\newcommand{\myfullTitle}{Moduli spaces and algebraic cycles in real algebraic geometry\xspace}
\newcommand{\myTitle}{Moduli spaces and\xspace}
\newcommand{\myTitletwo}{algebraic cycles in\xspace}
\newcommand{\myTitlethree}{real algebraic geometry\xspace}
\newcommand{\myName}{Olivier de Gaay Fortman\xspace}
\newcommand{\myDegree}{Doctor of Philosophy\xspace}
\newcommand{\myDepartment}{École normale supérieure de Paris\xspace}
\newcommand{\myUni}{École doctorale de Sciences Mathématiques de Paris Centre\xspace}
\newcommand*{\img}[1]{%
    \raisebox{-.02\baselineskip}{%
        \includegraphics[
        height=\baselineskip,
        width=\baselineskip,
        keepaspectratio,
        ]{#1}%
    }%
}
\newcommand{\wh}[1]{{\widehat{#1}}}
\newcommand{\PSL}{\textnormal{PSL}}
\newcommand{\set}[1]{\left\{ #1 \right\}}
\newcommand{\va}[1]{\left| #1 \right|}
\newcommand{\equivcohom}{\rm H^{2k}_G(X(\CC), \ZZ(k))}
\newcommand{\Alb}{\textnormal{Alb}}
\newcommand{\Isom}{\textnormal{Isom}}
\newcommand{\etale}{\textnormal{\'et}}
\newcommand{\white}{\;\;\;}
\newcommand{\Grass}{\textnormal{Grass}}
\newcommand{\Jac}{\text{Jac}}
\newcommand{\NL}{\textnormal{NL}}
\newcommand{\sm}{\setminus}
\newcommand{\CC}{\mathbb{C}}
\newcommand{\BB}{\mathbb{B}}
\newcommand{\OO}{\mathcal{O}}
\newcommand{\FF}{\mathbb{F}}
\newcommand{\VV}{\mathbb{V}}
\newcommand{\HH}{\mathbb{H}}
\newcommand{\QQ}{\mathbb{Q}}
\newcommand{\PP}{\mathbb{P}}
\newcommand{\GG}{\mathbb{G}}
\newcommand{\PGL}{\textnormal{PGL}}
\newcommand{\Sym}{\textnormal{Sym}}
\newcommand{\et}{\textnormal{\'et}}
\newcommand{\ch}{\textnormal{ch}}
\newcommand{\Hdg}{\textnormal{Hdg}}
\newcommand{\GL}{\textnormal{GL}}
\newcommand{\SL}{\textnormal{SL}}
\newcommand{\RR}{\mathbb{R}}
\newcommand{\ZZ}{\mathbb{Z}}
\newcommand{\CH}{\textnormal{CH}}
\newcommand{\CCH}{\bb C H}
\newcommand{\RRH}{\bb R H}
\newcommand{\Pic}{\textnormal{Pic}}
\newcommand{\Coker}{\textnormal{Coker}}
\newcommand{\Ker}{\textnormal{Ker}}
\newcommand{\NS}{\textnormal{NS}}
\newcommand{\tors}{\textnormal{tors}}
\newcommand{\id}{\textnormal{id}}
\newcommand{\Hom}{\textnormal{Hom}}
\newcommand{\Lie}{\textnormal{Lie}}
\newcommand{\Gal}{\textnormal{Gal}}
\newcommand{\Ima}{\textnormal{Im}}
\newcommand{\End}{\textnormal{End}}
\newcommand{\Aut}{\textnormal{Aut}}
\newcommand{\Spec}{\textnormal{Spec}}
\newcommand{\Sp}{\textnormal{Sp}}
\newcommand{\ca}[1]{{\mathcal{#1}}}
\newcommand{\bb}[1]{{\mathbb{#1}}}
\newcommand{\msf}[1]{{\mathsf{#1}}}
\newcommand{\mr}[1]{{\mathscr{#1}}}
\newcommand{\mf}[1]{{\mathfrak{#1}}}
\newcommand{\tn}[1]{{\textnormal{#1}}}
\let\rm\relax 
\newcommand{\rm}[1]{{\mathrm{#1}}}
\DeclareSymbolFont{bbm}{U}{bbm}{m}{n}
\DeclareSymbolFontAlphabet{\mathbbm}{bbm}
    \titleformat{\chapter}[display]%
    {\relax}{\raggedleft{\color{black}\chapterNumber\thechapter} \\ }{0pt}%
    {\titlerule\vspace*{.9\baselineskip}\raggedright\Large\color{CTtitle}\spacedallcaps}[\normalsize\vspace*{.8\baselineskip}\titlerule]%
    \titleformat{\chapter}[display]%
    {\relax}{\mbox{}\oldmarginpar{\vspace*{-3\baselineskip}\color{black}\chapterNumber\thechapter}}{0pt}%
    {\raggedright\huge\color{black}\spacedallcaps}[\normalsize\vspace*{.8\baselineskip}\titlerule]%
\newtheorem{theorem}{Theorem}[chapter]
\newtheorem{lemma}[theorem]{Lemma}
\newtheorem{corollary}[theorem]{Corollary}
\newtheorem{proposition}[theorem]{Proposition}
\newtheorem{condition}[theorem]{Condition}
\newtheorem{conditions}[theorem]{Conditions}
\newtheorem{theoreme}[theorem]{Th\'eor\`eme}
\newtheorem*{theoreme-non}{Th\'eor\`eme}
\newtheorem*{theorem-non}{Theorem}
\newtheorem*{condition-non}{Condition}
\theoremstyle{definition}
\newtheorem{definition}[theorem]{Definition}
\newtheorem{example}[theorem]{Example}
\newtheorem{examples}[theorem]{Examples}
\newtheorem{notation}[theorem]{Notation}
\newtheorem{conventions}[theorem]{Conventions}
\newtheorem{convention}[theorem]{Convention}
\theoremstyle{remark}
\newtheorem{remark}[theorem]{Remark}
\newtheorem{question}[theorem]{Question}
\newtheorem{remarks}[theorem]{Remarks}
\newtheorem{claim}[theorem]{Claim}
\newenvironment{itemize*}
  {\begin{itemize}[topsep=-\parskip+\jot,itemsep=-\parskip-\jot]}
  {\end{itemize}}
\newenvironment{enumerate*}
  {\begin{enumerate}[label=(\alph*),topsep=-\parskip+\jot,itemsep=-\parskip-\jot]}
  {\end{enumerate}}
\newenvironment{enumerate**}
  {\begin{enumerate}[label=(\roman*),topsep=-\parskip+\jot,itemsep=-\parskip-\jot]}
  {\end{enumerate}}
\newenvironment{enumerate***}
  {\begin{enumerate}[label=(\alph*'),topsep=-\parskip+\jot,itemsep=-\parskip-\jot]}
  {\end{enumerate}}
\begin{document}
\setboolean{@twoside}{false} 
\frenchspacing
\raggedbottom

\selectlanguage{english}

\pagenumbering{roman}
\pagestyle{plain}
\begin{titlepage}
    \pdfbookmark[1]{\myfullTitle}{titlepage}

\includepdf[pages={1}]{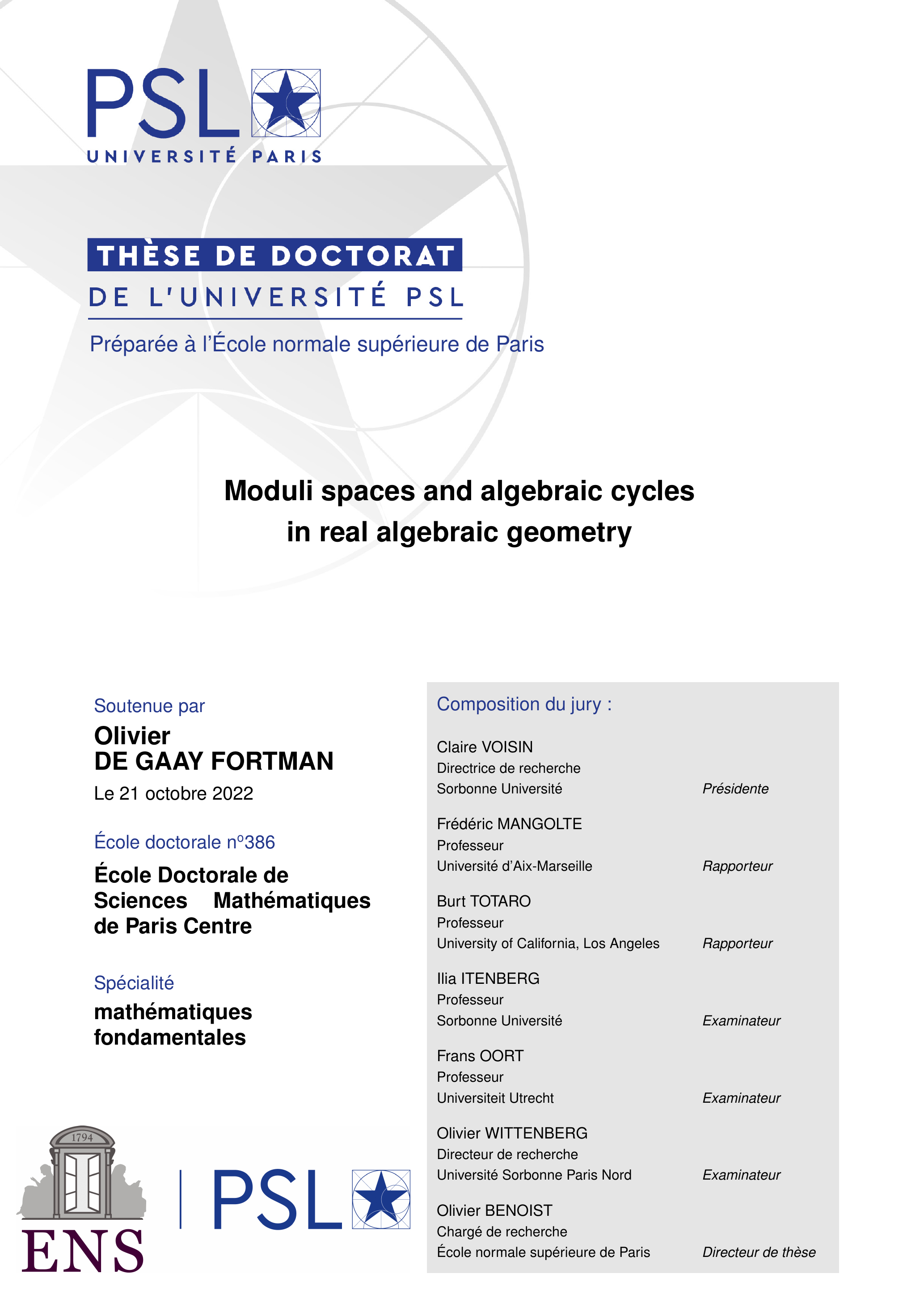}

\end{titlepage}

\cleardoublepage
\begin{titlepage}

    \begin{center}
        \large

        \hfill

        \vfill

        \begingroup
            \spacedallcaps{\large{\textbf{\myTitle}}} 
            
            \bigskip
            
            \spacedallcaps{\large{\textbf{\myTitletwo}}}
            
            \bigskip
            
            \spacedallcaps{\large{\textbf{\myTitlethree}}}
            
        \endgroup

		\vfill
		by\bigskip
		
        \spacedlowsmallcaps{\myName}

		\vfill
        
        A thesis submitted in conformity with\\
        the requirements for the degree of\bigskip
        
        \myDegree \bigskip
        
        \myDepartment\\
        
        October 21, 2022

    \end{center}
\end{titlepage}

\cleardoublepage
\begin{titlepage}

        \large

        \hfill

        \vfill

        \begingroup

Doctoral advisor:   \hspace{1cm}   \spacedlowsmallcaps{Olivier Benoist}
            
            \bigskip
            
            
            \bigskip
            
            
        \endgroup

		\vfill
		

		\vfill
        
  \bigskip
  
    \bigskip
    \bigskip
    \bigskip
    \bigskip
    \bigskip
    \bigskip

      \bigskip
        
       
       \bigskip
        

\end{titlepage}

\setboolean{@twoside}{true}
\cleardoublepage
\pdfbookmark[1]{Abstract}{Abstract}

\begingroup
\let\clearpage\relax
\let\cleardoublepage\relax
\let\cleardoublepage\relax

\chapter*{Abstract}

\doublespacing


\noindent
This thesis intends to make a contribution to the theories of algebraic cycles and moduli spaces over the real numbers. In the study of the subvarieties of a projective algebraic variety, smooth over the field of real numbers, the cycle class map between the Chow ring and the equivariant cohomology ring plays an important role. The image of the cycle class map remains difficult to describe in general; we study this group in detail in the case of real abelian varieties. To do so, we construct integral Fourier transforms on Chow rings of abelian varieties over any field. They allow us to prove the integral Hodge conjecture for one-cycles on complex Jacobian varieties, and the real integral Hodge conjecture modulo torsion for real abelian threefolds. 

For the theory of real algebraic cycles, and for several other purposes in real algebraic geometry, it is useful to have moduli spaces of real varieties to our disposal. Insight in the topology of a real moduli space provides insight in the geometry of a real variety that defines a point in it, and the other way around. In the moduli space of real abelian varieties, as well as in the Torelli locus contained in it, we prove density of the set of moduli points attached to abelian varieties containing an abelian subvariety of fixed dimension. Moreover, we provide the moduli space of stable real binary quintics with a hyperbolic orbifold structure, compatible with the period map on the locus of smooth quintics. This identifies the moduli space of stable real binary quintics with a non-arithmetic ball quotient. 

\endgroup
\pdfbookmark[1]{Acknowledgements}{acknowledgements}

\begingroup
\let\clearpage\relax
\let\cleardoublepage\relax

\chapter*{Acknowledgements}



First and foremost, I would like to wholeheartedly thank my supervisor Olivier Benoist for guiding me so skillfully over the past three years. It has been truly great to work under his guidance. It is difficult to express how much I have learned from Benoist's incredible mathematical knowledge and intuition. Moreover, his continuous encouragement and enthusiasm turned my time at the ENS into an amazing period, and I am very thankful for this. 

I would like to heartily thank Fr\'ed\'eric Mangolte and Burt Totaro, the reviewers of my thesis, for the close attention that they have paid to my work. It is also a great honour to thank Olivier Benoist, Ilia Itenberg, Fr\'ed\'eric Mangolte, Frans Oort, Burt Totaro, Claire Voisin and Olivier Wittenberg for being part of my jury. 

I am grateful for having had the privilege of discussing regularly with Nicolas Tholozan, who inspired me in many ways. I appreciate the help that Tholozan offered me, as my mentor at the ENS. 
I also thank all the other people at the ENS for being ready to discuss mathematics; in particular, I thank Nicolas Bergeron, Romain Branchereau, Samuel Bronstein, François Charles, Ga\"etan Chenevier, Antoine Ducros, Arnaud Eteve, Yusuke Kawamoto, Vadim Lebovici, Ariane Mézard, Léonard Pille-Schneider, Salim Tayou and Arnaud Vanhaecke for doing so. 

I am very thankful for having shared an office with Romain Branchereau, who soon became a great friend of mine. Our math discussions on our blackboard, and the fun that we always had, were invaluable to me. I am very grateful for the regular meetings with my friend Elyes Boughattas. Exchanging ideas with Elyes taught me very much, and it was incredible to organize a reading seminar together. Thanks to Ariane M\'ezard for the amazing time I had teaching the ENS course Complex Analysis with her. I learned a lot from working together with Ariane, and her heartfelt inquiries on the well-going of my PhD have meant much to me. 

I heartily thank Daniel Huybrechts for hosting me in Bonn during the months October -- December 2021. Discussing with Huybrechts and attending his lectures were a great stimulus for the development of my work. I thank the Hausdorff Center for Mathematics for their hospitality and pleasant working conditions. Moreover, it was very enriching to collaborate with and learn from Thorsten Beckmann. We had great fun at the University of Bonn; starting as colleagues, we soon became friends. 

I am indebted to Frans Oort for meeting me in Utrecht several times. It has been wonderful to experience his enthusiasm for the beauty of mathematics. 
Frans' work as a mathematician, and kindness as a person, are a great inspiration to me. I am grateful for his reading of an earlier draft of this thesis, and his valuable comments. Many thanks also to Fabrizio Catanese, for his invitation to spend several days in Bayreuth, to discuss mathematics with him and present my work at their seminar. This was a delightful visit, from which I learned a lot. Heartfelt thanks to Giuseppe Ancona, for proposing me to spend a research visit in Strasbourg and talk at their arithmetic and algebraic geometry seminar. I benefited much from the joyful discussions with both Giuseppe and Michele Ancona during my stay in Strasbourg. 

My sincere thanks to Christopher Lazda for guiding me during my master studies; this was a wonderful period which would eventually lead to this PhD.

I thank my parents Carl and Betteke for their eternal love and support. I am very proud of them, inspired by their work, and happy to acknowledge their vast influence on my development. They are and will always be an example for me. I thank my brother Nelson and my sister Duveke, their partners Emily and Mats, my friends Daan and Milan, my grandparents Bas and Ina, and all my other friends and family for their continuous affection and encouragement. 

Finally, I thank my fiancée Nola for her interest in the progress of my projects, for her understanding and support, and for always being there for me. 
\\
\\
I am grateful for the scholarship that I received from the FSMP, and for all the effort that the FSMP in general -- and Ariela Briani in particular -- has put into the development of my PhD. I am therefore happy to acknowledge that this project has received funding from the European Union's Horizon 2020 research and innovation programme under the Marie Sk\l{}odowska-Curie grant agreement N\textsuperscript{\underline{o}}754362. $\hfill \img{EU}$ \\
I thank the Prins Bernhard Cultuurfonds for providing me with additional funding to cover the expenses of my studies, during these fantastic three years in Paris. 

\endgroup
\clearpage
\begin{doublespace}
{\fontsize{12}{12}
\setlength{\cftbeforetoctitleskip}{100em}
\begingroup
\pagestyle{scrheadings}
\pdfbookmark[1]{\contentsname}{tableofcontents}
\setcounter{tocdepth}{1} 
\setcounter{secnumdepth}{3} 
\manualmark
\markboth{\spacedlowsmallcaps{\contentsname}}{\spacedlowsmallcaps{\contentsname}}
\tableofcontents
\automark[section]{chapter}
\renewcommand{\chaptermark}[1]{\markboth{\spacedlowsmallcaps{#1}}{\spacedlowsmallcaps{#1}}}
\renewcommand{\sectionmark}[1]{\markright{\textsc{\thesection}\enspace\spacedlowsmallcaps{#1}}}
\endgroup

}
\end{doublespace}

\cleardoublepage
\pagestyle{scrheadings}
\pagenumbering{arabic}
\cleardoublepage
\chapter{Introduction}
\label{ch:intro}

This thesis concerns a study of cycles and moduli spaces in real algebraic geometry. We intend to make a contribution towards understanding these concepts, individually as well as in light of one another, by looking at phenomena such as: 

\begin{itemize}
\item The hyperbolic structure of the moduli space of real binary quintics.
\item The distribution of non-simple abelian varieties in a family of real abelian varieties. 
\item Algebraic curves on complex and real abelian varieties. 
\end{itemize}


\noindent
The results as well as the methods of these investigations are closely intertwined, as we shall see. The general, overarching theory is \emph{equivariant Hodge theory}: the use of Hodge theory to study cohomology, moduli and cycles of real algebraic varieties. 
\\
\\
This introduction is structured as follows. 
In Section \ref{stagesetting}, we set the stage by explaining what real algebraic geometry is about. We continue to introduce the main players in Section \ref{mainplayers}: these are moduli spaces on the one hand, and algebraic cycles on the other. Finally, in Section \ref{mainresults}, we discuss our most important results. 

\section{Setting the stage} \label{stagesetting}

Real algebraic geometry concerns the study of algebraic varieties over $\RR$, the field of real numbers. As such, the basic objects of the theory are sets of the form 
\begin{align} \label{algebraicsubset}
X(\CC) = \left\{  \alpha \in \PP^n(\CC) \mid F_i(\alpha) = 0 \; \forall i = 1, \dotsc, k  \right\}, \quad  X(\RR) = X(\CC) \cap \PP^n(\RR), \quad
\end{align}
where the $F_i$ are homogeneous elements of the polynomial ring $\RR[X_0, \dotsc, X_n]$. Studying such polynomial equations is an old and fascinating topic, in which recent developments have led to important progress \cite{BCR87, mangolte-realvarieties}. 

\noindent
Nowadays, one often thinks of real algebraic geometry as \emph{$G$-equivariant complex algebraic geometry}. To explain this, we introduce the following:
\begin{definition} \label{thegaloisgroup}
We define, once and for all, $G$ as the Galois group of $\CC$ over $\RR$. 
\end{definition}
\noindent
Consider the algebraic subset $X(\CC) \subset \PP^n(\CC)$ defined in (\ref{algebraicsubset}). 
Since the coefficients of the polynomials $F_i$ are real, complex conjugation $x \mapsto \bar x$ on the complex projective space $\PP^n(\CC)$ descends to an anti-holomorphic involution $\sigma \colon X(\CC) \to X(\CC)$. In this way, one obtains a functor
\[
X \mapsto \left(X_\CC, \; \sigma \colon X(\CC) \to X(\CC) \right)
\]
from the category of projective varieties over $\RR$ to the category of projective varieties over $\CC$ equipped with an anti-holomorphic $G$-action. This functor is an \emph{equivalence}. 

In particular, to study a real algebraic variety, one may view it either as an $\RR$-scheme $X$ and employ algebraic methods, or as a $G$-equivariant complex analytic space $X(\CC)$ and use topological and analytical techniques. A guiding principle in this thesis is that especially the combination of both approaches is very powerful.
\\
\\
Led by this principle, we intend to make a contribution to real algebraic geometry by exploring the following two themes:

\begin{enumerate}
\item \label{intro:one}Moduli spaces of certain classes of real algebraic varieties. 
\item \label{intro:two}Real algebraic subvarieties of a fixed ambient real algebraic variety. 
\end{enumerate}

\noindent
These two concepts will be the main players of this thesis. Let us introduce them properly, before we put them into action and explain our results. 




\section{Introduction to the main players} \label{mainplayers}

\subsection{Complex and real moduli spaces} \label{intro:sub:complexandrealmodulispaces}

To understand how the geometry of varieties in some interesting category behaves in family, 
it can be useful to study the topology of their moduli space. 
When the base field is $\CC$, constructing a moduli space 
is a well-understood problem. It can be attacked with analytic methods such as Teichm\"uller theory, or with algebraic methods such as stacks and GIT. 
For example, if the moduli stack is algebraic and has finite inertia, then a coarse moduli space exists by \cite{K-M}. 

In real algebraic geometry, it is less straightforward what a real moduli space should be, and how it can be constructed.

\begin{question}
What is a real moduli space?
\end{question}
\noindent
Intuitively, the real moduli space of a real variety should be a topological space that classifies isomorphism classes of all real varieties that have something in common with the given one (e.g. numerical invariants of some kind). One can also classify equivalence classes of other algebraic objects defined over $\RR$, such as sheaves on a real variety, morphisms, or combinations of these. More generally, we can consider sets $S(T)$ of families of selected objects over $\RR$-schemes $T$ modulo a certain equivalence relation $\sim$ and build a functor $F \colon (\rm{Sch}/\RR) \to (\rm{Set})$ by the rule $F(T) = S(T)/\sim$. 
One could define the real moduli space for the moduli problem $F$ as the topological space obtained by giving $F(\RR)$ the finest topology for which $f_\RR \colon T(\RR) \to F(\RR)$ is continuous 
for every $\RR$-scheme $T$ and every $f \in F(T)$. 

Generally, the set 
$F(\RR)$ has more structure than just the structure of a topological space. For example, if the moduli problem $F$ is representable by a scheme $X$, then its real moduli space $X(\RR)$ is in fact a real-analytic space. Even in case $F$ is not representable, it is often possible to provide $F(\RR)$ with some additional structure.\begin{example}[Algebraic quotients] \label{algebraicquotients}
Let $F$ be a moduli problem over $\RR$ such that $F(T)$ is a certain set of isomorphism classes of families of polarized real varieties 
$(X, \ell)$ over $T$, where $\ell$ is a $G$-invariant ample class in the N\'eron-Severi group of $X_\CC$. Suppose there is a subscheme $H$ of $\text{Hilb}_{\PP^N}$ such that $F(\CC) \cong \PGL_{N+1}(\CC) \setminus H(\CC)$. If $\ell$ is induced by an ample line bundle on $X$ for every $[(X, \ell)] \in F(\RR)$, then there is a homeomorphism 
$F(\RR) \cong \PGL_{N+1}(\RR) \setminus H(\RR)$. For example, this works for hypersurfaces in $\PP^n_\RR$. Less trivially, this works for polarized real abelian varieties. 
\end{example}




\begin{example}[Analytic quotients] \label{introexample:grossharris}
Let $\ca A_g$ be the stack of principally polarized abelian varieties of dimension $g$, and let $\ca M_g$ be the stack of smooth curves of genus $g$. It is well-known that there exist complex analytic uniformizations 
\[
\ca A_g(\CC) \cong \Sp_{2g}(\ZZ) \setminus \bb H_g, \quad \textnormal{ and } \quad \ca M_g(\CC) \cong \Gamma_g \setminus \ca T_g.\] Here, $\bb H_g$ and $\ca T_g$ are complex manifolds, parametrizing complex abelian varieties and curves endowed with some additional structure, and the 
covers $\bb H_g \to \ca A_g(\CC)$ and $\ca T_g \to \ca M_g(\CC)$ are obtained forgetting these structures. 
To obtain real moduli spaces for these moduli problems, one defines sets of anti-holomorphic involutions \[\{\sigma \colon \bb H_g \to \bb H_g\} \quad \textnormal{ and } \quad \{\tau \colon \ca T_g \to \ca T_g\}\] in such a way that every principally polarized real abelian variety (resp. real curve) lies in $\bb H_g^\sigma$ (resp. $\ca T_g^\tau$) for some $\sigma$ (resp. $\tau$). For suitably defined subgroups $\Sp_{2g}(\ZZ)(\sigma) \subset \Sp_{2g}(\ZZ)$ and $ \Gamma_g(\tau)  \subset \Gamma_g$, 
this gives bijections \cite{grossharris, silholsurfaces, seppalasilhol2}
\begin{align*}
\ca A_g(\RR) \cong \bigsqcup_\sigma \Sp_{2g}(\ZZ)(\sigma) \setminus \bb H_g^\sigma \quad \textnormal{ and } \quad \ca M_g(\RR) \cong \bigsqcup_\tau \Gamma_g(\tau) \setminus \ca T_g^\tau.
\end{align*}
We verify in Theorems \ref{th:homeomorphismmoduli} and \ref{th:homeomorphismmoduli2} that these bijections are homeomorphisms. 
\end{example}
\noindent
Examples \ref{algebraicquotients} and \ref{introexample:grossharris} suggest that if $F$ arises as the functor attached to an algebraic stack over $\RR$, then $F(\RR)$ should carry the structure of a real-analytic orbifold. In Chapter \ref{ch:realmodulispaces}, we show that this indeed the case. 
We define a functorial orbifold structure on the 
real locus $\mr X(\RR)$ of a smooth, separated Deligne-Mumford stack $\mr X$ over $\RR$. One retrieves the real-analytic space $\mr X(\RR)$ when $\mr X$ is a scheme. 

\subsection{Complex and real period maps} \label{intro:sub:periodmaps}

We conclude that in complex and in real algebraic geometry, moduli spaces arise naturally via algebraic moduli stacks. 
As a next step, one may want to equip the moduli space with some additional structure. Hodge theory provides an excellent method for this goal, as we shall now explain.
\\
\\
Consider a projective holomorphic submersion of complex manifolds 
\begin{equation} \label{introeq:family}
\begin{split}
    \xymatrix{
    & \PP^N(\CC) \times B \ar[dr]& \\
     X \ar[rr]^\pi\ar@{^{(}->}[ur]&& B,
    }
\end{split}
\end{equation} 
where the manifold $B$ is connected and equipped with a base point $0 \in B$. 
Possibly after replacing $B$ by an open subset around $0$, for each $t \in B$ there is an isomorphism $$\rm H^k(X_t, \ZZ) \cong \rm H^k(X_0, \ZZ).$$ 
One obtains a family of Hodge structures on $\rm H^k(X_0, \ZZ)$, parametrized by $B$. 
The Hodge filtrations on $F^\bullet_t$ on $\rm H^k(X_0, \CC)$ induce a map from $B$ into a flag variety $\ca F$ 
and by the factorization of $\pi$ in (\ref{introeq:family}) 
the image is contained in a certain locally closed subset 
$\ca D \subset \ca F$, the \textit{period domain} \cite[\S10.1.3]{voisin}. The induced map
\begin{align*} 
P:    B \to \ca D
\end{align*}
is called a \textit{period map}, and turns out to be holomorphic. 

This procedure can be globalized, by taking a suitable cover $\widetilde B \to B$ instead of restricting to simply connected opens to trivialize the local system with stalks $\rm H^k(X_t,\ZZ)$. One obtains a morphism $\widetilde P \colon \widetilde B \to \ca D$ that induces a commutative diagram
\begin{align}\label{perioddiagram}
\begin{split}
\xymatrix{
\widetilde B \ar[d] \ar[r]^{\widetilde P} & \ca D \ar[d]& \\
B \ar[r]^{P \hspace{1mm}} & \Gamma \setminus B, & \Gamma = \Aut\left(\rm H^k(X_0, \ZZ)\right).
} 
\end{split}
\end{align} 
\noindent
The above construction applies to the complex locus $\msf M(\CC)$ of the coarse moduli space $\msf M$ of any separated Deligne-Mumford moduli stack $\ca M$ which is smooth over $\CC$. 
Indeed, although the complex analytic space $\msf M(\CC)$ might be singular and lacking a universal family, 
for a suitable \'etale cover $U \to \ca M$ there is a period map $\widetilde P \colon U(\CC) \to \ca D$ that descends 
to a morphism of analytic spaces $P \colon \msf M(\CC) \to \Gamma \setminus \ca D$. 

For such a smooth moduli stack $\ca M$, one may wonder if the map $\widetilde P \colon U(\CC) \to \ca D$ is immersive (infinitesimal Torelli), if the map $P \colon \msf M(\CC) \to \Gamma \setminus \ca D$ is injective (global Torelli), and if one can explicitly describe the image of $P$. 
\begin{examples} \label{AVK3examples}
These statements are true for the moduli stack $\ca A_g$ of $g$-dimensional principally polarized abelian varieties \cite{birkenhake}, as well as for the moduli stack $\ca M_d$ of degree $2d$-polarized K3 surfaces \cite{Palaiseau, HuybrechtsK3}. 
\end{examples}
\noindent
In contrast with Examples \ref{AVK3examples}, it can occur that for the family (\ref{introeq:family}), the period map $P$ in diagram (\ref{perioddiagram}) is actually constant. 
It may also happen that the period map is an embedding but $\dim(B) < \dim(\ca D)$. 
In these cases, it is sometimes useful to first associate an auxiliary variety $Y_t$ to the fiber $X_t$ above $t \in B$, carrying some additional structure (such as an embedding $H \subset \Aut(Y_t)$ for some fixed finite group $H$), and then take periods, namely those of the variety $Y_t$. 
For example: 
\begin{enumerate}
\item If $X_t \subset \PP^3_\CC$ is a cubic surface, one can take $Y_t$ to be the triple cover of $\PP^3_\CC$ ramified along $X_t$ \cite{ACTsurfaces}. 
\item If $X_t = \{F_t = 0 \} \subset \PP^1_\CC$ is a binary quantic \cite[Chapter 4]{GIT}, one can take $Y_t$ to be a finite cover of $\PP^1_\CC$ ramified along $X_t$ \cite{DeligneMostow}, \cite[Section 5.3]{Moonen2011TheTL}. 
\end{enumerate}
In this way, one obtains a family $\rho \colon Y \to B$ whose period domain $\ca D$ contains a sub-domain $\ca D' \subset \ca D$ that parametrizes Hodge structures with additional structure (see \cite[\S17.3]{periodmappings}). The period map of $\rho$ factors through a map $P \colon B \to \Gamma' \setminus \ca D'$ that might be closer to being an isomorphism than the original period map was. 
Such a map $P$ is called an \emph{occult period map}, due to its hidden nature \cite{kudlarappoortocculte}. 

\begin{examples} \label{occultexamples}
The idea of considering the periods of a branched cover of the projective line goes back to Picard \cite{picardperiods}; Shimura studied the moduli of such curves in \cite{shimuratranscendental}. Deligne, Mostow and Thurston developed on the work of Picard, determining for which stable configurations of points on $\PP^1(\CC)$ the occult period map is an isomorphism \cite{DeligneMostow, mostowgeneralized, Thurston1998ShapesOP}. 
Allcock--Carlson--Toledo identified the complex moduli space of stable cubic surfaces with a four-dimensional ball-quotient \cite{ACTsurfaces, beauvillecubicsurfaces, periodmappings}, extending their construction to cubic threefolds in \cite{ACTthreefolds} (c.f. \cite{looijenga_swierstra_2007}). Likewise, Kondō studied complex moduli of curves of genus three and four 
\cite{kondogenus3, kondogenus4}, and then genus six with Artebani \cite{genussixkondo}. 
\end{examples}
\begin{remark}
Kudla and Rapoport 
observed that occult period maps are compatible with families, extending them to morphisms of stacks defined over natural fields of definition \cite{kudlarappoortocculte}. The arithmetic these morphisms was studied by Achter \cite{achteroccult}.
\end{remark}
\noindent
What happens over the real numbers? 
Let us assume that family (\ref{introeq:family}) comes equipped with anti-holomorphic involutions \[
\left(\tau: X \to X, \sigma: B \to B \mid \sigma(0) = 0\right).\]
After replacing $B$ by a $G$-stable contractible open subset around $0$, we can trivialize the local system with stalks $\rm H^k(X_t, \ZZ)$ in a $G$-equivariant way. 
The involution $$
\tau^\ast: \rm H^k(X_0, \ZZ) \to \rm H^k(X_0, \ZZ)
 $$ can then be used to define a natural anti-holomorphic involution on $\ca D$, making the period map 
 $G$-equivariant. 
Taking fixed points results in a smooth \textit{real period map} $$\ca P^G \colon B^G \to \ca D^G.$$ 
As in the complex case, one may, instead of restricting to small opens, use $G$-equivariant covers to trivialize the monodromy. Such covers can also be used to define real period maps on real moduli spaces. This leads to interesting results. 

\begin{examples} \label{introexamples:realperiods}
Tate identified the real moduli space of elliptic curves with the disjoint union of two copies of $\RR^\ast$ \cite[Chapter 1, Proposition 4.3.1]{grossarithmeticcomplexmultiplication}, which was generalized to abelian varieties by Gross--Harris and Silhol \cite{grossharris, silholsurfaces}. The case of real K3 surfaces was treated by Nikulin \cite{Nikulin1980,itenbergenriques} (see also \cite{KHARLAMOVK3, silholsurfaces}). 
\end{examples}
\noindent
The occult period map construction carries over to the reals as well. 

\begin{examples} \label{realoccultexamples}
\emph{Equivariant occult period maps} are used to uniformize the connected components of the real moduli spaces of cubic surfaces \cite{realACTsurfaces}, binary sextics \cite{realACTnonarithmetic}, curves of genus three \cite{riekenthesis, heckman2016hyperbolic}, and binary octics \cite{chuthesis, chu2011octics}.  
\end{examples}

\noindent
This concludes our discussion of the basics of complex and real moduli theory. It is time to meet the other main players:


\subsection{Complex and real algebraic cycles}\label{intro:sub:complexandrealalgebraiccycles}

Any algebraic subvariety of a smooth projective variety induces a class in a suitably defined cohomology group. 
Important conjectures predict that one can understand the subgroup of algebraic classes in the cohomology of this variety via structures that are \emph{a priori} not directly related to the algebraic cycles themselves (i.e. Hodge theory, Galois representations, etc.). In this section, we explain this in more detail when the base field is either $\CC$ or $\RR$. Since the situation over $\CC$ is better understood than the situation over $\RR$, we start our discussion in the complex direction. 
\\
\\
Let $X$ be a smooth projective variety over $\CC$. 
A fundamental theorem of Hodge \cite{MR139613} says that for each integer $k$, there is a functorial \emph{Hodge decomposition} 
\begin{align*}
\rm H^k(X(\CC), \CC) = \rm H^k(X(\CC), \ZZ) \otimes \CC = \bigoplus_{p+q = k} \rm H^{p,q}(X) \;\; \textnormal{such that} \;\; \overline{\rm H^{p,q}(X)} = \rm H^{q,p}(X). 
\end{align*}Define the group of \emph{integral Hodge classes} of degree $2k$ as 
\[
\Hdg^{2k}(X(\CC), \ZZ) = \set{\alpha\in\rm H^{2k}(X(\CC), \ZZ) \mid \alpha_\CC \in \rm H^{k,k}(X) \subset \rm H^{2k}(X(\CC), \CC)}.
\]
These Hodge classes derive their main interest from the following fact. Let $Z \subset X$ be an irreducible subvariety of dimension $i$, and $\widetilde Z \to Z$ a resolution of singularities \cite{hironaka}. The composition $f\colon \widetilde Z \to Z \hookrightarrow X$ induces a Gysin homomorphism 
\begin{align*}
f_\ast \colon \rm H^{0}(\widetilde Z(\CC), \ZZ) \to \rm H^{2k}(X(\CC), \ZZ) \quad \quad (k = \dim(X) - i)
\end{align*}
which is compatible with the Hodge decomposition. Thus, we obtain a Hodge class $[Z] = f_\ast(1) \in \rm H^{2k}(X(\CC), \ZZ)$. This construction induces a homomorphism
\begin{align} \label{intro:IHC}
\CH_i(X) \to \Hdg^{2k}(X(\CC), \ZZ) \quad \quad (k = \dim(X) - i). 
\end{align}

\begin{definition}
For a smooth projective variety $X$ over $\CC$, the \emph{integral Hodge conjecture for $i$-cycles} (IHC$_i$) is the property that the homomorphism (\ref{intro:IHC}) is surjective. Thus, $X$ satisfies IHC$_i$ if (\ref{intro:IHC}) is surjective, and $X$ fails IHC$_i$ if (\ref{intro:IHC}) is not surjective. 
\end{definition}
\noindent
It is known since Atiyah and Hirzebruch \cite{atiyahintegralhodge} that there are varieties $X$ for which the property IHC$_i$ does not hold. 
In some interesting cases, however, the integral Hodge conjecture for $i$-cycles turns out to be satisfied. 
In Section \ref{intro:sub:theintegralhodgeconjectureforjacobianvarieties} below, we will indicate more precisely what is and what is not known in this direction. 
\\
\\
Let us carry the discussion over to the real setting. Let $X$ denote a smooth projective algebraic variety over $\RR$. Over the years, much work has been done to establish the right analogue of the cycle class map in real algebraic geometry. Borel and Haefliger \cite{borelhaefliger} constructed a cycle class with values in $\rm H^{k}(X(\RR),\ZZ/2)$ for any real algebraic subvariety $Z \subset X$ of codimension $k$, by considering the embedding of real loci $Z(\RR) \subset X(\RR)$. 
The study of the subgroup 
\begin{align*}
\rm H_{\text{alg}}^{k}(X(\RR), \ZZ/2) \subset \rm H^{k}(X(\RR), \ZZ/2)
\end{align*} formed by these classes is a classical topic in real algebraic geometry \cite{silholsurfaces, BK98, mangolte}, related to the problem of $\ca C^{\infty}$ approximation of submanifolds of $X(\RR)$ by algebraic subvarieties \cite{BCR87,cinftyapproxbenoist}, \cite[\S 6.2]{BW21}.  

To study the algebraic cycles on $X$, it is natural to consider a more refined cohomology theory than $\rm H^{\bullet}(X(\RR), \ZZ/2)$, forasmuch as the real locus $X(\RR) = X(\CC)^G$ constitutes only a small part of the topological $G$-structure of $X(\CC)$. The right choice seems to be provided by $G$-equivariant cohomology $\rm H^q_G(X(\CC), \ZZ(j))$ in the sense of Borel (see \cite{tohoku} or \cite{ATIYAH19841}), where $\ZZ(j)$ is the abelian group $\ZZ$ turned into a $G$-module by declaring that $\sigma(1)  = (-1)^j$ for the generator $\sigma \in G$. 

The study of equivariant cohomology in real algebraic geometry was initiated by Krasnov \cite{krasnovcharact}, and continued by Van Hamel in his thesis \cite{vanhamel}, who used extensively the formalism of equivariant sheaves and derived categories. As was noted by both \cite{krasnovgroth,vanhamel}, there is an equivariant cycle class map $\CH_i(X) \to \rm H^{2k}_G(X(\CC), \ZZ(k))$ that fits in a commutative diagram
\begin{equation*}
\xymatrixcolsep{1.5pc}
\xymatrix{
&\CH_i(X) \ar[dl]\ar[d]\ar[dr]& (k  = \dim(X) - i) \\
\rm H^{k}(X(\RR), \ZZ/2) & \ar[l] \rm H^{2k}_G(X(\CC), \ZZ(k)) \ar[r]& \rm H^{2k}(X(\CC), \ZZ(k)).
}
\end{equation*}
It turns out that algebraic classes are contained in a certain subgroup 
\[
\textnormal{Hdg}^{2k}_G(X(\CC), \ZZ(k))_0  = \Hdg^{2k}_G(X(\CC), \ZZ(k)) \cap \rm H^{2k}_G(X(\CC), \ZZ(k))_0 \subset \rm H^{2k}_G(X(\CC), \ZZ(k))
\]
that takes into account the Steenrod operations on $\rm H^\bullet(X(\RR), \ZZ/2)$ as well as the Hodge structure on $\rm H^{2k}(X(\CC), \ZZ(k))$. See Section \ref{sec:realintegralhodge} for details. This leads to:

\begin{definition}[Benoist--Wittenberg]
A smooth projective variety $X_{/\RR}$ satisfies the \textit{real integral Hodge conjecture for $i$-cycles} ($\RR$IHC$_i$) if the following map is surjective:
\begin{equation*}
\CH_i(X) \to \textnormal{Hdg}^{2k}_G(X(\CC), \ZZ(k))_0 \quad \quad (k = \dim(X) - i). 
\end{equation*}
\end{definition}
\noindent
Thus, in spite of its name, $\RR$IHC$_i$ is a property that a smooth projective variety $X$ can satisfy. There are varieties for which it holds, and there are varieties for which it fails. 
In some sense, the property -- if satisfied -- reveils deep links between the equivariant complex structure of $X(\CC)$, the topology of $X(\RR)$, and the real subvarieties of $X$. For instance, if $\textnormal{Hdg}^{2k}_G(X(\CC), \ZZ(k))_0$ is large, then there should be many subvarieties even though it may be difficult to write down explicit equations. 
\\
\\
This concludes our introduction to algebraic cycles and cohomology of complex and real algebraic varieties. Now that the stage is set, and its main players are met, 
it is time to present our results. 

\newpage

\section{Discussion of the main results} \label{mainresults}

\subsection{Noether-Lefschetz loci for real abelian varieties \hfill (based on \cite{degaayfortmanrealmodulispaces})}
\label{intro:sub:realabeliansubvarietiesinfamily}

Our first result concerns the behaviour of specific algebraic cycles on real abelian varieties in a family. Namely, we have considered the following:
\begin{question} \label{intro:questionabelianscheme}
Let $A \to B$ be a family of polarized real abelian varieties, and let $k$ be a positive integer. Let $S_k(B(\RR)) \subset B(\RR)$ be the subset of points $t \in B(\RR)$ such that $A_t$ contains a real abelian subvariety of dimension $k$. Does there exist a natural criterion for the density of $S_k(B(\RR))$ in $B(\RR)$?
\end{question}
\noindent
Viewing the above family as an equivariant variation of Hodge structure, Question \ref{intro:questionabelianscheme} concerns certain \emph{special loci} in $B(\RR)$. Indeed, we consider loci of $t \in B(\RR)$ such that the $G$-equivariant Hodge structure on the cohomology of $A_t(\CC)$ is "special" in the sense that it contains a $G$-stable sub-Hodge structure of rank $k$. What we ask for, is a density criterion for these special loci. Phrased like this, Question \ref{intro:questionabelianscheme} might remind of the Green--Voisin criterion for density of Noether-Lefschetz loci \cite[Proposition 17.20]{voisin}. Before we start looking for an answer, let us consider the analogous question for surfaces in $\PP^3$. 

Let $d \geq 4$ be an integer, and consider the universal degree $d$ smooth hypersurface $$\PP^3 \times \mr B \supset \mr S \to \mr B.$$
The Noether-Lefschetz locus 
is the set of $t \in \mr B(\CC)$ such that $\mr S_t$ contains a curve which is not a complete intersection. By the main result of \cite{NLlocus}, 
the Noether-Lefschetz locus is euclidean dense in $\mr B(\CC)$, despite having empty interior \cite{MR0033557}. 

Do we have a similar result over the reals? This question makes sense, since the above universal surface $\mr S \to \mr B$ is naturally defined over $\RR$. 
In analogy with the above, one may thus define the \textit{real Noether-Lefschetz locus} 
as the set of real surfaces $S$ in $\bb P^3_{{\bb R}}$ with $\Pic(S) \not \cong {\bb Z}$. 
The situation over $\RR$ turns out to be more delicate than the situation over $\CC$. Indeed, there exists a component $K$ of the real moduli space of degree four surfaces in $\PP^3_\RR$, such that every $S$ in $K$ satisfies $\Pic(S) \cong \ZZ$. 
There is a Green--Voisin criterion over the reals \cite[Proposition 1.1]{benoistttt}, but its hypothesis is more complicated and only applies to one component of $\mr B(\RR)$ at a time. 

Let us return to abelian varieties. 
Question \ref{intro:questionabelianscheme} turns out to have a response which, in light of the above difficulties, is remarkably simple. 
In \cite{Colombo1990}, Colombo and Pirola prove the following theorem. Let $
A \to B
$ be a holomorphic family of polarized complex abelian varieties over a connected manifold $B$, and let $S_k(B) \subset B$ be the locus of abelian varieties containing a $k$-dimensional abelian subvariety. If Condition \ref{criterion} in Chapter \ref{ch:density} holds, then 
$S_k(B)$ is euclidean dense in $B$. 

Over $\RR$, the criterion remains the same:


\begin{theorem}[Theorem \ref{th:maindensitytheorem}]  \label{intro:NLtheorem}
Let $A \to B$ be a family of polarized real abelian varieties. If $B$ is connected and Condition \ref{criterion} holds, then $S_k(B(\RR))$ is euclidean dense in $B(\RR)$. 
\end{theorem}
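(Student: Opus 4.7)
The plan is to adapt the infinitesimal density argument of Colombo–Pirola to the $G$-equivariant setting afforded by the fact that the family $A \to B$ is defined over $\RR$. I would first reduce to a local statement around a fixed point $t_0 \in B(\RR)$. Choose a $G$-stable contractible neighborhood $U \subset B(\CC)$ of $t_0$ on which the local system $R^1 \pi_\ast \ZZ$ admits a $G$-equivariant trivialization, and write $V = \rm H^1(A_{t_0}, \ZZ)$ with its polarization form and the involution $\sigma_V$ induced by the $G$-action. The period map then becomes a holomorphic $G$-equivariant map $P \colon U \to \bb H_g$, where the Siegel upper half space carries the anti-holomorphic involution $\sigma$ determined by $\sigma_V$. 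In particular, $P$ restricts to a real-analytic map $P^G \colon U^G \to \bb H_g^\sigma$, and $U^G$ is an open neighborhood of $t_0$ in $B(\RR)$.

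Next, for each primitive saturated sublattice $\Lambda \subset V$ of rank $2k$ compatible with the symplectic form (so that $\Lambda$ can arise as the integral homology of a $k$-dimensional abelian subvariety), let $\bb H_g^\Lambda \subset \bb H_g$ denote the Noether--Lefschetz-type locus where $\Lambda \otimes \CC$ remains a sub-Hodge structure. If $\Lambda$ is in addition $\sigma_V$-stable, then $\bb H_g^\Lambda$ is $\sigma$-invariant, and its real locus $(\bb H_g^\Lambda)^\sigma$ parametrizes precisely those real abelian varieties that contain a real abelian subvariety of dimension $k$ realizing $\Lambda$. Hence $P^{-1}(\bb H_g^\Lambda) \cap U^G \subset S_k(B(\RR))$ for every such $\Lambda$, and it suffices to show that the union of these loci, as $\Lambda$ ranges over $\sigma_V$-stable sublattices of the prescribed type, is dense in a neighborhood of $t_0$ in $U^G$.

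The heart of the argument is then a $G$-equivariant refinement of the Colombo--Pirola criterion. Their proof, applied to the underlying complex family, uses Condition \ref{criterion} to produce, for every open $W \subset U$, some (a priori non-$\sigma_V$-stable) sublattice $\Lambda'$ and a point $t \in W$ with $P(t) \in \bb H_g^{\Lambda'}$, via a first-order deformation analysis that identifies tangent directions to the locus $P^{-1}(\bb H_g^{\Lambda'})$. I would rerun this analysis in the equivariant category: the derivative of $P^G$ at $t_0$ factors through the $\sigma$-fixed part of $T_{P(t_0)} \bb H_g$, and one seeks, for every equivariant tangent direction, a $\sigma_V$-stable sublattice $\Lambda$ to which that direction is tangent at $P(t_0)$. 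The real-analytic implicit function theorem applied to $P^G$ then produces real points of $S_k(B(\RR))$ accumulating at $t_0$.

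The principal obstacle is producing a sufficient supply of $\sigma_V$-stable $\Lambda$'s. Naive symmetrizations such as $\Lambda' + \sigma_V \Lambda'$ generally have the wrong rank or fail to be saturated, so one cannot formally deduce the real density statement from the complex one. The remedy is to carry out Colombo--Pirola's linear-algebraic construction directly inside the rational $G$-representation $V_\QQ$, so that every sublattice it produces is automatically $\sigma_V$-stable; verifying that Condition \ref{criterion}, being phrased on a variation of Hodge structure that is intrinsically $G$-equivariant because $A \to B$ is defined over $\RR$, supplies enough flexibility for this equivariant construction to succeed, is the technical crux of the proof.
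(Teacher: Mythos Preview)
Your setup (localize around $t_0\in B(\RR)$, choose a $G$-stable contractible $U$, use the $G$-equivariant trivialization) matches the paper's. But from that point on you diverge from the actual mechanism, and the ``principal obstacle'' you identify is not where the difficulty lies.

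The paper does not work with individual Noether--Lefschetz loci $\bb H_g^\Lambda$ in the period domain. Following Colombo--Pirola, it lifts to the Grassmannian bundle $\mr G(k,\ca H^{1,0})|_U$ and studies the smooth map
\[
\Phi\colon \mr G(k,\ca H^{1,0})|_U \longrightarrow \tn{Grass}_\RR\bigl(2k, \rm H^1(A_{t_0},\RR)\bigr),
\]
whose fibers over rational points project down to $S_k$. Condition~\ref{criterion} says $\Phi$ is submersive on a nonempty Zariski-open set; submersions are open, rational points are dense in the target, and density of $S_k$ follows. For the real statement one takes $G$-fixed points everywhere and considers $\Phi^G$. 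Two elementary observations finish the proof. First, at a $G$-fixed point the differential of $\Phi$ is the complexification of the differential of $\Phi^G$, so $\Phi^G$ is submersive exactly where $\Phi$ is; since the submersive locus is Zariski open and nonempty in the connected complex Grassmannian bundle, its complement is a proper analytic subset that cannot contain a nonempty open piece of the real submanifold, hence $\Phi^G$ is submersive on a dense subset of $\mr G(k,\ca H^{1,0})^G|_{U(\RR)}$. Second---and this dissolves your worry about producing $\sigma_V$-stable sublattices---the density of $\tn{Grass}_\QQ(2k,\rm H^1(A_{t_0},\QQ))^G$ inside $\tn{Grass}_\RR(2k,\rm H^1(A_{t_0},\RR))^G$ is an easy lemma: $F_\infty$ is an involution on $\rm H^1(A_{t_0},\QQ)$, hence diagonalizable over $\QQ$, so the $G$-invariant Grassmannian is a disjoint union of products of ordinary Grassmannians of the $\pm 1$-eigenspaces, where rational points are obviously dense.

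Your proposal to ``carry out Colombo--Pirola's construction directly inside $V_\QQ$'' does not correspond to anything in their argument---they never construct sublattices by hand but rely on the submersion/density mechanism above. Your implicit-function-theorem step also seems to presume that $t_0$ already lies on some $\bb H_g^\Lambda$, which is not given. The genuine content of the passage from $\CC$ to $\RR$ is the complexification-of-differential observation together with Zariski-openness propagating the condition to real points, not the supply of stable lattices.
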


\noindent
To give some applications of Theorem \ref{intro:NLtheorem}, consider $\ca A_g(\RR)$, the moduli space of principally polarized abelian varieties of dimension $g$ over ${\bb R}$, and $\ca M_g(\RR)$, the moduli space of smooth, proper and geometrically connected curves of genus $g$ over $\RR$. These moduli spaces carry natural topologies for which the Torelli map $t \colon \ca M_g(\RR) \to \ca A_g(\RR)$ is continuous, see Section \ref{intro:sub:complexandrealmodulispaces} above. 

\begin{theorem}[Theorem \ref{theorem2}]  \label{abeliancyclecorollari}
\begin{enumerate}
\item
Let $k<g$ be positive integers, and consider the set $S_k(\ca A_g(\RR)) \subset \ca A_g(\RR)$ of moduli points of real abelian varieties containing a real abelian subvariety of dimension $k$. Then $S_k(\ca A_g(\RR))$ is dense in $\ca A_g(\RR)$. 
\item Suppose in addition that $k \leq 3 \leq g$, and let $\ca T_g(\RR) = t(\ca M_g(\RR))$ be the Torelli locus in $\ca A_g(\RR)$. The set $S_k(\ca A_g(\RR)) \cap \ca T_g(\RR)$ is dense in $\ca T_g(\RR)$. 
\item Let $V \subset \bb P\rm H^0(\bb P^2_{{\bb R}}, \OO_{\bb P^2_{{\bb R}}}(d))$ be the moduli space of smooth degree $d \geq 3$ real plane curves. Let $S_1(V)$ be the set of $t \in V$ such that the corresponding curve $C_t$ admits a non-constant map $C_t \to E$ to a real elliptic curve $E$. Then $S_1(V)$ is dense in $V$. 
\end{enumerate}
\end{theorem}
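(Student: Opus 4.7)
The plan is to deduce all three statements from Theorem~\ref{intro:NLtheorem} applied to connected smooth real families of polarised abelian varieties covering (a component of) the relevant real moduli space; in each case it suffices to verify Condition~\ref{criterion} at a real point of the base.

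For part~(1), I take a connected smooth real atlas $B\to\ca A_g$ carrying a universal family $A\to B$ and meeting a prescribed component of $\ca A_g(\RR)$. At any real point $t\in B$, the Kodaira--Spencer isomorphism identifies the cotangent to the period map with the natural isomorphism $\Sym^2 \rm H^{1,0}(A_t)\xrightarrow{\sim} T_t^\vee B$; the bilinear form $q$ of Condition~\ref{criterion} is then the restriction to $W\otimes\rm H^{1,0}(A_t)$ of the symmetrization $\rm H^{1,0}(A_t)^{\otimes 2}\to \Sym^2\rm H^{1,0}(A_t)$, whose kernel, upon choosing a basis $e_1,\dots,e_g$ of $\rm H^{1,0}(A_t)$ with $W=\langle e_1,\dots,e_k\rangle$, is seen by direct inspection to be exactly $\bigwedge^2 W$. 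Hence Condition~\ref{criterion} holds at every $t$ and Theorem~\ref{intro:NLtheorem} concludes. For part~(2), I run the same argument with $\ca M_g$ and the family of Jacobians: the cotangent to the period map at $[C]$ is now the canonical multiplication $\mu\colon \Sym^2\rm H^0(C,K_C)\to \rm H^0(C,2K_C)$, and $q$ factors through it. For $k=1$, Condition~\ref{criterion} reduces to the (automatic) injectivity of multiplication by a nonzero $\omega\in\rm H^0(C,K_C)$. For $k=2$, any base-point-free pencil $W\subset|K_C|$—which exists on every smooth curve of genus $g\geq 3$—works by the base-point-free pencil trick, whose kernel is $\rm H^0(C,\OO_C)=\bigwedge^2 W$. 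For $k=3$, an analogous Petri-type verification on a generic non-hyperelliptic curve, along the lines of~\cite{Colombo1990}, gives Condition~\ref{criterion} for a generic three-dimensional $W$; since every connected component of $\ca M_g(\RR)$ contains non-hyperelliptic real curves for $g\geq 3$, Theorem~\ref{intro:NLtheorem} applies.

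For part~(3), I apply Theorem~\ref{intro:NLtheorem} to the family of Jacobians over each connected component of $V$, taking $k=1$. The case $d=3$ is trivial, every smooth plane cubic being itself an elliptic curve. For $d\geq 4$, Condition~\ref{criterion} becomes the injectivity of the composition $\omega\cdot\colon \rm H^0(C,K_C)\to \rm H^0(C,2K_C)\to T_{[C]}^\vee V$ for some nonzero $\omega$. Griffiths' residue calculus describes the last map via multiplication in the Jacobian ring $R_f$ of the defining polynomial $f$, and one can verify the required injectivity explicitly at a convenient real plane curve (for instance one with a Fermat-type equation) in each connected component of $V$, again following the complex calculation of~\cite{Colombo1990}.

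The expected main obstacle is the verification of Condition~\ref{criterion} in parts~(2) and (3). Part~(1) is a formal consequence of the Kodaira--Spencer isomorphism, but for the Torelli and plane-curve cases one must exhibit, in each connected component of the real moduli space, an honest real point at which the relevant injectivity or Petri-generality holds; this is what constrains the ranges $k\leq 3\leq g$ and $d\geq 3$. The underlying algebraic computations for $k=2,3$ and for the Jacobian-ring injectivity are already present over $\CC$ in~\cite{Colombo1990}, and they continue to apply at real points; the delicate real ingredient is therefore only the component-by-component existence of such a real curve.
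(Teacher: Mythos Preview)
Your overall strategy is right, and for part~(1) your argument is essentially the paper's (the paper packages the Kodaira--Spencer computation as Proposition~\ref{satisfycondition}, showing Condition~\ref{criterion} holds at \emph{every} point of the universal deformation, for \emph{every} $W$).

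For parts~(2) and~(3), however, you have misread what Condition~\ref{criterion} requires, and this creates the ``delicate real ingredient'' you worry about in your last paragraph---an ingredient that is in fact not needed at all. Condition~\ref{criterion} asks for a single point $t\in B$ of the connected \emph{complex} base, not a real point, and not one in each component of $B(\RR)$. This is the whole content of the slogan in the introduction that the density criterion over~$\RR$ is \emph{unchanged} from the complex one. Concretely, for part~(2) the paper takes $B$ to be a small $\sigma_j$-invariant open in Teichm\"uller space (a connected complex manifold); since Condition~\ref{criterion} is Zariski-open on $\ca T_g$ and Colombo--Pirola verified it at \emph{some} complex curve, it is dense and therefore holds at some $t\in B$. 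For part~(3) the base $\mr B(\CC)$ is already connected (complement of the discriminant in a projective space), so the single complex point Colombo--Pirola exhibit---the Fermat curve---suffices for the whole family at once. No component-by-component search for a real curve with the right Petri behaviour is required.

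Your route could in principle be completed (the Fermat curve is real; each component of $\ca M_g(\RR)$ does contain non-hyperelliptic curves), but it is harder than necessary and, for $k=3$, you would still need to argue that a real non-hyperelliptic curve in each component actually satisfies the specific exactness condition, which you have not done. The paper's route sidesteps all of this by recognising that the criterion lives entirely on the complex side.
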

\noindent
The proof of Theorem \ref{intro:NLtheorem} consists of a detailed study of the differential of the period map for abelian varieties, and its compatibility with several Galois actions (see diagram (\ref{Skdiagram})). 
Theorem \ref{abeliancyclecorollari} arises quite naturally as a corollary, due to the results in \cite{Colombo1990} and the fact that the density criteria over $\RR$ and $\CC$ are the same. 

\subsection{Real moduli of five points on the line\hfill (based on \cite{degaay-glueing})}
\label{intro:sub:fivepoints}


Consider the following question in real moduli theory, phrased informally but whose idea should be clear. Consider a smooth moduli stack $\ca M$ of smooth projective varieties over $\RR$. On the one hand, even if $\ca M(\CC)$ is connected, this may not be true for $\ca M(\RR)$. In general, the $G$-equivariant diffeomorphism type of $X(\CC)$ of real varieties $X$ in one component of $\ca M(\RR)$ remains the same by Ehresmann's theorem, 
whereas varieties in different components of $\ca M(\RR)$ may have non-homeomorphic real loci. For example, the moduli space $\ca A_1(\RR)$ has two connected components $K_i = \RR^\ast$ $(i \in \{1, 2\})$ such that $K_i$ parametrizes elliptic curves whose real locus is the disjoint union of $i$ circles. If $\ca M(\RR)$ is not connected, real period maps have to be defined on each component of $\ca M(\RR)$ separately (see Section \ref{intro:sub:periodmaps} above). 

On the other hand, it is often possible to define a slightly larger moduli stack $\overline{\ca M} \supset \ca M$ by allowing mild singularities, whose real locus $\overline{\ca M}(\RR)$ \emph{is} connected. In this case, $\overline{\ca M}(\RR)$ glues together the various components of $\ca M(\RR)$ in a natural way.\begin{question}\label{ACTquestion} Can the real period domains and real period maps be glued as well?
\end{question}
\noindent
The goal of this section is to explain why, in the case of binary quintics, the answer is \emph{yes}. As for smooth quintics, we have the following (c.f. Examples \ref{realoccultexamples}):  

\begin{theorem}[Theorem \ref{th:theorem01}] \label{INTROtheorem:smoothquintics}
Let $\ca M_0(\RR)$ be the real moduli space of smooth binary quintics. For $i \in \{ 0,1,2\}$, let $\mr M_{i}$ be the component of $\ca M_0(\RR)$ parametrizing quintics with $2i$ complex and $5 - 2i$ real points. For each $i$, the real period map induces an isomorphism of real analytic orbifolds $\mr M_i \cong P \Gamma_i \setminus \left(\RR H^2 - \mr H_i \right)$. Here $\RR H^2$ is the real hyperbolic plane, $\mr H_i$ a union of geodesic subspaces in $ \RR H^2$ and $P\Gamma_i$ an arithmetic lattice in $\textnormal{PO}(2,1)$. 
\end{theorem}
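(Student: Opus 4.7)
The plan is to carry out an occult period map construction à la Deligne--Mostow, and then descend it equivariantly to $\RR$. For a smooth binary quintic $Q = V(F) \subset \PP^1_\RR$, form the cyclic cover $C_Q \to \PP^1$ defined by $Z^5 = F(X,Y)$. By Riemann--Hurwitz $C_Q$ is a smooth genus $6$ curve carrying a natural $\mu_5$-action. For a primitive fifth root of unity $\zeta$, the $\zeta$-eigenspace of $\h^1(C_Q(\CC),\CC)$ is a polarized Hodge structure of type $(1,2)$, whose period domain is the complex hyperbolic plane $\CC H^2$. Deligne--Mostow show that the resulting complex period map descends to an isomorphism of analytic orbifolds $\ca M_0(\CC) \cong P\Gamma \setminus (\CC H^2 - \ca H)$, where $P\Gamma$ is an arithmetic lattice in $\rm{PU}(1,2)$ and $\ca H$ is the image of the discriminant divisor parametrizing quintics with a double root. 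By Kudla--Rapoport, this construction is a morphism of stacks over a number field, so it commutes with the natural $G$-action on both sides.

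Next I would pin down the real structures. For $Q$ defined over $\RR$, choose a real lift of the $5$-th root, so that complex conjugation $c_Q$ on $C_Q(\CC)$ commutes with the $\mu_5$-action and hence acts antilinearly on the $\zeta$-eigenspace, inducing an anti-holomorphic involution $\sigma_Q$ of $\CC H^2$. The fixed locus $(\CC H^2)^{\sigma_Q}$ is a totally geodesic real form; a signature computation on the $c_Q$-action on the $\zeta$-eigenspace, plus compatibility with the Hermitian form, shows it is a copy of $\RR H^2$. The $G$-conjugacy class of $\sigma_Q$ depends only on the topological invariants of the real quintic, namely on the number $2i$ of complex conjugate pairs of roots; this gives three real forms $\RR H^2_i \subset \CC H^2$ indexed by $i\in\{0,1,2\}$, and one checks that $\mr M_i$ is the preimage of the $P\Gamma$-orbit of $\RR H^2_i$.

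Assembling this with the framework of real period maps recalled in Section \ref{intro:sub:periodmaps}, one obtains, for each $i$, a morphism of real analytic orbifolds
\[
P_i\colon \mr M_i \longrightarrow P\Gamma_i \setminus (\RR H^2 - \mr H_i),
\]
where $P\Gamma_i$ is the stabilizer in $P\Gamma$ of $\RR H^2_i$ modulo its pointwise stabilizer, and $\mr H_i = \ca H \cap \RR H^2_i$. Injectivity follows from Torelli for genus $6$ curves together with the standard dictionary between real isomorphisms of $Q$ and $G$-equivariant isomorphisms of $C_Q$ commuting with the $\mu_5$-action; the orbifold structure is matched because the real automorphism groups coincide with the isotropy groups of the lattice action. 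For surjectivity, I would argue as in \cite{realACTsurfaces,realACTnonarithmetic}: both sides are connected real-analytic orbifolds of real dimension $2$, $P_i$ is a local isomorphism by infinitesimal Torelli, and the image is open; combining this with the openness and closedness obtained from the properness of the complex period map on the GIT-stable locus (treated in Theorem \ref{INTROtheorem:smoothquintics}'s stable extension) forces $P_i$ to be an isomorphism.

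The main obstacle I anticipate is the last step, namely proving that every point of $\RR H^2_i \setminus \mr H_i$ is actually realized by a real quintic of topological type $i$, rather than merely by a complex quintic whose Hodge structure happens to carry some antiholomorphic involution. Equivalently, one must show that the antiholomorphic involution of the universal family of Hodge structures lifts to an antiholomorphic involution of a universal family of quintic curves in a manner compatible with the chosen component. I would handle this by first matching connected components of $\mr M_i$ with $G$-orbits of real forms $\RR H^2_i \subset \CC H^2$ using Gross--Harris--type real period computations (as in Examples \ref{introexamples:realperiods}), and then invoking the real orbifold structure on Deligne-Mumford stacks established in Chapter \ref{ch:realmodulispaces} to transfer the isomorphism from the complex level to the real level.
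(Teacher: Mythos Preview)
Your overall architecture is right---cyclic quintic cover, eigenspace Hodge structure, Deligne--Mostow ball quotient, then descend to $\RR$---and matches the paper's. But two points deserve correction.

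First, a computational slip: the $\zeta$-eigenspace of $\rm H^1(C_Q(\CC),\CC)$ has Hodge type $(3,0)$, not $(1,2)$; by Chevalley--Weil one has $h^{1,0}_{\zeta^j}=3,2,1,0$ for $j=1,2,3,4$. The hyperbolic eigenspace is the one for $\zeta^3$ (equivalently $\zeta^2$), and the paper packages this by choosing the CM type $\tau_1(\zeta)=\zeta^3$ so that the hermitian form on $\Lambda\otimes_{\tau_1}\CC$ has signature $(2,1)$.

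Second, and more substantively, the gap you flag at the end---surjectivity onto $\RR H^2_i\setminus\mr H_i$---is real, and your proposed fix (properness on the stable locus, then Gross--Harris--type matching of components) is not how the paper proceeds and would be awkward to make rigorous. The paper sidesteps the issue entirely by working one level up, on the \emph{framed} cover $\ca F_0\to X_0(\CC)$: this is a $P\Gamma$-Galois cover on which the period map already induces an isomorphism of complex manifolds $\GG(\CC)\setminus\ca F_0\cong \CC H^2-\mr H$. Each anti-unitary involution $\alpha\in P\mr A$ of the hermitian $\OO_K$-lattice $(\Lambda,\mf h)$ acts on $\ca F_0$ as an anti-holomorphic involution lifting complex conjugation on $X_0(\CC)$, and the preimage of $X_0(\RR)$ decomposes as the \emph{disjoint} union $\coprod_\alpha\ca F_0^\alpha$. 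Because the $\GG(\CC)$- and $P\Gamma$-actions on $\ca F_0$ are free and commute, taking $\alpha$-fixed points of the complex isomorphism yields $\GG(\RR)\setminus\ca F_0^\alpha\cong\RR H^2_\alpha-\mr H$ directly (this is the argument of \cite[proof of Theorem~3.3]{realACTsurfaces}), and quotienting by $P\Gamma$ gives Proposition~\ref{prop:realsmoothperiods}. Surjectivity is thus inherited from the complex isomorphism rather than argued via open--closed plus connectedness.

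The identification of the three components with $i=0,1,2$ is then a lattice computation: the paper shows (Theorem~\ref{th:explicitquadratic}) that $P\Gamma$-conjugacy classes of anti-unitary involutions of $\Lambda\cong\OO_K^3$ are distinguished by the dimension of the fixed space in $\Lambda/\theta\Lambda$, yielding exactly three classes $\alpha_0,\alpha_1,\alpha_2$ with fixed lattices $\ZZ[\alpha]^3,\ \theta\ZZ[\alpha]\oplus\ZZ[\alpha]^2,\ (\theta\ZZ[\alpha])^2\oplus\ZZ[\alpha]$, and $P\Gamma_{\alpha_j}\cong\textnormal{PO}(q_j,\ZZ[\alpha])$ for the explicit quadratic forms $q_j$. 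This is more concrete than invoking Kudla--Rapoport or Gross--Harris.
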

\noindent
The period map for smooth binary quintics referred to Theorem \ref{INTROtheorem:smoothquintics} is actually an occult period map (see Section \ref{intro:sub:periodmaps}). It is defined by sending a binary quintic $X = \{F = 0 \} \subset \PP^1_\RR$ to the $G$-equivariant Hodge structure with $\ZZ[\zeta_5]$-action $\rm H^1(C(\CC), \ZZ)$ attached to the degree five cover $C \to \PP^1_\RR$ ramified along $X$.

To explain the generalization of Theorem \ref{INTROtheorem:smoothquintics} to moduli of stable quintics, let us consider again a general inclusion of moduli stacks $\ca M \subset \overline{\ca M}$ over $\RR$ as above. 
Suppose that a suitable (occult) period map on ${\ca M}(\CC)$ extends to $\overline{\ca M}(\CC)$ identifying the latter with an arithmetic quotient of a hermitian symmetric domain. 
Examples are given by cubic surfaces, configurations of points on $\PP^1$, and K3 surfaces \cite{ACTsurfaces, DeligneMostow, engelcompactmoduli}. If this happens, then on the one hand, $\overline{\ca M}(\CC)$ is a locally symmetric variety, and on the other, the topology of $\overline{\ca M}(\RR)$ reveals how one type of real variety deforms into another when it crosses the boundary between two connected components of ${\ca M}(\RR)$. An optimistic approach to positively answer Question \ref{ACTquestion} would be to try to prove that the metric induced on $\ca M(\RR)$ via the period map extends to a path metric on the larger moduli space $\overline{\ca M}(\RR)$, complete in case $\overline{\ca M}(\CC)$ is complete. This may seem like a highly non-trivial thing to do. 

A beautiful theorem by Allcock, Carlson and Toledo \cite{realACTsurfaces} says that for cubic surfaces, this can in fact be done. 
As a consequence, the real moduli space of stable cubic surfaces is homeomorphic to a hyperbolic quotient space $P\Gamma_\RR \setminus \RR H^4$. Here $\RR H^4$ denotes hyperbolic $4$-space, $P\Gamma_\RR \subset \text{PO}(4,1)$ is a discrete subgroup of isometries, and $P\Gamma_\RR \setminus \RR H^4$ contains the disjoint union of the connected components $P\Gamma_i \setminus \left( \RR H^4 - \mr H_i \right)$ of the moduli space of real smooth cubic surfaces in its interior. In \cite{realACTnonarithmetic}, they establish the analogous result for moduli of stable binary sextics. 
Apart from these two examples, no other real moduli stacks where known to admit a positive answer to Question \ref{ACTquestion}, before we proved our next result. 

Let $\ca M_s(\RR)$ be the real moduli space of stable binary quintics. We prove (see Theorem \ref{th:theorem02}) that there exists a complete hyperbolic metric on $\ca M_s(\RR)$ that restricts to the metrics on $\mr M_i$ induced by Theorem \ref{INTROtheorem:smoothquintics}. With respect to it, $\ca M_s(\RR)$ is isometric to the hyperbolic triangle of angles $\pi/3, \pi /5, \pi/10$. 

To put it differently: if we define \[P\Gamma_{3,5,10} =  \langle \alpha_1, \alpha_2, \alpha_3 | \alpha_i^2 = (\alpha_1\alpha_2)^3 = (\alpha_1\alpha_3)^5 = (\alpha_2\alpha_3)^{10} = 1 \rangle,\]
then the following holds true. (See Figure \ref{fig:triangle} for a visualization of this theorem.)
\begin{theorem}[Theorem \ref{th:theorem02}] \label{introth:theorem02}
There is an open embedding $\coprod_i P \Gamma_i \setminus \left(\RR H^2 - \mr H_i \right) \subset P\Gamma_{3,5,10} \setminus \RR H^2$ of hyperbolic orbifolds and a homeomorphism $\ca M_s(\RR) \cong P\Gamma_{3,5,10} \setminus \RR H^2$ that extend the orbifold isomorphism $\ca M_0(\RR) \cong \coprod_i P \Gamma_i \setminus \left(\RR H^2 - \mr H_i \right) $ of 
Theorem \ref{INTROtheorem:smoothquintics}. 
\end{theorem}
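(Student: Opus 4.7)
The strategy follows Allcock--Carlson--Toledo's approach for real cubic surfaces and binary sextics. By Deligne--Mostow theory, the complex occult period map for binary quintics extends to an isomorphism $\ca M_s(\CC) \cong P\Gamma_\CC \setminus \CC H^2$ with an arithmetic ball quotient, where a quintic $X = \{F = 0\} \subset \PP^1$ is sent to the $\ZZ[\zeta_5]$-Hodge structure on $\rm H^1$ of the genus-two cyclic cover $C \to \PP^1$ branched over $X$. Complex conjugation on quintics induces an anti-holomorphic involution $\sigma$ on $P\Gamma_\CC \setminus \CC H^2$; the target $P\Gamma_{3,5,10} \setminus \RR H^2$ arises as the quotient of a totally real plane $\RR H^2 = (\CC H^2)^\sigma$ by the subgroup of $P\Gamma_\CC$ preserving it, suitably extended by reflections coming from the stable boundary.

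First I would build the open embedding $\coprod_i P\Gamma_i \setminus (\RR H^2 - \mr H_i) \hookrightarrow P\Gamma_{3,5,10} \setminus \RR H^2$. Using Theorem \ref{INTROtheorem:smoothquintics}, each smooth component $\mr M_i$ already embeds into $P\Gamma_i \setminus \RR H^2$ as the complement of a union of geodesic mirrors $\mr H_i$; I would identify each $P\Gamma_i$ as a finite-index reflection subgroup of $P\Gamma_{3,5,10}$ and arrange the three mirror families $\mr H_0, \mr H_1, \mr H_2$ so that together they cover the reflection walls of the three generators $\alpha_1, \alpha_2, \alpha_3$. The union of these open pieces is then the complement of all reflection walls in the $(3,5,10)$-triangle, a dense open suborbifold.

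Next I would extend the real period map across the stable discriminant $\ca M_s(\RR) - \ca M_0(\RR)$, which parametrizes real stable binary quintics with at least one node (stability forces branch multiplicities $\leq 2$). The real nodes come in three geometric types according to how real and complex-conjugate branch points collide, and each type corresponds, via the vanishing cycle in $\rm H^1(C, \ZZ)$ of the cyclic cover, to the mirror fixed by one generator $\alpha_j$ of $P\Gamma_{3,5,10}$. A local analysis of the periods of the genus-two covers near a nodal fiber, combined with the $G$-action on vanishing cycles, should show that the real period map extends continuously to $\ca M_s(\RR)$ and sends each boundary stratum onto a triangle edge. At the three most degenerate real stable configurations (several pairs colliding simultaneously), the cyclic automorphism groups of $C_0$ act transversely on the period domain by rotations of orders $3$, $5$ and $10$, producing exactly the corner angles $\pi/3, \pi/5, \pi/10$.

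The main obstacle is this last step: proving continuity of the extended real period map across the discriminant and verifying that the induced local orbifold structure matches the $(3,5,10)$ Coxeter diagram requires delicate control of period asymptotics for degenerating genus-two cyclic covers, together with a precise understanding of how $\sigma$ interacts with vanishing cycles. Granting this, compactness of $\ca M_s(\RR)$ from GIT and continuity of the extension yield a surjection onto $P\Gamma_{3,5,10} \setminus \RR H^2$; injectivity on the open strata comes from Theorem \ref{INTROtheorem:smoothquintics}, and the distinct correspondence between the three boundary types and the three triangle edges extends injectivity to the full stable moduli, upgrading the map to the desired homeomorphism of hyperbolic orbifolds.
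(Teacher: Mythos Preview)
Your proposal contains a factual error and misses the paper's key methodological innovation.

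First, the cyclic cover $C = \{z^5 = F(x,y)\} \subset \PP^2$ ramified along a smooth binary quintic has genus six, not genus two; its Jacobian is a six-dimensional abelian variety with $\ZZ[\zeta_5]$-action. The relevant eigenspace for the period map is two-dimensional, which is why one lands in $\CC H^2$, but the degenerating covers whose period asymptotics you would need to control are genus-six curves.

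More importantly, your strategy follows Allcock--Carlson--Toledo directly: extend the real period map across the discriminant by a local analysis on the moduli side. The paper explicitly does \emph{not} do this. Instead, it first develops a general machine (Chapter~\ref{ch:glueing}, Theorem~\ref{glueingtheorem1}) that glues the real hyperbolic pieces $\RR H^2_\alpha$ along the hyperplane arrangement $\mr H$ purely on the ball-quotient side, producing a complete hyperbolic orbifold $P\Gamma \setminus Y$ \emph{before} any comparison with moduli. Only afterwards (Theorem~\ref{th:realstableperiod}) does the paper show that the real framed period map induces a homeomorphism $\ca M_s(\RR) \cong P\Gamma \setminus Y$; this step uses the Fox completion $\ca F_s$ and the matching of local monodromy groups $P\Gamma_f \cong G(x)$, rather than period asymptotics. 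The identification of $P\Gamma \setminus Y$ with $P\Gamma_{3,5,10} \setminus \RR H^2$ then proceeds by soft topology: one computes all stabilizer groups of stable real quintics (Propositions~\ref{prop:allstabilizergroups}--\ref{prop:conesreflectors}), checks that $\ca M_s(\RR)$ is simply connected (Lemma~\ref{lem:simplyconnected}), and invokes the classification of closed two-dimensional orbifolds by underlying space plus cone/corner data.

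The ``main obstacle'' you flag---delicate control of period asymptotics near nodal fibers and the interaction of $\sigma$ with vanishing cycles---is precisely what the paper's approach is designed to avoid. Your outline could perhaps be made to work along ACT's original lines, but it is not what the paper does, and the step you correctly identify as hardest is left as a genuine gap.
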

\noindent
The proof of Theorem \ref{introth:theorem02} is inspired by the proof of the analogous theorem for cubic surfaces in \cite{realACTsurfaces}. In fact, to prove their real uniformization result cited above, Allcock--Carlson--Toledo use their previous result that the complex moduli space of stable cubic surfaces is isomorphic to a four-dimensional ball quotient \cite{ACTsurfaces}. Likewise, we have to prove (see Section \ref{complexball}) that the complex moduli space of stable binary quintics is isomorphic to a two-dimensional ball quotient. Then our proof diverges. Allcock--Carlson--Toledo carry out a local calculation in the real moduli space, to prove that the metrics on the components glue along the discriminant to a complete metric on the space of stable surfaces. Instead, we carry out this local calculation on the ball quotient side. 

To do so, it seemed natural not to restrict our attention to this two-dimensional ball quotient, but to consider general unitary Shimura varieties instead. In Chapter \ref{ch:glueing}, we develop a method of glueing real hyperbolic quotient spaces in this context. 
The input is a hermitian lattice $\Lambda$ of hyperbolic signature over the ring of integers of a CM field; the procedure glues together real ball quotients arising from anti-isometric involutions on $\Lambda$ along a hyperplane arrangement; the output is again a real ball quotient (or a disjoint union of those), assembling the different pieces in a sometimes non-arithmetic way. See Theorem \ref{th:theorem03} for details. 
For the right choice of $\Lambda$ (cohomology of a ramified cover of projective space, c.f. Section \ref{intro:sub:periodmaps}), one 
retrieves the main theorems of \cite{realACTnonarithmetic, realACTsurfaces} and obtains the new Theorem \ref{introth:theorem02}. 

\subsection{The integral Hodge conjecture for one-cycles on Jacobian varieties  \hfill (c.f. \cite{degaay-thorsten})
 \\ \textcolor{white}{.} \hfill (with \textsc{Thorsten Beckmann})}\label{intro:sub:theintegralhodgeconjectureforjacobianvarieties}


Next, we consider curves on complex abelian varieties. At first glance it may seem that we are changing directions: the goal is to study conditions under which a complex abelian variety $A$ of dimension $g$ contains sufficiently many curves to generate its group of integral Hodge classes of degree $2g-2$. As such, the nature of this project is discrete rather than continuous; the members of any irreducible family of curves in $A$ all give the same cohomology class in $\rm H^{2g-2}(A(\CC), \ZZ)$. However, it is only a study of the analytic structure of $\ca A_g(\CC)$ that allows us to prove density of abelian varieties satisfying the above conditions. 

Before getting into details, let us discuss what is known about the integral Hodge conjecture in general. IHC$_i$ holds for every smooth projective $X$ if $i \in \{0, \dim(X)\}$ (trivial), or $i = \dim(X)-1$ (Lefschetz $(1, 1)$). 
The first counterexamples were provided by torsion classes; the obstructions were topological. Non-torsion non-algebraic Hodge classes can be found on very general hypersurfaces of degree $d$ in $\PP^4_\CC$ for certain $d$. 
IHC$_{n-2}$ can fail for 
rationally connected varieties of dimension $n$, already for $n = 4$. Conjecturally, these varieties satisfy do satisfy IHC$_1$, and this is true in dimension three \cite{voisin_someaspectsofthehodgeconjecture, atiyahintegralhodge, totarocobordism, trento, colliotthelenevoisin, stabalyirrationalschreieder, voisin}. 

We conclude that IHC may hold if one imposes restrictions on the geometry of the variety $X$. Along the same lines, it is interesting to consider smooth projective threefolds $X$ of Kodaira dimension zero. If such a threefold $X$ satisfies the condition $h^0(X, K_X) > 0$, then $X$ satisfies the integral Hodge conjecture by work of Grabowski, Totaro and Voisin \cite{grabowski,voisinintegralhodge, totaroIHCthreefolds}; this condition on $h^0(X, K_X)$ is necessary by Benoist--Ottem \cite{benoistottem}. About the integral Hodge conjecture for one-cycles on higher-dimensional varieties $X$ with $K_X = 0$, not much seems to have been known. 
\newpage
\noindent
This leads us to the starting point of this project, carried out jointly with Thorsten Beckmann. What can be said about one-cycles on abelian varieties? In our investigation of this question, we build on some results of Grabowski, who proved in his thesis \cite{grabowski} that for a positive integer $g$ the following are equivalent:
%
\begin{enumerate}
\item \label{grabowski1} Every complex abelian variety of dimension $g$ satisfies the integral Hodge conjecture for one-cycles. 
\item \label{grabowski2}For every principally polarized complex abelian variety $(A, \theta)$ of dimension $g$, the integral cohomology class $\theta^{g-1}/(g-1)! $ is algebraic. 
\end{enumerate}
\noindent
Having established this, Grabowski proves the integral Hodge conjecture for complex abelian threefolds by noting that every principally polarized complex abelian threefold is a product of Jacobians ($\dim(\ca M_g) = \dim(\ca A_g)$ for $g = 3$). For $g > 3$, we have $\dim(\ca M_g) < \dim(\ca A_g)$, 
making it hard to generalize his proof. 

In fact, the problem is that condition \ref{grabowski2} above is a condition on \emph{every} abelian variety in the moduli space $\ca A_g(\CC)$. We take the different approach of fixing \emph{one} abelian variety $A$, and study the integral Hodge conjecture for one-cycles on $A$. 

\begin{theorem}[Theorem \ref{maintheorem}, with T. Beckmann] \label{introth:onecycles}
Let $(A, \theta)$ be a principally polarized complex abelian variety of dimension $g$, and suppose that the Hodge class 
\[
\frac{\theta^{g-1}}{(g-1)!} \in \rm H^{2g-2}(A(\CC), \ZZ)
\]
is algebraic. Then $A$ satisfies the integral Hodge conjecture for one-cycles. 
\end{theorem}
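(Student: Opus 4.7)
The plan is to use the integral Fourier transform on Chow groups of abelian varieties (introduced earlier in the thesis) to propagate the algebraicity of the single class $\theta^{g-1}/(g-1)!$ to algebraicity of every integral Hodge class of degree $2g-2$ on $A$. Rationally, Beauville's Fourier transform $\mathcal{F}_{\QQ}\colon \CH^*(A)_\QQ \to \CH^*(\hat A)_\QQ$ interchanges $\theta$ (via the principal polarization $\phi_\theta\colon A \xrightarrow{\sim}\hat A$) with, up to sign, the minimal class $\theta^{g-1}/(g-1)!$, and induces an isomorphism between Hodge classes in degrees $2$ and $2g-2$. Combined with the Lefschetz $(1,1)$ theorem on $\hat A$, this already yields the rational Hodge conjecture for $1$-cycles on $A$ unconditionally; the heart of the matter is to upgrade the statement to integer coefficients.

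The first step would be to construct an integral refinement $\mathcal{F}\colon \CH^*(A) \to \CH^*(\hat A)$ of $\mathcal{F}_{\QQ}$. Since Beauville's kernel $\exp(c_1(\mathcal{P}))$ has denominators $1/k!$ in each graded piece, the construction must avoid the exponential formalism. Instead, one combines the integral algebraic cycle $Z \subset A$ with $[Z] = \theta^{g-1}/(g-1)!$ provided by the hypothesis, the graphs of the isogenies $[n]\colon A \to A$, and the Poincaré bundle itself, to assemble integral correspondences on $A \times \hat A$ that realize the correct graded pieces of $\mathcal{F}_{\QQ}$ on each Beauville component $\CH^p(A)_{(s)}$. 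The cohomological output one wants is that the induced map on Hodge classes fits in a short exact sequence $\Hdg^2(\hat A, \ZZ) \to \Hdg^{2g-2}(A, \ZZ) \to \ZZ/m$ whose cokernel is killed by $\theta^{g-1}/(g-1)!$, so that every integral Hodge class of degree $2g-2$ is an integral linear combination of $\mathcal{F}(\beta)$ for $\beta \in \Hdg^2(\hat A, \ZZ)$ and the minimal class.

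Granting this structure, the argument closes quickly. For any $\alpha \in \Hdg^{2g-2}(A, \ZZ)$, write $\alpha = \mathcal{F}(\beta) + n\cdot \theta^{g-1}/(g-1)!$ with $\beta \in \Hdg^2(\hat A, \ZZ)$ and $n \in \ZZ$. The class $\beta$ is algebraic on $\hat A$ by Lefschetz $(1,1)$, and $\mathcal{F}(\beta)$ is algebraic on $A$ because $\mathcal{F}$ is defined by an integral algebraic correspondence; the minimal class is algebraic by hypothesis. Hence $\alpha$ is algebraic, which is IHC$_1$ for $A$. The main obstacle is the construction in the second paragraph: explicitly producing the integral kernel on $A \times \hat A$ that realizes Beauville's Fourier without denominators. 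The hypothesis on the minimal class enters precisely as the input that allows this integral lifting, and checking that the resulting operator has the right cohomological behaviour on each Beauville component requires a careful bookkeeping of the $[n]^*$-eigenspace decomposition of $\CH^*(A)$.
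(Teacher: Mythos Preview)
Your overall strategy — lift the Fourier transform to integral Chow groups, then combine with Lefschetz $(1,1)$ — is exactly the paper's, and your closing paragraph is correct. The gap is in your second paragraph, where you propose to ``avoid the exponential formalism'' and instead ``assemble integral correspondences'' out of the cycle $Z$, graphs of $[n]$, and the Poincar\'e bundle, projected onto Beauville components. This is too vague to be a construction, and the Beauville decomposition itself lives only in $\CH^*(A)_\QQ$, so it cannot serve as an integral scaffold. Moreover, your proposed short exact sequence with cokernel $\ZZ/m$ is puzzling: the cohomological Fourier transform $\Hdg^2(\wh A,\ZZ)\to\Hdg^{2g-2}(A,\ZZ)$ is already an isomorphism, so no cokernel correction is needed once the integral kernel exists.

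The paper's key move is the opposite of yours: it does \emph{not} avoid the exponential but switches which product it is exponential for. Lemma~\ref{lemma:crucialprop} proves the identity
\[
\ch(\ca P_A)=e^{c_1(\ca P_A)}=(-1)^g\cdot \mathrm{E}\bigl((-1)^g\cdot \mr R_A\bigr)\in \CH(A\times\wh A)_\QQ,
\]
where $\mr R_A=c_1(\ca P_A)^{2g-1}/(2g-1)!$ and $\mathrm{E}$ is the \emph{Pontryagin} ($\star$-)exponential. The divided powers for $\star$ are integral by the Moonen--Polishchuk PD-structure on $\CH_{>0}(A\times\wh A)$ (Theorem~\ref{th:PDstructure}). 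So once $\mr R_A$ lifts to an integral one-cycle, the entire $\ch(\ca P_A)$ lifts integrally, giving the motivic integral Fourier transform. The passage from the hypothesis on $A$ to algebraicity of $\mr R_A$ on $A\times\wh A$ is Lemma~\ref{lemma:minimalclasspoincarecomparison}: an explicit formula expressing $\mr R_A$ as a $\ZZ$-linear combination of pushforwards of $\gamma_\theta$ along $j_1,j_2,(\id,\lambda)$. These two lemmas are the missing ideas in your proposal.
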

\noindent
The proof of Theorem \ref{introth:onecycles} is inspired by Grabowski's approach to use Fourier transforms. The idea is quite simple, and goes as follows. On cohomology, the Fourier transform of an abelian variety preserves integral classes, so it is natural to ask whether the same holds for the Fourier transform on Chow groups. Suppose that, for some complex abelian variety $A$, this is indeed the case. That is, there exists a homomorphism $\ca F_{\wh A} \colon \CH(\wh A) \to \CH(A)$ that commutes with the cycle class map and the Fourier transform on integral cohomology. Because the latter is an isomorphism by \cite{beauvillefourier}, one may lift any degree $2g-2$ Hodge class $\alpha$ on $ A$ to a degree two Hodge class on $\wh A$. By Lefschetz (1,1), the latter is the class of a line bundle, which $\ca F_{\wh A}$ maps to a one-cycle in $\CH(A)$ lying above $\alpha$. 

The assumption on which this argument rests, is a compatibility of Fourier transforms with integral Chow groups of abelian varieties. We study this compatibility in detail in Chapter \ref{ch:integralfourier}, developing a theory of \emph{integral Fourier transforms}. The research in Chapters \ref{ch:integralfourier} and \ref{ch:onecycles} was carried out jointly with Thorsten Beckmann. 

As a first corollary of Theorem \ref{introth:onecycles}, note that it applies to Jacobian varieties, for which the minimal class $\theta^{g-1}/(g-1)!$ is well known to be algebraic: 

\begin{theorem}[Theorem \ref{introth:IHCforjacobians}, with T. Beckmann]
Let $C_1, \dotsc, C_n$ be smooth projective curves over $\CC$. Then $A = \prod_{i = 1}^n J(C_i)$ satisfies the integral Hoge conjecture for one-cycles. 
\end{theorem}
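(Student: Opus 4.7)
The plan is to deduce this theorem from Theorem \ref{introth:onecycles} applied to $A = \prod_{i=1}^n J(C_i)$, equipped with the product principal polarization $\theta = \sum_{i=1}^n p_i^*\theta_i$, where $p_i \colon A \to J(C_i)$ denotes the $i$-th projection and $\theta_i$ the canonical principal polarization on $J(C_i)$. Writing $g_i = \dim J(C_i)$ and $g = \sum_i g_i$, it suffices to verify that the minimal cohomology class $\theta^{g-1}/(g-1)! \in \rm H^{2g-2}(A(\CC), \ZZ)$ is algebraic on $A$.

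First I would expand this class using the multinomial formula:
\[
\frac{\theta^{g-1}}{(g-1)!} \;=\; \sum_{k_1+\dots+k_n=g-1}\;\prod_{i=1}^n \frac{p_i^*\theta_i^{k_i}}{k_i!}.
\]
Because $\theta_i^{g_i+1} = 0$ in $\rm H^\ast(J(C_i)(\CC), \ZZ)$, a summand vanishes unless $k_i \leq g_i$ for every $i$; combined with the constraint $\sum k_i = g - 1 = \sum g_i - 1$, the surviving terms are indexed by the choice of a single index $j \in \{1, \dots, n\}$ with $k_j = g_j - 1$ and $k_i = g_i$ for $i \neq j$. The corresponding summand equals
\[
p_j^*\!\left(\frac{\theta_j^{g_j-1}}{(g_j-1)!}\right)\cdot\prod_{i\neq j} p_i^*\!\left(\frac{\theta_i^{g_i}}{g_i!}\right).
\]

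Next I would observe that each of these factors is algebraic. For $i \neq j$, the class $\theta_i^{g_i}/g_i!$ is the class of a point on $J(C_i)$, so its pullback by $p_i$ is the class of a fiber of $p_i$, which is algebraic. For the remaining factor, Poincar\'e's formula identifies the minimal class $\theta_j^{g_j-1}/(g_j-1)!$ with the cohomology class of the Abel--Jacobi image of the curve $C_j$ in $J(C_j)$, which is manifestly algebraic. Since the intersection product of algebraic classes is algebraic, each of the $n$ surviving summands, and hence their sum, lies in the image of the cycle class map. This verifies the hypothesis of Theorem \ref{introth:onecycles} and concludes the argument.

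No serious obstacle arises in this reduction: the substantive input is Theorem \ref{introth:onecycles}, invoked as a black box, together with the classical fact that the minimal class on a Jacobian is represented by the Abel--Jacobi image of the curve. The only routine verifications are that the product of principal polarizations is again principal (so that Theorem \ref{introth:onecycles} applies) and the dimensional vanishing in the multinomial expansion.
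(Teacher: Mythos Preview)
Your proof is correct and follows essentially the same approach as the paper. The paper reduces to $n=1$ via Corollary \ref{cor:IHCforproducts} (whose proof is exactly your observation that the minimal class of a product is a sum of products of the minimal classes and point classes of the factors), and then invokes Poincar\'e's formula and Theorem \ref{maintheorem}; you simply carry out that multinomial expansion explicitly rather than packaging it as a separate corollary.
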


\noindent
It is also classical that \emph{Hecke orbits} are dense for the euclidean topology of $\ca A_g(\CC)$. These are isogeny orbits of polarized abelian varieties in their moduli space (see e.g.\ \cite{chaiordinaryhecke, oorthecke}). Since one can control the degree of these isogenies, one obtains:

\begin{theorem}[Theorem \ref{introth:density}, with T. Beckmann]
Principally polarized abelian varieties satisfying the integral Hodge conjecture for one-cycles are euclidean dense in $\ca A_g(\CC)$. 
\end{theorem}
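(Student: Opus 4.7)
The plan is to combine three ingredients. First, Theorem \ref{introth:IHCforjacobians}, which gives IHC$_1$ for products of Jacobians and hence implies that such products have algebraic minimal class $\theta^{g-1}/(g-1)!$. Second, Theorem \ref{introth:onecycles}, which upgrades algebraicity of the minimal class on a single ppav to full IHC$_1$ for that ppav. Third, the classical fact that Hecke orbits are euclidean dense in $\ca A_g(\CC)$.

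First I fix $(A_0,\theta_0) = (E_0,\theta_{E_0})^g$ for some elliptic curve $E_0$, with the product principal polarization; since each factor is the Jacobian of itself, Theorem \ref{introth:IHCforjacobians} gives that $\theta_0^{g-1}/(g-1)!$ is represented by an algebraic one-cycle $Z_0 \in \CH_1(A_0)$. Via the uniformization $\ca A_g(\CC) = \Sp_{2g}(\ZZ) \backslash \HH_g$, and because $\Sp_{2g}(\QQ)$ is dense in $\Sp_{2g}(\RR)$ (which acts transitively on $\HH_g$), the $\Sp_{2g}(\QQ)$-orbit of any lift $\tau_0 \in \HH_g$ of $[A_0,\theta_0]$ descends to a euclidean dense subset $\ca H \subset \ca A_g(\CC)$.

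The central step is to show that each $[A,\theta] \in \ca H$ itself satisfies IHC$_1$. Every such point carries a polarized isogeny $\phi\colon A \to A_0$ with $\phi^*\theta_0 = m\theta$ for some positive integer $m$ read off from the Hecke element, so that $\phi^*Z_0 \in \CH_1(A)$ is an algebraic cycle of cohomology class $m^{g-1}\theta^{g-1}/(g-1)!$. To remove the factor $m^{g-1}$, I would exploit the structure of $\Hom(A, A_0) \otimes \QQ \cong M_g(\End^0(E_0))$ together with variation of target in the isogeny class of $A_0$, producing a family of polarized isogenies $\phi_i\colon A \to B_i$ to Jacobian-type ppavs with $\phi_i^*\theta_{B_i} = m_i\theta$ and $\gcd(m_i) = 1$. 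Since $H^{2g-2}(A,\ZZ) \subset \bigwedge^{2g-2}H^1(A,\ZZ)$ is torsion-free, a suitable $\ZZ$-linear combination of the algebraic cycles $\phi_i^*Z_{B_i}$ represents the minimal class $\theta^{g-1}/(g-1)!$ of $A$ itself; Theorem \ref{introth:onecycles} then yields IHC$_1$ for $A$. Density of $\ca H$ in $\ca A_g(\CC)$ concludes.

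The hard part, which is precisely what the introductory phrase ``controlling the degree of these isogenies'' refers to, is producing the family $\{\phi_i\}$ with coprime polarization types $m_i$. Concretely, this reduces to analyzing the action of $\End(A_0)$ on $\mathrm{NS}(A_0)$ and identifying enough integers realizable as polarization exponents --- essentially a question about which integers are represented by the quadratic form $g \mapsto g^{T}g$ on $M_g(\End(E_0))$, which by Lagrange's four-square theorem becomes fully flexible as soon as $g \geq 4$ and is still tractable for small $g$ (noting that for $g \leq 3$ Jacobians are already Zariski dense in $\ca A_g$, so the theorem is essentially immediate there).
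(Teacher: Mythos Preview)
Your overall architecture is the same as the paper's: start from a product of elliptic curves, use Hecke orbits for density, pull back the minimal class along polarized isogenies, and invoke Theorem~\ref{introth:onecycles}. The difference lies entirely in how you clear the unwanted factor in front of the minimal class, and here your proposal has a genuine gap.

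You propose to produce polarized isogenies $\phi_i\colon A\to B_i$ with $\phi_i^\ast\theta_{B_i}=m_i\theta$ and $\gcd_i(m_i)=1$, by varying the target inside the isogeny class of $A_0=E_0^g$ and appealing to the values of $g\mapsto g^{T}g$ on $M_g(\End(E_0))$. This does not work as written. First, composing your fixed $\phi_1$ with $g\in\End(E_0^g)$ satisfying $g^{T}g=nI$ yields exponent $nm_1$, so every exponent you produce is a multiple of $m_1$; you cannot reach $\gcd=1$ this way. Second, Lagrange's four-square theorem concerns expressing an integer as a sum of four squares, which is not the same as finding $g\in M_g(\ZZ)$ with $g^{T}g=nI_g$ (the latter asks for $g$ many pairwise orthogonal integer vectors all of norm $n$, a much more restrictive condition). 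Third, varying the target $B_i$ among products $E_1\times\cdots\times E_g$ isogenous to $E_0^g$ and composing still only produces multiples of $m_1$, for the same reason.

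The paper sidesteps this completely by a simple observation you are missing: the class $(2g-1)!\cdot c_1(\ca P_A)^{2g-1}/(2g-1)!=c_1(\ca P_A)^{2g-1}$ is \emph{always} algebraic. Hence one does not need several coprime $m_i$'s at all; a single exponent $m$ coprime to $(2g-1)!$ suffices. The paper therefore fixes one prime $\ell>(2g-1)!$ and uses only the Hecke-$\ell$-orbit of $E_0^g$: every $A$ in it admits an $\ell$-power isogeny to $E_0^g$, so $\ell^{N}\cdot c_1(\ca P_A)^{2g-1}/(2g-1)!$ is algebraic, and $\gcd(\ell^{N},(2g-1)!)=1$ finishes. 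Density of this smaller orbit comes from strong approximation for $\Sp_{2g}$: already $\Sp_{2g}(\ZZ[1/\ell])$ is dense in $\Sp_{2g}(\RR)$, so one prime is enough.

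In short: replace ``$\Sp_{2g}(\QQ)$ and coprime $m_i$'' by ``$\Sp_{2g}(\ZZ[1/\ell])$ for a single $\ell>(2g-1)!$ and the tautological algebraicity of $c_1(\ca P_A)^{2g-1}$'', and your proof becomes correct and identical to the paper's.
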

\noindent
Our machinery on integral Fourier transforms (Chapter \ref{ch:integralfourier}) has similar implications for cycles on abelian varieties in positive characteristic, see Theorems \ref{introth:integraltate} and \ref{th:chinglichai}. Naturally, the next thing to do was to consider cycles on abelian varieties over $\RR$. 

\subsection{The real integral Hodge conjecture for abelian threefolds\hfill (based on \cite{degaay-RIHC})}
\label{intro:sub:therealintegralhodgeconjectureforabelianthreefolds}

Let us have a closer look at Grabowski's elegant proof of the integral Hodge conjecture for complex abelian threefolds. For such a threefold $A$, the Fourier transform $\mr F_A$ defines a natural isomorphism $\Hdg^2(A(\CC), \ZZ) \cong \Hdg^4(\wh A(\CC), \ZZ)$. By Lefschetz $(1,1)$, one is reduced to checking whether $\mr F_A$ sends line bundles to classes of one-cycles -- for this, one reduces to the line bunde $\ca L = \ca O_{J(C)}(\Theta)$ attached to the theta divisor $\Theta$ on a Jacobian variety $J(C)$. Since the Fourier transform $\mr F_{J(C)}$ sends $c_1(\ca L)$ to the class of the curve $C$ inside $J(C)$, this is enough to conclude. 

It is natural to ask whether the same line of arguments works over $\RR$. 

\begin{question} \label{introquestion:realHC}
Do real abelian threefolds satisfy the real integral Hodge conjecture? 
\end{question}
\noindent
We remark that the real integral Hodge conjecture for $k$-cycles holds for every real variety $X$ for $k \in \{0, \dim(X)\}$ by \cite{BW20}, and for $k = \dim(X) - 1$ by \cite{krasnovcharact, vanhamel, mangoltehamel}. Thus, to answer Question \ref{introquestion:realHC}, one needs only to consider one-cycles. In general, it is an open problem whether uniruled or Calabi-Yau threefolds satisfy $\RR$IHC$_1$ \cite[Question 2.16]{BW20}. There are partial results in the uniruled case \cite{BW21}, but for real Calabi-Yau threefolds, not much had been known.   

We prove that at least modulo torsion, the answer to Question \ref{introquestion:realHC} is \emph{yes}. 

\begin{theorem}[Theorem \ref{theorem1} and Corollary \ref{IHCforconnected}] \label{introth:modtors}
Let $A$ be an abelian threefold over $\RR$. The following map  surjective:
\[
\CH_1(A) \to \Hdg_G^{4}(A(\CC), \ZZ(2))_0/(\textnormal{torsion}) = \Hdg^{4}(A(\CC), \ZZ(2))^G.
\]
In particular, if $A(\RR)$ is connected, then $A$ satisfies the real integral Hodge conjecture. 
\end{theorem}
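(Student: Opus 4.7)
The strategy is to adapt the integral-Fourier-transform approach behind Theorem~\ref{introth:onecycles} to the Galois-equivariant setting, combining it with the real integral Hodge conjecture for divisors, which is already known by work of Krasnov and van Hamel (cf.\ Section~\ref{intro:sub:complexandrealalgebraiccycles}). First, I would justify the stated identification
\[
\Hdg^{4}_G(A(\CC), \ZZ(2))_0 / (\textnormal{torsion}) = \Hdg^{4}(A(\CC), \ZZ(2))^G
\]
as a purely cohomological statement: the forgetful map $\rm H^{2k}_G(X(\CC), \ZZ(k)) \to \rm H^{2k}(X(\CC), \ZZ(k))^G$ has $2$-torsion kernel and cokernel (by Hochschild--Serre for $G = \Gal(\CC/\RR)$), and the defining topological Steenrod condition for the subscript~$_0$ is itself $2$-torsion in nature. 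One is therefore reduced to fixing a $G$-invariant integral Hodge class $\alpha \in \Hdg^{4}(A(\CC), \ZZ(2))^G$ and producing a one-cycle $z \in \CH_1(A)$ whose cycle class agrees with $\alpha$ modulo torsion.

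Next, I would invoke the integral Fourier transform on Chow groups constructed in Chapter~\ref{ch:integralfourier}. Because the Poincar\'e line bundle on $A \times \wh A$ is defined over $\RR$, this transform descends to a $G$-equivariant map $\ca F_{\wh A}\colon \CH^{\ast}(\wh A) \to \CH^{\ast}(A)$ commuting with the cycle class map. At the level of integral cohomology, the Fourier transform identifies $\rm H^{2}(\wh A(\CC), \ZZ(1))$ with $\rm H^{4}(A(\CC), \ZZ(2))$ modulo torsion, $G$-equivariantly. Applying the inverse cohomological Fourier to $\alpha$ produces, modulo torsion, a class
\[
\beta \in \Hdg^{2}(\wh A(\CC), \ZZ(1))^G = \NS(\wh A_\CC)^G,
\]
the second equality being Lefschetz $(1,1)$.

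The crucial step is then to lift $\beta$ to a Picard class on $\wh A$ modulo torsion, which is precisely the real integral Hodge conjecture for divisors on $\wh A$. This is known unconditionally: the map $\Pic(\wh A) = \CH^1(\wh A) \to \Hdg^{2}_G(\wh A(\CC), \ZZ(1))_0$ is surjective, so it surjects onto $\NS(\wh A_\CC)^G$ modulo torsion. Choose $L \in \Pic(\wh A)$ with $c_1(L_\CC) \equiv \beta$ modulo torsion, and let $z \in \CH_1(A)$ be the codimension-two component of $\ca F_{\wh A}(c_1(L))$. By construction and by the compatibility of the integral Fourier transform with the cycle class map, the cohomology class of $z$ equals $\alpha$ modulo torsion, proving the main surjectivity.

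The hard part, and the main obstacle I anticipate in the whole argument, is upgrading the modulo-torsion statement to the full real integral Hodge conjecture in the "in particular" clause. Since $\rm H^{\ast}(A(\CC), \ZZ)$ is torsion-free, all torsion in $\Hdg^{4}_G(A(\CC), \ZZ(2))_0$ originates from the equivariant Hochschild--Serre spectral sequence and is controlled by the restriction map to $\rm H^{\ast}(A(\RR), \ZZ/2)$. When $A(\RR)$ is connected, so that $A(\RR) \cong (S^1)^3$, this mod-$2$ cohomology is the exterior algebra on three degree-one generators and is small enough that the relevant torsion either vanishes or is visibly hit by explicit real cycles (translates of real elliptic curves, norms of complex cycles descending to $A$, and the like). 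The delicate bookkeeping is to identify every $2$-torsion class and to construct a compatible algebraic representative while preserving the Fourier-theoretic input provided by Chapter~\ref{ch:integralfourier}.
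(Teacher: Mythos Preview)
Your proposal has a genuine gap in the main surjectivity argument. You write ``invoke the integral Fourier transform on Chow groups constructed in Chapter~\ref{ch:integralfourier}'' as though such a transform exists unconditionally for every abelian variety. It does not. The central result of that chapter (Theorem~\ref{th:motivic} and Proposition~\ref{prop:motivicetale-new}) is precisely that the existence of a motivic integral lift of the Fourier transform is \emph{equivalent} to the algebraicity of the minimal class $c_1(\ca P_A)^{2g-1}/(2g-1)!$, or, in the principally polarized case, of $\theta^{g-1}/(g-1)!$. So your argument is circular: you are assuming the integral Fourier transform in order to conclude surjectivity, but the integral Fourier transform only exists once you already know enough algebraicity.

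The paper supplies exactly the geometric input you are missing. It first reduces to the principally polarized case (Theorem~\ref{principalisogeny}: every real abelian variety is isogenous to a principally polarized one), and then proves directly that $\theta^{2}/2$ is algebraic on every principally polarized real abelian threefold. This is done via Hecke orbits: Theorem~\ref{density} shows that $(p,q)$-Hecke orbits for distinct odd primes are dense in each component of $\ca A_3(\RR)$, and Lemma~\ref{lemma:opensubsetofnonhyp} shows each component contains Jacobians of curves with non-empty real locus. For such a Jacobian the minimal class is trivially algebraic via an Abel--Jacobi embedding, and pulling back along an odd-degree isogeny gives algebraicity of an odd multiple of $\theta^2/2$ on $A$; since $\theta^2$ is already algebraic, so is $\theta^2/2$. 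Only \emph{after} this step does the Fourier machinery (Theorem~\ref{grabowski}) kick in to give the mod-torsion surjectivity.

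For the connected case, the paper's argument is also sharper than your sketch: when $A(\RR)$ is connected one has $\rm H^1(A(\CC),\ZZ)\cong \ZZ[G]^3$ as $G$-modules, and a direct computation of $\rm H^p(G,\rm H^q(A(\CC),\ZZ))$ for $p+q=4$, $p>0$ shows these groups all \emph{vanish}. There is no torsion to hit with explicit cycles; the equivariant and non-equivariant Hodge groups coincide on the nose, and Theorem~\ref{theorem1} finishes immediately.
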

\noindent
The proof of Theorem \ref{introth:modtors} uses a real analogue of the equivalence of statements \ref{grabowski1} and \ref{grabowski2} in Section \ref{intro:sub:theintegralhodgeconjectureforjacobianvarieties} (see Theorem \ref{grabowski}), and a reduction to the Jacobian of a curve with non-empty real locus. 
The latter rests crucially on density of certain Hecke orbits in the moduli space $\ca A_g(\RR)$, a result which we prove in Theorem \ref{density}. The same density statement implies that Question \ref{introquestion:realHC} in the principally polarized case reduces to the case of Jacobians (see Theorem \ref{reduction}). 

Finally, we manage to say something about torsion cohomology classes of degree four on real abelian varieties of any dimension $g$. The real integral Hodge conjecture for codimension-two cycles predicts that these classes are algebraic, provided that they satisfy a certain topological condition (\ref{topologicalcondition}) introduced in Section \ref{sec:realintegralhodge}. 

To state this result, we need the following fact: for any projective variety $X$, smooth over $\RR$, there exists a canonical filtration $F^\bullet$ on $\Hdg^{4}_G(X(\CC),\ZZ(2))_0$ coming from a certain spectral sequence, the \emph{Hochschild-Serre} spectral sequence (see (\ref{hochschild})). 

\begin{theorem}[Theorem \ref{fourierreduction}]
Let $A$ be an abelian variety over $\RR$. The group \\ $F^3\Hdg^4_G(A(\CC), \ZZ(2))_0$ is zero and the group $F^2\Hdg^4_G(A(\CC), \ZZ(2))_0$ is algebraic. 
\end{theorem}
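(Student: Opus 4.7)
The plan is to combine an analysis of the Hochschild--Serre spectral sequence with the integral Fourier transform on cohomology of the abelian variety $A$. Recall that the filtration $F^\bullet$ on $H^4_G(A(\CC), \ZZ(2))$ is induced by
\[
E_2^{p,q} = H^p(G, H^q(A(\CC), \ZZ(2))) \Longrightarrow H^{p+q}_G(A(\CC), \ZZ(2)).
\]
Since $G = \ZZ/2$ has $2$-torsion cohomology in positive degree, the groups $E_2^{p,q}$ are $2$-torsion for $p \geq 1$, and hence so are the graded pieces $F^p/F^{p+1}$ for $p \geq 1$. Moreover, a class in $F^p H^4_G$ for $p \geq 1$ has trivial image in $H^4(A(\CC), \CC)$, so the Hodge condition is automatic, and for $p \geq 1$ one has $F^p \Hdg^4_G(A(\CC), \ZZ(2))_0 = F^p H^4_G(A(\CC), \ZZ(2)) \cap H^4_G(A(\CC), \ZZ(2))_0$.

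For the vanishing of $F^3 \Hdg^4_G(A(\CC), \ZZ(2))_0$, I would analyse the restriction map $H^4_G(A(\CC), \ZZ(2)) \to H^4(A(\RR), \ZZ/2)$, on which the topological condition defining $H^\bullet_G(\cdot)_0$ is imposed. Since $A(\RR)$ is a compact Lie group whose connected components are real tori, $H^\bullet(A(\RR), \ZZ/2)$ is explicitly an exterior algebra on $H^1(A(\RR), \ZZ/2)$. Comparing the Hochschild--Serre spectral sequences for $\ZZ(2)$- and $\ZZ/2$-coefficients through the Bockstein, one can locate the image of $F^3 H^4_G$ inside $H^4(A(\RR), \ZZ/2)$ and check that its intersection with the Steenrod-admissible subgroup appearing in condition~(\ref{topologicalcondition}) is trivial. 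Concretely, one must verify that the contributions from $E_\infty^{3,1} \subset H^3(G, H^1(A(\CC), \ZZ))$ and $E_\infty^{4,0} \subset H^4(G, \ZZ) = \ZZ/2$ both vanish once the topological constraint is imposed.

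For the algebraicity of $F^2 \Hdg^4_G(A(\CC), \ZZ(2))_0$, I would invoke the integral Fourier transform developed in Chapter~\ref{ch:integralfourier}. Since the Poincar\'e bundle on $A \times \wh A$ is defined over $\RR$, the transform $\mr F_A$ is $G$-equivariant and induces an isomorphism on integral equivariant cohomology. The key step is to show that $\mr F_A$ is compatible with the Hochschild--Serre filtration in the appropriate way, so that $F^2 H^4_G(A(\CC), \ZZ(2))$ is sent to a piece on $\wh A$ whose Hodge classes are known to be algebraic, namely covered by line bundles by the real Lefschetz $(1,1)$ theorem of Krasnov and Van Hamel (which states that $\Pic(A_{/\RR}) \to \Hdg^2_G(A(\CC), \ZZ(1))_0$ is surjective). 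Applying $\mr F_A^{-1}$ to such a line bundle class yields an algebraic codimension-two cycle on $A$. Thanks to the $F^3$-vanishing established in the previous paragraph, one can then lift \emph{all} of $F^2 \Hdg^4_G(A(\CC), \ZZ(2))_0$ and not merely its image in $E_\infty^{2,2}$.

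The main obstacle is pinning down the precise interaction between the topological condition $(\cdot)_0$ and the Hochschild--Serre filtration $F^\bullet$: one must identify which Steenrod-admissible classes on $A(\RR)$ come from each graded piece $E_\infty^{p, 4-p}$, and verify that the integral Fourier transform truly interchanges the filtration pieces in the predicted way. The $F^3$-vanishing is the more delicate step, requiring a careful interplay of Galois cohomology and Steenrod operations on $A(\RR)$; the $F^2$-algebraicity then follows once the Fourier reduction is in place and the real Lefschetz $(1,1)$ theorem is applied on the dual abelian variety.
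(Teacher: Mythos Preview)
Your overall strategy—Hochschild--Serre plus Fourier transform—is right, but the $F^2$-algebraicity step has a genuine gap in the degrees. The Fourier transform $\mr F_A$ sends $H^i(A(\CC),\ZZ)$ to $H^{2g-i}(\wh A(\CC),\ZZ)$, so the graded piece $H^2(G, H^2(A(\CC),\ZZ(2)))$ of $F^2H^4_G$ is carried to $H^2(G, H^{2g-2}(\wh A(\CC),\ZZ(g)))$, which sits inside $H^{2g}_G(\wh A(\CC),\ZZ(g))$, i.e.\ \emph{zero-cycles}, not divisors. So the real Lefschetz $(1,1)$ theorem is not the right input; what you need is the real integral Hodge conjecture for zero-cycles (known by \cite[Proposition~2.10]{BW20}).

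There is a second, more serious issue. To transfer algebraicity back from $\wh A$ to $A$ you need the Fourier transform at the level of Chow groups, i.e.\ an actual cycle $\Gamma \in \CH(A \times \wh A)$ with $[\Gamma_\CC] = \ch(\ca P_{A_\CC})$, and you also need the correspondence to preserve the topological condition $(\cdot)_0$ (this uses \cite[Theorem~1.21]{BW20} and requires $\Gamma$ to be algebraic). For an arbitrary real abelian variety such a $\Gamma$ is not known to exist. The paper circumvents this in two different ways. For the $F^3$-vanishing, which is a purely topological statement, it replaces $A(\CC)$ by a $G$-equivariantly homeomorphic product of elliptic curves, where the minimal class is algebraic and hence the motivic Fourier transform exists; then $F^3H^4_G(A)_0$ maps isomorphically (on the graded piece) to $F^3H^{2g+2}_G(\wh A)_0 \subset H^{2g+2}_G(\wh A,\ZZ(g+1))_0 = 0$. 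For the $F^2$-algebraicity, the paper first treats Jacobians of curves with a real point (where again the motivic Fourier transform exists and one reduces to zero-cycles), and then handles the general case by choosing a curve $C \subset \wh A$ through enough real points so that the induced $f\colon J(C) \to \wh A$ is surjective with connected kernel and $f_\RR$ surjective; one then shows that $\hat f^*$ surjects $F^2H^4_G(\wh J(C))_0$ onto $F^2H^4_G(A)_0$. Your proposal does not address how to obtain an algebraic Fourier correspondence for general $A$, and without it the final step ``apply $\mr F_A^{-1}$ to such a line bundle class'' does not yield an algebraic cycle.

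Your proposed direct Steenrod computation for $F^3=0$ is a different route from the paper's Fourier argument and could in principle work, but you would have to actually carry out the comparison between the Bockstein, the Hochschild--Serre filtration, and the decomposition of $H^4(A(\RR),\ZZ/2)$; the paper's argument avoids this by the dimension trick above.
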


Combined with Theorem \ref{introth:modtors}, this theorem has two notable consequences: 
\begin{enumerate}
\item (Theorem \ref{reduction} and Corollary \ref{questionreductioncorollary}). The real integral Hodge conjecture for principally polarized abelian threefolds is equivalent to the surjectivity of the Abel-Jacobi map
\begin{align*} 
\CH_1(J(C))_{\textnormal{hom}} \to \rm H^1(G, \rm H^3(J(C)(\CC), \ZZ(2)))_0
\end{align*}
for the Jacobian $J(C)$ of every real algebraic curve $C$ of genus three whose real locus $C(\RR)$ is non-empty. 
\item (Corollary \ref{abeliansurface}). If $A = B \times E$ is the product of a real abelian surface $B$ and a real elliptic curve $E$ whose real locus $E(\RR)$ is connected, then $A$ satisfies the real integral Hodge conjecture.  
\end{enumerate}

\cleardoublepage

\part{Real moduli spaces}
\cleardoublepage
\chapter{Real moduli spaces}\label{ch:realmodulispaces}

\section{Introduction}



Let $\va{\ca A_g(\RR)}$ be the set of isomorphism classes of principally polarized abelian varieties of dimension $g$ over $\RR$. Similarly, define $\va{\ca M_g(\RR)}$ as the set of isomorphism classes of proper, smooth and geometrically connected curves of genus $g$ over $\RR$. In \cite{grossharris, seppalasilhol2}, Gross, Harris, Sepp\"al\"a and Silhol provide these spaces with a real semi-analytic structure\footnote{These structures are \emph{not} real-analytic, as opposed to what is claimed in \emph{loc.cit.} -- see \cite{MR1705976}. Moreover, one has to be aware of the error in the definition of $\Gamma_H$ on page $180$ of \cite{grossharris}.}. 
In both cases, the real moduli space is a disjoint union of quotients $\Gamma \setminus M$ of a real-analytic manifold $M$ by a properly discontinuous action of a discrete group $\Gamma$. Thus, the sets $\va{\ca A_g(\RR)}$ and $\va{\ca M_g(\RR)}$ carry the structure of topological space, semi-analytic variety, and real-analytic orbifold in a natural way. 

This seems the right analytic approach to construct the real moduli spaces of abelian varieties and curves. 
Is there a general approach to construct a suitable real moduli space, starting from an algebraic moduli stack over $\RR$? If so, how does it compare to the above analytic construction of the real moduli spaces of abelian varieties and curves? The goal of this chapter is two answer these two questions. 

We will consider an algebraic stack $\mr X$ locally of finite type over $\RR$, and define a topology on the set $|\mr X(\RR)|$ of isomorphism classes of $\mr X(\RR)$ which agrees the euclidean topology on $\mr X(\RR)$ when $\mr X$ is a scheme. If $\mr X$ admits a coarse moduli space $\mr X \to M$, then $|\mr X(\RR)|$ should be thought of as the real analogue of $M(\CC)$ with its euclidean topology. The point is that we cannot use $M(\RR)$, since this set may not be in bijection with $|\mr X(\RR)|$. We show that the space $|\mr X(\RR)|$ underlies an orbifold structure if $\mr X$ is a smooth separated Deligne-Mumford stack. We finish the chapter by verifying that if $\mr X$ is the moduli stack of abelian varieties $\ca A_g$, or the stack of curves $\ca M_g$, then the topology on $|\mr X(\RR)|$ coincides with the topology arising via the analytic approach of Gross--Harris--Sepp\"al\"a--Silhol mentioned above. 

\section{Real algebraic stacks} \label{sec:coarsemoduli}

In Section \ref{sec:coarsemoduli}, 
we equip the set of isomorphism classes $|\mr X(\RR)|$ of the real locus $\mr X(\RR)$ of a real algebraic stack with a topology, using a scheme $U$ and a morphism $U \to \mr X$ which is smooth, surjective and essentially surjective on $\RR$-points. For smooth Deligne-Mumford stacks, this space $|\mr X(\RR)|$ admits an orbifold structure. 

\begin{conventions} \label{notnot}
For a scheme $S$, an algebraic stack $\mr X$ over $S$ and a scheme $T$ over $S$, we let $|\mr X(T)|$ denote the set of isomorphism classes of the groupoid $\mr X(T)$. A smooth (resp. \'etale) \emph{presentation} of an algebraic stack $\mr X$ is a smooth (resp. \'etale) surjective $S$-morphism $U \to \mr X$, where $U$ is a scheme over $S$. A \emph{real algebraic stack} is an algebraic stack locally of finite type over $\RR$. By the topology on $X(\RR)$ for a scheme $X$ locally of finite type over $\RR$, we mean the real-analytic topology.
\end{conventions}


\subsection{The topology of the real locus of an algebraic stack} \label{topology-on-real}

The goal of Section \ref{topology-on-real} is to define a topology on $\va{\mr X(\RR)}$ for any algebraic stack $\mr X$ locally of finite type over $\RR$, in a way that is functorial in $\mr X$, and that generalizes the real-analytic topology on $\mr X(\RR)$ when $\mr X$ is a scheme. 

\begin{definition}[c.f. \cite{Sakellaridis2016TheSS, 2019}]\label{Rontodefinition}
Let $\mr X$ be an algebraic stack over a scheme $S$, and let $K$ be a field equipped with a morphism $\Spec(K) \to S$. A presentation $X \to \mr X$ by an $S$-scheme $X$ is \emph{$K$-surjective} if the map $X(K) \to |\mr X(K)|$ is surjective. 
\end{definition}

\begin{theorem}[Laumon--Moret-Bailly]\label{th:LMB}
Let $\mr X$ be an algebraic stack over a scheme $S$, and let $K$ be a field equipped with a morphism $\Spec(K) \to S$. For any $S$-morphism $x \colon \Spec(K) \to \mr X$, there exists an affine scheme $X$, a smooth morphism $P \colon X \to \mr X$, and an $S$-morphism $z \colon \Spec(K) \to X$ such that $P \circ z \cong x$. 
\end{theorem}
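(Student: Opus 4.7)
The plan is to construct the desired smooth affine chart by starting from an arbitrary smooth presentation of $\mr X$, pulling it back along $x$, and extracting a chart with a $K$-point by means of the local structure of smooth morphisms.

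First, I would choose a smooth presentation $\pi \colon U \to \mr X$ by an affine scheme $U$, which exists by the definition of algebraic stack (possibly after replacing a given smooth presentation by a disjoint union of its affine opens). I would then form the $2$-fibre product $V = U \times_{\mr X, x} \Spec(K)$. Since $\pi$ is representable, $V$ is an algebraic space; as the base change of the smooth surjective morphism $\pi$ along $x$, the projection $V \to \Spec(K)$ is smooth and surjective, in particular $V$ is non-empty. Passing if necessary to an étale cover of $V$ by a scheme, I may further assume $V$ is itself a scheme, still smooth over $\Spec(K)$.

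Next, I would pick a closed point $v \in V$. By smoothness at $v$, the residue field $\kappa(v) = K'$ is a finite separable extension of $K$. Invoking the local structure theorem for smooth morphisms, there exists an affine open neighborhood $V_0 \subset V$ of $v$ together with an étale $K$-morphism $V_0 \to \mathbb{A}^d_K$, where $d = \dim_v V$. Composing with the natural morphism $V \to U$ furnishes local coordinates on $U$ around the image point $u = \pr_U(v)$.

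The final step is to use these data to produce the required triple $(X, P, z)$. Heuristically, one takes $X$ to be a suitably chosen étale neighborhood of $u$ in $U$, so that $X \to U \xrightarrow{\pi} \mr X$ is smooth, and then produces the $K$-point $z$ by exploiting the étale chart $V_0 \to \mathbb{A}^d_K$ to find a $K$-rational point in the appropriate fibre. The main obstacle will be bridging the gap between the residue field $K'$ of $v$ and the base field $K$: the point $v$ only provides an \emph{a priori} $K'$-rational lift of $x$, and upgrading this to a genuine $K$-point requires exploiting both the flexibility of varying the smooth presentation $U$ and the additional structure coming from the étale chart at $v$.
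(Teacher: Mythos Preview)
The paper does not prove this statement; it simply cites \cite[Th\'eor\`eme~(6.3)]{LM-B}. Your outline correctly reproduces the standard opening moves and correctly isolates the crux of the matter: passing from a $K'$-rational lift of $x$ to a $K$-rational one. But you then leave precisely this step open, and the heuristic you offer---taking $X$ to be an \'etale neighbourhood of $u$ in $U$---cannot work. If $X \to U$ is \'etale, then so is the base change $X \times_{\mr X} \Spec K \to V$; since \'etale morphisms can only enlarge residue fields, every point of $X \times_{\mr X} \Spec K$ has residue field containing that of its image in $V$, and hence $X \times_{\mr X} \Spec K$ acquires a $K$-rational point only if $V$ already has one. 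No \'etale modification of $U$ will therefore manufacture the required $z$; the ``flexibility of varying the smooth presentation'' must be exploited in a genuinely different way, by passing to a chart not dominated by $U$ via an \'etale map.

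This missing step is the entire substance of the theorem, and the construction in \cite{LM-B} that fills it is not a routine manoeuvre. Since the paper itself treats the result as a black box, the appropriate course here is either to cite the reference as the paper does, or to work through the actual argument in Laumon--Moret-Bailly rather than leave the decisive construction at the level of heuristics.
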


\begin{proof}
See \cite[Théorème (6.3)]{LM-B}. 
\end{proof}

\begin{corollary}\label{cor:LMB}
Let $\mr X$ be an algebraic stack over a scheme $S$, and let $K$ be a field equipped with a morphism $\Spec(K) \to S$. 
There exists a $K$-surjective smooth presentation $U \to \mr X$. 
\end{corollary}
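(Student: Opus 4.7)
The plan is to prove this as a routine consequence of Theorem \ref{th:LMB}, by taking a disjoint union of smooth presentations, one for each $K$-isomorphism class, and combining the result with an \emph{a priori} smooth surjective presentation supplied by the definition of an algebraic stack.

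First, by the very definition of an algebraic stack, we may pick a smooth surjective presentation $V \to \mr X$ by a scheme $V$ over $S$. The induced map $V(K) \to |\mr X(K)|$ need not be surjective, so to fix this we enlarge $V$ as follows. For every $K$-isomorphism class $[x] \in |\mr X(K)|$, choose a representative $x \colon \Spec(K) \to \mr X$ and apply Theorem \ref{th:LMB} to obtain an affine scheme $X_{[x]}$, a smooth $S$-morphism $P_{[x]} \colon X_{[x]} \to \mr X$, and a $K$-point $z_{[x]} \colon \Spec(K) \to X_{[x]}$ with $P_{[x]} \circ z_{[x]} \cong x$.

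Next, set
\[
U \;=\; V \,\sqcup\, \bigsqcup_{[x] \in |\mr X(K)|} X_{[x]},
\]
with the tautological $S$-morphism $P \colon U \to \mr X$ whose restriction to $V$ is the given presentation and whose restriction to each $X_{[x]}$ is $P_{[x]}$. Being a disjoint union of smooth morphisms, $P$ is smooth; it is surjective since already its restriction to $V$ is; and the induced map $U(K) \to |\mr X(K)|$ is surjective, because for each $[x]\in|\mr X(K)|$ the $K$-point $z_{[x]} \in X_{[x]}(K) \subset U(K)$ maps to $[x]$. Hence $P$ is the desired $K$-surjective smooth presentation.

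The only mildly delicate point is the set-theoretic one: the indexing collection $|\mr X(K)|$ must be a (small) set for the disjoint union to make sense. This is a standard consequence of the fact that objects of $\mr X(K)$ are bounded in size --- for instance, $|\mr X(K)|$ is a quotient of the set $V(K)$ after replacing $V$ by a suitable smooth cover built chart-by-chart, or one may argue directly using that any $x \colon \Spec(K) \to \mr X$ factors through a quasi-compact open substack. Beyond this, no real obstacle arises; the content of the statement is entirely carried by Theorem \ref{th:LMB}.
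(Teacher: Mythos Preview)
Your proof is correct and follows essentially the same approach as the paper: take an arbitrary smooth presentation, then for each class in $|\mr X(K)|$ adjoin the affine scheme provided by Theorem \ref{th:LMB}, and form the disjoint union. The paper's proof is identical in content (it does not comment on the set-theoretic point you raise, but your discussion there is reasonable).
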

\begin{proof}
Let $U' \to \mr X$ be any smooth presentation. For every $[x] \in |\mr X(K)|$, choose a morphism $x \colon \Spec(K) \to \mr X$ that represents $[x]$, and define $X_x \to \mr X$ as in Theorem \ref{th:LMB}. The morphism $U = \left(\bigsqcup_{[x] \in \va{\mr X(K)}} X_x  \right) \bigsqcup U' \to \mr X$ satisfies the requirements. 
\end{proof}

\begin{proposition} \label{th1}
Let $\mr X$ be a real algebraic stack and let $P_i \colon U_i \to \mr X$ for $i  \in \{1,2\}$ be two $\RR$-surjective smooth presentations. Consider the two quotient topologies on $\va{\mr X(\RR)}$ given by the two surjections $P_{i,\RR} \colon U_i(\RR) \to \va{\mr X(\RR)}$. These two topologies are the same. 
\end{proposition}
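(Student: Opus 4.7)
The plan is to produce a common smooth refinement $W$ of $U_1$ and $U_2$ whose real points surject continuously and openly onto both $U_1(\RR)$ and $U_2(\RR)$. Once such a $W$ is constructed, both quotient topologies $T_i$ on $|\mr X(\RR)|$ induced by $P_{i,\RR}$ must coincide with the single quotient topology induced by $W(\RR) \to |\mr X(\RR)|$, yielding $T_1 = T_2$.

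\textbf{Step 1 (common refinement).} I would form the fiber product $V := U_1 \times_{\mr X} U_2$, which is an algebraic space of finite type over $\RR$, with projections $\pi_i \colon V \to U_i$ that are smooth as base changes of the smooth morphisms $P_j$ ($j \ne i$). Applying Corollary~\ref{cor:LMB} to $V$ (viewed as an algebraic stack with trivial stabilizers), one obtains an $\RR$-surjective smooth presentation $r \colon W \to V$ by an $\RR$-scheme $W$. Set $q_i := \pi_i \circ r \colon W \to U_i$; these are smooth morphisms of $\RR$-schemes satisfying $P_1 \circ q_1 \cong P_2 \circ q_2$.

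\textbf{Step 2 ($\RR$-surjectivity of $q_{i,\RR}$).} Given $x \in U_1(\RR)$, the class $[P_1(x)] \in |\mr X(\RR)|$ lies in the image of $P_{2,\RR}$ by the $\RR$-surjectivity of $P_2$, so there exists $u_2 \in U_2(\RR)$ together with an isomorphism $\alpha \colon P_1(x) \xrightarrow{\sim} P_2(u_2)$ in $\mr X(\RR)$. The triple $(x, u_2, \alpha)$ defines a point of $V(\RR)$ lying over $x$, which by the $\RR$-surjectivity of $r$ lifts to a point $y \in W(\RR)$ with $q_1(y) = x$. The symmetric argument handles $q_2$.

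\textbf{Step 3 (openness and conclusion).} Smoothness of $q_i$ implies that $q_{i,\RR} \colon W(\RR) \to U_i(\RR)$ is a submersion of real-analytic spaces: locally $q_i$ factors as an étale morphism followed by projection from an affine space over $U_i$, and étale morphisms induce local real-analytic isomorphisms on $\RR$-points. In particular $q_{i,\RR}$ is an open map. Combined with Step~2, $q_{i,\RR}$ is continuous, open, and surjective. For any $A \subseteq |\mr X(\RR)|$, openness of $P_{i,\RR}^{-1}(A)$ in $U_i(\RR)$ is then equivalent, via $q_{i,\RR}$, to openness of $(P_i \circ q_i)_{\RR}^{-1}(A) = q_{i,\RR}^{-1}(P_{i,\RR}^{-1}(A))$ in $W(\RR)$ (the non-trivial direction using that $q_{i,\RR}$ is surjective and open, so $P_{i,\RR}^{-1}(A) = q_{i,\RR}(q_{i,\RR}^{-1}(P_{i,\RR}^{-1}(A)))$). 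Hence both $T_1$ and $T_2$ agree with the quotient topology on $|\mr X(\RR)|$ induced by $W(\RR) \to |\mr X(\RR)|$, and the proposition follows.

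\textbf{Main obstacle.} The delicate point is the $\RR$-surjectivity in Step~2, which genuinely requires \emph{both} $\RR$-surjectivity hypotheses on $P_1$ and $P_2$ (to manufacture a $V(\RR)$-point above a given element of $U_i(\RR)$) \emph{and} the existence of an $\RR$-surjective smooth presentation of the algebraic space $V$. The latter is where Corollary~\ref{cor:LMB} is invoked beyond the strict stack setting. Openness on $\RR$-points is comparatively routine once smoothness is available, but it is essential for the quotient-topology comparison and so must not be overlooked.
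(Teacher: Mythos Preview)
Your proof is correct and follows essentially the same strategy as the paper: form the fiber product of the two presentations over $\mr X$, show that the projections to each $U_i$ are smooth and surjective on $\RR$-points, and deduce that both quotient topologies coincide with the one induced by the common refinement. The paper asserts directly that $U_1 \times_{\mr X} U_2$ is equivalent to an $\RR$-scheme $W$ and applies Lemma~\ref{2} to the projections $W \to U_i$; you are slightly more careful in that you treat $V = U_1 \times_{\mr X} U_2$ only as an algebraic space and pass to a further $\RR$-surjective smooth scheme presentation $W \to V$ via Corollary~\ref{cor:LMB}, but this is a minor technical refinement of the same argument rather than a different approach.
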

\begin{proof}
The stack $U_1 \times_{\mr X} U_2$ is equivalent to an $\bb R$-scheme $W$, and the projections $\pi_2: W \to U_2$ and $ \pi_1: W \to U_1$ are smooth. The morphisms $\pi_{2, \RR}: W(\bb R) \to U_2(\bb R)$ and $\pi_{1,\RR}: W(\bb R) \to U_1(\bb R)$ are surjective: if $x \in U_2(\bb R)$ then $P(x) \in \mr X(\bb R)$, so by $\bb R$-surjectivity there is a $y \in U_1(\bb R)$ and an isomorphism $\psi: Q(y) \cong P(x)$ in $\mr X(\bb R)$; then $(x,y, \psi)$ defines an object in $(U_1 \times_{\mr X} U_2)(\bb R)$ for which $\pi_{2, \RR}(x,y, \psi) = x$ in $U_2(\RR)$. Since $\pi_{2,\RR}$ and $\pi_{1,\RR}$ are open by Lemma \ref{2} below, the result follows. 
\end{proof}

\begin{lemma} \label{2}
Let $g: X \to Y$ be a morphism of schemes which are locally of finite type over $\bb R$. Let $g_\RR: X(\bb R) \to Y(\bb R)$ be the induced map of real-analytic spaces. The morphism of analytic spaces $g_\RR$ is open if the morphism of schemes $g$ is smooth. 
\end{lemma}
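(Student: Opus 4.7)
The plan is to reduce the problem to the local structure theorem for smooth morphisms. After shrinking $X$ around any point $x$ in which $g$ is smooth of relative dimension $n$, one can factor $g$ as $g|_U = \pi \circ f$, where $f \colon U \to \mathbb{A}^n_Y$ is étale at $x$ and $\pi \colon \mathbb{A}^n_Y \to Y$ is the natural projection. Since openness of a continuous map can be checked locally on the source, it suffices to treat $\pi_\mathbb{R}$ and $f_\mathbb{R}$ separately. The projection is harmless: on real points, $\pi_\mathbb{R}$ is the coordinate projection $Y(\mathbb{R}) \times \mathbb{R}^n \to Y(\mathbb{R})$, which is open.

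For the étale component the plan is to prove the following lemma: if $h \colon X' \to Y'$ is étale at a real point $x'$, then $h_\mathbb{R}$ is a local homeomorphism at $x'$. Having reduced to this, one writes $Y' = \mathrm{Spec}(A)$ and $X' = \mathrm{Spec}(A[T_1, \ldots, T_n]/(P_1, \ldots, P_n))$ with the Jacobian $\det(\partial P_i / \partial T_j)$ a unit at $x'$. Choosing a closed immersion $Y' \hookrightarrow \mathbb{A}^m_\mathbb{R}$ realizes $Y'(\mathbb{R})$ as a real-analytic subset of $\mathbb{R}^m$ and $X'(\mathbb{R})$ as the subset of $\mathbb{R}^m \times \mathbb{R}^n$ cut out by the equations of $Y'$ and by the $P_i$. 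The real-analytic implicit function theorem, applied at $x'$ using the invertibility of the Jacobian, produces real-analytic functions $T_j = \varphi_j(y)$ on a neighborhood of $\pi(x')$ in $\mathbb{R}^m$ which solve the equations $P_i(y, T) = 0$. The map $y \mapsto (y, \varphi(y))$ is then a continuous local section of $h_\mathbb{R}$ defined on an open neighborhood of $h_\mathbb{R}(x')$ in $Y'(\mathbb{R})$, witnessing openness.

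The only point that requires care is that the section constructed above lands in $X'(\mathbb{R})$ and not merely in $\mathbb{R}^m \times \mathbb{R}^n$; but this is immediate, since the section solves precisely the equations defining $X'$ inside $\mathbb{A}^m \times \mathbb{A}^n$ and starts from points in $Y'(\mathbb{R})$. Once openness of $h_\mathbb{R}$ is established for étale $h$, the openness of $g_\mathbb{R}$ follows by composition: locally around any $x \in X(\mathbb{R})$, $g_\mathbb{R} = \pi_\mathbb{R} \circ f_\mathbb{R}$ is a composition of open maps. The main (mild) obstacle is the bookkeeping for the implicit function theorem in the presence of possibly singular $Y'(\mathbb{R})$; this is handled cleanly by embedding $Y'$ in an affine space and working in the ambient smooth category, using only that the variables $T_j$ are eliminated by the implicit function theorem.
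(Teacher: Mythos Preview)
The proposal is correct and follows essentially the same approach as the paper: both reduce locally to the factorization of a smooth morphism as an étale map followed by a projection, then observe that projections are open on real points and étale maps are local homeomorphisms on real points. The only difference is that you spell out the implicit function theorem argument for the étale step, while the paper simply asserts that $f_\RR$ is a local homeomorphism.
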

\begin{proof}
We can work locally on $X(\bb R)$; let $U \subset X$ and $V \subset Y$ be affine open subschemes with $g(U)  \subset V$ such that $g|_U = \pi_2 \circ f$, where $f: U \to \bb A^d_V$ is \'etale for some integer $d \geq 0$, and $\pi_2: \bb A^d_V \to V$ is the projection on the right factor \cite[\href{https://stacks.math.columbia.edu/tag/054L}{Tag 054L}]{stacks-project}. Because $X(\bb C) \to X$ is continuous \cite[\S XII, 1.1]{SGA1}, the set $U(\CC)$ is open in $X(\CC)$, thus $U(\RR) = U(\CC) \cap X(\RR)$ is open in $X(\RR)$. Similarly, $V(\RR)$ is open in $Y(\RR)$. 
Finally, the map $U(\bb R) \to V(\bb R)$ is open because it factors as the composition $$U(\bb R) \xrightarrow{f_\RR} \bb R^d \times V(\bb R) \xrightarrow{\pi_{2,\RR}} V(\bb R),$$ where $\pi_{2,\RR}$ is open and $f_\RR$ a local homeomorphism. The lemma follows. 
\end{proof}

\begin{definition}\label{deffer}
Let $\mr X$ be an algebraic stack locally of finite type over $\RR$. The \emph{real-analytic topology} on $|\mr X(\bb R)|$ is the quotient topology on $|\mr X(\bb R)|$ given by the surjection $U(\RR) \to \va{\mr X(\RR)}$ and the real-analytic topology on $U(\bb R)$, for any $\RR$-surjective smooth presentation $U \to \mr X$ (see Definition \ref{Rontodefinition} and Corollary \ref{cor:LMB}).
\end{definition}
\noindent
In the sequel, $\va{\mr X(\RR)}$ will denote the topological space defined in Definition \ref{deffer}. 

\begin{corollary} \label{stackycorollary}
\begin{enumerate}
\item \label{st:one}
The assignment of a topological space to an algebraic stack locally of finite type over the real numbers is functorial.
\item \label{stackyopen} If $\rho \colon \mr X \to \mr Y$ is a smooth morphism \cite[\href{https://stacks.math.columbia.edu/tag/06FM}{Tag 06FM}]{stacks-project} of algebraic stacks locally of finite type over $\RR$ , then the map $\rho_\RR \colon |\mr X(\RR)| \to |\mr Y(\RR)|$ is open. In particular, if $\mr U \subset \mr X$ is an open substack, then $|\mr U(\RR)| \subset |\mr X(\RR)|$ is an open subset. 
\item \label{stacky:coarsecont}
If a real algebraic stack $\mr X$ admits a coarse moduli space $\pi: \mr X \to M$, then the natural map $|\mr X(\bb R)| \to M(\CC)$ is continuous. 
\end{enumerate}
\end{corollary}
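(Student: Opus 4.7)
My strategy is to handle all three parts via a single construction. Given a morphism $f \colon \mr X \to \mr Y$ of real algebraic stacks together with $\RR$-surjective smooth presentations $P \colon U \to \mr X$ and $Q \colon V \to \mr Y$ (furnished by Corollary \ref{cor:LMB}), I form the $2$-fibre product $\mr Z = U \times_{\mr Y} V$ with respect to $f \circ P$ and $Q$, and pick an $\RR$-surjective smooth presentation $R \colon W \to \mr Z$ by a scheme, again via Corollary \ref{cor:LMB}. The projection $\pi_U \colon W \to U$ is then smooth, factoring as $R$ followed by the base change of $Q$. A short diagram chase combining the $\RR$-surjectivities of $Q$ and $R$ shows that $\pi_{U,\RR} \colon W(\RR) \to U(\RR)$ is also surjective: any $u \in U(\RR)$ lifts first to an isomorphism class $(u, v, \alpha) \in \mr Z(\RR)$ via $Q$, then to $W(\RR)$ via $R$. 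Hence $W \to U \to \mr X$ is itself an $\RR$-surjective smooth presentation, and by Proposition \ref{th1} I may compute the topology on $|\mr X(\RR)|$ as the quotient topology of $W(\RR)$.

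For part (\ref{st:one}), the $2$-commutativity of the fibre diagram identifies the two maps $W(\RR) \to |\mr Y(\RR)|$ obtained by routing through $|\mr X(\RR)|$ via $f_\RR$ and through $V(\RR)$ via $\pi_{V,\RR}$. The latter route is continuous because $\pi_V \colon W \to V$ is a morphism of $\RR$-schemes locally of finite type and $Q_\RR$ is continuous by Definition \ref{deffer}. The universal property of the quotient topology on $|\mr X(\RR)|$ then forces $f_\RR$ to be continuous.

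For part (\ref{stackyopen}), I specialise to $f = \rho$ smooth. Now $\pi_V \colon W \to V$ is also smooth (base change of the smooth morphism $\rho \circ P$ along $Q$, composed with $R$), so by Lemma \ref{2} the map $\pi_{V,\RR}$ is open. Given an open $A \subset |\mr X(\RR)|$, its preimage $B \subset W(\RR)$ is open, and I claim that
\[
Q_\RR^{-1}\bigl(\rho_\RR(A)\bigr) = \pi_{V,\RR}(B).
\]
Openness of $\rho_\RR(A)$ in $|\mr Y(\RR)|$ then follows immediately from Definition \ref{deffer}. The consequence for an open immersion $\mr U \hookrightarrow \mr X$ is obtained by noting that such a morphism is simultaneously smooth and a monomorphism, so $|\mr U(\RR)| \to |\mr X(\RR)|$ is an injective open map. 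Part (\ref{stacky:coarsecont}) is then immediate: for any $\RR$-surjective smooth presentation $U \to \mr X$, the composition $U \to \mr X \xrightarrow{\pi} M$ is a morphism of $\RR$-schemes, so $U(\RR) \to M(\RR) \hookrightarrow M(\CC)$ is continuous, and the universal property of the quotient topology gives continuity of $|\mr X(\RR)| \to M(\CC)$.

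The main obstacle I anticipate is the set-theoretic equality $Q_\RR^{-1}(\rho_\RR(A)) = \pi_{V,\RR}(B)$ in part (\ref{stackyopen}). The inclusion $\supset$ is a straightforward consequence of $2$-commutativity. For $\subset$, given $v \in V(\RR)$ with $Q_\RR(v) = \rho_\RR(x)$ for some $x \in A$, one must produce a single point of $W(\RR)$ lying simultaneously above $v$ and above $x$; this hinges on choosing an isomorphism $Q(v) \cong \rho(x)$ in $\mr Y(\RR)$ to assemble a point of $\mr Z(\RR)$, then invoking the $\RR$-surjectivity of $R$ to lift it to $W(\RR)$. The three $\RR$-surjectivities of $P$, $Q$ and $R$ must therefore be juggled carefully, using that $\mr Z(\RR)$ is the $2$-fibre product of groupoids $U(\RR) \times_{\mr Y(\RR)} V(\RR)$ rather than an honest scheme-theoretic fibre product.
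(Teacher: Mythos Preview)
Your proposal is correct and follows essentially the same approach as the paper. The only organizational difference is that the paper forms $\mr X \times_{\mr Y} V$ and takes an $\RR$-surjective smooth presentation of that stack directly, whereas you first fix a presentation $U \to \mr X$ and then present the algebraic space $U \times_{\mr Y} V$; this adds one harmless extra layer but the key set-theoretic equality and the appeal to Lemma~\ref{2} are identical in spirit.
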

\begin{proof}
1. Let $\mr X \to \mr Y$ be a morphism of real algebraic stacks, $V \to \mr Y$ an $\RR$-surjective smooth presentation, and $\mr Z = \mr X \times_{\mr Y} V$. Then $\va{\mr Z(\RR)} \to \va{\mr X(\RR)}$ is surjective. If $U \to \mr Z$ be an $\RR$-surjective smooth presentation, then the composition $U \to \mr Z \to \mr X$ is an $\RR$-surjective smooth presentation of $\mr X$. Statement \ref{st:one} follows. 

2. Consider an open subset $B \subset |\mr X(\RR)|$. Let $\pi \colon V \to \mr Y$ 
be an $\RR$-surjective smooth presentation and denote the projection $ \mr X\times_{\mr Y}V \to \mr X$ by $f$. Suppose that $\rho \colon \mr X \to \mr Y$ is the smooth morphism under consideration, and let $\tilde \rho \colon \mr X\times_{\mr Y}V \to V$ be its base-change. We have an equality $$\tilde \rho_\RR(f_\RR^{-1}(B)) = \pi_\RR^{-1}(\rho_\RR(B)) \subset V(\RR).$$
Indeed, this can be deduced from the following commutative diagram of sets:
\[
\xymatrix{
\va{\left( \mr X \times_{\mr Y} V \right)(\RR)}  \ar@{->>}[r] \ar@/^2pc/[rr]_{\tilde \rho_\RR} \ar@{->>}[dr]^{f_\RR} &\va{\mr X(\RR)}  \times_{\va{\mr Y(\RR)}} V(\RR) \ar@{->>}[d] \ar[r]& V(\RR) \ar@{->>}[d]^{\pi_\RR} \\
&\va{\mr X(\RR)} \ar[r]^{\rho_\RR} &  \va{\mr Y(\RR)}. 
} 
\]
Therefore, to prove statement \ref{stackyopen}, it suffices to treat the case $\mr Y = V$ is a scheme. This is clear: if $U \to \mr X$ is any $\RR$-surjective smooth presentation, then the composition $U \to \mr X \to V$ is smooth, hence $U(\RR) \to V(\RR)$ is open by Lemma \ref{2}. 

3. This follows from statement \ref{st:one} and the continuity of $M(\RR) \to M(\CC)$.\end{proof}

\subsection{Pointwise surjective presentations of Deligne-Mumford stacks}  \label{pointwise}

The goal of this subsection is to prove the following (compare Theorem \ref{th:LMB}):

\begin{theorem} \label{rfonto2new}
Let $K$ be a field, and let $\mr X$ be a Deligne-Mumford stack over $K$. For any $K$-morphism $x \colon \Spec(K) \to \mr X$, there exists an affine scheme $U$, an \'etale morphism $\Psi \colon U \to \mr X$, and a $K$-morphism $y \colon \Spec(K) \to U$ such that $\Psi \circ y \cong x$. 
\end{theorem}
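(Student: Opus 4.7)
The plan is to start from a smooth presentation produced by Theorem \ref{th:LMB} and then cut it down by a regular sequence that kills the relative tangent space at the chosen point, producing an étale presentation through which the point lifts.

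First, I would apply Theorem \ref{th:LMB} to the morphism $x\colon \Spec(K)\to \mr X$ to obtain an affine $K$-scheme $V$, a smooth $K$-morphism $P\colon V\to \mr X$, and a $K$-point $z\colon \Spec(K)\to V$ with $P\circ z\cong x$. Let $v\in V(K)$ be the image of $z$. Since $\mr X$ is Deligne-Mumford, its diagonal is unramified, so any smooth morphism from a scheme to $\mr X$ has finite relative dimension at each point; call $d$ the relative dimension of $P$ at $v$. After shrinking $V$ to an affine open neighborhood of $v$, I may assume $P$ is smooth of constant relative dimension $d$, so that $\Omega_{V/\mr X}$ is locally free of rank $d$ in a neighborhood of $v$.

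Next, because $V$ is affine and $k(v)=K$, I can choose global regular functions $f_1,\dots,f_d\in \OO_V(V)$ that vanish at $v$ and whose differentials $df_1,\dots,df_d$ form a $K$-basis of $\Omega_{V/\mr X,v}\otimes_{\OO_{V,v}} k(v)$. Let $U\subset V$ be the closed subscheme defined by $f_1=\cdots=f_d=0$; then $z$ factors through a point $y\in U(K)$. I claim that the composition $\Psi\colon U\hookrightarrow V\xrightarrow{P}\mr X$ is étale at $y$. By Nakayama, $\Omega_{U/\mr X,y}=\Omega_{V/\mr X,y}/(df_1,\dots,df_d)$ vanishes, so $\Psi$ is unramified at $y$. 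To verify flatness, I can check it after étale base change along any étale atlas $R\to\mr X$: the base-change $V\times_{\mr X}R\to R$ is a smooth morphism of schemes of relative dimension $d$, and $f_1,\dots,f_d$ pull back to functions whose differentials still span the relative cotangent space, so the standard slicing lemma for smooth morphisms of schemes (see \cite[\href{https://stacks.math.columbia.edu/tag/039P}{Tag 039P}]{stacks-project}) shows that the pullback of $U$ is étale at any preimage of $y$. Étaleness being local on the source for the étale topology on $\mr X$, this gives étaleness of $\Psi$ at $y$.

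Finally, since being étale is an open condition on the source \cite[\href{https://stacks.math.columbia.edu/tag/02GI}{Tag 02GI}]{stacks-project}, I can shrink $U$ to an affine open neighborhood of $y$ on which $\Psi$ is everywhere étale; the resulting affine scheme, étale morphism $\Psi\colon U\to \mr X$, and $K$-point $y\colon\Spec(K)\to U$ satisfy the conclusion. The only genuinely non-trivial step is the flatness verification in the slicing, which is where the Deligne-Mumford hypothesis enters to ensure the relative dimension at $v$ is finite and equals the number of cutting equations needed.
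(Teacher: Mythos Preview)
Your proof is correct and follows essentially the same approach as the paper: both start from the smooth presentation of Theorem~\ref{th:LMB}, slice by functions whose differentials span the relative cotangent space $\Omega_{V/\mr X}\otimes k(v)$ at the chosen point, and verify \'etaleness after \'etale base change to a scheme. The only cosmetic difference is that the paper packages the slicing via the morphism $(P,f)\colon V\to \mr X\times_K\mathbb{A}^d_K$ and defines $U$ as the fiber over $(\mathrm{id},f(z))$, whereas you write $U$ directly as the vanishing locus $\{f_1=\cdots=f_d=0\}$; these give the same closed subscheme once the $f_i$ are normalized to vanish at $v$.
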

\noindent
We will deduce Theorem \ref{rfonto2new} from Theorem \ref{th:LMB}. To do so, we will need: 

\begin{lemma}[Laumon--Moret-Bailly] \label{lemma:LMMBB}
Let $\mr X$ be a Deligne-Mumford stack over a scheme $S$, let $X$ be an affine $S$-scheme, and let $P \colon X \to \mr X$ be a smooth morphism. Define $\Omega_{X/\mr X}$ as the conormal sheaf of the diagonal $X \to X \times_{\mr X}X$. The canonical morphism $\rho \colon \Omega_{X/S} \to \Omega_{X/\mr X}$ is surjective and $\Omega_{X/\mr X}$ is an $\ca O_X$-module locally free of finite rank. 
\end{lemma}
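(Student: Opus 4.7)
The plan is to reduce the lemma to the analogous well-known statement for smooth morphisms of algebraic spaces, via étale base change along a presentation of $\mr X$.

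Since $\mr X$ is a Deligne-Mumford stack, pick any étale surjective morphism $\phi \colon V \to \mr X$ with $V$ a scheme. Form the $2$-fibre product $X' = X \times_{\mr X} V$; it is an algebraic space, because the diagonal of $\mr X$ is unramified. The projection $Q \colon X' \to V$ is smooth, being the base change of $P$, and $\pi \colon X' \to X$ is étale and surjective, being the base change of $\phi$.

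The first key step is to establish the base-change identity $\pi^{*}\Omega_{X/\mr X} \cong \Omega_{X'/V}$. This rests on the canonical isomorphism $X' \times_V X' \cong (X \times_{\mr X} X) \times_X X'$ (through $\pi$), under which the diagonal $\Delta_{X'/V}$ corresponds to the pullback of $\Delta_{X/\mr X}$ along $\pi$; their conormal sheaves are then identified. In a similar vein, $\phi^{*}\Omega_{\mr X/S} \cong \Omega_{V/S}$, compatibly with pullback along $\pi$.

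Granting this, the lemma follows. On the one hand, $\Omega_{X'/V}$ is locally free of finite rank because $Q$ is a smooth morphism of algebraic spaces, and étale descent along the surjection $\pi$ transfers this property to $\Omega_{X/\mr X}$. On the other hand, the surjectivity of $\rho \colon \Omega_{X/S} \to \Omega_{X/\mr X}$ is obtained from the cotangent exact sequence
\[
P^{*}\Omega_{\mr X/S} \to \Omega_{X/S} \to \Omega_{X/\mr X} \to 0
\]
attached to the composition $X \to \mr X \to S$. Its exactness is verified after pulling back along $\pi$, where it becomes the familiar cotangent sequence for $X' \to V \to S$ of algebraic spaces.

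The main obstacle will be the base-change identity of Step~2: one must carefully track how $2$-fibre products of stacks interact with diagonals in order to identify the conormal sheaves. Once this is cleanly set up, the rest of the argument is routine étale descent combined with the EGA-style theory of smooth morphisms between algebraic spaces.
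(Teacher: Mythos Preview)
Your proposal is correct. The paper itself does not give a proof but simply refers to \cite[(8.2.2)--(8.2.3.2)]{LM-B}; your argument---pulling back along an \'etale presentation $V\to\mr X$, identifying $\pi^*\Omega_{X/\mr X}\cong\Omega_{X'/V}$ via the compatibility of diagonals under base change, and then invoking the scheme/algebraic-space case together with \'etale descent---is exactly the standard route and essentially what that reference contains. One small simplification: you do not need to introduce $\Omega_{\mr X/S}$ at all for the surjectivity of $\rho$; it suffices to check that $\pi^*\rho$ becomes the natural surjection $\Omega_{X'/S}\to\Omega_{X'/V}$ after the base-change identifications, which follows directly from the cotangent sequence for $X'\to V\to S$.
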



\begin{proof}
See \cite[(8.2.2) - (8.2.3.2)]{LM-B}. 
 \end{proof}
 
 \begin{proof}[Proof of Theorem \ref{rfonto2new}]
By Theorem \ref{th:LMB}, there exists an affine scheme $X$, a smooth morphism $P \colon X \to \mr X$, and a $K$-morphism $z \colon \Spec(K) \to X$ such that $P \circ z \cong x$. By Lemma \ref{lemma:LMMBB}, the $\ca O_X$-module $\Omega_{X/\mr X}$ is locally free of finite rank; let $r$ be the rank of $\Omega_{X/\mr X}$ in a neighborhood around $z \in X$. Consider the sequence (c.f. Lemma \ref{lemma:LMMBB}): 
\begin{align}\label{surjectivityofrho}
\ca O_X \xrightarrow{d} \Omega_{X/K} \xrightarrow{\rho} \Omega_{X / \mr X}. 
\end{align}
\emph{\hypertarget{claim1}{Claim 1}}: There are sections $f_1, \dotsc, f_r \in R$ whose images $\rho d(f_i)(z) $ in $\Omega_{X/\mr X} \otimes k(z)$ form a basis for the $k(z)$-vector space $\Omega_{X/\mr X} \otimes k(z)$. (Compare \cite[(8.2.4)]{LM-B}.)

Indeed, let $R = \ca O_X(X)$; then $ \Omega_{X/K}  \otimes k(z)$ is generated by $\{d(f)(z) \mid f \in R\}$, where $d(f) \in \Omega_{X/k}$ is the differential of a section $f \in R$, and $d(f)(z)$ is the image of $d(f)$ in $\Omega_{X/k} \otimes k(z)$. The claim follows from the surjectivity of $\rho$ in (\ref{surjectivityofrho}).

We obtain representable $K$-morphisms 
\[
f \coloneqq \left(f_1, \dotsc, f_r\right) \colon X \to \bb A^r_K, \quad \textnormal{ and } \quad (P,f) \colon X \to \mr X \times_K \bb A^r_K. 
\]
\emph{\hypertarget{claim2}{Claim 2}}: $(P,f)$ is \'etale on a neighorhood $X' \subset X$ of $z$. (Compare \cite[(8.2.4.2)]{LM-B}.)

To prove Claim \hyperlink{claim2}{2}, let $Y $ be a scheme with surjective \'etale morphism $Y \to \mr X$, and let $W = X \times_{\mr X}Y$. By \cite[\href{https://stacks.math.columbia.edu/tag/01UV}{Tag 01UV}]{stacks-project} and \cite[\href{https://stacks.math.columbia.edu/tag/01UX}{Tag 01UX}]{stacks-project}, the composition $W \xrightarrow{h} Y \times_K\bb A^r_K \xrightarrow{\pi} Y$ induces a canonical exact sequence
\begin{align*}
h^\ast \pi^\ast\Omega_{\bb A^r_K/K} \to \Omega_{W/Y} \to \Omega_{W/ Y \times \bb A^r} \to 0.
\end{align*}
By Claim \hyperlink{claim1}{1}, $h^\ast \pi^\ast\Omega_{\bb A^r_K/K}\otimes k(w) \to \Omega_{W/Y} \otimes k(w)$ is an isomorphism. Thus, by \cite[IV, \S4, 17.11.2]{EGA}, $h$ is \'etale at $w \in W$. Since $W \to X$ is \'etale, Claim \hyperlink{claim2}{2} follows.

Define a closed subscheme $j: U \hookrightarrow X'$ as the following fibre product:
\begin{align*}
\xymatrixcolsep{3pc}
\xymatrix{
U \ar[r]^{\Psi} \ar[d]^j & \mr X \ar[d] \ar[r] & \Spec(K) \ar[d]^{f(z)}
\\
X' \ar[r]^{(P,f) \hspace{3mm}} & \mr X \times_{K} \bb A^r_{K} \ar[r] & \bb A^r_K.
}
\end{align*}
The morphism $\Psi \colon U \to \mr X$ is \'etale since $(P,f)$ is \'etale. Let $F $ be the morphism $\mr X \to \mr X \times_K \bb A^r_K$, and choose an isomorphism $\phi \colon P(z) \cong y$ in $\mr X(K)$. Then $F(\phi)$ is an isomorphism $F(P(z)) = (P(z), f(z)) \cong (y, f(z))$ in $\left(\mr X \times_K \bb A^r_K\right)(K)$. By definition of the fibre product of $X'$ and $\mr X$ over $\mr X \times_K \bb A^r_K$ \cite[(2.2.2)]{LM-B}, the triple $$\left(z \in X'(K),\;\; x \in \mr X(K), \;\; F(\phi) \colon (P,f)(z) \cong (y, f(z)) \right)$$
induces $y \colon \Spec(K) \to U$ such that $j \circ y = z \in X'(K)$ and $\Psi \circ y  \cong x \in \mr X(K)$. 
 \end{proof}
 \begin{corollary}\label{rfontofour}
Let $K$ be a field, and let $\mr X$ be a Deligne-Mumford stack over $K$. There exists a $K$-surjective \'etale presentation $U \to \mr X$. \hfill \qed
\end{corollary}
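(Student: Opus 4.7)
The plan is to mirror the deduction of Corollary \ref{cor:LMB} from Theorem \ref{th:LMB}, but using Theorem \ref{rfonto2new} as input in place of Theorem \ref{th:LMB}.

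First I would invoke the definition of a Deligne-Mumford stack to fix any étale presentation $U' \to \mr X$ by a scheme $U'$; this handles surjectivity as a morphism of stacks, but need not be surjective on $K$-points. Next, I would run through the (possibly large) set $|\mr X(K)|$: for each class $[x]$, pick a representative $x \colon \Spec(K) \to \mr X$ and apply Theorem \ref{rfonto2new} to obtain an affine $K$-scheme $X_x$, an étale morphism $\Psi_x \colon X_x \to \mr X$, and a $K$-point $y_x \colon \Spec(K) \to X_x$ with $\Psi_x \circ y_x \cong x$.

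Then I would form the coproduct
\[
U \;=\; U' \;\sqcup\; \bigsqcup_{[x] \in |\mr X(K)|} X_x,
\]
which is again a scheme (disjoint unions of schemes are schemes, possibly not quasi-compact, but that is fine since $\mr X$ is only required to be locally of finite type in Conventions \ref{notnot}). The induced morphism $\Psi \colon U \to \mr X$ is étale, since étaleness is local on the source and each component of $U$ maps étalely to $\mr X$. It is surjective on the underlying topological stack because $U' \to \mr X$ already is, and the map $U(K) \to |\mr X(K)|$ hits every class $[x]$ by construction via $y_x$. Hence $\Psi$ is a $K$-surjective étale presentation in the sense of Definition \ref{Rontodefinition}.

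The only subtle point is that the index set $|\mr X(K)|$ may not be small; however, since we impose no finiteness condition on $U$ (we only need an étale presentation by a scheme), this causes no difficulty. The substantive content has already been absorbed into Theorem \ref{rfonto2new}, so the corollary is just the assembly step.
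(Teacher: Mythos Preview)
Your proposal is correct and is exactly the argument the paper intends: the corollary is stated with a bare $\qed$, and the proof is just the same assembly step as in Corollary~\ref{cor:LMB}, with Theorem~\ref{rfonto2new} substituted for Theorem~\ref{th:LMB} and ``\'etale'' for ``smooth''.
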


\subsection{The orbifold structure of the real locus of a Deligne-Mumford stack} \label{orbifolds}


Let $\mr X$ be a smooth separated Deligne-Mumford stack over $\RR$. The goal of this section is to use the results of Section \ref{pointwise} to show that the topological space $\va{\mr X(\RR)}$ (see Definition \ref{deffer}) admits a canonical orbifold structure, agreeing with smooth manifold structure of $\mr X(\RR)$ in case $\mr X$ is a scheme. Let $P \colon U \to \mr X$ be an $\RR$-surjective \'etale presentation; $P$ exists by Corollary \ref{rfontofour}. 
For $R = U \times_{\mr X}U$, there are maps $s, t \colon R \to U$ and $c \colon R \times_{s,U,t} R \to R$ turning $
(U, R, s, t, c) = \left(R \rightrightarrows U \right)
$
into a groupoid scheme over $\RR$ \cite[Proposition (3.8)]{LM-B}. 
The morphisms $s, t \colon R \to U$ are \'etale, the morphism $[U/R] \to \mr X$ is an equivalence \cite[(4.3)]{LM-B}, 
and the morphism $j = (t,s) \colon R \to U \times_\RR U$ is proper since $\mr X$ is separated over $\RR$.  

The schemes $U$ and $R$ are smooth over $\RR$, thus the real-analytic spaces $U(\RR)$ and $R(\RR)$ are real-analytic manifolds. Consequently, the resulting five-tuple 
\[
\mr X(\RR)_P \coloneqq (U(\RR), R(\RR), s_\RR, t_\RR, c_\RR) = \left(R(\RR) \rightrightarrows U(\RR) \right)
\]
is a proper \'etale Lie groupoid, see \cite[\S1]{moerdijk-orbifoldsasgroupoids}, \cite[Chapter 5]{moerdijk-mrcun} or \cite[Chapter 1]{ruan-stringy}. Indeed, $j_\RR \colon R(\RR) \to U(\RR) \times U(\RR)$ is proper by \cite[Chapitre 1, \S10, Th\'eor\`eme 1]{bourbaki-topologie} because $j_\CC \colon R(\CC) \to U(\CC) \times U(\CC)$ is proper \cite[XII, Proposition 3.2]{SGA1}; the maps $s_\RR$ and $t_\RR$ are local diffeomorphisms because $s$ and $t$ are \'etale. 

Since $P_\RR \colon U(\RR) \to \va{X(\RR)}$ induces a homeomorphism $U(\RR)/R(\RR)= \va{\mr X(\RR)}$, we obtain an orbifold structure \cite[\S3.2]{moerdijk-orbifoldsasgroupoids} 
on the topological space $|\mr X(\RR)|$. Let $\rm O_\RR(\mr X)$ denote the resulting orbifold \cite[\S3.3]{moerdijk-orbifoldsasgroupoids}. 

\begin{proposition} \label{from-stack-to-orbifold}
Let $\mr X$ and $\mr Y$ be smooth separated Deligne-Mumford stacks over $\RR$. The orbifold $\rm O_\RR(\mr X)$ 
does not depend on the choice of $\RR$-surjective \'etale presentation $U \to \mr X$. A morphism 
of stacks $\mr X \to \mr Y$ induces a morphism of orbifolds $\rm O_\RR(\mr X) \to \rm O_\RR(\mr Y)$.  
\end{proposition}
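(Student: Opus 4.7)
The plan is to reduce both statements to standard facts about the $2$-category of proper étale Lie groupoids, where two groupoids present the same orbifold precisely when they are Morita equivalent (see \cite[\S2.4]{moerdijk-orbifoldsasgroupoids}). Accordingly, the content is to exhibit, for any two $\RR$-surjective étale presentations of $\mr X$, a Morita equivalence between the Lie groupoids $\mr X(\RR)_{P_1}$ and $\mr X(\RR)_{P_2}$, and, for any morphism $f \colon \mr X \to \mr Y$ together with chosen presentations, a morphism of Lie groupoids lying above $f$ whose class as an orbifold morphism is independent of all choices.

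For independence of the presentation, suppose $P_i \colon U_i \to \mr X$ are two $\RR$-surjective étale presentations. Since $\mr X$ is Deligne-Mumford and separated, $W \coloneqq U_1 \times_{\mr X} U_2$ is representable by a scheme, and the projections $\pi_i \colon W \to U_i$ are étale. The argument already used in the proof of Proposition \ref{th1} shows that the induced maps $\pi_{i,\RR} \colon W(\RR) \to U_i(\RR)$ are surjective, hence they are surjective local diffeomorphisms. The scheme $W$ comes with a natural morphism $W \to \mr X$, and the associated groupoid $\mr X(\RR)_{W} = (R_W \rightrightarrows W(\RR))$, with $R_W = (W \times_{\mr X} W)(\RR)$, fits into morphisms of Lie groupoids $\mr X(\RR)_{W} \to \mr X(\RR)_{P_i}$. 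The next step is to verify that these morphisms are Morita equivalences: on objects they are surjective submersions (by what we just said), and the pullback condition on the arrow squares follows from the canonical identification $W \times_{\mr X} W \cong (U_i \times_{\mr X} U_i) \times_{U_i \times U_i} (W \times W)$, which reflects the fact that $[W/R_W] \to [U_i/R_{U_i}]$ is an equivalence of stacks. This gives the required Morita equivalence, and the usual zig-zag argument (replacing $U_1 \times_{\mr X} U_2$ by $U_1 \times_{\mr X} U_3 \times_{\mr X} U_2$) shows that the resulting orbifold structure on $\va{\mr X(\RR)}$ is canonical.

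For the second statement, let $f \colon \mr X \to \mr Y$ be a morphism of smooth separated DM stacks over $\RR$, and fix $\RR$-surjective étale presentations $Q \colon V \to \mr Y$ and $P_0 \colon U_0 \to \mr X$. The $2$-fibre product $U \coloneqq U_0 \times_{\mr Y} V$ is a scheme, the morphism $U \to \mr X$ is étale, and the argument of Proposition \ref{th1} together with $\RR$-surjectivity of $Q$ shows that $U(\RR) \to \va{\mr X(\RR)}$ is surjective. So $P \colon U \to \mr X$ is again an $\RR$-surjective étale presentation, and the projection $U \to V$ induces a strict morphism of Lie groupoids $\mr X(\RR)_P \to \mr Y(\RR)_Q$ lying over $f_\RR$. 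Combined with the independence established above, this defines the morphism of orbifolds $\rm O_\RR(\mr X) \to \rm O_\RR(\mr Y)$; independence of the choices and compatibility with composition both follow from the same fibre-product manipulation applied one dimension higher.

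The main obstacle in this plan is the verification that the natural morphisms $\mr X(\RR)_{W} \to \mr X(\RR)_{P_i}$ are genuine Morita equivalences in the sense of Lie groupoids, i.e.\ checking the submersion condition and the cartesian square on arrows at the level of real-analytic manifolds. The étaleness of $\pi_i$ and the fact that $\mr X$ is Deligne-Mumford give the required transversality on the algebraic side, but one must then invoke Lemma \ref{2} and the properness of $j \colon R \to U \times U$ to transfer these properties to the real-analytic groupoids. Once this is done, functoriality and independence fall into place formally.
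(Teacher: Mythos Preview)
Your proposal is correct and follows essentially the same route as the paper: both use the fibre product $U_1 \times_{\mr X} U_2$ of two $\RR$-surjective \'etale presentations to produce a common refinement giving a Morita equivalence, and both handle functoriality by pulling back a presentation of $\mr Y$ to obtain a compatible presentation of $\mr X$. The paper's proof is terser and simply cites \cite[\S2.4 and \S3.3]{moerdijk-orbifoldsasgroupoids} for the Morita equivalence and orbifold-morphism formalism, whereas you spell out the verification (surjective local diffeomorphisms on objects, cartesian square on arrows) and flag the transfer from algebraic to real-analytic groupoids as the main technical point; this extra detail is accurate and helpful but not a different argument.
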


\begin{proof}
From two $\RR$-surjective \'etale presentations $P \colon U \to \mr X$ and $P' \colon U' \to \mr X$ we obtain a third one that covers both, namely $P^{''} \colon U^{''} = U \times_{\mr X}U' \to \mr X$. This establishes a Morita equivalence 
$
\mr X(\RR)_P \leftarrow \mr X(\RR)_{P^{''}} \to \mr X(\RR)_{P'}$ 
\cite[\S2.4]{moerdijk-orbifoldsasgroupoids}. Thus, $\mr X(\RR)_P$ and $\mr X(\RR)_{P'}$ induce equivalent orbifold structures on $\va{\mr X(\RR)}$. Next, let $Q \colon V \to \mr Y$ and $U \to \mr X \times_{\mr Y} V$ be $\RR$-surjective \'etale presentations, and let $P$ be the $\RR$-surjective \'etale presentation $U \to  \mr X \times_{\mr Y} V \to \mr X$. The map of groupoids $\mr X(\RR)_P \to \mr Y(\RR)_Q$ gives a map of orbifolds $\rm O_\RR(\mr X) \to \rm O_\RR(\mr Y)$ \cite[\S3.3]{moerdijk-orbifoldsasgroupoids}. 
\end{proof}

\section{Moduli of real abelian varieties and curves} \label{sec:realmoduliabelianvarieties}

In the final part of Chapter \ref{ch:realmodulispaces}, we specialize to the stacks $\ca A_g$ and $\ca M_g$, of principally polarized abelian varieties of dimension $g$, respectively smooth, proper and geometrically connected curves of genus $g$. By the results of Section \ref{sec:coarsemoduli}, there is a natural topology on the set of isomorphisms classes of the real locus of both stacks; by the results of Section \ref{orbifolds}, this topology underlies an orbifold structure. Gross--Harris and Sepp\"al\"a--Silhol also defined a topology (and even an orbifold structure) on $\va{\ca A_g(\RR)}$ and on $\va{\ca M_g(\RR)}$, see Examples \ref{introexample:grossharris}. Our goal in this Section \ref{sec:realmoduliabelianvarieties} will be to prove that their topology agrees with ours. Although our proof shows that also the two orbifold structures agree, we will not discuss this aspect here.

\subsection{Moduli of real abelian varieties} \label{modofrealAV}
The set $\va{\ca A_g(\RR)}$ of isomorphism classes of principally polarized real abelian varieties of dimension $g$ can be provided with a real semi-analytic space structure as follows. The result seems to have been proven independently by to Gross--Harris \cite[Section 9]{grossharris} and Silhol \cite[Chapter IV, Section 4]{silholsurfaces}, although the former made an error in the definition of the group $\Gamma_H$ that appears on page $180$ of \cite{grossharris}. 
\newpage
\noindent
Let us recall Silhol's approach. We only give a sketch here; the reader is referred to Section \ref{sec:moduliofabelianRRR} for a more detailed account. Following \cite[Chapter IV, Definition 4.4]{silholsurfaces}, define a subset $\mr T(g) \subset \bb Z^2$ as 
\begin{align*}
\mr T(g) = \big\{(r, \alpha) \in \bb Z^{2}:  1 \leq r \leq g, \;  \alpha \in \{1,2\}  \mid  \alpha = 1 \tn{ if } r \tn{ is odd}\big\} \cup \set{(0,1)}.  
\end{align*}
Attach to each $ \tau = (r, \alpha) \in \mr T(g)$ a matrix $M(\tau) \in M_{g}(\bb Z)$ as in \cite[Ch.IV, Theorem 4.1]{silholsurfaces} (see the matrices \ref{typeone} and \ref{typetwo} in Section \ref{sec:moduliofabelianRRR}). Then define 
\[
\GL_g^\tau(\ZZ) =  \{ T \in \GL_g(\bb Z): T^t \cdot M(\tau) \cdot T \equiv M(\tau) \bmod 2 \} \subset \GL_g(\bb Z). 
\] 
Consider the genus $g$ Siegel space $\bb H_g$ and define an anti-holomorphic involution 
\begin{align} \label{vartheta}
\Sigma_\tau : \bb H_g \to \bb H_g, \quad \Sigma_\tau(Z) = M(\tau) - \overline{Z}.
\end{align}
Let $\bb H_g^{\Sigma_\tau}$ be the fixed locus of $\Sigma_\tau$. Then $\GL_g^\tau(\ZZ)$ acts on $\bb H_g^{\Sigma_\tau}$ via the inclusion 
    \[
f_\tau\colon    \GL_g(\RR) \hookrightarrow \Sp_{2g}(\RR), \quad T \mapsto \begin{pmatrix} T^t & \frac{1}{2}\left(M(\tau) \cdot T^{-1}-T^t\cdot M(\tau)\right) \\ 0 & T^{-1} \end{pmatrix}. 
    \]
By \cite[Proposition 9.3]{grossharris} or \cite[Ch.IV, Theorem 4.6]{silholsurfaces}, taking period matrices induces a bijection 
\begin{equation} \label{eq:topologyaggrossharris}
\va{\ca A_g(\RR)} \cong \bigsqcup_{\tau \in \mr T(g)} \GL_g^\tau(\ZZ) \backslash \bb H_g^{\Sigma_\tau}.
\end{equation}

\subsection{Moduli of real algebraic curves} \label{modofrealAC}

In this section, as well as in the rest of this thesis, we shall often use the following: 

\begin{definition} \label{definition:realalgebraiccurve}
A \emph{real algebraic curve} is a proper, smooth and geometrically connected curve over $\RR$. 
\end{definition}
\noindent
Sepp\"al\"a and Silhol provide the set $\va{\ca M_g(\RR)}$ of real genus $g$ algebraic curves with a topology as follows. Fix a compact oriented $\ca C^{\infty}$-surface $\Sigma$ of genus $g$ and let $\ca T_g$ be the Teichm\"uller space of the surface $\Sigma$ (see e.g. \cite{arbarelloteichmuller}). 
\begin{definition}
Let $\mr J(g) \subset \ZZ^2$ be the set of tuples $(k, \epsilon) \in \bb Z^2$ that satisfy the following conditions. We have $\epsilon \in \{0,1\}$. If $\epsilon = 1$, then $ 1 \leq k \leq g +1$ and $k \equiv g+1 \bmod 2$. If $\epsilon = 0$, then $0 \leq k \leq g$.  
\end{definition}
\noindent
To every $j = (k(j), \epsilon(j)) \in \mr J(g)$ one can attach an orientation-reversing involution $\sigma_j: \Sigma \to \Sigma$ of \emph{type} $j \in \mr J(g)$ \cite[Definition 1.2]{seppalasilhol2}. This means that $k(j) = \#\pi_0(\Sigma^{\sigma_j})$ and that $\epsilon(j) = 0$ if and only if $\Sigma \setminus \Sigma^{\sigma_j}$ is connected. Moreover, every such involution $\sigma_j: \Sigma \to \Sigma$ induces an anti-holomorphic involution $\sigma_j: \ca T_g \to \ca T_g$ \cite[\S I.2]{seppalasilhol2}. Denote by $N_j = \{ g \in \Gamma_g: g \circ \sigma_j = \sigma_j \circ g \}$ the normalizer of $\sigma_j: \ca T_g \to \ca T_g$ in the mapping class group $\Gamma_g$ of $\Sigma$. There is a natural bijection \cite[Theorem 2.1, Definition 2.3]{seppalasilhol2} 
\begin{equation} \label{eq:topologymgseppalasilhol}
    \va{\ca M_g(\RR)} \cong \bigsqcup_{j \in J} N_j \backslash \ca T_g^{\sigma_j}.
\end{equation}

\subsection{Comparing the real moduli spaces}  \label{subsec:compare}

Consider the real-analytic topologies on $|\ca A_g(\RR)|$ and $|\ca M_g(\RR)|$, see Definition \ref{deffer}. Our goal in Section \ref{subsec:compare} is to prove that (\ref{eq:topologyaggrossharris}) and (\ref{eq:topologymgseppalasilhol}) are homeomorphisms. 

\begin{theorem} \label{th:homeomorphismmoduli}
The natural bijection (\ref{eq:topologyaggrossharris}) is a homeomorphism.
\end{theorem}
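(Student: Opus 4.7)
The strategy is to select an $\RR$-surjective \'etale presentation $P \colon U \to \ca A_g$, which exists by Corollary \ref{rfontofour}, and to show that the composite
\[
\Psi \coloneqq \phi^{-1} \circ P_\RR \colon U(\RR) \to \bigsqcup_{\tau \in \mr T(g)} \GL_g^\tau(\ZZ) \backslash \bb H_g^{\Sigma_\tau}
\]
is continuous and open, where $\phi$ denotes the bijection in (\ref{eq:topologyaggrossharris}); surjectivity is automatic from the $\RR$-surjectivity of $P$. Since $\va{\ca A_g(\RR)}$ carries the quotient topology induced by $P_\RR$ (Definition \ref{deffer}), these properties will force $\phi$ to be a homeomorphism: a subset $W$ of $\bigsqcup_\tau \GL_g^\tau(\ZZ) \backslash \bb H_g^{\Sigma_\tau}$ is open if and only if $\Psi^{-1}(W) = P_\RR^{-1}(\phi(W))$ is open in $U(\RR)$, if and only if $\phi(W)$ is open in $\va{\ca A_g(\RR)}$.

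To analyse $\Psi$ locally, pull back the universal principally polarized abelian scheme $p \colon A_U \to U$ along $P$; restriction to $\RR$-points yields a real-analytic family of real principally polarized abelian varieties whose $R^1 p_{\ast} \ZZ$ is a $G$-equivariant polarized symplectic local system on $U(\RR)$. For any $u_0 \in U(\RR)$, pick a connected open neighbourhood $V$ of $u_0$ in $U(\RR)$ on which this $G$-equivariant local system admits a symplectic trivialization in an adapted basis: by Ehresmann's theorem applied to the $G$-space $A_U(\CC) \to U(\CC)$, the combinatorial type $\tau \in \mr T(g)$ of the fibre is locally constant and hence constant on $V$, and the Gross--Harris--Silhol classification produces a trivialization in which complex conjugation on the stalks acts by the matrix $M(\tau)$ from (\ref{vartheta}). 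Recording the period matrix of the fibre relative to this basis gives a real-analytic map $\psi_V \colon V \to \bb H_g^{\Sigma_\tau}$ whose composition with the projection $\bb H_g^{\Sigma_\tau} \to \GL_g^\tau(\ZZ) \backslash \bb H_g^{\Sigma_\tau}$ agrees with $\Psi|_V$, which proves continuity of $\Psi$.

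For openness, the complex lift $\psi_V^{\CC} \colon V^{\CC} \to \bb H_g$ of $P_\CC|_{V^{\CC}}$, defined on a small $G$-stable neighbourhood $V^{\CC} \subset U(\CC)$ of $V$ from a trivialization of the underlying complex local system, is a local biholomorphism: this follows from $P$ being \'etale together with the classical period isomorphism $\va{\ca A_g(\CC)} \cong \Sp_{2g}(\ZZ) \backslash \bb H_g$ whose orbifold universal cover is $\bb H_g$. Restricting to fixed points of complex conjugation, $\psi_V$ is a local real-analytic diffeomorphism onto an open subset of $\bb H_g^{\Sigma_\tau}$. The quotient map $\bb H_g^{\Sigma_\tau} \to \GL_g^\tau(\ZZ) \backslash \bb H_g^{\Sigma_\tau}$ is open, since $\GL_g^\tau(\ZZ)$ acts properly discontinuously on $\bb H_g^{\Sigma_\tau}$, so $\Psi|_V$ is open; letting $V$ range over such neighbourhoods covering $U(\RR)$, the map $\Psi$ is open.

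The principal obstacle is the careful construction of the local symplectic trivialization of $R^1 p_{\ast} \ZZ$ in a basis which normalizes the $G$-action to the fixed matrix $M(\tau)$. This amounts to the $G$-equivariant classification of principally polarized integral Hodge structures of weight one in terms of the types $\tau \in \mr T(g)$, i.e., to the very input that defines (\ref{eq:topologyaggrossharris}), combined with the verification that this normalization can be achieved continuously in real-analytic families; the latter rests on the local constancy of $\tau$ supplied by Ehresmann's theorem.
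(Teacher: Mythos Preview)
Your proposal is correct and follows essentially the same strategy as the paper: choose an $\RR$-surjective \'etale presentation, work locally near a real point, normalize a symplectic basis so that the Galois involution acts by the standard matrix attached to $\tau$, and use \'etaleness to see that the resulting local period map is a local diffeomorphism whose restriction to real points is open and continuous. The paper carries out one step you flag as the ``principal obstacle'' in full detail: it works from the start on a $G$-stable contractible complex neighbourhood $B\subset U(\CC)$ (not merely a real neighbourhood $V\subset U(\RR)$), extends the normalized basis by parallel transport, and then explicitly computes that the complex period map $P\colon B\to \bb H_g$ satisfies $\Sigma_\tau\circ P = P\circ\sigma$; this is exactly what guarantees $P^G(B(\RR))\subset \bb H_g^{\Sigma_\tau}$ and that the resulting map agrees with the Gross--Harris--Silhol bijection. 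Your invocation of Ehresmann plus local constancy of $\tau$ is morally the same mechanism, but note that the involution on the stalks is represented by $\begin{psmallmatrix} I_g & M(\tau)\\ 0 & -I_g\end{psmallmatrix}$ rather than $M(\tau)$ itself, and that the equivariance check genuinely uses both $t$ and its conjugate $\sigma(t)$, so passing to a $G$-stable complex neighbourhood at that stage (not only for openness) is the cleanest way to run the argument.
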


\begin{proof}
Let $\ca S \to \ca A_{g}$ be an $\bb R$-surjective \'etale surjection by a scheme $\ca S$ over $\bb R$ (see Corollary \ref{rfontofour}). Recall that $\ca A_g$ is smooth over $\RR$ \cite[Remark 1.2.5]{Jong1993}, so that $\ca S(\bb C)$ is a complex manifold. The composition of $\ca S(\bb R) \to \va{\ca A_g(\RR)}$ with (\ref{eq:topologyaggrossharris}) 
defines a surjective \emph{real period map} (c.f. Section \ref{intro:sub:periodmaps}):
\begin{equation*}
\mr P: \ca S(\bb R) \to  \bigsqcup_{\tau \in \mr T(g)} \GL_g^\tau(\ZZ) \backslash \bb H_g^{\Sigma_\tau}.
\end{equation*}\begin{claim} \label{abelianclaim}
The real period map $\mr P$ is continuous and open.
\end{claim}
\noindent
\begin{proof}[Proof of Claim \ref{abelianclaim}]
We may work locally on $\ca S(\RR)$. Fix a point $0 \in \ca S(\RR)$, recall that $G = \Gal(\CC/\RR)$ (Definition \ref{thegaloisgroup}), and consider a $G$-stable contractible open neighbourhood $B$ of $0$ in $\ca S(\bb C)$ such that $B(\bb R) = B \cap \ca S(\bb R)$ is connected. The universal family $\ca X_g \to \ca A_g$ induces a holomorphic family of complex abelian varieties $\phi: A \to B$, equipped with two anti-holomorphic involutions $$(\sigma: A \to A, \chi: B \to B)$$ that commute with $\phi$, polarized by some section $E \in R^2\phi_\ast\ZZ$, such that $$E_{\chi(t)}(\sigma_\ast(x), \sigma_\ast(y)) = -E_t(x,y)$$ for every $t \in B$ and $x,y \in \rm  H_1(A_t, \ZZ)$, where $\sigma_\ast \colon \rm  H_1(A_t, \ZZ) \to \rm  H_1(A_{\chi(t)}, \ZZ)$ is the push-forward of the anti-holomorphic map $\sigma \colon A_t \to A_{\chi(t)}$. In other words, $\underline{\phi} \coloneqq \left(\phi, \sigma, \chi, E \right)$ is a \emph{holomorphic family of polarized real abelian varieties}. 

Claim \ref{abelianclaim} will now follow from:
\begin{claim} \label{localclaim}The family $\underline{\phi}$ admits a period map $P \colon B \to \bb H_g$, which is $G$-equivariant for one of the holomorphic involutions $\Sigma_\tau$ on $\bb H_g$ (see (\ref{vartheta})), whose differential $dP$ defines an isomorphism on tangent spaces at each point of $B$, and such that the induced map $ B(\RR) \to \GL_g^\tau(\ZZ) \setminus \bb H_g^{\Sigma_\tau}$ coincides with the restriction of $\mr P$ to $B(\RR)$. 
\end{claim}
\noindent
Before we prove Claim \ref{localclaim}, we show that it implies Claim \ref{abelianclaim}. Consider the period map $P \colon B \to \bb H_g$ in Claim \ref{localclaim}. The map $P^G \colon B(\RR) \to \bb H_g^{\Sigma_\tau}$ defines an isomorphism on tangent spaces at each point of $B(\RR)$, thus is open and continuous, hence the restriction of $\mr P$ to $B(\RR)$ is open and continuous, proving Claim \ref{abelianclaim}.
\end{proof}
\begin{proof}[Proof of Claim \ref{localclaim}]
Consider the real abelian variety $(A_0, \sigma: A_0 \to A_0)$. Define $\Lambda_0$ to be the lattice $\rm H_1(A_0, \ZZ)$ and consider the alternating form $E_0: \Lambda_0 \times \Lambda_0 \to \bb Z$. By \cite[Section 9]{grossharris}, there exists a symplectic basis $\underline m = \{m_1, \dotsc, m_g; n_1, \dotsc, n_g \}$ for $\Lambda_0$ and a unique element $\tau \in \mr T(g)$ such that the matrix corresponding to $\sigma_*$ with respect to $\underline m$ is the matrix
\begin{equation} \label{eq:T}
   T = \begin{pmatrix}
 I_g & M(\tau)\\
 0 & -I_g
\end{pmatrix} \in \GL_{2g}(\ZZ),
\end{equation}
where $M(\tau) \in M_g(\ZZ)$ corresponds to $\tau \in \mr T(g)$ as in Section \ref{modofrealAV}. 

The canonical trivialization $R^1\phi_*\bb Z \cong \rm H^1(A_0, \bb Z)$ is $G$-equivariant, and induces the following isomorphism (resp. abelian groups):
\begin{align*}\label{trivialization2}
&s: R^1\phi_*\bb Z\cong \rm H^{1}(A_0, \bb Z) \xrightarrow{E_0} \rm H_1(A_0, \bb Z) = \Lambda_0, \\
& r_t: \rm H_1(A_t, \ZZ) \xrightarrow{E_t} \rm H^1(A_t,\ZZ) \xrightarrow{s_t} \Lambda_0, \quad \forall t \in B.
\end{align*}
Thus, for every $t \in B$, the lattice $\rm H_1(A_t, \bb Z)$ is equipped with the symplectic basis $$r_t^{-1}(\underline m) = \{m(t)_1, \dotsc, m(t)_g; n(t)_1, \dotsc, n(t)_g\}.$$
\noindent
We claim that these bases are compatible with the real structure of $\phi: A \to B$, in the sense that for every $t \in B$, the pushforward
$$
\sigma_*: \rm H_1(A_t, \bb Z) \to \rm H_1(A_{\chi(t)}, \bb Z)
$$
is given by matrix $T$ of equation (\ref{eq:T}) with respect to the bases $r_t^{-1}(\underline m)$ and $r_{\sigma(t)}^{-1}(\underline m)$. Indeed, this follows from the fact that the following diagram commutes:
\begin{equation} \label{comofcom}
\xymatrixcolsep{2pc}
\xymatrix{
\Lambda_0\ar[d]^{\sigma_*} &\rm H^1(A_0, \bb Z)\ar[d]^{-\tau*} \ar[l]&\rm H^1(A_t, \bb Z) \ar[d]^{-\sigma_*}\ar[l] & \rm H_1(A_t, \bb Z) \ar[l]\ar[d]^{\sigma_*} \\ 
\Lambda_0 &\rm H^1(A_0, \bb Z)\ar[l] &\rm H^1(A_{\chi(t)}, \bb Z) \ar[l] & \rm H_1(A_{\chi(t)}, \bb Z).\ar[l] 
}
\end{equation}
The first and the third square commute because if $\sigma: A_t \to A_{\sigma(t)}$ is the anti-holomorphic restriction of $\tau$ to the fibre $A_t$, then $E_t(x,y) = - E_{\chi(t)}(\sigma_\ast(x), \sigma_\ast(y))$ for every $x,y \in \rm H_1(A_t, \ZZ)$. The second diagram commutes because the trivialization $R^1\phi_*\bb Z \cong \rm H^1(A_0,\ZZ)$ is $G$-equivariant. 

The family of symplectic bases $\{r_t^{-1}(\underline m)\subset \rm H_1(A_t, \bb Z) \}_{t \in B}$ induces a holomorphic period map $P: B \to \bb H_g$, which is defined by sending an element $t \in B$ to the period matrix of the abelian variety $A_t$ with respect to the symplectic basis $r_t^{-1}(\underline m)$. Note that the differential $dP$ defines an isomorphism on each tangent space, because the morphism $\ca S \to \ca A_g$ is \'etale. 

Consider the anti-holomorphic involution $\Sigma_\tau: \bb H_g \to \bb H_g$ defined in (\ref{vartheta}). 
We claim that: 
\begin{enumerate}
\item The composition 
\begin{equation*} 
    B(\RR) \xrightarrow{P^G} \bb H_g^{\Sigma_\tau} \to \GL_g^\tau(\ZZ) \setminus \bb H_g^{\Sigma_\tau}
\end{equation*}
equals the restriction of $\mr P$ to the open neighborhood $B(\RR) \subset \ca S(\RR)$ of $0 \in B(\RR)$. 
\item \label{grosharrisitem:two}The period map $P$ is Galois-equivariant, in the sense that $\Sigma_\tau \circ P = P \circ \sigma$. 
\end{enumerate} 
The first part follows by definition of the bijection (\ref{eq:topologyaggrossharris}). So let us prove \ref{grosharrisitem:two}.  Fix an element $t \in B$; we need to show that 
\begin{align}\label{needtoshow}
\Sigma_\tau(P(t)) = P(\chi(t)) \in \bb H_g.
\end{align} For this, consider the symplectic basis $r_t^{-1}(\underline m) = \{m(t)_i; n(t)_j\}_{ij}
\subset \rm H_1(A_t, \ZZ).$ Let $\{\omega(t)_1, \dotsc, \omega(t)_g\} \subset \rm H^0(A_t, \Omega_{A_t}^1)$ be the basis dual to $\{m(t)_1, \dotsc, m(t)_g\}$ under the natural pairing
\begin{equation} \label{eq:pairing}
    \rm H_1(A_t, \ZZ) \times \rm H^0(A_t, \Omega_{A_t}^1) \to \CC,\white (\gamma, \omega) \mapsto \int_\gamma\omega.
\end{equation}
Then 
$
P(t)= \left(\int_{n(t)_i} \omega(t)_j\right)_{ij} \in \bb H_g. 
$
Moreover, the following diagram commutes by Lemma \ref{lemmatje} in Chapter \ref{ch:density}:
\begin{equation} \label{comofcom}
\xymatrixcolsep{2pc}
\xymatrix{
 \rm H_1(A_t, \bb Z) \ar[d]^{\tau_*}  \ar[r] & \rm H^0(A_t, \Omega^1_{A_t})^\vee \ar[d]^{F_{dR}} \\
\rm H_1(A_{\sigma(t)}, \bb Z)  \ar[r]& \rm H^0(A_{\sigma(t)}, \Omega^1_{A_{\sigma(t)}})^\vee.
}
\end{equation}
Here, $F_{dR}$ is the anti-linear map induced by the differential $d\sigma: T_eA_t \to T_eA_{\chi(t)}$ of $\sigma: A_t \to A_{\chi(t)}$ and the canonical identification $T_eX = \rm H^0(X, \Omega^1_{X})^\vee$ for any complex abelian variety $X$, where $e \in X$ is the origin. See also Lemma \ref{lemmatje}. This implies that (\ref{eq:pairing}) is compatible with $\sigma_\ast$ and $F_{dR}$, and that $F_{dR}\left(\omega(t)_j \right) = \omega(\chi(t))_j$. Therefore,
\begin{align*}
\Sigma_\tau( P(t)) &= M(\tau) - \overline{\left(\int_{n(t)_i} \omega(t)_j\right)_{ij}} \\
&= M(\tau) - \left(\int_{\sigma_\ast(n(t)_i)} F_{dR}\left(\omega(t)_j \right)\right)_{ij} \\
&= M(\tau) - 
\left(\int_{\sigma_\ast(n(t)_i)} \omega(\chi(t))_j\right)_{ij}. 
\end{align*}
If $M(\tau) = \left( x_{ij} \right)_{ij}$, and $\psi_t: \rm H_1(A_t, \ZZ) \cong \ZZ^{2g}
$ is the isomorphism identifying $r_t^{-1}(\underline m)$ with the standard symplectic basis $\{ e_1, \dotsc, e_g ; f_1, \dotsc , f_g\} \subset \ZZ^{2g}$, then, for $T \in \GL_{2g}(\ZZ)$ as in (\ref{eq:T}), we have:
\begin{align*}
\sigma_\ast(n(t)_i) &= \psi_{\chi(t)}^{-1}\left( T\cdot f_i \right) = \psi_{\chi(t)}^{-1}\left(  \sum_{k = 1}^g x_{ki} e_k - f_i \right) \\
&= \sum_{k = 1}^g x_{ki} m(\chi(t))_k  - n(\chi(t))_i \in \rm H_1(A_{\chi(t)}, \ZZ). 
\end{align*}
Therefore, we obtain $$\int_{\sigma_\ast(n(t)_i)} \omega(\chi(t))_j = \int_{\sum_{k = 1}^g x_{ki} m(\chi(t))_k  - n(\chi(t))_i} \omega(\chi(t))_j,$$ 
which further implies that 
\begin{align*}
\left(\int_{\sigma_\ast(n(t)_i)} \omega(\chi(t))_j\right)_{ij} 
&= \left(\int_{x_{ji} m(\chi(t))_j} \omega(\chi(t))_j\right)_{ij} - 
\left(\int_{n(\chi(t))_i} \omega(\chi(t))_j\right)_{ij} \\
&= (x_{ji})_{ij} - P\left(\chi(t)\right). 
\end{align*}
Since $\left( x_{ji}\right)_{ij} = \left( x_{ij}\right)_{ij} = M(\tau)$ because $M(\tau)$ is symmetric, the equality (\ref{needtoshow}) follows. This finishes the proof of Claim \ref{localclaim}, and thereby the proof of Claim \ref{abelianclaim}.
\end{proof}
\noindent
Theorem \ref{th:homeomorphismmoduli} follows readily from Claim \ref{abelianclaim}.
\end{proof}

\begin{theorem} \label{th:homeomorphismmoduli2}
The natural bijection (\ref{eq:topologymgseppalasilhol}) is a homeomorphism. 
\end{theorem}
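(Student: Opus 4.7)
The strategy mirrors the proof of Theorem \ref{th:homeomorphismmoduli}, with Teichm\"uller spaces playing the role of the Siegel upper half-space and orientation-reversing diffeomorphisms of the reference surface $\Sigma$ playing the role of the involutions $\Sigma_\tau$. First I would invoke Corollary \ref{rfontofour} to pick an $\RR$-surjective \'etale presentation $\ca S \to \ca M_g$; since $\ca M_g$ is smooth over $\RR$, the complex analytic space $\ca S(\CC)$ is a complex manifold. The composition of $\ca S(\RR) \to \va{\ca M_g(\RR)}$ with the bijection (\ref{eq:topologymgseppalasilhol}) yields a surjective \emph{real Teichm\"uller map}
\[
\mr P \colon \ca S(\RR) \to \bigsqcup_{j \in \mr J(g)} N_j \backslash \ca T_g^{\sigma_j},
\]
and the theorem will follow once we establish that $\mr P$ is continuous and open.

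Since continuity and openness are local on $\ca S(\RR)$, I would fix a point $0 \in \ca S(\RR)$ and choose a $G$-stable contractible open neighborhood $B \subset \ca S(\CC)$ around $0$ such that $B(\RR) = B \cap \ca S(\RR)$ is connected. Pulling back the universal family $\ca C_g \to \ca M_g$ gives a holomorphic family of smooth proper curves $\phi \colon C \to B$ together with commuting anti-holomorphic involutions $(\tau \colon C \to C, \chi \colon B \to B)$ compatible with $\phi$. The central fibre $(C_0, \tau_0)$ is a real algebraic curve of genus $g$, and I would choose an orientation-preserving diffeomorphism $m_0 \colon \Sigma \xrightarrow{\sim} C_0(\CC)$. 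The involution $\sigma_j \coloneqq m_0^{-1} \circ \tau_0 \circ m_0$ is an orientation-reversing involution of $\Sigma$ of some type $j \in \mr J(g)$, uniquely determined up to conjugation by an element of the mapping class group $\Gamma_g$. Since $B$ is contractible, Ehresmann's theorem allows me to extend $m_0$ to a $\ca C^\infty$-trivialization of the family $\phi$ over $B$, yielding markings $m_t \colon \Sigma \xrightarrow{\sim} C_t(\CC)$ depending smoothly on $t$; this defines a holomorphic Teichm\"uller map $P \colon B \to \ca T_g$.

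The key claim to verify is that $P$ is $G$-equivariant with respect to $\chi$ on $B$ and $\sigma_j$ on $\ca T_g$. Shrinking $B$ if necessary and modifying $m_t$ by isotopies (which does not change the class in $\ca T_g$), I can arrange that the trivialization is compatible with the anti-holomorphic involutions, namely $m_{\chi(t)} \circ \sigma_j = \tau \circ m_t$ for all $t \in B$. This identity ensures that, for every $t \in B$, the complex structure on $\Sigma$ pulled back from $C_{\chi(t)}(\CC)$ via $m_{\chi(t)}$ coincides with the complex conjugate of the $m_t$-pulled-back structure, composed with $\sigma_j$; translated on Teichm\"uller space, this is precisely $P \circ \chi = \sigma_j \circ P$. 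Moreover, the differential $dP$ is an isomorphism at every point because $\ca S \to \ca M_g$ is \'etale and $\ca T_g$ is the fine moduli space of marked Riemann surfaces, so the infinitesimal Kodaira--Spencer map is an isomorphism. Taking $G$-fixed points therefore yields a local diffeomorphism $P^G \colon B(\RR) \to \ca T_g^{\sigma_j}$.

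Finally, one checks directly from the definition of (\ref{eq:topologymgseppalasilhol}) (via period matrices on $\rm H_1$ or directly via marked Riemann surfaces) that the composition
\[
B(\RR) \xrightarrow{P^G} \ca T_g^{\sigma_j} \to N_j \backslash \ca T_g^{\sigma_j}
\]
coincides with the restriction of $\mr P$ to $B(\RR)$. Since $P^G$ is a local diffeomorphism and $N_j \backslash \ca T_g^{\sigma_j}$ carries the quotient topology, this restriction is open and continuous, and hence so is $\mr P$. The hardest step will be the equivariant trivialization of the family $\phi$ on a $G$-stable neighborhood; concretely, showing that the marking $m_0$ can be extended so that $m_{\chi(t)} \circ \sigma_j = \tau \circ m_t$ requires a $G$-equivariant version of Ehresmann's theorem on the contractible $G$-space $B$, which may be obtained by averaging an arbitrary smooth trivialization over $G$ using the fact that $B$ retracts $G$-equivariantly onto $\{0\}$.
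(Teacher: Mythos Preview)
Your overall architecture matches the paper's proof exactly: an $\RR$-surjective \'etale presentation, reduction to a local statement on a $G$-stable contractible $B$, construction of a holomorphic map $P \colon B \to \ca T_g$ via markings of the fibres, $G$-equivariance of $P$, and the conclusion that $P^G$ is a local diffeomorphism so $\mr P$ is continuous and open.

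The one substantive difference is in how you establish the $G$-equivariance $P \circ \chi = \sigma_j \circ P$. You propose to arrange an honest equality of diffeomorphisms $m_{\chi(t)} \circ \sigma_j = \tau \circ m_t$ for all $t$, obtained from a $G$-equivariant Ehresmann trivialization produced by ``averaging''. This is more than is needed, and the averaging suggestion is imprecise: one cannot average diffeomorphisms in any naive way, so making this rigorous would require genuine equivariant-topology input (e.g.\ an equivariant Ehresmann theorem). The paper sidesteps this entirely by working only at the level of isotopy classes. Having chosen a Teichm\"uller structure $[f_0]$ at $0$ with $[f_0 \circ \tau \circ f_0^{-1}] = [\sigma_j]$, it extends to a unique Teichm\"uller structure $\{[f_t]\}_{t\in B}$ on the family (since $B$ is contractible). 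The second family of markings $\{[\sigma_j \circ f_{\sigma(t)} \circ \tau]\}_{t\in B}$ is another Teichm\"uller structure agreeing with the first at $t=0$, hence agrees everywhere by the uniqueness of such extensions (the paper cites \cite[XV, \S2]{curvesII}). That immediately gives $\sigma_j(P(t)) = P(\sigma(t))$ with no equivariant trivialization required. I would recommend replacing your averaging argument with this uniqueness argument; it is both shorter and avoids the technical point you correctly flagged as the hardest step.
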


\begin{proof}
The strategy is similar to our strategy in the proof of Theorem \ref{th:homeomorphismmoduli}. Let $\ca S \to \ca M_{g}$ be an $\bb R$-surjective \'etale presentation by a scheme $\ca S$ over $\bb R$, and let $\ca C \to \ca S$ be the corresponding family of real algebraic curves of genus $g$. The composition $$\mr P: \ca S(\bb R) \to \va{\ca M_g(\RR)} \cong \bigsqcup_{j \in \mr J(g)} N_j \setminus \ca T_g^{\sigma_j}$$ is surjective. As before, it is enough to prove that $\mr P$ is continuous and open. 

For this, note that $\ca S(\CC)$ is a complex manifold because ${\ca M_g}$ is smooth \cite[Theorem 5.2]{DM69}. Fix $0 \in \ca S(\RR)$ and consider a $G$-stable contractible open neighborhood $B \subset \ca S(\CC)$ of $0$ such that $B(\RR) = B \cap \ca S(\RR)$ is connected. Let $\phi: C \to B$ be the induced family of Riemann surfaces over $B$, with anti-holomorphic involutions $(\tau \colon C \to C, \sigma\colon B \to B)$ that commute with $\phi$. By \cite{seppalasilhol2}, there exists a Teichm\"uller structure $[f_0], f_0: C_0 \xrightarrow{\sim} \Sigma$ and a unique $j \in \mr J(g)$ such that the composition
$$
\Sigma \xrightarrow{f_0^{-1}} C_0 \xrightarrow{\tau} C_0 \xrightarrow{f_0} \Sigma 
$$
is isotopic to the map $\sigma_j: \Sigma \to \Sigma$ -- in other words, such that $[f_0 \circ \tau \circ f_0^{-1} ] = [\sigma_j]$. 

Since the Kodaira-Spencer morphism $\rho: T_0B \to \rm H^1(C_0, T_{C_0})$ is an isomorphism, the family $\phi: C \to B$ is a Kuranishi family for the fiber $C_0$. The family $\phi: C \to B$ is topologically trivial because $B$ is contractible, thus $\phi$ can be endowed with a unique Teichm\"uller structure 
$$\set{[f_t], f_t: C_t \to \Sigma}_{t \in B}$$
 extending the Teichm\"uller structure $[f_0]$ on $C_0$. 
 
 For $t \in B$, consider the anti-holomorphic map $\tau: C_t \to C_{\sigma(t)}$. We claim that the two Teichm\"uller structures $[f_t]$ and $[\sigma_{j} \circ f_{\sigma(t)} \circ \tau]$ on $C_t$ agree. Indeed, the family $$\left\{ [\sigma_{j} \circ f_{\sigma(t)} \circ \tau] \right\}_{t \in B}$$ defines a new Teichm\"uller structure on $\phi: C \to B$, agreeing with $\left\{ [f_t] \right\}_{t \in B}$ at the point $0 \in B$, therefore agreeing with $\left\{ [f_t] \right\}_{t \in B}$ everywhere on $B$ \cite[XV, \S 2]{curvesII}.
 
Consequently, the holomorphic map $P: B \to \ca T_g$ defined as $P(t) = [C_t, [f_t]]$ can be shown to be $G$-equivariant as follows. The involution $\sigma_j: \ca T_g \to \ca T_g$ is defined by sending the class $[C, [f]]$ of a curve $C$ with Teichm\"uller structure $[f]$ to $$[C^\sigma, [C^\sigma \xrightarrow{\text{can}} C \xrightarrow{f} \Sigma \xrightarrow{\sigma_j}\Sigma ]] \in \ca T_g,$$ where $C^\sigma$ is the complex conjugate curve of $C$ and $\text{can}: C^\sigma \to C$ is the canonical anti-holomorphic map (see \cite[Chapter I, \S1]{silholsurfaces} for these notions). The family $\phi^\sigma: C^\sigma \to B^\sigma$ is isomorphic to the family $\phi: C \to B$ via the maps $\sigma \circ \text{can}: B^\sigma \to B$ and $\tau \circ \text{can}: C^\sigma \to C$, and the composition $\tau \circ \text{can}: C^\sigma \cong C$ restricts to an isomorphism $C_t^\sigma \cong C_{\sigma(t)}$ for each $t \in B$. For $t \in B$, we obtain 
$$
\sigma_j ( P(t)  ) = \sigma_j ( [C_t, [f_t]]  ) = [C_{\sigma(t)}, [C_{\sigma(t)} \xrightarrow{\tau} C_t \xrightarrow{f_t} \Sigma \xrightarrow{\sigma_j} \Sigma]] = [C_{\sigma(t)}, [f_{\sigma(t)}]] = P(\sigma(t)).$$ 
We conclude that $\sigma_j \circ P = P \circ \sigma$ as desired. 

Let $P^G \colon B(\RR) \to \ca T_g^{\sigma_j}$ be the induced morphism on real loci. Since $P$ induces isomorphisms on tangent spaces, the same holds for $P^G$. With respect to the projection $\pi: \ca T_g^{\sigma_j} \to N_j \setminus \ca T_g^{\sigma_j}$, one has $$\pi \circ P^G = \mr P|_{B(\RR)} \colon B(\RR) \to N_j \setminus \ca T_g^{\sigma_j}.$$ Therefore, $\mr P$ is continuous and open around the point $0 \in \ca S(\RR)$.
\end{proof}

\cleardoublepage
\chapter{Noether-Lefschetz loci for real abelian varieties}\label{ch:density}

\section{Introduction} \label{densityintroduction}

In the previous Chapter \ref{ch:realmodulispaces}, we saw how one can obtain a real moduli space from a real algebraic moduli stack. This generalized the classical approaches to construct the real moduli spaces of abelian varieties and curves. In this chapter, we will continue to consider families of real abelian varieties $\ca A \to B$. This time, we will look at the distribution of certain Noether-Lefschetz loci in $B(\RR)$. More precisely, we consider the locus $R_k$ consisting of $t \in B(\RR)$ such that the fiber $A_t$ contains a real abelian subvariety of fixed dimension. We prove that -- even though such a set $R_k$ often has empty interior -- in favourable situations, $R_k$ is dense in $B(\RR)$. 

The goal of Chapter \ref{ch:density} will be to prove two results (Theorems \ref{th:maindensitytheorem} and \ref{theorem2}). Let us set up the context. Fix an integer $g \geq 1$. Let $\ca A$ and $B$ be complex manifolds, and let
\begin{equation} \label{family}
\left(\psi:  \ca A \to B, \white s: B \to \ca A, \white E \in R^2\psi_*\bb Z\right)
\end{equation} be a polarized holomorphic family of $g$-dimensional complex abelian varieties. The map $\psi$ is a proper holomorphic submersion, and $s$ is a holomorphic section of $\psi$. Moreover, and $A_t \coloneqq \psi^{-1}(t) $ is a complex abelian variety of dimension $g$ with origin $s(t)$, polarized by $E_t \in \rm H^2(A_t, \bb Z)$, for $t \in B$. Assume that $\psi$ admits a real structure in the following sense: $\ca A$ and $B$ are equipped with anti-holomorphic involutions $\tau$ and $\sigma$, commuting with $\psi$ and $s$ and compatible with the polarization, in the sense $\tau^*(E) = -E$. For example, this is true when $\psi$ is real algebraic, i.e. induced by a polarized abelian scheme over a smooth $\bb R$-scheme.

Let $B({\bb R})$ be the set of fixed points under the involution $\sigma: B \to B$. If $t \in B({\bb R})$, then $A_t$ is equipped with an anti-holomorphic involution $\tau$ preserving the group law. We shall not distinguish between the category of abelian varieties over ${\bb R}$ and the category of complex abelian varieties equipped with an anti-holomorphic involution preserving the group law. Thus, if $t \in B({\bb R})$, then $A_t$ is an abelian variety over $\bb R$. Define a set $R_k \subset B({\bb R})$ as follows:
\begin{equation}\label{rk}
R_k  = \left\{ t \in B({\bb R}): A_t \text{\emph{ contains an abelian subvariety over ${\bb R}$ of dimension }} k \right\}.
\end{equation}
For $t \in B$, the polarization gives an isomorphism $H^{0,1}(A_t) \cong \rm H^{1,0}(A_t)^\vee$; using the dual of the differential of the period map we obtain a symmetric bilinear form 
\begin{equation} \label{symbil}
q: \rm H^{1,0}(A_t) \otimes \rm H^{1,0}(A_t) \to (T_tB)^\vee. 
\end{equation}



\begin{condition}  \label{criterion} 
There exists an element $t \in B$ and a $k$-dimensional complex subspace $W \subset \rm H^{1,0}(A_t)$ such that the complex $0 \to  \bigwedge^2 W \to W \otimes \rm H^{1,0}(A_t) \to (T_tB)^\vee $ is exact.
\end{condition}
\noindent
The first main theorem of Chapter \ref{ch:density} is the following.
\begin{theorem}  \label{th:maindensitytheorem} 
If $B$ is connected and if Condition \ref{criterion} holds, then $R_k$ is dense in $B({\bb R})$. 
\end{theorem}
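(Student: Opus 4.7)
My plan is to adapt the argument of Colombo--Pirola to the $G$-equivariant setting, where $G = \Gal(\CC/\RR)$ (Definition \ref{thegaloisgroup}). Fix $t_0 \in B(\RR)$ (we may assume $B(\RR) \neq \emptyset$, else $R_k$ is vacuously dense) and a $G$-stable contractible open neighborhood $V \subset B$ of $t_0$; the aim is to produce points of $R_k$ in any given $G$-stable open subset of $V$. Shrinking $V$, trivialize the local system $R^1\psi_\ast\ZZ|_V$ in a $G$-equivariant way exactly as in the proof of Theorem \ref{th:homeomorphismmoduli}, obtaining a holomorphic period map $P \colon V \to \bb H_g$ equivariant for the anti-holomorphic involution $\Sigma_\tau$ of (\ref{vartheta}) attached to $A_{t_0}$. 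Under this trivialization, a real abelian subvariety of $A_t$ of dimension $k$ corresponds to a $G$-stable primitive sublattice $\Lambda \subset \ZZ^{2g}$ of rank $2k$ with $\dim_\CC(\Lambda_\CC \cap \rm H^{1,0}(A_t)) = k$, and thus $R_k \cap V$ decomposes as a countable union $\bigcup_\Lambda R_k^\Lambda$, indexed by $G$-stable primitive rank-$2k$ sublattices $\Lambda$.

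Next, I would transfer the Colombo--Pirola infinitesimal computation to real fixed loci. Their proof interprets Condition \ref{criterion} at $t_1 \in B$ with witness $W \subset \rm H^{1,0}(A_{t_1})$ as the surjectivity of the differential of the forgetful map from the moduli of pairs (abelian variety, $k$-dimensional sub-Hodge structure) to the moduli of abelian varieties. Since the form $q$ of (\ref{symbil}) is naturally $G$-equivariant at any real point, at $t \in V \cap B(\RR)$ the exactness of the complex in Condition \ref{criterion} for a $G$-stable witness $W$ implies the surjectivity of the $G$-fixed part of this differential. Combining this with the real-analytic implicit function theorem applied to $P^G \colon V \cap B(\RR) \to \bb H_g^{\Sigma_\tau}$, together with the fact that primitive $G$-stable rank-$2k$ sublattices of $\ZZ^{2g}$ are $\QQ$-dense in the Grassmannian of $G$-stable real $2k$-subspaces of $\RR^{2g}$, one concludes that $\bigcup_\Lambda R_k^\Lambda$ is euclidean dense in a neighborhood of any real point at which this equivariant refinement of Condition \ref{criterion} holds.

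The main obstacle is therefore the reduction to this favorable situation: Condition \ref{criterion} as stated only furnishes some complex point $t_1$ and some witness $W$, whereas the argument above requires a real point of $V$ equipped with a $G$-stable witness. To handle this I would use that Condition \ref{criterion} is Zariski-open on the $G$-equivariant Grassmann bundle $\Grass_k(\rm H^{1,0}(\ca A /B)) \to B$: the $G$-orbit $\{(t_1, W),(\bar t_1, \overline W)\}$ lies in a $G$-stable Zariski-open subset $\Omega$ on which exactness persists, and connectedness of $B$ together with the density of the complex locus $S_k(B) \subset B$ proved by Colombo--Pirola allows one to deform $\Omega$ along a $G$-stable path until it meets the real Grassmann bundle over $V \cap B(\RR)$. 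At such a real fiber, the complex witness $W$ must be replaced by a $G$-stable $k$-dimensional subspace extracted from $W + \overline W \subset \rm H^{1,0}(A_t)$ in such a way that the exactness of the complex in Condition \ref{criterion} is preserved; this linear-algebraic verification, controlling how the bilinear form $q$ interacts with Galois conjugation, is the step I expect to be the most delicate, and once it is carried out the density conclusion follows from the preceding two paragraphs.
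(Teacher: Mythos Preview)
Your first two paragraphs set up the right framework and align with the paper's approach: work on a $G$-stable contractible neighbourhood, trivialize equivariantly, interpret $R_k$ via $G$-stable rational sub-Hodge structures, and use that Condition \ref{criterion} controls the rank of the differential of the map $\Phi$ from the Grassmann bundle $\mr G(k,\ca H^{1,0})$ to the real Grassmannian of $2k$-planes. The density of $G$-stable rational $2k$-planes in the $G$-fixed real Grassmannian is indeed what one needs (this is Lemma \ref{lemmabetween} in the paper, and requires the small observation that $F_\infty$ is diagonalizable over $\QQ$).

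The problem is your third paragraph. You have correctly identified that one needs a real point $t$ together with a $G$-stable witness $W \subset \rm H^{1,0}(A_t)$ satisfying the exactness condition, but your proposed route to produce one---deforming along a $G$-stable path and then extracting a $G$-stable $k$-subspace from $W + \overline W$---is both unnecessary and unlikely to work as stated. There is no reason an arbitrary $G$-stable $k$-plane inside $W + \overline W$ should inherit the exactness property; this ``linear-algebraic verification'' is not a verification but a genuine obstruction if approached this way.

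The paper's resolution is much simpler and bypasses the obstacle entirely. The locus $\mr F_k \subset \mr G(k,\ca H^{1,0})$ of pairs $(t,W)$ for which the complex in Condition \ref{criterion} is exact is the complement of a set $Z_k$ locally cut out by vanishing of holomorphic minors. Since the total space of the Grassmann bundle is connected and $\mr F_k$ is nonempty by hypothesis, $Z_k$ is nowhere dense. Now the real Grassmann bundle $\mr G(k,\ca H^{1,0})^G$ is a totally real submanifold of real dimension equal to the complex dimension of the ambient bundle; if $Z_k$ contained a nonempty open subset of it, the defining holomorphic functions would vanish on that open set and hence identically, contradicting $\mr F_k \neq \emptyset$. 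Thus $\mr F_k(\RR) = \mr F_k \cap \mr G(k,\ca H^{1,0})^G$ is automatically dense in the real Grassmann bundle, furnishing real points with $G$-stable witnesses at no extra cost. Once you have this, your second paragraph finishes the proof.
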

\noindent
The second main theorem of Chapter \ref{ch:density} is a consequence of Theorem \ref{th:maindensitytheorem}. It consists of the following three applications of Theorem \ref{th:maindensitytheorem}. See Definition \ref{definition:realalgebraiccurve} for the definition of \emph{real algebraic curve}. 

\begin{theorem} \label{theorem2}
\begin{enumerate}
\item[\hypertarget{theoremA}{\textbf{A.}}] 
Given a positive integer $k < g$, moduli points of abelian varieties over $\bb R$ containing a $k$-dimensional abelian subvariety over ${\bb R}$ are dense in the moduli space $\va{\ca A_g(\RR)}$ of principally polarized abelian varieties of dimension $g$ over ${\bb R}$. 
\item[\hypertarget{theoremB}{\textbf{B.}}] For $1 \leq k \leq 3 \leq g$, moduli of real algebraic curves $C$ admitting a map $\varphi: C \to A$ to a $k$-dimensional abelian variety $A$ over ${\bb R}$ such that $\varphi(C(\CC))$ generates the group $A(\CC)$ are dense in the moduli space $\va{\ca M_g(\RR)}$ of real algebraic curves of genus $g$. 
\item[\hypertarget{theoremC}{\textbf{C.}}] Let $V \subset \bb P\rm H^0(\bb P^2_{{\bb R}}, \OO_{\bb P^2_{{\bb R}}}(d))$ be the moduli space of smooth degree $d \geq 3$ real plane curves. Let $R_1(V)$ be the set of $t \in V$ such that the corresponding curve $C_t$ admits a non-constant map $C_t \to E$ to a real elliptic curve $E$. Then $R_1(V)$ is dense in $V$. 
\end{enumerate}
\end{theorem}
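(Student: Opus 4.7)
The three statements will all be deduced from Theorem \ref{th:maindensitytheorem} by verifying Condition \ref{criterion} for suitable families, the relevant verifications being already present in the complex-analytic work of Colombo and Pirola \cite{Colombo1990}.

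First, I would reduce each density claim in a moduli space to a density claim inside the real locus of an ordinary scheme. For each of $\ca A_g$, $\ca M_g$, and $V$ (the last being already a scheme), choose an $\RR$-surjective \'etale presentation $\phi \colon B \to \ca M$ by a scheme $B$ smooth over $\RR$ (Corollary \ref{rfontofour}). By Corollary \ref{stackycorollary}.\ref{stackyopen}, the map $\phi_\RR \colon B(\RR) \to \va{\ca M(\RR)}$ is open and surjective. It therefore suffices, for each geometrically connected component $B'$ of $B$ with $B'(\RR) \neq \emptyset$, to show that the preimage of the target locus is dense in $B'(\RR)$.

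Second, over each such $B'$ I would pull back the relevant universal family to obtain a polarized holomorphic family of real abelian varieties as in (\ref{family}): the universal family of principally polarized abelian varieties in part A, the relative Jacobian of the universal curve in part B, and the relative Jacobian of the universal smooth plane curve in part C. Theorem \ref{th:maindensitytheorem} reduces each density assertion to verifying Condition \ref{criterion} at a single (possibly non-real) point $t \in B'$. For part B, the translation between \emph{$J(C)$ contains a $k$-dimensional abelian subvariety $B$} and \emph{$C$ admits a map to a $k$-dimensional abelian variety whose image generates it} is obtained by dualizing: set $A = \wh B$ and take $\varphi$ to be the composition $C \hookrightarrow J(C) \cong \wh{J(C)} \twoheadrightarrow \wh B$.

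The heart of the argument is the verification of Condition \ref{criterion} in each case, which I would import from \cite{Colombo1990}. For part A, the principal polarization identifies $\rm H^{1,0}(A_t)$ with its dual and the form $q$ of (\ref{symbil}) with the natural multiplication $\rm H^{1,0}(A_t) \otimes \rm H^{1,0}(A_t) \to \Sym^2 \rm H^{1,0}(A_t)$; exactness of the relevant complex for a generic $k$-dimensional $W$ with $k < g$ is then elementary linear algebra. For part B, at a non-hyperelliptic curve the form $q$ becomes the multiplication $\rm H^0(K_C) \otimes \rm H^0(K_C) \to \rm H^0(K_C^{\otimes 2})$; the case $k = 1$ follows from the injectivity of multiplication by a nonzero holomorphic differential, while the cases $k = 2, 3$ with $g \geq 3$ are the main results of Colombo-Pirola. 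Part C is their parallel statement for plane curves of degree $d \geq 3$. The main potential obstacle would be to establish Condition \ref{criterion} in the Torelli and plane-curve cases from scratch, as these are non-trivial multilinear-algebra problems; however, the condition is purely holomorphic, so the existing complex verifications apply verbatim and the real density statement follows at once from Theorem \ref{th:maindensitytheorem}.
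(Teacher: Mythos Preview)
Your plan is correct and follows essentially the same route as the paper: reduce to Theorem \ref{th:maindensitytheorem} and import the verification of Condition \ref{criterion} from \cite{Colombo1990}. The packaging differs somewhat. The paper does not use the \'etale-presentation machinery of Chapter \ref{ch:realmodulispaces}; instead it works directly with explicit period domains (Siegel space for part A, Teichm\"uller space for part B) and constructs, around each real point, a universal local deformation or Kuranishi family carrying a real structure by Proposition \ref{realdef}. For part A the paper then invokes Proposition \ref{satisfycondition}, which shows that Condition \ref{criterion} holds at \emph{every} point of the Kuranishi space (not just generically); this is the precise statement behind your ``elementary linear algebra''. Your uniform use of $\RR$-surjective \'etale presentations is cleaner and avoids the case-by-case construction of real structures on local deformations.

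One point you should make explicit in part B. You need Condition \ref{criterion} to hold at some point of \emph{each} geometrically connected component $B'$ of your presentation, but Colombo--Pirola only furnish a single curve $[C]\in\ca M_g(\CC)$. The paper closes this gap by observing that Condition \ref{criterion} is Zariski open on the base (the locus $\mr F_k$ is the complement of a determinantal locus), hence dense in the irreducible space $\ca M_g(\CC)$, and therefore meets the nonempty open image of $B'_\CC$. Since $B'\to\ca M_g$ is \'etale, $T_tB'\cong \rm H^1(C_t,T_{C_t})$ and the condition is intrinsic to the fibre, so it lifts to $B'$. Your phrase ``the existing complex verifications apply verbatim'' glosses over this propagation step; without it, the argument for part B is incomplete.
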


 \begin{remarks}
 \begin{enumerate}
 \item
The topology on the moduli space $\va{\ca A_g(\RR)}$ in Theorem \ref{theorem2}.\hyperlink{theoremA}{A} (resp.  $\va{\ca M_g(\RR)}$ in Theorem \ref{theorem2}.\hyperlink{theoremB}{B}) is the real-analytic topology, see Definition \ref{deffer}. It coincides with the topology constructed by Gross--Harris \cite{grossharris} (resp. Sepp\"{a}l\"{a}--Silhol \cite{seppalasilhol2}), see Theorem \ref{th:homeomorphismmoduli} (resp. Theorem \ref{th:homeomorphismmoduli2}). 
\item
Although well-known in the complex case, Theorem \ref{theorem2}.\hyperlink{theoremA}{A} is new in the real case. 
\end{enumerate}
\end{remarks}
\noindent Our proofs rely on results in the complex setting proved by Colombo and Pirola in \cite{Colombo1990}. Indeed, Theorem \ref{th:maindensitytheorem} is the analogue over $\bb R$ (with \textit{unchanged} hypothesis) of the following theorem. Define $S_k \subset B$ to be the set of those $t$ in $B$ for which the complex abelian variety $A_t$ contains a complex abelian subvariety of dimension $k$.
\begin{theorem}[Colombo-Pirola \cite{Colombo1990}] \label{colpol}
If $B$ is connected and if Condition \ref{criterion} holds, then $S_k$ is dense in $B$. $\hfill \qed$
\end{theorem}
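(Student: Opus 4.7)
The plan is to adapt the Green--Voisin density argument for Noether--Lefschetz loci to the setting of weight-one sub-Hodge structures. Fix $t_0 \in B$ and $W_0 \subset V_0 := \rm H^{1,0}(A_{t_0})$ of dimension $k$ satisfying Condition \ref{criterion}. Passing to a contractible neighborhood $U$ of $t_0$ in $B$, trivialize $R^1\psi_\ast\ZZ$ over $U$ so that $\rm H^1(A_t, \ZZ) = H_\ZZ := \rm H^1(A_{t_0}, \ZZ)$ canonically for all $t \in U$; let $V_t \subset H_\CC$ denote the moving Hodge filtration. A primitive rank-$2k$ sublattice $L \subset H_\ZZ$ defines a sub-Hodge structure of $\rm H^1(A_t)$, equivalently the homology of an abelian subvariety of $A_t$ of dimension $k$, precisely when $\dim_\CC(L_\CC \cap V_t) = k$. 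Hence $S_k \cap U = \bigcup_L S_L$ is a countable union of closed complex-analytic subvarieties $S_L := \{t \in U : \dim_\CC(L_\CC \cap V_t) = k\}$ indexed by primitive rank-$2k$ sublattices $L \subset H_\ZZ$.

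Next, the infinitesimal period map yields a tangent-space description of the $S_L$. At a point $t \in S_L$ with $W := L_\CC \cap V_t$, the tangent space $T_t S_L$ equals the kernel of the composite $T_t B \to \Hom(V_t, H_\CC/V_t) \to \Hom(W, H_\CC/(V_t + \bar W))$, and its cokernel identifies dually, via the polarization, with the image of the natural map $W \otimes (V_t/W) \hookrightarrow W \otimes V_t \to (T_tB)^\vee$. Condition \ref{criterion} asserts exactly that at $(t_0, W_0)$ the map $W_0 \otimes V_0 \to (T_{t_0}B)^\vee$ has kernel equal to $\Lambda^2 W_0 \subset W_0 \otimes W_0$; since this kernel lies entirely in the first factor of the decomposition $W_0 \otimes V_0 = W_0 \otimes W_0 \oplus W_0 \otimes (V_0/W_0)$, the induced map $W_0 \otimes (V_0/W_0) \hookrightarrow (T_{t_0}B)^\vee$ is injective, equivalently $T_{t_0} B \twoheadrightarrow \Hom(W_0, (V_0/W_0)^\vee)$ is surjective.

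This surjectivity drives a density argument via approximation by rational $2k$-planes. In the real Grassmannian $\Gr_\RR(2k, H_\RR)$, the rational $2k$-subspaces of $H_\QQ$ form a dense subset. Consider the real-analytic locus $\Sigma \subset U \times \Gr_\RR(2k, H_\RR)$ of pairs $(t, M)$ with $\dim_\CC(M_\CC \cap V_t) \geq k$; the surjectivity obtained above, combined with the implicit function theorem, shows that near $(t_0, W_0 \oplus \bar W_0)$ the projection $\Sigma \to \Gr_\RR(2k, H_\RR)$ is a submersion on its smooth stratum. Hence for every rational $2k$-plane $L_\QQ$ sufficiently close to $W_0 \oplus \bar W_0$, there exists $t \in U$ arbitrarily near $t_0$ with $\dim_\CC(L_\CC \cap V_t) = k$; setting $L := L_\QQ \cap H_\ZZ$ yields a primitive sublattice realizing $t \in S_L \subset S_k$, so $S_k \cap U$ is dense in a neighborhood of $t_0$. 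Finally, Condition \ref{criterion} is an open condition on the pair $(t, W)$ in the relative Grassmannian bundle of $k$-planes in the Hodge bundle, so its projection to $B$ is a non-empty open subset of the connected complex manifold $B$, hence dense; local density of $S_k$ around each such point then propagates to density of $S_k$ on all of $B$. The principal technical obstacle is the real-analytic submersion/implicit function step, where the complex-analytic infinitesimal period data must be carefully reconciled with the real parametrization of $2k$-planes in $H_\RR$.
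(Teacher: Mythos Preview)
The paper does not supply its own proof here; the result is quoted from Colombo--Pirola with a bare $\qed$. One can, however, read off their argument from the paper's proof of the real analogue (Proposition~\ref{propper}), and your proposal follows the same line: map the relative Grassmannian of complex $k$-planes in the Hodge bundle to the fixed real Grassmannian $\Grass_{\RR}(2k,H_{\RR})$, show this map is a submersion where Condition~\ref{criterion} holds, and pull back the dense set of rational $2k$-planes. Your incidence locus $\Sigma$ is, on the stratum where $\dim(M_{\CC}\cap V_t)=k$, exactly the image of the paper's map $\Phi\colon\mr G(k,\ca H^{1,0})\to\Grass_{\RR}(2k,H_{\RR})$ (since then $M_{\CC}=W\oplus\bar W$ with $W=M_{\CC}\cap V_t$), so the two formulations coincide; working directly with the smooth bundle $\mr G(k,\ca H^{1,0})$ avoids the stratification issue you flag at the end. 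One notational slip: there is no natural inclusion $W\otimes(V_t/W)\hookrightarrow W\otimes V_t$; the paper uses $W\otimes W^\perp$ (orthogonal complement for the polarization), which does embed, and this is how the set $\mr E_k$ is defined.

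There is a genuine gap in your final propagation step. You assert that the projection to $B$ of the locus where Condition~\ref{criterion} holds is ``a non-empty open subset of the connected complex manifold $B$, hence dense''---but a non-empty open subset of a connected manifold is not dense in general. The correct argument, used in the proof of Proposition~\ref{propper}, is that the locus $\mr F_k\subset\mr G(k,\ca H^{1,0})$ where Condition~\ref{criterion} holds is the complement of a proper \emph{analytic} subset (it is cut out by the non-vanishing of holomorphic minors), hence dense in the connected bundle. Restricting over any contractible neighbourhood $U\subset B$ then gives $\mr F_k|_U$ dense in $\mr G(k,\ca H^{1,0})|_U$, and the submersion argument yields $S_k\cap U$ dense in $U$ for \emph{every} such $U$, which is what gives density of $S_k$ in all of $B$.
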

\noindent
Colombo and Pirola in turn were inspired by the Green-Voisin Density Criterion \cite[Proposition 17.20]{voisin}. Indeed, the latter gives a criterion for density of the locus where the fiber contains many Hodge classes for a variation of Hodge structure of weight $2$. Theorem \ref{colpol} adapts this result to a polarized variation of Hodge structure of weight $1$ (which is nothing but a polarized family of complex abelian varieties). The result is a criterion for the density of the locus where the fiber admits a sub-Hodge structure of rank $k$. 

To be a bit more precise, recall that for a complex manifold $U$ and a rational weight $2$ variation of Hodge structure $(H_{\bb Q}^2, \ca H, F^1, \nabla)$ on $U$, the \textit{Noether-Lefschetz locus} $\tn{NL}(U) \subset U$ is the locus where the rank of the vector space of Hodge classes is bigger than the general value. If $H_{\bb Q}^2$ is polarizable then $\tn{NL}(U)$ is a countable union of closed algebraic subvarieties of $U$ \cite{MR1273413}. The Green-Voisin Density Criterion referred to above decides whether $\tn{NL}(U)$ is dense in $U$. It was first stated in \cite{NLlocus} and applied to the universal degree $d\geq4$ surface $\mr S \to \mr B$ in $\bb P^3$. In this case, $\tn{NL}(\mr B)$ is the locus where the Picard group is not generated by a hyperplane section, and the union of the general components of $\tn{NL}(\mr B)$ is dense in $\mr B$ [\textit{loc. cit.}]. Analogously, $S_k$ is a countable union of components of $\NL(B)$ \cite{laszlodebarre}, and Condition \ref{criterion} implies that this union is dense in $B$. 

Let us carry the discussion over to the real setting. Unfortunately, the Green-Voisin Density Criterion cannot be adapted to the reals without altering the hypothesis. Going back to the universal family $\mr S \to \mr B$ of degree $d\geq 4$ surfaces in $\bb P^3_{{\bb C}}$, one observes that this family has a real structure, so that we can define the \textit{real Noether-Lefschetz locus} $\NL(\mr B({\bb R})) \subset \mr B({\bb R})$ as the locus of real surfaces $S$ in $\bb P^3_{{\bb R}}$ with $\Pic(S) \neq {\bb Z}$. By the above, the Green-Voisin Density Criterion is fulfilled hence $\NL(\mr B)$ is dense in $\mr B$, whereas density of $\NL(\mr B({\bb R}))$ in $\mr B(\bb R)$ may fail: for every degree $4$ surface in $\bb P^3_{{\bb R}}$ whose real locus is a union of $10$ spheres, $\Pic(S) = {\bb Z}$, and so $\NL(\mr B({\bb R}))  \cap K = \emptyset$ for any connected component $K$ of surfaces of such a topological type \cite[Remark 1.5]{benoistttt}. There is an alternate criterion \cite[Proposition 1.1]{benoistttt}, but the hypothesis is more complicated thus harder to fulfill, and only implies density of $\NL(\mr B({\bb R}))$ in one component of $\mr B(\bb R)$ at a time. It is therefore remarkable that for the real analogue of density of $S_k$ in $B$, none of these problems occur. Theorem \ref{th:maindensitytheorem} shows that the complex density criterion can be carried over to the reals \textit{without changing it}. Condition \ref{criterion} does not involve the real structures at all, applying to any real structure on the family. The result is density of $S_k \subset B$ \textit{and} $R_k \subset B({\bb R})$. It is for this reason that the applications of Theorem \ref{th:maindensitytheorem} are generous: the statements in Theorem \ref{theorem2}, as well as their proofs, are direct analogues of some applications of Theorem \ref{colpol} in \cite{Colombo1990}. 

Finally, we remark that Colombo-Pirola's Theorem \ref{colpol} has been generalized by Ching-Li Chai \cite{Chai1998DensityOM}, who considers a variation of rational Hodge structures over a complex analytic variety and rephrases and answers the following question in the context of Shimura varieties: when do the points corresponding to members having extra Hodge cycles of a given type form a dense subset of the base? It should be interesting to investigate whether such a generalization can be carried over to the real numbers as well.

\section{Abelian subvarieties in a family} \label{realfamily}

In this section, we shall prove Theorem \ref{th:maindensitytheorem}. 
\\
\\
Let
\begin{equation*} 
\left(\psi:  \ca A \to B, \white s: B \to \ca A, \white E \in R^2\psi_*\bb Z\right)
\end{equation*} 
be as in Section \ref{densityintroduction}. Let $\bb V_{{\bb Z}} =  R^1\psi_{*}{\bb Z}$ be the ${\bb Z}$-local system attached to $\psi$. The holomorphic vector bundle $\ca H  =  \bb V_{{\bb Z}}  \otimes_{{\bb Z}} \OO_{B}$ is endowed with a filtration by the holomorphic subbundle $F^1\ca H = \ca H^{1,0} \subset \ca H$, of fiber $$(\ca H^{1,0})_t = \rm H^{1,0}(A_t) \subset \rm H^1(A_t, {\bb C})= \ca H_t.$$ Denote by $\ca H_{{\bb R}}$ the real $C^{\infty}$-subbundle of $\ca H$ whose fibers are $(\ca H_{{\bb R}})_t = \rm H^1( A_t, {\bb R})$. Define $G = \Gal({\bb C}/{\bb R})$. Recall that $\ca A $ and $B$ are endowed with anti-holomorphic involutions $\tau$ and $\sigma$ such that $\psi \circ \tau = \sigma \circ \psi$. The map $\tau$ induces, for each $t \in B$, an anti-holomorphic isomorphism $\tau: A_t \cong A_{\sigma(t)}$. The pullback of $\tau$ gives an isomorphism $\tau^*: \ca H_{\sigma(t)} \to \ca H_{t}$ inducing an involution of differentiable manifolds
$$
F_{\infty}: \ca H \to \ca H 
$$
over the involution $\sigma: B \to B$. Composing $F_{\infty}$ fiberwise with complex conjugation provides an involution of differentiable bundles 
$$
F_{dR}: \ca H \to \ca H
$$ over $\sigma \colon B \to B$ which respects the Hodge decomposition by \cite[I, Lemma 2.4]{silholsurfaces}. 

Let $\mr G(k,\ca H^{1,0})$ be the complex Grassmannian bundle of complex $k$-planes in $\ca H^{1,0}$ over $B$, and $\mr G(2k, \ca H_{{\bb R}})$ the real Grassmannian bundle of real $2k$-planes in $\ca H_{{\bb R}}$ over $B$. We see that $G$ acts on these bundles via the morphisms $F_{dR}: \ca H^{1,0} \to \ca H^{1,0}$ and $\tau^\ast: \ca H_{{\bb R}} \to \ca H_{{\bb R}}$. Since the diffeomorphism $\ca H_{{\bb R}} \to \ca H^{1,0}$, defined as the composition of morphisms
$$
\ca H_{{\bb R}} \hookrightarrow \ca H = \ca H^{1,0} \oplus \ca H^{0,1} \to \ca H^{1,0},
$$
is $G$-equivariant, it induces a $G$-equivariant morphism of differentiable manifolds
\[
\mr G(k,\ca H^{1,0}) \to \mr G(2k, \ca H_{{\bb R}}).
\]
Let $0 \in B({\bb R})$ and choose a $G$-stable contractible neighbourhood $U$ of $0$ in $B$. Trivialize $\bb V_{\bb Z}$ over $U$, which trivializes $\mr G(2k, \ca H_{{\bb R}})$ over $U$, and consider the morphism $\Phi$ defined as the composition
\begin{align*}
\mr G(k,\ca H^{1,0})|_U &\to \mr G(2k, \ca H_{{\bb R}})|_U \\
&\cong U \times \tn{Grass}_{{\bb R}}(2k, \rm H^{1}(A_0, {\bb R})) \to \tn{Grass}_{{\bb R}}(2k, \rm H^{1}(A_0, {\bb R})).
\end{align*} Let $j$ be the canonical map $\tn{Grass}_{{\bb Q}}(2k, \rm H^{1}(A_0, {\bb Q})) \to \tn{Grass}_{{\bb R}}(2k, \rm H^{1}(A_0, {\bb R}))$. We obtain the following diagram: 
\begin{equation} \label{Skdiagram}
\xymatrixcolsep{5pc}
\xymatrix{
\mr G(k,\ca H^{1,0})|_U\ar[d]^f \ar[r]^{\Phi \hspace{1cm}} & \tn{Grass}_{{\bb R}}(2k, \rm H^{1}(A_0, {\bb R})) \\
U & \tn{Grass}_{{\bb Q}}(2k, \rm H^{1}(A_0, {\bb Q})). \ar[u]_j
}
\end{equation}
\noindent
Write $U(\bb R) = U \cap B(\bb R)$. Diagram (\ref{Skdiagram}) provides a parametrization of polarized real abelian varieties containing a $k$-dimensional real abelian subvariety:

\begin{proposition} \label{gequivprop00}
\begin{enumerate}  \label{gequivprop0}
\item  \label{gequivprop1}
The morphisms 
$
f$, $\Phi$ and $j$ in diagram (\ref{Skdiagram}) are $G$-equivariant. 
\item \label{gequivprop2} 
We have 
$$
f(\Phi^{-1}(j(\tn{Grass}_{{\bb Q}}(2k, \rm H^{1}(A_0, \bb Q))^G))) = R_k \cap U({\bb R}),
$$
where $R_k \subset B(\RR)$ is the set defined in (\ref{rk}). 
\end{enumerate}
\end{proposition}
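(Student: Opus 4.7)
The plan is to dissect the proposition into its two parts and reduce each one to a careful bookkeeping of how the Galois group acts at each stage of the diagram~(\ref{Skdiagram}).

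For part~\ref{gequivprop1}, I would first choose the trivialization $\bb V_{\bb Z}|_U \cong \underline{\rm H^1(A_0,\bb Z)}$ to be the one induced by parallel transport from the base point $0 \in U(\bb R)$; since $U$ is $G$-stable and contractible, parallel transport is compatible with $F_\infty$, so this trivialization is $G$-equivariant. With this choice, the induced $G$-action on $U \times \tn{Grass}_{\bb R}(2k,\rm H^1(A_0,\bb R))$ is simply $(\sigma,\tau^*)$. The map $f$ is $G$-equivariant by the very definition of the $G$-action on $\mr G(k,\ca H^{1,0})$ (which was set up to lie over $\sigma$ on $B$ via $F_{dR}$). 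The map $j$ is $G$-equivariant because the $G$-action on both Grassmannians is induced by $\tau^*\colon \rm H^1(A_0,-)\to \rm H^1(A_0,-)$, which preserves the $\bb Q$-structure. Finally, $\Phi$ is the composition of the $G$-equivariant morphism $\mr G(k,\ca H^{1,0})|_U \to \mr G(2k,\ca H_{\bb R})|_U$ recalled in the introduction, the $G$-equivariant trivialization above, and the $G$-equivariant projection on the second factor.

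For part~\ref{gequivprop2}, I would translate the geometric condition ``$A_t$ contains a real abelian subvariety of dimension $k$'' into Hodge-theoretic data. Concretely, I will invoke the classical dictionary
\[
\set{\textnormal{abelian subvarieties } B \subset A_t \textnormal{ of dimension } k} \;\longleftrightarrow\; \set{\textnormal{rank } 2k \textnormal{ primitive sublattices } \Lambda \subset \rm H^1(A_t,\bb Z) \textnormal{ which are sub-Hodge structures}},
\]
together with the observation that $B$ descends to $\bb R$ precisely when $\Lambda$ is stable under the involution $\tau^*\colon\rm H^1(A_t,\bb Z)\to\rm H^1(A_t,\bb Z)$. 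Under the identification $\rm H^1(A_t,\bb R)\cong \rm H^{1,0}(A_t)$ coming from the Hodge decomposition, a rank $2k$ real sub-Hodge structure corresponds to a $k$-dimensional complex subspace $W\subset \rm H^{1,0}(A_t)$ together with the additional rationality condition that $W\oplus\overline W\subset\rm H^1(A_t,\bb C)$ is defined over $\bb Q$.

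Combining these two observations with part~\ref{gequivprop1}, for $t\in U(\bb R)$ the class $t$ lies in $R_k$ if and only if there is $W\in\mr G(k,\ca H^{1,0})_t$ such that the real $2k$-plane $\Phi(W)$ is both defined over $\bb Q$ (i.e.\ lies in the image of $j$) and $G$-stable (i.e.\ lies in $\tn{Grass}_{\bb Q}(2k,\rm H^1(A_0,\bb Q))^G$, using the $G$-equivariance of the trivialization to transport $\tau^*$-stability on $\rm H^1(A_t,\bb Q)$ to $G$-stability on $\rm H^1(A_0,\bb Q)$). Since $f(W)=t$, this is precisely the equality claimed in part~\ref{gequivprop2}. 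The main thing to be careful about is the bookkeeping of $G$-actions across all the identifications; everything flows once the trivialization of $\bb V_{\bb Z}|_U$ is fixed to be $G$-equivariant.
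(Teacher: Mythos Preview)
Your proposal is correct and follows essentially the same approach as the paper: for part~\ref{gequivprop1} the paper likewise pins down the trivialization as the unique one restricting to the identity at $0$ (hence $G$-equivariant), and for part~\ref{gequivprop2} it uses the same Hodge-theoretic dictionary between $k$-dimensional abelian subvarieties of $A_t$ and rational $2k$-planes in $\rm H^1(A_t,\bb Q)$ whose real span comes from a $k$-plane in $\rm H^{1,0}(A_t)$, together with the observation that reality of the subvariety corresponds to $\tau^*$-stability of the rational plane. The only organizational difference is that the paper isolates the last step---the equivalence between $\tau$-stability of the subtorus $W_1^*/L$ and $F_{dR}$-stability of the rational plane---as a separate lemma (Lemma~\ref{lemmatje}) spelling out the threefold correspondence between real structures on $A_t$, involutions $F_\infty$ on $\rm H_1(A_t,\bb Z)$ swapping the Hodge pieces, and anti-linear involutions $F_{dR}$ on $\rm H^{1,0}(A_t)^\vee$ preserving the lattice.
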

\begin{proof} 
1. The fact that $f$ and $j$ are $G$-equivariant is immediate from the description of $F_{dR}$. As for the morphism $\Phi$, it suffices to show that the trivialization $$\ca H_{\bb R} \cong U \times \rm H^1(A_0, \bb R)$$ is $G$-equivariant. This map is induced by the restriction $r: \bb V_{\bb R}|_U \to (\bb V_{\bb R})_0$, which is an isomorphism of local systems; but $r$ is unique if we require that $r$ induces the identity on $(\bb V_{\bb R})_0$. 

2. Let $t \in U({\bb R})$ and consider the polarized real abelian variety $(A_t, \tau)$. We have $A_t \cong V / \Lambda$, where $V \cong \rm H^{1,0}(A_t)^{*}$ and $\Lambda \cong \rm H_1(A_t, {\bb Z})$. It follows that $A_t$ contains a complex abelian subvariety $X$ of dimension $k$ if and only if there exists a $k$-dimensional ${\bb C}$-vector subspace $W_1 \subset \rm H^{1,0}(A_t)$ and a $k$-dimensional $\bb Q$-vector subspace $W_2 \subset \rm H^1(A_t, \bb Q)$ such that, under the canonical real isomorphism $\rm H^{1,0}(A_t) \cong \rm H^1(A_t, {\bb R})$, the space $W_1$ is identified with $W_2 \otimes {\bb R}$. In this case, $L = W_2^* \cap \Lambda$ is a lattice in $W_1^*$, and $X = W_1^{*} / L$. Then observe that the $k$-dimensional complex abelian subvariety $X \subset A_t$ is a $k$-dimensional real abelian subvariety $(X, \tau|_X)$ of $(A_t, \tau)$ if and only if $\tau(X)  = \tau(W_1^{*} / L) = W_1^{*} / L = X$. The latter is equivalent to $F_{dR}(W_2) = W_2$ by Lemma \ref{lemmatje} below, and we are done.
\end{proof}
\begin{lemma} \label{lemmatje}
Let $A$ be a complex torus, let $\Lambda = \rm H_1(A, \ZZ)$ and consider the Hodge decomposition $\Lambda_\CC = V^{-1,0} \oplus V^{0,-1}$. For an anti-holomorphic involution $\sigma: A \to A$ such that $\sigma(e) = e$, let $F_\infty(\sigma): \Lambda \to \Lambda$ be the pushforward of $\sigma$, and let $F_{dR}(\sigma): V^{-1,0}  \to V^{-1,0} $ correspond to the differential $d\sigma_e: T_eA \to T_eA$. This defines a bijection between:
\begin{enumerate}
    \item[(i)] The set of real structures $\sigma: A \to A$.
    \item[(ii)] The set of involutions $F_\infty: \Lambda \to \Lambda$ such that $F_{\infty, \CC} \left( V^{-1,0}  \right) = V^{0, -1} $.
    \item[(iii)] The set of anti-linear involutions $F_{dR}: V^{-1,0}  \to V^{-1,0} $ such that $F_{dR} \left( \Lambda  \right) = \Lambda $.
\end{enumerate}
\end{lemma}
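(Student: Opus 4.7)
The plan is to exhibit explicit maps between the three sets and verify that they are mutually inverse bijections, pivoting on the canonical identification of $V^{-1,0}$ with $T_eA$. Specifically, the projection $\pi \colon \Lambda_\CC = V^{-1,0} \oplus V^{0,-1} \twoheadrightarrow V^{-1,0}$, restricted to $\Lambda_\RR$, is an $\RR$-linear isomorphism $\phi \colon \Lambda_\RR \xrightarrow{\sim} V^{-1,0}$ (since complex conjugation swaps the two Hodge summands). It sends a class $x \in \Lambda_\RR$ to the unique $v \in V^{-1,0}$ with $x = v + \bar v$, and it identifies $V^{-1,0}$ with $T_eA$ as complex vector spaces and the image of $\Lambda$ with the defining lattice of $A = T_eA/\Lambda$.

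The bijection (i) $\leftrightarrow$ (ii) uses the standard fact that a continuous map $\sigma \colon A \to A$ fixing $e$ is anti-holomorphic if and only if it lifts to a conjugate-linear map $T_eA \to T_eA$ preserving $\Lambda$; the lift is $d\sigma_e$, and the induced map on $\Lambda = H_1(A,\ZZ)$ is $F_\infty \coloneqq \sigma_*$. Under $\phi$, the condition that $d\sigma_e$ is conjugate-linear for the complex structure of $T_eA$ translates into the condition that the $\CC$-linear extension of $F_\infty$ to $\Lambda_\CC$ swaps $V^{-1,0}$ with $V^{0,-1}$, which is precisely (ii). Involutivity and fixing $e$ transfer directly under this correspondence. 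For the bijection (ii) $\leftrightarrow$ (iii), given $F_\infty$ as in (ii) I define $F_{dR}$ to be the composition
\[
V^{-1,0} \xrightarrow{F_{\infty,\CC}|_{V^{-1,0}}} V^{0,-1} \xrightarrow{\;\overline{\phantom{x}}\;} V^{-1,0},
\]
where the second arrow is the canonical complex conjugation on $\Lambda_\CC$ (well-defined since $F_\infty$ is real). A short check shows $F_{dR} = \phi \circ F_\infty \circ \phi^{-1}$: indeed, for $v \in V^{-1,0}$ one has $\phi^{-1}(v) = v+\bar v$, and $F_\infty(v+\bar v) = F_{\infty,\CC}(v) + \overline{F_{\infty,\CC}(v)}$ lies in $\Lambda_\RR$ with $\phi$-image $\overline{F_{\infty,\CC}(v)} = F_{dR}(v)$. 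Since complex conjugation is anti-linear and $F_{\infty,\CC}$ is $\CC$-linear, $F_{dR}$ is anti-linear; the preservation $F_{dR}(\Lambda) = \Lambda$ in $V^{-1,0}$ follows from $F_\infty(\Lambda) = \Lambda$ under $\phi$.

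Putting these together, (iii) recovers (i) by descending the anti-$\CC$-linear involution $F_{dR} \colon T_eA \to T_eA$, which preserves the defining lattice $\Lambda$, to an anti-holomorphic involution of $A$ fixing $e$. All three constructions are manifestly functorial in the three sets and invert one another by the identity $F_{dR} = \phi \circ F_\infty \circ \phi^{-1}$ checked above together with the uniqueness of lifts of $(A,\sigma)$ to $(T_eA, d\sigma_e)$. The only delicate point, which will be the focus of the write-up, is matching the two notions of ``conjugate-linearity'': namely, the conjugate-linearity of $d\sigma_e$ for the intrinsic complex structure on $T_eA$ is \emph{equivalent} via $\phi$ to the Hodge-swapping condition $F_{\infty,\CC}(V^{-1,0}) = V^{0,-1}$, a point I will verify by a direct decomposition $x = v + \bar v$ as above.
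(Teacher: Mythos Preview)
Your proposal is correct and follows essentially the same approach as the paper: the paper defines the map (ii) $\to$ (iii) exactly as you do, namely $F_{dR} = \mathrm{conj} \circ F_{\infty,\CC}|_{V^{-1,0}}$, and identifies (i) $\leftrightarrow$ (iii) with the universal-cover correspondence between anti-holomorphic involutions of $A$ fixing $e$ and lattice-preserving anti-linear involutions of $T_eA$. Your write-up is more explicit (particularly the identity $F_{dR} = \phi \circ F_\infty \circ \phi^{-1}$ and the direct verification that conjugate-linearity of $d\sigma_e$ is equivalent to the Hodge-swapping condition), whereas the paper cites \cite[Chapter I, Lemma (2.4)]{silholsurfaces} for the latter point, but the argument is the same.
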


\begin{proof}
If $\sigma: A \to A$ is as in $(i)$, then $F_\infty(\sigma)_\CC$ interchanges the factors of the Hodge decomposition by \cite[Chapter I, Lemma (2.4)]{silholsurfaces}, so $\sigma \mapsto F_\infty(\sigma)$ is a well-defined map $(i) \to (ii)$. For $F_\infty: \Lambda \to \Lambda$ as in $(ii)$, the restriction to $V^{-1,0}$ of the composition of $F_{\infty, \CC} $ with complex conjugation defines a map $F_{dR} = \text{conj} \circ F_{\infty, \CC} : V^{-1,0} \to V^{-1,0}$ as in $(iii)$, which equals $F_{dR}(\sigma)$ when $F_\infty=F_\infty(\sigma)$. Then $F_\infty \mapsto \text{conj} \circ F_{\infty, \CC}$ determines the bijection between $(ii)$ and $(iii)$. Finally, the map $\sigma \mapsto F_{dR}(\sigma)$, $(i) \to (iii)$ is the map giving the natural correspondence between anti-holomorphic involutions on $A$ preserving $e$ and anti-holomorphic involutions on its universal cover $T_eA$ that preserve $0$ and are compatible with $\pi_1(A,e) = \rm H_1(A, \ZZ)$-orbits.
\end{proof}

\subsection{Proving the density theorem}

For a smooth manifold $W$ on which a compact Lie group $H$ acts by diffeomorphisms, the set of fixed points $W^H$ has a natural manifold structure that makes it a submanifold of $W$ \cite[Corollary I.2.3]{audin}. $\Phi$ in Diagram (\ref{Skdiagram}) is $G$-equivariant by Proposition \ref{gequivprop00}.\ref{gequivprop1}; denote by $\Phi^G$ the induced morphism on fixed spaces. We obtain a diagram of $\ca C^{\infty}$-manifolds:

\begin{equation} \label{Skdiagram2}
\xymatrixcolsep{5pc}
\xymatrix{
\mr G(k,\ca H^{1,0})^G|_{U(\bb R)} \ar[d]^f \ar[r]^{\Phi^G\hspace{1cm}} & \tn{Grass}_{{\bb R}}(2k, \rm H^{1}(A_0, {\bb R}))^G \\
U({\bb R}) &  \tn{Grass}_{{\bb Q}}(2k, \rm H^{1}(A_0, {\bb Q}))^G. \ar[u]_j
}
\end{equation}
\noindent
Recall that we obtained the equality $$f(\Phi^{-1}(j(\tn{Grass}_{{\bb Q}}(2k, \rm H^{1}(A_0, \bb Q))^G))) = R_k \cap U({\bb R})$$ in Proposition \ref{gequivprop00}.\ref{gequivprop2}. Recall also the symmetric bilinear form $$
q: \rm H^{1,0}(A_t) \otimes \rm H^{1,0}(A_t) \to (T_tU)^\vee
$$ from Section \ref{densityintroduction}, given by the differential of the period map and the isomorphism $H^{0,1}(A_t) \cong \rm H^{1,0}(A_t)^\vee$ which the polarization induces. Finally, recall the notation (Equation (\ref{rk}), \S \ref{densityintroduction}) $$ R_k  = \{ t \in B({\bb R}): A_t \text{\textit{ contains an abelian subvariety over ${\bb R}$ of dimension }} k \}.$$
Let us fix notation and introduce the aim of this section. 
\begin{notation}
For $t \in B$ and $W \in \Grass_{\bb C}(k, \rm H^{1,0}(A_t))$, let $W^\perp$ denote the orthogonal complement of $W$ in $\rm H^{1,0}(A_t)$ with respect to the polarization $E_t$. Define the sets $\mr E_k, \mr F_k \subset \mr G(k, \ca H^{1,0})$ and $\mr E_k(\bb R), \mr F_k(\bb R),\mr R_{k,U} \subset \mr G(k, \ca H^{1,0})^G$ as follows:
\begin{equation*}
\mr E_k = \{(t, W) \in \mr G(k, \ca H^{1,0}) \tn{ : } 0 \to W \otimes W^\perp \to (T_tB)^\vee \ \tn{ is exact} \}
\end{equation*}
\begin{equation*}
\mr F_k = \{(t, W) \in \mr G(k, \ca H^{1,0}) \tn{ : } 0 \to  \wedge^2 W\to W \otimes \rm H^{1,0}(A_t) \to (T_tB)^\vee  \tn{ is exact} \} 
\end{equation*}
\begin{equation*}
\mr E_k(\bb R) = \mr E_k \cap \mr G(k, \ca H^{1,0})^G
\end{equation*}
\begin{equation*}
\mr F_k(\bb R) = \mr F_k \cap \mr G(k, \ca H^{1,0})^G
\end{equation*}
\begin{equation*}
\mr R_{k,U} = \Phi^{-1}(j (\tn{Grass}_{\bb Q}(2k, \rm H^{1}(A_0, \bb Q))^G)).
\end{equation*}
\end{notation}
\noindent
Note that $f(\mr R_{k,U}) = R_k \cap U(\bb R)$. Our strategy to prove Theorem \ref{th:maindensitytheorem} is the following:\begin{proposition} \label{propper}
Let $\mr E_k, \mr F_k , \mr R_{k,U} \subset \mr G(k, \ca H^{1,0})$ be as above. One has inclusions
\begin{align*}
\xymatrixcolsep{0.8pc}
\xymatrix{
\mr F_k(\bb R)|_{U(\bb R)} \white \ar@{^{(}->}[r] \white &\white \mr E_k(\bb R)|_{U(\bb R)} \white \ar@{^{(}->}[r]\white&\white \overline{\mr R_{k,U}} \white\ar@{^{(}->}[r]\white& \white \mr G(k, \ca H^{1,0})^G|_{U(\bb R)} .
}
\end{align*}
If $\mr F_k$ is not empty then $\mr F_k(\bb R)$ is dense in $\mr G(k, \ca H^{1,0})^G$. Consequently, if $\mr F_k \neq \emptyset$, then $\mr R_{k,U}$ is dense in $\mr G(k, \ca H^{1,0})^G|_{U(\bb R)}$; in particular, then $R_k \cap U(\bb R)$ is dense in $U(\bb R)$.
\end{proposition}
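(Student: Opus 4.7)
The first inclusion $\mr F_k(\bb R) \subset \mr E_k(\bb R)$ is a purely linear algebraic consequence of the symmetry of $q$. Given $(t, W) \in \mr F_k$, the decomposition $W \otimes \rm H^{1,0}(A_t) = W \otimes W \oplus W \otimes W^\perp$ and the symmetry of $q$ imply that $q$ vanishes on the antisymmetric summand $\wedge^2 W \subset W \otimes W$, so $\wedge^2 W$ lies in the kernel of $W \otimes \rm H^{1,0}(A_t) \to (T_tB)^\vee$. Exactness of the $\mr F_k$-complex forces the kernel to equal $\wedge^2 W$; intersecting with $W \otimes W^\perp$, which meets $\wedge^2 W$ trivially, yields injectivity of $W \otimes W^\perp \to (T_tB)^\vee$, which is the $\mr E_k$-condition. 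The remaining inclusion $\overline{\mr R_{k,U}} \subset \mr G(k, \ca H^{1,0})^G|_{U(\bb R)}$ is immediate from the definitions.

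The central inclusion $\mr E_k(\bb R)|_{U(\bb R)} \subset \overline{\mr R_{k,U}}$ is what I expect to be the main obstacle, as it requires a careful analysis of $d\Phi$. The plan is to prove that $\Phi$ is a submersion at every point of $\mr E_k$; by the $G$-equivariance established in Proposition \ref{gequivprop00}, $\Phi^G$ is then a submersion at every point of $\mr E_k(\bb R)$. Following Colombo--Pirola, one decomposes $d\Phi_{(t,W)}$ into a Grassmannian-direction and a base-direction part. The Grassmannian-direction part maps isomorphically onto the complex-linear component of the target tangent space $\Hom_{\bb R}(W, \rm H^1(A_0, \bb R)/W)$. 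The base-direction part, restricted to the conjugate-linear component of the target, is essentially the transpose of the map $W \otimes W^\perp \to (T_tB)^\vee$ governed by $q$, which is injective exactly when $(t, W) \in \mr E_k$. Hence $d\Phi$ is surjective at every point of $\mr E_k$. The implicit function theorem, together with the density of $j(\Grass_{\bb Q}(2k, \rm H^1(A_0, \bb Q))^G)$ in $\Grass_{\bb R}(2k, \rm H^1(A_0, \bb R))^G$, then forces $\mr R_{k,U}$ to be dense near any point of $\mr E_k(\bb R)$.

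For the density of $\mr F_k(\bb R)$ in $\mr G(k, \ca H^{1,0})^G$, I would argue as follows. The condition defining $\mr F_k$ is the maximality of rank of a morphism of holomorphic vector bundles, so $\mr F_k$ is Zariski open in $\mr G(k, \ca H^{1,0})$. Since $B$ is connected, $\mr G(k, \ca H^{1,0})$ is a connected complex manifold; assuming $\mr F_k \neq \emptyset$, its complement $Z$ is a proper closed complex analytic subset. Now $\mr G(k, \ca H^{1,0})^G$ is a totally real real-analytic submanifold whose real dimension equals $\dim_{\bb C} \mr G(k, \ca H^{1,0})$. A proper complex analytic subset cannot contain a non-empty open subset of such a maximal totally real submanifold: in local coordinates where the complex manifold is an open in $\bb C^n$ and the totally real submanifold is $\bb R^n$, a holomorphic function vanishing on a non-empty open subset of $\bb R^n$ vanishes identically on a neighborhood in $\bb C^n$. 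Therefore $Z \cap \mr G(k, \ca H^{1,0})^G$ is nowhere dense in $\mr G(k, \ca H^{1,0})^G$, and $\mr F_k(\bb R)$ is dense there.

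Combining the three steps, $\mr F_k(\bb R)|_{U(\bb R)}$ is dense in $\mr G(k, \ca H^{1,0})^G|_{U(\bb R)}$ and contained in $\overline{\mr R_{k,U}}$, so $\mr R_{k,U}$ is itself dense in $\mr G(k, \ca H^{1,0})^G|_{U(\bb R)}$. The projection $f$ is a surjective submersion of real-analytic manifolds and in particular open, so $R_k \cap U(\bb R) = f(\mr R_{k,U})$ is dense in $U(\bb R)$, which is the final conclusion of the proposition.
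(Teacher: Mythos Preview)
Your proposal is correct and follows essentially the same route as the paper's proof: the same linear-algebraic reduction for $\mr F_k \subset \mr E_k$, the same identification of $\mr E_k$ with the maximal-rank locus of $d\Phi$ (which the paper simply cites from \cite{Colombo1990} while you sketch the differential computation), the same passage to $\Phi^G$ via the fact that $d\Phi$ at a $G$-fixed point is the complexification of $d(\Phi^G)$, and the same totally-real argument for the density of $\mr F_k(\bb R)$. The one place where the paper is slightly more careful is the density of $j(\Grass_{\bb Q}(2k,\rm H^1(A_0,\bb Q))^G)$ in $\Grass_{\bb R}(2k,\rm H^1(A_0,\bb R))^G$, which it isolates as a separate lemma (using that $F_\infty$ is diagonalizable over $\bb Q$ to reduce to products of ordinary Grassmannians); you invoke this as known, which is fine for a proposal but worth stating explicitly.
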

\noindent
Before proving Proposition \ref{propper} we remark that it implies Theorem \ref{th:maindensitytheorem}. 

\begin{proof}[Proof of Theorem \ref{th:maindensitytheorem}]
It suffices to show that for each $x \in B(\bb R)$ and any $G$-stable contractible open neighborhood $x \in U \subset B$, the set $R_k \cap U \cap B(\bb R)$ is dense in $U \cap B(\bb R)$. Let $x \in U \subset B$ be such a real point and open neighborhood. Condition \ref{criterion} is satisfied if and only if $\mr F_k$ is non-empty; by Proposition \ref{propper}, we are done.
\end{proof}

\begin{proof}[Proof of Proposition \ref{propper}]
We first prove the inclusions. We have $\mr F_k \subset \mr E_k$: if $(t, W) \in \mr F_k$ then $\Ker( W \otimes \rm H^{1,0}(A_t) \to (T_tB)^\vee) \subset W \otimes W$. Hence $\mr F_k(\bb R)|_{U(\bb R)} \subset \mr E_k(\bb R)|_{U(\bb R)} $.

For the second inclusion, consider again the map 
$$
\Phi: \mr G(k,\ca H^{1,0})|_U \to \Grass_{\bb R}(2k, \rm H^1(A_0, \bb R))
$$
as in diagram (\ref{Skdiagram}). Define a set $Y \subset \mr G(k,\ca H^{1,0})$ as follows:
$$
Y = \{ x \in \mr G(k,\ca H^{1,0})|_U \mid \tn{ \textit{the rank of} } d\Phi \tn{ \textit{is maximal at} }x  \}  \subset \mr G(k, \ca H^{1,0})|_U.
$$
Then $ Y = \mr E_k|_U$ by \cite[\S 1]{Colombo1990}. Let $Z$ be the set of points $x \in \mr G(k,\ca H^{1,0})^G|_{U(\bb R)}$ such that the differential $$d(\Phi^G): T_x\mr G(k,\ca H^{1,0})^G|_{U(\bb R)}  \to T_{\Phi(x)}\tn{Grass}_{{\bb R}}(2k, \rm H^{1}(A_0, {\bb R}))^G$$ 
of $\Phi^G$ at $x$ has maximal rank. We claim that $$
Z = Y \cap \mr G(k,\ca H^{1,0})^G|_{U(\bb R)}.$$ Indeed, for a point $x \in \mr G(k, \ca H^{1,0})^G|_{U(\bb R)}$, the rank of $d(\Phi^G)$ is maximal at $x$ if and only if the rank of $d \Phi$ is maximal at $x$. The latter is true because $$T_x\mr G(k, \ca H^{1,0})  = (T_x\mr G(k, \ca H^{1,0}))^G \otimes_{{\bb R}} {\bb C} = T_x\mr G(k, \ca H^{1,0})^G \otimes_{{\bb R}} {\bb C},$$ and similarly for $T_{\Phi(x)} \tn{Grass}_{{\bb R}}(2k,\rm H^1(A_0, {\bb R}))$, and because the differential $d\Phi$ at the point $x \in \mr G(k,\ca H^{1,0})^G|_{U(\bb R)}$ is the complexification of the differential $d( \Phi^G)$ at $x$.

Consequently, 
\[
Z = Y \cap \mr G(k,\ca H^{1,0})^G|_{U(\bb R)} = \mr E_k \cap \mr G(k, \ca H^{1,0})^G|_{U(\bb R)} = \mr E_k(\bb R)|_{U(\bb R)}.
\]
It follows that the morphism
$$
\Phi^G: \mr G(k,\ca H^{1,0})^G|_{U(\bb R)} \to \tn{Grass}_{{\bb R}}(2k, \rm H^{1}(A_0, {\bb R}))^G
$$
is open when restricted to the set $\mr E_k(\bb R)|_{U(\bb R)}$. Since $j(\tn{Grass}_{{\bb Q}}(2k, \rm H^{1}(A_0, {\bb Q}))^G)$ is dense in $\tn{Grass}_{{\bb R}}(2k, \rm H^{1}(A_0, {\bb R}))^G$ by Lemma \ref{lemmabetween} below, the set $$\Phi^{-1}( j(\tn{Grass}_{{\bb Q}}(2k, \rm H^{1}(A_0, {\bb Q}))^G)) \cap \mr E_k(\bb R)|_{U(\bb R)}$$ is dense in $\mr E_k(\bb R)|_{U(\bb R)}$. The equality $$\Phi^{-1}( j(\tn{Grass}_{{\bb Q}}(2k, \rm H^{1}(A_0, {\bb Q}))^G)) \cap \mr E_k(\bb R)|_{U(\bb R)} = \mr R_{k,U} \cap  \mr E_k(\bb R)|_{U(\bb R)}$$ implies that $\mr R_{k,U} \cap  \mr E_k(\bb R)|_{U(\bb R)}$ is dense in $ \mr E_k(\bb R)|_{U(\bb R)}$. In particular, $ \mr E_k(\bb R)|_{U(\bb R)}$ is contained in the closure of $\mr R_{k,U}$ in $ \mr G(k,\ca H^{1,0})^G|_{U(\bb R)} $. The inclusions are thus proved.
\\
\\
It remains to prove the assertion that non-emptiness of $\mr F_k$ implies density of $\mr F_k(\bb R)$ in $\mr G(k, \ca H^{1,0})^G$. Write $\mr G = \mr G(k, \ca H^{1,0})$ and $\mr G(\RR) = \mr G^G$. Let $Z_k \subset \mr G $ be the complement of $\mr F_k$ in $\mr G$. Observe that $Z_k$ equals the set of pairs $(t,W)\in \mr G $ such that the injection $$\iota \colon \bigwedge^2 W \to \Ker ( W  \otimes \rm H^{1,0}(A_t) \to (T_tB)^\vee)$$ is not an isomorphism. This latter map $\iota$ varies holomorphically; therefore, locally on $\mr G$, the set of points where the rank is not maximal is determined by the vanishing of minors in a matrix with holomorphic coefficients. Since $\mr G$ is connected, it follows that $Z_k$ is nowhere dense in $\mr G$. 

If $n = \dim_\CC(\mr G)$, then $\mr G(\RR)$ is a closed real submanifold of $\mr G$ of real dimension $n$, and we claim that $Z_k(\RR)$ is nowhere dense in $\mr G(\RR)$. Suppose for contradiction that $Z_k(\RR)$ contains a non-empty open set $V \subset \mr G(\RR)$. Locally around $t\in V$, $t \in V \subset \mr G(\RR)$ is the inclusion of the real locus $ 0 \in B(\RR) \subset \RR^n$ of a euclidean open ball $0 \in B \subset \CC^n$, such that, for some holomorphic functions $f_i: B \to \CC$, one has 
$$B \cap Z_k = \{f_1 =  \dotsc = f_m = 0 \}.$$ Since the $f_i$ vanish on $B(\RR)$, they vanish on $B$, hence $B \subset Z_k$ which is absurd.
\end{proof}

\begin{lemma}\label{lemmabetween} Let $n, k \in \bb Z_{\geq 0}$. Let $V$ be an $n$-dimensional vector space over $\bb Q$ and consider a linear transformation $F \in \textnormal{GL}(V)_{\bb Q}$ which is diagonalizable over $\bb Q$. For $L \in \{\QQ, \RR\}$, denote by $\bb G(k, V)^F(L)$ the set of $k$-dimensional $L$-subvector spaces $W \subset V \otimes_{\bb Q} L$ for which $F(W) = W$. Then $\bb G(k, V)^F(\bb Q)$ is dense in $\bb G(k, V)^F(\bb R)$. 
\end{lemma}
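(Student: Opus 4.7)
The plan is to reduce the statement to the classical density of rational points in ordinary Grassmannians by exploiting the hypothesis that $F$ is diagonalizable over $\mathbb{Q}$.

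First, I would decompose $V$ into $F$-eigenspaces over $\mathbb{Q}$. Since $F \in \mathrm{GL}(V)_\mathbb{Q}$ is diagonalizable over $\mathbb{Q}$, we may write
\[
V = \bigoplus_{\lambda \in \Sigma} V_\lambda, \qquad \Sigma \subset \mathbb{Q}^\ast \text{ the set of eigenvalues of } F,
\]
where $V_\lambda = \ker(F - \lambda \cdot \mathrm{id}_V)$. The key linear-algebra observation is that for \emph{any} field extension $L/\mathbb{Q}$, any $F$-stable $L$-subspace $W \subset V \otimes_\mathbb{Q} L$ decomposes as
\[
W = \bigoplus_{\lambda \in \Sigma} \bigl(W \cap (V_\lambda \otimes_\mathbb{Q} L)\bigr),
\]
because the restriction of $F$ to $W$ is still diagonalizable with eigenvalues in $\Sigma$, and the $\lambda$-eigenspace of $F|_W$ equals $W \cap (V_\lambda \otimes_\mathbb{Q} L)$.

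Consequently, for each tuple $\underline{k} = (k_\lambda)_{\lambda \in \Sigma}$ of non-negative integers with $\sum_\lambda k_\lambda = k$ and $k_\lambda \leq \dim_\mathbb{Q} V_\lambda$, setting
\[
\mathbb{G}(k,V)^F_{\underline{k}}(L) = \Bigl\{ W \in \mathbb{G}(k,V)^F(L) : \dim_L\bigl(W \cap (V_\lambda \otimes L)\bigr) = k_\lambda \Bigr\},
\]
the assignment $W \mapsto \bigl(W \cap (V_\lambda \otimes L)\bigr)_{\lambda}$ gives a bijection
\[
\mathbb{G}(k,V)^F_{\underline{k}}(L) \;\cong\; \prod_{\lambda \in \Sigma} \mathbb{G}(k_\lambda, V_\lambda)(L),
\]
and $\mathbb{G}(k,V)^F(L) = \bigsqcup_{\underline{k}} \mathbb{G}(k,V)^F_{\underline{k}}(L)$. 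Taking $L = \mathbb{R}$, this bijection is a homeomorphism for the natural real-analytic topology on both sides.

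It therefore suffices to prove that for each $\lambda$, the set $\mathbb{G}(k_\lambda, V_\lambda)(\mathbb{Q})$ is dense in $\mathbb{G}(k_\lambda, V_\lambda)(\mathbb{R})$. This is classical: for any flag of basis vectors one obtains a standard affine chart of $\mathbb{G}(k_\lambda, V_\lambda)$ isomorphic to $\mathbb{A}^{k_\lambda(\dim V_\lambda - k_\lambda)}_\mathbb{Q}$, and these charts cover $\mathbb{G}(k_\lambda, V_\lambda)(\mathbb{R})$; density of $\mathbb{Q}^N$ in $\mathbb{R}^N$ for each chart finishes the proof. I do not expect any real obstacle here — the only subtle point is verifying that the eigenspace decomposition of an $F$-stable subspace really holds over arbitrary extensions, which is immediate once one remembers that the projectors onto the $V_\lambda$ are polynomials in $F$ with rational coefficients (via partial fractions of $1/\prod_{\mu \neq \lambda}(x - \mu)$), so they act on $V \otimes L$ and preserve $W$.
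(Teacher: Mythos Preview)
Your proposal is correct and follows essentially the same approach as the paper: decompose $V$ into rational eigenspaces, observe that $F$-stable subspaces split accordingly (yielding a product of ordinary Grassmannians indexed by dimension vectors $\underline{k}$), and reduce to the classical density of $\mathbb{G}(k_\lambda,V_\lambda)(\mathbb{Q})$ in $\mathbb{G}(k_\lambda,V_\lambda)(\mathbb{R})$. Your remark that the eigenspace projectors are rational polynomials in $F$ is a nice justification that the paper leaves implicit.
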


\begin{proof} 
For $F = \id$ this is an elementary fact. In order to deduce the general case from this, let $\lambda_1, \dotsc, \lambda_r \in \bb Q^*$ be the eigenvalues of $F$, and denote by $$V_i \coloneqq V^{F = \lambda_i} \subset V, \quad i \in I \coloneqq  \{1, \dotsc, r \}$$ the corresponding eigenspaces. Eigenspaces are preserved under scalar extension, so $(V \otimes_{\bb Q} \bb R)^{F_{\bb R} = \lambda_i} = V_i \otimes_{\bb Q} \bb R$. But a $k$-dimensional $\bb R$-subvector space $W \subset V_{\bb R}$ satisfies $F(W) = W$ if and only if $W = \bigoplus_{i \in I}W_i$ with $W_i$ a $\bb R$-subvector space of $V_i\otimes_{\bb Q} \bb R$ for each $i \in I$ and $\sum_{i \in I} \dim W_i = k$, and $W$ is defined over $\bb Q$ if and only if each $W_i$ is. This means that, under the canonical diffeomorphism 
\begin{equation}
\bb G(k, V)^F(\bb R)  \cong \underset{\sum k_i = k}{\bigsqcup} \prod_{i \in I} \bb G(k_i, V_i)(\bb R),
\end{equation}
the rational subspace $ \bb G(k, V)^F(\bb Q) $ is identified with $\bigsqcup_{k_i} \prod_{i \in I} \bb G(k_i, V_i)(\bb Q)$. 
\end{proof}

\section{Real deformation spaces}

\subsection{Density in the complex deformation space}

Let  
$
\psi$ be a holomorphic family $\psi: \ca A \to B$ of complex abelian varieties of dimension $g$, polarized by a section $E \in R^2\psi_*\bb Z$. For $t \in B$, denote by $\omega_t \in \rm H^1(A_t, \Omega^1_{A_t})$ the K\"ahler class corresponding to the polarization $E_t \in \rm H^2(A_t, \bb Z)$. Let $t \in B$ and define $X = \psi^{-1}(t)$. Suppose that $\psi$ is a universal local deformation of $(X, \omega_t)$; in other words, that the Kodaira-Spencer map \begin{align}\label{kodairaspencer}
\rho: T_tB \to \rm H^1(X, T_X)_{\omega}
\end{align}
is an \emph{isomorphism}. Here, $\rm H^1(X, T_X)_{\omega}$ denotes the kernel of the composition 
$$\rm H^1(X, T_X) \to \rm H^2(X, T_X \otimes \Omega^1_X) \to \rm H^2(X, \OO_X)$$ 
of cup-product with $\omega_t$ with the map on cohomology induced by $T_X \otimes \Omega^1_X \to \OO_X$. 

\begin{proposition} \label{satisfycondition}
Condition \ref{criterion} is satisfied for the point $t \in B$ and any element $$W \in \textnormal{Grass}_\CC(k, \rm H^{1,0}(A_t)).$$ 
\end{proposition}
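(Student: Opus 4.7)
The plan is to identify all of the spaces and maps appearing in Condition \ref{criterion} explicitly, using the fact that the Kodaira--Spencer map (\ref{kodairaspencer}) is an isomorphism and that $X$ is an abelian variety. Once this identification is made, exactness will reduce to an elementary linear-algebra statement that holds for \emph{every} subspace $W$.

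First I would compute $\rm H^1(X, T_X)_\omega$. Since $X$ is an abelian variety, the tangent bundle is trivial: $T_X \cong \ca O_X \otimes_\CC T_0X$, and $T_0X$ is canonically identified with $\rm H^{1,0}(X)^\vee$. Thus
\[
\rm H^1(X, T_X) \cong \rm H^1(X, \ca O_X) \otimes T_0X \cong \rm H^{0,1}(X) \otimes \rm H^{1,0}(X)^\vee.
\]
Using the polarization $E_t$ to identify $\rm H^{0,1}(X) \cong \rm H^{1,0}(X)^\vee$, I obtain an isomorphism $\rm H^1(X,T_X) \cong \rm H^{1,0}(X)^\vee \otimes \rm H^{1,0}(X)^\vee$. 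The subspace $\rm H^1(X,T_X)_\omega$ cut out by the K\"ahler class then corresponds to the symmetric tensors, giving a canonical identification
\[
\rm H^1(X,T_X)_\omega \cong \Sym^2 \rm H^{1,0}(X)^\vee.
\]
By the hypothesis that $\rho$ is an isomorphism, this yields $(T_tB)^\vee \cong \Sym^2 \rm H^{1,0}(X)$.

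Next I would verify that under these identifications, the symmetric bilinear form $q$ defined in (\ref{symbil}) coincides with the canonical symmetrization map $\rm H^{1,0}(X) \otimes \rm H^{1,0}(X) \to \Sym^2 \rm H^{1,0}(X)$. This is the standard description of the differential of the period map for abelian varieties (via the Gauss--Manin connection and Griffiths transversality), together with the compatibility of the polarization with the identification $\rm H^{0,1} \cong (\rm H^{1,0})^\vee$; it is the step requiring the most care but it is essentially a bookkeeping exercise, since $q$ is by construction the composition of Kodaira--Spencer (dualized) with the polarization, matching the symmetrization map on the nose.

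Finally, I would check the purely linear-algebra statement: for any subspace $W \subset \rm H^{1,0}(X)$ of dimension $k$, the sequence
\[
0 \to \bigwedge^2 W \to W \otimes \rm H^{1,0}(X) \to \Sym^2 \rm H^{1,0}(X)
\]
is exact. The kernel of the symmetrization map $W \otimes \rm H^{1,0}(X) \to \Sym^2 \rm H^{1,0}(X)$ consists of the elements of $W \otimes \rm H^{1,0}(X)$ which are antisymmetric inside $\rm H^{1,0}(X) \otimes \rm H^{1,0}(X)$. Choosing a basis $e_1, \dots, e_k$ of $W$ extended to a basis $e_1, \dots, e_g$ of $\rm H^{1,0}(X)$, any element of $W \otimes \rm H^{1,0}(X)$ can be written $\sum_{i \leq k,\, j \leq g} a_{ij}\, e_i \otimes e_j$, and antisymmetry forces $a_{ij} = -a_{ji}$; in particular $a_{ij} = 0$ whenever $j > k$, so the element lies in $\bigwedge^2 W$. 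This proves the claim for \emph{every} $W \in \tn{Grass}_\CC(k, \rm H^{1,0}(A_t))$, establishing Condition \ref{criterion}.
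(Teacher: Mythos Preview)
Your proof is correct and follows essentially the same approach as the paper. Both arguments identify $(T_tB)^\vee$ with $\Sym^2 \rm H^{1,0}(X)$ via the Kodaira--Spencer isomorphism (the paper does this through a commutative diagram invoking a theorem of Griffiths, while you compute $\rm H^1(X,T_X)_\omega$ directly using triviality of $T_X$), recognize $q$ as the symmetrization map, and then reduce to the elementary linear-algebra fact that $0 \to \bigwedge^2 W \to W \otimes V \to \Sym^2 V$ is exact for any subspace $W \subset V$.
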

\begin{proof}

By a theorem of Griffiths \cite[Th\'eor\`eme 17.7]{voisin}, the dual $q^\vee$ of the bilinear form $$q: \rm H^{1}(X, \OO_X)^\vee \otimes \rm H^{1}(X, \OO_X)^\vee = \rm H^{1,0}(X) \otimes H^{1,0}(X)  \to (T_tB)^\vee$$ (see equation (\ref{symbil}) in \S \ref{densityintroduction}) factors as
\begin{align*}
T_tB \to \rm H^1(X, T_X) &\to \Hom(\rm H^{0}(X, \Omega^1_X) , \rm H^{1}(X, \OO_X)) \xrightarrow{\sim} \rm H^{1}(X, \OO_X) \otimes \rm H^{1}(X, \OO_X). 
\end{align*}The second arrow is an isomorphism. The third arrow is induced by the polarization. Remark that the following diagram commutes, and that the first row is exact:
\begin{equation}\label{com}
\xymatrixcolsep{1pc}
\xymatrix{
0 \ar[r]  & \rm H^1(X, T_X)_{\omega} \ar[r]  &  \rm H^1(X, T_X)  \ar[r] \ar[d]^{\rotatebox{90}{$\sim$}} & \rm H^2(X, \OO_X) \ar[r]  & 0 \\
0 \ar[r]& T_tB \ar[u]_{\rho}\ar[r]^{q^\vee \hspace{2cm}} & \rm H^{1}(X, \OO_X) \otimes \rm H^{1}(X, \OO_X) \ar[r] & \bigwedge^2\rm H^{1}(X, \OO_X) \ar[r] \ar[u]_{\rotatebox{90}{$\sim$}}  & 0.
}
\end{equation}
We conclude that the Kodaira-Spencer map $\rho$ in (\ref{kodairaspencer}) is an isomorphism if and only if the second row in (\ref{com}) is exact if and only $q^\vee$ induces an isomorphism 
\begin{equation} 
q^\vee \colon T_tB \xrightarrow{\sim} \Sym^2\rm H^1(X, \OO_X).
\end{equation}
Identify $(T_tB)^\vee$ with $\Sym^2\rm H^{1,0}(X)$ and $q$ with $\rm H^{1,0}(X) \otimes \rm H^{1,0}(X) \to \Sym^2\rm H^{1,0}(X)$, and observe that for any complex vector space $V$ and any $k$-dimensional subspace $W \subset V$, the natural sequence $$0 \to\bigwedge^2W \to W \otimes V \to \text{Sym}^2(V)$$ is exact.
\end{proof}

\subsection{Real structures on deformation spaces}

The goal of this section is to prove that for a universal local deformation of a complex manifold $X$, any real structure on $X$ extends uniquely to a real structure on the local deformation. 
\\
\\
Let $X$ be a compact complex manifold, possibly polarized by the first Chern class $\omega = c_1(\ca L) \in H^2(X, \ZZ)$ of an ample line bundle $\ca L$, and let $$\pi: \mr X \to B \ni 0$$
be a universal local deformation of $X$ (resp. of $(X, \omega)$), where $B$ is a complex analytic space. Let $\sigma: X \to X$ be an anti-holomorphic involution, compatible with $\omega$ in case $X$ is polarized. In the following proposition and its proof, only the germ of $\pi$ in $0 \in B$ plays a role and all statements should be read in this sense.

\begin{proposition}[compare \cite{catanese_frediani_2003}, Section 4] \label{realdef}The real structure $\sigma : X \to X$ extends uniquely to a real structure on the universal local deformation $\pi: \mr X \to B$. In other words, possibly after restricting $(B,0)$, there is a unique couple of anti-holomorphic involutions $(\tau: B \to B,\mr T: \mr X \to \mr X)$ such that $\pi \circ \mr T = \tau \circ \pi$, $\tau(0) = 0$ and $\mr T|_X = \sigma: X \to X$.  
\end{proposition}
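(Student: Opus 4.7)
The plan is to construct $(\tau, \mathcal{T})$ by invoking the universal property of $\pi$ applied to the complex-conjugate family. Let $\bar B$ (respectively $\bar{\mathcal{X}}$) denote the germ of complex analytic space obtained from $B$ (respectively $\mathcal{X}$) by reversing the complex structure, and let $\bar\pi \colon \bar{\mathcal{X}} \to \bar B$ be the induced holomorphic map. Its central fiber is $\bar X$, and the anti-holomorphic involution $\sigma \colon X \to X$ becomes a biholomorphism $\sigma \colon \bar X \xrightarrow{\sim} X$. Hence $\bar\pi$ can be regarded as a holomorphic local deformation of $X$, and in the polarized case the compatibility hypothesis on $\sigma$ and $\omega$ guarantees that $\bar\pi$ is in fact a deformation of the polarized manifold $(X, \omega)$.

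By the universal property of $\pi \colon \mathcal{X} \to B$, there then exist unique germs of holomorphic maps $\alpha \colon (\bar B, 0) \to (B, 0)$ and $\mathcal{S} \colon \bar{\mathcal{X}} \to \mathcal{X}$ with $\pi \circ \mathcal{S} = \alpha \circ \bar\pi$ and $\mathcal{S}|_{\bar X} = \sigma$. By definition of the conjugate complex structure, a holomorphic map out of $\bar B$ (resp.\ $\bar{\mathcal{X}}$) is the same datum as an anti-holomorphic map out of $B$ (resp.\ $\mathcal{X}$), so the pair $(\alpha, \mathcal{S})$ yields exactly the desired anti-holomorphic maps $\tau \colon B \to B$ and $\mathcal{T} \colon \mathcal{X} \to \mathcal{X}$, with $\tau(0) = 0$, $\pi \circ \mathcal{T} = \tau \circ \pi$, and $\mathcal{T}|_X = \sigma$. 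Uniqueness of $(\tau, \mathcal{T})$ is part of the universal property.

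It remains to check that $(\tau, \mathcal{T})$ is involutive. The pair $(\tau^2, \mathcal{T}^2)$ consists of holomorphic self-maps of the germ $\pi$ whose restriction to the central fiber is $\sigma^2 = \id_X$. The trivial pair $(\id_B, \id_{\mathcal{X}})$ shares these properties, so the uniqueness statement in the universal property, applied to the identity deformation of $(X, \omega)$, forces $\tau^2 = \id_B$ and $\mathcal{T}^2 = \id_{\mathcal{X}}$, possibly after shrinking $B$. I expect the main subtlety to lie in the polarized case: one has to verify carefully that passing to $\bar\pi$ with the identification $\sigma \colon \bar X \xrightarrow{\sim} X$ of central fibers produces a polarized deformation of $(X, \omega)$, so that the universal property in the polarized setting can actually be invoked; this is precisely what the compatibility hypothesis between $\sigma$ and $\omega$ is designed to ensure.
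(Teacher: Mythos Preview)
Your proof is correct and follows essentially the same strategy as the paper: pass to the conjugate family, invoke the universal property of $\pi$ to produce the holomorphic classifying map (equivalently, the anti-holomorphic $\tau$ and $\mr T$), and then use uniqueness to force $\tau^2=\id$ and $\mr T^2=\id$. The paper phrases the involutivity step as $f\circ f^\sigma=\id$ and $g\circ g^\sigma=\id$ after also observing that $\pi^\sigma$ is universal (so that $f,g$ are biholomorphisms from the start), whereas you deduce bijectivity a posteriori from $\tau^2=\id$; both arguments rest on the same uniqueness clause of the universal property.
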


\begin{proof}
Consider the complex conjugate analytic spaces $X^\sigma$, $B^\sigma$ and $\mr X^\sigma$ of $X, B$ and $\mr X$ respectively (see \cite[Chapter I, Definition 1.1]{silholsurfaces} for the definition of complex conjugate analytic variety; the definition for general analytic spaces is similar). There is an induced local deformation $$\pi^\sigma\colon \mr X^\sigma \to B^\sigma\ni 0$$ of the manifold $\left(\mr X^\sigma \right)_0 = \left(\mr X_0 \right)^\sigma = X^\sigma$. This local deformation is universal. 

The anti-holomorphic involution $\sigma: X \to X$ induces a biholomorphic function $\phi: X^\sigma \to X$ such that $ \phi \circ \phi^\sigma = \id: X \to X$, hence the fibers of $\pi$ and $\pi^\sigma$ above $0$ are isomorphic via $\phi$. By the universal properties of $\pi$ and $\pi^\sigma$, this means that, possibly after restricting $B$ around $0$, there is a unique pair of biholomorphisms $(f: B^\sigma \to B, g: \mr X^\sigma \to \mr X)$ making the following diagram cartesian: 
\begin{equation}
\xymatrix{
\mr X^\sigma \ar[r]^{g} \ar[d]^{\pi^\sigma} & \mr X \ar[d]^\pi \\
B^\sigma \ar[r]^{f} & B.
}
\end{equation}
Moreover, $f(0) = 0$ and $g|_{X^\sigma}: X^\sigma \to X$ is the map $\phi$. Applying the functor $(-)^\sigma$ to this diagram, we obtain a pair of cartesian diagrams
\begin{equation} \label{getadiagram}
\xymatrix{
\mr X \ar[d]^\pi\ar[r]^{g^\sigma } & \mr X^\sigma \ar[r]^{g} \ar[d]^{\pi^\sigma} & \mr X \ar[d]^\pi \\
B \ar[r]^{f^\sigma } & B^\sigma \ar[r]^{f} & B
}
\end{equation}such that $(f \circ f^\sigma)(0) = 0$, and such that the map $$(g \circ g^\sigma)|_{X}\colon X \to X^\sigma \to X$$ equals $\phi \circ \phi^\sigma = \id$. We must have $f \circ f^\sigma = \id$ and $g \circ g^\sigma = \id$. By composing $f^\sigma: B \to B^\sigma$ (resp. $g^\sigma: \mr X \to \mr X^\sigma$) with the canonical anti-holomorphic map $B^\sigma \to B$ (resp. $\mr X^\sigma \to \mr X$), we obtain the desired involutions $\tau$ and $\mr T$.
\end{proof}


\section{Analytic families of Jacobians}

The goal of this section is to prove that a real structure on a family of curves extends canonically to a real structure on the corresponding family of Jacobians.
\\
\\
Let $B$ be a simply connected complex manifold and let $\pi: \mr X \to B$ be a family of compact Riemann surfaces of genus $g$. The relative Jacobian
\begin{equation} \label{jacobian} 
\psi \colon J_{\mr X} =  \underline{\Pic^0}(\mr X/B) \to B
\end{equation}
of $\pi$ arises from following exact sequence of sheaves on $\mr X$:
\begin{equation} \label{gsheaves}
0 \longrightarrow \bb Z \longrightarrow \OO_{\mr X} \xrightarrow{\text{exp}(2\pi iz)} \OO_{\mr X}^* \longrightarrow 0.
\end{equation}
Indeed, sequence (\ref{gsheaves}) induces an inclusion $R^1 \pi _* \bb Z \to R^1 \pi _* \OO_{\mr X}$, where we identify $R^1 \pi _* \bb Z$ with its \'etal\'e space and $R^1 \pi _* \OO_{\mr X}$ with its corresponding holomorphic vector bundle; define $J_{\mr X} = R^1 \pi _* \OO_{\mr X}/ R^1 \pi _* \bb Z$. 

We claim that family (\ref{jacobian}) is a polarized holomorphic family of $g$-dimensional complex abelian varieties. Indeed, $\psi: J_{\mr X} \to B$ admits a section $s: B \to J_{\mr X}$, corresponding to the line bundle $\OO_{J_{\mr X}} \in \Pic^0({\mr X})$ (recall that $J_{\mr X}$ represents the relative Picard functor of degree $0$). For $0 \in B$, we have the Riemann form $$E_{0} = - \langle \;\; , \;\; \rangle: \bigwedge^2\rm H_1({\mr X}_{0}, \bb Z) \to \bb Z.$$ The trivialization $R^2\pi_*\bb Z \cong \rm H^2(J_{{\mr X}_{0}}, \bb Z)$ implies that 
\begin{align*}
\Hom_{\bb Z}(\wedge^2  \rm H_1({\mr X}_{0}, \bb Z), \bb Z) &= \Hom_{\bb Z}( \wedge^2  \rm H_1(\Jac({\mr X}_{0}), \bb Z), \bb Z)  \\
&=\rm H^2({J_{\mr X}}_{0}, \bb Z)  \cong \rm H^0(B, R^2\pi_*\bb Z).
\end{align*}
In this way, $E_{0}$ extends to a polarization on the Jacobian family (\ref{jacobian}).

\begin{lemma} \label{realjacobianstructure}
Consider the polarized family of Jacobian varieties $$(\psi: J_{\mr X} \to B, s: B \to J_{\mr X}, E \in R^2\psi_*\bb Z)$$ defined above. If the relative curve $\pi$ admits a real structure $(\sigma: B \to B, \sigma': \mr X \to \mr X)$, then the polarized relative Jacobian $\psi: J_{\mr X} \to B$ admits a real structure compatible with $\sigma$, in the sense that there exists an anti-holomorphic involution $\Sigma: J_{\mr X} \to J_{\mr X}$ such that
 $$(i) \white \psi \circ \Sigma = \sigma \circ \psi, \white\white (ii) \white \Sigma \circ s = s \circ \sigma, \white\white (iii)\white \Sigma^\ast(E) = -E.$$
\end{lemma}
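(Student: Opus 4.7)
The plan is to build $\Sigma$ by transporting the real structure on $\mathscr{X}$ through the Jacobian construction using complex conjugation, and then to verify the three compatibilities.

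For a complex analytic space $Y$, write $Y^c$ for its complex conjugate and $\kappa_Y: Y \to Y^c$ for the canonical anti-holomorphic map (cf.\ the proof of Proposition \ref{realdef}). The anti-holomorphic involution $\sigma': \mathscr{X} \to \mathscr{X}$ factors uniquely as $\sigma' = \widetilde{\sigma'} \circ \kappa_\mathscr{X}$, where $\widetilde{\sigma'}: \mathscr{X}^c \to \mathscr{X}$ is a holomorphic isomorphism over a holomorphic isomorphism $\widetilde{\sigma}: B^c \to B$. Since the relative Jacobian is functorial for holomorphic isomorphisms of families of Riemann surfaces, $\widetilde{\sigma'}$ induces a holomorphic isomorphism $J_{\widetilde{\sigma'}}: J_{\mathscr{X}^c} \to J_\mathscr{X}$ over $\widetilde{\sigma}$. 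Moreover the construction $J_\mathscr{X} = R^1\pi_*\mathcal{O}_\mathscr{X}/R^1\pi_*\mathbb{Z}$ commutes with complex conjugation: $R^1\pi_*\mathbb{Z}$ is purely topological, while the complex conjugate of $R^1\pi_*\mathcal{O}_\mathscr{X}$ is $R^1\pi_*\mathcal{O}_{\mathscr{X}^c}$. This yields a canonical anti-holomorphic isomorphism $\kappa_{J_\mathscr{X}}: J_\mathscr{X} \to J_{\mathscr{X}^c}$ over $\kappa_B$. I define $\Sigma := J_{\widetilde{\sigma'}} \circ \kappa_{J_\mathscr{X}}$.

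By construction $\Sigma$ is anti-holomorphic and lies over $\sigma = \widetilde{\sigma} \circ \kappa_B$, giving (i). That $\Sigma \circ \Sigma = \id$ follows from the analogous property of $\sigma'$ together with the functoriality of the Jacobian. The section $s: B \to J_\mathscr{X}$ corresponds to the trivial line bundle $\mathcal{O}_\mathscr{X}$, which is preserved by both $\kappa_{J_\mathscr{X}}$ and $J_{\widetilde{\sigma'}}$, so $\Sigma \circ s = s \circ \sigma$, giving (ii).

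The substantive content is (iii), which I will check fiberwise. For $t \in B$, the map $\Sigma_t: J_t \to J_{\sigma(t)}$ is the anti-holomorphic isomorphism of complex Jacobians induced by the anti-holomorphic diffeomorphism $\sigma'_t: \mathscr{X}_t \to \mathscr{X}_{\sigma(t)}$ between compact Riemann surfaces. Under the canonical identification $H_1(J_t, \mathbb{Z}) = H_1(\mathscr{X}_t, \mathbb{Z})$, the push-forward $(\Sigma_t)_*$ agrees with $(\sigma'_t)_*$, and the Riemann form $E_t$ coincides with $-\langle\cdot,\cdot\rangle$, the topological intersection pairing on the curve (as recalled just before the lemma). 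Since $\sigma'_t$ is an anti-holomorphic diffeomorphism of compact Riemann surfaces, it reverses the complex orientation of the underlying oriented real surface, so $(\sigma'_t)_*$ acts as $-1$ on $H_2(-,\mathbb{Z})$; hence $\langle(\sigma'_t)_*\alpha,(\sigma'_t)_*\beta\rangle_{\mathscr{X}_{\sigma(t)}} = -\langle\alpha,\beta\rangle_{\mathscr{X}_t}$, which translates into $\Sigma_t^* E_{\sigma(t)} = -E_t$, and thus $\Sigma^* E = -E$ globally. The hard part is the bookkeeping in this last step: pinning down the sign conventions linking $E_t$ to the intersection form on the curve, and checking that $(\Sigma_t)_*$ really does match $(\sigma'_t)_*$ under the identifications above. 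Once these compatibilities are in place, the result reduces to the standard orientation-reversing property of anti-holomorphic maps of Riemann surfaces.
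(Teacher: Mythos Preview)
Your argument is correct and essentially the same as the paper's. The paper builds $\Sigma$ by invoking the anti-holomorphic involution $F_{dR}$ on the Hodge bundle $\ca H = R^1\pi_*\ZZ \otimes \OO_B$ (constructed in Section~\ref{realfamily}), observing that it preserves both $\bb V_\ZZ$ and $\ca H^{0,1}$ and hence descends to $J_{\mr X} = \ca H^{0,1}/\bb V_\ZZ$; your construction via complex conjugate spaces $\kappa_{J_{\mr X}}$ composed with the functorial map $J_{\widetilde{\sigma'}}$ is a repackaging of exactly this. For (iii) the paper writes out the cup-product diagram on $\rm H^1(\mr X_t,\RR)$ and observes that the right-hand square commutes because the anti-holomorphic map reverses orientation on a Riemann surface; you phrase the same fact homologically via the intersection pairing on $\rm H_1$, which is Poincar\'e dual to their argument. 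The bookkeeping you flag (that $(\Sigma_t)_*$ matches $(\sigma'_t)_*$ under $\rm H_1(J_t,\ZZ)=\rm H_1(\mr X_t,\ZZ)$) is genuinely the point where care is needed, and the paper does not spell it out any more explicitly than you do.
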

\begin{proof}
Write $\bb V_{{\bb Z}} = R^1 \pi _* {\bb Z}$ and $\ca H = \bb V_{{\bb Z}}  \otimes_{{\bb Z}} \OO_{B}$. Then $\ca H$ is endowed with a filtration by the holomorphic subbundle $F^1\ca H  \subset \ca H$, and we have $$\ca H / F^1\ca H = \ca H^{0,1} = R^1 \pi _* \OO_{\mr X}.$$ By the strategy in Section \ref{realfamily}, the real structure $(\sigma, \sigma')$ on $\pi: \mr X \to B$ induces an anti-holomorphic involution 
$
F_{dR}: \ca H \to \ca H$ compatible with $\sigma$ and preserving $\ca H^{0,1}$ and $\bb V_{{\bb Z}}$. Since $J_{\mr X} =  \ca H^{0,1} / \bb V_{{\bb Z}} $, the involution $F_{dR}$ induces an anti-holomorphic involution 
$
\Sigma: J_{\mr X} \to J_{\mr X}.  
$
By construction of $F_{dR}$ in Section \ref{realfamily}, we have $\psi \circ \Sigma = \sigma \circ \psi$. For $t \in B$, $\Sigma$ induces an anti-holomorphic map $\Sigma: \Jac(\mr X_{\sigma(t)}) \to \Jac(\mr X_{t})$. We need to prove that 
$
\Sigma^\ast(E_t) = - E_{\sigma(t)}$ for the map $$ \Sigma^\ast \colon \rm H^2( \Jac({\mr X}_t) , \bb Z) \to \rm H^2( \Jac(\mr X_{\sigma(t)}) , \bb Z).$$ This follows from the commutativity of the following diagram:
$$
\xymatrixcolsep{5pc}
\xymatrix{
\rm H^1({\mr X}_t, {\bb R}) \otimes \rm H^1({\mr X}_t, {\bb R}) \ar[d]^{(\sigma')^\ast \otimes (\sigma')^\ast} \ar[r]^{\hspace{1cm}\cup} & \rm H^2({\mr X}_t, {\bb R}) \ar[d]^{(\sigma')^\ast} \ar[r]^{\sim} & {\bb R} \ar[d]^{-1} \\
  \rm H^1(\mr X_{\sigma(t)}, {\bb R}) \otimes   \rm H^1(\mr X_{\sigma(t)}, {\bb R})  \ar[r]^{\hspace{1cm}\cup} &  \rm  H^2(\mr X_{\sigma(t)}, {\bb R}) \ar[r]^{\sim} &  {\bb R}.  } 
$$
The square on the left commutes because pullback commutes with cup-product, and the square on the right commutes because $\sigma' \colon \mr X_{\sigma(t)} \xrightarrow{\sim} \mr X_{t}$ is anti-holomorphic thus reverses the orientation since $\dim(\mr X_t) = \dim(\mr X_{\sigma(t)}) = 1$ is odd. 
\end{proof}

\section{Density in real moduli spaces}\label{provetheorem2}

The goal of this section is to prove Theorem \ref{theorem2} by applying the results of the previous sections. 

\subsection{Density in $\va{\ca A_g(\RR)}$} \label{sec:densityinag}

\begin{proof}[Proof of Theorem \ref{theorem2}.\textcolor{CTlink}{A}]
Let $\tau \in \mr T(g)$ and $Z \in \bb H_g^{\Sigma_\tau}$ (see Section \ref{modofrealAV}). Let $$(X, \omega\in \rm H^2(X, \ZZ))$$ be a principally polarized complex abelian variety with symplectic basis with period matrix $Z$. Then $(X, \omega)$ admits a unique real structure $\sigma: X \to X$, compatible with $\omega$ and the symplectic basis \cite[Section 9]{grossharris}. There exists a $\Sigma_\tau$-invariant connected open neighborhood $B \subset \bb H_g$ of $Z \in \bb H_g$, and a universal local deformation $$\pi: \mr X \to B \ni Z$$ of the polarized complex abelian variety $(X, \omega)$. By Proposition \ref{realdef}, possibly after restricting $B$ around $Z$, the real structure $\sigma: X \to X$ extends uniquely to a real structure on the polarized family $\pi$, which, by uniqueness, is compatible with the real structure $\Sigma_\tau \colon B \to B.$ By Proposition \ref{satisfycondition}, Condition \ref{criterion} is satisfied. By Theorem \ref{th:maindensitytheorem}, the subset 
\[R_k \cap B^{\Sigma_\tau}  \subset B^{\Sigma_\tau}\]
is dense in $B^{\Sigma_\tau}$. It follows that $R_k$ is dense in $\bb H_g^{\Sigma_\tau}$. 
\end{proof}

\subsection{Density in $\va{\ca M_g(\RR)}$}

\begin{proof}[Proof of Theorem \ref{theorem2}.\textcolor{CTlink}{B}] 

Suppose that $g \geq 3$, let $j \in \mr J(g)$ and consider a point $0 \in \ca T_t^{\sigma_j}$ (see Section \ref{modofrealAC}). Let $(X, [f])$ be a complex Teichm\"uller curve of genus $g$ that gives rise to the point $0$. By \cite{seppalasilhol2}, there is a unique real structure $\sigma: X\to X$ which is compatible with the Teichm\"uller structure $[f]$ and the involution $\sigma_j: \Sigma \to \Sigma$. Moreover, there exists a $\sigma_j$-invariant simply connected open subset $B \subset \ca T_g$ of $0$ in the Teichm\"uller space $\ca T_g$, and a Kuranishi family $$\pi: \mr X \to B \ni 0$$ of the Riemann surface $X$. By Proposition \ref{realdef}, up to restricting $B$ around $0$, the real structure $\sigma: X\to X$ extends uniquely to a real structure 
\[
(\tau \colon B \to B , \ca T \colon \mr X \to \mr X )
\]
on the Kuranishi family $\pi$ such that $\tau(0) = 0$. By uniqueness, $\tau: B \to B$ coincides with $\sigma_j$. By Lemma \ref{realjacobianstructure}, the real structure $(\tau, \mr T)$ induces a real structure 
\[
(\tau \colon B \to B, \Sigma \colon J_{\mr X} \to J_{\mr X})
\] on the Jacobian $J_{\mr X} \to B$ of the curve $\pi: \mr X \to B$. 

Let $k \in \{1,2,3\}$. Observe that, by Theorem \ref{th:maindensitytheorem}, it suffices to prove that Condition \ref{criterion} holds in $B$. That is, we need to show that there exists an element $t \in B$ and a $k$-dimensional complex subspace $$W \subset \rm H^{1,0}(\Jac(\mr X_t))  = \rm H^{1,0}(\mr X_t)$$ such that the sequence 
$$
0 \to \bigwedge^2 W \to W \otimes H^{1,0}(\mr X_t) \to (T_tB)^\vee
$$ is exact. The family
$
\pi: \mr X \to B
$
is a universal local deformation of $\mr X_t$ for each $t \in B$, hence $ T_tB \cong \rm H^1(\mr X_t, T_{\mr X_t})$. By \cite[Lemme 10.22]{voisin}, the dual of $$q: H^{1,0}(\mr X_t) \otimes H^{1,0}(\mr X_t) \to (T_t{B})^\vee$$ is nothing but the cup-product $$
\rm H^0(K_{\mr X_t}) \otimes \rm H^0(K_{\mr X_t}) \to \rm H^0(K_{\mr X_t}^{\otimes 2}).$$
We are reduced to the claim that for each $k \in \{1,2,3\}$, there exists $t \in B$ and a $k$-dimensional subspace $W \subset \rm H^0(K_{\mr X_t})$ such that the following sequence is exact:
\begin{equation} \label{exseq}
0 \to \wedge^2 W \to W \otimes \rm H^0(K_{\mr X_t}) \to \rm H^0(K_{\mr X_t}^{\otimes 2}).
\end{equation}
In \cite[\emph{Proof of Theorem (3)}]{Colombo1990}, Colombo and Pirola consider the moduli space of complex genus $g \geq 3$ curves $$\ca M_g(\CC) = \Gamma_g \sm \ca T_g$$ to prove the complex analogue of Theorem \ref{theorem2}.\hyperlink{theoremB}{B}. They show that there exists a moduli point $p = [C] \in \ca M_g(\CC)$ and a $k$-dimensional complex subspace $W \subset \rm H^0(K_C)$ such that (\ref{exseq}) is exact. This implies that Condition \ref{criterion} is satisfied for some point $t \in \ca T_g$. Since Condition \ref{criterion} is open for the Zariski topology on $\ca T_g$, it is dense for the euclidean topology, hence Condition \ref{criterion} holds for some $t \in B$.  
\end{proof}

\subsection{Real plane curves covering an elliptic curve}

\begin{proof}[Proof of Theorem \ref{theorem2}\textcolor{CTlink}{.C}] 
Fix $d \in \bb Z_{\geq 3}$. Define $N = {d+2 \choose 2}$ and let $$\mr B({\bb C}) \subset \rm H^0(\bb P^2_{\bb C}, \OO_{\bb P^2_{\bb C}}(d)) \cong \bb C^N$$ be the Zariski open subset of non-zero degree $d$ homogeneous polynomials $F$ that define smooth plane curves $\{F = 0 \} \subset \bb P^2({\bb C})$. Consider the universal plane curve $$\mr B({\bb C}) \times \bb P^2({\bb C}) \supset \mr S(\CC) \xrightarrow{\pi} \mr B(\CC).$$ 
The map $\pi$ is induced by a morphism of real varieties $\mr S \to \mr B$. 
For a projective, flat morphism of locally Noetherian schemes with integral geometric fibers, the relative Picard scheme exists \cite[\S V, 3.1]{FGA}. We obtain an abelian scheme 
$$
\tn{\underline{Pic}}_{\mr S/\mr B}^0 \to \mr B
$$ of relative dimension $g = (d-1)(d-2)/2$ over $\bb R$. By Theorem \ref{th:maindensitytheorem}, it suffices to 
prove the existence of an element $t \in \mr B({\bb C})$ for which there exists a non-zero $v \in \rm H^{1,0}( \tn{Jac}(\mr S_t({\bb C})))$  such that the map $$\langle v \rangle \otimes \rm H^{1,0}(\tn{Jac}(\mr S_t({\bb C}))) \to (T_t\mr B({\bb C}))^\vee$$ is injective. This is done in \cite[\textit{Proof of Proposition (6)}]{Colombo1990}. 
In fact, the element $t \in \mr B(\CC)$ attached to the degree $d$ Fermat curve $F = X_0^d + X_1^d + X_2^d$ satisfies this criterion (compare \cite[Proposition 3]{kim}).
\end{proof}

\cleardoublepage
\chapter{Glueing real ball quotients}\label{ch:glueing}

\section{Introduction}

In the previous Chapters \ref{ch:realmodulispaces} and \ref{ch:density}, we often encountered (and made use of) the fact that for each connected component of the moduli space of real abelian varieties, there is a natural morphism that identifies this component with the quotient $\Gamma \setminus M$ of a real-analytic manifold by a properly discontinuous group action. The same goes for the moduli space of real algebraic curves. In the next Chapter \ref{ch:binaryquintics}, we will see that something similar is true for the real moduli space $\ca M_0(\RR)$ of smooth binary quintics. One difference stands out: each component $\mr M_i$ of $\ca M_0(\RR)$ identifies only with an open subset $\Gamma_i \setminus (M - \mr H_i)$ of a certain quotient $\Gamma_i \setminus M$, instead of the entire quotient space. In this case, $M = \RR H^2$ is the real hyperbolic plane, and the open sets are obtained by removing unions $\mr H_i$ of lower-dimensional geodesic subspaces. 

This difference actually works in our favour. The larger moduli space $\ca M_s(\RR)$ of stable real binary quintics is connected, which raises the dream that: 
\begin{enumerate}
\item  \label{gluestepone} One can, one way or another, glue the quotient spaces $\Gamma_i \setminus \RR H^2$ into some larger hyperbolic quotient $\Gamma \setminus \RR H^2$; and that
\item \label{gluesteptwo} The various isomorphisms $\mr M_i \cong \Gamma_i \setminus (\RR H^2 - \mr H_i)$ extend to an isomorphism $\ca M_s(\RR) \cong \Gamma \sm \RR H^2$. 
\end{enumerate}
\noindent
It turns out that this dream can be realized. 
Chapter \ref{ch:binaryquintics} will be devoted to Step \ref{gluesteptwo}. In the current Chapter \ref{ch:glueing}, we focus on Step \ref{gluestepone}. 
\\
\\
To carry out Step \ref{gluestepone}, it seemed natural not to restrict our attention to the two-dimensional case, but to glue real ball quotients in any dimension. Unitary Shimura varieties turned out to provide a suitable framework for doing so. 
We build upon work of Allcock, Carlson and Toledo \cite{realACTsurfaces}. Let us outline the construction. 

\noindent
Let $K$ be a CM field of degree $2g$ over $\QQ$ with ring of integers $\OO_K$, and let $\Lambda$ be a finite free $\OO_K$-module equipped with a hermitian form $h: \Lambda \times \Lambda \to \OO_K$. Suppose that $h$ has signature $(n,1)$ with respect to an embedding $\tau: K \to \CC$ and is definite other infinite places of $K$. Let $\CC H^n$ be the space of negative lines in $\Lambda \otimes_{\OO_K, \tau} \CC$ and $P\Gamma = \Aut(\Lambda, h) / \mu_K$ where $\mu_K \subset \OO_K^\ast$ is the group of finite units in $\OO_K$. Let $P\mr A$ be the quotient of the set of anti-unitary involutions $\alpha: \Lambda \to \Lambda$ by $\mu_K$. 

Consider the hyperplane arrangement $\mr H = \cup_{h(r,r) = 1} \langle r_\CC \rangle ^\perp \subset \CC H^n$ and assume
\begin{condition} \label{conditionast}
Different hyperplanes intersect orthogonally or not at all, c.f. \cite{orthogonalarrangements}. 
\end{condition}
\noindent
For example, this holds 
under a condition on the CM field $K$ (see Theorem \ref{th:conditionsimplyhypothesis}) 
satisfied when $K$ is cyclotomic or quadratic (see Lemma \ref{lemma:discr}). (In fact, condition \ref{conditionast} is \textit{always} satisfied if one is willing to adapt the definition of $\mr H$, see Remark \ref{remark:avoidcondition}.)
%

We claim that there is a canonical way to glue the different copies 
\[
\RR H^n_\alpha: = \left(\CC H^n\right)^\alpha \subset \CC H^n, \quad \alpha \in P\mr A
\] of the real hyperbolic space $\RR H^n$ along the hyperplane arrangement $\mr H$. See Remark \ref{rem:gluing} for the precise formulation of the equivalence relation. This gives a topological space which we denote by $Y$, acted upon by $P\Gamma$. Define $P\Gamma_\alpha \subset P\Gamma$ to be stabilizer of $\RR H^n_\alpha$. The precise goal of Chapter \ref{ch:glueing} is to prove the following theorem. 
\begin{theorem} \label{th:theorem03}
The topological space $P\Gamma \setminus Y$ admits a metric that makes it a complete path metric space. With respect it, the natural map $P\Gamma \setminus Y \to P\Gamma \setminus \CC H^n$ is a local isometry. 
In fact, the metric underlies a real hyperbolic orbifold structure on $P\Gamma \setminus Y$, such that \[ \coprod_{\alpha \in P\Gamma \setminus P\mr A}  [P\Gamma_\alpha \setminus \left(\RR H^n_\alpha - \mr H \right)]  \subset P\Gamma \setminus Y\] is an open suborbifold and such that for each connected component $C \subset P\Gamma \setminus Y$ there is a lattice $P\Gamma_C \subset \textnormal{PO}(n,1)$ and an isomorphism of real hyperbolic orbifolds $C \cong [P\Gamma_C \setminus  \RR H^n]$. 
\end{theorem}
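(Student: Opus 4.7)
The strategy is to construct a real hyperbolic orbifold atlas on $Y$ itself, show that $P\Gamma$ acts by orbifold isometries, and then read off the desired statements about $P\Gamma \setminus Y$ from this. First I would handle the local structure of $Y$. If $p \in \RR H^n_\alpha$ avoids every hyperplane of $\mr H$, the glueing relation identifies $p$ only with itself, so $p$ admits a neighbourhood in $Y$ canonically identified with a neighbourhood in $\RR H^n_\alpha$, giving a real hyperbolic chart. At $p \in \RR H^n_\alpha \cap \mr H$, Condition \ref{conditionast} says that the finitely many hyperplanes $H_1 = r_1^\perp, \ldots, H_k = r_k^\perp$ of $\mr H$ through $p$ are pairwise orthogonal with $h(r_j,r_j)=1$. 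The complex reflections $s_j$ in $H_j$ belong to $\Aut(\Lambda,h)$, and for any $S \subseteq \{1,\ldots,k\}$ the composite $\alpha_S = \bigl(\prod_{j \in S} s_j\bigr) \alpha$ is an anti-unitary involution whose real locus $\RR H^n_{\alpha_S}$ passes through $p$. Using the orthogonal decomposition of $\Lambda \otimes_{\OO_K,\tau}\CC$ adapted to $r_1,\ldots,r_k$ together with a normal geodesic chart at $p$, I would verify that exactly these $2^k$ sheets are the ones glued to $\RR H^n_\alpha$ near $p$, and that they assemble as the $2^k$ chambers of the finite Coxeter group $W$ generated by the reflections in the real hyperplanes $H_j \cap \RR H^n_\alpha$. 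Consequently $Y$ is locally modeled on $W \setminus \RR H^n$, yielding the required orbifold chart.

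Granting the local charts, I would observe that any $\gamma \in P\Gamma$ sends the $\alpha$-sheet to the $\gamma \alpha \gamma^{-1}$-sheet in a way compatible with the glueing relation, respects the charts, and hence acts by orbifold isometries. The resulting path metric descends to $P\Gamma \setminus Y$, and the set map $Y \to \CC H^n$ is a local isometry on each chart because every inclusion $\RR H^n_\alpha \hookrightarrow \CC H^n$ is totally geodesic. The open inclusion of $\coprod_\alpha [P\Gamma_\alpha \setminus (\RR H^n_\alpha - \mr H)]$ into the smooth locus of the orbifold is tautological from the construction. For completeness I would invoke the length-space Hopf--Rinow theorem, reducing the claim to local compactness of closed balls; this in turn follows from the fact that each quotient $P\Gamma_\alpha \setminus \RR H^n_\alpha$ is a complete real hyperbolic orbifold of finite covolume, which is standard since $P\Gamma_\alpha$ is arithmetic in $\textnormal{PO}(n,1)$, coming from the orthogonal group of the $\alpha$-fixed real quadratic $\OO_K^\alpha$-lattice $\Lambda^\alpha$.

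For the final assertion I would develop. Fix a connected component $C \subseteq P\Gamma \setminus Y$, pick a regular basepoint, and assemble the local hyperbolic charts into a developing map $\textnormal{dev}\colon \wtilde{C} \to \RR H^n$ from the orbifold universal cover. Since $\wtilde{C}$ is complete in the induced metric and $\textnormal{dev}$ is a local isometry to the simply connected $\RR H^n$, a standard covering argument forces $\textnormal{dev}$ to be a global isometry. The holonomy homomorphism then realizes $\pi_1^{\textnormal{orb}}(C)$ as a discrete subgroup $P\Gamma_C \subset \textnormal{PO}(n,1)$, of finite covolume because $C$ inherits finite volume from $P\Gamma \setminus Y$, and gives $C \cong [P\Gamma_C \setminus \RR H^n]$.

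The hard part of this plan is the local calculation at multi-wall points $p \in \mr H$: showing that the set of sheets glued at $p$ is exactly $\{\RR H^n_{\alpha_S}\}_{S \subseteq \{1,\ldots,k\}}$, and that the glueing relation identifies their walls as neighbouring chambers of $W$, uses Condition \ref{conditionast} essentially. Without orthogonality the combinatorics of the glueing near $p$ need not be that of a Coxeter chamber complex, and the local quotient need not even be a manifold, let alone a real hyperbolic orbifold.
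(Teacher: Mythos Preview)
Your local analysis at points of $\mr H$ has two genuine gaps.

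First, the complex reflections are not of order $2$. For $r\in\Lambda$ with $h(r,r)=1$ the isometry $\phi_r(x)=x-(1-\zeta)h(x,r)r$ has order $m$, where $m=\lvert\mu_K\rvert$ (so $m=6$ for $K=\QQ(\zeta_3)$, $m=10$ for $K=\QQ(\zeta_5)$). The group $G(p)$ generated by the reflections in the hyperplanes through $p$ is $(\ZZ/m)^k$, not $(\ZZ/2)^k$. Moreover the anti-unitary maps $g\alpha$ with $g\in G(p)$ are not all involutions: writing $g=\prod_j\phi_{r_j}^{i_j}$, the condition $(g\alpha)^2=1$ is $i_\nu\equiv i_{\alpha(\nu)}\bmod m$, where $\alpha$ acts on $\{1,\dots,k\}$ by permuting the hyperplanes $H_{r_j}$. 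One must therefore distinguish real nodes ($\alpha(H_{r_j})=H_{r_j}$) from pairs of complex conjugate nodes ($\alpha$ swaps $H_{r_j}$ and $H_{r_{j'}}$); if there are $a$ conjugate pairs and $b$ real nodes, the number of sheets through $p$ is $m^{a+b}$, not $2^k$.

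Second, and more seriously, $Y$ itself is not an orbifold. Already with a single real node, $Y$ is locally $m$ full copies of $\RR H^n$ in $\CC H^n$ sharing a common $\RR H^{n-1}$; in cross-section this is $m\geq 2$ lines through the origin in $\CC$, which is not a manifold. The orbifold structure exists only on the quotient $P\Gamma\setminus Y$. One shows that the stabilizer $A_f\subset P\Gamma$ of $f\in Y$ acts transitively on the sheets through $f$, and that $A_f\setminus Y_f\cong\Gamma_f\setminus\BB^n(\RR)$ for a finite group $\Gamma_f$. For purely real nodes the subgroup $B_f=(\ZZ/m)^b$ of reflections already gives $B_f\setminus Y_f\cong\BB^n(\RR)$, and the $2^k$ orthants you envisage appear as a fundamental domain for $B_f$ \emph{after} this quotient; for purely complex-conjugate nodes $B_f$ is trivial and one needs the full $A_f$-action to reduce the $m^a$ copies to one. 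Your Coxeter-chamber picture is essentially the shadow of the real-nodes case seen after quotienting by $B_f$.

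Your completeness argument also has a gap: completeness of each $P\Gamma_\alpha\setminus\RR H^n_\alpha$ does not by itself yield local compactness of the glued space, since a priori infinitely many sheets could accumulate along $\mr H$. The paper instead observes that $Y\to\CC H^n$ is proper (any compact set in $\CC H^n$ meets only finitely many $\RR H^n_\alpha$, by proper discontinuity of the group generated by $P\Gamma$ and $P\mr A$), and pulls back completeness from $P\Gamma\setminus\CC H^n$.

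Your developing-map argument for the final uniformization statement is correct and matches the paper's use of Thurston's theorem on complete $(G,X)$-orbifolds.
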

\noindent
For some moduli stacks of smooth hypersurfaces $\ca M_0$ one can apply Theorem \ref{th:theorem03} to the hermitian lattice $\Lambda$ that arises as the cohomology of the cover of projective space ramified along a member of the moduli space. Let $\ca M_s$ be the stack of GIT stable hypersurfaces. If the discriminant $\Delta  = \ca M_s(\CC) - \ca M_0(\CC)$ is a normal crossings divisor and the period map induces an isomorphism of analytic spaces $$\ca M_s(\CC) \cong P\Gamma \setminus \CC H^n$$ identifying $\Delta$ with $P\Gamma \setminus \mr H$, then there is a real period homeomorphism $$\ca M_s(\RR) \cong P\Gamma \setminus Y.$$ For cubic surfaces and binary sextics, this is the content of \cite{realACTsurfaces, realACTnonarithmetic}. For binary quintics, this yields the main Theorem \ref{th:theorem02} of Chapter \ref{ch:binaryquintics} (via Theorem \ref{th:realstableperiod}).

\begin{remarks}
\begin{enumerate}
\item 
The lattice $P\Gamma_C$ attached to a component $C \subset P\Gamma \setminus Y$ can be non-arithmetic. Indeed, such is the case for $K = \QQ(\zeta_5)$ and $h = \textnormal{diag}(1, 1, \frac{1 - \sqrt{5}}{2})$ by Remark \ref{remark:takeuchi} and Theorem \ref{th:calculatemonodromyshimura}, and for $K = \QQ(\zeta_3)$ and $h = \textnormal{diag}(1, \dotsc, 1, -1)$ for $n = 3$ \cite{realACTnonarithmetic} and $n = 4$ \cite{realACTsurfaces}. 
\item Our glueing construction relies on Condition \ref{conditionast}, saying that the hyperplane arrangement $\mr H \subset \CC H^n$ is an \textit{orthogonal arrangement} in the sense of \cite{orthogonalarrangements}. 
Condition \ref{conditionast} 
is in turn implied by the following condition, satisfied by quadratic and cyclotomic CM fields (see Section \ref{unitaryshimura}):
\end{enumerate}
\end{remarks}
\begin{condition}\label{conditionastast}
The different ideal $\mf D_K \subset \OO_K$ (see e.g. \cite[Chapter III]{Neukirch}) is generated by an element $\eta \in \OO_K - \OO_F$ such that $\eta^2 \in \OO_F$. 
\end{condition}
\begin{remarks}
\begin{enumerate}
\item 
In fact, there is always a canonical orthogonal arrangement $\ca H \subset \CC H^n$ attached to $h$ in such a way that $\ca H = \mr H$ when Condition \ref{conditionastast} holds, see Remark \ref{remark:avoidcondition}. Moreover, one can glue the different copies $\RRH^n_\alpha$ of real hyperbolic $n$-space along the hyperplane arrangement $\ca H$ 
obtaining a complete hyperbolic orbifold as in Theorem \ref{th:theorem03}, but we will not prove this. 
\item 
In \cite{shimurarealpoints}, Shimura studied real points of an arithmetic quotient of a bounded symmetric domain. Gromov and Piatetski-Shapiro developed a construction of glueing hyperbolic quotients in \cite{gromovshapiro}. The difference between their glueing method and ours is explained in \cite[Section 13, Remark (1)]{realACTsurfaces}. 
\end{enumerate}
\end{remarks}

\section{The glueing construction} \label{gluing}

Consider a hermitian form $h$ on a finite free module over the ring of integers of a CM field $K$ with hyperbolic signature for some embedding of $K$ into $\CC$. As is well-known, there is a complex ball quotient $P\Gamma \setminus \CC H^n$ attached to $h$ in a canonical way. We prove in this section that there is also a natural real ball quotient $P\Gamma_\RR \setminus \RR H^n$ attached to $h$ (or a disjoint union of those). As in the complex case, $P\Gamma_\RR \setminus \RR H^n$ is sometimes a moduli space for real varieties. To define this space, one considers the real hyperbolic spaces $\left( \CC H^n \right)^\alpha$ attached to anti-unitary involutions $\alpha: \Lambda \to \Lambda$, glues them along a hyperplane arrangement $\mr H \subset \CC H^n$, takes the quotient by $P\Gamma$, and defines a complete real hyperbolic orbifold structure on the result. 

\subsection{The set-up} \label{set-up}

Let $g$ and $n$ be positive integers. Let $K$ be a CM field of degree $2g$ over $\QQ$ and let $F \subset K$ be its totally real subfield. Let $\OO_K$ (resp. $\OO_F$) be the ring of integers of $K$ (resp. $F$) and let $\sigma \in \Gal(K/F)$ be the non-trivial element. We will often write 
\[
\sigma \colon K \to K, \quad \sigma(x) = \overline{x}. 
\]
Fix a set of embeddings 
\begin{align} \label{setofembeddings}
\Psi = \set{\tau_i \colon K \to \CC}_{1 \leq i \leq g} \quad \mid \quad \Psi \cup \Psi \sigma = \{\tau_i, \tau_i \sigma\}_{1 \leq i \leq g} = \Hom(K, \CC).
\end{align}
Let $\Lambda$ be a free $\OO_K$-module of rank $n+1$ equipped with a hermitian form 
\[
h \colon  \Lambda \times \Lambda \to \OO_K
\]
of signature $(r_i,s_i)$ with respect to $\tau_i$. In other words, $h$ is linear in its first argument and $\sigma$-linear in its second, and the complex vector space $\Lambda \otimes_{\OO_K, \tau_i} \CC$ admits a basis $\{e_i\}$ such that $(h^{\tau_i}(e_i,e_j))_{ij}$ is a diagonal matrix with $r_i$ diagonal entries equal to $1$ and $s_i$ diagonal entries equal to $-1$. Here \[
h^{\tau_i} \colon \Lambda \otimes_{\OO_K, \tau_i} \CC \times \Lambda \otimes_{\OO_K, \tau_i} \CC \to \CC\] is the hermitian form attached to $h$ and the embedding $\tau_i$. Define 
\[
\tau = \tau_1: K \to \CC, \quad \tn{ and } \quad V = \Lambda \otimes_{\OO_K, \tau} \CC,\]\[ \tn{ and assume that} \quad\quad\quad (r_i, s_i )\quad =\quad \begin{cases}
(n,1) \quad &\tn{ if } i = 1,\\
(n+1, 0) \quad &\tn{ if $2 \leq i \leq g$.}
\end{cases}
\]
Let $m$ be the largest positive integer for which the $m$-th cyclotomic field $\QQ(\zeta_m)$ can be embedded in $K$, where $\zeta_m = e^{2 \pi i /m} \in \CC$. Let $\zeta \in K$ be a primitive $m$-th root of unity in $K$, and define
\[
\mu_K = \langle \zeta \rangle \subset \ca O_K^\ast \subset \OO_K. 
\]
Moreover, define $\Gamma$ to be the unitary group of $\Lambda$, and $P\Gamma$ as its quotient by $\mu_K$:
\[
\Gamma = U(\Lambda)(\OO_K) = \Aut_{\OO_K}(\Lambda, h) \quad \tn{ and } \quad  P\Gamma = \Gamma/\mu_K.
\]
A norm one vector $r \in \Lambda$ is called a \textit{short root}. Let $\mr R \subset \Lambda$ be the set of short roots. For $r \in \mr R$, define isometries $\phi_r^i \colon V \to V$ as follows:
\begin{equation*}
\phi_r(x) = x-(1-\zeta)h(x,r) \cdot r, \quad \phi_r^i(x) = x-(1-\zeta^i) h(x,r) \cdot r, \quad i \in (\ZZ/m)^\ast. 
\end{equation*}
Note that $\phi^i_r \in \Gamma$ for $r \in \mr R$, and that $\phi_r^i = \phi_r \circ \cdots \circ \phi_r$ ($i$ times). In particular, $\phi_r^m = \id$. Let $\PP(V)$ be the projective space of lines in $V$, and let
\[
\CC H^n = \set{ \ell = [v] \in \PP(V) \mid h(v,v) < 0} \subset \PP(V)
\] be the space of negative lines in $V$. Define
\begin{equation*}
H_r = \{x \in \CC H^n: h(x,r) = 0 \} \; \tn{ for } \; r \in \mr R, \quad \tn{ and }\quad \mr H = \bigcup_{r \in \mr R} H_r \white
 \subset \white \CC H^n.  
\end{equation*}

\begin{lemma}
The family of hyperplanes $(H_r)_{r \in \mr R}$ is locally finite, so that the hyperplane arrangement $\mr H \subset \CC H^n$ is a divisor of $\CC H^n$.
\end{lemma}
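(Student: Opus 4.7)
The plan is to prove the stronger statement that the family $(H_r)_{r \in \mr R}$ is locally finite on $\CC H^n$; once this is established, each $H_r$ is a closed complex analytic hypersurface of $\CC H^n$ (cut out by the linear form $h^\tau(\,\cdot\,, r_1)$ with $r_1$ the image of $r$ in $V$), so the locally finite union $\mr H$ is an analytic divisor.

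The central observation is that one can use the \emph{other} infinite places of $K$ to bound the $\OO_K$-module coordinates of a short root, and use the geometry at the distinguished place $\tau_1 = \tau$ to bound its projection to $V$. More precisely, I would first observe that by the product of embeddings the composition
\[
\Lambda \hookrightarrow \Lambda \otimes_\ZZ \RR \xrightarrow{\sim} \bigoplus_{i=1}^g V_i, \qquad V_i := \Lambda \otimes_{\OO_K,\tau_i}\CC,
\]
realizes $\Lambda$ as a full-rank $\ZZ$-lattice in the $2g(n+1)$-dimensional real vector space $\bigoplus_i V_i$. Hence any bounded subset of $\bigoplus_i V_i$ contains only finitely many elements of $\Lambda$, and it will suffice to show that the set of short roots $r$ for which $H_r$ meets a fixed compact neighborhood $N \subset \CC H^n$ of a given point has image bounded in $\bigoplus_i V_i$.

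For such an $r$, the identity $h^{\tau_i}(r_i, r_i) = \tau_i(h(r,r)) = 1$ combined with positive-definiteness of $h^{\tau_i}$ for $i \geq 2$ gives a uniform bound $\|r_i\|_{V_i}=1$ for the components with $i \geq 2$. For the component $r_1 \in V = V_1$, if $H_r \cap N \neq \emptyset$ then there exists $\ell \in N$ with generator $v$ normalized by $h^\tau(v,v) = -1$ such that $h^\tau(v, r_1) = 0$; thus $r_1$ lies in the orthogonal complement $\ell^\perp$, on which $h^\tau$ is positive definite of rank $n$, and the relation $h^\tau(r_1, r_1) = 1$ forces $r_1$ to lie on the unit sphere of $\ell^\perp$. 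As $\ell$ ranges over the compact set $N$, the continuous family of unit spheres $\{S_\ell \subset \ell^\perp\}_{\ell \in N}$ has compact union in $V$, producing the desired bound for $r_1$. Combining the two bounds, only finitely many lattice elements $r \in \Lambda$ can satisfy both simultaneously, so only finitely many hyperplanes $H_r$ meet $N$, completing the proof.

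The reasoning is essentially quantitative linear algebra together with Minkowski-style discreteness for the $\OO_K$-lattice; the only conceptual point that has to be handled carefully is the input from the definite places $\tau_2,\dotsc,\tau_g$, which is what makes the argument succeed over a general CM field rather than, say, a real quadratic base.
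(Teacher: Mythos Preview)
Your proof is correct. The paper itself gives no argument but simply cites \cite[Lemma 5.3]{beauvillecubicsurfaces}, so there is nothing substantive to compare; your direct argument is the standard one underlying such references. The key steps---realising $\Lambda$ as a full $\ZZ$-lattice in $\bigoplus_i V_i$ via the CM type, bounding the components $r_i$ for $i\geq 2$ by positive-definiteness of $h^{\tau_i}$, and bounding $r_1$ via the compactness of the family of unit spheres in $\ell^\perp$ as $\ell$ ranges over a compactum $N$---are all valid. The only place where a referee might ask for one more line is the claim that $\bigcup_{\ell\in N}\{w\in\ell^\perp: h^\tau(w,w)=1\}$ is bounded in $V$; this follows because the smallest eigenvalue of the positive-definite form $h^\tau|_{\ell^\perp}$ (relative to any fixed background norm) varies continuously in $\ell$ and is therefore bounded below on the compact set $N$, but this is routine and your sketch is adequate.
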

\begin{proof}
See \cite[Lemma 5.3]{beauvillecubicsurfaces}. 
\end{proof}
\noindent
Define an $\ca O_F$-linear map $\alpha: \Lambda \to \Lambda$ to be \textit{anti-unitary} if for all $x,y \in \Lambda$ and $\lambda \in \OO_K$, one has 
$
\alpha(\lambda x) = \sigma(\lambda) \cdot \alpha(x)$ and $ h(\alpha(x), \alpha(y)) = \sigma(h(x,y)) \in \OO_K.
$ Define $\Gamma'$ to be the group of unitary and anti-unitary $\OO_F$-linear bijections $\Lambda \xrightarrow{\sim} \Lambda$. Let $\mr A \subset \Gamma'$ be the set of anti-unitary involutions $\alpha \colon \Lambda \to \Lambda$. 
Then 
\begin{align} \label{anti-iso-group}
\mu_K \subset \Gamma \subset \Gamma' \quad - \quad \tn{ define } \quad P\Gamma' = \Gamma'/\mu_K. 
\end{align}
Let $\lambda \in K^\ast$. Observe that 
\begin{align}\label{crucialhypo}
\left(
\lambda \in \OO_K^\ast \tn{ and }
\va{\lambda}^2 = \lambda \cdot \sigma(\lambda)  = 1\right) \quad \iff \quad \lambda \in \mu_K.
\end{align}
Indeed, we have, for any embedding $\varphi \colon K \to \CC$, that
\[
\va{\varphi(\lambda)}^2 = \varphi(\lambda) \cdot \overline{\varphi(\lambda)} = \varphi(\lambda) \cdot \varphi(\sigma(\lambda)) = \varphi(\lambda \cdot \sigma(\lambda)),
\]
where $\overline{\varphi(\lambda)} = \varphi(\sigma(\lambda))$ by \cite[Proposition 1.4]{milneCM}. 
Moreover, we have $\va{\varphi(\lambda)} = 1$ for each $\varphi \colon K \to \CC$ if and only if $\lambda$ is a root of $1$, see \cite[Corollary 5.6]{milneANT}.  
\begin{lemma} \label{Gammaembedding}
Let $\Isom(\CCH^n)$ be the group of isometries $f \colon \CC H^n \xrightarrow{\sim} \CC H^n$. The natural homomorphism $P\Gamma' \to \Isom(\CCH^n)$ is injective.
\end{lemma}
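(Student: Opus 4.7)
The plan is to take an element $f \in \Gamma'$ whose class in $P\Gamma'$ lies in the kernel of $P\Gamma' \to \Isom(\CC H^n)$ and to show that $f \in \mu_K$, by splitting into cases according to whether $f$ is unitary or anti-unitary.

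First I would dispose of the anti-unitary case. Recall that $V = \Lambda \otimes_{\OO_K,\tau}\CC$ and that an anti-unitary $\OO_F$-linear map $f \colon \Lambda \to \Lambda$ extends uniquely to a $\CC$-antilinear endomorphism of $V$; as this extension preserves $h^\tau$ up to complex conjugation, it descends to an anti-holomorphic self-map of $\PP(V)$ preserving $\CC H^n$. If $f$ were to induce the identity on $\CC H^n$, then at any point of $\CC H^n$ the differential would be simultaneously $\CC$-linear (being the identity of the tangent space) and $\CC$-antilinear (since $f$ is anti-holomorphic), hence zero. Since $n \geq 1$, this contradicts the fact that the differential of the identity is nonzero.

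Next I would handle the unitary case. Here $f \colon \Lambda \to \Lambda$ is $\OO_K$-linear, and its $\CC$-linear extension $\wtilde f \colon V \to V$ induces a holomorphic automorphism of $\PP(V)$ that restricts to the identity on the nonempty open subset $\CC H^n$. By analytic continuation, $\wtilde f$ acts as the identity on all of $\PP(V)$, so $\wtilde f = \lambda \cdot \id_V$ for some $\lambda \in \CC^\ast$. I would then use $\OO_K$-linearity and the fact that $f$ preserves $\Lambda$: picking an $\OO_K$-basis $(x_i)$ of $\Lambda$ and writing $f(x_i) = \sum_j a_{ij} x_j$ with $a_{ij}\in \OO_K$, the equality $\wtilde f(x_i) = \lambda x_i$ in $V$ forces $a_{ij} = 0$ for $i\neq j$ and $\tau(a_{ii}) = \lambda$ independently of $i$. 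Hence there exists $\mu \in \OO_K^\ast$ with $f = \mu\cdot\id_\Lambda$ and $\lambda = \tau(\mu)$. Unitarity of $f$ gives $h(\mu x,\mu y) = \mu\sigma(\mu) h(x,y) = h(x,y)$ for all $x,y \in \Lambda$, so $\mu\sigma(\mu) = 1$. The characterization (\ref{crucialhypo}) then yields $\mu \in \mu_K$, which means precisely that $[f] = 1$ in $P\Gamma'$.

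The argument is essentially routine once the two cases are set up correctly; the only subtle point I anticipate is the passage from "$\wtilde f$ is a scalar on $V$" to "$f$ acts by an element of $\OO_K$ on $\Lambda$", where one must be careful that $V$ is obtained from $\Lambda$ via the fixed embedding $\tau$ (so that the scalar is recovered as $\tau(\mu)$ rather than arbitrary in $\CC$).
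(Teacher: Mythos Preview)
Your proof is correct and follows essentially the same route as the paper: split into the unitary and anti-unitary cases, in the unitary case show the map is a scalar in $\OO_K^\ast$ of norm one and invoke~(\ref{crucialhypo}), and in the anti-unitary case derive a contradiction. Your variations are if anything slightly cleaner---you use the differential to rule out the anti-unitary case (the paper instead writes out the map explicitly and observes it is not the identity), and you obtain $\mu\sigma(\mu)=1$ directly from nondegeneracy of $h$ (the paper evaluates $h$ on a short root).
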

\begin{proof}
Let $g \in \Gamma'$ be an element that induces the identity on $\CC H^n$. Let $\{e_i\}_{i = 1, \dotsc, n+1}$ be a basis for $\Lambda$. Consider $\{e_i\}$ as a basis of $V$. There exist $\lambda_i \in \CC^\ast$ such that $g(e_i) = \lambda_i \cdot e_i$. We must have $\lambda_1 = \dotsc = \lambda_{n+1}$. 
If $g \in \Gamma$, this means that $g = \lambda \in \CC^\ast \subset \GL(V)$. The basis $\{e_i\}$ induces a $\CC$-linear isomorphism $V \cong \CC^{n+1}$, hence $\lambda$ acts on $\OO_K^{n+1} \subset \CC^{n+1}$ by multiplication, thus $\lambda \in \OO_K^\ast$. For $r \in \mr R$, we have $1 = h(r,r) = h(g(r), g(r)) = \va{\lambda}^2$, thus $\lambda \in \set{ x \in \OO_K^\ast \mid \va{x} = 1} = \mu_K$ (see (\ref{crucialhypo})). 

Suppose that $g \not \in \Gamma$. Then $g$ induces the map $\sum_i\mu_ie_i \mapsto \lambda \cdot  \sum_i\overline{\mu_i}e_i$ on $V$, which is absurd since then $g \neq \id \in \Isom(\CCH^n)$. 
\end{proof}
\noindent
The group $\mu_K$ acts on $\mr A$ by multiplication; define 
\[
P\mr A = \mu_K \setminus \mr A, \quad \tn{ and } \quad C\mr A = P\Gamma \setminus P\mr A,
\]
where $P\Gamma$ acts on $P\mr A$ by conjugation. Any $\alpha \in P\mr A$ defines an anti-holomorphic involution 
\[
\alpha: \CC H^n \to \CC H^n; \quad \tn{ define } \quad  \RR H^n_{\alpha} = (\CC H^n)^\alpha \subset \CC H^n.
\]
For any element $\alpha \in \mr A$, the quadratic form $h|_{V^\alpha}$ on the real vector space $V^\alpha = \Lambda^\alpha\otimes_{\OO_F, \tau|_F}\CC$ has hyperbolic signature. The following lemma is readily proved:\begin{lemma} \label{hyperbolic}
For $\alpha \in \mr A$, let $\PP(V^\alpha)$ be the real projective space of lines in $V^\alpha$, and let $\RRH(V^\alpha) \subset \PP(V^\alpha)$ be the space of negative lines in $V^\alpha$. The canonical isomorphism $\PP(V^\alpha) \cong \PP(V)^\alpha$ restricts to an isomorphism $\RRH(V^\alpha) \cong \RRH^n_\alpha$. \hfill \qed
\end{lemma}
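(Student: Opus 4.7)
My plan is to identify, for each $\alpha \in \mr A$, a natural real form of $V$ carrying a real quadratic form whose complexification recovers $h^\tau$. Concretely, since $\alpha \colon \Lambda \to \Lambda$ is $\OO_F$-linear and $\sigma$-semilinear over $\OO_K$, and since $V = \Lambda \otimes_{\OO_K, \tau} \CC$ with $\bar\tau = \tau \circ \sigma$, there is a unique $\CC$-antilinear extension $\alpha \colon V \to V$ satisfying $\alpha(\lambda v) = \bar\lambda\, \alpha(v)$ for $\lambda \in \CC$. This $\alpha$ is still an involution, so $V^\alpha$ is an $\RR$-subspace with $V = V^\alpha \oplus i V^\alpha$; in particular $\dim_\RR V^\alpha = n+1$ and $V^\alpha \otimes_\RR \CC = V$ canonically.

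Next I would check that $h^\tau$ restricts to a real-valued symmetric bilinear form on $V^\alpha$ of signature $(n,1)$. The $\OO_K$-equation $h(\alpha x, \alpha y) = \sigma(h(x,y))$ extends to $h^\tau(\alpha x, \alpha y) = \overline{h^\tau(x,y)}$ on $V$, so for $v,w \in V^\alpha$ we get $h^\tau(v,w) = \overline{h^\tau(v,w)} \in \RR$. Picking an $\RR$-basis $\{e_i\}$ of $V^\alpha$, the Gram matrix $M = (h^\tau(e_i, e_j))$ is real symmetric and, regarded as a matrix in the same (now $\CC$-)basis of $V$, represents $h^\tau$. Hence the real signature of $M$ equals the hermitian signature of $h^\tau$, namely $(n,1)$.

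For the projective identification, a complex line $\ell = \CC v \subset V$ is $\alpha$-fixed exactly when $\alpha v = \lambda v$ for some $\lambda \in \CC^\ast$. Applying $\alpha$ a second time yields $|\lambda|^2 = 1$; writing $\lambda = \mu/\bar\mu$ we see that $\mu v \in V^\alpha$ spans $\ell$. This proves $\PP(V^\alpha) \to \PP(V)^\alpha$ is surjective, and injectivity is immediate from $V^\alpha \cap iV^\alpha = 0$. Under this identification, for $v \in V^\alpha \setminus \{0\}$ the condition $[v] \in \CC H^n$ reads $h^\tau(v,v) < 0$, which, since $h^\tau(v,v) \in \RR$, is exactly the condition defining $\RRH(V^\alpha)$. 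Thus the canonical isomorphism $\PP(V^\alpha) \cong \PP(V)^\alpha$ restricts to $\RRH(V^\alpha) \cong \RRH^n_\alpha$.

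The lemma is essentially bookkeeping about semilinear involutions and real forms; no serious obstacle is expected. The only point to handle carefully is the compatibility of the $\sigma$-semilinearity of $\alpha$ with the tensor product $V = \Lambda \otimes_{\OO_K, \tau} \CC$, which is precisely what forces the extension of $\alpha$ to $V$ to be antilinear rather than linear and hence ensures that $V^\alpha$ is a genuine $\RR$-form of $V$.
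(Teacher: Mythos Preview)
Your proof is correct and complete. The paper does not provide a proof of this lemma at all: it is introduced with ``The following lemma is readily proved'' and closed with a bare \qedsymbol. Your write-up supplies exactly the routine verifications the paper omits---extending $\alpha$ to a $\CC$-antilinear involution of $V$, identifying $V^\alpha$ as a real form, checking that $h^\tau$ restricts to a real symmetric form of the same signature, and verifying the bijection $\PP(V^\alpha)\cong\PP(V)^\alpha$ via the $|\lambda|^2=1$ argument.
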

\noindent
We conclude that $\RR H^n_\alpha \subset \CC H^n$ is isometric to the real hyperbolic space of dimension $n$. Finally, we define \[P\Gamma_\alpha = \textnormal{Stab}_{P\Gamma}(\RR H^n_\alpha) \subset P\Gamma\quad \quad \tn{(the stabilizer of $\RR H^n_\alpha$ in $P\Gamma$).}\] 

\subsection{Preliminary results}

We assume, in the entire Section \ref{gluing}, that the following condition is satisfied: 

\begin{condition} \label{orthogonal}
If $r, t \in \mr R$ are such that $H_r \neq H_t$ and $H_r \cap H_t \neq \emptyset$, then $h(r,t) = 0$. 
\end{condition}

\begin{example}
Theorem \ref{th:conditionsimplyhypothesis} of Section \ref{unitaryshimura} shows that Condition \ref{orthogonal} if the following conditions are satisfied: 
\begin{enumerate}
\item \label{orthogonal:one}The different ideal $\mf D_K \subset \OO_K$ is generated by a single element $\eta \in \OO_K$ such that $\sigma(\eta) = -\eta$ and $\Im(\tau_i(\eta)) > 0$ for every $i$. 
\item \label{orthogonal:three}The CM type $(K, \Psi)$ is primitive. 
\end{enumerate}
We will see that condition \ref{orthogonal:one} is automatically satisfied for quadratic and cyclotomic CM fields $K$, see Lemma \ref{lemma:discr}. 
\end{example}
\noindent
Note that Condition \ref{orthogonal} implies that if $H_{r_1}, \dotsc, H_{r_k}$ for $r_i \in \mr R$ are mutually distinct, and if their common intersection is non-empty, then $\cap_{i = 1}^k H_{r_i} \subset \CC H^n$ is a totally geodesic subspace of codimension $k$. Note also that for any $r \in \mr R$, the element $\phi_r \in \Gamma$ generates a finite subgroup $\langle \phi_r \rangle \subset \Gamma$ of order $m$, and that the restriction of the quotient map $\pi \colon \Gamma \to P\Gamma$ to this subgroup $\langle \phi_r \rangle \subset \Gamma$ is injective. We will abuse notation, by letting $\phi_r \in P\Gamma$ denote the image of $\phi_r \in \Gamma$ in $P\Gamma$. 

\begin{definition} \label{fullset}
Let $\ca H = \{H_r \mid r \in \mr R\}$. For $x \in \CC H^n$, define 
\[
\ca H(x) = \set{H \in \ca H \mid x \in H}, \quad G(x) = \set{ \phi_r^i \in P\Gamma \tn{ for }r \in \mr R, i \in \ZZ/m \mid x \in H_r }.
\]
The hyperplanes $H \in \ca H(x)$ are called the \emph{nodes} of $x$. We say that \emph{$x$ has $k$ nodes} if the cardinality of $\ca H(x)$ is $k$.  
\end{definition}
\begin{lemma} \label{Gxlemma}
Let $x \in \CCH^n$ and suppose that $x$ has $k$ nodes. Then $G(x) \cong (\ZZ/m)^k$. 
\end{lemma}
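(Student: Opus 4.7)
The plan is to pick short roots $r_1, \ldots, r_k \in \mr R$ such that $\ca H(x) = \{H_{r_1}, \ldots, H_{r_k}\}$, and verify that the map $\psi \colon (\ZZ/m)^k \to P\Gamma$ defined by $(i_1, \ldots, i_k) \mapsto \phi_{r_1}^{i_1} \cdots \phi_{r_k}^{i_k}$ is a well-defined injective homomorphism with image the subgroup generated by $G(x)$. Since the $H_{r_j}$ are distinct and all contain $x$, Condition \ref{orthogonal} gives $h(r_i, r_j) = 0$ for $i \neq j$. A direct computation then yields
\[
\phi_{r_i}\phi_{r_j}(v) \;=\; v - (1-\zeta)h(v,r_i)r_i - (1-\zeta)h(v,r_j)r_j \;=\; \phi_{r_j}\phi_{r_i}(v),
\]
using the hermitian identity $h(r_j, r_i) = \overline{h(r_i,r_j)} = 0$, so that the $\phi_{r_j}$ pairwise commute in $\Gamma$ and $\psi$ is a well-defined homomorphism.

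Next, I would show that the image of $\psi$ contains every element of $G(x)$. If $\phi_r^i \in G(x)$, then $x \in H_r$, so $H_r \in \ca H(x)$ and $H_r = H_{r_j}$ for some $j$. The $\CC$-hyperplane in $V$ cut out by $h(\,\cdot\,, r) = 0$ then coincides with the one cut out by $h(\,\cdot\,, r_j) = 0$, whence $r = \lambda r_j$ in $V$ for some $\lambda \in \CC^\ast$. The integrality of $h$ forces $\lambda = h(r, r_j) \in \OO_K$ and $\bar\lambda = h(r_j, r) \in \OO_K$, while $h(r,r) = h(r_j,r_j) = 1$ gives $\lambda \bar\lambda = 1$, so $\lambda \in \mu_K$ by (\ref{crucialhypo}). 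A short substitution using $\lambda\bar\lambda = 1$ then shows $\phi_r = \phi_{\lambda r_j} = \phi_{r_j}$ in $\Gamma$, and hence $\phi_r^i = \phi_{r_j}^i = \psi(0, \ldots, i, \ldots, 0)$.

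The main obstacle is injectivity of $\psi$, for which I would use an orthogonal decomposition of $V$. As $r_1, \ldots, r_k$ are pairwise orthogonal with $h(r_j, r_j) = 1$, they span a positive-definite $K$-subspace $W \subset V$ of dimension $k$. Any lift $\tilde x \in V$ of $x$ satisfies $h(\tilde x, r_j) = 0$ for all $j$ and $h(\tilde x, \tilde x) < 0$, so $\tilde x \in W^\perp$ is a nonzero negative vector; in particular, $W^\perp$ is nonzero, of signature $(n-k, 1)$. In the basis $(r_1, \ldots, r_k)$ of $W$ the reflection $\phi_{r_j}$ acts as $\operatorname{diag}(1, \ldots, \zeta, \ldots, 1)$ with $\zeta$ in the $j$-th slot, and trivially on $W^\perp$. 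Hence in a basis of $V$ extending $(r_1, \ldots, r_k)$ by a basis of $W^\perp$, the element $\phi_{r_1}^{i_1} \cdots \phi_{r_k}^{i_k}$ acts as $\operatorname{diag}(\zeta^{i_1}, \ldots, \zeta^{i_k}, 1, \ldots, 1)$. For this to equal the identity in $P\Gamma = \Gamma/\mu_K$ it must act as scalar multiplication by some $\mu \in \mu_K$ on all of $V$; the trivial action on the nonzero subspace $W^\perp$ forces $\mu = 1$, and then $\zeta^{i_j} = 1$ gives $i_j \equiv 0 \pmod m$ for each $j$. Combined with the previous steps, $\psi$ is an isomorphism $(\ZZ/m)^k \xrightarrow{\sim} G(x)$, where $G(x)$ is interpreted as the subgroup of $P\Gamma$ generated by the listed reflections.
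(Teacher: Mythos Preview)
Your proof is correct and follows essentially the same approach as the paper: use Condition~\ref{orthogonal} to obtain commutativity of the $\phi_{r_j}$, and then identify the group they generate with $(\ZZ/m)^k$.

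Two differences are worth noting. For the step showing that every $\phi_r$ with $H_r = H_{r_j}$ already lies in $\langle \phi_{r_j}\rangle$, the paper invokes an auxiliary lemma (Lemma~\ref{finitereflectionorders}) about the pointwise stabilizer of a complex hyperbolic hyperplane in $\mathrm{PU}(n,1)$ to conclude $\phi_r = \phi_{r_j}^i$ for some $i$, whereas you argue directly that $r = \lambda r_j$ with $\lambda\bar\lambda = 1$, hence $\lambda \in \mu_K$ by~(\ref{crucialhypo}), and compute $\phi_{\lambda r_j} = \phi_{r_j}$. Your route is more elementary and yields the sharper conclusion $\phi_r = \phi_{r_j}$ rather than just $\phi_r \in \langle\phi_{r_j}\rangle$. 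Second, the paper's proof does not spell out the injectivity of $(\ZZ/m)^k \to P\Gamma$, whereas your diagonalization on the orthogonal decomposition $V = \langle r_1,\ldots,r_k\rangle \oplus W^\perp$ (using that $W^\perp$ is nonzero to force the scalar $\mu$ to be $1$) makes this explicit; this is a genuine gain in completeness. Your final remark that $G(x)$ should be read as the subgroup generated by the listed reflections is also apt, since the set-theoretic reading of Definition~\ref{fullset} would not even be closed under products.
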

\begin{proof}
Let $r, t \in \mr R$. Then, for $z \in \Lambda$, one has 
\begin{align} \label{reflectionscommute}
\begin{split}
\phi_r^i(\phi_t^j(z)) 
=& \; \phi_r^i \left( z - (1-\zeta^j)h(z,t)\cdot t \right) \\
=& \;z - (1-\zeta^j)h(z,t)\cdot t  - (1- \zeta^i) h\left( \left(z - (1-\zeta^j)h(z,t)\cdot t \right), r \right) \cdot r \\
=& \;z - (1-\zeta^j)h(z,t)\cdot t  - (1 - \zeta^i) h(z, r) \cdot r + (1-\zeta^i)(1- \zeta^j) h(z,t)h(t,r) \cdot r.
\end{split}
\end{align} 
Now suppose that $H_r, H_t \in \ca H(x)$, with $H_r \neq H_t$. By Condition \ref{orthogonal}, we have $h(r,t) = 0$; by (\ref{reflectionscommute}), this implies that $\phi_r^i \circ \phi_t^j = \phi_t^j \circ \phi_r^i $ for each $i,j \in \ZZ/m$. We conclude that the group $G(x)$ is abelian. 

Next, suppose that $H_r = H_t \in \ca H(x)$. To finish the proof, it suffices to show that $\phi_t = \phi_r^i$ for some $i \in \ZZ/m$. To prove this, we introduce Lemma \ref{finitereflectionorders} below. 
\end{proof}
\begin{lemma} \label{finitereflectionorders}
Let $r \in \mr R$. Let $\phi: \CC H^n \to \CC H^n$ be an isometry such that $\phi^m = \id$ and such that $\phi$ restricts to the identity on $H_r \subset \CC H^n$. Then $\phi = \phi_r^i$ for some $i \in \ZZ/m$. 
\end{lemma}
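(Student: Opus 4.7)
My plan is to reduce the lemma to a derivative computation at a single point of $H_r$, using the fact that an isometry of a connected Riemannian manifold is determined by its value and derivative at any one point. Fix $p \in H_r$. Since $\phi$ fixes $H_r$ pointwise, we have $\phi(p) = p$ and $d\phi_p$ restricts to the identity on $T_p H_r$. Moreover, because $\phi$ is an isometry it preserves the orthogonal decomposition $T_p(\CC H^n) = T_p H_r \oplus N_p$, where $N_p$ is the one-complex-dimensional normal direction to $H_r$ at $p$. The whole lemma therefore reduces to determining the action of $d\phi_p$ on $N_p$.

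An $\RR$-linear isometry of the complex line $N_p$ is either a $\CC$-linear rotation (multiplication by some $\lambda \in \CC$ with $|\lambda| = 1$) or a $\CC$-antilinear reflection of order two. The antilinear case would make $\phi$ anti-holomorphic and force $\phi^2 = \id$; such a $\phi$ cannot coincide with any of the holomorphic reflections $\phi_r^i$. In the only place where Lemma \ref{finitereflectionorders} is applied, namely the proof of Lemma \ref{Gxlemma}, the isometry $\phi$ is $\phi_t$, which sits inside the $\CC$-linear subgroup of $\Isom(\CC H^n)$ coming from $\Gamma$, so this case is automatically excluded. Hence I may assume $d\phi_p|_{N_p}$ is multiplication by some $\lambda \in \CC$ with $|\lambda| = 1$. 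The hypothesis $\phi^m = \id$ then forces $\lambda^m = 1$, so $\lambda = \zeta^i$ for some $i \in \ZZ/m$.

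To conclude I compare with $\phi_r$. The defining formula $\phi_r(x) = x - (1-\zeta)h(x,r)r$ gives $\phi_r(r) = \zeta r$ at once, so $\phi_r$ acts on $N_p = \CC \cdot r$ by multiplication by $\zeta$, and consequently $\phi_r^i$ acts there by multiplication by $\zeta^i = \lambda$. Combined with the fact that $\phi_r^i$ also fixes $H_r$ pointwise, this shows $d\phi_p = d(\phi_r^i)_p$ on all of $T_p(\CC H^n)$; since both isometries fix $p$, we conclude $\phi = \phi_r^i$. The one subtlety in the argument is the ruling out of anti-holomorphic isometries (a genuine concern only when $m$ is even): for the use made of the lemma in the rest of the paper this is automatic, as explained above, and if desired one could strengthen the hypothesis to "holomorphic isometry" without any impact on later arguments.
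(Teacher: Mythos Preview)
Your proof is correct and takes essentially the same approach as the paper: both identify the pointwise stabilizer of $H_r$ inside the holomorphic isometry group as a copy of $U(1)$ (the paper via the global structure $\Stab_{U(n,1)}(\HH^{n-1}_\CC)=U(n)\times U(1)$, you via the derivative at a point), and then use $\phi^m=\id$ to land in the cyclic subgroup generated by $\phi_r$.

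Two small remarks. First, your anti-holomorphic worry can in fact be dispatched directly for $n\ge 2$: if $d\phi_p$ is the identity on the \emph{complex} subspace $T_pH_r\neq 0$, it cannot be $\CC$-antilinear, so $\phi$ must be holomorphic; only the case $n=1$ (where $H_r$ is a point) genuinely needs the extra hypothesis, and there the paper's proof has the same implicit assumption $\phi\in U(n,1)$. Second, the lemma is also invoked in the proof of Proposition~\ref{preliminaryproposition}, not only in Lemma~\ref{Gxlemma}; in both places the isometry in question lies in $\Gamma$ and is therefore holomorphic, so your caveat remains harmless.
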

\begin{proof}
Let $\HH^n_\CC$ be the hyperbolic space attached to the standard hermitian space $\CC^{n,1}$ of dimension $n+1$. It is classical that
$$
\text{Stab}_{U(n,1)}(\HH^{n-1}_\CC) = U(n) \times U(1).
$$
Thus, any $\phi \in U(n,1)$ that fixes $\HH^{n-1}_\CC$ pointwise lies in $U(1) = \{z \in \CC^\ast : \va{z}^2 = 1\}$. If $\phi^m = \id$, then $\phi$ lies in the unique subgroup of $U(1)$ that has order $m$.  
\end{proof}
\noindent
We will also consider $G(x)$ as a subgroup of $\Gamma$ sometimes, which we may do, since for each $[g] \in G(x)$ there is a unique $g \in \Gamma$ such that $\pi(g) = [g]$ for $\pi \colon \Gamma \to P\Gamma$. 
Lemma \ref{Gammaembedding} allows us to view $P\mr A$ as a subset of $\Isom(\CCH^n)$, and also to view the groups $G(x) \subset P\Gamma \subset P\Gamma'$ (for any $x \in \CC H^n$) as subgroups of $\Isom(\CC H^n)$. Define 
\[
\widetilde Y = \coprod_{\alpha \in P\mr A} \RR H^n_\alpha. 
\]
We will glue the different hyperbolic spaces $\RR H^n_\alpha$, by defining an equivalence relation $\sim$ on $\widetilde Y$. To define this equivalence relation, we need a couple of results.\begin{lemma}\label{anti-involution}
Let $\alpha \in \mr A$ and $r \in \mr R$. Then $\alpha \circ \phi_r^i = \phi_{\alpha(r)}^{-i} \circ \alpha$. 
\end{lemma}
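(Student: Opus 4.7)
The plan is to establish this identity by direct computation, unfolding the definitions of $\phi_r^i$ and the anti-unitary involution $\alpha$, and then repackaging the result to recognize $\phi_{\alpha(r)}^{-i}$. Since $\alpha$ is anti-unitary, I can use the two defining properties: $\alpha(\lambda x) = \overline{\lambda} \cdot \alpha(x)$ for $\lambda \in \OO_K$ and $h(\alpha(x), \alpha(y)) = \overline{h(x,y)}$. As a small preliminary, observe that $\alpha(r) \in \mr R$ for any $r \in \mr R$, since $h(\alpha(r), \alpha(r)) = \overline{h(r,r)} = \overline{1} = 1$, so the symbol $\phi_{\alpha(r)}^{-i}$ is well-defined.

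Next I would apply $\alpha$ to the defining formula $\phi_r^i(x) = x - (1 - \zeta^i) h(x,r) \cdot r$. Anti-linearity in the scalar $(1-\zeta^i)h(x,r) \in \OO_K$ yields
\[
\alpha(\phi_r^i(x)) = \alpha(x) - \overline{(1 - \zeta^i) h(x,r)} \cdot \alpha(r) = \alpha(x) - (1 - \overline{\zeta^i}) \cdot \overline{h(x,r)} \cdot \alpha(r).
\]
Here I use that $\zeta \in \mu_K$ is a root of unity, so $\overline{\zeta} = \sigma(\zeta) = \zeta^{-1}$, giving $\overline{\zeta^i} = \zeta^{-i}$. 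Using the second anti-unitary property, $\overline{h(x,r)} = h(\alpha(x), \alpha(r))$, so the right-hand side equals
\[
\alpha(x) - (1 - \zeta^{-i}) h(\alpha(x), \alpha(r)) \cdot \alpha(r),
\]
which is precisely $\phi_{\alpha(r)}^{-i}(\alpha(x))$ by the definition of the reflection. Thus $\alpha \circ \phi_r^i = \phi_{\alpha(r)}^{-i} \circ \alpha$, as desired.

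There is essentially no obstacle here: the statement is a mechanical verification once the anti-unitary relations and the identity $\overline{\zeta} = \zeta^{-1}$ are in hand. The only thing to keep track of is the sign flip in the exponent of $\zeta$, which is forced by $\alpha$ being anti-linear, and which is exactly what turns $\phi_r^i$ into $\phi_{\alpha(r)}^{-i}$ after conjugation by $\alpha$.
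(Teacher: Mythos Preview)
Your proof is correct and follows essentially the same direct computation as the paper's proof: apply $\alpha$ to the defining formula for $\phi_r^i(x)$, use anti-linearity to conjugate the scalar $(1-\zeta^i)h(x,r)$, then use $\overline{h(x,r)} = h(\alpha(x),\alpha(r))$ to recognize $\phi_{\alpha(r)}^{-i}(\alpha(x))$. Your added remarks that $\alpha(r)\in\mr R$ and that $\overline{\zeta}=\zeta^{-1}$ are helpful clarifications but not substantive departures.
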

\begin{proof}
Indeed, for $x \in \Lambda$, we have 
\begin{align*}
\alpha(\phi_r^i(x)) 
&= \alpha\left( x - (1-\zeta^i)h(x,r)\cdot r \right) \\
&= \alpha(x) - (1-\zeta^{-i}) \overline{h(x,r)}\cdot \alpha(r) \\
&= \alpha(x) - (1-\zeta^{-i}) h(\alpha(x),\alpha(r))\cdot \alpha(r) \\
&= \phi_{\alpha(r)}^{-i}(\alpha(x)).  
\end{align*}
\end{proof}
\begin{lemma}\label{alphaswitch}
Let $x \in \RRH^n_\alpha$ and write $\ca H(x) = \set{H_{r_1}, \dotsc, H_{r_k}}$ for some $r_i \in \mr R$. Then for each $i \in \{1, \dotsc, k\}$ there is a unique $j \in \{1, \dotsc, k\}$ such that $
\alpha(H_{r_i}) = H_{\alpha(r_i)} =  H_{r_j}.$
\end{lemma}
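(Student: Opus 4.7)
The plan is to deduce both identities from the anti-unitary property $h(\alpha(y),\alpha(z))=\sigma(h(y,z))$ and the fact that $\alpha$ is an involution on $\Lambda$, combined with the fact that $x \in \RR H^n_\alpha$ means $\alpha$ fixes the line $x$.

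First I would check that $\alpha(r_i)\in \mr R$, which is immediate: $h(\alpha(r_i),\alpha(r_i))=\sigma(h(r_i,r_i))=\sigma(1)=1$, so the symbol $H_{\alpha(r_i)}$ makes sense. Next I would establish the equality $\alpha(H_{r_i})=H_{\alpha(r_i)}$ as subsets of $\CC H^n$. For any $y\in H_{r_i}$, lifting to a vector in $V$ and using the anti-unitary identity gives $h(\alpha(y),\alpha(r_i))=\sigma(h(y,r_i))=0$, so $\alpha(y)\in H_{\alpha(r_i)}$; hence $\alpha(H_{r_i})\subseteq H_{\alpha(r_i)}$. Applying the same argument with $\alpha(r_i)$ in place of $r_i$ and using $\alpha\circ\alpha=\id$ gives $\alpha(H_{\alpha(r_i)})\subseteq H_{r_i}$, and applying $\alpha$ once more yields the reverse inclusion $H_{\alpha(r_i)}\subseteq \alpha(H_{r_i})$. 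Thus $\alpha(H_{r_i})=H_{\alpha(r_i)}$.

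Second, I would show $H_{\alpha(r_i)} \in \ca H(x)$. Since $x \in \RR H^n_\alpha$, the anti-holomorphic involution $\alpha$ of $\CC H^n$ fixes $x$. As $x \in H_{r_i}$, we obtain $x=\alpha(x)\in\alpha(H_{r_i})=H_{\alpha(r_i)}$, so $H_{\alpha(r_i)}$ is one of the nodes of $x$. By Definition \ref{fullset} this forces $H_{\alpha(r_i)}=H_{r_j}$ for some $j\in\{1,\dotsc,k\}$, and uniqueness of $j$ is automatic since the $H_{r_1},\dotsc,H_{r_k}$ are by definition pairwise distinct.

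There is essentially no obstacle here: the proof is a direct unwinding of the definitions of anti-unitary involution, of $H_r$ and of $\ca H(x)$, and does not require Condition \ref{orthogonal} or the structure of $G(x)$ from Lemma \ref{Gxlemma}. The only subtlety worth flagging is that $\alpha$ is defined as an $\OO_F$-linear map on $\Lambda$ but descends to an anti-holomorphic involution of $\PP(V)$ preserving $\CC H^n$; the equality $\alpha(H_{r_i})=H_{\alpha(r_i)}$ should be interpreted at this level, but this is already implicit in the notation used in Section \ref{set-up}.
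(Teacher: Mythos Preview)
Your proof is correct and follows essentially the same approach as the paper: first establish $\alpha(H_{r_i})=H_{\alpha(r_i)}$ from the anti-unitary identity, then use $\alpha(x)=x$ to conclude $H_{\alpha(r_i)}\in\ca H(x)$. The paper's proof is more terse---it simply asserts $\beta(H_r)=H_{\beta(r)}$ for any $\beta\in\mr A$ without writing out the inclusion argument---but your version spells out the same computation, including the verification that $\alpha(r_i)\in\mr R$ and the observation that uniqueness is automatic since the $H_{r_j}$ are pairwise distinct.
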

\begin{proof}
Indeed, we have, for any $\beta \in \mr A$ and $r \in \mr R$, that 
\begin{align*}
\beta(H_r) = H_{\beta(r)}. 
\end{align*}
Since $x \in H_{r_i}$, we have $x = \alpha(x) \in \alpha(H_{r_i}) = H_{\alpha(r_i)}$ for every $i$. In particular, we have $H_{\alpha(r_i)} \in \ca H(x)$ (see Definition \ref{fullset}), so that $H_{\alpha(r_i)} = H_{r_j}$ for some $j$. 
\end{proof}
\begin{lemma} \label{ij-lemma}
Let $r, t \in \mr R$ and $i,j \in \ZZ/m - \{0\}$ be such that $\phi_r^i = \phi_t^j \in \Gamma$. Then $i = j$ and $a \cdot t = b\cdot r$ for some $a, b \in \OO_K - \{0\}$ such that $\va{a}^2 = \va{b}^2$. 
\end{lemma}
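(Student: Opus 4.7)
The plan is to interpret the identity $\phi_r^i = \phi_t^j$ of endomorphisms of $\Lambda$ by evaluating both sides on an arbitrary $x \in \Lambda$ and comparing. Writing out the definitions gives
\[
(1-\zeta^i)\, h(x,r)\, r \;=\; (1-\zeta^j)\, h(x,t)\, t \quad \text{for all } x \in \Lambda.
\]
Since $i,j \in \ZZ/m - \{0\}$, both factors $(1-\zeta^i)$ and $(1-\zeta^j)$ are nonzero in $K$. The non-degeneracy of $h$ forces $h(\cdot, r)$ and $h(\cdot, t)$ to be non-zero linear functionals, so the equation already shows that $r$ and $t$ must be $K$-proportional. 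I would make this explicit by choosing any $x$ with $h(x,t) \neq 0$ (equivalently $h(x,r) \neq 0$), which yields a relation $r = \lambda\, t$ for some $\lambda \in K^{\ast}$.

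Next, I would substitute $r = \lambda t$ back into the displayed equation. Using that $h$ is conjugate-linear in the second argument, we have $h(x,r) = \overline{\lambda}\, h(x,t)$, so after cancelling the nonzero vector $h(x,t)\,t$ we obtain
\[
(1-\zeta^i)\, |\lambda|^2 \;=\; (1-\zeta^j).
\]
The normalization $h(r,r) = h(t,t) = 1$ together with $r = \lambda t$ gives $|\lambda|^2 = \lambda\,\sigma(\lambda) = 1$, hence $\zeta^i = \zeta^j$ in $K$. Since $\zeta$ has order $m$ in $K$ and $i,j$ are taken mod $m$, this forces $i = j$.

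Finally, to get the integral statement, I would clear denominators: write $\lambda = a/b$ with $a,b \in \OO_K - \{0\}$, which by $r = \lambda t$ rewrites as $b\, r = a\, t$, i.e.\ $a\, t = b\, r$. Taking norms in the relation $|\lambda|^2 = 1$ gives $a\,\sigma(a) = b\,\sigma(b)$, that is $|a|^2 = |b|^2$. There is essentially no obstacle here; the only point requiring a moment's attention is that $h(\cdot, r)$ really is not the zero functional on $\Lambda$, which follows from $h$ being non-degenerate on the free $\OO_K$-module $\Lambda$ and $r \neq 0$.
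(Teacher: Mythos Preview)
Your proof is correct and follows essentially the same approach as the paper. The only cosmetic difference is that the paper evaluates the identity $\phi_r^i = \phi_t^j$ at the specific point $x = r$ (which immediately gives $\zeta^i r = r - (1-\zeta^j)h(r,t)\,t$ and hence $K$-proportionality of $r$ and $t$), whereas you evaluate at a general $x$ and invoke non-degeneracy of $h$; both lead to the same relation $r = \lambda t$ with $\lambda \in K^\ast$, $|\lambda|^2 = 1$, and then $\zeta^i = \zeta^j$.
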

\begin{proof}
Suppose for contradiction that there are no such $a, b\in \OO_K - \{0\}$. Since $K = \tn{Frac}(\OO_K)$, this implies that $r, t \in V$ are linearly independent over $K$, and hence over $\CC$, which contradicts the equality 
\[
\zeta^i \cdot r = \phi_r^i(r) = \phi_t^j(r) = r - (1 - \zeta^j)h(r,t) \cdot t \in V.
\]
Thus, there exist $a,b \in \OO_K - \{0\}$ with $a \cdot t = b \cdot r.$ We have $\va{a}^2 = h(a\cdot t, a\cdot t) = h(b\cdot r, b\cdot r) = \va{b}^2$. Write $\lambda = b/a \in K^\ast$; then $t = \lambda \cdot r$ with $\va{\lambda}^2 = 1$, so that 
\[
\zeta^i \cdot r = \phi_r^i(r) = \phi_t^j(r) = \phi_{\lambda \cdot r}^j(r) = r - (1-\zeta^j)h(r, \lambda\cdot r) \cdot \lambda\cdot r= 
\zeta^j \cdot r \in V,
\]
from which we conclude that $ i = j$. 
\end{proof}
\begin{definition} \label{alphainvolution}
Let $\alpha \in P\mr A$ and $x \in \RRH^n_\alpha$. Write $\ca H(x) = \set{H_{r_1}, \dotsc, H_{r_k}}$, see Definition \ref{fullset}. By Lemma \ref{alphaswitch}, the involution $\alpha$ induces an involution on the set $\ca H(x)$. Define $\alpha \colon I \to I$ as the resulting involution on the set $I = \set{1, \dotsc, k}$. 
\end{definition}
\begin{proposition}\label{preliminaryproposition}
Let $\alpha \in P\mr A$ and $x \in \RRH^n_\alpha$. Write $\ca H(x) = \set{H_{r_1}, \dotsc, H_{r_k}}$ (Definition \ref{fullset}) and let $g = \phi_{r_1}^{i_1} \circ \cdots \circ \phi_{r_k}^{i_k} \in G(x)$ for some $i_\nu \in \ZZ/m$. The following are equivalent:
\begin{enumerate}
\item We have $g \circ \alpha \in P\mr A$. (In other words, $g\circ \alpha$ is an involution.)
\item For each $\nu \in I$, we have $i_{\nu} \equiv i_{\alpha(\nu)} \bmod m$. 
\end{enumerate}
\end{proposition}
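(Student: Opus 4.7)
The idea is to work entirely inside $\Isom(\CC H^n)$ via the embedding $P\Gamma' \hookrightarrow \Isom(\CC H^n)$ of Lemma \ref{Gammaembedding}, under which $g\alpha \in P\mr A$ becomes equivalent to the assertion that $(g\alpha)^2 = \id$ as an isometry of $\CC H^n$. Since $g$ is unitary and $\alpha$ anti-unitary, $g\alpha$ is anti-holomorphic, so the only issue is whether its square is trivial. I will compute $(g\alpha)^2$ directly, using Lemma \ref{anti-involution} to transport the reflections across $\alpha$ and using Condition \ref{orthogonal} (already exploited in Lemma \ref{Gxlemma}) to reorder the commuting factors.

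First, I would unwind the conjugation of a single reflection by $\alpha$. By Lemma \ref{anti-involution}, $\alpha \phi_{r_\nu}^{i_\nu} \alpha^{-1} = \phi_{\alpha(r_\nu)}^{-i_\nu}$. By Lemma \ref{alphaswitch}, $\alpha(H_{r_\nu}) = H_{r_{\alpha(\nu)}}$, so $\alpha(r_\nu) = \lambda_\nu \cdot r_{\alpha(\nu)}$ for some $\lambda_\nu \in K^\times$; the anti-unitarity of $\alpha$ gives $h(\alpha(r_\nu),\alpha(r_\nu)) = \overline{h(r_\nu,r_\nu)} = 1$, which forces $|\lambda_\nu|^2 = 1$. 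A direct check from the definition of $\phi_r^i$ then yields $\phi_{\lambda_\nu r_{\alpha(\nu)}}^{-i_\nu} = \phi_{r_{\alpha(\nu)}}^{-i_\nu}$, so
\[
\alpha\,\phi_{r_\nu}^{i_\nu}\,\alpha^{-1} \;=\; \phi_{r_{\alpha(\nu)}}^{-i_\nu}.
\]

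Next, since $\ca H(x) = \{H_{r_1},\dots,H_{r_k}\}$ is an orthogonal configuration by Condition \ref{orthogonal}, the commutativity argument in the proof of Lemma \ref{Gxlemma} shows that the reflections $\phi_{r_\nu}^{j_\nu}$ all commute. Applying this and the reindexing $\mu = \alpha(\nu)$ (using that $\alpha$ is an involution on $\{1,\dots,k\}$), I obtain $\alpha g \alpha^{-1} = \prod_{\mu} \phi_{r_\mu}^{-i_{\alpha(\mu)}}$. Because $\alpha^2 = \id$ in $\Isom(\CC H^n)$, this gives
\[
(g\alpha)^2 \;=\; g\,(\alpha g \alpha^{-1})\,\alpha^2 \;=\; \prod_{\nu} \phi_{r_\nu}^{\,i_\nu - i_{\alpha(\nu)}}.
\]

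Finally, Lemma \ref{Gxlemma} identifies $G(x) \subset P\Gamma \subset \Isom(\CC H^n)$ with $(\ZZ/m)^k$, with the $\phi_{r_\nu}$ corresponding to the standard generators. Hence $\prod_\nu \phi_{r_\nu}^{\,j_\nu} = \id$ in $\Isom(\CC H^n)$ if and only if $j_\nu \equiv 0 \bmod m$ for every $\nu$. Applying this to $j_\nu = i_\nu - i_{\alpha(\nu)}$ proves the equivalence. The main technical point is the scalar computation showing $|\lambda_\nu|^2 = 1$ and its absorption into the reflection formula; once this is in place the rest is a routine manipulation in the finite abelian group $G(x)$.
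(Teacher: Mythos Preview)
Your proposal is correct and follows essentially the same route as the paper: lift $\alpha$, use Lemma~\ref{anti-involution} to move each $\phi_{r_\nu}^{i_\nu}$ past $\alpha$, and then read off $(g\alpha)^2$ as an element of $G(x)\cong(\ZZ/m)^k$. The only difference is in establishing $\phi_{\alpha(r_\nu)} = \phi_{r_{\alpha(\nu)}}$: the paper deduces it from Lemmas~\ref{finitereflectionorders} and~\ref{ij-lemma}, whereas you argue directly that $\alpha(r_\nu)=\lambda_\nu r_{\alpha(\nu)}$ with $\lambda_\nu\sigma(\lambda_\nu)=1$ and check from the reflection formula that $\phi_{\lambda r}=\phi_r$ whenever $\lambda\sigma(\lambda)=1$---a slightly more hands-on but equally valid justification.
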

\begin{proof}
Lift $\alpha \in P\mr A$ to an element $\alpha \in \mr A$. We claim that, for each $i \in I$, we have $$\phi_{\alpha(r_i)} = \phi_{r_{\alpha(i)}}.$$
Indeed, by Lemma \ref{alphaswitch}, for each $i \in I$, we have $\alpha(H_{r_i}) = H_{\alpha(r_i)} = H_{r_{j}} \in \ca H(x)$ for some $j \in I$. By definition of the involution $\alpha \colon I \to I$, we have $j = \alpha(i)$. Therefore, $\phi_{\alpha(r_i)} = \phi_{r_{\alpha(i)}}^{b_i}$ for some $b_i \in \ZZ/m$, see Lemma \ref{finitereflectionorders}. By Lemma \ref{ij-lemma}, we have $b_i = 1$. 

By Lemma \ref{anti-involution} and by the claim above, we obtain $
\phi_{r_\nu}^{i_\nu} \circ \alpha = \alpha \circ \phi_{\alpha(r_\nu)}^{-i_{\nu}} = \alpha  \circ \phi_{r_{\alpha(\nu)}}^{-i_{\nu}}$ for each $\nu \in I$, which implies that 
\[
\phi_{r_1}^{i_1} \circ \cdots   \circ \phi_{i_k}^{i_k} \circ \alpha = \alpha \circ 
\phi_{r_{\alpha(1)}}^{-i_1} \circ \cdots   \circ \phi_{i_{\alpha(k)}}^{-i_k}.  
\]
Therefore,
\begin{align*}
\left(\phi_{r_1}^{i_1} \circ \cdots   \circ \phi_{i_k}^{i_k} \circ \alpha\right)^2 
&= 
\phi_{r_1}^{i_1} \circ \cdots   \circ \phi_{i_k}^{i_k} \circ \phi_{r_{\alpha(1)}}^{-i_1} \circ \cdots   \circ \phi_{i_{\alpha(k)}}^{-i_k} 
\\
&= 
\phi_{r_{\alpha(1)}}^{i_{\alpha(1)}} \circ \cdots   \circ \phi_{i_{\alpha(k)}}^{i_{\alpha(k)}} \circ 
\phi_{r_{\alpha(1)}}^{-i_1} \circ \cdots   \circ \phi_{i_{\alpha(k)}}^{-i_k}  \\
&= 
\phi_{r_{\alpha(1)}}^{i_{\alpha(1)} - i_1 } \circ \cdots   \circ \phi_{i_{\alpha(k)}}^{i_{\alpha(k)} - i_k} \in G(x),
\end{align*}
and this is the identity in $G(x)$ if and only if $i_\nu \equiv i_{\alpha(\nu)} \bmod m$. 
\end{proof}

\subsection{The glueing construction}

\begin{definition} \label{def:conditions}
Define a relation $R \subset \widetilde Y \times \widetilde Y$ as follows. An element
 \[
(x_\alpha, y_\beta) \in \RR H^n_\alpha \times \RR H^n_\beta \subset \widetilde Y \times \widetilde Y
 \]
 is an element of $R$ if the following conditions are satisfied:
\begin{enumerate} 
\item \label{item1} With respect to the inclusions $\RR H^n_\alpha \subset \CC H^n$ and $\RR H^n_\beta \subset \CC H^n$, we have $x_\alpha = y_\beta \in \CC H^n$. 
\item \label{item2} If $\alpha \neq \beta$, then 
\begin{enumerate}
\item $x_\alpha = y_\beta$ lies in $ \mr H$; and   
\item 
$\beta = g \circ \alpha \in P\mr A$ for some $g \in G(x_\alpha) = G(y_\beta)$ (c.f. Lemma \ref{Gammaembedding}). 
\end{enumerate}
\end{enumerate}
\end{definition}

\begin{remark} \label{rem:gluing}
Conditions \ref{item1} and \ref{item2} in Definition \ref{def:conditions} say that we are identifying points of $\RR H^n_\alpha \cap \mr H$ and $\RR H^n_\beta \cap \mr H$ that have the same image in $\CC H^n$. But we do not glue all such points: the real structures $\alpha$ and $\beta$ are required to differ by complex reflections in the hyperplanes that pass through $x$. 
In fact, we will see below (see Lemma \ref{intersection}) that the glueing can be rephrased as follows: we glue $\RR H^n_\alpha$ and $\RR H^n_\beta$ along their intersection, provided that this intersection is contained in $\mr H$ in such a way that for some (equivalently, any) $x \in \RR H^n_\alpha \cap \RR H^n_\beta$, the real structures $\alpha$ and $\beta$ differ by reflections in hyperplanes that pass through $x$. 
\end{remark}

\begin{lemma} \label{eqrel}
$R$ is an equivalence relation. 
\end{lemma}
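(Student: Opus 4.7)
The strategy is to verify the three axioms of an equivalence relation in turn, with the bulk of the argument concentrated in transitivity. The basic observation enabling everything is that the group $G(x)$ depends only on the image of a point in $\CC H^n$ (Definition \ref{fullset}), so whenever $x_\alpha$ and $y_\beta$ agree in $\CC H^n$ we have an equality $G(x_\alpha) = G(y_\beta)$ as subgroups of $P\Gamma$, and by Lemma \ref{Gxlemma} this common group is the finite abelian group $(\ZZ/m)^k$ generated by the reflections through the nodes at $x$.

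\emph{Reflexivity} is immediate: $(x_\alpha,x_\alpha)\in R$ because condition (1) of Definition \ref{def:conditions} is trivial and condition (2) is vacuous when $\alpha=\beta$. \emph{Symmetry}: suppose $(x_\alpha,y_\beta)\in R$. Condition (1) is symmetric. If $\alpha\neq\beta$, then $x_\alpha=y_\beta$ lies in $\mr H$ and $\beta=g\circ\alpha$ in $P\Gamma'$ for some $g\in G(x_\alpha)$. Since $\alpha$ is an involution, $\alpha=g^{-1}\circ\beta$, and $g^{-1}\in G(y_\beta)$ because $G(y_\beta)=G(x_\alpha)$ is a group. Hence $(y_\beta,x_\alpha)\in R$.

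\emph{Transitivity} is the step I expect to be least automatic, though still straightforward once the above observation is in place. Suppose $(x_\alpha,y_\beta),(y_\beta,z_\gamma)\in R$; then $x_\alpha=y_\beta=z_\gamma$ as points of $\CC H^n$, so condition (1) holds for $(x_\alpha,z_\gamma)$. If $\alpha=\gamma$ we are done. Otherwise I distinguish the cases (i) $\alpha=\beta$ and $\beta\neq\gamma$, (ii) $\alpha\neq\beta$ and $\beta=\gamma$, and (iii) $\alpha\neq\beta$ and $\beta\neq\gamma$. In cases (i) and (ii) the conclusion follows directly by transporting condition (2) along the equality of real structures. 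In case (iii) one writes $\beta=g\circ\alpha$ and $\gamma=h\circ\beta$ with $g,h\in G(x_\alpha)=G(y_\beta)=G(z_\gamma)$, so that $\gamma=(hg)\circ\alpha$ with $hg$ again in this common group; moreover $x_\alpha=y_\beta\in\mr H$. Thus $(x_\alpha,z_\gamma)\in R$.

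The main obstacle to watch out for is the consistency of the group-theoretic condition (2)(b) across the three pairs: one must ensure that the element produced by composition still lies in $G(x_\alpha)$ and still witnesses an anti-unitary \emph{involution} $\gamma$. The first is handled by the group structure of $G(x)=(\ZZ/m)^k$ (Lemma \ref{Gxlemma}, which relies on Condition \ref{orthogonal}); the second is automatic since $\gamma\in P\mr A$ is given at the outset, so no appeal to Proposition \ref{preliminaryproposition} is needed here. This completes the verification.
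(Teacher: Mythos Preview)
Your proof is correct and follows essentially the same approach as the paper's: reflexivity is trivial, symmetry uses $\alpha=g^{-1}\circ\beta$ with $g^{-1}\in G(x_\alpha)$, and transitivity composes the two witnesses via $\gamma=(hg)\circ\alpha$ with $hg\in G(x_\alpha)=G(y_\beta)=G(z_\gamma)$. The paper simply collapses your cases (i)--(iii) into the single line ``we may and do assume that $\alpha,\beta,\gamma$ are different,'' but the content is identical.
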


\begin{proof}
Consider three elements $x_\alpha , y_\beta, z_\gamma \in \widetilde Y$. The fact that $x_\alpha\sim x_\alpha$ is clear. 

Suppose that $x_\alpha \sim y_\beta$. If $\alpha  = \beta$ then $x_\alpha = y_\beta \in \widetilde Y$ hence $y_\beta \sim x_\alpha$. If $\alpha \neq \beta$ then $x_\alpha = y_\beta \in \mr H \subset \CC H^n$, and $\beta = g \circ \alpha$ for $g \in G(x_\alpha) = G(y_\beta)$ as in Definition \ref{def:conditions}. 
Since $\alpha = g^{-1} \circ \beta$ with $g^{-1} \in G(x_\alpha)$, this shows that $y_\beta \sim x_\alpha$. 

Suppose that $x_\alpha\sim y_\beta$ and $y_\beta \sim z_\gamma$; we claim that $x_\alpha\sim z_\gamma$. We may and do assume that $\alpha, \beta$ and $\gamma$ are different, which implies that $x_\alpha = y_\beta = z_\gamma \in \mr H$, that $\gamma = h \circ \beta$ for some $h \in G(y_\beta)$, and that $\beta = g \circ \alpha$ for some $g \in G(x_\alpha)$. We obtain $
\gamma = h \circ \beta = h \circ g \circ \alpha$ for $ h \circ g \in G(x_\alpha) = G(y_\beta) = G(z_\gamma)$. 
\end{proof}

\begin{definition} \label{gluedspace}
Define $Y$ to be the quotient of $\widetilde Y$ by the equivalence relation $R$, and equip it with the quotient topology. We shall prove (Lemma \ref{lemma:pgammaaction}) that the group $P\Gamma$ acts on $Y$. We call $P\Gamma \setminus Y$ the \textit{glued space} attached to the hermitian $\OO_K$-lattice $(\Lambda, h)$.  
\end{definition}

\begin{lemma} \label{lemma:pgammaaction}
The action of $P\Gamma$ on $\CC H^n$ induces an action of $P\Gamma$ on $\widetilde Y$, compatible with the equivalence relation $R$, so that $P\Gamma$ acts on $Y$. Moreover, $P\Gamma \setminus \widetilde Y =  \coprod_{\alpha \in C\mr A} P\Gamma \setminus \RR H^n_\alpha$. 
\end{lemma}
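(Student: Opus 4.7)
The plan is to build the $P\Gamma$-action piece by piece and then verify compatibility with the relation $R$. First, for $\gamma \in P\Gamma$ and $\alpha \in P\mr A$, I would note that the conjugate $\gamma\alpha\gamma^{-1}$ is again an anti-unitary involution in $\Gamma'$, well-defined modulo $\mu_K$, so it defines an element of $P\mr A$. Since the image of $\gamma$ in $\Isom(\CC H^n)$ (via Lemma \ref{Gammaembedding}) sends the fixed locus of $\alpha$ to the fixed locus of $\gamma\alpha\gamma^{-1}$, we obtain an isometry $\gamma \colon \RR H^n_\alpha \to \RR H^n_{\gamma\alpha\gamma^{-1}}$. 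Glueing these together over $\alpha \in P\mr A$ yields an action of $P\Gamma$ on $\widetilde Y$.

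The key computational input for compatibility with $R$ is the identity
\[
\gamma \circ \phi_r^i \circ \gamma^{-1} \;=\; \phi_{\gamma(r)}^i \quad \text{in } P\Gamma,
\]
which follows directly from the formula defining $\phi_r^i$ together with the fact that $\gamma$ preserves $h$ and sends the short root $r$ to the short root $\gamma(r)$. In particular, $\gamma$ permutes the family $(H_r)_{r \in \mr R}$, so $\gamma(\mr H) = \mr H$. Now suppose $(x_\alpha, y_\beta) \in R$ with $\alpha \neq \beta$; write $\beta = g\circ\alpha$ with $g = \phi_{r_1}^{i_1}\cdots \phi_{r_k}^{i_k} \in G(x_\alpha)$, where $\ca H(x_\alpha) = \{H_{r_1}, \dots, H_{r_k}\}$. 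Applying $\gamma$, the displayed identity gives
\[
\gamma g \gamma^{-1} \;=\; \phi_{\gamma(r_1)}^{i_1}\cdots \phi_{\gamma(r_k)}^{i_k},
\]
and since $\gamma(x_\alpha) \in H_{\gamma(r_\nu)}$ for each $\nu$, this element lies in $G(\gamma(x_\alpha))$. Combined with $\gamma\beta\gamma^{-1} = (\gamma g \gamma^{-1})(\gamma\alpha\gamma^{-1})$ and $\gamma(x_\alpha) = \gamma(y_\beta) \in \mr H$, this verifies that $(\gamma(x_\alpha), \gamma(y_\beta))$ satisfies all the conditions of Definition \ref{def:conditions}. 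The case $\alpha = \beta$ is immediate. Hence the action descends to $Y$.

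For the final decomposition, I would unravel the definitions. The $P\Gamma$-action on $\widetilde Y$ permutes the components $\RR H^n_\alpha$ according to the action of $P\Gamma$ on $P\mr A$ by conjugation, with stabilizer of $\RR H^n_\alpha$ equal to $P\Gamma_\alpha$. Thus choosing a set of representatives for $C\mr A = P\Gamma \setminus P\mr A$ yields a bijection
\[
P\Gamma \setminus \widetilde Y \;=\; \coprod_{\alpha \in C\mr A} P\Gamma_\alpha \setminus \RR H^n_\alpha,
\]
which is the asserted equality (interpreting $P\Gamma \setminus \RR H^n_\alpha$ as the quotient by the stabilizer).

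The only potentially delicate point is that all the manipulations above take place in $P\Gamma$ and $P\mr A$, i.e.\ after passing to the quotient by $\mu_K$; but since $\mu_K$ acts trivially on $\CC H^n$ by Lemma \ref{Gammaembedding}, every formula I have written lifts unambiguously from $\Gamma'$ to $P\Gamma'$ and the descent presents no obstacle. The argument is therefore essentially formal once the reflection conjugation identity is in hand.
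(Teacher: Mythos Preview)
Your proof is correct and follows essentially the same approach as the paper: the key identity $\gamma \phi_r^i \gamma^{-1} = \phi_{\gamma(r)}^i$, the resulting equality $\gamma G(x) \gamma^{-1} = G(\gamma(x))$, and the orbit--stabilizer decomposition are exactly the ingredients the paper uses. Your interpretation of $P\Gamma \setminus \RR H^n_\alpha$ as $P\Gamma_\alpha \setminus \RR H^n_\alpha$ is the intended one.
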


\begin{proof}
If $\phi \in P\Gamma$, then $
\phi \left( \RR H^n_\alpha \right) = \RR H^n_{\phi \alpha \phi^{-1}}$ hence $P\Gamma$ acts on $\widetilde Y = \coprod_{\alpha \in P\mr A} \RR H^n_\alpha$, and
$$
P\Gamma \setminus \widetilde Y = P\Gamma \setminus \coprod_{\alpha \in P\mr A} \RR H^n_\alpha = \coprod_{\alpha \in C\mr A} P\Gamma \setminus \RR H^n_\alpha. 
$$
Now suppose that $x_\alpha\sim y_\beta \in \widetilde Y$ and $f \in P\Gamma$. Then $f(x_\alpha) \in \RR H^n_{f \alpha f^{-1}}$ and $f(y_\beta) \in \RRH^n_{f\beta f^{-1}}$. We claim that 
\[
f(x_\alpha)_{f \alpha f^{-1}} \sim f(y_\beta)_{f\beta f^{-1}}. 
\]
For this, we may and do assume that $x_\alpha\neq y_\beta$, hence $x_\alpha = y_\beta \in \mr H$ and $\beta = g \circ \alpha$ for some $g \in G(x_\alpha)$ as in Definition \ref{def:conditions}. In particular, $f(x_\alpha) = f(y_\beta)$. Since $f \circ \phi_r^{i} \circ f^{-1} = \phi_{f(r)}^i$ for each $r \in \mr R$ and $i \in \ZZ/m$, and $h(x,r) = 0 $ if and only if $h(f(x), f(r)) = 0$, we have $fG(x)f^{-1} = G(f(x))$ for each $x \in \CCH^n$. We are done:
\[
f \beta f^{-1} = f( g \circ \alpha) f^{-1} = f g f^{-1}  \circ  (f\alpha f^{-1}), \quad f g f^{-1} \in G(f(x_\alpha)). 
\]
\end{proof}
%

\section{The hyperbolic orbifold structure}

We are now in position to state the main theorem of Chapter \ref{ch:glueing}: 

\begin{theorem} \label{glueingtheorem1}
\begin{enumerate}
\item \label{completepart}
The glued space $P\Gamma \setminus Y$ admits a metric that makes it a complete path metric space. The natural map $P\Gamma \setminus Y \to P\Gamma \setminus \CC H^n$ is a local isometry. 
\item 
Each point $x \in P\Gamma \setminus Y$ admits an open neighborhood $U \subset P\Gamma \setminus Y$ which is isometric to the quotient of an open subset $V \subset \RR H^n$ by a finite group of isometries. Therefore, the glued space $P\Gamma \setminus Y$ has a real hyperbolic orbifold structure. 
\item \label{oopensuborbifold} One has $ \coprod_{\alpha \in C\mr A}  [P\Gamma_\alpha \setminus \left(\RR H^n_\alpha - \mr H \right)]  \subset P\Gamma \setminus Y$ as an open suborbifold. 
\item The connected components of the real-hyperbolic orbifold $P\Gamma \setminus Y$ are uniformized by $\RR H^n$. This means that for each component $C \subset P\Gamma \setminus Y$ there exists a lattice $P\Gamma_C \subset \textnormal{PO}(n,1)$ and an isomorphism of real hyperbolic orbifolds $C \cong [P\Gamma_C \setminus \RR H^n]$. In other words, 
\begin{equation*}
    P\Gamma \setminus Y \cong \coprod_{C\in \pi_0(P\Gamma\setminus K)} \left[ P\Gamma_C \setminus \RR H^n \right].
\end{equation*}
\end{enumerate}
\end{theorem}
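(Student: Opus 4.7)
The plan is to construct a real hyperbolic orbifold atlas on $P\Gamma \setminus Y$ by local analysis, then deduce the path metric, completeness, and statements (3)--(4).

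Fix $[y] \in P\Gamma \setminus Y$, lift to $y \in \widetilde Y$, and let $x \in \CC H^n$ be its image, with $k$ nodes $\ca H(x) = \{H_{r_1}, \dotsc, H_{r_k}\}$. By Condition \ref{orthogonal} and Lemma \ref{Gxlemma}, we can choose unitary coordinates $(z_1, \dotsc, z_n)$ on a small neighbourhood $U \subset \CC H^n$ of $x$ making $H_{r_i} \cap U = \{z_i = 0\}$, with $G(x) \cong (\ZZ/m)^k$ acting by independent rotations $z_i \mapsto \zeta^{a_i} z_i$. We may also assume that $y$ lies in $\RR H^n_{\alpha_0}$ for a distinguished $\alpha_0 \in P\mr A$ which, after adjusting coordinates, acts on $U$ as the composition of complex conjugation with the permutation of the first $k$ coordinates given by Definition \ref{alphainvolution}. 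By Proposition \ref{preliminaryproposition}, the set of $\beta \in P\mr A$ fixing $x$ that lie in the same glueing orbit as $\alpha_0$, i.e.\ $\beta = g\alpha_0$ with $g = \prod_i \phi_{r_i}^{a_i} \in G(x)$ inducing an involution, is controlled by the parity condition $a_i \equiv a_{\alpha_0(i)} \bmod m$.

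The local model of $Y$ at $y$ is then the disjoint union, over such admissible $\beta$, of the totally real submanifolds $\RR H^n_\beta \cap U \subset U$, identified at points of $\mr H \cap U$ as prescribed by Definition \ref{def:conditions}. The technical heart of the argument is the claim that the quotient of this glued preimage by $G(x) \subset P\Gamma$ is canonically real-analytically isomorphic to $\RR H^n_{\alpha_0} \cap U$. This is verified coordinate-by-coordinate: for a node $i$ fixed by $\alpha_0$, the admissible real structures trace out the $m$ real lines $e^{i\pi a/m}\RR$ in the $z_i$-plane, all glued at the origin, and the $\ZZ/m$-factor of $G(x)$ acts on the $2m$ resulting rays in two orbits whose union forms a single real line; for a pair $(i,j)$ swapped by $\alpha_0$ the parity constraint forces $a_i = a_j$, and one checks the analogous assertion for the two-dimensional model $\{(z, \bar z) : z \in e^{i\pi a/m}\RR\} \subset \CC^2$ under the diagonal $\ZZ/m$-action. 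Letting $H = \Stab_{P\Gamma}(y)$, which is finite and normalizes $G(x)$ since $G(x)$ is intrinsic to $x$, we thus obtain an orbifold chart at $[y]$ of the form $[(\RR H^n_{\alpha_0} \cap U)/(H/G(x))]$ with $H/G(x)$ a finite group of isometries of $\RR H^n$.

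These charts assemble into a real hyperbolic orbifold structure on $P\Gamma \setminus Y$: the chart at $[y]$ sits inside $\RR H^n_{\alpha_0}$, which is totally geodesic in $\CC H^n$ for the real form of the Bergman metric, so the induced real hyperbolic metrics glue to a globally defined path metric making the projection $P\Gamma \setminus Y \to P\Gamma \setminus \CC H^n$ a local isometry. Completeness follows by lifting Cauchy sequences: the image in the complete orbifold $P\Gamma \setminus \CC H^n$ converges by properness of the $P\Gamma$-action on $\CC H^n$, and the explicit finite-quotient-of-a-real-ball form of the chart at the limit point guarantees a convergent lift upstairs.

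Statement (3) is immediate: at points $y \in \RR H^n_\alpha - \mr H$ the node set is empty and no glueing occurs, so $[y_\alpha]$ has a neighbourhood canonically identified with its image in $P\Gamma_\alpha \setminus \RR H^n_\alpha$, exhibiting $\coprod_{\alpha} [P\Gamma_\alpha \setminus (\RR H^n_\alpha - \mr H)]$ as the open suborbifold of no-node points. Statement (4) follows from the uniformization of connected complete real hyperbolic orbifolds: each component $C$ is presented as $[P\Gamma_C \setminus \RR H^n]$ for a discrete subgroup $P\Gamma_C \subset \textnormal{PO}(n,1)$, which is of finite covolume because $P\Gamma$ acts on $\CC H^n$ with finite covolume. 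The main obstacle in executing this plan is the coordinate-wise local claim, especially when $\alpha_0$ nontrivially permutes nodes and $m > 2$; here Proposition \ref{preliminaryproposition} must be combined with the induced $G(x)$-action on the labels $(a_1, \dotsc, a_k)$, and real analyticity of the resulting chart across $\mr H$ must be verified explicitly.
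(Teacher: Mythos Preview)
Your overall architecture matches the paper's: work in local unitary coordinates where the nodes are coordinate hyperplanes, analyse the glued preimage, and produce an orbifold chart of the form $\Gamma_f \setminus \BB^n(\RR)$. The problem is your central technical claim, that the quotient of the local glued model by $G(x)$ is isometric to $\RR H^n_{\alpha_0}$. This is correct for real nodes but \emph{fails} for complex conjugate pairs.

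Concretely, take a single pair of swapped nodes, so $k=2$, $a=1$, $b=0$. In the coordinates of Lemma~\ref{lemma:localcoordinates}, the admissible real structures are $\alpha_j : (t_1,t_2) \mapsto (\zeta^j \bar t_2, \zeta^j \bar t_1)$ for $j \in \ZZ/m$, and each fixed set is the real $2$-plane $\{t_2 = \zeta^j \bar t_1\}$, not the line you write down. The group $G(x) \cong (\ZZ/m)^2$ acts transitively on the $m$ planes, but the stabiliser of one plane is the anti-diagonal $\{(u,-u)\} \cong \ZZ/m$, which acts on that plane by $t_1 \mapsto \zeta^u t_1$ --- a genuine rotation of order $m$. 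Hence $G(x) \setminus Y_f$ is $\BB^n(\RR)$ modulo a $\ZZ/m$-rotation, a cone of angle $2\pi/m$, not $\BB^n(\RR)$. Your chart $(\RR H^n_{\alpha_0})/(H/G(x))$ therefore misses this rotation in the local orbifold group. The paper's Proposition~\ref{prop:conesreflectors} exhibits exactly this phenomenon: at the binary quintic point $(\infty,i,i,-i,-i)$ one has $a=1$, $b=0$, $m=10$, and the correct orbifold stabiliser is $D_{10}$ of order $20$, whereas $H/G(x) \cong \ZZ/2$.

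The paper's Proposition~\ref{localmodel} fixes this by quotienting only by $B_f$ (the reflections in the \emph{real} nodes), which does produce copies of $\BB^n(\RR)$; the full stabiliser $A_f$ then permutes these copies transitively, and the correct local group is $\Gamma_f = (A_f/B_f)_{\BB^n(\RR)}$, which contains the rotation $(\ZZ/m)^a$ coming from the complex conjugate pairs. Your argument can be repaired by making the same distinction between real and complex conjugate nodes.
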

\noindent
The remaining part of Section \ref{gluing} is devoted to the proof of Theorem \ref{glueingtheorem1}. It can happen that $P\Gamma \setminus Y$ is connected: such is the case when $K = \QQ(\zeta_3)$ and $h = \text{diag}(1,1,1,1,-1)$, see \cite{realACTsurfaces}. When $K = \QQ(\zeta_3)$ and $h = \text{diag}(1,1,1,-1)$, then $P\Gamma \setminus Y$ has two components, see \cite[Remark 6]{realACTbinarysextics}. In Chapter \ref{ch:binaryquintics} we show that some $(d, n)$, there is a homeomorphism between $P\Gamma \setminus Y$ and the space of stable hypersurfaces of degree $d$ in $\PP^n_\RR$, restricting to an orbifold isomorphism between $\coprod_{\alpha} [P\Gamma_\alpha \setminus \left(\RR H^n_\alpha - \mr H \right)]$ and the locus of smooth hypersurfaces (Theorem \ref{th:realstableperiod}). 

\subsection{The path metric on the glued space} 		\label{prooftheorem}

We start with a lemma. We will need it in the proof of Lemma \ref{localisometry} below, which will in turn be used to define a path metric on $P\Gamma \setminus Y$ making it locally isometric to quotients of $\RR H^n$ by finite groups of isometries. It also serves as a sanity check: once there exists an element $x \in \RR H^n_\alpha \cap \RR H^n_\beta$ such that $x_\alpha \sim x_\beta$, then one glues the entire copy $\RR H^n_\alpha$ to the copy $\RR H^n_\beta$ along their intersection in $\CC H^n$. 

\begin{lemma} \label{intersection}
\begin{enumerate}
\item \label{uno}Let $g = \prod_{\nu  = 1}^k \phi_{r_\nu}^{i_\nu} \in \Gamma$ for some set $\{r_\nu\} \subset \mr R$ of mutually orthogonal short roots $r_\nu$, where $i_\nu \not \equiv 0 \bmod m$ for each $\nu$. Then $\left(\CC H^n\right)^g = \cap_{\nu = 1}^k H_{r_\nu}$. 
\item \label{dos}
Let $\alpha, \beta \in P\mr A$ and $x \in \RR H^n_\alpha \cap \RR H^n_\beta$ such that $x_\alpha\sim x_\beta$. Then $y_\alpha \sim y_\beta$ for every $y \in \RR H^n_\alpha \cap \RR H^n_\beta$. 
\item \label{tres} The natural map $\widetilde Y \to \CC H^n$ descends to a $P\Gamma$-equivariant map $\mr P: Y \to \CC H^n$. 
\end{enumerate}
\end{lemma}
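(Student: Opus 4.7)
For part \ref{uno}, the plan is to exploit the commutativity of reflections in mutually orthogonal short roots (which follows from the computation (\ref{reflectionscommute}) once $h(r_\nu, r_{\nu'}) = 0$ for $\nu \neq \nu'$) to simultaneously diagonalize the $\phi_{r_\nu}^{i_\nu}$. Set $W = \Span_\CC(r_1, \dotsc, r_k) \subset V$. Since the $r_\nu$ are mutually orthogonal short roots, $h|_W$ is positive definite and $V = W \oplus W^\perp$. In the orthogonal basis $\{r_1, \dotsc, r_k\}$ of $W$, the element $g$ acts as $\text{diag}(\zeta^{i_1}, \dotsc, \zeta^{i_k})$ on $W$ and as the identity on $W^\perp$. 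The inclusion $\cap_\nu H_{r_\nu} \subset (\CC H^n)^g$ is clear. For the reverse inclusion, take $[v] \in (\CC H^n)^g$ and write $v = \sum_\nu c_\nu r_\nu + w$ with $w \in W^\perp$. Then $g(v) = \lambda v$ forces either $\lambda = 1$ and all $c_\nu = 0$ (since $\zeta^{i_\nu} \neq 1$ by hypothesis), or $w = 0$; but $w = 0$ contradicts $h(v,v) < 0$ because $h|_W$ is positive definite. Hence $v = w \in W^\perp$, i.e., $[v] \in \cap_\nu H_{r_\nu}$.

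For part \ref{dos}, suppose $\alpha \neq \beta$ and $x_\alpha \sim x_\beta$. By Definition \ref{def:conditions} we have $\beta = g \circ \alpha$ for some $g \in G(x)$, and by Lemma \ref{Gxlemma} we can write $g = \prod_{\nu=1}^{k} \phi_{r_\nu}^{i_\nu}$, where $\ca H(x) = \{H_{r_1}, \dotsc, H_{r_k}\}$ and $i_\nu \in \ZZ/m$. For any $y \in \RR H^n_\alpha \cap \RR H^n_\beta$ we have $g(y) = \beta(\alpha(y)) = \beta(y) = y$. Applying part \ref{uno} to the product $g$ restricted to the indices with $i_\nu \not\equiv 0$, we conclude that $y \in H_{r_\nu}$ for every such $\nu$, so $g \in G(y)$ as well. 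Since $\alpha \neq \beta$ we have $g \neq 1$, so at least one $i_\nu \neq 0$ and therefore $y \in \mr H$, which establishes the two conditions of Definition \ref{def:conditions} for the pair $(y_\alpha, y_\beta)$.

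Part \ref{tres} is immediate: condition \ref{item1} of Definition \ref{def:conditions} says precisely that equivalent points of $\widetilde Y$ have the same image in $\CC H^n$, so the canonical map $\widetilde Y \to \CC H^n$ factors through $Y$. The $P\Gamma$-equivariance is built into the formula $f \cdot x_\alpha = f(x)_{f \alpha f^{-1}}$ recorded in Lemma \ref{lemma:pgammaaction}, whose projection to $\CC H^n$ is the standard $P\Gamma$-action $f \cdot x = f(x)$.

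The main obstacle is part \ref{uno}: the whole argument rests on showing that orthogonal hyperbolic signature forces $v$ to have a nontrivial $W^\perp$-component, which in turn pins down the eigenvalue $\lambda = 1$ and kills the $W$-component. Once this linear-algebra fact is in hand, parts \ref{dos} and \ref{tres} are largely formal consequences of the definitions and of Lemma \ref{Gxlemma}.
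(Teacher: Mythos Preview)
Your proof is correct and follows essentially the same route as the paper: the same positive-definite-versus-negative-line dichotomy for part \ref{uno}, the same $g(y)=y$ argument transferring membership from $G(x)$ to $G(y)$ for part \ref{dos}, and the same one-line observation for part \ref{tres}. If anything, your handling of the indices with $i_\nu = 0$ in part \ref{dos} is slightly more explicit than the paper's.
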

\begin{proof}
1. Let $y \in V$ be representing an element in $\left(\CC H^n\right)^\phi$. Since the $r_i$ are orthogonal, and $g(y) = \lambda$ for some $\lambda \in \CC^\ast$, we have
\begin{align}
g(y) = \prod_{\nu  = 1}^k \phi_{r_\nu}^{i_\nu}(y) = y - \sum_{\nu =  1}^k \left(1 - \zeta^{i_\nu}\right) h(y, r_\nu)r_\nu = \lambda y,
\end{align}
hence $(1-\lambda)y = \sum_{i = 1}^\ell \left(1 - \zeta^{i_\nu}\right) h(y, r_\nu)r_\nu  \in V$. But $y$ spans a negative definite subspace of $V$ while the $r_\nu$ span a positive definite subspace, so that we must have $1 - \lambda = 0 = \sum_{\nu = 1}^k \left(1 - \zeta^{i_\nu}\right) h(y, r_\nu)r_\nu$. Since the $r_\nu$ are mutually orthogonal, they are linearly independent; since $\zeta^{i_\nu} \neq 1$ we find $h(y, r_\nu) = 0$ for each $\nu$. Conversely, if $x \in \cap H_{r_\nu}$, then $\phi_{r_\nu}^{i_\nu}(x) = x$ for each $\nu$. 

2. Since $x_\alpha  \sim x_\beta$, there exists $g \in G(x)$ such that $\beta = g \circ \alpha$. Write $\ca H(x) = \set{H_{r_1}, \dotsc, H_{r_k}}$. Let $y \in \RR H^n_\alpha \cap \RR H^n_\beta$. Then $\alpha(y) = \beta(y) = y$ implies that $g(y) = y$. In particular, $y \in \cap_\nu H_{r_\nu}$ by part \ref{uno}, which implies that $\ca H(x) \subset \ca H(y)$, which in turn implies that $G(x) \subset G(y)$. We conclude that $g \in G(y)$. Hence $y_\alpha \sim y_\beta$. 

3. If $x_\alpha\sim y_\beta$, then $x = y \in \CC H^n$. 
\end{proof}
\noindent
By Lemma \ref{intersection}, we obtain continuous maps \[
\mr P : K \to \CC H^n, \quad \tn{ and } \quad \overline{\mr P}: P\Gamma \setminus Y \to P\Gamma \setminus \CC H^n.
\] Our next goal is to prove that each point $x \in Y$ has a neighbourhood $V \subset Y$ that maps homeomorphically onto a finite union $\cup_{i = 1}^\ell \RR H^n_{\alpha_i} \subset \CC H^n$. Hence $x$ has an open neighourhood $x \in U \subset V$ that identifies with an open set in a union of copies of $\RR H^n$ in $\CC H^n$ under the map $\mr P$. This allows us to define a metric on $Y$ by pulling back the metric on $\CC H^n$.

\begin{lemma} \label{lemma:compactfinite}
Each compact set $Z \subset \CC H^n$ meets only finitely many $\RR H^n_\alpha$, $\alpha \in P\mr A$. 
\end{lemma}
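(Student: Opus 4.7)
My approach is to identify each $\alpha \in P\mr A$ with an isometry of $\CC H^n$ via the embedding $P\mr A \hookrightarrow P\Gamma' \hookrightarrow \Isom(\CC H^n)$ supplied by Lemma~\ref{Gammaembedding}, and then to invoke proper discontinuity of the $P\Gamma'$-action on $\CC H^n$.

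The first step is to establish this proper discontinuity, which is really the heart of the argument. Because $\Lambda$ has signature $(n,1)$ at $\tau_1$ and is positive definite at the other infinite places $\tau_2,\dotsc,\tau_g$ of $F$, the unitary group scheme $U(\Lambda)$ over $\OO_F$ satisfies
\[
U(\Lambda)(F \otimes_{\QQ} \RR) \;\cong\; U(n,1) \;\times\; \prod_{i=2}^{g} U(n+1),
\]
with $\Gamma = U(\Lambda)(\OO_F)$ an arithmetic lattice therein. Since the factors $U(n+1)$ are compact, the projection of $\Gamma$ to the first factor $U(n,1)$ is again a lattice. Quotienting by the finite group $\mu_K$, which acts trivially on $\CC H^n$, shows that $P\Gamma$ embeds as a lattice in $\mathrm{PU}(n,1) = \Isom^{+}(\CC H^n)$ and hence acts properly discontinuously on $\CC H^n$. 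Because $P\Gamma \subset P\Gamma'$ has index at most $2$, the larger group $P\Gamma'$ also acts properly discontinuously.

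The second step is then immediate. Proper discontinuity implies that, for any compact $Z \subset \CC H^n$, the set
\[
F(Z) = \{\phi \in P\Gamma' \mid \phi(Z) \cap Z \neq \emptyset\}
\]
is finite. If $\alpha \in P\mr A$ satisfies $\RR H^n_\alpha \cap Z \neq \emptyset$, then for any $x$ in this intersection we have $\alpha(x) = x \in Z \cap \alpha(Z)$, so $\alpha \in F(Z)$ under the injection $P\mr A \hookrightarrow P\Gamma'$. Since this injection is one-to-one, the set of $\alpha \in P\mr A$ meeting $Z$ injects into the finite set $F(Z)$ and is therefore itself finite. The only place any real work is required is in justifying proper discontinuity of $P\Gamma'$; once that is in hand, the lemma reduces to a one-line observation.
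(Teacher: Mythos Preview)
Your proof is correct and follows essentially the same route as the paper: both identify $P\mr A$ inside $P\Gamma'$ via Lemma~\ref{Gammaembedding}, use that $P\Gamma'$ acts properly discontinuously on $\CC H^n$, and conclude by observing that any $\alpha$ with $\RR H^n_\alpha \cap Z \neq \emptyset$ lies in the finite set $\{\phi \in P\Gamma' : \phi(Z) \cap Z \neq \emptyset\}$. The paper simply asserts proper discontinuity without justification, whereas you supply the arithmetic-lattice argument (signature $(n,1)$ at one place, definite elsewhere, index at most $2$); this extra detail is welcome but does not change the strategy.
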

\begin{proof}
Recall the subgroup $P\Gamma' \subset \textnormal{Isom}(\CC H^n)$ (see (\ref{anti-iso-group}) and Lemma \ref{Gammaembedding}). 
We have that $P\Gamma'$ acts properly discontinuously on $\CC H^n$. So if $S$ is the set of $\alpha \in P\mr A$ such that $\alpha Z \cap Z \neq \emptyset$, then $S$ is finite. In particular, $Z$ meets only finitely many $\RR H^n_\alpha$.
\end{proof}
\noindent
Fix a point $f \in Y$ and a point $x_\alpha \in \widetilde Y$ lying above $f$. Let $\alpha_1, \dotsc, \alpha_\ell$ be the elements in $P\mr A$ such that $x_{\alpha_i} \sim x_\alpha$ for each $i \in I \coloneqq \set{1, \dotsc, \ell}$ (since the group $G(x)$ is finite by Lemma \ref{Gxlemma}, these are finite in number). 

Let $p: \widetilde Y \to Y$ be the quotient map, and define
\begin{equation} \label{Kf}
Y_f = p\left(\coprod_{i = 1}^\ell \RR H^n_{\alpha_i}\right) \subset Y. 
\end{equation}
\noindent
We prove that $Y$ is locally isometric to opens in unions of real hyperbolic subspaces of $\CC H^n$. Indeed, we have the following:

\begin{lemma} \label{localisometry}
\begin{enumerate}
\item \label{un}The set $Y_f$ is closed in $Y$. 
\item \label{deux}We have $\mr P\left( Y_f \right) = \cup_{i = 1}^\ell \RR H^n_{\alpha_i} \subset \CC H^n$, and the map 
\begin{equation*}
\mr P_f: Y_f \to \cup_{i = 1}^\ell \RR H^n_{\alpha_i} 
\end{equation*}
induced by $\mr P$ is a homeomorphism. 
    \item \label{trois}The set $Y_f \subset Y$ contains an open neighborhood $U_f$ of $f$ in $Y$. 
\end{enumerate}

\end{lemma}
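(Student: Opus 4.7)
The plan is to prove parts (3), (1), (2) in that order, with the common tool being the observation that local finiteness of the hyperplane arrangement $\mr H$ forces $G(y) \subset G(x)$ whenever $y$ lies in a sufficiently small neighbourhood of $x$ in $\CC H^n$. This inclusion will let me turn the combinatorial statement "$\{\alpha_1, \dots, \alpha_\ell\}$ are the $P\mr A$-elements of $G(x) \circ \alpha$" into an open, saturated topological statement near the fibre $p^{-1}(f) = \{x_{\alpha_1}, \dots, x_{\alpha_\ell}\}$.

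For part (3), I will construct a saturated open neighbourhood of $p^{-1}(f)$ in $\widetilde Y$. Choose a compact $W \subset \CC H^n$ around $x$ small enough that every hyperplane $H_r$ meeting $W$ contains $x$, and set $V = \coprod_{i=1}^{\ell} (W^\circ \cap \RR H^n_{\alpha_i})$. To verify $p^{-1}(p(V)) = V$, suppose $z_\beta \sim y_{\alpha_i}$ with $y \in W^\circ \cap \RR H^n_{\alpha_i}$; Definition \ref{def:conditions} forces $z = y \in W^\circ$ and $\beta = g \circ \alpha_i$ for some $g \in G(y) \subset G(x)$, so writing $\alpha_i = h \circ \alpha$ with $h \in G(x)$ gives $\beta = (gh) \circ \alpha$, hence $\beta \in \{\alpha_1, \dots, \alpha_\ell\}$ by the very definition of this set. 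Thus $V$ is saturated and $U_f := p(V) \subset Y_f$ is an open neighbourhood of $f$ in $Y$.

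For part (1), I will show $p^{-1}(Y_f)$ is closed in $\widetilde Y$ component by component. Fix $\gamma \in P\mr A$ and put $A_\gamma := p^{-1}(Y_f) \cap \RR H^n_\gamma$; a point $y \in \RR H^n_\gamma$ lies in $A_\gamma$ exactly when $\alpha_i = g \circ \gamma$ for some $i \in \{1, \dots, \ell\}$ and some $g \in G(y)$. Given $y_n \to y$ with $y_n \in A_\gamma$, local finiteness of $\mr H$ yields $G(y_n) \subset G(y)$ for $n \gg 0$, and $G(y)$ is finite by Lemma \ref{Gxlemma}. The pairs $(i_n, g_n) \in \{1, \dots, \ell\} \times G(y)$ realising $\alpha_{i_n} = g_n \circ \gamma$ range over a finite set, so after passing to a constant subsequence $(i, g)$ we obtain $\alpha_i = g \circ \gamma$ with $g \in G(y)$, giving $y \in A_\gamma$. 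This sequential argument, which leverages the semicontinuity $y_n \to y \Rightarrow G(y_n) \subset G(y)$, is the main technical step and the one I expect to be the chief obstacle.

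For part (2), the map $\mr P_f$ is continuous (as a restriction of $\mr P$ of Lemma \ref{intersection}.\ref{tres}), surjective by definition of $Y_f$, and injective by Definition \ref{def:conditions}.\ref{item1} combined with Lemma \ref{intersection}.\ref{dos} (which ensures that two points of $\coprod_i \RR H^n_{\alpha_i}$ with common $\CC H^n$-image are $R$-equivalent). To upgrade this continuous bijection to a homeomorphism, I will apply the construction of part (3) at an arbitrary $h = p(y_{\alpha_i}) \in Y_f$: the same $G(y) \subset G(x)$ argument shows the local analogues of $\alpha_1, \dots, \alpha_\ell$ at $h$ coincide with $\{\alpha_j : y \in \RR H^n_{\alpha_j}\} \subset \{\alpha_1, \dots, \alpha_\ell\}$, so the saturated open $V_h$ produced by part (3) at $h$ projects into $Y_f$. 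Both $p(V_h)$ and its $\mr P_f$-image inherit the quotient topology from the same disjoint union $\coprod_j (W_h^\circ \cap \RR H^n_{\alpha_j})$, the target using the standard fact that a finite union of closed subspaces carries the quotient topology from its coproduct; hence $\mr P_f|_{p(V_h)}$ is a homeomorphism. Thus $\mr P_f$ is a local homeomorphism, and, being bijective, a homeomorphism.
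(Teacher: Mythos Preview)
Your proofs of parts (1) and (3) are correct and take a somewhat different route from the paper. For (3), the paper uses Lemma~\ref{lemma:compactfinite} to reduce to finitely many $\beta$'s near $x$, then peels off the closed sets $p(\RR H^n_\beta)$ for the irrelevant $\beta$'s; your direct construction of a saturated open set via the semicontinuity $G(y)\subset G(x)$ is cleaner. For (1), the paper computes $p^{-1}(p(\RR H^n_{\alpha}))\cap \RR H^n_\beta = \RR H^n_\alpha\cap \RR H^n_\beta$ structurally via Lemma~\ref{intersection}.\ref{dos}; your sequential argument using the same semicontinuity is a valid alternative.

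Part (2), however, has a genuine gap. The assertion that at an arbitrary $h=p(y_{\alpha_i})\in Y_f$ the local anti-involutions coincide with $\{\alpha_j: y\in \RR H^n_{\alpha_j}\}\subset\{\alpha_1,\dots,\alpha_\ell\}$ is false: it would require $G(y)\subset G(x)$, but this holds only for $y$ close to $x$. For a concrete failure, take $x$ with no nodes, so $\ell=1$ and $Y_f=p(\RR H^n_\alpha)$; any $y\in \RR H^n_\alpha\cap H_r$ lying on a hyperplane not through $x$ has local anti-involutions $\phi_r^j\circ\alpha$ that are not among the $\alpha_i$'s. Consequently the saturated open $V_h$ you build at such $h$ does \emph{not} project into $Y_f$, and your local-homeomorphism argument breaks down.

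The paper avoids this by proving $\mr P_f$ is a \emph{closed} map: for $Z\subset Y_f$ closed, part (1) gives $Z$ closed in $Y$, so $p^{-1}(Z)$ is closed in $\widetilde Y$; since $\widetilde{\mr P}\colon \widetilde Y\to \CC H^n$ is proper (Lemma~\ref{lemma:compactfinite}) hence closed, $\widetilde{\mr P}(p^{-1}(Z))$ is closed in $\CC H^n$, and intersecting with $\cup_i \RR H^n_{\alpha_i}$ gives $\mr P_f(Z)$ closed. A closed continuous bijection is a homeomorphism. This approach uses your part (1) as an essential input, so your ordering (3), (1), (2) is still fine; you just need to replace the local-homeomorphism step by this closedness argument.
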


\begin{proof}
1. One has 
\begin{equation*}
p^{-1} \left( Y_f \right) = p^{-1} \left( p\left(\coprod_{i = 1}^\ell \RR H^n_{\alpha_i}\right) \right)  = \bigcup_{i = 1}^\ell p^{-1} \left( p\left( \RR H^n_{\alpha_i}\right) \right).
\end{equation*}
Therefore, it suffices to show that $p^{-1} \left( p\left( \RR H^n_{\alpha_i}\right) \right)$ is closed in $Y$. But notice that $x_\beta \in p^{-1} \left( p\left( \RR H^n_{\alpha}\right) \right)$ if and only if $x \in \RR H^n_\alpha$ and $x_\alpha\sim x_\beta$, which implies (Lemma \ref{intersection}) that $\RR H^n_\alpha \cap \RR H^n_\beta \subset p^{-1} \left( p\left( \RR H^n_{\alpha}\right) \right)$. Hence for any $\alpha \in P\mr A$, one has
$$
p^{-1} \left( p\left( \RR H^n_{\alpha}\right) \right) = \coprod_{\beta \sim \alpha} \RR H^n_\alpha \cap \RR H^n_\beta,
$$
where $\beta \sim \alpha$ if and only if there exists $x \in \RR H^n_\alpha \cap \RR H^n_\beta$ such that $x_\alpha\sim x_\beta$. It follows that $p^{-1} \left( p\left( \RR H^n_{\alpha}\right) \right) \cap \RR H^n_\beta$ is closed in $\RR H^n_\beta$ for every $\beta \in P\mr A$. But the $\RR H^n_\beta$ are open in $\widetilde Y$ and cover $\widetilde Y$, so that $p^{-1} \left( p\left( \RR H^n_{\alpha}\right) \right)$ is closed in $\widetilde Y$. 

2. We have 
\begin{equation*}
\mr P_f(Y_f) =  \mr P\left( p\left(\coprod_{i = 1}^\ell \RR H^n_{\alpha_i}\right)\right) = \widetilde{ \mr P} \left( \coprod_{i = 1}^\ell \RR H^n_{\alpha_i}\right) = \bigcup_{i = 1}^\ell \RR H^n_{\alpha_i} \subset \CC H^n. 
\end{equation*}
To prove injectivity, let $x_{\alpha_i}, y_{\alpha_j} \in \widetilde Y$ and suppose that $x = y \in \CC H^n$. Then indeed, $x_{\alpha_i} \sim y_{\alpha_j}$ because $\sim$ is an equivalence relation by Lemma \ref{eqrel}. 

Let $Z \subset \CC H^n$ be a compact set. Write 
\[
\widetilde{ \mr P} : \widetilde Y \to \CC H^n
\] 
for the canonical map. 
Remark that $Z$ meets only finitely many of the $\RR H^n_\alpha$ for $\alpha \in P\mr A$, see Lemma \ref{lemma:compactfinite}. Each $Z \cap \RR H^n_\alpha$ is closed in $Z$ since $\RR H^n_\alpha$ is closed in $\CC H^n$, so each $Z \cap \RR H^n_\alpha$ is compact. We conclude that ${\widetilde{ \mr P}}^{-1}(Z) = \coprod Z \cap \RR H^n_\alpha$ is compact. In particular, $\widetilde{\mr P}$ is closed \cite[Theorem A.57]{MR2954043}. 

Finally, we prove that $\mr P_f$ is closed. Let $Z \subset Y_f$ be a closed set. Then $Z$ is closed in $Y$ by part 1, hence $p^{-1}(Z)$ is closed in $\widetilde Y$, hence $\widetilde{ \mr P} \left( p^{-1}(Z) \right) )$ is closed in $\CC H^n$, so that 
\begin{equation*}
\mr P_f(Z) = \mr P(Z) = \widetilde{ \mr P} \left( p^{-1} \left( Z \right) \right) = \left(\widetilde{ \mr P} \left( p^{-1} \left( Z \right) \right)\right) \cap \left(\cup_{i = 1}^\ell \RR H^n_{\alpha_i} \right)
\end{equation*}
is closed in $\cup_{i = 1}^\ell \RR H^n_{\alpha_i} $.  

3. Let $x = \mr P(f) \in \CC H^n$. Since $\CC H^n$ is locally compact, there exists a compact set $Z \subset \CC H^n$ and an open set $U \subset \CC H^n$ with $x \in U \subset Z$. Since $Z$ is compact, it meets only finitely many of the $\RR H^n_\beta \subset \CC H^n$ (Lemma \ref{lemma:compactfinite}). Consequently, the same holds for $U$; define $V = \mr P^{-1}(U) \subset Y$. Define 
\[
\mr B = \{\beta \in P\mr A: U \cap \RR H^n_\beta \neq \emptyset \}.
\]
Also define, for $\beta \in P\mr A$, $Z_\beta = p\left(\RR H^n_\beta \right) \subset Y$. Then 
\begin{equation*}
    f \in V \subset \bigcup_{\beta \in \mr B} Z_\beta = \bigcup_{\substack{\beta \in \mr B \\ \beta(x) = x}} Z_\beta \bigcup_{\substack{\beta \in \mr B \\ \beta(x) \neq x}} Z_\beta.  
\end{equation*}
\noindent
Since each $Z_\beta$ is closed in $Y$ by the proof of part \ref{un}, there is an open $V' \subset V$ with 
\begin{equation*}
f \in  V' \subset \bigcup_{\substack{\beta \in \mr B \\ \beta(x) = x}} Z_\beta = 
\bigcup_{\substack{\beta \in \mr B \\ \beta(x) = x \\ x_\beta \sim x_\alpha}} Z_\beta
\bigcup_{\substack{\beta \in \mr B \\ \beta(x) = x \\ x_\beta \not \sim x_\alpha}} Z_\beta
\end{equation*}
\noindent
Hence again there exists an open subset $V'' \subset V'$ with
\begin{equation*}
    f \in  V'' \subset \bigcup_{\substack{\beta \in \mr B \\ \beta(x) = x \\ x_\beta \sim x_\alpha}} Z_\beta \subset 
    \bigcup_{\substack{\beta \in P \mr A \\ \beta(x) = x \\ x_\beta \sim x_\alpha}} Z_\beta = Y_f. 
\end{equation*}
Therefore, $U_f \coloneqq V'' \subset Y$ satisfies the requirements. 
\end{proof}
\noindent
We need one further lemma:

\begin{lemma}
The topological space $Y$ is Hausdorff. 
\end{lemma}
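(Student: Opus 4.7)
The plan is to prove Hausdorffness by splitting into two cases according to whether or not the two points have the same image in $\CC H^n$ under the projection $\mr P \colon Y \to \CC H^n$ from Lemma \ref{intersection}.\ref{tres}. Let $f_1, f_2 \in Y$ be distinct. If $\mr P(f_1) \neq \mr P(f_2)$, then, using that $\CC H^n$ is Hausdorff, one chooses disjoint open neighborhoods $V_1, V_2$ of the images in $\CC H^n$; the preimages $\mr P^{-1}(V_i)$ are disjoint open neighborhoods of $f_1, f_2$ in $Y$ by continuity of $\mr P$. This handles the easy case.

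Now suppose $\mr P(f_1) = \mr P(f_2) = x \in \CC H^n$, and lift $f_i$ to $x_{\alpha_i} \in \widetilde Y$. Since $f_1 \neq f_2$, by definition of the equivalence relation $R$, we have $x_{\alpha_1} \not\sim x_{\alpha_2}$. The strategy here is to exploit the local structure provided by Lemma \ref{localisometry}: one obtains open neighborhoods $U_{f_i} \subset Y_{f_i}$ of $f_i$ in $Y$, with $Y_{f_i}$ closed in $Y$ by part \ref{un} of that lemma.

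The key separation observation is that $f_1 \notin Y_{f_2}$ and vice versa. Indeed, define $S_i = \{\beta \in P\mr A : x_\beta \sim x_{\alpha_i}\}$; these are the finite sets indexing which copies $\RR H^n_\beta$ appear in $Y_{f_i}$. Any $\beta \in S_1 \cap S_2$ would satisfy $x_{\alpha_1} \sim x_\beta \sim x_{\alpha_2}$, contradicting $x_{\alpha_1} \not\sim x_{\alpha_2}$, so $S_1 \cap S_2 = \emptyset$. If one had $f_1 \in Y_{f_2}$, then $x_{\alpha_1}$ would be equivalent to some $y_\beta$ with $\beta \in S_2$, forcing $y = x$ and hence $\alpha_1 \in S_2$; but $\alpha_1 \in S_1$, contradiction.

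Consequently the sets
\[
W_1 \;=\; U_{f_1} \setminus Y_{f_2}, \qquad W_2 \;=\; U_{f_2} \setminus Y_{f_1}
\]
are open in $Y$ (since the $Y_{f_i}$ are closed) and contain $f_1$ and $f_2$ respectively. They are disjoint: any point in $W_1 \cap W_2$ lies in $U_{f_1} \cap U_{f_2} \subset Y_{f_1} \cap Y_{f_2}$, but also avoids $Y_{f_1} \cup Y_{f_2}$, which is impossible. This completes the argument. The main content is really just the closedness of $Y_f$ and the finite local description from Lemma \ref{localisometry}; the combinatorial identity $S_1 \cap S_2 = \emptyset$ does the rest with no further obstacle.
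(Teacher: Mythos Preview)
Your proof is correct. You and the paper use the same ingredients—closedness of the $Y_f$, the open neighborhoods $U_f \subset Y_f$ from Lemma \ref{localisometry}, and Hausdorffness of $\CC H^n$—but organize them via dual case splits. The paper splits on whether $f \in Y_{f'}$: if not, closedness of $Y_{f'}$ immediately gives a neighborhood of $f$ disjoint from $U_{f'}$; if so, injectivity of $\mr P|_{Y_{f'}}$ (Lemma \ref{localisometry}.\ref{deux}) forces $\mr P(f) \neq \mr P(f')$, and one then pulls back disjoint opens from $\CC H^n$. You instead split on whether $\mr P(f_1) = \mr P(f_2)$: if not, pull back from $\CC H^n$; if so, you argue directly that $f_1 \notin Y_{f_2}$ (your $S_1 \cap S_2 = \emptyset$ argument is in effect a direct reproof of the relevant injectivity), and closedness then separates. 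The two arguments are mirror images of one another; the paper's is marginally shorter because it invokes the injectivity of $\mr P_f$ already packaged in Lemma \ref{localisometry}.\ref{deux} rather than unpacking it via the $S_i$.
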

\begin{proof}
Let $f , f' \in Y$ be elements such that $f \neq f'$. First suppose that $f \not \in Y_{f'}$. Since $Y_{f'}$ is closed in $Y$ by Lemma \ref{localisometry}, there is an open neighbourhood $U$ of $f$ such that $U \cap U_{f'}  \subset U \cap Y_{f'} = \emptyset$. 

Next, suppose that $f \in Y_{f'}$. Lift $f$ and $f'$ to elements $x_\alpha,  y_\beta \in \widetilde Y$. Assume first that $x = y$. This means that $\mr P(f) = \mr P(f')$. Since $\mr P: Y_{f'} \to \CC H^n$ is injective, this implies that $f = f'$, contradiction. So we have $x \neq y \in \CC H^n$. But $\CC H^n$ is Hausdorff, so there are open subsets $\left(U \subset \CC H^n, V \subset \CC H^n \right)$ such that $x \in U$, $y \in V$ and $U \cap V = \emptyset$. Then $\mr P^{-1}(U) \cap \mr P^{-1}(V) = \emptyset$. 
\end{proof}

\noindent
We then obtain:

\begin{proposition} \label{metric}
$Y$ is naturally a path metric space which is piecewise isometric to $\RR H^n$.
\end{proposition}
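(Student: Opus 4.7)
The plan is to build the path metric on $Y$ from the intrinsic hyperbolic metrics on the sheets $\RR H^n_\alpha$, exploiting the local model provided by Lemma \ref{localisometry}. Since $\CC H^n$ carries its canonical Bergman metric and each $\RR H^n_\alpha \subset \CC H^n$ is a totally geodesic submanifold (being the fixed locus of an anti-holomorphic isometric involution), the restriction of the Bergman metric to $\RR H^n_\alpha$ makes it isometric to the real hyperbolic space of dimension $n$. In particular, each sheet of $\widetilde Y$ comes equipped with an intrinsic hyperbolic metric $d_\alpha$.

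I would then define a length structure on $Y$ as follows. Given a continuous path $\gamma\colon [0,1] \to Y$, cover its image by finitely many open neighbourhoods of the form $U_{f_i}$ provided by Lemma \ref{localisometry}.\ref{trois}, and choose a subdivision $0 = t_0 < t_1 < \cdots < t_N = 1$ fine enough that $\gamma([t_{j-1}, t_j]) \subset U_{f_{i(j)}} \subset Y_{f_{i(j)}}$. By Lemma \ref{localisometry}.\ref{deux}, the map $\mr P_{f_{i(j)}}$ is a homeomorphism onto $\bigcup_\nu \RR H^n_{\alpha_\nu} \subset \CC H^n$, so I can transport $\gamma|_{[t_{j-1}, t_j]}$ to a continuous path in this finite union of real hyperbolic subspaces and assign it a length using the restricted Bergman metric (using that on each sheet the metric is the intrinsic hyperbolic one, and that overlaps occur along totally geodesic submanifolds by Condition \ref{orthogonal}, so the assigned length is independent of which sheet one picks at a crossing point). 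Summing and taking infimum over subdivisions gives $L(\gamma)$; additivity under refinement and compatibility of the local charts show this is well-defined. Set
\[
d(x,y) = \inf\{L(\gamma) \mid \gamma \text{ a path from } x \text{ to } y\},
\]
with $d(x,y) = +\infty$ if no such path exists in the connected component.

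To verify that $d$ is a metric, symmetry and the triangle inequality are immediate from reversal and concatenation of paths. Positivity is the delicate point: given $f \neq f' \in Y$, I need a lower bound on $L(\gamma)$ for every path from $f$ to $f'$. The neighbourhood $U_f$ from Lemma \ref{localisometry}.\ref{trois} maps homeomorphically via $\mr P_f$ onto an open subset of the finite union $\bigcup_{i=1}^\ell \RR H^n_{\alpha_i} \subset \CC H^n$. Choose a small closed ball $B \subset \CC H^n$ around $\mr P(f)$ contained in $\mr P_f(U_f)$; then any path from $f$ to a point outside $U_f$ must, when pushed down to $\CC H^n$, escape $B$. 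Since the Bergman distance from $\mr P(f)$ to $\partial B$ is some positive number $\varepsilon$, and the transported path has length bounded below by this quantity (the projection to each sheet shortens distances), we conclude $d(f,f') \geq \varepsilon$ if $f' \notin U_f$, and the usual argument handles the case $f' \in U_f$. This argument simultaneously shows that the metric topology agrees with the quotient topology on $Y$: the balls $\{f' : d(f,f') < \varepsilon\}$ form a basis of neighbourhoods of $f$ because inside $U_f$ the metric $d$ coincides with the metric pulled back from $\mr P_f$.

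Finally, the piecewise isometry assertion is exactly the content of Lemma \ref{localisometry}: each $U_f$ is isometric, via $\mr P_f$, to an open subset of a finite union $\bigcup_i \RR H^n_{\alpha_i} \subset \CC H^n$, i.e.\ to finitely many open pieces of $\RR H^n$ glued along their common totally geodesic intersections (an intersection $\RR H^n_{\alpha_i} \cap \RR H^n_{\alpha_j}$ is the fixed locus of the unitary element $\alpha_i \circ \alpha_j^{-1}$, which by Lemma \ref{intersection}.\ref{uno} and Condition \ref{orthogonal} is the intersection of mutually orthogonal hyperplanes, hence a totally geodesic real hyperbolic subspace). The main obstacle I expect is bookkeeping the independence of $L(\gamma)$ from the chosen subdivision and local charts, and giving a clean proof that the metric and quotient topologies coincide --- but both reduce to the fact that $\mr P_f\colon Y_f \to \bigcup_i \RR H^n_{\alpha_i}$ is a homeomorphism that restricts to an isometry on each sheet.
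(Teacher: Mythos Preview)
Your proposal is correct and follows essentially the same approach as the paper: both pull back the metric from $\CC H^n$ to the local neighbourhoods $U_f$ via the homeomorphisms $\mr P_f$ of Lemma \ref{localisometry}, and then globalize to a path metric on $Y$. The paper's proof is much terser---it simply invokes the standard ``largest compatible path metric'' construction without spelling out the length structure or checking positivity---whereas you have unpacked these details carefully.
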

\begin{proof}
From Lemma \ref{localisometry}, we deduce that for each $f \in Y$ there exists an open neighborhood $f \in U_f \subset Y$ such that $\mr P$ induces a homeomorphism $Y \supset U_f \xrightarrow{\sim} \mr P(U_f) \subset \CC H^n$. Pull back the metric on $\mr P(U_f)$ to obtain a metric on $U_f$. Then define a metric on $Y$ as the largest metric which is compatible with the metric on each open set $U_f$ and which preserves the lengths of paths.
\end{proof}

\begin{proposition} \label{prop:pathmetricquotient}
The path metric on $Y$ descends to a path metric on $P\Gamma \setminus Y$. 
\end{proposition}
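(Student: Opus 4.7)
The plan is to verify three things: first, that $P\Gamma$ acts on $Y$ by isometries; second, that the induced action satisfies a proper-discontinuity property sufficient to ensure that the naive quotient pseudometric separates orbits; and third, that the resulting distance function on $P\Gamma \setminus Y$ is realized as an infimum of lengths of paths.

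For the first step, I would use that the metric on $Y$ (Proposition \ref{metric}) is characterized locally by the pullback via $\mr P$ of the Bergman metric on $\CC H^n$, together with the condition that it preserves path-lengths. Since $\mr P \colon Y \to \CC H^n$ is $P\Gamma$-equivariant by Lemma \ref{intersection}.\ref{tres}, and $P\Gamma \subset \Isom(\CC H^n)$ by Lemma \ref{Gammaembedding}, each $\phi \in P\Gamma$ sends the neighborhoods $U_f$ of Lemma \ref{localisometry}.\ref{trois} isometrically onto neighborhoods $U_{\phi(f)}$. Hence $\phi \colon Y \to Y$ is a local isometry, and since it preserves path lengths by construction, it is an isometry of the path metric.

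For the second step, define on $P\Gamma \setminus Y$ the function
\[
\overline{d}([x], [y]) = \inf_{\phi \in P\Gamma} d_Y(x, \phi \cdot y).
\]
Symmetry follows since $P\Gamma$ acts by isometries, and the triangle inequality is standard. The only non-trivial point is that $\overline{d}([x], [y]) = 0$ implies $[x] = [y]$. To prove this, I would combine two facts: first, that $P\Gamma$ acts properly discontinuously on $\CC H^n$ since it is a discrete subgroup of $\Isom(\CC H^n)$; second, that by Lemma \ref{localisometry}, around each point $f \in Y$ there is an open neighborhood $U_f$ on which $\mr P$ is a homeomorphism onto a finite union $\bigcup_i \RR H^n_{\alpha_i}$ of totally geodesic subspaces of $\CC H^n$. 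Choosing $U_f$ small enough so that $\mr P(U_f)$ has compact closure, Lemma \ref{lemma:compactfinite} and proper discontinuity on $\CC H^n$ imply that only finitely many $\phi \in P\Gamma$ can satisfy $\phi \cdot U_f \cap U_f \neq \emptyset$. This yields a positive lower bound for $d_Y(x, \phi \cdot x)$ over $\phi \notin \Stab_{P\Gamma}(x)$, and hence $\overline{d}$ separates orbits.

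For the third step, I would show $\overline{d}$ is a path metric by verifying that $\overline{d}([x], [y])$ equals the infimum of lengths of continuous paths in $P\Gamma \setminus Y$ joining $[x]$ and $[y]$. The quotient map $\pi \colon Y \to P\Gamma \setminus Y$ does not increase path length, giving one inequality; for the reverse, one approximates any path in the quotient by lifts pieced together using the local isometric identifications from step one, using that over each of the neighborhoods $U_f$ the map $\pi$ is simply the quotient by the finite stabilizer $\Stab_{P\Gamma}(f)$.

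The main obstacle I anticipate is verifying positive-definiteness of $\overline{d}$, i.e.\ controlling how the $P\Gamma$-orbits interact with the glueing. The subtlety is that $Y$ is not itself a manifold along $\mr H$, so the proper-discontinuity statement must be transferred from $\CC H^n$ via $\mr P$; this is manageable because $\mr P$ is locally a homeomorphism onto a finite geodesic union by Lemma \ref{localisometry}.\ref{deux}, and Lemma \ref{lemma:compactfinite} provides the finiteness needed to reduce to the classical properly discontinuous action of $P\Gamma$ on the symmetric space $\CC H^n$.
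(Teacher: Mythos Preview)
Your proposal is correct and follows essentially the same approach as the paper: both arguments show that $P\Gamma$ acts by isometries via the equivariance of $\mr P$ and the construction of the metric, and both transfer proper discontinuity from $\CC H^n$ through $\mr P$ to conclude that orbits are separated. The paper packages the general metric-space facts by citing Gromov's book (a path metric descends to a genuine path metric on the quotient as soon as the group acts by isometries with closed orbits), and checks properness on $\widetilde Y$ via $\widetilde{\mr P}$ rather than locally on $Y$; your hands-on verification of positive-definiteness and the path-metric property is a more explicit version of the same reasoning.
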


\begin{proof}
The metric on $Y$ descends in any case to a pseudo-metric on $P\Gamma \setminus Y$, and by \cite[Chapter 1]{Gromov2007}, this is a metric if $P\Gamma$ acts by isometries on $Y$ with closed orbits. 
This is true: the fact that $P\Gamma$ acts isometrically on $Y$ comes from the $P\Gamma$-equivariance of $\mr P: Y \to \CC H^n$ (Lemma \ref{intersection}) together with the construction of the metric on $Y$ (Proposition \ref{metric}). To check that the $P\Gamma$-orbits are closed in $Y$, let $f \in Y$ with representative $x_\alpha\in \widetilde Y$. By equivariance of $p: \widetilde Y \to Y$, we have $p^{-1} \left( P\Gamma \cdot f \right) = P \Gamma \cdot \left(p^{-1}f \right)$, so since $p$ is a quotient map, it suffices to show that 
$$
P \Gamma \cdot \left(p^{-1}f \right) = P\Gamma \cdot \cup_{x_\beta \sim x_\alpha}x_\beta = \cup_{x_\beta \sim x_\alpha} P\Gamma \cdot x_\beta
$$
is closed in $\widetilde Y$, thus that each orbit $P\Gamma \cdot x_\beta$ is closed in $\widetilde Y$. 
Since $P\Gamma$ is discrete, it suffices to show that $P\Gamma$ acts properly on $\widetilde Y$. So let $Z \subset \widetilde Y$ be any compact set: we claim that $\{g \in P\Gamma: g Z \cap Z \neq \emptyset \}$ is a finite set. Indeed, for each $g \in P\Gamma$, one has $\widetilde{ \mr P} \left( g Z \cap Z  \right) \subset g \widetilde{\mr P}(Z) \cap \widetilde{\mr P}(Z)$, and the latter is non-empty for only finitely many $g \in P\Gamma$, by properness of the action of $P\Gamma$ on $\CC H^n$. 

Since the metric on $Y$ is a path metric, so is the metric on $P\Gamma \setminus Y$ \cite{Gromov2007}.
\end{proof}

\subsection{The orbifold structure on the glued space} 

The next step is to prove that the glued space $P\Gamma \setminus Y$ (see Definition \ref{gluedspace}) is locally isometric to quotients of open sets in $\RR H^n$ by finite groups of isometries. 
\begin{definition}
Let $f \in Y$ with representative $ x_\alpha \in \widetilde Y$. Thus, $x$ is an element in $\CCH^n$, and $\alpha \in P\mr A$ is the class of an anti-unitary involution such that $\alpha(x) = x$. 
\begin{enumerate}
\item 
The \emph{nodes} of $f$ are by definition the nodes of $x_\alpha$ (see Definition \ref{fullset}). Thus, these are the hyperplanes $H \in \ca H(x)$, i.e. the hyperplanes $H_r \in \ca H$ defined by short roots $r \in \mr R$ such that $x \in H_r$ (equivalently, such that $h(x,r) = 0$). 
\item 
The number of nodes of $f$ is the cardinality of $\ca H(x)$. 
\item 
The anti-unitary involution $\alpha \in P\mr A$ induces an involution on the set $\ca H(x)$ by Lemma \ref{alphaswitch}. 
Let $H \in \ca H(x)$ be a node. We call $H$ a \emph{real node} of $f$ if $\alpha(H) = H$. We call $(H, \alpha(H))$ a \emph{pair of complex conjugate nodes} of $f$ if $\alpha(H) \neq H$. 
\item 
If $k$ is the number of nodes of $f$, we will generally write $k = 2a + b$, with $a$ the number of pairs of complex conjugate nodes of $f$, and $b$ the number of real nodes of $f$. 
\end{enumerate}
\end{definition}
\noindent
Fix again a point $f \in Y$ and a point $x_\alpha \in \widetilde Y$ lying above $f$. Let $k = 2a + b$ be the number of nodes of $f$. Thus $x \in \RRH^n_\alpha$, and there exist $r_1, \dotsc, r_k \in \mr R$ such that 
\[
\ca H(x) = \set{H_{r_1}, \dotsc, H_{r_k}}, \quad G(x) = \langle \phi_{r_1}, \dotsc, \phi_{r_\ell} \rangle \cong (\ZZ/m)^k.
\]
For $\beta \in P\mr A$, observe that $x_\beta \sim x_\alpha$ if and only if $\alpha \circ \beta \in G(x)$. We relabel the $r_i$ so that they satisfy the following condition: 
\begin{align} \label{relabel}
\begin{split}
\alpha(H_{r_i}) &= H_{r_{i+1}} \tn{ for } i \tn{ odd and } i \leq 2a, \\
\alpha(H_{r_i}) &= H_{r_{i-1}} \tn{ for } i \tn{ even and } i \leq 2a, \tn{ and } \\ 
\alpha(H_{r_i}) &= H_{r_{i}} \tn{ for } i  \in \set{2a +1, \dotsc, k}.
\end{split}
\end{align}
\noindent
In other words, $H_{r_i}$ is a real root if and only if $i > 2a$, and $\left(H_{r_i}, H_{r_{i+1}}\right)$ is a pair of complex conjugate roots if and only if $i < 2a$ is odd. 

\begin{lemma} \label{lemma:localcoordinates}
Continue with the notation from above.  
\begin{enumerate}
    \item \label{ein}Let $\beta \in P\mr A$ be such that $x_\beta \sim x_\alpha$. Then 
    \[
    \beta = \prod_{i = 1}^a \left( \phi_{r_{2i-1}} \circ \phi_{r_{2i}} \right)^{j_i} \circ \prod_{i = 2a+1}^k \phi_{r_i}^{j_i} \circ \alpha
    \]
    for some $j_1, \dotsc, j_a, j_{2a+1}, \dotsc, j_k \in \ZZ/m$. In particular, there are $m^{a + b}$ such $\beta$. 
    \item \label{zwei}There is an isometry $\CC H^n \xrightarrow{\sim} {\bb B}^n(\CC)$ identifying $x$ with the origin, $\phi_{r_i}$ with the map \begin{equation*}
        {\bb B}^n(\CC) \to {\bb B}^n(\CC), \white (t_1, \dotsc,t_i, \dotsc, t_n) \mapsto (t_1, \dotsc, \zeta t_i, \dotsc, t_n),
    \end{equation*}
    and $\alpha$ with the map defined by 
    \begin{equation} \label{eq:alpha}
        t_i \mapsto 
\begin{cases}
\bar t_{i+1} \white \textnormal{for $i$ odd and $i \leq 2a$}\\
\bar t_{i-1} \white \textnormal{for $i$ even and $i \leq 2a$}\\
\bar t_i \white\white \textnormal{for $i > 2a$}.
\end{cases}
    \end{equation}
\end{enumerate}
\end{lemma}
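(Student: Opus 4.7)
For part (\ref{ein}), I would apply Proposition \ref{preliminaryproposition} together with Lemma \ref{Gxlemma} and the labelling (\ref{relabel}). By Definition \ref{def:conditions}, the condition $x_\beta \sim x_\alpha$ is equivalent to $\beta = g \circ \alpha$ for some $g \in G(x)$, and by Lemma \ref{Gxlemma} we may write $g = \prod_{\nu = 1}^{k} \phi_{r_\nu}^{j_\nu}$ with $j_\nu \in \ZZ/m$ uniquely determined by $g$. Proposition \ref{preliminaryproposition} then says that $g \circ \alpha$ is an involution if and only if $j_\nu = j_{\alpha(\nu)}$ for every $\nu$, where $\alpha \colon \{1,\dotsc,k\} \to \{1,\dotsc,k\}$ is the involution from Definition \ref{alphainvolution}. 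By (\ref{relabel}), this forces $j_{2i-1} = j_{2i}$ for $1 \le i \le a$ and leaves $j_{2a+1},\dotsc,j_k$ free. Since the $\phi_{r_\nu}$ pairwise commute (by the proof of Lemma \ref{Gxlemma}, using Condition \ref{orthogonal}), the asserted expression for $\beta$ follows, and the count of $a + b$ independent parameters in $\ZZ/m$ gives the $m^{a+b}$ claim.

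For part (\ref{zwei}) I plan to produce a single hermitian orthonormal basis of $V$ in which all the maps under consideration simultaneously take the desired diagonal form. First I would choose a vector $v \in V$ spanning $x$ with $h(v,v) = -1$. Since $\alpha$ is anti-unitary and fixes the line $x$, one has $\alpha(v) = \mu v$ with $|\mu|^2 = 1$; choosing $\lambda \in \CC^\ast$ with $\lambda/\bar\lambda = \mu$ and replacing $v$ by $\lambda v$, I arrange $\alpha(v) = v$ while preserving $h(v,v) = -1$. The short roots $r_1,\dotsc,r_k$ satisfy $h(r_i,r_i) = 1$, are pairwise $h$-orthogonal by Condition \ref{orthogonal}, and are orthogonal to $v$ since $x \in H_{r_i}$. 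To align them with $\alpha$, I would use the identity $\phi_{\alpha(r_i)} = \phi_{r_{\alpha(i)}}$ (established in the proof of Proposition \ref{preliminaryproposition}) together with Lemma \ref{ij-lemma}: these force $\alpha(r_i) = \lambda_i r_{\alpha(i)}$ with $|\lambda_i|^2 = 1$. For a complex-conjugate pair $(r_{2i-1}, r_{2i})$, the condition $\alpha^2 = \id$ then gives $\alpha(r_{2i}) = \lambda_{2i-1}^{-1} r_{2i-1}$, so replacing $r_{2i}$ by $\lambda_{2i-1} r_{2i}$ (which preserves $h$-orthonormality because $|\lambda_{2i-1}| = 1$ and fixes the hyperplane $H_{r_{2i}}$) yields $\alpha(r_{2i-1}) = r_{2i}$ and $\alpha(r_{2i}) = r_{2i-1}$. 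For a real root $r_i$ with $i > 2a$, an analogous rescaling produces $\alpha(r_i) = r_i$.

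It remains to complete $\{v, r_1,\dotsc,r_k\}$ to an $\alpha$-adapted orthonormal basis of $V$. The orthogonal complement $W \coloneqq \langle v, r_1,\dotsc,r_k\rangle^\perp \subset V$ is a positive definite hermitian subspace of complex dimension $n - k$, and it is $\alpha$-stable; hence $\alpha|_W$ is an anti-unitary involution, and $h$ restricts to a positive definite real inner product on $W^\alpha$ (real dimension $n-k$). Any real orthonormal basis $r_{k+1},\dotsc, r_n$ of $W^\alpha$ is then simultaneously a hermitian orthonormal basis of $W$ and $\alpha$-fixed. In the resulting basis $\{r_1,\dotsc,r_n,v\}$ of $V$, the standard chart $[\sum z_i r_i + z_{n+1} v] \mapsto (z_1/z_{n+1}, \dotsc, z_n/z_{n+1})$ identifies $\CC H^n$ with $\BB^n(\CC)$ and sends $x$ to the origin. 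A direct computation using $\phi_{r_i}(r_i) = \zeta r_i$ and the fact that $\phi_{r_i}$ fixes the remaining basis vectors shows that $\phi_{r_i}$ is the $i$-th coordinate reflection, and the anti-linearity of $\alpha$ together with $\alpha(v) = v$ and the permutation of $\{r_j\}$ given by (\ref{relabel}) translates directly into the coordinate formula (\ref{eq:alpha}).

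The main technical obstacle will be the simultaneous rescaling in the second paragraph: each rescaling must preserve $h$-orthonormality and the relevant hyperplanes while producing the correct $\alpha$-action. This works precisely because every phase one encounters has absolute value $1$, which in turn comes from the anti-unitarity of $\alpha$ and from the rigidity provided by Lemma \ref{ij-lemma}.
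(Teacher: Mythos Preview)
Your proof is correct and follows essentially the same approach as the paper. For part~(\ref{ein}) the paper simply cites Proposition~\ref{preliminaryproposition}; you have spelled out the deduction explicitly. For part~(\ref{zwei}) the paper also constructs an orthonormal basis $\{r_1,\dotsc,r_k,w_1,\dotsc,w_{n-k},v\}$ of $V$ adapted to $x$, the $H_{r_i}$, and $\alpha$, the only organizational difference being that the paper first passes to $\CC^{n+1}$ and then rescales coordinates by $\mathrm{diag}(\rho_i)$ to put $\alpha$ in standard form, whereas you rescale the basis vectors in $V$ first and then take the chart; these are equivalent. One minor slip: from $\alpha^2=\id$ and $\alpha(r_{2i-1})=\lambda_{2i-1}r_{2i}$ with $|\lambda_{2i-1}|=1$ you actually get $\alpha(r_{2i})=\bar\lambda_{2i-1}^{-1}r_{2i-1}=\lambda_{2i-1}r_{2i-1}$, not $\lambda_{2i-1}^{-1}r_{2i-1}$, but your subsequent rescaling $r_{2i}\rightsquigarrow\lambda_{2i-1}r_{2i}$ still works and gives $\alpha(\text{new }r_{2i})=r_{2i-1}$ as claimed.
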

\begin{proof}
1. This follows readily from Proposition \ref{preliminaryproposition}. 

2. Since the $H_{r_i}$ are orthogonal by Condition \ref{orthogonal}, and their intersection contains $x$, we can find coordinates $t_1, \dotsc, t_{n+1}$ on $V$ that induce an identification 
$
\left(V, h \right) \cong \CC^{n,1} \coloneqq \left(\CC^{n+1}, H\right)$ with $ H(x,x) = \va{x_1}^2 + \cdots + \va{x_{n}}^2 - \va{x_{n+1}}^2$, 
in such a way that $H_{r_i} \subset V$ is identified with the hyperplane $\{t_i = 0\} \subset \CC^{n+1}$ and $x \in \cap_i H_{r_i}$ with the point $(0,0, \dotsc, 0, 1)$. We will do this in the following way. Define
\[
T =  \langle x \rangle \oplus \langle r_1 \rangle  \oplus \cdots \oplus \langle r_k \rangle  \subset V, 
\quad 
W = T^\perp = \{w \in V \mid h(w,t) = 0 \;\; \forall t \in T\}. 
\]
For each $i \in I = \{1, \dotsc, k\}$, we have $\alpha(r_i) = \lambda_i \cdot r_{\alpha(i)}$ for some $\lambda_i \in K$ (see Lemma \ref{alphaswitch}, Definition \ref{alphainvolution}, and Lemmas \ref{finitereflectionorders} and \ref{Gxlemma}). Observe that $\alpha(W) = W$. Since $W \subset \langle x \rangle^\perp$, the hermitian space $(W, h|_{W})$ is positive definite. Let $\{w_1, \dotsc, w_{n-k}\} \subset W$ be an orthonormal basis such that $\alpha(w_i) = w_i$, which exists by the elementary
\begin{lemma}
Let $(W,h)$ a non-degenerate hermitian vector space of dimension $n \geq 1$ and let $\alpha \colon W \to W$ be an anti-linear involution with $h(\alpha(x), \alpha(y)) = \overline{h(x,y)}$ for $x,y \in W$. For each positive integer $m \leq n$, there exists a linearly independent set $\{w_i\}_{i = 1}^m \subset W$ such that $h(w_i, w_j) = \pm \delta_{ij}$, 
and such that $\alpha(w_i) = w_i$ for each $i = 1, \dotsc, m$. \hfill \qed
\end{lemma}
\noindent
Let $\{e_i\}_{i = 1}^{n+1}$ be the standard basis of $\CC^{n+1}$, and define a $\CC$-linear isomorphism 
\begin{align} \label{hermit-iso}
\Phi \colon V \xrightarrow{\sim} \CC^{n+1}, \quad 
\left(
\frac{x}{ h(x,x)} \mapsto e_{n+1}, \quad  r_i \mapsto e_i, \quad w_i \mapsto e_i
\right).
\end{align}
By (\ref{relabel}), we have that $\alpha(r_i) = \lambda_i \cdot r_{i+1}$ for $i$ odd and $i \leq 2a$, that $\alpha(r_i) = \lambda_i \cdot r_{i-1}$ for $i$ even and $i \leq 2a$, and that $\alpha(r_i) = \lambda_i \cdot r_{i}$ for $i > 2a$. We conclude that the anti-linear involution on $\CC^{n+1}$ induced by $\alpha$ and (\ref{hermit-iso}) corresponds to the matrix
\begin{equation*}
\alpha = \begin{pmatrix}
0 & \alpha_1  & \ldots & 0&\ldots&&\ldots&0\\
\alpha_2  &  0 & 0 & 0&&\ldots&&\vdots\\
0  &  0 & 0 & \alpha_4&&&&\\
0  &  0 & \alpha_3 & 0&&&&\\
\vdots & 0 & \ddots & \vdots&&\ddots&&\vdots\\
\vdots & \vdots & 0 & \vdots&&&\alpha_{n}&0 \\
0  &   0       &\ldots & 0 &&\ldots&0&\alpha_{n+1}
\end{pmatrix}
\end{equation*}
where each $\alpha_i$ is an anti-linear involution $\CC \to \CC$, and $\alpha_i = \alpha_{i+1}$ for $i < 2a$ odd. If $\alpha_i(1) = \mu_i \in \CC^\ast$, then $\mu_i^{-1} \cdot \alpha_i  = \tn{conj} \colon \CC \to \CC$ (complex conjugation). Since $\va{\mu_i} = 1$, there exists $\rho_i \in \CC$ such that $\mu_i = \overline{\rho_i}/\rho_i$ and $\va{\rho_i} = 1$. This gives $\mu_i^{-1} \cdot \alpha_i = \rho_i \cdot \alpha_i \cdot \rho_i^{-1} = \tn{conj} \colon \CC \to \CC$. The composition 
 $$V \xrightarrow{\Phi} \CC^{n+1} \xrightarrow{\tn{diag}(\rho_i)} \CC^{n+1}$$
 induces an isomorphism $\CCH^n \cong \bb B^n(\CC)$ with the required properties. 
\end{proof}
\begin{definition}
\begin{itemize}
\item
Define $A_f = \textnormal{Stab}_{P\Gamma}(f)$ to be the subgroup of $P\Gamma$ fixing $f \in Y$. This contains the group $G(x) \cong (\ZZ/m)^k$. 
\item 
Define $B_f$ as the subgroup of $G(x)$ generated by the order $m$ complex reflections associated to the real nodes of $f$, rather than all the nodes. 
Hence $B_f  = \langle \phi_{r_i} \rangle_{i > 2a} \cong (\ZZ/m)^b$. 
\end{itemize}
\end{definition}
\noindent
Recall the quotient map $p\colon \widetilde Y \to Y$, the definition (\ref{Kf}) of $Y_f$, and Lemma \ref{localisometry}. 

\begin{lemma}
The stabilizer $A_f$ of $f \in Y$ preserves the subset $Y_f \subset Y$.
\end{lemma}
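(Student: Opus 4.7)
The plan is to unpack the definitions and show that any element of $A_f$ permutes the set $\{\alpha_1,\dotsc,\alpha_\ell\}$ parameterizing the real hyperbolic subspaces that make up $Y_f$. Recall that $Y_f = p\bigl(\coprod_{i=1}^\ell \RR H^n_{\alpha_i}\bigr)$, where the $\alpha_i\in P\mr A$ are precisely the elements satisfying $x_{\alpha_i}\sim x_\alpha$. Recall also from Lemma \ref{lemma:pgammaaction} that the action of $g\in P\Gamma$ on $\widetilde Y$ is given by $g\cdot y_\beta = g(y)_{g\beta g^{-1}}$. Hence
\[
g\cdot Y_f \;=\; p\Bigl(\coprod_{i=1}^\ell g(\RR H^n_{\alpha_i})\Bigr) \;=\; p\Bigl(\coprod_{i=1}^\ell \RR H^n_{g\alpha_i g^{-1}}\Bigr),
\]
so it suffices to check that for each $i$ one has $g\alpha_i g^{-1}\in\{\alpha_1,\dotsc,\alpha_\ell\}$, i.e.\ that $x_{g\alpha_i g^{-1}}\sim x_\alpha$.

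First I would extract the content of the condition $g\in A_f$. Since $g$ fixes $f=[x_\alpha]$, the equality $p(x_\alpha)=p(g(x)_{g\alpha g^{-1}})$ gives $x_\alpha\sim g(x)_{g\alpha g^{-1}}$. By Definition \ref{def:conditions}, this forces $g(x)=x$ and the existence of an element $h\in G(x)$ with $g\alpha g^{-1}=h\circ\alpha$ (when $g\alpha g^{-1}=\alpha$, just take $h=\mathrm{id}$). In the same way, for every $i$ the relation $x_{\alpha_i}\sim x_\alpha$ yields $h_i\in G(x)$ with $\alpha_i=h_i\circ\alpha$.

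Next I would use that conjugation by $g$ carries $G(x)$ into itself. Indeed, as already used in the proof of Lemma \ref{lemma:pgammaaction}, one has $g\,\phi_r^j\,g^{-1}=\phi_{g(r)}^j$ and $h(y,r)=0\Leftrightarrow h(g(y),g(r))=0$, so $g\,G(y)\,g^{-1}=G(g(y))$ for every $y\in\CC H^n$; applied with $y=x$ and $g(x)=x$, this gives $g\,G(x)\,g^{-1}=G(x)$. Putting everything together, we compute
\[
g\alpha_i g^{-1} \;=\; (g h_i g^{-1})\,(g\alpha g^{-1}) \;=\; \bigl((g h_i g^{-1})\,h\bigr)\circ\alpha,
\]
and the element $(g h_i g^{-1})\,h$ lies in $G(x)$. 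Thus $g\alpha_i g^{-1}$ differs from $\alpha$ by an element of $G(x)$, so $x_{g\alpha_i g^{-1}}\sim x_\alpha$ by Definition \ref{def:conditions}, i.e.\ $g\alpha_i g^{-1}$ is one of the $\alpha_j$. Combined with the displayed equation above this yields $g\cdot Y_f\subset Y_f$, as required.

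The argument is essentially bookkeeping once the action of $P\Gamma$ on $\widetilde Y$ and the equivalence relation defining $Y$ are untangled; I do not expect a genuine obstacle. The only step that deserves a moment of care is the verification that $g G(x) g^{-1}=G(x)$, but this is immediate from $g(x)=x$ and the transformation rule $g\phi_r^j g^{-1}=\phi_{g(r)}^j$.
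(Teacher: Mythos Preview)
Your proof is correct and follows essentially the same approach as the paper's. The paper's argument is slightly more streamlined: it directly invokes the $P\Gamma$-equivariance of the relation $\sim$ (established in Lemma~\ref{lemma:pgammaaction}) together with transitivity to conclude $x_\alpha \sim \psi\cdot x_\alpha \sim \psi\cdot x_\beta$, whereas you unwind this equivariance by hand via the identity $gG(x)g^{-1}=G(x)$; but the content is the same.
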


\begin{proof}
Let $\psi \in A_f$, with $f = p(x_\alpha) \in Y$, $x \in \cap_i H_{r_i}$. Then $\psi(x)_{\psi \alpha \psi^{-1}} \sim x_\alpha$. Now let $p(y_\beta) \in Y_f$. Then $\beta(x) = x$ and $x_\alpha\sim x_\beta$. Hence $x_\alpha \sim \psi \cdot x_\alpha\sim \psi \cdot x_\beta = \psi(x)_{\psi \beta \psi^{-1}}$. This implies that $\psi \beta \psi^{-1} \circ \alpha \in G(x)$, so that $p\left(\psi(y)_{\psi \beta \psi^{-1}} \right) \in Y_f$. 
\end{proof}

\noindent
We also need the following lemma. Write $m = 2^ak$ with $k \neq 0 \bmod 2$. 

\begin{lemma} \label{lemma:T/G}Let $T = \{t \in \CC: t^m \in \RR\}$. Then $G = \langle \zeta_m \rangle$ acts on $T$ by multiplication. 
Each element in $T/G$ has a unique representative of the form $\zeta_{2^{a+1}}^\epsilon \cdot r$, $r \geq 0$ and $\epsilon \in \{0,1\}$. 
\end{lemma}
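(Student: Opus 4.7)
The statement is elementary; my plan is to describe $T$ explicitly as a union of real rays, compute the $G$-action on the set of rays, and then select canonical representatives.

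First I would parametrize $T$. Writing $t = \rho e^{i\theta}$ with $\rho \geq 0$ and $\theta \in \RR$, the condition $t^m = \rho^m e^{im\theta} \in \RR$ is equivalent to $m\theta \in \pi\ZZ$, i.e., $\theta \equiv \pi j/m \pmod{2\pi}$ for some $j \in \ZZ$. Hence
\[
T = \bigcup_{j = 0}^{2m-1} \zeta_{2m}^j\cdot \RR_{\geq 0},
\]
a union of $2m$ rays emanating from the origin.

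Next I would analyze the $G$-action on rays. Since $\zeta_m = \zeta_{2m}^2$, multiplication by $\zeta_m^i$ sends the ray $\zeta_{2m}^j \RR_{\geq 0}$ to $\zeta_{2m}^{j+2i} \RR_{\geq 0}$. Consequently the $G$-orbits of rays are indexed by the parity of $j$, giving exactly two orbits of rays; moreover the stabilizer in $G$ of any ray is trivial since $2i \equiv 0 \pmod{2m}$ forces $i \equiv 0 \pmod m$. As canonical representatives I choose the rays with $j = 0$ and $j = k$, namely $\RR_{\geq 0}$ and $\zeta_{2m}^k \RR_{\geq 0}$. From $m = 2^a k$ I get
\[
\zeta_{2m}^k = e^{2\pi i k/(2m)} = e^{\pi i/2^a} = e^{2\pi i/2^{a+1}} = \zeta_{2^{a+1}},
\]
so the two orbit representative rays are exactly $\zeta_{2^{a+1}}^{\epsilon}\cdot \RR_{\geq 0}$ for $\epsilon \in \{0,1\}$.

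Finally, for uniqueness, any $t \in T$ lies on exactly one ray $\zeta_{2m}^j \RR_{\geq 0}$ when $t \neq 0$; its $G$-orbit meets the chosen ray $\zeta_{2^{a+1}}^\epsilon \RR_{\geq 0}$ (with $\epsilon$ the parity of $j$) in a single point $\zeta_{2^{a+1}}^\epsilon\cdot r$ with $r > 0$, because the stabilizer of the ray in $G$ is trivial and the action on the ray is multiplication by a positive real (namely by $1$). The element $0 \in T$ forms its own orbit, represented by $\zeta_{2^{a+1}}^0\cdot 0$. The main (and only) obstacle I anticipate is simply the bookkeeping of identifying $\zeta_{2m}^k$ with $\zeta_{2^{a+1}}$; once this is done, the result follows.
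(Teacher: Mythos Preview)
Your proof is correct and follows the same overall strategy as the paper: describe $T$ via $2m$-th roots of unity times (nonnegative) reals, then pass to the quotient by $G = \langle \zeta_m\rangle = \langle \zeta_{2m}^2\rangle$. Your direct identification $\zeta_{2m}^k = e^{2\pi i k/(2^{a+1}k)} = \zeta_{2^{a+1}}$ is in fact cleaner than the paper's route, which instead establishes the identity $\zeta_{2^{a+1}}\cdot\zeta_m = \zeta_{2m}^{k+2}$ and then inverts modulo $2m$ to write $\zeta_{2m} = \zeta_{2^{a+1}}^{b}\zeta_m^{b}$; both arguments reach the same conclusion that, modulo $\langle\zeta_m\rangle$, only the parity class in $\langle\zeta_{2^{a+1}}\rangle/\langle\zeta_{2^a}\rangle\cong\ZZ/2$ survives. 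One trivial remark: uniqueness fails at $t=0$ (both $\epsilon=0$ and $\epsilon=1$ give $0$), but this is a harmless degeneracy in the lemma's statement and does not affect its use.
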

\begin{proof}
Therefore, we have $a \geq 1$. 
Next, observe that $t = r \zeta_{2m}^j$ for some $j \in \ZZ$ and $r \in \RR$ if and only if $t^m \in \RR$. 
One easily shows that since $\gcd(2,k) = 1$, we have $\zeta_{2^{a+1}} \cdot \zeta_{2^ak} = (\zeta_{2^{a+1}k})^{k+2}$. Raising both sides to the power $b = (k+2)^{-1} \in (\ZZ/m)^\ast$ 
gives $\zeta_{2m} = \zeta_{2^{a+1}}^b \cdot \zeta_m^b$. 
Consequently, $t^m \in \RR$ 
if and only if $t = r \cdot  \zeta_{2^{a+1}}^{bj} \cdot  \zeta_{m}^{bj}$ for some $r \in \RR$. 
Finally, $\zeta_{2^{a+1}}^u \cdot \zeta_{2^{a}}^v = \zeta_{2^{a+1}}^{u + 2v}$ hence $\langle\zeta_{2^{a+1}}\rangle / \langle \zeta_{2^{a}} \rangle \cong \ZZ/2$.    
\end{proof}
\noindent
We obtain the key to Theorem \ref{glueingtheorem1}. 

\begin{proposition} \label{localmodel}
Keep the above notations, and consider the set $Y_f \subset Y$ (see (\ref{Kf})). \begin{enumerate}
    \item\label{caseone} If $f$ has no nodes, then $G(x) = B_f$ is trivial, and $Y_f = \RR H^n_\alpha \cong {\bb B}^n(\RR)$. 
    \item\label{casetwo} If $f$ has only real nodes, then $B_f \setminus Y_f$ is isometric to ${\bb B}^n(\RR)$. 
    \item\label{casethree} If $f$ has $a$ pairs of complex conjugate nodes ($k = 2a$), and no other nodes, then $B_f\setminus Y_f = Y_f$ is the union of $m^a$ copies of ${\bb B}^n(\RR)$, any two of which meet along a ${\bb B}^{2c}(\RR)$ for some integer $c$ with $0 \leq c \leq a$.
    \item\label{casefour} If $f$ has $2a$ complex conjugate nodes and $b$ real nodes, then there is an isometry between $B_f \setminus Y_f$ and the union of $m^a$ copies of ${\bb B}^n(\RR)$ identified along common ${\bb B}^{2c}(\RR)'s$, that is, the set $Y_f$ of case \ref{casethree} above.
    \item\label{casefive} In each case, $A_f$ acts transitively on the indicated copies of ${\bb B}^n(\RR)$. If ${\bb B}^n(\RR)$ is any one of them, and $\Gamma_f = (A_f/B_f)_{{\bb B}^n(\RR)}$ its stabilizer, then the natural map
    \begin{equation*}
        \Gamma_f \setminus {\bb B}^n(\RR) \to \left(A_f/B_f\right) \setminus \left(B_f \setminus Y_f \right) = A_f \setminus Y_f
    \end{equation*}
is an isometry of path metrics. 
\end{enumerate}
\end{proposition}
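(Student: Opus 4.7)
My plan is to reduce everything to the explicit local model supplied by Lemma \ref{lemma:localcoordinates}: fix coordinates $(t_1, \dots, t_n)$ identifying $\CC H^n$ with $\BB^n(\CC)$ so that $x$ sits at the origin, each $\phi_{r_i}$ acts by multiplication by $\zeta$ on the $i$-th coordinate and trivially on the others, and $\alpha$ takes the standard form \eqref{eq:alpha}. Combined with Proposition \ref{preliminaryproposition}, this identifies the $\beta \in P\mr A$ satisfying $x_\beta \sim x_\alpha$ with tuples in $(\ZZ/m)^{a+b}$, each $\RR H^n_\beta$ being an explicit real slice of $\BB^n(\CC)$ cut out by the corresponding involution. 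Working in a neighborhood of $x$ small enough that only the hyperplanes of $\ca H(x)$ appear, the analysis becomes purely local. Cases \ref{caseone} and \ref{casethree} then follow quickly: in case \ref{caseone} there are no nodes, so $G(x) = B_f = \{1\}$ and only $\beta = \alpha$ contributes, giving $Y_f = \RR H^n_\alpha \cong \BB^n(\RR)$ by Lemma \ref{hyperbolic}; in case \ref{casethree} we have $b = 0$ and $B_f = \{1\}$, producing $m^a$ distinct slices, and a direct coordinate computation shows that the slices indexed by $(j_i)$ and $(j_i')$ intersect along $\{t_{2i-1} = t_{2i} = 0 \text{ whenever } j_i \neq j_i'\}$, a totally geodesic real hyperbolic subspace whose codimension equals twice the number of disagreeing indices.

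The substance of the proposition lies in cases \ref{casetwo} and \ref{casefour}, where the metric quotient $B_f \setminus Y_f$ must be identified isometrically with $\BB^n(\RR)$ (possibly decorated with the $m^a$-fold gluing of case \ref{casefour}). In the local model, the real-node coordinates $t_{2a+1}, \dots, t_k$ in each $\RR H^n_\beta$ range over lines through the origin at angles $\pi j_i/m$, so their union across all $\beta$'s places these coordinates in $T = \{t \in \CC : t^m \in \RR\}$, a union of $2m$ rays. The group $B_f \cong (\ZZ/m)^b$ acts coordinatewise as $G = \langle \zeta \rangle$, and Lemma \ref{lemma:T/G} identifies $T/G$ bijectively with $\RR$ via the representatives $\zeta_{2^{a+1}}^\epsilon \cdot r$; since $G$ acts by Bergman isometries on $\BB^1(\CC)$ and permutes the rays of $T$, each ray projects isometrically onto a half-line of the target. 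Applied coordinatewise, together with the free coordinates ($t_i \in \RR$ for $i > k$) and the complex-pair coordinates from case \ref{casethree}, this produces the claimed $\BB^n(\RR)$ in case \ref{casetwo} and the $m^a$-fold gluing in case \ref{casefour}. The main obstacle is precisely this metric identification: while the set-theoretic bijection is combinatorial, verifying that the path metric pulled back from $\CC H^n$ agrees with the standard real hyperbolic metric on $\BB^n(\RR)$ requires checking that the Bergman metric restricts to the real hyperbolic metric on each $\RR H^n_\beta$ (via Lemma \ref{hyperbolic}), that the quotient $T \to T/G$ is an isometry on each ray, and that the orthogonality imposed by Condition \ref{orthogonal} makes the various slices glue isometrically along their totally geodesic intersections.

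For case \ref{casefive}, I would first verify that $G(x) \subset A_f$: for $g = \phi_r^i \in G(x)$, Lemma \ref{anti-involution} gives $g \alpha g^{-1} = \phi_r^i \phi_{\alpha(r)}^i \alpha$, and since $\alpha$ permutes $\ca H(x)$ by Lemma \ref{alphaswitch} with $\phi_{\alpha(r_j)} = \phi_{r_{\alpha(j)}}$ (as established in the proof of Proposition \ref{preliminaryproposition}), the factor $\phi_r^i \phi_{\alpha(r)}^i$ lies in $G(x)$, whence $g \cdot f = f$. Next, for $\ell$ odd with $\ell \leq 2a$, Lemma \ref{anti-involution} combined with the commutativity of the $\phi_{r_i}$'s (Condition \ref{orthogonal}) yields $\phi_{r_\ell}^j \alpha \phi_{r_\ell}^{-j} = (\phi_{r_\ell} \phi_{r_{\ell+1}})^j \alpha$, so conjugation by $\phi_{r_\ell}^j$ shifts the $((\ell{+}1)/2)$-th coordinate of the index $(\ZZ/m)^a$ by $j$. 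Since these shifts generate the whole group $(\ZZ/m)^a$, the subgroup $G(x) \subset A_f$ acts transitively on the $m^a$ copies of $\BB^n(\RR)$ in $B_f \setminus Y_f$. Fixing one copy and letting $\Gamma_f = (A_f/B_f)_{\BB^n(\RR)}$ denote its stabilizer, orbit-stabilizer together with the isometric identifications of cases \ref{caseone}--\ref{casefour} produce the final isometry $\Gamma_f \setminus \BB^n(\RR) \cong A_f \setminus Y_f$.
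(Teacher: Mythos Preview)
Your approach follows the paper's closely: both rely on the explicit local model of Lemma~\ref{lemma:localcoordinates}, both use Lemma~\ref{lemma:T/G} to handle the real-node coordinates, and both establish transitivity in case~\ref{casefive} via the action of $G(x) \subset A_f$ on the index set $(\ZZ/m)^a$. Your treatment of cases~\ref{caseone}--\ref{casefour} is essentially a condensed version of the paper's (the paper writes out the explicit fundamental domain $U = \bigcup_\epsilon K_{f,\epsilon}$ and the map $U \to \BB^n(\RR)$, $t_i \mapsto (-\zeta_{2^{a+1}})^{-\epsilon_i} t_i$, while you describe this more abstractly via $T/G \cong \RR$).

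There is, however, a genuine gap in your case~\ref{casefive}. You conclude with ``orbit-stabilizer together with the isometric identifications of cases \ref{caseone}--\ref{casefour} produce the final isometry $\Gamma_f \setminus \BB^n(\RR) \cong A_f \setminus Y_f$'', but orbit-stabilizer only tells you that $(A_f/B_f)/\Gamma_f$ is in bijection with the set of copies; it does \emph{not} by itself give injectivity of $\Gamma_f \setminus \BB^n(\RR) \to (A_f/B_f) \setminus (B_f \setminus Y_f)$. The problem is that the $m^a$ copies of $\BB^n(\RR)$ overlap along lower-dimensional subspaces, so an element $g \in A_f/B_f$ with $g \notin \Gamma_f$ may still carry a point $y$ of your chosen copy to another point $y'$ of the same copy (via a different copy that also contains both). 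You must then produce an $h \in \Gamma_f$ with $h \cdot y = y'$. The paper fills this gap with a short elementary lemma: if for every point $y$ the stabilizer of $y$ in $A_f/B_f$ acts transitively on the copies of $\BB^n(\RR)$ containing $y$, then injectivity follows. This hypothesis is then verified in the local model by observing that $G(x)$ contains transformations multiplying individual complex-pair coordinates by powers of $\zeta$, which fix any $y$ with those coordinates vanishing while permuting the copies through $y$.
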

\begin{proof}
1. This is clear. 

2. Suppose then that $f$ has $k$ real nodes. Then in the local coordinates $t_i$ of Lemma \ref{lemma:localcoordinates}.\ref{zwei}, we have that $\alpha: {\bb B}^n(\CC) \to {\bb B}^n(\CC)$ is defined by $\alpha(t_i) = \bar t_i$. Part \ref{ein} of the same lemma shows that any $\beta \in P\mr A$ fixing $x$ such that $x_\alpha \sim x_\beta$ is of the form 
\begin{equation*}
{\bb B}^n(\CC) \to {\bb B}^n(\CC), \white (t_1, \dotsc,t_i, \dotsc, t_n) \mapsto (\bar t_1 \zeta^{j_1}, \dotsc, \bar t_k \zeta^{j_k}, \bar t_{k+1}, \dotsc, \bar t_n). 
\end{equation*}
Since $f$ has $k$ real nodes and no complex conjugate nodes, we have (writing $j = (j_1, \dotsc, j_k)$ and $\alpha_j = \prod_{i = 1}^k \phi_{r_i}^{j_i} \circ \alpha$): 
\begin{equation*}
Y_f \cong  \bigcup_{j_1, \dotsc, j_k = 1}^m \RR H^n_{\alpha_j} \cong \left\{ (t_1, \dotsc, t_n) \in {\bb B}^n(\CC): t_1^m, \dotsc, t_k^m, t_{k+1}, \dotsc, t_n \in \RR \right\}. 
\end{equation*}
Each of the $2^k$ subsets 
\begin{align*}
    K_{f, \epsilon_1, \dotsc, \epsilon_k} \coloneqq 
    \left\{ (t_1, \dotsc, t_n) \in {\bb B}^n(\CC): \zeta_{2^{a+1}}^{-\epsilon_1}t_1, \dotsc, \zeta_{2^{a+1}}^{-\epsilon_k}t_k \in \RR_{\geq 0} \textnormal{ and } t_{k+1}, \dotsc, t_n \in \RR    \right\},
\end{align*}
indexed by $\epsilon_1, \dotsc, \epsilon_k \in \{0,1\}$, is isometric to the closed region in ${\bb B}^n(\RR)$ bounded by $k$ mutually orthogonal hyperplanes. By Lemma \ref{lemma:T/G}, their union $U$ is a fundamental domain for $B_f$, in the sense that it maps homeomorphically and piecewise-isometrically onto $B_f \setminus Y_f$. Under its path metric, $U = \cup K_{f, \epsilon_1, \dotsc, \epsilon_k} $ is isometric to ${\bb B}^n(\RR)$ by the following map: 
\begin{equation*}
    U \to {\bb B}^n(\RR), \white (t_1, \dotsc, t_k) \mapsto \left((-\zeta_{2^{a+1}})^{-\epsilon_1}t_1, \dotsc, (-\zeta_{2^{a+1}})^{-\epsilon_k}t_k, t_{k+1}, \dotsc, t_n\right). 
\end{equation*}
This identifies $B_f \setminus Y_f$ with the standard ${\bb B}^n(\RR) \subset {\bb B}^n(\CC)$. 

3. Now suppose $f$ has $k = 2a$ nodes $H_{r_1}, \dotsc, H_{r_{2a}}$. There are now $m^{a}$ anti-isometric involutions $\alpha_{j_i}$ fixing $x$ and such that $x_{\alpha_{j_i}} \sim x_\alpha$: they are given in the coordinates $t_i$ as follows, taking $j = (j_1, \dotsc, j_a) \in (\ZZ/m)^a$: 
\begin{equation*}
    \alpha_j: (t_1, \dotsc, t_n) \mapsto (\bar t_2 \zeta^{j_1}, \bar t_1 \zeta^{j_1}, \dotsc, \bar t_{2a} \zeta^{j_{a}}, \bar t_{2a-1} \zeta^{j_a}, \bar t_{2a+1}, \dotsc, \bar t_n).
\end{equation*}
So any fixed-point set $\RR H^n_{\alpha_j}$ is identified with  
\begin{align*}
    {\bb B}^n(\RR)_{\alpha_j} &\coloneqq \\
    &\left\{ (t_1, \dotsc, t_n) \in {\bb B}^n(\CC): t_{i} = \bar t_{i-1} \zeta^{j_i} \textnormal{ for $1 \leq i \leq 2a $ even, } t_i \in \RR \textnormal{ for $i > 2a$} \right\}. 
\end{align*}
All these $m^a$ copies of ${\bb B}^n(\RR)$ meet at the origin of ${\bb B}^n(\CC)$; in fact, for $j \neq j'$, the space ${\bb B}^n(\RR)_{\alpha_{j}}$ meets the space ${\bb B}^n(\RR)_{\alpha_{j'}}$ in a ${\bb B}^{2c}(\RR)$ if $c$ is the number of pairs $(j_i, j'_i)$ 
with $j_i = j'_i$.

4. Now we treat the general case. In the local coordinates $t_i$, any anti-unitary involutions fixing $x$ and equivalent to $\alpha$ is of the form 
\begin{align*}
    \alpha_j: &(t_1, \dotsc, t_n) \mapsto \\
    &(\bar t_2 \zeta^{j_1}, \bar t_1 \zeta^{j_1}, \dotsc, \bar t_{2a} \zeta^{j_{a}}, \bar t_{2a-1} \zeta^{j_a}, \bar t_{2a+1}\zeta^{j_{2a+1}}, \dotsc, \bar t_k \zeta^{j_{k}}, \bar t_{k+1}, \dotsc, \bar t_n)
\end{align*}
for some $j = (j_1, \dotsc, j_a, j_{2a+1}, \dotsc, j_{k}) \in (\ZZ/m)^{a+b}$. We now have $B_f \cong (\ZZ/m)^b$ acting by multiplying the $t_i$ for $2a+1 \leq i \leq k$ by powers of $\zeta$, and there are $m^{a+b}$ anti-unitary involutions $\alpha_j$. We have 
\begin{align*}
&Y_f \cong \bigcup_{j_1, \dotsc, j_k = 1}^m \RR H^n_{\alpha_j} \cong \\ 
&\left\{ (t_1, \dotsc, t_n) \in {\bb B}^n(\CC) \mid t_2^{m} = \bar t_1^{m}, \dotsc, t_{2a}^{m} = \bar t_{2a-1}^{m}, t_{2a+1}^{m}, \dotsc, t_k^{m}, t_{k+1}, \dotsc, t_n \in \RR \right\}.
\end{align*}
We look at subsets $K_{f, \epsilon_1, \dotsc, \epsilon_k} \subset Y_f$ again, this time defined as 
\begin{align*}
    K_{f, \epsilon} =& K_{f, \epsilon_1, \dotsc, \epsilon_k} \\
    =&\left\{ (t_1, \dotsc, t_n) \in {\bb B}^n(\CC)\mid \right. \\
    &\left.t_i^m = \bar t_{i-1}^m \textnormal{ $i \leq 2a$ even, }
    \zeta_{2^{a+1}}^{-\epsilon_i}t_i \in \RR_{\geq 0} \textnormal{ $2a<i\leq k$, }
     t_i \in \RR, i > k    \right\}.
\end{align*}
As before, we have that the natural map
$
U:= \bigcup_{\epsilon} K_{f, \epsilon} \to B_f \setminus Y_f
$ is an isometry. Define  
\begin{equation*}
\widetilde Y_f = 
\left\{ (t_1, \dotsc, t_n) \in {\bb B}^n(\CC):
    t_i^m = \bar t_{i-1}^m \textnormal{ for $i \leq 2a$ even, }
     t_i \in \RR, \textnormal{ for } i > 2a    \right\}.
\end{equation*}
\noindent
Under its path metric, $U = \cup_\epsilon K_{f, \epsilon_1, \dotsc, \epsilon_k}$ is isometric to $\widetilde Y_f$ 
by the following map: 
\begin{align*}
    U \to \widetilde Y_f, \white 
&(t_1, \dotsc, t_k) \mapsto \\ 
&\left(t_1, \dotsc, t_{2a}, (-\zeta_{2^{a+1}})^{-\epsilon_1}t_{2a+1}, \dotsc, (-\zeta_{2^{a+1}})^{-\epsilon_k}t_k, t_{k+1}, \dotsc, t_n\right). 
\end{align*}
Hence $B_f \setminus Y_f \cong \widetilde Y_f$; but since $\widetilde Y_f$ is what $Y_f$ was in case \ref{casethree}, we are done. 

5. The transitivity of $A_f$ on the copies of ${\bb B}^n(\RR)$ follows from the fact that $G(x)\subset A_f$ contains transformations multiplying $t_1, \dotsc, t_{2a}$ by powers of $\zeta$, hence $t_i \mapsto \zeta^ut_i, t_{i-1} \mapsto t_{i-1}$ maps those $t_{i-1}, t_i$ with $t_i = \bar t_{i-1}\zeta^{j_i}$ to those $t_{i-1}, t_i$ with $t_i = \bar t_{i-1}\zeta^{j_1+u}$. So if $B$ is any one of the copies of ${\bb B}^n(\RR)$, and $G = (A_f/B_f)_H$ is its stabilizer, then it remains to prove that $G \setminus B \to A_f \setminus Y_f$ is an isometry. Surjectivity follows from the transitivity of $A_f$ on the ${\bb B}^n(\RR)^{'}s$. It is a piecewise isometry so we only need to prove injectivity. This will follow from an elementary lemma. 
\begin{lemma}
Let $X$ be a set on which a group $G$ acts, let $Y$ and $I$ be sets, and let $\set{\phi_i \colon Y \hookrightarrow X}_{i \in I}$ be a set of embeddings. Write $Y_i = \phi_i(Y)$ and suppose that $X = \cup_i Y_i$. Fix $0 \in I$. Let $H \subset G$ be the stabilizer of $Y_0$. 
Suppose that for all $y \in X$, the stabilizer of $y$ in $G$ acts transitively on the sets $Y_i$ containing $y$. 
Then $H\setminus Y_0 \to G \setminus X$ is injective.
\end{lemma} 
\begin{proof}
Let $x,y \in Y_0$ and $g \in G$ such that $g \cdot x = y$. Then $y = gx \in gY_0$. Since also $y \in Y_0$, there is an element $h \in \text{Stab}_G(y)$ such that $hgY_0 = Y_0$ and $hg(x) = h(y) = y$. Let $f = hg$; then $f \in H$ and $f \cdot x = y$, which proves what we want.
\end{proof} 
\noindent
Now let us use the lemma: suppose that $y \in B_f \setminus Y_f$. We need to prove that $\text{Stab}_{A_f/B_f}(y)$ acts transtivitely on the copies of ${\bb B}^n(\RR)$ containing $y$. There exists
\begin{equation*}
    j = (j_1, \dotsc, j_a, j_{2a+1}, \dotsc, j_k) \in (\ZZ/m)^{a+b}
\end{equation*} such that $y = (t_1, \dotsc, t_n)$ with $t_{i} = \bar t_{i-1} \zeta^{j_i}$ for $i \leq 2a$ even, $t_{i} = \bar t_{i-1} \zeta^{j_i}$ for $2a < i \leq k$, and $t_i \in \RR$ for $i > k$. If all $t_i$ are non-zero, then $y \in \cup_{j'} \RR H^n_{\alpha_{j'}}$ is only contained in $\RR H^n_{\alpha_j}$, so there is nothing to prove. Let us suppose that $t_1 = t_2 = 0$ and the other $t_i$ are non-zero. Then $y$ is contained in all the $\RR H^n_{\alpha_{j'}}$ with $j_i' = j_i$ for $i \geq 2$; there are $m$ of them. The stabilizer of $y$ multiplies $t_1$ and $t_2$ by powers of $\zeta$ and leaves the other $t_i$ invariant; it acts transitively on the $\RR H^n_{\alpha_{j'}}$ containing $y$ for if $t_2 = \bar t_1 \zeta^{j_1'}$ then $\zeta^{(j_1''-j_1')}t_2 = \bar t_1 \zeta^{j_1''}$. The general case is similar. 
\end{proof}
\noindent
As indicated above, we can now prove Theorem \ref{glueingtheorem1}.
\begin{proof}[Proof of Theorem \ref{glueingtheorem1}]
1. The path metric on $P\Gamma \setminus Y$ is given by Proposition \ref{prop:pathmetricquotient}. Note that the map $\mr P: Y \to \CC H^n$ is a local embedding by Lemma \ref{localisometry}, which was used to define the metric on $Y$ (Proposition \ref{metric}). Thus, almost by definition, $\mr P$ is a local isometry. For each $f \in Y$ we can find a $P\Gamma_f$-invariant open neighborhood $U_f \subset Y_f \subset Y$ such that $P\Gamma_f \setminus U_f \subset P\Gamma \setminus Y$, with $U_f$ mapping bijectively onto an open subset $V_f$ in the closed subset $\mr P(Y_f) = \cup_i\RR H^n_{\alpha_i} \subset \CC H^n$. By $P\Gamma$-equivariance of $\mr P$, the set $V_f$ is $P\Gamma_f$-invariant, and we have $P\Gamma_f \setminus V_f \subset P\Gamma \setminus \CC H^n$. 
Thus $$\overline{\mr P}: P\Gamma \setminus Y \to P\Gamma \setminus \CC H^n$$ is also a local isometry. 

2. Note that the map $\mr P: Y \to \CC H^n$ is proper because any compact set in $\CC H^n$ meets only finitely many ${\RR H^n_\alpha}^{'}s$, $\alpha \in P\mr A$ (Lemma \ref{lemma:compactfinite}), and $\mr P$ carries each $H_\alpha = p\left(\RR H^n_\alpha \right)$ homeomorphically onto $\RR H^n_\alpha$. Since $P\Gamma \setminus \CC H^n$ is complete, the space $P\Gamma \setminus Y$ is complete as well. 

Finally, let $[f] \in P\Gamma \setminus Y$ be the image of $f \in Y$. Then $[f]$ has an open neighborhood isometric to the quotient of an open set $W$ in $\RR H^n$ by a finite group of isometries $\Gamma_f$. Indeed, take $Y_f \subset Y$ as in Equation (\ref{Kf}), and $f \in U_f \subset Y_f$ as in Lemma \ref{localisometry}.\ref{deux}. We let $A_f = P\Gamma_f$ be the stabilizer of $f$ in $P\Gamma$ as before, and take an $A_f$-equivariant open neighborhood $V_f \subset U_f$ such that $A_f \setminus V_f \subset P\Gamma \setminus Y$. By Proposition \ref{localmodel}.\ref{casefive}, we know that $A_f \setminus Y_f$ is isometric to $\Gamma_f \setminus \RR H^n$ for some finite group of isometries of $\RR H^n$. This implies that $A_f \setminus V_f$ is isometric to some open set $W'$ in $\Gamma_f \setminus \RR H^n$. Take $W \subset \RR H^n$ to be the preimage of $W'$. 

\emph{Claim:} For any path metric space $X$ locally isometric to quotients of $\RR H^n$ by finite groups of isometries, there is a unique real-hyperbolic orbifold structure on $X$ whose path metric is the given one. 

\emph{Proof of the Claim:} If $U$ and $U'$ are connected open subsets of $\RR H^n$ and $\Gamma$ and $\Gamma'$ finite groups of isometries of $\RR H^n$ preserving $U$ and $U'$ respectively, then any isometry $\bar \phi: \Gamma\setminus U \to \Gamma' \setminus U'$ extends to an isometry $\phi: \RR H^n \to \RR H^n$ such that $\phi(U) = U'$ and $\phi \Gamma{\phi}^{-1} = \Gamma' \subset \textnormal{Isom}(\RR H^n)$. 

We conclude that $P\Gamma \setminus Y$ is naturally a real hyperbolic orbifold. 

3. Let us show that 
\[
O \coloneqq \coprod_{\alpha \in C\mr A}  [P\Gamma_\alpha \setminus \left(\RR H^n_\alpha - \mr H \right)]  \subset P\Gamma \setminus Y
\]
as hyperbolic orbifolds. It suffices to show the following

\emph{Claim:} For those $f = p(x_\alpha) \in Y$ that have no nodes, the stabilizer $A_f = P\Gamma_f \subset P\Gamma$ of $f \in Y$ and the stabilizer $P\Gamma_{\alpha,x} \subset P\Gamma_{\alpha}$ of $x$ in $\RR H^n_\alpha$ agree as subgroups of $P\Gamma$. 

\emph{Proof of the Claim:} To prove that $A_f = P\Gamma_{\alpha,x}$, we first observe that $p: \widetilde Y \to Y$ induces an isomorphism between $P\Gamma_{x_\alpha}$, the stabilizer of $x_\alpha\in \widetilde Y$ and $P\Gamma_f$, the stabilizer of $f = [x, \alpha] \in Y$. So it suffices to show that $P\Gamma_{x_\alpha} = P\Gamma_{\alpha,x}$. For this we use that the normalizer $N_{P\Gamma}(\alpha)$ and the stabilizer $P\Gamma_\alpha \subset P\Gamma$ of $\alpha$ in $P\Gamma$ are equal, which implies that $    P\Gamma_{\alpha,x} = P\Gamma_{x_\alpha}$ because
\begin{align*}
\left\{ g \in P\Gamma_\alpha : gx = x \right\}  &= \left\{ g \in N_{P\Gamma}(\alpha) : gx = x \right\} \\
&=       \left\{ g \in P\Gamma: g\cdot x_\alpha= (g(x), g\alpha g^{-1}) = x_\alpha\right\} . 
    \end{align*}
So the claim is proved. Part \ref{oopensuborbifold} of the theorem can be deduced from it as follows. Let $f = p(x_\alpha) \in Y$ have no nodes. We have $Y_f = \RR H^n_\alpha$, hence 
\[
A_f \setminus \RR H^n_\alpha= A_f \setminus Y_f = \Gamma_f \setminus \RR H^n \quad \tn{with} \quad \Gamma_f = A_f \setminus B_f = A_f.
\] 
By construction, an orbifold chart of the glued space $P\Gamma \setminus Y$ is given by $$W \to A_f \setminus W \subset P\Gamma_\alpha \setminus \RR H^n_\alpha \subset Y$$ for an invariant open subset $W$ of $\RR H^n_\alpha$ containing $x$. Because $A_f = P\Gamma_{\alpha,x}$ by the claim, this is also an orbifold chart for $O$ at the point $x_\alpha$. 

4. The real-hyperbolic orbifold $P\Gamma \setminus Y$ is complete by part \ref{completepart}, so the uniformization of the connected components of $P\Gamma \setminus Y$ follows from the Ehresmann--Thurston uniformization theorem for $(G,X)$-orbifolds, see \cite[Proposition 13.3.2]{Thurston80}. This concludes the proof of Theorem \ref{glueingtheorem1}, and thereby also of Theorem \ref{th:theorem03}. 
\end{proof}

\section{Unitary Shimura varieties} \label{unitaryshimura}

The goal of this section is to prove Proposition \ref{prop:canonicalbijection}, which describes (in case the signature of $h$ is hyperbolic for one place of $K$ and definite for all others) our ball quotient $P\Gamma \setminus \CC H^n$ in terms of moduli of abelian varieties with $\OO_K$-action of hyperbolic signature, and Proposition \ref{prop:HcorrespondsNonSimple}, which interprets the divisor $P\Gamma \setminus \mr H$ as the locus of abelian varieties $A$ that admit a homomorphism $\CC^g/\Psi(\OO_K) \to A$. 
\noindent
This has two applications: 
\begin{enumerate}
\item Consider a relative uniform cyclic cover (see e.g. \cite{arsievistoli2004}) \[\mr X \to P \to S,\] 
where $P =  \PP^1_S$ (resp. $\PP^3_S$), the fibers of $\mr X \to S$ are curves (resp. threefolds with $\rm H^{0,3} = 0$) and the induced hermitian form on middle cohomology satisfies the above signature condition. 
Since the image $\mf I \subset P\Gamma \setminus \CC H^n$ of the period map $S(\CC) \to P\Gamma \setminus \CC H^n$ is contained in the locus of abelian varieties whose theta divisor is irreducible, 
one has $\mf I \subset P\Gamma \setminus \left( \CC H^n - \mr H \right)$. 
\item  
If the different ideal $\mf D_K \subset \OO_K$ is generated by some $\eta \in \OO_K - \OO_F$ with $\eta^2 \in \OO_F$, then the hyperplanes in the arrangement $\mr H \subset \CC H^n$ are orthogonal along their intersection (see Theorem \ref{th:conditionsimplyhypothesis}). 
\end{enumerate}

\subsection{Alternating and hermitian forms on the lattice}


The goal of this subsection is to prove two lemmas. They will later be used to show that $P\Gamma \setminus \CC H^n$ is a moduli space of abelian varieties, 
and to give a modular interpretation of the divisor $P\Gamma \setminus \mr H \subset P\Gamma \setminus \CC H^n$. 
\\
\\
We continue with the notation of Section \ref{set-up}. In particular, $\Lambda$ is a free $\OO_K$-module of rank $n+1$. 

\begin{lemma} \label{lemma:equivalentforms}
The assignment $T \mapsto \textnormal{Tr}_{K/\QQ} \circ T$ defines a bijection between:
\begin{enumerate}
    \item \label{item:hermitian} The set of skew-hermitian forms $T: \Lambda_{\QQ} \times \Lambda_{\QQ} \to K$. 
    \item \label{item:alternating} The set of alternating forms $E: \Lambda_{\QQ} \times \Lambda_{\QQ} \to \QQ$ such that $E(a \cdot x,y) = E( x, a^\sigma \cdot y)$.
\end{enumerate}
Under this correspondence, $T(\Lambda, \Lambda) \subset \mf D_K^{-1}$ if and only if $E(\Lambda, \Lambda) \subset \ZZ$. 
\end{lemma}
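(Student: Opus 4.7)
The plan is to establish the bijection via the non-degeneracy of the trace pairing $K \times K \to \QQ$, $(a,b) \mapsto \textnormal{Tr}_{K/\QQ}(ab)$, and then read off the integrality statement directly from the definition of the different.

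First I would define the forward map $T \mapsto E := \textnormal{Tr}_{K/\QQ} \circ T$ and verify the compatibility conditions. Alternation of $E$ follows from the skew-hermitian identity $T(x,y) = -T(y,x)^\sigma$ combined with the fact that $\textnormal{Tr}_{K/\QQ} \circ \sigma = \textnormal{Tr}_{K/\QQ}$. The Galois-compatibility $E(a x, y) = E(x, a^\sigma y)$ reduces to $T(a x, y) = a \cdot T(x,y)$ together with $T(x, a^\sigma y) = (a^\sigma)^\sigma T(x,y) = a \cdot T(x,y)$, using $K$-linearity in the first slot and $\sigma$-linearity in the second.

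Next I would construct the inverse. Given an alternating form $E \colon \Lambda_\QQ \times \Lambda_\QQ \to \QQ$ satisfying $E(a x, y) = E(x, a^\sigma y)$, fix $x, y \in \Lambda_\QQ$ and consider the $\QQ$-linear functional $\phi_{x,y} \colon K \to \QQ$, $a \mapsto E(a x, y)$. Since the trace pairing is non-degenerate, there is a unique element $T(x,y) \in K$ with $E(a x, y) = \textnormal{Tr}_{K/\QQ}(a \cdot T(x,y))$ for all $a \in K$. Setting $a = 1$ shows that this reconstruction inverts the forward map. I would then check that $T$ is $K$-linear in $x$, $\sigma$-linear in $y$, and skew-hermitian, by testing each identity against arbitrary $a \in K$ inside the trace and invoking non-degeneracy. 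For instance, $T(y,x)^\sigma = -T(x,y)$ follows from \[\textnormal{Tr}(a \cdot T(y,x)^\sigma) = \textnormal{Tr}(a^\sigma T(y,x)) = E(a^\sigma y, x) = -E(x, a^\sigma y) = -E(a x, y) = -\textnormal{Tr}(a \cdot T(x,y))\] valid for every $a \in K$.

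For the integrality assertion, I would use that by definition $\mathfrak{D}_K^{-1} = \{ \lambda \in K \mid \textnormal{Tr}_{K/\QQ}(\lambda \cdot \OO_K) \subset \ZZ \}$. Suppose $T(\Lambda,\Lambda) \subset \mathfrak{D}_K^{-1}$; then for $x,y \in \Lambda$ we have $E(x,y) = \textnormal{Tr}_{K/\QQ}(T(x,y)) \in \ZZ$. Conversely, if $E(\Lambda,\Lambda) \subset \ZZ$, then for any $a \in \OO_K$ and $x,y \in \Lambda$ we have $ax \in \Lambda$, hence $\textnormal{Tr}_{K/\QQ}(a \cdot T(x,y)) = E(ax, y) \in \ZZ$; since $a$ ranges over $\OO_K$ this forces $T(x,y) \in \mathfrak{D}_K^{-1}$.

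The argument is essentially formal once the trace pairing is used; the main mild obstacle is simply keeping track of the sesquilinearity conventions (which slot is $K$-linear versus $\sigma$-linear, and how $\sigma$ interacts with the trace) while verifying the four bilinearity/skew-symmetry identities for $T$.
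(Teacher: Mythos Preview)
Your proof is correct and follows essentially the same route as the paper: both verify the forward map via $\textnormal{Tr}_{K/\QQ}\circ\sigma = \textnormal{Tr}_{K/\QQ}$, construct the inverse using non-degeneracy of the trace pairing (the paper does this through a choice of $\OO_K$-basis while you do it basis-free, but this is the same idea), and deduce the integrality statement directly from the definition of $\mathfrak{D}_K^{-1}$ as the trace dual of $\OO_K$.
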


\begin{proof}
Let \[T\colon  \Lambda_{\QQ} \times \Lambda_{\QQ} \to K\] be as in \ref{item:hermitian}. Define $E_T = \text{Tr}_{K/\QQ} \circ T$. 
Since $T$ is skew-hermitian, we have, for each $x, y \in \Lambda_\QQ$, that $$\text{Tr}_{K/\QQ}T(x,y) = - \text{Tr}_{K/\QQ}\overline{T(y,x)}.$$ Since $K/\QQ$ is separable, for any $x \in K$, we have \cite[(7-1)]{stevenhagen}:
$$
\text{Tr}_{K/\QQ}(x) = \sum_{1 \leq i \leq g} \left( \tau_i(x) + \tau_i\sigma(x) \right).
$$
Thus, we have $\text{Tr}_{K/\QQ}(\sigma(x)) = \text{Tr}_{K/\QQ}(x)$, so that $E_T(x,y) = - E_T(y,x)$ for any $x,y \in \Lambda_\QQ$. The property in \ref{item:alternating} is easily checked. 

Conversely, let $E: \Lambda_\QQ \times \Lambda_\QQ \to \QQ$ be as in \ref{item:alternating}. Choose a basis $\{b_1, \dotsc, b_{n+1} \} \subset \Lambda$ for $\Lambda$ over $\OO_K$. 
Define $Q$ to be the induced map $K^{n+1} \times K^{n+1} \to \QQ$ and consider the map
$K \to \QQ$, $a \mapsto Q(a \cdot e_i, e_j)$. Since the trace pairing 
\[
K \times K \to \QQ, \quad (x,y) \mapsto \text{Tr}_{K/\QQ}(xy)
\] 
is non-degenerate \cite[\href{https://stacks.math.columbia.edu/tag/0BIE}{Tag 0BIE}]{stacks-project}, 
there is a unique $t_{ij} \in K$ such that $Q(a \cdot e_i, e_j) = \text{Tr}_{K/\QQ}(a \cdot t_{ij})$ for every $a \in K$. This gives a matrix $(t_{ij})_{ij} \in M_{n+1}(K)$ such that $\sigma(t_{ij}) = - t_{ji}$, and the basis $\{b_i\}$ induces a skew-hermitian form $T_E: \Lambda_\QQ \times \Lambda_\QQ \to K$. 

The last claim from the definition of $\mf D_K^{-1} \subset K$ as the trace dual of $\OO_K$, see \cite[Chapter III, \S3]{localfields}. 
\end{proof}

\begin{examples} \label{examplesunitary}
\begin{enumerate}
    \item \label{ex:unitone} Suppose $K = \QQ(\sqrt \Delta)$ is imaginary quadratic with discriminant $\Delta$ and non-trivial Galois automorphism $a \mapsto a^\sigma$. Let $E: \Lambda \times \Lambda \to \ZZ$ be an alternating form with $E(a \cdot x,y) = E(x, a^\sigma \cdot y)$. The form $T: \Lambda \times \Lambda \to \mf D_{K}^{-1} = (\sqrt{\Delta})^{-1}$ is defined as 
    \begin{equation*}
        T(x,y) = \frac{E( \sqrt{\Delta}\cdot  x,y) + E(x,y)\sqrt{\Delta} }{2\sqrt \Delta}. 
    \end{equation*}
    \item \label{ex:unittwo} Let $K = \QQ(\zeta)$ where $\zeta = \zeta_p = e^{2 \pi i/p} \in \CC$ for some prime number $p > 2$. Let $E: \Lambda \times \Lambda \to \ZZ$ be an alternating form with $E(a \cdot x,y) = E(x, a^\sigma \cdot y)$. Then $\mf D_K = \left(p/(\zeta - \zeta^{-1})\right)$ and 
\begin{equation*}
T: \Lambda \times \Lambda \to \mf D_K^{-1}, \white T(x,y) = \frac{1}{p}\sum_{j = 0}^{p-1}\zeta^jE\left( x, \zeta^j \cdot  y \right).
\end{equation*}
\end{enumerate}
\end{examples}

\noindent
Now consider a corresponding pair 
\[
\left(E \colon \Lambda_\QQ \times \Lambda_\QQ \to \QQ, \quad T: \Lambda_{\QQ} \times \Lambda_{\QQ} \to K\right)
\]
as in Lemma \ref{lemma:equivalentforms}, and suppose that $E$ is non-degenerate. Let $\varphi: K \to \CC$ be an embedding. Define a skew-hermitian form $T^\varphi$ as
\begin{align*}
\begin{split}
& T^\varphi\coloneqq \Lambda \otimes_{\OO_K, \varphi} \CC \times \Lambda \otimes_{\OO_K, \varphi} \CC\to \CC, \\
 & T^\varphi( \sum_i x_i \otimes \lambda_i, \sum_j y_j \otimes \mu_j) = 
\sum_{ij} \lambda_i \overline{\mu_j} \cdot \varphi\left( T(x_i, y_j) \right). 
\end{split}
\end{align*}
On $\Lambda_\CC$, we also have the skew-hermitian form $A(x,y) = E_\CC(x,\bar y)$. The composition 
\[
\left(\Lambda \otimes_\ZZ \CC \right)_{\varphi} \to \Lambda \otimes_\ZZ \CC \to \Lambda \otimes_{\OO_K, \varphi} \CC
\]
is an isomorphism. Define $A^\varphi$ to be the restriction of $A$ to the subspace $\left(\Lambda \otimes_\ZZ \CC \right)_{\varphi} = \Lambda \otimes_{\OO_K, \varphi} \CC \subset \Lambda_\CC$. Note that 
\[
\Lambda \otimes_\ZZ \CC \cong \oplus_{\phi: K \to \CC}\left(\Lambda \otimes_\ZZ \CC \right)_{\phi}.\]
For $x \in \Lambda \otimes_\ZZ \CC$, let $x^\phi$ be the image of $x$ under $\Lambda \otimes_\ZZ \CC \to \left(\Lambda \otimes_\ZZ \CC \right)_{\phi}$. 

\begin{lemma} \label{lemma:agree}
Let $\varphi \colon K \to \CC$ be an embedding. We have an equality of skew-hermitian forms: 
\[
T^\varphi = A^\varphi \colon \left(\Lambda \otimes_\ZZ \CC \right)_{\varphi} \times \left(\Lambda \otimes_\ZZ \CC \right)_{\varphi} \to \CC.
\]
More precisely, we have $A(x,y) = \sum_{\phi: K \to \CC} T^\phi(x^\phi, y^\phi)$ for every $x,y \in \Lambda \otimes_\ZZ \CC$.
\end{lemma}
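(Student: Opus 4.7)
The plan is to establish the more precise formula $A(x,y) = \sum_{\phi} T^{\phi}(x^{\phi}, y^{\phi})$ first, and then deduce the equality $T^{\varphi} = A^{\varphi}$ by restriction to the summand $(\Lambda \otimes_\ZZ \CC)_\varphi$. The computation is a straightforward unfolding of definitions; the main thing to be careful about is the interaction between the two different meanings of complex conjugation at play (conjugation on the $\CC$-factor of $\Lambda \otimes_\ZZ \CC$ vs.\ the Galois involution $\sigma$ on $K$).

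First I would recall/set up the canonical decomposition. The $\QQ$-algebra isomorphism
\[
K \otimes_\QQ \CC \xrightarrow{\sim} \prod_{\phi \colon K \to \CC} \CC, \qquad k \otimes \lambda \mapsto (\phi(k)\lambda)_\phi,
\]
induces the decomposition $\Lambda \otimes_\ZZ \CC = \Lambda \otimes_{\OO_K} (K \otimes_\QQ \CC) = \bigoplus_\phi \Lambda \otimes_{\OO_K, \phi} \CC$. Under this decomposition, a pure tensor $x \otimes \lambda \in \Lambda \otimes_\ZZ \CC$ (with $x \in \Lambda$, $\lambda \in \CC$) has $\phi$-component $x \otimes_\phi \lambda \in \Lambda \otimes_{\OO_K, \phi} \CC$. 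Consequently, for $x = \sum_i x_i \otimes \lambda_i$ with $x_i \in \Lambda$ and $\lambda_i \in \CC$, we have $x^{\phi} = \sum_i x_i \otimes_\phi \lambda_i$.

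Next I would compute $A(x,y)$ directly. Writing $x = \sum_i x_i \otimes \lambda_i$ and $y = \sum_j y_j \otimes \mu_j$, complex conjugation on the $\CC$-factor gives $\bar y = \sum_j y_j \otimes \bar{\mu_j}$, so by $\CC$-bilinearity of $E_\CC$:
\[
A(x,y) = E_\CC(x, \bar y) = \sum_{i,j} \lambda_i \overline{\mu_j}\, E(x_i, y_j).
\]
Using the hypothesis $E = \Tr_{K/\QQ} \circ T$ and the formula $\Tr_{K/\QQ}(k) = \sum_{\phi \colon K \to \CC} \phi(k)$, I obtain
\[
A(x,y) = \sum_{i,j} \lambda_i \overline{\mu_j} \sum_{\phi} \phi(T(x_i, y_j)) = \sum_\phi \Bigl( \sum_{i,j} \lambda_i \overline{\mu_j}\, \phi(T(x_i, y_j)) \Bigr) = \sum_\phi T^\phi(x^\phi, y^\phi),
\]
where the last equality is just the definition of $T^\phi$ applied to $x^\phi = \sum_i x_i \otimes_\phi \lambda_i$ and $y^\phi = \sum_j y_j \otimes_\phi \mu_j$. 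This proves the more detailed statement.

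Finally, to deduce $T^\varphi = A^\varphi$, I restrict both sides to $(\Lambda \otimes_\ZZ \CC)_\varphi$. If $x, y$ lie in this summand, then $x^\phi = y^\phi = 0$ for all $\phi \neq \varphi$, and $x^\varphi = x$, $y^\varphi = y$; the formula just established then gives $A^\varphi(x,y) = A(x,y) = T^\varphi(x,y)$. The only obstacle worth flagging is bookkeeping: one should double-check that the $\CC$-bilinear extension of $E$ composed with conjugation-in-the-second-argument matches the claimed skew-hermitian form, and that the conjugation on the $\CC$-factor in $\Lambda \otimes_\ZZ \CC$ does \emph{not} introduce a twist by $\sigma$ under the identification $(\Lambda \otimes_\ZZ \CC)_\phi = \Lambda \otimes_{\OO_K, \phi} \CC$ — this is automatic from the definition of $T^\phi$, which already uses $\overline{\mu_j}$ in the second slot.
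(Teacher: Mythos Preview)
Your proof is correct and follows essentially the same approach as the paper: both use the decomposition $K \otimes_\QQ \CC \cong \prod_\phi \CC$ together with $E = \Tr_{K/\QQ} \circ T$ and the trace formula $\Tr_{K/\QQ}(k) = \sum_\phi \phi(k)$ to unfold $A(x,y) = E_\CC(x,\bar y)$ into $\sum_\phi T^\phi(x^\phi,y^\phi)$. The paper packages this same computation as a commutative diagram, while you write it out as a chain of equalities; the content is identical.
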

\begin{proof}
Write $V = \Lambda_\QQ$. 
The lemma follows from the fact that the following diagram commutes:
$$
\xymatrixcolsep{5pc}
\xymatrix{
V \times V\ar@{^{(}->}[d] \ar[d] \ar[r]^T& K\ar@{^{(}->}[d] \ar[r]^{\textnormal{Tr}_{K/\QQ}} & \QQ \ar@{^{(}->}[dd] \\
V \otimes_\QQ \CC \times V \otimes_\QQ \CC \ar[r]^{T_\CC} \ar[drr]^{A(x, y)} \ar@{=}[d] & K \otimes_\QQ \CC\ar@{=}[d]  & \\
\oplus_\phi \left(V \otimes_\QQ \CC\right)_\phi  \times \left(V \otimes_\QQ \CC\right)_\phi  \ar[r]^{\white \white\white\white\white  \oplus T^\phi} & \oplus_\phi \CC_\phi \ar[r]_{\sum} & \CC.
}
$$
Here, $\phi$ ranges over the set of embeddings $K \to \CC$, $\CC_\phi$ is the $K$-module $\CC$ where $K$ acts via $\phi$, and $$T_\CC: V \otimes_\QQ \CC \times V \otimes_\QQ \CC \to K \otimes_\QQ\CC$$ is the map that sends $(v \otimes \lambda, x \otimes \mu)$ to $\lambda \bar \mu T(v,w)$. 
\end{proof}

\subsection{Moduli of abelian varieties acted upon by the ring of integers of a CM field} \label{subsec:moduliabelianvarieties}

\begin{notation} \label{not:fixhermitian}
In the rest of Section \ref{unitaryshimura}, we fix: 
\begin{enumerate}
\item a non-degenerate hermitian form $\mf h: \Lambda \times \Lambda \to \mf D_K^{-1}$; and 
\item an element $\xi \in \mf D_K^{-1}$ such that $\sigma(\xi) = - \xi$ and $\Im \left(\tau_i(\xi) \right) < 0$ for $1 \leq i \leq g$ and write $\eta = \xi^{-1}$. Here, the embeddings $\tau_i \colon K \to \CC$ are those introduced in (\ref{setofembeddings}). 
\end{enumerate}
These data define a skew-hermitian form 
\[
T\colon \Lambda \times \Lambda \to \mf D_K^{-1}, \quad  T \coloneqq \xi \cdot \mf h.
\]
The form $T$ is in turn attached to a symplectic form (see Lemma \ref{lemma:equivalentforms})
\[
E: \Lambda \times \Lambda \to \ZZ \; \tn{ such that } \; E(a x, y) = E(x, a^\sigma y)\; \tn{ for all }\; a \in \OO_K, \; x,y \in \Lambda. 
\]
Write $V_i = \Lambda_\QQ \otimes_{K, \tau_i} \CC$ and define 
\[
\mf h^{\tau_i}: V_i\times V_i \to \CC
\]
to be the hermitian form restricting to $\tau_i \circ \mf h $ on $\Lambda$. Let $(r_i,s_i)$ be the signature of the hermitian form $\mf h^{\tau_i}$.
\end{notation}
 \noindent
Let $A$ be a complex abelian variety, $\iota$ a homomorphism $\ca O_K \to \text{End}(A)$, and $\lambda$ a polarization $A \to A^\vee$, satisfying the following (c.f. \cite[Part I, \S2.1]{kudlarap-special-II}):
\begin{conditions} \label{KRconditions}
\begin{enumerate} 
    \item We have $\iota(a)^\dagger = i(a^\sigma)$ for the Rosati involution $$\dagger \colon \End(A)_{\QQ} \to\End(A)_{\QQ}, \quad \quad \text{ and }$$
    \item 
$
\textnormal{char}(t, \iota(a) | \textnormal{Lie}(A)) =
\prod_{\nu = 1}^g (t-a^{\tau_i})^{r_i}
\cdot 
(t-a^{\tau_i\sigma})^{s_i} \; \in \;  \CC[t]
$ \\
\phantom{the}
\hfill (the characteristic polynomial of $\iota(a)$). 
\end{enumerate}
\end{conditions}
\noindent
Note that $\dim A = g(n+1)$. 
Define $E_A : \rm H_1(A, \ZZ) \times \rm H_1(A, \ZZ) \to \ZZ$ to be the alternating form corresponding to $\lambda$. The condition on the Rosati involution implies that $E_A(\iota(a)x, y) = E_A(x, \iota(a^\sigma)y)$ for $x,y \in \rm H_1(A, \QQ)$. Define a hermitian form $\mf h_A$ on the $\OO_K$-module $\rm H_1(A, \ZZ)$ as follows:
\[
\mf h_A  = \eta T_A \colon \; \rm H_1(A, \ZZ) \times \rm H_1(A, \ZZ) \to \mf D_K^{-1}. 
\]
Here, $T_A \colon \rm H_1(A, \ZZ) \times \rm H_1(A, \ZZ) \to \mf D_K^{-1}$ is the skew-hermitian form attached to $E_A$ via Lemma \ref{lemma:equivalentforms}. 
\begin{definition} \begin{enumerate}
\item 
Let $\widetilde{\textnormal{Sh}}_{K}(\mf h)$ be the set of isomorphism classes of four-tuples $(A, i, \lambda, j)$, where $(A,i, \lambda)$ is as above and satisfies Conditions \ref{KRconditions}, and where $j \colon \rm H_1(A, \ZZ) \to \Lambda$ is a symplectic isomorphism of $\OO_K$-modules. 
\item 
Let $\bb D(V_i)$ be the space of negative $s_i$-planes in the hermitian space $(V_{i}, \mf h^{\tau_i})$. 
\end{enumerate}
\end{definition}
\noindent
We have the following proposition which is due to Shimura, see \cite[Theorem 2]{Shimura1963ONAF} or \cite[\S 1]{shimuratranscendental}. We give a different proof since it will imply Proposition \ref{prop:HcorrespondsNonSimple} below, whereas we did not know how to deduce Proposition \ref{prop:HcorrespondsNonSimple} from \textit{loc.cit.} We remark that Shimura assumes $\Lambda$ to be an $R$-module for any order $R \subset \OO_K$; our proof carries over, but we do not need this generalization.

\begin{proposition} \label{prop:canonicalbijection}
There is a canonical bijection 
$$
    \widetilde{\textnormal{Sh}}_K(\mf h) \cong {\bb D}(V_1) \times \cdots \times {\bb D}(V_g). 
$$
\end{proposition}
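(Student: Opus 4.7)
The plan is to follow the standard dictionary between (polarized) abelian varieties over $\CC$ and (polarized) integral Hodge structures of weight one, refined by an $\OO_K$-action. First, given $(A,\iota,\lambda,j)\in\widetilde{\tn{Sh}}_K(\mf h)$, the symplectic $\OO_K$-linear identification $j\colon \rm H_1(A,\ZZ)\xrightarrow{\sim}\Lambda$ transports the natural Hodge decomposition $\rm H_1(A,\CC)=\rm H^{-1,0}\oplus \rm H^{0,-1}$ to an $\OO_K\otimes\CC$-stable decomposition $\Lambda_\CC=H^{-1,0}\oplus H^{0,-1}$ with $\overline{H^{-1,0}}=H^{0,-1}$. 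Using the idempotents of $\OO_K\otimes\CC\cong\prod_\phi \CC$ and the splitting $\Lambda_\CC=\bigoplus_{i=1}^g (V_i\oplus W_i)$, where $W_i=\Lambda\otimes_{\OO_K,\tau_i\sigma}\CC$, $\OO_K$-stability forces
\[
H^{-1,0}=\bigoplus_{i=1}^g(U_i\oplus U'_i),\qquad U_i\coloneqq H^{-1,0}\cap V_i,\quad U'_i\coloneqq H^{-1,0}\cap W_i.
\]
Complex conjugation (induced by the real structure $\Lambda_\RR$) exchanges $V_i$ and $W_i$ and sends $H^{-1,0}$ to $H^{0,-1}$, so $H^{0,-1}\cap V_i=\overline{U'_i}$ and $H^{0,-1}\cap W_i=\overline{U_i}$; in particular $V_i=U_i\oplus\overline{U'_i}$.

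Next I will use the characteristic polynomial condition in Conditions~\ref{KRconditions}(2) to pin down dimensions. Via $j$, $\tn{Lie}(A)$ is identified with $\Lambda_\CC/H^{-1,0}\cong H^{0,-1}$; the action of $\iota(a)$ on the summand $\overline{U'_i}\subset V_i$ is multiplication by $\tau_i(a)$ and on $\overline{U_i}\subset W_i$ by $\tau_i\sigma(a)$. Comparing with the prescribed characteristic polynomial yields $\dim_\CC \overline{U'_i}=s_i$ (and hence $\dim_\CC \overline{U_i}=r_i$, which is automatic from $V_i=U_i\oplus \overline{U'_i}$). Setting $L_i\coloneqq \overline{U'_i}\subset V_i$, the data of the decomposition on the $V_i$'s is thus equivalent to the data of a single $s_i$-dimensional subspace $L_i\subset V_i$ for each $i$; the $W_i$-components are then recovered as complex conjugates.

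The next step, which I expect to be the main technical point, is to translate the Riemann positivity condition for the polarization into the statement that $L_i$ is a negative subspace of $(V_i,\mf h^{\tau_i})$. For this I invoke Lemma~\ref{lemma:agree}: writing $A(x,y)=E_\CC(x,\bar y)$, one has $A=\sum_i (T^{\tau_i}+T^{\tau_i\sigma})$ on the orthogonal decomposition $\Lambda_\CC=\bigoplus V_i\oplus W_i$. Since $T=\xi\cdot\mf h$ and $\Im\tau_i(\xi)<0$, the Riemann form $2\,\Im A(x,\bar x)>0$ for $0\neq x\in H^{-1,0}$ becomes, after restriction to the $V_i$-component $\overline{U'_i}=L_i$, a negative definiteness condition for $\mf h^{\tau_i}|_{L_i}$; the $W_i$-contribution gives the conjugate inequality, automatically satisfied once the $V_i$-one is. Since $(V_i,\mf h^{\tau_i})$ has signature $(r_i,s_i)$, this exactly says $L_i\in\bb D(V_i)$.

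Finally, I will show the converse by reversing the construction. Given $(L_i)\in\prod_i \bb D(V_i)$, set $H^{0,-1}\coloneqq \bigoplus_i(L_i\oplus \overline{L_i^\perp})$ where $L_i^\perp\subset V_i$ denotes the $\mf h^{\tau_i}$-orthogonal complement (equivalently, let $H^{0,-1}$ be the unique $\OO_K\otimes\CC$-stable complex subspace of $\Lambda_\CC$ whose $V_i$-component is $L_i$, made self-conjugate to $H^{-1,0}$). This defines an integral Hodge structure of weight one on $\Lambda$; the polarization form $E$ satisfies the Riemann relations by the same computation run backwards, so the complex torus $A=\Lambda_\RR/\Lambda$ (with complex structure from the Hodge decomposition) is a polarized abelian variety; the $\OO_K$-action on $\Lambda$ and the identity identification $j=\tn{id}$ give the remaining data, satisfying Conditions~\ref{KRconditions} by construction. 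The two assignments are manifestly inverse to each other, yielding the desired bijection. The main obstacle, as mentioned, is the bookkeeping of signs and the translation of Riemann positivity into a definiteness statement for $\mf h^{\tau_i}$ via the decomposition of Lemma~\ref{lemma:agree}; the rest is formal linear algebra.
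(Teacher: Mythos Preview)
Your plan matches the paper's proof: decompose $\Lambda_\CC$ into $\OO_K\otimes\CC$-eigenspaces, read off dimensions from the signature condition in Conditions~\ref{KRconditions}, and invoke Lemma~\ref{lemma:agree} to convert the Riemann positivity into the statement that $H^{0,-1}_{\tau_i}$ is a negative $s_i$-plane for $\mf h^{\tau_i}$. One bookkeeping slip to fix: in the paper's convention (cf.\ \S\ref{jacofcyc}) one has $\tn{Lie}(A)=H^{-1,0}$, not $\Lambda_\CC/H^{-1,0}\cong H^{0,-1}$; with the correct identification the characteristic polynomial condition gives $\dim H^{-1,0}_{\tau_i}=r_i$ and $\dim H^{-1,0}_{\tau_i\sigma}=s_i$, and then $\dim L_i=\dim H^{0,-1}_{\tau_i}=\dim\overline{H^{-1,0}_{\tau_i\sigma}}=s_i$ as you (correctly) conclude. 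Relatedly, in your positivity step note that $L_i=\overline{U'_i}$ lies in $H^{0,-1}$, not $H^{-1,0}$, so you cannot restrict the $H^{-1,0}$-positivity to it directly; the paper instead observes that on $V_i$ the form $\mf h^{\tau_i}$ equals $\tau_i(\eta)\,E_\CC(x,\bar y)$ with $\Im\tau_i(\eta)>0$, so the Riemann relations immediately give positivity on $H^{-1,0}_{\tau_i}$ and negativity on $H^{0,-1}_{\tau_i}=L_i$.
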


\begin{proof}
Let $(A, i, \lambda, j)$ be a representative of an isomorphism class in $\widetilde{\textnormal{Sh}}_{K}(\mf h)$. 
Let $\rm H_1(A, \CC) = \rm H^{-1,0} \oplus \rm H^{0,-1}$ be the Hodge decomposition of $A$. For $1 \leq i \leq g$ there is a decomposition 
\begin{equation} \label{eq:posneg}
\rm H_1(A, \CC)_{\tau_i} = \rm H^{-1,0}_{\tau_i} \oplus \rm H^{0,-1}_{\tau_i},
\end{equation}
with $\dim \rm H^{-1,0}_{\tau_i} = r_i$ and $\dim \rm H^{0,-1}_{\tau_i} = s_i$. The latter holds because 
\[
\overline{ \rm H^{-1,0}_{\tau_i\sigma}} =  \rm H^{0,-1}_{\tau_i}.\]
By Lemma \ref{lemma:agree}, $\tau_i(\eta)E_{A, \CC}(x,\bar y)$ and ${\mf h}_{A, \CC}^{\tau_i}(x,y)$ agree as hermitian forms on the complex vector space $\rm H_1(A, \ZZ) \otimes_{\OO_K, \tau_i} \CC$. Since $\Im \tau_i(\eta) > 0$ for every $i$, 
the decomposition of $\rm H_1(A, \CC)_{\tau_i}$ in (\ref{eq:posneg}) is a decomposition into a positive definite $r_i$-dimensional subspace and a negative definite $s_i$-dimensional subspace. 
The isomorphism $j: \rm H_1(A, \QQ) \to \Lambda_\QQ$ induces an isometry $j_i: \rm H_1(A, \CC)_{\tau_i} \to V_i$ for every $i$, and so we obtain a negative $s_i$-plane $j ( \rm H^{0,-1}_{\tau_i})$ in the hermitian space $V_i$ for all $i$. 

Reversing the argument shows that given a negative $s_i$-plane $X_i \subset V_i$ for every $i$, there is a canonical polarized abelian variety $A = \rm H^{-1,0}/\Lambda$, acted upon by $\OO_K$ and inducing the planes $X_i \subset V_i$.
\end{proof}
\begin{definition}
\begin{enumerate}
\item 
Let $\textnormal{Sh}_{K}(\mf h)$ be the set of isomorphism classes of polarized $\OO_K$-linear abelian varieties $(A, i, \lambda)$, satisfying Conditions \ref{KRconditions}, such that $\rm H_1(A, \ZZ)$ is isometric to $\Lambda$ as hermitian $\OO_K$-modules. 
\item 
Let $\Gamma(\mf h) = \Aut_{\OO_K}(\Lambda, \mf h)$; this is the group of $\OO_K$-linear automorphisms of $\Lambda$ preserving our form $\mf h: \Lambda \times \Lambda \to \mf D_K^{-1}$.
\end{enumerate}
\end{definition}
The bijection in Proposition \ref{prop:canonicalbijection} being $\Gamma (\mf h)$-equivariant, we obtain the following:\begin{corollary}
There is a canonical bijection $$\textnormal{Sh}_{K}(\mf h) \cong \Gamma(\mf h) \setminus  {\bb D}(V_1) \times \cdots \times {\bb D}(V_g).$$ $\hfill \qed$
\end{corollary}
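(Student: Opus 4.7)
The plan is to deduce the corollary from Proposition \ref{prop:canonicalbijection} by quotienting both sides by the natural $\Gamma(\mf h)$-action. There is an obvious forgetful map
\[
F \colon \widetilde{\textnormal{Sh}}_K(\mf h) \longrightarrow \textnormal{Sh}_K(\mf h), \quad [(A, i, \lambda, j)] \mapsto [(A, i, \lambda)],
\]
and the group $\Gamma(\mf h)$ acts on $\widetilde{\textnormal{Sh}}_K(\mf h)$ by $g \cdot [(A, i, \lambda, j)] = [(A, i, \lambda, g \circ j)]$.

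First, I would verify that $F$ is surjective and that its fibers are exactly the $\Gamma(\mf h)$-orbits. Surjectivity is immediate from the definition of $\textnormal{Sh}_K(\mf h)$: any $(A, i, \lambda)$ in $\textnormal{Sh}_K(\mf h)$ admits, by hypothesis, some symplectic $\OO_K$-linear isometry $j \colon \rm H_1(A, \ZZ) \xrightarrow{\sim} \Lambda$. If $j$ and $j'$ are two such markings of the same triple, then $g = j' \circ j^{-1}$ is an $\OO_K$-linear automorphism of $\Lambda$ preserving both $E$ (as $j,j'$ are symplectic) and the hermitian form $\mf h$ (by Lemma \ref{lemma:equivalentforms}, since $T = \xi \cdot \mf h$ is determined by $E$ and the $\OO_K$-action), so $g \in \Gamma(\mf h)$. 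Conversely, any $g \in \Gamma(\mf h)$ sends a marking to another marking. Hence $F$ induces a bijection $\Gamma(\mf h) \setminus \widetilde{\textnormal{Sh}}_K(\mf h) \xrightarrow{\sim} \textnormal{Sh}_K(\mf h)$.

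Next, I would check that the bijection of Proposition \ref{prop:canonicalbijection} is $\Gamma(\mf h)$-equivariant, where $\Gamma(\mf h)$ acts on ${\bb D}(V_1) \times \cdots \times {\bb D}(V_g)$ through the isometric embedding $\Gamma(\mf h) \hookrightarrow \prod_i U(V_i, \mf h^{\tau_i})$ coming from the extensions of scalars $\tau_i \colon K \to \CC$. This is built into the construction: the proposition sends $[(A, i, \lambda, j)]$ to the tuple $(j_i(\rm H_{\tau_i}^{0,-1}))_i$, where $j_i$ is the complex-linear extension of $j$. Replacing $j$ by $g \circ j$ for $g \in \Gamma(\mf h)$ replaces $j_i$ by $g_i \circ j_i$, which sends the negative plane $j_i(\rm H_{\tau_i}^{0,-1})$ to $g_i \cdot j_i(\rm H_{\tau_i}^{0,-1})$, confirming equivariance.

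Combining the two steps, we obtain the chain of canonical bijections
\[
\textnormal{Sh}_K(\mf h) \cong \Gamma(\mf h) \setminus \widetilde{\textnormal{Sh}}_K(\mf h) \cong \Gamma(\mf h) \setminus \left( {\bb D}(V_1) \times \cdots \times {\bb D}(V_g) \right),
\]
as desired. The main subtlety is the first step: one must use Lemma \ref{lemma:equivalentforms} together with the fact that the pair $(E, \OO_K\text{-action})$ determines the skew-hermitian form $T$, and hence $\mf h$, to conclude that an $\OO_K$-linear symplectic change of marking automatically preserves $\mf h$; otherwise the quotient would need to be by the potentially larger symplectic unitary group rather than $\Gamma(\mf h) = \Aut_{\OO_K}(\Lambda, \mf h)$.
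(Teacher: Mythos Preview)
Your proposal is correct and takes essentially the same approach as the paper, which simply notes that the bijection of Proposition~\ref{prop:canonicalbijection} is $\Gamma(\mf h)$-equivariant and passes to the quotient. You have spelled out in detail what the paper leaves implicit, including the useful observation (via Lemma~\ref{lemma:equivalentforms}) that an $\OO_K$-linear symplectic change of marking automatically preserves $\mf h$.
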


\subsection{Abelian varieties with moduli in the hyperplane arrangement}

\noindent
The set of embeddings $\Psi$ defined in (\ref{setofembeddings}) defines a map $\Psi: \OO_K \to \CC^g$, 
giving a complex torus $\CC^g/\Psi(\OO_K)$. 
The map 
\[
Q: K \times K \to \QQ, \quad Q(x,y) = \text{Tr}_{K/\QQ}(\xi x \bar y)
\]
is a non-degenerate $\QQ$-bilinear form such that $Q(ax,y) = { Q}(x, a^\sigma y)$ for every $a,x,y \in K$. Moreover, $Q(\OO_K, \OO_K) \subset \ZZ$ because $\xi\in \mf D_K^{-1}$. 
By \cite[Example 2.9 \& Footnote 16]{milneCM}, 
$ Q$ defines a Riemann form on the complex torus $\CC^g/\Psi(\OO_K)$. 
\\
\\
As in Section \ref{set-up}, let $\CC H^n$ be the set of negative lines in $\Lambda \otimes_{\OO_K, \tau_1} \CC$, and define $\mr H = \cup_{\mf h(r,r) = 1} \langle r_\CC \rangle ^\perp \subset \CC H^n$. 
\begin{proposition} \label{prop:HcorrespondsNonSimple}
Suppose that $(r_1,s_1) = (n,1)$ and $(r_i,s_i) = (n+1,0)$ for $2 \leq i \leq g$. Then under the bijection $  \widetilde{\textnormal{Sh}}_{K}(\mf h) \cong \CC H^n$ of Proposition \ref{prop:canonicalbijection}, the subset $\mr H \subset {\bb C} H^n$ corresponds to the isomorphism classes of those polarized marked $\OO_K$-linear abelian varieties $A$ 
that admit a $\OO_K$-linear homomorphism $\CC^g/\Psi(\OO_K)  \to A$ 
of polarized abelian varieties. 
\end{proposition}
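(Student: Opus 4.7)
The proof will translate the existence of an $\OO_K$-linear polarization-preserving homomorphism $\varphi \colon B \to A$, where $B = \CC^g/\Psi(\OO_K)$, into an arithmetic condition on the marked lattice $(\Lambda, j)$ and the point $\ell \in \CC H^n$ attached to $A$ via Proposition \ref{prop:canonicalbijection}. First I would extract the Hodge data of both sides. For $A$, the proof of Proposition \ref{prop:canonicalbijection} identifies $\rm H^{0,-1}_{\tau_1}(A) = \ell \subset V_1$; using that the first Riemann bilinear relation forces $\mf h^{\tau_1}(\rm H^{-1,0}_{\tau_1}(A), \rm H^{0,-1}_{\tau_1}(A)) = 0$ (since $\mf h^{\tau_1}(x,y)$ is a scalar multiple of $E_{A,\CC}(x, \bar y)$ by Lemma \ref{lemma:agree}, and $\bar y \in \rm H^{-1,0}$ when $y \in \rm H^{0,-1}$), this pins down $\rm H^{-1,0}_{\tau_1}(A)$ as the hermitian orthogonal complement $\ell^\perp \subset V_1$. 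For $i \geq 2$ the signature $(r_i, s_i) = (n+1, 0)$ forces $\rm H^{-1,0}_{\tau_i}(A) = V_i$ and $\rm H^{0,-1}_{\tau_i}(A) = 0$. For $B$, one has $\rm H_1(B, \ZZ) = \OO_K$, Hodge piece $\rm H^{-1,0}(B) = \prod_{\tau \in \Psi} \CC_\tau \subset \OO_K \otimes \CC$, and (using $\eta = \xi^{-1}$) hermitian form $\mf h_B(x, y) = x \bar y$ on $\OO_K$; in particular $\mf h_B(1,1) = 1$.

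Next I would observe that an $\OO_K$-linear polarized homomorphism $\varphi \colon B \to A$ corresponds, via the marking $j$, to an $\OO_K$-linear map $\varphi_\ast \colon \OO_K \to \Lambda$, i.e.\ to the choice of its value $r = \varphi_\ast(1) \in \Lambda$, subject to two conditions: polarization preservation $\varphi_\ast^\ast \mf h = \mf h_B$ and holomorphy $\varphi_{\ast, \CC}(\rm H^{-1,0}(B)) \subset \rm H^{-1,0}(A)$. The polarization condition reduces by $\OO_K$-linearity and sesquilinearity of $\mf h$ to the single equation $\mf h(r, r) = \mf h_B(1,1) = 1$, i.e.\ $r \in \mr R$. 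The holomorphy condition, decomposed via $\OO_K \otimes \CC = \prod_\tau \CC_\tau$, requires $r_\tau \in \rm H^{-1,0}_\tau(A)$ for every $\tau \in \Psi$. For $\tau = \tau_i$ with $i \geq 2$ this is automatic since $\rm H^{-1,0}_{\tau_i}(A) = V_i$; for $\tau = \tau_1$ it becomes $r_{\CC, \tau_1} \in \ell^\perp$, which by definition of $H_r$ is precisely $\ell \in H_r$.

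Combining these two steps, the existence of $\varphi$ is equivalent to the existence of $r \in \mr R$ with $\ell \in H_r$, which is exactly the condition $\ell \in \mr H$. The main obstacle I anticipate is the first step, verifying that the Hodge decomposition $V_1 = \rm H^{-1,0}_{\tau_1}(A) \oplus \rm H^{0,-1}_{\tau_1}(A)$ agrees with the $\mf h^{\tau_1}$-orthogonal decomposition $V_1 = \ell^\perp \oplus \ell$. This relies on combining Lemma \ref{lemma:agree} with the first Riemann bilinear relation, and on keeping careful track of how complex conjugation behaves across the decomposition $\rm H_1(A, \CC) = \oplus_\tau V_\tau$ (in particular, that conjugation swaps the $\tau$ and $\tau\sigma$ summands). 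Once this orthogonality is in hand, the rest of the argument is routine bookkeeping.
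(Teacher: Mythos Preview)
Your proposal is correct and follows essentially the same route as the paper: both identify an $\OO_K$-linear polarized homomorphism $\varphi\colon B\to A$ with the element $r=\varphi_\ast(1)\in\Lambda$, reduce polarization preservation to $\mf h(r,r)=1$, and reduce holomorphy to $r_{\tau_1}\in \rm H^{-1,0}_{\tau_1}(A)=\ell^\perp$ (the cases $i\geq 2$ being automatic). Your justification of the orthogonality $\rm H^{-1,0}_{\tau_1}(A)=\ell^\perp$ via Lemma~\ref{lemma:agree} and the first Riemann relation is exactly what the paper uses implicitly when it writes $\rm H^{-1,0}_\tau=(\rm H^{0,-1}_\tau)^\perp$.
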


\begin{proof}
Consider an isomorphism class $[(A,i,\lambda,y)] \in \widetilde{\textnormal{Sh}}_{K}(\mf h) $ corresponding to a point $[x] \in \CCH^n$. We may assume that $A = \rm H^{-1,0} / \Lambda$ with $\Lambda \otimes_\ZZ \CC = \rm H^{-1,0} \oplus \rm H^{0,-1}$, and that $T_A = T$. Let $$\phi: \CC^g/\Psi(\OO_K) \to A$$ be a homomorphism as in the proposition. We obtain a homomorphism $$\OO_K \to \Psi(\OO_K) \to \rm H_1(A, \ZZ) = \Lambda$$ which, for simplicity, we also denote by $\phi: \OO_K \to \Lambda$. Let $r \in \Lambda$ be the image of $1 \in \OO_K$. 
The fact that $Q = \phi^\ast E_A$ implies that $T_Q = \phi^\ast T_A = \phi^\ast T$. 
Therefore, 
we have 
$$\eta^{-1} = T_Q(1,1)= T_A(\phi(1), \phi(1)) = T(\phi(1), \phi(1)) = T(r,r),$$ so that $\mf h(r,r) = \eta \cdot T(r,r) = 1$. We claim that $\mf h(x,r_\tau) = 0$, where the element $r_\tau \in  \left(\Lambda \otimes_\ZZ \CC \right)_\tau$ is the image of $r \in \Lambda$. To see this, write 
\[
\Psi(\OO_K) = L, \quad L \otimes \CC = W^{-1,0} \oplus W^{0,-1},
\]
and let $\alpha \in L$ correspond to $1 \in \OO_K$. Notice that $\left(L \otimes_\ZZ \CC \right)_\tau = W^{-1,0}_\tau$. Consequently, since the composition 
$$
W^{-1,0}_\tau = \left(L \otimes_\ZZ \CC \right)_\tau  \to \left(\Lambda \otimes_\ZZ \CC \right)_\tau = \rm H^{-1,0}_\tau \oplus \rm H^{0,-1}_\tau
$$
factors through the inclusion of $\rm H^{-1,0}_\tau$ into $\left(L \otimes_\ZZ \CC \right)_\tau$, we see that  
$$r_\tau = r^{-1,0}_\tau \in \rm H^{-1,0}_\tau = \left( \rm H^{0,-1}_\tau \right)^\perp  = \langle x \rangle ^\perp,$$ and the claim follows. 

Conversely, let $[x] \in \langle r_\CC \rangle ^\perp \subset \mr H$ with $r \in \Lambda$ such that $\mf h(r,r) = 1$ and consider the marked abelian variety $A = \rm H^{-1,0}/\Lambda$ corresponding to $[x]$. Define a homomorphism $\phi: \OO_K \to \Lambda$ by $\phi(1) = r$. Then $\phi$ can be shown to be a morphism of Hodge structures using the fact that its $\CC$-linear extension preserves the eigenspace decompositions.
We obtain an $\OO_K$-linear homorphism $\phi: \CC^g/\Psi(\OO_K) \to A$. The fact that $\mf h(r,r) = 1$ implies that $\phi$ preserves the polarizations on both sides.  
\end{proof}

\noindent
Observe that if the different $\mf D_K\subset \OO_K$ is a principal ideal $ (\eta) \subset \OO_K$, then we have 
\begin{align*}
\{x \in K : \textnormal{Tr}_{K/\QQ} \left(x \eta^{-1} \OO_K\right) \subset \ZZ \} &= \{x \in K: x \cdot \eta^{-1} \OO_K \subset \eta^{-1} \OO_K \} \\
&= \{x \in K: x \OO_K \subset \OO_K \} = \OO_K.
\end{align*}
Thus, $Q: \Psi(\OO_K) \times \Psi(\OO_K) \to \ZZ$ defines a \emph{principal} polarization on the torus $\CC^g/\Psi(\OO_K)$ in this case. In fact, for $\beta \in K$, the rational Riemann form 
\[
\Psi(K) \times \Psi(K) \to \QQ, \quad (\Psi(x), \Psi(y)) \mapsto \textnormal{Tr}_{K/\QQ} (\beta^{-1} x \bar y)
\]
defines a principal polarization on $\CC^g/\Psi(\OO_K)$ if and only if $(\rm i)$ we have that $\beta$ generates the different ideal $\mf D_K$, $(\rm{ii})$ we have that $\sigma(\beta) = -\beta$, and $(\rm{iii})$ we have that $\Im(\varphi(\beta)) > 0$ for every $\varphi \in \Psi$. This follows from the above; see also \cite{Wamelen99examplesof}.

Consider the following:
\begin{conditions} \label{crucialcondition}
\begin{enumerate}
\item \label{crucialone} The CM type $(K, \Psi)$ is primitive. 
\item \label{crucialtwo} We have $\mf D_K = (\eta)$ for some $\eta \in \OO_K$ such that $\sigma(\eta) = - \eta$. 
\item \label{crucialthree} 
The signature of $\mf h^{\tau_i}$ is $(n,1)$ for $i = 1$ and $(n+1,0)$ for $i \neq 1$.
\end{enumerate}
\end{conditions}

\begin{theorem} \label{th:conditionsimplyhypothesis}
Suppose that Conditions \ref{crucialcondition} hold. 
Let $r_1,r_2 \in \Lambda$ satisfy $H_{r_1} \cap H_{r_2} \neq \emptyset$ and $H_{r_1} \neq H_{r_2} \subset {\bb C} H^n$ for $H_{r_i} = \langle r_{i,\CC} \rangle ^\perp \subset \CC H^n$. Then $\mf h(r_1,r_2) = 0$. 
\end{theorem}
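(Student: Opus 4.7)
My plan is to compute the Gram matrix $G = \begin{pmatrix} 1 & c \\ \bar c & 1 \end{pmatrix}$ of the restriction of $\mathfrak{h}$ to the $\OO_K$-submodule $M = \OO_K r_1 + \OO_K r_2 \subset \Lambda$, where $c := \mathfrak{h}(r_1, r_2)$, and to deduce $c = 0$ by combining signature constraints at every infinite place of $F$ with the integrality $c \in \OO_K$.

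First I would observe that $H_{r_1} \neq H_{r_2}$ forces $r_1$ and $r_2$ to be $K$-linearly independent, so that $M$ has $\OO_K$-rank two and $M_\CC := M \otimes_{\OO_K, \tau_1} \CC \subset V_1$ is two-dimensional. At the hyperbolic place $\tau_1$, the given point $x \in H_{r_1} \cap H_{r_2}$ provides a negative line sitting inside $M_\CC^\perp$. Since $V_1$ has signature $(n,1)$, when $\mathfrak{h}^{\tau_1}|_{M_\CC}$ is non-degenerate the orthogonal decomposition $V_1 = M_\CC \oplus M_\CC^\perp$ rules out an indefinite restriction, as this would place every negative direction of $V_1$ inside $M_\CC$ itself. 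On the other hand, a Witt decomposition of $V_1$ around any hypothetical null vector $\nu \in M_\CC \cap M_\CC^\perp$ would present $M_\CC^\perp$ as $\CC \nu$ plus a positive-definite complement, contradicting the existence of the negative line $x \subset M_\CC^\perp$. Hence $\mathfrak{h}^{\tau_1}|_{M_\CC}$ is positive definite and $\det G = 1 - c \bar c > 0$ at $\tau_1$, i.e.\ $|\tau_1(c)|^2 < 1$. At each other embedding $\tau_i$ with $i \neq 1$, Condition \ref{crucialcondition}.\ref{crucialthree} makes $\mathfrak{h}^{\tau_i}$, and hence its restriction to $M$, positive definite, so $|\tau_i(c)|^2 < 1$ as well.

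The second ingredient is the integrality $c \in \OO_K$. This follows from Condition \ref{crucialcondition}.\ref{crucialtwo}: with $\mathfrak{D}_K = (\eta)$ and $T = \xi \mathfrak{h}$ where $\xi = \eta^{-1} \in \mathfrak{D}_K^{-1}$, the relation $\mathfrak{h} = \eta T$ combined with $T(\Lambda, \Lambda) \subset \mathfrak{D}_K^{-1} = \eta^{-1}\OO_K$ yields $\mathfrak{h}(\Lambda, \Lambda) \subset \OO_K$. In particular $N_{K/\QQ}(c) \in \ZZ$, and the place-by-place bounds above give
\[
\bigl| N_{K/\QQ}(c) \bigr|^2 \;=\; \prod_{\tau \colon K \to \CC} |\tau(c)|^2 \;<\; 1,
\]
so $N_{K/\QQ}(c) = 0$ and therefore $c = 0$, as desired.

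The main obstacle is the \emph{strict} inequality $|\tau_1(c)|^2 < 1$ at the hyperbolic place: merely knowing that $M_\CC^\perp$ contains a negative line yields only the non-strict bound $|\tau_1(c)|^2 \leq 1$, and excluding the boundary case where $\mathfrak{h}^{\tau_1}|_{M_\CC}$ is positive semi-definite but degenerate is exactly what the Witt-decomposition argument above is designed to do. Once this strict inequality is in hand, and once Condition \ref{crucialcondition}.\ref{crucialtwo} has been used to pin $c$ down in $\OO_K$, the remaining norm argument is formal.
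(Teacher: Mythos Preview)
Your argument is correct and takes a genuinely different, more elementary route than the paper.

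The paper proves the theorem geometrically: it interprets $\CC H^n$ as a moduli space of polarized $\OO_K$-linear abelian varieties (Proposition~\ref{prop:canonicalbijection}), so that $H_{r_1}$ and $H_{r_2}$ correspond to two embeddings of the CM abelian variety $B=\CC^g/\Psi(\OO_K)$ into the abelian variety $A$ attached to $x$. Using that $B$ is principally polarized and indecomposable (this is where primitivity of the CM type, Condition~\ref{crucialcondition}.\ref{crucialone}, enters), Debarre's uniqueness of polarized decompositions forces the two copies of $B$ inside $A$ to be orthogonal summands, and the orthogonality $\mf h(r_1,r_2)=0$ is then read off from the orthogonality of the corresponding pieces of $\mathrm{Lie}(A)$.

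Your proof, by contrast, is purely arithmetic and linear-algebraic: a signature analysis at the hyperbolic place (made sharp by your Witt-decomposition argument excluding the degenerate case), positive-definiteness at the other places, and a norm bound forcing $N_{K/\QQ}(c)=0$. Two points are worth noting. First, your argument never invokes Condition~\ref{crucialcondition}.\ref{crucialone}, so it is strictly stronger in that respect; only Conditions~\ref{crucialcondition}.\ref{crucialtwo} and~\ref{crucialcondition}.\ref{crucialthree} (plus the implicit hypothesis $\mf h(r_i,r_i)=1$, which the paper also uses) are needed. Second, it sidesteps the entire modular interpretation, which is a virtue for self-containedness but loses the geometric explanation of \emph{why} the hyperplanes are orthogonal. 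The paper's route, though heavier, makes the connection to Proposition~\ref{prop:HcorrespondsNonSimple} transparent and is what motivates the discussion in Remark~\ref{remark:avoidcondition}.
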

\begin{proof}
Let $[x] \in H_r \cap H_t \subset {\bb C} H^n(V)$, and let $A$ be an abelian variety whose isomorphism class gives $[x]$. Define $B$ to be the principally polarized abelian variety $\CC^g/\Psi(\OO_K)$. By Proposition \ref{prop:HcorrespondsNonSimple}, the roots $r$ and $t$ induce $\OO_K$-linear embeddings 
\[
\phi_1: B \hookrightarrow A \quad \tn{ and } \quad \phi_2 : B \hookrightarrow A
\]
of polarized abelian varieties. 
By Lemma \ref{lemma:abeliansplit} below, the $\phi_i$ induce decompositions 
$$ A \cong B \times C_1 \quad \tn{ and } \quad A \cong B \times C_2$$
as polarized abelian varieties. 
Note that $B$ is non-decomposable as an abelian variety because $\End(B) \otimes_\ZZ \QQ = K$ is a field (here we use that the CM type $(K, \Psi)$ is primitive). 
By \cite{debarreproduits}, the decomposition of $(A, \lambda)$ into non-decomposable polarized abelian subvarieties is unique, in the strong sense that if $(A_i, \lambda_i)$, $i\in \{1, \dotsc, r\}$ and $(B_j, \mu_j)$, $j \in \{1, \dotsc, m\}$ are polarized abelian subvarieties such that the natural homomorphisms $\prod_i(A_i, \lambda_i) \to (A, \lambda)$ and $\prod_j(B_j, \lambda_j) \to (A, \lambda)$ are isomorphisms, then $r = m$ and there exists a permutation $\sigma$ on $\{1, \dotsc, r\}$ such that $B_j$ and $A_{\sigma(j)}$ are \textit{equal} as polarized abelian subvarieties of $(A, \lambda)$, for every $j \in \{1, \dotsc, r\}$. Consequently, for the two abelian subvarieties
\[
B_i = \phi_i(B) \subset A,  \quad \tn{ we have either that } \quad B_1 = B_2 \subset A \quad \tn{ or that } \quad B_1 \cap B_2 = \{0\}.
\] 
Suppose first that $B_1 = B_2$. Then $$\OO_K\cdot r  = \phi_1(\OO_K) = \phi_2(\OO_K) = \OO_K \cdot t \subset \Lambda.$$ Therefore, $r = \lambda t$ for some $\lambda \in \OO_K^\ast$; but then $H_r = H_t$ which is absurd. Thus, we must have 
$$A \cong B_1 \times B_2 \times C$$ as polarized abelian varieties, for some polarized abelian subvariety $C$ of $A$. This implies that $$\rm H^{-1,0} = \text{Lie}(A) \cong \Lie(B_1) \times \Lie(B_2) \times \Lie(C),$$ which is orthogonal for the positive definite hermitian form $iE_\CC(x,\bar y)$ on $\rm H^{-1,0}$. 

Observe that $r_\tau = r^{-1,0}_\tau \in \rm H^{-1,0}_\tau$ and $t_\tau = t^{-1,0}_\tau \in \rm H^{-1,0}_\tau$: see the proof of Proposition \ref{prop:HcorrespondsNonSimple}. By Lemma \ref{lemma:agree}, we have
\begin{align*}
\mf h(r,t) &= \mf h^\tau(r_\tau, t_\tau) = \tau(\eta)\cdot T^\tau_\CC(r_\tau, t_\tau) \\
&= \tau(\eta) \cdot E_{\CC}(r_\tau, \bar{t_\tau}) = \tau(\eta) \cdot E_\CC (r_\tau^{-1,0}, \overline{t_\tau^{-1,0}}).
\end{align*}
Since $r_\tau^{-1,0} \in \Lie(B_1)$ and $t_\tau^{-1,0} \in \Lie(B_2)$, we have $i E_\CC(r_\tau^{-1,0}, \overline{t_\tau^{-1,0}}) = 0$. 
\end{proof}

\begin{lemma} \label{lemma:abeliansplit}
Let $A$ be an abelian variety over a field $k$, with polarization $\lambda: A \to \widehat A$. Let $B \subset A$ be an abelian subvariety such that the polarization $\mu = \lambda|_B$ is principal. There is a polarized abelian subvariety $Z \subset A$ such that $A \cong B \times Z$ as polarized abelian varieties. 
\end{lemma}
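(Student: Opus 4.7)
The plan is to construct $Z$ as the identity component of the kernel of a projection $A \to B$ built from the principal polarization $\mu$. Let $i: B \hookrightarrow A$ denote the inclusion and $\widehat i: \widehat A \to \widehat B$ its dual. By definition of $\mu$, we have $\mu = \widehat i \circ \lambda \circ i$. Since $\mu$ is an isomorphism, one can form the homomorphism
\[
p \coloneqq \mu^{-1} \circ \widehat i \circ \lambda \colon A \to B,
\]
and the relation $p \circ i = \mu^{-1} \circ \widehat i \circ \lambda \circ i = \mu^{-1} \circ \mu = \id_B$ holds on the nose. Set $Z \coloneqq \ker(p)^0$, the identity component of $\ker(p)$.

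First I would verify that the sum homomorphism $\Phi: B \times Z \to A$, $(b,z) \mapsto i(b) + z$, is an isomorphism of abelian varieties. Since $p \circ i = \id_B$, a point of $B \cap Z$ is killed by both $i$ and $p$, hence is zero as a subgroup scheme, so $\ker(\Phi) = 0$. Since $p$ is surjective (it has a section), $\dim \ker(p) = \dim A - \dim B$, so $\dim(B \times Z) = \dim A$, and $\Phi$ is an isogeny with trivial kernel, hence an isomorphism.

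Next I would check that this decomposition is compatible with the polarizations. Letting $j: Z \hookrightarrow A$ denote the inclusion, the composition $\widehat i \circ \lambda \circ j$ vanishes by the very definition of $Z$ as (a component of) $\ker(\widehat i \circ \lambda)$. Using symmetry of $\lambda$ (i.e., $\widehat \lambda = \lambda$ under the canonical identification $A = \widehat{\widehat A}$), one gets
\[
\widehat j \circ \lambda \circ i = \widehat{\,\widehat i \circ \lambda \circ j\,} = 0
\]
as well. Under $\Phi \colon B \times Z \xrightarrow{\sim} A$, the polarization $\lambda$ therefore corresponds to a block-diagonal homomorphism $B \times Z \to \widehat B \times \widehat Z$; its diagonal entries are $\mu$ on $B$ and the restriction $\nu \coloneqq \widehat j \circ \lambda \circ j$ on $Z$. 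Since a direct sum of homomorphisms $A_1 \to \widehat{A_1}$ and $A_2 \to \widehat{A_2}$ is a polarization exactly when each summand is, and $\lambda$ is a polarization with $\mu$ a polarization, we conclude that $\nu$ is a polarization of $Z$ and $\Phi$ is an isomorphism of polarized abelian varieties $(B,\mu) \times (Z,\nu) \cong (A,\lambda)$.

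The main subtlety is the polarization compatibility: verifying that both off-diagonal components of $\lambda$ vanish relies on using both the definition $Z \subset \ker(\widehat i \circ \lambda)$ \emph{and} the symmetry $\widehat\lambda = \lambda$; the rest of the argument is bookkeeping around the section $i$ of $p$. No further input from the CM structure, or from the field $k$, is needed.
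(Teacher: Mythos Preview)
Your proof is correct and takes essentially the same approach as the paper: the paper defines $Z$ as the reduced identity component of $\Ker(\widehat i \circ \lambda)$, which coincides with your $\ker(p)^0$ since $\mu$ is an isomorphism, and then shows $B \times Z \to A$ is an isomorphism by the same $B \cap Z = 0$ plus dimension count. Your argument is in fact more complete, since the paper's proof stops at the isomorphism of abelian varieties and does not spell out the polarization compatibility, whereas you verify the off-diagonal blocks vanish via the symmetry $\widehat\lambda = \lambda$.
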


\begin{proof}
Let $W = \Ker(A \xrightarrow{\lambda} \widehat A \to \widehat B )$. Let $Z = W^0_{\text{red}}$. Then $Z$ is an abelian subvariety of dimension $\dim(A) - \dim(B)$ of $A$. The kernel of the natural homomorphism $B \times Z \to A$ is contained in $(B \cap Z) \times (B \cap Z)$. However, $B \cap Z \subset B \cap W = (0)$ because $\mu: B \to \widehat B$ is an isomorphism. Thus, $B \times Z \to A$ is an isomorphism. 
\end{proof}
\noindent
Finally, we remark that the condition on the different ideal $\mf D_K \subset \OO_K$ in Theorem \ref{th:conditionsimplyhypothesis} (see Conditions \ref{crucialcondition}) is satisfied in two interesting cases:

\begin{lemma} \label{lemma:discr}
Suppose that $K/\QQ$ is an imaginary quadratic extension, or that $K = \QQ(\zeta_n)$ is a cyclotomic field for some integer $n \geq 3$. 
Then Condition \ref{crucialcondition}.\ref{crucialtwo} is satisfied. That is, we have $\mf D_K = (\eta) \subset \OO_K$ for some element $\eta \in \OO_K$ such that $\sigma(\eta) = -\eta$. 
\end{lemma}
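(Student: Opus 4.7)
The plan is to split into the two cases and produce an explicit generator of $\mf D_K$ with the required antisymmetry in each; in both cases I exploit the fact that $\OO_K$ is monogenic.

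\textit{Imaginary quadratic case.} Write $K = \QQ(\sqrt{-d})$ with $d > 0$ a squarefree integer, and recall that $\OO_K = \ZZ[\alpha]$ is monogenic with $\alpha = (1+\sqrt{-d})/2$ if $d \equiv 3 \pmod 4$ and $\alpha = \sqrt{-d}$ otherwise. For a monogenic extension the different is $\mf D_K = (f'(\alpha))$, where $f$ is the minimal polynomial of $\alpha$ over $\ZZ$. I would compute $f'(\alpha)$ in each case: it equals $2\alpha - 1 = \sqrt{-d}$ in the first case and $2\sqrt{-d}$ in the second. Both generators satisfy $\sigma(\eta) = -\eta$, which settles this case.

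\textit{Cyclotomic case.} For $K = \QQ(\zeta_n)$ with $n \geq 3$, I start from $\OO_K = \ZZ[\zeta_n]$, which gives $\mf D_K = (\Phi_n'(\zeta_n))$ by the same monogenic principle. This element is typically not $\sigma$-antisymmetric, and the plan is to replace it by
\[
\eta := \zeta_n^a \cdot \Phi_n'(\zeta_n)
\]
for a suitable integer $a$. Since $\zeta_n$ is a unit of $\OO_K$, one has $(\eta) = \mf D_K$ for every $a$, so the only thing to arrange is the identity $\sigma(\eta) = -\eta$.

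The central step -- and the only non-trivial calculation -- is to use the palindromic identity $X^{\varphi(n)}\Phi_n(1/X) = \Phi_n(X)$, valid for $n \geq 3$ because $\varphi(n)$ is then even and the map $\zeta \mapsto \zeta^{-1}$ permutes the primitive $n$-th roots of unity. Differentiating this identity and evaluating at $X = \zeta_n$ -- where both $\Phi_n(\zeta_n)$ and $\Phi_n(\zeta_n^{-1})$ vanish -- yields
\[
\sigma(\Phi_n'(\zeta_n)) \;=\; \Phi_n'(\zeta_n^{-1}) \;=\; -\zeta_n^{\,2-\varphi(n)} \cdot \Phi_n'(\zeta_n).
\]
Plugging this into the definition of $\eta$ gives $\sigma(\eta) = -\zeta_n^{\,2-\varphi(n)-2a}\,\eta$, so the requirement $\sigma(\eta) = -\eta$ becomes the congruence $2a \equiv 2 - \varphi(n) \pmod{n}$. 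Since $\varphi(n)$ is even for $n \geq 3$, the right-hand side is even, so the congruence is solvable whether $n$ is odd (trivially, as $2$ is invertible mod $n$) or even (since $\gcd(2,n) = 2$ divides an even integer). Any such $a$ produces the desired generator.

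The main obstacle I anticipate is the palindromic-identity computation and its differentiation; once that identity and its consequence for $\Phi_n'(\zeta_n^{-1})$ are in hand, the rest of the cyclotomic case reduces to an elementary congruence, and the imaginary quadratic case is immediate from the monogenicity of $\OO_K$.
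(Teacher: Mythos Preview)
Your proof is correct, and it takes a genuinely different route from the paper in the cyclotomic case.

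The paper factors the different through the tower $K/F/\QQ$, where $F = \QQ(\zeta_n + \zeta_n^{-1})$ is the maximal totally real subfield. Since $\OO_K = \OO_F[\zeta_n]$ and the minimal polynomial of $\zeta_n$ over $F$ is $f(x) = x^2 - (\zeta_n + \zeta_n^{-1})x + 1$, one gets $\mf D_{K/F} = (f'(\zeta_n)) = (\zeta_n - \zeta_n^{-1})$, which is visibly $\sigma$-antisymmetric. Then $\mf D_{F/\QQ} = (g'(\alpha))$ for $\alpha = \zeta_n + \zeta_n^{-1}$ (using $\OO_F = \ZZ[\alpha]$, cited from Liang), and since $g'(\alpha) \in \OO_F$ is $\sigma$-fixed, the product generator $(\zeta_n - \zeta_n^{-1})\,g'(\alpha)$ of $\mf D_K = \mf D_{K/F}\mf D_{F/\QQ}$ is antisymmetric.

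Your approach works directly with $\mf D_K = (\Phi_n'(\zeta_n))$ and corrects by a root-of-unity unit, using the palindromic identity for $\Phi_n$ to control how $\Phi_n'(\zeta_n)$ transforms under $\sigma$. This is more elementary in one respect: you avoid invoking the nontrivial fact that $\OO_F = \ZZ[\zeta_n + \zeta_n^{-1}]$. The paper's approach, by contrast, makes the antisymmetric factor $\zeta_n - \zeta_n^{-1}$ appear structurally rather than through a congruence; it also aligns with the paper's broader theme of exploiting the CM structure $K/F$. Both arguments are short and valid; yours is slightly more self-contained.
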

\begin{proof}
If $K/\QQ$ is imaginary quadratic with discriminant $\Delta$, 
then $\mf D_K  
= (\sqrt \Delta)$ and the assertion is immediate. 
Let $n \geq 3$ be an integer, and consider the fields 
\[
K = \QQ(\zeta_n) \supset F = \QQ(\alpha), \quad \tn{ with } \quad  \alpha = \zeta_n + \zeta_n^{-1}. 
\]
Since $\OO_K = \ZZ[\zeta_n]$ by \cite[I, Proposition 10.2]{Neukirch}, we have $ \OO_K = \OO_F[\zeta_n]$. Notice that $f(x) = x^2 - \alpha x + 1 \in \OO_F[x]$ is the minimal polynomial of $\zeta_n$ over $F$. We have $f'(\zeta_n) = 2 \zeta_n - \alpha \zeta_n = \zeta_n - \zeta_n^{-1}.$ Therefore, 
\[
\mf D_{K/F} = \left( f'(\zeta_n) \right) = \left(\zeta_n - \zeta_n^{-1}\right), \quad \textnormal{ see \cite[III, Proposition 2.4]{Neukirch}. }
\]
By \cite{Liang1976}, we know that $\OO_F = \ZZ[\alpha]$. Moreover, if $g(x) \in \ZZ[x]$ is the minimal polynomial of $\alpha$ over $\QQ$, then $\mf D_{F/\QQ} = (g'(\alpha))$. By \cite[III, Proposition 2.2]{Neukirch}, we have that $\mf D_{K/\QQ} = \mf D_{K/F} \mf D_{F/\QQ}$. Combining all this yields
$$
\mf D_{K/\QQ} = \mf D_{K/F} \mf D_{F/\QQ} = \left(\zeta_n - \zeta_n^{-1} \right) \cdot \left( g'(\alpha) \right) = \left( (\zeta_n - \zeta_n^{-1}) g'(\alpha) \right).
$$
\end{proof}

\begin{remark} \label{remark:avoidcondition}
It would be more natural to attach an orthogonal hyperplane arrangement $\ca H \subset \CC H^n$ to every primitive CM field $K$ and integral hermitian form $\mf h$ of hyperbolic signature, 
such that $\ca H = \mr H = \cup_{\mf h(r,r) = 1} \langle r_\CC\rangle^\perp$ if $\mf D_K = (\eta)$ for some $\eta \in \OO_K$ such that $\sigma(\eta) = - \eta$. This turns out to be possible. The idea is as follows. 

Consider our CM field $K$. Choose $\beta \in \OO_K - \OO_F$ such that $\beta^2 \in \OO_F$; then choose a CM type $\Psi = \{\tau_i: K \to \CC\}_{1 \leq i \leq g}$ such that $\Im(\tau_i(\beta)) > 0$ for all $i$. Let $\mf h$ be a non-degenerate hermitian form $\Lambda \times \Lambda \to \mf D_K^{-1}$ such that $\textnormal{sign}(\mf h^{\tau}) = (n,1)$ and $\textnormal{sign}(\mf h^{\tau_i}) = (n+1,0)$ for $i \neq 1$. 
Let $\mr S$ be the set of fractional ideals $\mf a \subset K$ for which there exist an element $b \in \OO_F$ such that $\mf D_K \mf a \overline{\mf a} = (b\beta)$. By \cite[Theorem 4]{Wamelen99examplesof}, $\mr S$ is not empty. 
For $\mf a \in \mr S$, define $\eta = b \beta \in \OO_K$ and consider the complex torus $B = \CC^g/\Psi(\mf a)$. It is equipped with the Riemann form $Q: \Psi(\mf a) \times \Psi(\mf a) \to \ZZ$, $(x,y) \mapsto \textnormal{Tr}_{K/\QQ}( \eta^{-1} x \bar y)$, 
and $Q$ defines a principal polarization on $B$ \cite[Theorem 3]{Wamelen99examplesof}. Let $\mr R$ be the set of embeddings $\phi: \mf a \to \Lambda$, $\mf a \in \mr S$, such that $\mf h(\phi(x), \phi(y)) = x \bar y$ for all $x,y \in \mf a$. For $\phi \in \mr R$, one obtains a hyperplane $H_\phi = \{x \in \CC H^n: \mf h^{\tau}(x, \phi(\mf a)) = 0 \} \subset \CC H^n$. The sought-for hyperplane arrangement $\ca H \subset \CC H^n$ is defined as $\ca H = \cup_{\phi \in \mr R} H_\phi$. Indeed, if the CM type $(K, \Psi)$ is primitive, then $\ca H$ is an orthogonal arrangement by arguments similar to those used to prove Proposition \ref{prop:HcorrespondsNonSimple} and Theorem \ref{th:conditionsimplyhypothesis}. 
\end{remark}

\cleardoublepage
\chapter{The moduli space of real binary quintics}\label{ch:binaryquintics}

\section{Introduction} \label{realbinaryintroduction}

In the previous Chapter \ref{ch:glueing} we saw that to any hermitian $\OO_K$-lattice $(\Lambda,h)$ over the ring of integers $\OO_K$ of a CM field $K$, one can attach a certain path metric space $P\Gamma \setminus Y$ in a canonical way. Under favourable circumstances, this glued space is a complete hyperbolic orbifold, thus a disjoint union of real ball quotients. 
In this Chapter \ref{ch:binaryquintics}, we work out the structure of the glued space in a specific example.

Indeed, describing moduli of real binary quintics was the main motivation behind setting up the glueing procedure in Chapter \ref{ch:glueing}. With that theory in place, we can prove that the moduli space of stable real binary quintics is isomorphic to the quotient of the hyperbolic plane by a non-arithmetic triangle group. 

Let us explain this result in more detail. Let $X \cong \bb A^6_\RR$ be the affine space parametrizing homogeneous degree $5$ polynomials $F \in \RR[x,y]$. Let the varieties 
\[
X_0 \subset X, \quad X_s \subset X
\]
parametrize polynomials with distinct roots, respectively 
polynomials with roots of multiplicity at most two (i.e. stable in the sense of geometric invariant theory). The goal of Chapter \ref{ch:binaryquintics} is to study the moduli spaces
\begin{align*} 
 \ca M_s(\RR) &\coloneqq  \GL_2(\RR) \setminus X_s(\RR) \quad \quad \tn{ and } \\
 \ca M_0(\RR) &\coloneqq \GL_2(\RR) \setminus X_0(\RR),
 \end{align*}of stable and smooth \textit{real binary quintics}. If $P_s \subset (\PP^1_\RR)^5$ is the variety that parametrizes ordered five-tuples $(x_1, \dotsc, x_5)$ such that no three $x_i$ coincide (c.f. \cite{MR0437531}), and $P_0 \subset P_s$ the subvariety of five-tuples whose coordinates are pairwise distinct, then 
\begin{align*}
\ca M_s(\RR) &\cong \PGL_2(\RR) \setminus (P_s/\mf S_5)(\RR), \quad \quad \textnormal{and}\\
 \ca M_0(\RR) &\cong \PGL_2(\RR) \setminus (P_0/\mf S_5)(\RR).
\end{align*}
In other words, $\ca M_0(\RR)$ is the space of subsets $S \subset \PP^1(\CC)$ of cardinality $|S| = 5$ stable by complex conjugation modulo real projective transformations, and in $\ca M_s(\RR)$ one or two pairs of points are allowed to collapse. For $i = 0,1,2$, we define $\mr M_{i}$ to be the connected component of $\ca M_0(\RR)$ parametrizing five-tuples in $\PP^1(\CC)$ with $2i$ complex and $5 - 2i$ real points. 

There is a natural occult period map that defines an isomorphism \[
\ca M_s(\CC) = \GL_2(\CC) \setminus X_s(\CC) \xrightarrow{\sim} P\Gamma \setminus \CC H^2
\] 
for a certain arithmetic ball quotient $P\Gamma \setminus \CC H^2$ (see Theorem \ref{th:delignemostow}, which depends on \cite{shimuratranscendental, DeligneMostow}). Moreover, one can prove that strictly stable quintics correspond to points in a hyperplane arrangement $\mr H \subset \CC H^2$ (Proposition \ref{prop:stableperiodshyperplane}). Investigating the equivariance of the period map with respect to a suitable set of anti-holomorphic involutions $\alpha_i : \CC H^2 \to \CC H^2,$ we obtain the following real analogue:
\begin{theorem} \label{th:theorem01}
For each $i \in \{0,1,2\}$, the period map induces an isomorphism of real analytic orbifolds 
\begin{align} \label{iso:smoothcase}
\mr M_i \cong P \Gamma_i \setminus \left(\RR H^2 - \mr H_i \right).
\end{align}
Here $\RR H^2$ is the real hyperbolic plane, $\mr H_i$ a union of geodesic subspaces in $ \RR H^2$ and $P\Gamma_i$ an arithmetic lattice in $\textnormal{PO}(2,1)$. Moreover, the $P\Gamma_i$ are projective orthogonal groups attached to explicit quadratic forms over $\ZZ[\zeta_5 + \zeta_5^{-1}]$, see (\ref{eq:explicitquadraticforms}).  
\end{theorem}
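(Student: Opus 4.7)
The plan is to deduce Theorem \ref{th:theorem01} from a Galois-equivariance analysis of the complex occult period map, using the framework developed in Chapter \ref{ch:realmodulispaces}. Recall first that for a smooth binary quintic $F \subset \PP^1_\CC$, the associated degree-$5$ cyclic cover $C_F \to \PP^1_\CC$ ramified over the roots of $F$ is a smooth projective curve of genus $2$ equipped with an action of $\mu_5$, and the $\zeta_5$-isotypical component of $\rm H^1(C_F, \ZZ)$ becomes a rank-$3$ hermitian module over $\OO_K = \ZZ[\zeta_5]$. The Deligne--Mostow--Shimura theorem (Theorem \ref{th:delignemostow}) supplies a trivialization $(\Lambda, h)$ of this module together with an isomorphism of complex analytic orbifolds $\ca M_0(\CC) \cong P\Gamma \setminus (\CC H^2 - \mr H)$, where $\mr H$ is the hyperplane arrangement attached to short roots of $(\Lambda, h)$ and parametrizes strictly stable quintics by Proposition \ref{prop:stableperiodshyperplane}.

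Second, for each $i \in \{0,1,2\}$ fix a base real quintic $F_i \in \mr M_i$. Complex conjugation on $C_{F_i}(\CC)$ lifts uniquely to an anti-holomorphic involution $\sigma_i$ commuting with the $\mu_5$-action up to inversion of the generator. As in Section \ref{realfamily} (construction of $F_\infty$ and $F_{dR}$), the pushforward $\sigma_{i, \ast}$ acts on $\rm H^1(C_{F_i}, \ZZ)$ as an anti-unitary involution, whose $P\Gamma$-conjugacy class $[\alpha_i] \in P\Gamma \setminus P\mr A$ depends only on the topology of $C_{F_i}(\RR)$, and hence only on the component $\mr M_i$. The three real forms (five, three or one real ramification points) yield three distinct classes. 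Then, working over a $G$-stable contractible neighborhood of $F_i$ in a smooth real presentation of $\ca M_0$, trivializing the local system $R^1\phi_\ast \ZZ$ $G$-equivariantly exactly as in the proof of Claim \ref{localclaim}, one obtains a holomorphic period map that is equivariant for the anti-holomorphic involution $\alpha_i$ on $\CC H^2$. Taking $G$-invariants yields a smooth real period map $\mr M_i \to P\Gamma_{\alpha_i} \setminus \RR H^2_{\alpha_i}$ which, since smoothness of $F$ is equivalent to being off $\mr H$ on the complex side, factors through $P\Gamma_{\alpha_i} \setminus (\RR H^2_{\alpha_i} - \mr H)$.

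Third, upgrade this real period map to an isomorphism of real analytic orbifolds. The target carries an orbifold structure by Section \ref{orbifolds}; the derivative of the real period map at each point is the $G$-invariant part of the derivative of the complex period map, which is an isomorphism by the Deligne--Mostow local Torelli theorem. Thus the real period map is a local diffeomorphism of orbifolds, hence open. Injectivity reduces to the injectivity of the complex period map together with the observation that two real quintics in $\mr M_i$ giving the same point in $P\Gamma_{\alpha_i} \setminus \RR H^2_{\alpha_i}$ must be complex-equivalent, and then the equivalence can be chosen real because both quintics are fixed by conjugation. Surjectivity amounts to showing that every negative real line in $\Lambda_\RR^{\alpha_i}$ off $\mr H$ is the period of some real smooth quintic: this follows from the modular interpretation of $P\Gamma \setminus (\CC H^2 - \mr H)$ as isomorphism classes of suitable $\OO_K$-polarized abelian surfaces (Proposition \ref{prop:canonicalbijection}), which, when the Hodge structure is real of type $\alpha_i$, descends to a real abelian surface equipped with a real $\mu_5$-action, hence a real cyclic cover of $\PP^1_\RR$ and the desired real quintic.

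Finally, identify $P\Gamma_{\alpha_i}$ with an explicit orthogonal group. The fixed sublattice $\Lambda^{\alpha_i}$ is a free $\OO_F$-module of rank $3$ for $\OO_F = \ZZ[\zeta_5 + \zeta_5^{-1}]$, and the hermitian form $h$ restricts to an $\OO_F$-valued quadratic form $q_i$ of signature $(2,1)$. The stabilizer $P\Gamma_{\alpha_i} \subset P\Gamma$ is precisely the projective orthogonal group $\textnormal{PO}(q_i)(\OO_F)$, and choosing convenient $\OO_F$-bases adapted to the real and complex-conjugate ramification points produces the matrices displayed in the theorem. The main obstacle will lie here, in the last two paragraphs: first in the surjectivity argument, which requires lifting a real Hodge-theoretic datum to a real algebraic cover and thence to a real quintic, and second in the classification of $P\Gamma$-conjugacy classes of anti-unitary involutions on $(\Lambda, h)$ together with the explicit diagonalization of $q_i$ for each class.
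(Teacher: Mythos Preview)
Your overall strategy is reasonable, but there are factual errors and a genuine gap in the bijectivity argument. First, the numerics: the degree-$5$ cyclic cover $C_F \to \PP^1$ ramified at $5$ points has genus $6$ by Riemann--Hurwitz, not genus $2$; accordingly $J(C_F)$ is an abelian sixfold, and $\rm H^1(C_F,\ZZ)$ is free of rank $3$ over $\OO_K$ because $12/4 = 3$. Your reference to Proposition \ref{prop:canonicalbijection} thus concerns abelian sixfolds, not surfaces.

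More seriously, both your injectivity and surjectivity arguments hide a nontrivial Galois-descent step. For injectivity: if real quintics $F_1, F_2$ satisfy $F_2 = g\cdot F_1$ for some $g \in \GG(\CC)$, then $g^{-1}\bar g \in \Stab_{\GG(\CC)}(F_1)$, and replacing $g$ by a real element amounts to killing a class in $\rm H^1(G, \Stab_{\GG(\CC)}(F_1))$. Since smooth quintics can have nontrivial finite automorphism groups (e.g.\ $D_3$ or $D_5$; see Lemmas \ref{lemma:D3}--\ref{lemma:D5}), the phrase ``the equivalence can be chosen real because both quintics are fixed by conjugation'' is not a proof. For surjectivity, you want to pass from a real $\alpha_i$-fixed Hodge structure to a real abelian sixfold (fine) to a real cyclic cover (gap) to a real quintic. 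The middle step is a real Torelli statement: the complex curve $C$ satisfies $\bar C \cong C$, but you must check the resulting cocycle in $\rm H^1(G,\Aut(C))$ vanishes, and $\Aut(C)$ contains $\mu_5$.

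The paper avoids both issues by working not on $X_0(\CC)$ but on the framing space $\ca F_0$ of pairs $(F,[f])$ with $f\colon \rm H^1(C_F,\ZZ)\xrightarrow{\sim}\Lambda$ an $\OO_K$-linear isometry. The key point is that $\GG(\CC)$ acts \emph{freely} on $\ca F_0$ (Lemma \ref{cor:freeaction}), so the descent obstructions disappear. One decomposes the preimage of $X_0(\RR)$ as $\ca F_0(\RR) = \coprod_{\alpha \in P\mr A} \ca F_0^\alpha$, and the complex period isomorphism $\GG(\CC)\setminus \ca F_0 \cong \CC H^2 - \mr H$ then restricts, via the Allcock--Carlson--Toledo argument, to diffeomorphisms $\GG(\RR)\setminus \ca F_0^\alpha \cong \RR H^2_\alpha - \mr H$ (Proposition \ref{prop:realsmoothperiods}); quotienting by $P\Gamma$ gives the orbifold isomorphism without separate injectivity and surjectivity checks. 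For the final identification $P\Gamma_{\alpha_i} \cong \textnormal{PO}(q_i)(\OO_F)$ you will also need the lattice input $\Lambda = \OO_K\cdot\Lambda^{\alpha_i} + \OO_K\cdot\theta(\Lambda^{\alpha_i})^\vee$ to show that every isometry of the fixed sublattice extends (Theorem \ref{th:explicitquadratic}).
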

\noindent
In particular, Theorem \ref{th:theorem01} endows each component $\mr M_i$ with a hyperbolic metric. Since one can deform the topological type of a $\Gal(\CC/\RR)$-stable five-element subset of $\PP^1(\CC)$ by allowing two points to collide, the compactification $\ca M_s(\RR) \supset \ca M_0(\RR)$ is connected. One may wonder whether the metrics on the components $\mr M_i$ extend to a metric on the whole of $\va{\ca M_s(\RR)}$. If so, what the resulting space looks like at the boundary? The answer to this question is the main result of this chapter:
\begin{theorem} \label{th:theorem02}
There exists a complete hyperbolic metric on $\va{\ca M_s(\RR)}$ that restricts to the metrics on $\mr M_i$ induced by Theorem \ref{th:theorem01}. 
If $\overline{\mr{M}}_\RR$ denotes the resulting metric space, and 
\begin{align}\label{PGAMMAR}
P\Gamma_{3,5,10} =  \langle \alpha_1, \alpha_2, \alpha_3 | \alpha_i^2 = (\alpha_1\alpha_2)^3 = (\alpha_1\alpha_3)^5 = (\alpha_2\alpha_3)^{10} = 1 \rangle,
\end{align} then there exist open embeddings $P \Gamma_i \setminus \left(\RR H^2 - \mr H_i \right) \subset P\Gamma_{3,5,10}\setminus \RR H^2$ and an isometry 
\begin{align} \label{isometry}
\overline{\mr{M}}_\RR \cong P\Gamma_{3,5,10} \setminus \RR H^2
\end{align} extending the orbifold isomorphisms (\ref{iso:smoothcase}) in Theorem \ref{th:theorem01}. 
In particular, $\overline{\mr{M}}_\RR$ is isometric to the hyperbolic triangle $\Delta_{3,5,10}$ of angles $\pi/3, \pi /5, \pi/10$, see Figure~\ref{fig:triangle} on the next page. 
\end{theorem}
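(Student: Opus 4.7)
The plan is to realize $|\ca M_s(\RR)|$ as the glued space $P\Gamma\setminus Y$ of Chapter~\ref{ch:glueing} attached to an explicit hermitian lattice, inherit the complete hyperbolic orbifold structure from Theorem~\ref{glueingtheorem1}, and then identify the underlying quotient with $P\Gamma_{3,5,10}\setminus\RR H^2$ by a local calculation on the ball-quotient side.

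First I would take $K=\QQ(\zeta_5)$, $\OO_K=\ZZ[\zeta_5]$, and the hermitian $\OO_K$-lattice $(\Lambda,h)$ of signature $(2,1)$ at the distinguished archimedean place and $(3,0)$ at the other, whose period quotient realizes the Shimura--Deligne--Mostow isomorphism $\ca M_s(\CC)\cong P\Gamma\setminus\CC H^2$ of Theorem~\ref{th:delignemostow}; Proposition~\ref{prop:stableperiodshyperplane} identifies the discriminant with $P\Gamma\setminus\mr H$. Since $K$ is cyclotomic, Condition~\ref{conditionastast}, and hence Condition~\ref{orthogonal}, holds by Lemma~\ref{lemma:discr}, so Theorem~\ref{glueingtheorem1} supplies a complete real hyperbolic orbifold $P\Gamma\setminus Y$ containing
\[
\coprod_{\alpha\in C\mr A}\bigl[P\Gamma_\alpha\setminus(\RR H^2_\alpha-\mr H)\bigr]
\]
as an open suborbifold. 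Classifying the $P\Gamma$-conjugacy classes of anti-unitary involutions on $\Lambda$ and matching the lattices $P\Gamma_\alpha$ with the arithmetic lattices $P\Gamma_i$ of Theorem~\ref{th:theorem01} shows that this open suborbifold coincides with $\mr M_0\sqcup\mr M_1\sqcup\mr M_2$.

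Second, I would promote this to a homeomorphism $|\ca M_s(\RR)|\cong P\Gamma\setminus Y$ by a real period isomorphism (Theorem~\ref{th:realstableperiod}) in the style of Deligne--Mostow. The crucial point is that a strictly stable real quintic corresponds to a point of $\mr H\cap\RR H^2_\alpha$, and the glueing relation of Definition~\ref{def:conditions}, which identifies $x_\alpha$ with $x_\beta$ exactly when $\alpha\beta^{-1}\in G(x)$, reproduces the natural identifications along the discriminant: deforming a real quintic across a codimension-one boundary stratum conjugates the antiholomorphic involution by the complex reflection attached to the colliding pair of roots. Pulling back the path metric of Theorem~\ref{glueingtheorem1}.\ref{completepart} then endows $|\ca M_s(\RR)|$ with a complete hyperbolic metric extending the hyperbolic metrics on the components given by Theorem~\ref{th:theorem01}.

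Because any two real stable quintics are connected by a real path through the strictly stable locus, $|\ca M_s(\RR)|$, and hence $P\Gamma\setminus Y$, is connected, so Theorem~\ref{glueingtheorem1}.\ref{completepart} yields a lattice $P\Gamma_C\subset\textnormal{PO}(2,1)$ with $\overline{\mr M}_\RR\cong P\Gamma_C\setminus\RR H^2$. The hard part will be identifying $P\Gamma_C$ explicitly with the non-arithmetic triangle group $P\Gamma_{3,5,10}$. I would do this by enumerating the $P\Gamma$-orbits of points $f\in Y$ with one or two nodes and computing the stabilizers $A_f/B_f$ using the local models of Proposition~\ref{localmodel}: this should produce exactly three orbit types giving corner reflectors of $P\Gamma\setminus Y$ with dihedral isotropy of orders $3$, $5$ and $10$, attached to the three real binary quintics with largest automorphism groups. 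A Gauss--Bonnet consistency check against the orbifold Euler characteristic $-1/60$ of $\Delta_{3,5,10}$ would confirm that no other singularities occur, and the non-arithmeticity of the resulting group follows from Remark~\ref{remark:takeuchi} together with Takeuchi's classification of arithmetic triangle groups.
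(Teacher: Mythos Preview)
Your first two steps match the paper: the metric is obtained by transporting the orbifold structure of $P\Gamma\setminus Y$ (Theorem~\ref{glueingtheorem1}) across the real period homeomorphism of Theorem~\ref{th:realstableperiod}; this is Corollary~\ref{cor:theorem2} together with Definition~\ref{neworbifold}.

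The gap is in your identification of the corner reflectors. Only the order-$10$ corner arises from a nodal configuration: it is the point $(\infty,i,i,-i,-i)$ with one pair of complex conjugate nodes, where Proposition~\ref{localmodel}.\ref{casethree} enlarges the stabilizer from $\ZZ/2$ to $D_{10}$. The corners of orders $3$ and $5$ lie in the smooth locus: they are the smooth quintics $(-1,\infty,0,\omega,\omega^2)$ and $(0,-1,\infty,\lambda+1,\lambda)$ whose $\PGL_2(\RR)$-stabilizers are already $D_3$ and $D_5$ (Lemmas~\ref{lemma:D3} and~\ref{lemma:D5}); for such $f$ there are no nodes, $G(x)$ is trivial by Proposition~\ref{localmodel}.\ref{caseone}, and $\Gamma_f=A_f$ coincides with the automorphism group of the quintic. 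Enumerating only nodal orbits would therefore detect one corner reflector, not three. The paper proceeds instead by classifying all $\PGL_2(\RR)$-stabilizers of real stable quintics directly (Propositions~\ref{prop:allstabilizergroups}, \ref{prop:zmod2stabilizer}, \ref{prop:conesreflectors}), proving separately that $\va{\ca M_s(\RR)}$ is simply connected (Lemma~\ref{lem:simplyconnected}), and then invoking the classification of closed $2$-orbifolds by underlying space plus singular data. Your Gauss--Bonnet check cannot replace this last topological step: the orbifold Euler characteristic depends on $\chi$ of the underlying space as well as on the singular set, so matching it against $-1/60$ does not on its own determine the orbifold.
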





\begin{figure} 
\hspace*{-6.5cm}
    \includegraphics[scale=0.24]
    {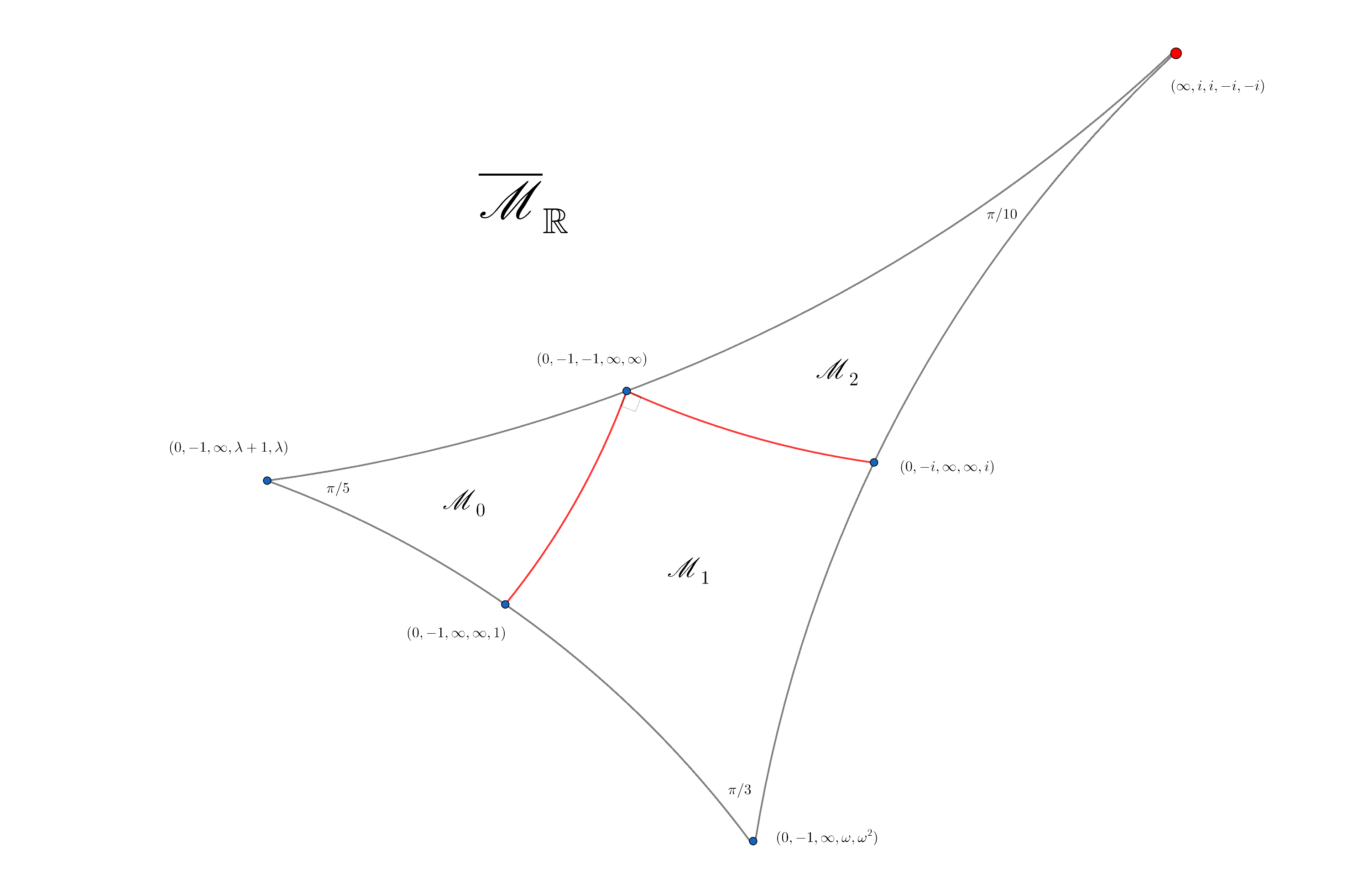}
    \caption{$\overline{\mr M}_\RR$ as the hyperbolic triangle $\Delta_{3,5,10} \subset \RR H^2$. Here $\lambda = \zeta_5 + \zeta_5^{-1}$ and $\omega = \zeta_3$.}
    \label{fig:triangle}
\end{figure}

\begin{remark} \label{remark:takeuchi}
The lattice $P\Gamma_{3,5,10} \subset \textnormal{PO}(2,1)$ is \textit{non-arithmetic}, as follows from \cite{takeuchi}. 
\end{remark}
\noindent
The isometry (\ref{isometry}) in Theorem \ref{th:theorem02} seems to provide $\overline{\mr M}_\RR$ with a hyperbolic orbifold structure. The proof of Theorem \ref{th:theorem02} actually goes in the other direction: we use the theory developed in the previous Chapter \ref{ch:glueing} to show that the pieces on the right hand side of (\ref{iso:smoothcase}) glue into the hyperbolic quotient on the right hand side of (\ref{isometry}), and afterwards, we prove that the period maps (\ref{iso:smoothcase}) glue to form (\ref{isometry}). 

By Theorem \ref{th:theorem02}, the topological space $\va{\ca M_s(\RR)}$ underlies two orbifold structures: the one of $\ca M_s(\RR)$ and the one of $\overline{\mr M}_\RR$. These structures only differ at one point of $\va{\ca M_s(\RR)}$, which is the point $(\infty, i, i, -i, -i)$ in Figure~\ref{fig:triangle}, see Proposition \ref{prop:conesreflectors}. 


As explained in Section \ref{intro:sub:periodmaps}, equivariant (occult) period maps are often used in real algebraic geometry to uniformize components of a real moduli space of smooth varieties. Think of real moduli of abelian varieties \cite{grossharris}, algebraic curves \cite{seppalasilhol2}, K3 surfaces \cite{Nikulin1980} and quartic curves \cite{heckman2016hyperbolic}. Only recently, Allcock, Carlson and Toledo have shown that for moduli of cubic surfaces \cite{realACTsurfaces} and binary sextics \cite{realACTnonarithmetic, realACTbinarysextics}, the real ball quotient components can be glued along the hyperplane arrangement to form a larger ball quotient that parametrizes moduli of stable varieties. Binary quintics provide the first new example of this phenomenon. 

\begin{remark} Our glueing construction relies on Condition \ref{conditionast}, saying that $\mr H \subset \CC H^n$ is an \textit{orthogonal arrangement} in the sense of \cite{orthogonalarrangements}. Such arrangements are interesting in their own right. Indeed, 
if $n > 1$ then the orbifold fundamental $\pi_1^\textnormal{orb} \left( P\Gamma \setminus \left( \CC H^n - \mr H \right) \right)$ is not a lattice in any Lie group with finitely many connected components [\emph{loc.cit.}, Theorem 1.2]. In particular, neither $\pi_1 \left( P_0 / \mf S_5  \right)$ nor $\pi_1^\textnormal{orb}\left( \mr M_\CC \right)$ is a lattice in any Lie group with finitely many connected components. 
\end{remark}

\section{Moduli of complex binary quintics} \label{complexball}

Recall from Section \ref{realbinaryintroduction} that $X \cong \bb A^6_\RR$ is the real affine space of homogeneous degree $5$ polynomials $F \in \RR[x,y]$, $X_0$ the subvariety of polynomials with distinct roots, and $X_s \subset X$ the subvariety of polynomials with roots of multiplicity at most two, i.e. non-zero polynomials whose class in the associated projective space is stable in the sense of geometric invariant theory \cite{GIT} for the action of $\SL_{2, \RR}$ on it. 
\begin{notation} \label{binarynotation}
In Chapter \ref{ch:binaryquintics}, the CM field $K$ is the cyclotomic field $ \QQ(\zeta)$, with $\zeta = \zeta_5 = e^{2 \pi i /5} \in \CC$. Recall that the ring of integers $\OO_K$ of $K$ is the ring $\ZZ[\zeta]$ \cite[Chapter I, Proposition (10.2)]{Neukirch}. We let $\mu_K \subset \OO_K^\ast$ denote the group of finite units as before; in this case $\mu_K$ is generated by $-\zeta$, and is therefore of order ten.  
\end{notation}
\noindent
The goal of this Section \ref{complexball} is to prove that there exists a hermitian $\OO_K$-lattice $\Lambda$ of rank three, such that, if $\Gamma = \Aut(\Lambda)$, $\GG(\CC) = \GL_2(\CC)/D$, and $\mr H \subset \CC H^n$ is the hyperplane arrangement defined by the norm one vectors in $\Lambda$, then there is an isomorphism of analytic spaces $\ca M_s(\CC) =  \GG(\CC) \setminus X_s(\CC) \cong P\Gamma \setminus \CC H^2$ restricting to an orbifold isomorphism $\ca M_0(\CC) =  \GG(\CC) \setminus X_0(\CC) \cong P\Gamma \setminus \left( \CC H^2 - \mr H \right)$. 

\subsection{The Jacobian of a cyclic quintic cover of the projective line} \label{jacofcyc}

We begin with the following:
\begin{lemma} \label{lemma:refinedhodge}
Let $Z \subset \bb P^1_\CC$ be a smooth quintic hypersurface. Let $ \bb P^2_\CC \supset C \to \PP^1_\CC$ be the quintic cover of $\bb P^1$ ramified along $Z$. Then $C$ has the following refined Hodge numbers:
\begin{align}
    h^{1,0}(C)_{\zeta} = 3, \white h^{1,0}(C)_{\zeta^2} = 2, \white 
    h^{1,0}(C)_{\zeta^3} = 1, \white 
    h^{1,0}(C)_{\zeta^{4}} = 0  \\
    h^{0,1}(C)_{\zeta} = 0, \white \rm H^{0,1}(C)_{\zeta^2} = 1, \white 
    h^{0,1}(C)_{\zeta^3} = 2, \white 
    h^{0,1}(C)_{\zeta^{4}} = 3. 
\end{align}
\end{lemma}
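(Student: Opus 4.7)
The plan is to compute the $\mu_5$-eigenspace decomposition of $\rm H^1(C, \OO_C)$ via the pushforward along $\pi \colon C \to \PP^1_\CC$, and then obtain $\rm H^{1,0}(C)$ from Hodge symmetry. First, I would present $C$ as the $\mu_5$-cover defined by an equation of the form $y^5 = F(x_0, x_1)$, where $F$ is a homogeneous quintic cutting out $Z$. This displays $F$ as a section of $\OO_{\PP^1}(5) = L^{\otimes 5}$ for $L = \OO_{\PP^1}(1)$, which is the data needed to invoke the standard structure theorem for cyclic covers.

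Second, by that structure theorem, there is a canonical $\mu_5$-equivariant decomposition
\[
\pi_\ast \OO_C \;=\; \bigoplus_{i = 0}^{4} \OO_{\PP^1}(-i),
\]
where a chosen generator $\sigma \in \mu_5$ acts on the $i$-th summand by $\zeta^i$. Since $\pi$ is finite, the Leray spectral sequence degenerates and yields, in the eigenspace labelled by $\zeta^i$,
\[
\rm H^1(C, \OO_C)_{\zeta^i} \;\cong\; \rm H^1(\PP^1, \OO_{\PP^1}(-i)).
\]
A direct computation (or Serre duality) gives dimensions $0, 0, 1, 2, 3$ for $i = 0, 1, 2, 3, 4$ respectively, matching the stated values of $h^{0,1}(C)_{\zeta^i}$ for $i \in \{1, 2, 3, 4\}$.

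Third, complex conjugation on $\rm H^1(C, \CC)$ exchanges $(1,0)$ with $(0,1)$ and sends the $\zeta^i$-eigenspace onto the $\zeta^{-i}$-eigenspace, so that
\[
h^{1,0}(C)_{\zeta^i} \;=\; h^{0,1}(C)_{\zeta^{5-i}},
\]
which gives the remaining four numbers. As a consistency check, Riemann--Hurwitz yields $g(C) = 6$, and indeed $3+2+1+0 = 6 = 0+1+2+3$.

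The only real point requiring care is the structure theorem for $\pi_\ast \OO_C$ and the identification of the $\mu_5$-action on its summands; once this is in place, the rest is a one-line cohomology computation on $\PP^1$.
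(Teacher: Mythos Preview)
Your proof is correct. The paper, by contrast, does not compute anything: it simply invokes the Hurwitz--Chevalley--Weil formula (citing \cite[Proposition 5.9]{Moonen2011TheTL} and \cite[Section 5]{carlsontoledomonodromy}), which gives the character of the $\mu_n$-action on $\rm H^0(C,\Omega^1_C)$ for any cyclic cover of curves. Your approach is a genuinely different, more elementary route specific to cyclic covers of $\PP^1$: the decomposition $\pi_\ast\OO_C \cong \bigoplus_{i=0}^{4}\OO_{\PP^1}(-i)$ together with the cohomology of line bundles on $\PP^1$ gives $h^{0,1}_{\zeta^i}$ directly, and Hodge symmetry does the rest. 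What your argument buys is self-containedness and transparency (one sees exactly where each number comes from), at the cost of being tailored to $\PP^1$; the Hurwitz--Chevalley--Weil citation is shorter and applies uniformly to all cyclic covers of curves, but is a black box here. Your consistency check via Riemann--Hurwitz ($g=6$) is a nice touch; one can also verify the $h^{1,0}$ numbers independently by writing down the adjunction basis $x^ay^bz^c/z^4\,dx$ with $a+b+c=2$ for the plane quintic, which confirms your eigenspace labeling.
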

\begin{proof}
This follows from the Hurwitz-Chevalley-Weil formula, see \cite[Proposition 5.9]{Moonen2011TheTL}. Alternatively, see \cite[Section 5]{carlsontoledomonodromy}. 
\end{proof}
\noindent
Fix a point $F_0 \in X_0(\CC)$ and let 
\begin{equation} \label{eq:coverp1}
C = \{z^5 = F_0(x,y) \} \subset \PP^2_\CC
\end{equation}
be the corresponding cyclic cover of $\PP^1_\CC$. 
Let 
\[
\left(A = J(C) = \text{Pic}^0(C), \quad \lambda \colon A \to \wh A, \quad \iota \colon \OO_K = \ZZ[\zeta] \to \End(A) \right)
\]
be the Jacobian of $C$, viewed as a principally polarized abelian variety of dimension six equipped with an $\OO_K$-action compatible with the polarization, see (\ref{KRconditions}). 

Write $\Lambda = \rm H_1(A(\CC), \ZZ)$. We have $\Lambda \otimes_\ZZ \CC = \rm H^{-1,0} \oplus \rm H^{0,-1}$, the Hodge decomposition of $\Lambda \otimes_\ZZ \CC$. 
Define a CM-type $\Psi \subset \Hom(K, \CC)$ as follows:
\begin{align}\label{CMTYPE}
\tau_i: K \to \CC, \quad \tau_1(\zeta) = \zeta^3, \quad  \tau_2(\zeta) = \zeta^4; \quad \quad \Psi = \set{\tau_1, \tau_2}. 
\end{align}
Since 
$
\rm H^{-1,0} = \Lie(A) = \rm H^1(C, \OO_C) = \rm H^{0,1}(C)$, Lemma \ref{lemma:refinedhodge} implies that
\begin{equation} \label{eq:signaturefivepoints2}
\dim_\CC \rm H^{-1,0}_{\tau_1} = 2, \white \dim_\CC \rm H^{-1,0}_{\tau_1\sigma} = 1, \white \dim_\CC \rm H^{-1,0}_{\tau_2} = 3, \white \dim_\CC \rm H^{-1,0}_{\tau_2\sigma} = 0.  
\end{equation}
Define $\eta = 5/(\zeta - \zeta^{-1})$. Then $\mf D_K = (\eta)$ (see Lemma \ref{lemma:discr}). Let 
\[E: \Lambda \times \Lambda \to \ZZ\] 
be the alternating form corresponding to the polarization $\lambda$ of the abelian variety $A$. For $a \in \OO_K$ and $x,y \in \Lambda$, we have $E(\iota(a)x,y) = E(x, \iota(a^\sigma)y)$. Define 
\[
T \colon \Lambda \times \Lambda \to \mf D_K^{-1}, \quad T(x,y) = \frac{1}{5}\sum_{j = 0}^{4}\zeta^jE\left( x, \iota(\zeta)^j y \right).
\]
By Example \ref{examplesunitary}.\ref{ex:unittwo}, this is the skew-hermitian form corresponding to $E$ via Lemma \ref{lemma:equivalentforms}. We obtain a hermitian form on the free $\OO_K$-module $\Lambda$ as follows:
\begin{equation} \label{eq:hermitianformonbinaryquinticlattice}
    \mf h: \Lambda \times \Lambda \to \OO_K, \white
\mf h(x,y) = \eta T(x,y) = (\zeta - \zeta^{-1})^{-1}\sum_{j = 0}^{4}\zeta^jE\left( x, \iota(\zeta)^j y \right). 
\end{equation}
By Lemma \ref{lemma:equivalentforms}, the hermitian lattice $(\Lambda, \mf h)$ is unimodular, because $(\Lambda, E)$ is unimodular. 
For each embedding $\varphi: K \to \CC$, the restriction of the hermitian form $\varphi(\eta)\cdot E_\CC(x, \bar y)$ on $\Lambda_\CC$ 
to $(\Lambda_\CC)_{\varphi} \subset \Lambda_\CC$ coincides with 
$
\mf{h}^{\varphi}$  
by Lemma \ref{lemma:agree}. 
Since $\Im (\tau_i(\zeta - \zeta^{-1})) < 0$ for $i = 1,2$,  
the signature of $\mf{h}^{\tau_i}$ on $V_i = \Lambda \otimes_{\OO_K, \tau_i}\CC $ is 
\begin{align} \label{quintic-cases}
\tn{sign}(\mf{h}^{\tau_i}) = 
\begin{cases}
(\rm h^{-1,0}_{\tau_1}, h^{0,-1}_{\tau_1}) = (2,1) & \textnormal{ for } i = 1, \quad \textnormal{ and } \\
(\rm h^{-1,0}_{\tau_2}, h^{0,-1}_{\tau_2}) = (3,0)& \textnormal{ for } i = 2. 
\end{cases}
\end{align}

\subsection{The monodromy representation} \label{sec:monodromy}

Consider the real algebraic variety $X_0$ introduced in Section \ref{realbinaryintroduction}. 
Let $D \subset \GL_2(\CC)$ be the subgroup $D = \set{\zeta^i\cdot I_2 } \subset \GL_2(\CC)$ of scalar matrices $\zeta^i \cdot I_2$, where $I_2 \in \GL_2(\CC)$ is the identity matrix of rank two, and define
\begin{align} \label{def:GC}
\bb G(\CC) = \GL_2(\CC)/D.  
\end{align}
The group $\GG(\CC)$ acts from the left on $X_0(\CC)$ in the following way: if $F(x,y) \in \CC[x,y]$ is a binary quintic, we may view $F$ as a function $\CC^2 \to \CC$, and define $g \cdot F = F(g^{-1})$ for $g \in \GG(\CC)$. This gives a canonical isomorphism of complex analytic orbifolds
\[
\ca M_0(\CC) = \GG(\CC) \setminus X_0(\CC),
\]
where $\ca M_0$ is the moduli stack of smooth binary quintics. 

Consider two families 
\[
\pi: \mr C \to X_0 \quad \tn{ and } \quad  \phi: J \to X_0,
\] 
defined as follows. We define $\pi$ as the universal family of cyclic covers $C \to \PP^1$ ramified along a smooth binary quintic $\{F = 0\} \subset \PP^1$. We let $\phi$ be the relative Jacobian of $\pi$. By Section \ref{jacofcyc}, $\phi$ is an abelian scheme of relative dimension six over $X_0$, with $\OO_K$-action of signature $\{(2,1), (3,0)\}$ with respect to $\Psi = \{\tau_1, \tau_2\}$. 

Let $\bb V = R^1\pi_\ast \ZZ$ be the local system of hermitian $\OO_K$-modules underlying the abelian scheme $J/X_0$. Attached to $\bb V$, we have a representation 
\[
\rho': \pi_1(X_0(\CC), F_0) \to \Gamma, \quad \Gamma = \Aut_{\OO_K}(\Lambda, \mf h),
\] whose composition with the quotient map $\Gamma \to P\Gamma = \Gamma / \mu_K$ defines a homomorphism 
\begin{equation} \label{eq:monodromy}
\rho: \pi_1(X_0(\CC), F_0) \to P\Gamma.
\end{equation}
We shall see that $\rho$ is surjective, see Corollary \ref{cor:surjectivemon} below. 

\subsection{Marked binary quintics} \label{sec:markedbinary}

For $F  \in X_0(\CC)$, define $Z_F$ as the hypersurface
\[
 Z_F = \{F = 0\} \subset \PP^1_\CC.
\] 
A \textit{marking} of $F$ is a ring isomorphism $m: \rm H^0(Z_F(\CC), \ZZ) \xrightarrow{\sim} \ZZ^5$. To give a marking is to give a labelling of the points $p \in Z_F(\CC)$. 
Let $\ca N_0$ be the space of marked binary quintics $(F, m)$; this is a manifold, equipped with a holomorphic map 
\begin{align}\label{markedquintics}
\ca N_0 \to X_0(\CC). 
\end{align}Let
\[
\psi: \ca Z \to X_0(\CC)
\]
be the universal complex binary quintic, and consider the local system $H = \psi_\ast \ZZ$ of stalk $H_F = \rm H^0(Z_F(\CC), \ZZ)$ for $F \in X_0(\CC)$. Then $H$ corresponds to a monodromy representation
\begin{equation}\label{eq:monodromy3}
\tau: \pi_1(X_0(\CC), F_0) \to \mf S_5. 
\end{equation}
It can be shown that $\tau$ is surjective using the results of \cite{beauvillemonodromie}. This implies that (\ref{markedquintics}) is covering space, i.e. that $\ca N_0$ is connected. 

If we choose a marking $m_0: \rm H^0(Z_{F_0}(\CC), \ZZ) \cong \ZZ^5$ lying over our base point $F_0 \in X_0(\CC)$, we obtain an embedding 
$
\pi_1 \left( \ca N_0, m_0 \right) \hookrightarrow  \pi_1(X_0(\CC), F_0)$, 
%
whose composition with $\rho$ in (\ref{eq:monodromy}) defines a homomorphism 
\begin{equation}\label{eq:monodromy2}
\mu: \pi_1(\ca N_0, m_0) \to P\Gamma. 
\end{equation}
Define $\theta = \zeta - \zeta^{-1}$ and consider the $3$-dimensional $\bb F_5$ vector space $\Lambda/\theta\Lambda$ and the quadratic space 
$$
W \coloneqq \left(\Lambda/\theta\Lambda,q\right),
$$ where $q$ is the quadratic form obtained by reducing $\mf h$ modulo $\theta\Lambda$. Define two groups $\Gamma_\theta$ and $P\Gamma_\theta$ as follows:
\[
\Gamma_\theta = \Ker\left( \Gamma \to \Aut(W) \right), \quad P\Gamma_\theta = \Ker\left( P\Gamma \to P\Aut(W) \right) \subset \text{PU}(2,1). 
\]
Remark that the composition $\ca N_0 \to X_0(\CC) \to X_s(\CC)$ admits an essentially unique \textit{completion} $\ca N_s \to X_s(\CC)$, see \cite{fox} or \cite[\S8]{DeligneMostow}. Here $\ca N_s$ a manifold and $\ca M_s \to X_s(\CC)$ is a ramified covering space. 
\begin{proposition} \label{prop:commutativemonodromy}
The image of $\mu$ in (\ref{eq:monodromy2}) is the group $P\Gamma_\theta$, and the induced homomorphism $ \pi_1(X_0(\CC), F_0)/ \pi_1 \left( \ca N_0, m_0 \right)= \mf S_5 \to P\Gamma /P\Gamma_\theta$ is an isomorphism. In other words, we obtain the following commutative diagram with exact rows:
\begin{equation} \label{eq:commutativemonodromy}
    \xymatrix{
    0 \ar[r] & \pi_1(\ca N_0, m_0) \ar@{->>}[d]^\mu\ar[r] & \pi_1(X_0(\CC), F_0)\ar[d]^\rho \ar[r]^{\;\;\;\;\;\; \tau} & \mf S_5\ar[d]^*[@]{\sim}_\gamma \ar[r] & 0 \\
    0 \ar[r] & P\Gamma_\theta \ar[r] & P\Gamma \ar[r] & P\Aut(W) \ar[r] & 0.
    }
\end{equation}

\begin{proof}
Consider the quotient 
\[
Q = \GG(\CC) \setminus \ca N_0 = \PGL_2(\CC) \setminus P_0(\CC),\]
where $P_0 \subset (\PP^1_\RR)^5$ is the subvariety of distinct five-tupes, see Section \ref{ch:binaryquintics}. Let $0 \in Q$ be the image of $m_0 \in \ca N_0$. In \cite{DeligneMostow}, Deligne and Mostow define a hermitian space bundle $B_Q \to Q$ over $Q$ whose fiber over $0 \in Q$ is $\CC H^2$. Consequently, writing $V_1 = \Lambda \otimes_{\OO_K, \tau_1} \CC$, this gives a monodromy representation $$\pi_1(Q,0) \to \textnormal{PU}(V_1, \mf h^{\tau_1}) \cong \textnormal{PU}(2,1)$$ whose image we denote by $\Gamma_{\text{DM}}$. Kondō has shown that in fact, $\Gamma_{\text{DM}} = P\Gamma_\theta$ \cite[Theorem 7.1]{kondo5points}. Since $\ca N_0 \to Q$ is a covering space (the action of $\GG(\CC)$ on $\ca N_0$ being free) we have an embedding $\pi_1(\ca N_0, m_0) \hookrightarrow \pi_1(Q, 0)$ whose composition with $\pi_1(Q,0) \to\textnormal{PU}(2,1)$ is the map $\mu:\pi_1(\ca N_0, m_0) \to P\Gamma \subset  \textnormal{PU}(2,1)$. 

To prove that the image of $\mu$ is $P\Gamma_\theta$, it suffices to give a section of the map $\ca N_0 \to Q$. Indeed, such a section induces a retraction of $\pi_1(\ca N_0, m_0) \hookrightarrow \pi_1(Q, 0)$, so that the images of these two groups in $\textnormal{PU}(2,1)$ are the same. 
\newpage
\noindent
To define such a section, observe that if $\Delta \subset \PP^1(\CC)^5$ is the union of all hyperplanes $\{x_i = x_j\} \subset \PP^1(\CC)^5$ for $i \neq j$, then 
\begin{align*}
Q = \PGL_2(\CC) \setminus P_0(\CC) &= \PGL_2(\CC) \setminus \left( \PP^1(\CC)^5  - \Delta \right) \\
&\cong \{(x_4,x_5) \in \CC^2 : x_i \neq 0,1 \textnormal{ and } x_1 \neq x_2 \}.
\end{align*}
The section $Q \to \ca N_0$ may then be defined by sending $(x_4, x_5)$ to the binary quintic 
\[
F(x,y) = x(x-y)y(x-x_4\cdot y)(x-x_5\cdot y) \in X_0(\CC),
\]
marked by the labelling of its roots $\{[0:1],[1:1], [1:0], [x_4:1], [x_5:1]\}$. 

It remains to prove that the homomorphism $\gamma: \mf S_5 \to P\Gamma/P\Gamma_\theta$ appearing on the right in (\ref{eq:commutativemonodromy}) is an isomorphism. 
We use Theorem \ref{th:calculatemonodromyshimura}, proven by Shimura in \cite{shimuratranscendental}, which says that $$( \Lambda, \mf h ) \cong \left( \OO_K^3, \textnormal{diag}\left(1,1, \frac{1 - \sqrt 5}{2}\right)\right).$$ It follows that $$P\Gamma/P\Gamma_\theta = P\Aut(W) \cong \textnormal{PO}_3(\bb F_5) \cong \mf S_5.$$ Next, consider the manifold $\ca N_s$. Remark that $\mf S_5$ embeds into $\Aut(\GG(\CC) \setminus \ca N_s)$. Moreover, there is a natural isomorpism 
\[
p\colon \GG(\CC) \setminus \ca N_s \cong P\Gamma_\theta \setminus \CC H^2, \quad \quad \tn{see \cite{DeligneMostow, kondo5points}.}
\]
(See also (\ref{eq:periodframed}).) The two compositions $$\mf S_5 \subset \Aut(\GG(\CC) \setminus \ca N_s) \cong \Aut(P\Gamma_\theta \setminus \CC H^2) \; \tn{ and } \; \mf S_5 \to P\Gamma/P\Gamma_\theta \subset \Aut(P\Gamma_\theta \setminus \CC H^2)$$ agree, because of the equivariance of $p $ with respect to $\gamma$. Thus, $\gamma$ is injective.  
\end{proof}

\end{proposition}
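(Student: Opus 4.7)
The plan is to exploit the Deligne--Mostow description of the moduli of ordered five-point configurations on $\PP^1$, combined with Kondō's monodromy computation and Shimura's explicit diagonalisation of $(\Lambda,\mf h)$. I will treat the two assertions of the proposition separately but in the same framework: the ball quotient $P\Gamma_\theta \setminus \CC H^2$ provides the bridge between the combinatorics of five marked points and the arithmetic of the hermitian lattice.

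First I would identify the quotient $Q = \PGL_2(\CC) \setminus P_0(\CC) = \GG(\CC) \setminus \ca N_0$ with the Deligne--Mostow configuration space, and recall (from \cite{DeligneMostow} and its refinement by Kondō \cite{kondo5points}) that the period map for the family of cyclic quintic covers over $Q$ has image equal to the full group $P\Gamma_\theta \subset \textnormal{PU}(V_1,\mf h^{\tau_1})\cong \textnormal{PU}(2,1)$. Since $\GG(\CC)$ acts freely on $\ca N_0$, the projection $\ca N_0 \to Q$ is a covering map, giving an injection $\pi_1(\ca N_0, m_0) \hookrightarrow \pi_1(Q, 0)$ whose composition with the Kondō monodromy is $\mu$. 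To upgrade the inclusion $\mu(\pi_1(\ca N_0, m_0)) \subset P\Gamma_\theta$ to equality I would construct an explicit continuous section $Q \to \ca N_0$: over the open set where three coordinates are normalised to $0,1,\infty$, the quintic $F(x,y) = x(x-y)y(x-x_4 y)(x-x_5 y)$ with its canonical labelling of roots provides such a section, hence a retraction of fundamental groups, forcing $\mu$ to surject onto $P\Gamma_\theta$.

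For the second assertion I would invoke Shimura's Theorem \ref{th:calculatemonodromyshimura}, which identifies $(\Lambda, \mf h)$ with $(\OO_K^3, \textnormal{diag}(1,1,(1-\sqrt{5})/2))$. Reducing modulo $\theta = \zeta - \zeta^{-1}$ shows that $P\Gamma/P\Gamma_\theta \cong \textnormal{PO}_3(\bb F_5)$, and an elementary order count (or the classical isomorphism with the symmetry group of the icosahedron) gives $\textnormal{PO}_3(\bb F_5) \cong \mf S_5$. The homomorphism $\gamma\colon \mf S_5 \to P\Gamma/P\Gamma_\theta$ from diagram (\ref{eq:commutativemonodromy}) is then a map between groups of the same finite order, so it suffices to prove injectivity.

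The hard part will be this last injectivity, since the map is defined through monodromy on an auxiliary marked family. My proposed strategy is to realise both sides as automorphism groups of $P\Gamma_\theta \setminus \CC H^2$: on one hand, $\mf S_5$ acts on $\GG(\CC)\setminus\ca N_s$ by permutation of markings, and the Deligne--Mostow--Kondō period isomorphism $\GG(\CC)\setminus\ca N_s \cong P\Gamma_\theta \setminus \CC H^2$ transports this to an action on the ball quotient; on the other hand, $P\Gamma/P\Gamma_\theta$ acts on $P\Gamma_\theta \setminus \CC H^2$ by deck transformations of the cover to $P\Gamma \setminus \CC H^2$. The $\gamma$-equivariance of the period isomorphism means these two actions coincide, so $\gamma$ is the composition of an injection with an isomorphism and is therefore injective. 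This concludes the proof; the only genuine input beyond soft arguments is Kondō's monodromy theorem and Shimura's lattice computation, both of which I would quote as black boxes.
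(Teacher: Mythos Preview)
Your proposal is correct and follows essentially the same approach as the paper's own proof: the same identification of $Q$ with the Deligne--Mostow configuration space, the same appeal to Kond\=o's monodromy result, the same explicit section $F(x,y)=x(x-y)y(x-x_4 y)(x-x_5 y)$ to obtain the retraction, and the same injectivity argument for $\gamma$ via the action of both groups on $P\Gamma_\theta\setminus\CC H^2$ together with the equivariance of the period isomorphism.
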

\begin{corollary} \label{cor:surjectivemon}
The monodromy representation $\rho$ in (\ref{eq:monodromy}) is surjective. $\hfill \qed$
\end{corollary}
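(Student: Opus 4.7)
The plan is to deduce surjectivity of $\rho$ directly from the commutative diagram with exact rows established in Proposition~\ref{prop:commutativemonodromy}. Since the vertical arrows on the left and right are both surjective (the left one, $\mu$, surjects onto $P\Gamma_\theta$ by the first assertion of the proposition, and the right one, $\gamma$, is even an isomorphism onto $P\Gamma/P\Gamma_\theta$), the middle arrow $\rho$ has to be surjective as well.

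Concretely, I would argue as follows. Let $g \in P\Gamma$, and let $\bar g \in P\Gamma/P\Gamma_\theta$ denote its image. Because $\gamma$ is an isomorphism, there is an element $s \in \mf S_5$ with $\gamma(s) = \bar g$. Using surjectivity of $\tau$ (which was noted in Section~\ref{sec:markedbinary} via \cite{beauvillemonodromie}), choose $\tilde s \in \pi_1(X_0(\CC), F_0)$ mapping to $s$. Then $\rho(\tilde s) g^{-1}$ lies in $P\Gamma_\theta$ by commutativity of the right-hand square, so by surjectivity of $\mu$ there is an element $n \in \pi_1(\ca N_0, m_0)$ with $\mu(n) = \rho(\tilde s) g^{-1}$. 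Commutativity of the left-hand square gives $\rho(n) = \rho(\tilde s) g^{-1}$, hence $g = \rho(n^{-1} \tilde s)$ lies in the image of $\rho$.

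There is no real obstacle here: the corollary is a purely formal consequence of Proposition~\ref{prop:commutativemonodromy}, and the argument is just the standard snake/five-lemma style diagram chase carried out on the rightmost two columns of (\ref{eq:commutativemonodromy}). All the substantive work — the identification of the image of $\mu$ with $P\Gamma_\theta$ via Kondō's result and the construction of a section $Q \to \ca N_0$, together with Shimura's computation of $(\Lambda, \mf h)$ giving $P\Gamma/P\Gamma_\theta \cong \mf S_5$ — has already been done in the proof of the proposition.
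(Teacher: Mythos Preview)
Your argument is correct and is exactly what the paper intends: the corollary is stated with a bare $\qed$ immediately after the statement, indicating that it is meant to follow formally from the commutative diagram with exact rows in Proposition~\ref{prop:commutativemonodromy}. Your diagram chase spelling out why surjectivity of $\mu$ together with the isomorphism $\gamma$ forces surjectivity of $\rho$ is the intended (and only reasonable) reasoning; the paper simply omits it as routine.
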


\subsection{Framed binary quintics}

By a \textit{framing} of a point $F \in X_0(\CC)$ we mean a projective equivalence class $[f]$, where \[
f \colon  \VV_F = \rm H^1(C_F(\CC), \ZZ) \to \Lambda\] is an $\OO_K$-linear isometry: two such isometries are in the same class if and only if they differ by an element in $\mu_K$. Let $\ca F_0$ be the collection of all framings of all points $x \in X_0(\CC)$. The set $\ca F_0$ is naturally a complex manifold, by arguments similar to those in \cite{ACTsurfaces}. 
Note that Corollary \ref{cor:surjectivemon} implies that $\ca F_0$ is connected, hence 
\begin{align}\label{framedcovering}
\ca F_0 \to X_0(\CC)
\end{align} is a covering, with Galois group $P\Gamma$. 
\begin{lemma} \label{lemma:isomorphiccoveringspaces}
The spaces $P\Gamma_\theta \setminus \ca F_0$ and $ \ca N_0$ are isomorphic as covering spaces of $X_0(\CC)$. In particular, there is a covering map $\ca F_0 \to \ca N_0$ with Galois group $P\Gamma_\theta$. 
\end{lemma}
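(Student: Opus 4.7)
The plan is to apply the Galois correspondence between connected coverings of $X_0(\CC)$ and subgroups of $\pi_1(X_0(\CC), F_0)$. Both $P\Gamma_\theta \setminus \ca F_0 \to X_0(\CC)$ and $\ca N_0 \to X_0(\CC)$ are connected covering spaces, so to identify them it suffices to identify the associated subgroups of $\pi_1(X_0(\CC), F_0)$, and to observe that an isomorphism between these subgroups can be promoted to an isomorphism of coverings once a base point is chosen compatibly on both sides.

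First I would identify the subgroup attached to each covering. By Corollary \ref{cor:surjectivemon}, the covering $\ca F_0 \to X_0(\CC)$ is Galois with deck group $P\Gamma$ and monodromy $\rho$, so it corresponds to $\ker(\rho)$; hence the intermediate covering $P\Gamma_\theta \setminus \ca F_0 \to X_0(\CC)$ corresponds to $\rho^{-1}(P\Gamma_\theta) \subset \pi_1(X_0(\CC), F_0)$. On the other hand, by the discussion in Section \ref{sec:markedbinary}, $\ca N_0 \to X_0(\CC)$ is the connected Galois covering with deck group $\mf S_5$ and monodromy $\tau$, so it corresponds to $\ker(\tau) = \pi_1(\ca N_0, m_0)$.

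Next I would use the commutative diagram \eqref{eq:commutativemonodromy} from Proposition \ref{prop:commutativemonodromy}, whose rows are exact and whose rightmost vertical arrow $\gamma \colon \mf S_5 \to P\Gamma/P\Gamma_\theta$ is an isomorphism. Chasing this diagram gives
\[
\rho^{-1}(P\Gamma_\theta) = \tau^{-1}\bigl(\gamma^{-1}(1)\bigr) = \tau^{-1}(1) = \ker(\tau),
\]
so the two subgroups coincide. By the Galois correspondence there is therefore an isomorphism of covering spaces $P\Gamma_\theta \setminus \ca F_0 \cong \ca N_0$ over $X_0(\CC)$, uniquely determined once we choose lifts of $F_0$ on both sides that map to compatible points in $P\Gamma_\theta \setminus \ca F_0$. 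For the final assertion, since $P\Gamma_\theta \subset P\Gamma$ acts freely on $\ca F_0$, the composition $\ca F_0 \to P\Gamma_\theta \setminus \ca F_0 \cong \ca N_0$ is a Galois covering with Galois group $P\Gamma_\theta$.

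I do not expect a serious obstacle: all the hard work has been done in Proposition \ref{prop:commutativemonodromy}, where Kondō's identification of the Deligne--Mostow monodromy with $P\Gamma_\theta$ and Shimura's description of $(\Lambda, \mf h)$ were used to establish the diagram \eqref{eq:commutativemonodromy}. The remainder is a formal application of covering space theory.
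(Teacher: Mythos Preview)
Your proposal is correct and is essentially the paper's argument spelled out in full: the paper's proof consists of the single sentence ``We have $P\Gamma/P\Gamma_\theta \cong \mf S_5$ as quotients of $P\Gamma$, see Proposition \ref{prop:commutativemonodromy}'', and your diagram chase $\rho^{-1}(P\Gamma_\theta) = \ker(\tau)$ via Galois correspondence is exactly the covering-space unpacking of that remark.
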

\begin{proof}
We have $P\Gamma/P\Gamma_\theta \cong \mf S_5$ as quotients of $P\Gamma$, see Proposition \ref{prop:commutativemonodromy}. 
\end{proof}

\begin{lemma} \label{lemma:irreduciblenormal}
$\Delta \coloneqq X_s(\CC) - X_0(\CC)$ is an irreducible normal crossings divisor of $X_s(\CC)$.
\end{lemma}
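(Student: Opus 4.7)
My plan is to identify $\Delta$ with the vanishing locus of the binary-quintic discriminant inside $X_s(\CC)$ and then to analyze $\Delta$ stratum by stratum according to the number of double roots of a stable quintic.

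First, I will observe that a point $F \in X_s(\CC)$ lies in $\Delta$ if and only if $F$ has a multiple root, which, by stability, means $F$ has either exactly one or exactly two double roots (and no higher multiplicities). So, set-theoretically, $\Delta = \{F \in X_s(\CC) : \mathrm{disc}(F) = 0\}$, where $\mathrm{disc}$ denotes the usual discriminant of a binary form. For irreducibility, I would consider the incidence variety
\[
I = \{(F,[p]) \in X_s \times \PP^1_\CC : F(p) = F'(p) = 0\},
\]
note that the projection to $\PP^1_\CC$ exhibits $I$ as a Zariski-locally trivial affine-bundle (the two conditions $F(p)=F'(p)=0$ are linear in the six coefficients of $F$ and independent for every $[p]$), hence $I$ is irreducible, and then observe that the first projection $I \to X_s$ is proper with image exactly $\Delta$. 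Therefore $\Delta$, with its reduced structure, is irreducible.

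Next, to verify the normal crossings property, I fix a point $F_0\in\Delta$ and split into two cases. If $F_0$ has exactly one double root $p$, I would show that $d\,\mathrm{disc}(F_0)\neq 0$ by exhibiting a tangent vector along which the discriminant changes to first order (e.g.\ a deformation $F_0+\varepsilon G$ where $G$ vanishes to order exactly $1$ at $p$); this makes $\Delta$ smooth at $F_0$. If $F_0$ has two double roots $p\neq q$ and a simple root $r$, I would introduce local analytic coordinates on $X_s$ around $F_0$ by writing nearby stable quintics uniquely as
\[
F=c\cdot q_1(x,y)\cdot q_2(x,y)\cdot (x-r_5 y),
\]
where $q_1$ is the monic quadratic factor with roots near $p$, $q_2$ the monic quadratic factor with roots near $q$, and $r_5$ the simple root near $r$. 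Using the coefficients $(e_1,e_2)$ of $q_1$, the coefficients $(e_1',e_2')$ of $q_2$, the root $r_5$, and the leading scalar $c\in\CC^\ast$, I obtain six independent local coordinates. In these coordinates the two local branches of $\Delta$ through $F_0$ are cut out by $e_1^2-4e_2=0$ and $e_1'^2-4e_2'=0$ respectively, each defining a smooth hypersurface, and together meeting transversally because the two equations involve disjoint subsets of coordinates.

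The main (mild) obstacle is simply to justify that the factorization-based local coordinates above really give a local biholomorphism to $X_s$ near $F_0$; this follows from the fact that the product map
\[
\mathrm{Sym}^2(U_p)\times\mathrm{Sym}^2(U_q)\times U_r\times\CC^\ast\longrightarrow X_s,\quad (q_1,q_2,r_5,c)\mapsto c\cdot q_1 q_2 (x-r_5 y),
\]
has invertible differential at $F_0$, as one checks by computing the Jacobian using that $F_0$ has pairwise distinct root-loci $p,q,r$. Once this is in hand, the combination of the two cases yields that $\Delta$ is a normal crossings divisor, completing the proof.
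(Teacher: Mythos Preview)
Your proposal is correct. The irreducibility argument via the incidence variety is standard and clean, and the two-case normal crossings analysis is right: at a one-node quintic the discriminant is smooth (your first-order deformation argument shows $d\,\mathrm{disc}\neq 0$), while at a two-node quintic your factorization coordinates $(c,e_1,e_2,e_1',e_2',r_5)$ are genuine local analytic coordinates near $F_0$ (the Jacobian check goes through precisely because the three root-clusters $\{p\},\{q\},\{r\}$ are pairwise distinct), and in those coordinates the two branches $e_1^2-4e_2=0$ and $e_1'^2-4e_2'=0$ are visibly smooth and transverse.

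The paper itself does not spell this out: it simply points to the analogous statement for cubic surfaces in Beauville's paper and says the argument is similar. Your approach is therefore more self-contained than what the paper provides, though it is in the same spirit as the referenced argument (a direct local analysis of the discriminant stratified by singularity type). One small stylistic remark: for irreducibility you could alternatively note that the full discriminant hypersurface in $\bb A^6\setminus\{0\}$ is already irreducible (same incidence argument, with honest affine-space fibers), and then $\Delta$ is its intersection with the open set $X_s$; this avoids the minor wrinkle that in your formulation the fibers of $I\to\PP^1$ are only \emph{open} subsets of affine spaces. Either way the conclusion is the same.
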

\begin{proof}
The proof is similar to the proof of Proposition 6.7 in \cite{beauvillecubicsurfaces}. 
\end{proof}

\begin{lemma} \label{lemma:monodromy}
The local monodromy transformations of $\ca F_0 \to X_0(\CC)$ around every $x \in \Delta$ are of finite order. More precisely, if $x \in \Delta$ lies on the intersection of $k$ local components of $\Delta$, then the local monodromy group around $x$ is isomorphic to $(\ZZ/10)^k$. 
\end{lemma}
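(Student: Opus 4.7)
The strategy is to reduce via the normal crossings structure of $\Delta$ to computing a single local monodromy, identify that monodromy with a complex reflection $\phi_r$ of order $10$ in $P\Gamma$, and finally check that distinct branches give orthogonal roots.

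First, by Lemma \ref{lemma:irreduciblenormal} the divisor $\Delta$ has simple normal crossings, so around $x \in \Delta$ lying on $k$ local branches I may choose analytic coordinates on a polydisc $U \ni x$ for which $U \cap \Delta$ is a union of $k$ coordinate hyperplanes. Then $\pi_1(U \cap X_0(\CC)) \cong \ZZ^k$ is generated by commuting loops $\gamma_1, \dotsc, \gamma_k$, each going once around one branch, and the local monodromy group is the subgroup of $P\Gamma$ generated by $g_i \coloneqq \rho(\gamma_i)$. Since a stable binary quintic has at most two double roots ($2k + (5 - 2k) = 5$), we have $k \in \{1, 2\}$, and a generic point of the $i$-th branch corresponds to a quintic $F_0$ whose only non-simple root is the $i$-th merging pair.

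Next, I analyse a single branch. Near a generic point of the $i$-th branch, the universal cyclic $5$-cover $\pi \colon \mr C \to X_s$ is analytically isomorphic, after \'etale base change, to the suspension $(z, u, t) \mapsto t$ of the Brieskorn–Pham singularity $f(z, u) = z^5 - u^2$. The vanishing cohomology of the smoothing is therefore free of rank $\mu = (5-1)(2-1) = 4$ over $\ZZ$, which equals $[\ca O_K : \ZZ]$; the residual action of $\ca O_K$ (generated by $z \mapsto \zeta_5 z$) makes this vanishing cohomology a free $\ca O_K$-module of rank one, spanned by a class $r_i \in \Lambda$. By Brieskorn's Picard–Lefschetz formula, the Milnor monodromy on this rank-one vanishing summand acts with characteristic polynomial $\Phi_{10}(t)$, hence via multiplication by a primitive tenth root of unity, while acting trivially on the orthogonal complement in $\Lambda$. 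Self-duality of the vanishing cohomology for an isolated plane curve singularity forces $\mf h(r_i, r_i) = 1$, so $r_i$ is a short root, and after absorbing a $\mu_K$-scalar the monodromy $g_i$ equals the complex reflection $\phi_{r_i}$. In particular $g_i$ has order $10$ in $P\Gamma$, as computed in the proof of Lemma \ref{Gxlemma}.

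For the case $k = 2$, the two double roots lie at distinct points of $\PP^1$, so the two Brieskorn–Pham degenerations occur in disjoint analytic neighbourhoods of $C_{F_0}$ and the vanishing cycles $r_1, r_2$ have disjoint topological support. This forces $\mf h(r_1, r_2) = 0$, so by the commutator computation \eqref{reflectionscommute} the reflections $\phi_{r_1}$ and $\phi_{r_2}$ commute in $\Gamma$. Finally, orthogonality implies that $\bigcap_{i=1}^{k} H_{r_i} \neq \emptyset$, and Lemma \ref{Gxlemma} applied to any point of this intersection yields
\[
\langle \phi_{r_1}, \dotsc, \phi_{r_k} \rangle \cong (\ZZ/10)^k,
\]
which is the asserted local monodromy group.

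The main obstacle is the content of the middle paragraph: namely, matching the Brieskorn–Pham Milnor monodromy of $z^5 - u^2$ (together with its $\mu_5$-symmetry) against a specific complex reflection $\phi_{r_i}$ with $\mf h(r_i, r_i) = 1$. Writing down a suitable basis of vanishing cycles on the Milnor fiber, computing the intersection form, and fixing the exponent of $\zeta_{10}$ so that no extra twist by $\mu_K$ survives are the delicate steps; equivalently, one may invoke the extended period map $\ca F_s \to \CC H^2$ produced later in the chapter and match the local Picard–Lefschetz transformation with the standard complex reflection at a generic point of $\mr H$ via the Deligne–Mostow–Kond\=o identification.
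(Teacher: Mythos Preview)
Your core strategy matches the paper's: both arguments identify the local degeneration of the cyclic cover with the Brieskorn--Pham singularity $z^5 + u^2 = 0$, use that the Milnor monodromy has characteristic polynomial $\Phi_{10}$ (hence order $10$), and for $k=2$ invoke the orthogonal splitting of the vanishing cohomology coming from the two disjoint singular points. The paper simply cites \cite[Proposition 9.2]{DeligneMostow} and \cite[Proposition 6.1]{carlsontoledomonodromy} for the order-$10$ statement and then observes the orthogonal splitting; your write-up fills in the Brieskorn--Pham details more explicitly.

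Where you take an unnecessary detour is in trying to identify each $g_i$ with a specific complex reflection $\phi_{r_i}$ for a short root $r_i$, and then appealing to Lemma~\ref{Gxlemma}. For the present lemma this is not needed, and you yourself flag the delicate parts (that $\mf h(r_i,r_i)=1$, and that the exponent of $\zeta_{10}$ matches). The group structure $(\ZZ/10)^k$ follows directly from the orthogonal splitting: each $g_i$ acts on the $i$-th rank-four $\ZZ$-summand with characteristic polynomial $\Phi_{10}$ and trivially on the others, so $g_1^{a_1}\cdots g_k^{a_k} = \id$ forces $a_i \equiv 0 \pmod{10}$ for all $i$. Your justification that the vanishing lattice is generated by a norm-one vector (``self-duality forces $\mf h(r_i,r_i)=1$'') is not convincingly argued and is in any case superfluous here; that identification is what the paper establishes later, in Lemma~\ref{lemma:stabilizergroups}, via the period map.
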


\begin{proof}
See \cite[Proposition 9.2]{DeligneMostow} or \cite[Proposition 6.1]{carlsontoledomonodromy} for the generic case, i.e. when a quintic $Z = \{F = 0\} \subset \PP^1_\CC$ aquires only one node. In this case, the local equation of the singularity is $x^2 = 0$, hence the curve $C_F$ acquires a singularity of the form $y^5 + x^2 = 0$. If the quintic acquires two nodes, then $C_F$ acquires two such singularities; the vanishing cohomology splits as an orthogonal direct sum, hence the local monodromy transformations commute. 
\end{proof}

\noindent
In the following corollary, we let $D = \set{z \in \CC \mid \va{z} < 1}$ denote the open unit disc, and $D^\ast = D - \{0\}$ the punctured open unit disc. 

\begin{corollary} \label{cor:framedquintics}
There is an essentially unique branched cover $\pi: \ca F_s \to X_s(\CC)$, with $\ca F_s$ a complex manifold, 
such that for any $x \in \Delta$, any open $x \in U \subset X_s(\CC)$ with $U \cong D^k \times D^{6-k}$ and $U \cap X_0(\CC) \cong (D^\ast)^k \times D^{6-k}$, 
and any connected component $U'$ of $\pi^{-1}(U) \subset \ca F_s$, there is an isomorphism $U' \cong D^k \times D^{6-k}$ such that the composition 
\[
D^k \times D^{6-k} \cong U' \to U \cong D^6 \; \text{ is the map } \; (z_1, \dotsc, z_6) \mapsto (z_1^{r_1}, \dotsc, z_k^{r_k}, z_{k+1}, \dotsc, z_6). 
\]
\end{corollary}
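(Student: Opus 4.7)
The plan is to construct $\pi : \ca F_s \to X_s(\CC)$ by gluing explicit toric local models along the normal crossings divisor $\Delta$, relying on Lemmas~\ref{lemma:irreduciblenormal} and~\ref{lemma:monodromy} as the essential inputs. This is a classical Fox-type completion argument that becomes manageable because the local monodromy is finite.

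For the local analysis, I would fix $x \in \Delta$ lying on $k$ local components of $\Delta$ and, invoking Lemma~\ref{lemma:irreduciblenormal}, choose holomorphic coordinates $(z_1, \dotsc, z_6)$ identifying a neighbourhood $U \ni x$ with the polydisc $D^6$ so that $U \cap \Delta = \{z_1 \cdots z_k = 0\}$ and $U \cap X_0(\CC) \cong (D^\ast)^k \times D^{6-k}$. The fundamental group of the latter is the free abelian group $\ZZ^k$ generated by loops around each branch $\{z_i = 0\}$. By Lemma~\ref{lemma:monodromy}, the composition
\[
\ZZ^k \cong \pi_1(U \cap X_0(\CC)) \longrightarrow \pi_1(X_0(\CC), F_0) \xrightarrow{\rho} P\Gamma
\]
factors through the natural surjection $\ZZ^k \twoheadrightarrow (\ZZ/10)^k$ and identifies this quotient with a subgroup of $P\Gamma$. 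Since $P\Gamma$ acts freely and transitively on the fibers of the Galois cover $\ca F_0 \to X_0(\CC)$, every connected component $U'_0$ of $\pi^{-1}(U \cap X_0(\CC))$ is a single $(\ZZ/10)^k$-orbit, hence a connected $(\ZZ/10)^k$-Galois cover of $U \cap X_0(\CC)$ whose monodromy kernel is the full lattice $(10\ZZ)^k \subset \ZZ^k$. Such a cover is canonically isomorphic, as a cover, to the standard product map
\[
(w_1, \dotsc, w_k, z_{k+1}, \dotsc, z_6) \longmapsto (w_1^{10}, \dotsc, w_k^{10}, z_{k+1}, \dotsc, z_6)
\]
from $(D^\ast)^k \times D^{6-k}$ to itself. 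This map extends holomorphically across the locus $\{w_1 \cdots w_k = 0\}$ to a finite flat branched cover $D^k \times D^{6-k} \to D^k \times D^{6-k}$ of complex manifolds, providing the desired toric local extension with $r_1 = \dotsb = r_k = 10$.

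To globalise, I would cover $X_s(\CC)$ by polydisc charts of the above form and glue the local extensions along overlaps. On an overlap $U_1 \cap U_2$ the two constructed extensions of $\ca F_0$ agree over $U_1 \cap U_2 \cap X_0(\CC)$, and the resulting isomorphism extends uniquely across the branch locus because each local extension is a normal analytic space: any two finite branched covers of a complex manifold that coincide outside a proper analytic subset are canonically isomorphic. Patching produces a complex manifold $\ca F_s$ containing $\ca F_0$ as an open dense subset, together with a finite branched covering $\pi \colon \ca F_s \to X_s(\CC)$ of the required local shape. Essential uniqueness of the extension then follows from the same uniqueness of the local model.

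The main technical point is the product toric structure of each local component $U'_0$, that is, the decomposition of the covering as a fibre product of one-dimensional cyclic covers in the first $k$ coordinates. This is where Lemma~\ref{lemma:monodromy} is essential: it identifies the local monodromy group as the direct sum $(\ZZ/10)^k$, with one cyclic summand generated by the Picard--Lefschetz monodromy around each branch of $\Delta$ through $x$. This direct sum decomposition, together with the freeness of the $P\Gamma$-action on fibres, passes to the structure of the cover itself and yields the displayed product model.
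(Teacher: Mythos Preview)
Your proposal is correct and follows exactly the approach the paper intends: the paper's own proof is just a reference to \cite[Lemma 7.2]{beauvillecubicsurfaces}, \cite{fox}, and \cite[\S8]{DeligneMostow}, which is precisely the Fox completion argument you have written out. One small slip: the resulting map $\pi\colon \ca F_s \to X_s(\CC)$ is not a \emph{finite} branched cover globally (the deck group $P\Gamma$ is infinite), only each connected component $U'$ over a chart $U$ is finite over $U$; this does not affect your gluing argument, which works component by component via normality.
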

\begin{proof}
See \cite[Lemma 7.2]{beauvillecubicsurfaces}. See also \cite{fox} and \cite[Section 8]{DeligneMostow}.
\end{proof}
\noindent
The group $\GG(\CC) = \GL_2(\CC)/D$ (see (\ref{def:GC})) acts on $\ca F_0$ over its action on $X_0$. Explicitly, if $g \in \GG(\CC)$ and if $([\phi],  \phi: \VV_{F} \cong \Lambda)$ is a framing of $F \in X_0(\CC)$, then 
\[
\left([\phi \circ g^\ast], \phi \circ g^\ast: \VV_{g\cdot F} \to \Lambda\right)
\] is a framing of $g\cdot F \in X_0(\CC)$. This is a left action. 
The group $P\Gamma$ also acts on $\ca F_0$ from the left, and the actions of $P\Gamma$ and $\GG(\CC)$ on $\ca F_0$ commute. By functoriality of the Fox completion, the action of $\GG(\CC)$ on $\ca F_0$ extends to an action of $\GG(\CC)$ on $\ca F_s$.

\begin{lemma} \label{cor:freeaction}
The group $\GG(\CC) = \GL_2(\CC)/D$ acts freely on $\ca F_s$. 
\end{lemma}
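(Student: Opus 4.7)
The plan is to first prove freeness on the open dense locus $\ca F_0 \subset \ca F_s$ of smooth framed binary quintics by a direct monodromy argument based on Proposition~\ref{prop:commutativemonodromy}, and then extend the result across the preimage of the discriminant $\Delta$ using the explicit local description of the Fox completion given in Corollary~\ref{cor:framedquintics}.

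\textbf{Step~1 (freeness on $\ca F_0$).} Suppose $g \in \GG(\CC)$ fixes $(F,[\phi]) \in \ca F_0$. Unpacking the action, $g\cdot F = F$ in $X_0(\CC)$ and $\phi \circ g^{*} = \lambda\cdot \phi$ for some $\lambda \in \mu_K$, where $g^{*}$ is the automorphism induced by $g$ on $\VV_F = \rm H^1(C_F(\CC),\ZZ)$. Conjugated through the framing $\phi$, the element $g$ corresponds to the scalar $\lambda \in \mu_K \subset \Gamma$, so its image in $P\Gamma = \Gamma/\mu_K$ is trivial. By the commutative diagram~(\ref{eq:commutativemonodromy}) of Proposition~\ref{prop:commutativemonodromy}, the composition
\[
\Stab_{\GG(\CC)}(F) \longrightarrow P\Gamma \longrightarrow P\Gamma/P\Gamma_\theta \xrightarrow{\;\sim\;} \mf S_5
\]
agrees with the natural permutation action of $\Stab_{\GG(\CC)}(F)$ on the five distinct roots of $F$. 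Triviality of $g$ in $P\Gamma$ therefore forces this permutation to be the identity, so the class of $g$ in $\PGL_2(\CC)$ fixes five distinct points of $\PP^1(\CC)$ and is the identity. A representative of $g$ in $\GL_2(\CC)$ is then a scalar $c\cdot I_2$; the condition $c\cdot I_2 \cdot F = c^{-5}F = F$ gives $c \in \mu_5 = \langle \zeta\rangle$, hence $g = 1$ in $\GG(\CC) = \GL_2(\CC)/D$.

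\textbf{Step~2 (extension to $\ca F_s$, the main obstacle).} Let $x \in \ca F_s$ lie over $F \in \Delta$ and suppose $g \in \GG(\CC)$ fixes $x$. Since $F$ is GIT stable it has at least three distinct roots, so $\Stab_{\GG(\CC)}(F)$ is finite and $\langle g\rangle$ is a finite cyclic subgroup of $\GG(\CC)$. The central difficulty is that, a priori, the fixed locus of $\langle g\rangle$ acting on $\ca F_s$ near $x$ could be entirely contained in $\pi^{-1}(\Delta)$, so one cannot immediately reduce to Step~1 by density. My plan to handle this is to combine a linear-algebra computation in the affine space $X \cong \bb A^6_\CC$ with the local model of Corollary~\ref{cor:framedquintics}: first, checking by an explicit case analysis (finite-order elements of $\GG(\CC)$ are diagonalizable because any Jordan block is of infinite order in $\GL_2(\CC)/D$) that the linear fixed subspace $X^{\langle g\rangle} \subset X$ always meets $X_0(\CC)$; and secondly, in the chart $U'\cong D^6$ of Corollary~\ref{cor:framedquintics} around $x$, linearizing the action of $\langle g\rangle$ on $U$ and lifting it, up to the deck group $(\ZZ/10)^k$ of the branched cover $U'\to U$, which by Lemma~\ref{lemma:monodromy} is identified with the local monodromy subgroup of $P\Gamma$ at $F$. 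Reading the action of $g$ on the limit framing at $x$ then produces an element of $P\Gamma$ whose triviality is forced by $g\cdot x = x$ (exactly as in Step~1), and whose image in $P\Gamma/P\Gamma_\theta \cong \mf S_5$ equals the permutation induced by $g$ on the distinct roots of $F$. Triviality of this permutation together with the scalar-matrix computation of Step~1 then yields $g = 1$ in $\GG(\CC)$, completing the proof.
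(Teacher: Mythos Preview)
Your Step~2 is only a sketch, and the plan is both more complicated than necessary and not obviously complete. You yourself flag the central difficulty (the fixed locus of $\langle g\rangle$ on $\ca F_s$ might be confined to $\pi^{-1}(\Delta)$), and the proposed resolution---a case analysis showing $X^{\langle g\rangle}$ meets $X_0(\CC)$, followed by linearizing on the Fox chart $U'\cong D^6$, lifting up to the local monodromy $(\ZZ/10)^k$, and then ``reading the action of $g$ on the limit framing'' to obtain an element of $P\Gamma$---is left largely unexecuted. In particular, the identification of the element produced by the limit framing with the permutation action on the distinct roots of $F$ via $P\Gamma/P\Gamma_\theta\cong\mf S_5$ is not justified; this is the same gap already present in Step~1, where you invoke the commutative diagram~(\ref{eq:commutativemonodromy}) to conclude that $\Stab_{\GG(\CC)}(F)\to P\Gamma\to\mf S_5$ coincides with the root-permutation action. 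That diagram is about monodromy of loops in $X_0(\CC)$, not about stabilizer subgroups of $\GG(\CC)$, so an extra argument (e.g.\ realizing $g$ as the monodromy of a loop obtained from a path in the connected group $\GG(\CC)$) is needed.

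The paper's proof bypasses all of this. It uses the $\GG(\CC)$-equivariant map $\ca F_s \to P\Gamma_\theta\setminus\ca F_s \cong \ca N_s$ furnished by Lemma~\ref{lemma:isomorphiccoveringspaces} and the functoriality of the Fox completion, so that it suffices to show $\GG(\CC)$ acts freely on $\ca N_s$. Then it uses the $\GG_m$-quotient $\ca N_s \to P_s(\CC)$, equivariant for $\GL_2(\CC)\to\PGL_2(\CC)$, and the elementary fact that $\PGL_2(\CC)$ acts freely on ordered stable $5$-tuples in $\PP^1(\CC)$ (at least three distinct points). This forces $g$ to be a scalar, and $g\cdot F=F$ gives $g\in D$. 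The key insight you are missing is that passing to the intermediate quotient $\ca N_s$ (marked quintics) already trivializes the $\mf S_5$-ambiguity and reduces everything to a one-line statement about $\PGL_2$ acting on ordered tuples; no analysis of framings, local models, or limit behavior is required.
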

\begin{proof}
The functoriality of the Fox completion gives an action of $\GG(\CC)$ on $\ca N_s$ such that, by Lemma \ref{lemma:isomorphiccoveringspaces}, there is a $\GG(\CC)$-equivariant commutative diagram 
\[
\xymatrixrowsep{0.8pc}
\xymatrix{
P\Gamma_\theta \setminus \ca F_s \ar[rr]^{\sim}\ar[dr] && \ca N_s \ar[dl] \\
& X_s(\CC).
}
\] 
In particular, it suffices to show that $\GG(\CC)$ acts freely on $\ca N_s$. Note that $\ca N_0$ admits a natural $\GG_m$-covering map $\ca N_0 \to P_0(\CC)$ where $P_0(\CC) \subset \PP^1(\CC)^5$ is the space of distinct ordered five-tuples in $\PP^1(\CC)$ introduced in Section \ref{realbinaryintroduction}. Consequently, there is a $\GG_m$-quotient map $\ca N_s \to P_s(\CC)$, where $P_s(\CC)$ is the space of stable ordered five-tuples, and this map is equivariant for the homomorphism $\GL_2(\CC) \to \PGL_2(\CC)$. 

Let $g \in \GL_2(\CC)$ and $x \in \ca N_s$ such that $gx = x$. It is clear that $\PGL_2(\CC)$ acts freely on $P_s(\CC)$. Therefore, $g = \lambda \in \CC^\ast$. Let $F \in X_s(\CC)$ be the image of $x \in \ca N_s$; then $$gF(x,y) = F(g^{-1}(x,y)) = F(\lambda^{-1}x, \lambda^{-1}y) = \lambda^{-5}F(x,y).$$ The equality $gF = F$ implies that $\lambda^{5} = 1 \in \CC$, from which we conclude that $\lambda \in \langle \zeta \rangle$. Therefore, $[g] = [\id] \in \GG(\CC) = \GL_2(\CC)/D$. 
\end{proof}

\subsection{Complex uniformization}

Consider the hermitian space $V_1 = \Lambda \otimes_{\OO_K, \tau_1} \CC$; define $\CC H^2$ to be the space of negative lines in $V_1$. Using Proposition \ref{prop:canonicalbijection} we see that the abelian scheme $J \to X_0$ induces a $\GG(\CC)$-equivariant morphism 
$
\ca P: \ca F_0 \to \CC H^2$. Explicitly, if $(F, [f]) \in \ca F_0$ is the framing $[f: \rm H^1(C_F(\CC), \ZZ) \xrightarrow{\sim}\Lambda]$ of the binary quintic $F \in X_0(\CC)$, and $A_F$ is the Jacobian of the curve $C_F$, then 
\begin{equation} \label{eq:periodframed}
\ca P: \ca F_0 \to \CC H^2, \quad
\ca P \left( F, [f] \right) = f \left( H^{0,-1}(A_F)_{\tau_1} \right) = f \left(H^{1,0}(C_F)_{\zeta^3} \right) \in \CC H^2. 
\end{equation}
The map $\ca P$ is holomorphic, 
and descends to a morphism of complex analytic spaces
\begin{equation} \label{eq:period}
\ca M_0(\CC) =\GG(\CC) \setminus X_0(\CC) \to P\Gamma \setminus \CC H^2. 
\end{equation}
\noindent
By Riemann extension, (\ref{eq:periodframed}) extends to a $\GG(\CC)$-equivariant holomorphic map
\begin{align}\label{eq:stableperiodmapframed}
\overline{\ca P}: \ca F_s \to \CC H^2. 
\end{align}

\begin{theorem}[Deligne--Mostow] \label{th:delignemostow}
The period map (\ref{eq:stableperiodmapframed}) induces an isomorphism of complex manifolds
\begin{align} \label{eq:isomstablefivepoints-zero}
\ca M_s^f(\CC) \coloneqq \GG(\CC) \setminus \ca F_s \cong \CC H^2. 
\end{align}
Taking $P\Gamma$-quotients gives an isomorphism of complex analytic spaces
\begin{equation} \label{eq:isomstablefivepoints}
\ca M_s(\CC) = \GG(\CC) \setminus X_s(\CC) \cong P\Gamma \setminus \CC H^2. 
\end{equation} 
\end{theorem}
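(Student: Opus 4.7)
The plan is to deduce both statements from the theorem of Deligne--Mostow for five (ordered) points on $\PP^1_\CC$, together with Kondō's Hodge-theoretic reinterpretation. I would first dispose of (\ref{eq:isomstablefivepoints}) assuming (\ref{eq:isomstablefivepoints-zero}): the group $P\Gamma$ acts on $\ca F_s$ commuting with $\GG(\CC)$; the covering $\ca F_0 \to X_0(\CC)$ has Galois group $P\Gamma$ by Corollary \ref{cor:surjectivemon}, and this identification persists on the Fox completions by construction, so $P\Gamma \backslash \ca F_s = X_s(\CC)$. Since $\overline{\ca P}$ is Hodge-theoretic in origin, it is equivariant for the natural action of $P\Gamma$ on $\CC H^2$ coming from its embedding into $\text{PU}(V_1,\mf h^{\tau_1})$. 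Passing to $P\Gamma$-quotients in (\ref{eq:isomstablefivepoints-zero}) then yields (\ref{eq:isomstablefivepoints}).

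For (\ref{eq:isomstablefivepoints-zero}), I would exploit the intermediate covering $\ca F_s \to \ca N_s$ with Galois group $P\Gamma_\theta$ provided by Lemma \ref{lemma:isomorphiccoveringspaces} (and extended to the Fox completions by functoriality). Both $\ca F_s$ and $\ca N_s$ are smooth complex manifolds by Corollary \ref{cor:framedquintics}, and the free action of $\GG(\CC)$ (Lemma \ref{cor:freeaction}) commutes with the $P\Gamma_\theta$-action, so taking $\GG(\CC)$-quotients produces a covering
\[
\GG(\CC)\backslash\ca F_s \longrightarrow \GG(\CC)\backslash\ca N_s
\]
with Galois group $P\Gamma_\theta$, and $\overline{\ca P}$ descends to a holomorphic map $\GG(\CC)\backslash\ca N_s \to P\Gamma_\theta \backslash \CC H^2$. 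By the theorem of Deligne--Mostow \cite{DeligneMostow} applied to five points with uniform weights $1/5$, together with Kondō's identification of their ball-quotient with our $P\Gamma_\theta \backslash \CC H^2$ via periods of the cyclic cover (\ref{eq:coverp1}) \cite{kondo5points}, this descended map is an isomorphism of analytic spaces. Pulling back along the $P\Gamma_\theta$-covers on both sides then gives the claimed isomorphism of complex manifolds (\ref{eq:isomstablefivepoints-zero}).

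The main obstacle will be to verify that the Deligne--Mostow period map (defined in their framework via monodromy of a hypergeometric system on the configuration space) really coincides with our Hodge-theoretic $\overline{\ca P}$ on $\GG(\CC)\backslash\ca N_s$. This is exactly the content of Kondō's comparison: on the smooth locus, the Hodge line $\rm H^{1,0}(C_F)_{\zeta^3} \subset V_1$ computes the same period as the DM multivalued integral. The remaining points -- that $\overline{\ca P}$ is a local biholomorphism on the smooth locus (infinitesimal Torelli for the cyclic quintic cover, with matching dimensions $6-4=2$), and that it extends holomorphically across the discriminant with finite-order local monodromy (already built into the construction of $\ca F_s$ via Lemma \ref{lemma:monodromy} and Corollary \ref{cor:framedquintics}) -- are then formal once the two period maps are identified.
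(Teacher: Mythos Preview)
Your proposal is correct and follows essentially the same strategy as the paper: reduce to Deligne--Mostow's theorem. There is one structural difference worth flagging. The paper works directly at the top of the tower: it identifies $\GG(\CC)\backslash\ca F_s$ with the Fox completion $\widetilde Q_{\textnormal{st}}$ of Deligne--Mostow's monodromy cover $\widetilde Q \to Q$ (this follows since $\GG(\CC)\backslash\ca F_0 \cong \widetilde Q$ as covers of $Q$, both having Galois group $P\Gamma_\theta$), and then invokes \cite[(3.11)]{DeligneMostow}, which asserts precisely that $\widetilde Q_{\textnormal{st}} \to \CC H^2$ is an isomorphism. You instead descend one level to $\GG(\CC)\backslash\ca N_s \cong Q_{\textnormal{st}}$, establish the isomorphism there, and then pull back along the $P\Gamma_\theta$-covers. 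That pull-back step is valid---on the unramified locus both vertical maps are connected $P\Gamma_\theta$-torsors over isomorphic bases and $\overline{\ca P}$ is equivariant, hence an isomorphism there, and one extends across the branch divisor---but it is an extra argument that the paper sidesteps by citing Deligne--Mostow at the right level of the tower.

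Two smaller points. For the comparison of the Hodge-theoretic period map with Deligne--Mostow's hypergeometric period, the paper cites \cite[(2.23) and (12.9)]{DeligneMostow} directly rather than Kond\=o; Kond\=o's role (used in Proposition \ref{prop:commutativemonodromy}) is the identification of the monodromy image with $P\Gamma_\theta$, not the period comparison itself. And your final remarks about infinitesimal Torelli and finite local monodromy are superfluous once the Deligne--Mostow isomorphism is cited: the result is established globally, not assembled from local properties.
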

\begin{proof}
In \cite{DeligneMostow}, Deligne and Mostow define $\widetilde Q \to Q$ to be the covering space corresponding to the monodromy representation $\pi_1(Q,0) \to \textnormal{PU}(2,1)$; since the image of this homomomorphism is $P\Gamma_\theta$ (see the proof of Proposition \ref{prop:commutativemonodromy}), it follows that $\GG(\CC) \setminus \ca F_0 \cong \widetilde Q$ as covering spaces of $Q$. Consequently, if $\widetilde Q_{\textnormal{st}}$ is the Fox completion of the spread $$\widetilde Q \to Q \to  Q_{\textnormal{st}} := \GG(\CC) \setminus \ca N_s = \PGL_2(\CC) \setminus P_s(\CC),$$ then there is an isomorphism $\GG(\CC) \setminus \ca F_s \cong \widetilde  Q_{\textnormal{st}}$ of branched covering spaces of $ Q_{\textnormal{st}}$. We obtain commutative diagrams, where the lower right morphism uses (\ref{eq:commutativemonodromy}):
$$
\xymatrixcolsep{5pc}
\xymatrix{
\GG(\CC) \setminus \ca F_s \ar[r]^{\sim \;\;\;\;} \ar[d] & \widetilde{Q}_{\textnormal{st}}\ar[d] \ar[r] & \CC H^2  \ar[d] \\
\GG(\CC) \setminus \ca N_s \ar[r]^{\sim \;\;\;\;}  \ar[d] & Q_{\textnormal{st}}  \ar[r] \ar[d]& P\Gamma_{\theta} \setminus \CC H^2\ar[d]\\
\GG(\CC) \setminus X_s(\CC) \ar[r]^{\sim \;\;\;\;} &  Q_{\textnormal{st}}/\mf S_5 \ar[r] & P\Gamma \setminus \CC H^2.
}
$$
The map $\widetilde{Q}_{\textnormal{st}} \to \CC H^2$ is an isomorphism by \cite[(3.11)]{DeligneMostow}. Therefore, we are done if the composition $\GG(\CC) \setminus \ca F_0 \to \widetilde Q \to \CC H^2$ agrees with the period map $\ca P$ of equation (\ref{eq:periodframed}). This follows from \cite[(2.23) and (12.9)]{DeligneMostow}. 
\end{proof}

\begin{proposition} \label{prop:stableperiodshyperplane}
The isomorphism (\ref{eq:isomstablefivepoints}) induces an isomorphism of complex analytic spaces
\begin{equation}
\ca M_0(\CC) = \GG(\CC) \setminus X_0(\CC) \cong P\Gamma \setminus \left(\CC H^2 - \mr H \right). 
\end{equation}
\end{proposition}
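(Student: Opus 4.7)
The plan is to show, working upstairs on the framed moduli spaces, that $\overline{\ca P}^{-1}(\mr H) = \ca F_s - \ca F_0$ as subsets of $\ca F_s$; after passing to the $\GG(\CC)$- and $P\Gamma$-quotients via Theorem \ref{th:delignemostow}, this yields the claimed isomorphism $\ca M_0(\CC) \cong P\Gamma \setminus (\CC H^2 - \mr H)$.

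First I would prove the inclusion $\overline{\ca P}(\ca F_s - \ca F_0) \subseteq \mr H$. Fix $(F,[f]) \in \ca F_s - \ca F_0$ lying over a strictly stable $F$ with $k \in \{1,2\}$ double roots. By Corollary \ref{cor:framedquintics}, a punctured neighbourhood of $(F,[f])$ in $\ca F_0$ is an unramified cover of a punctured polydisc, and by Lemma \ref{lemma:monodromy} the local monodromy of $\ca F_0 \to X_0(\CC)$ is generated by $k$ commuting elements of $P\Gamma$ of order $10$, one around each local component of $\Delta$. A Picard-Lefschetz computation for the degeneration $y^5 = x^2$ identifies each such generator with a complex reflection $\phi_{r_i}$ attached to a short root $r_i \in \mr R$, the $r_i$ being mutually orthogonal in view of Condition \ref{orthogonal}. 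Since $\overline{\ca P}$ is holomorphic and single-valued on $\ca F_s$ and $P\Gamma$-equivariant, its value at $(F,[f])$ must be fixed by each $\phi_{r_i}$; by Lemma \ref{intersection}.\ref{uno} this forces $\overline{\ca P}(F,[f]) \in \bigcap_i H_{r_i} \subseteq \mr H$.

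For the reverse inclusion $\overline{\ca P}^{-1}(\mr H) \subseteq \ca F_s - \ca F_0$, I would combine the modular interpretation of $\mr H$ from Proposition \ref{prop:HcorrespondsNonSimple} with the classical fact that the Jacobian of a smooth irreducible projective curve is indecomposable as a principally polarized abelian variety. The hypotheses of Proposition \ref{prop:HcorrespondsNonSimple} are met: the signature of $\mf h$ is $(2,1),(3,0)$ by \eqref{quintic-cases}, and $\mf D_K = (\eta)$ is generated by $\eta = 5/(\zeta - \zeta^{-1})$ with $\sigma(\eta) = -\eta$ in view of Lemma \ref{lemma:discr}. Suppose for contradiction that some $(F,[f]) \in \ca F_0$ satisfies $\overline{\ca P}(F,[f]) \in \mr H$. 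Proposition \ref{prop:HcorrespondsNonSimple} then provides a non-trivial $\OO_K$-linear polarization-preserving homomorphism $\phi \colon B \to J(C_F)$ from the CM abelian surface $B = \CC^2/\Psi(\OO_K)$. Because $K = \QQ(\zeta_5)$ contains no proper imaginary quadratic subfield, the CM type $(K,\Psi)$ defined in \eqref{CMTYPE} is primitive, so $B$ is simple and $\phi$ is an embedding. Lemma \ref{lemma:abeliansplit} yields a splitting $J(C_F) \cong B \times Z$ as principally polarized abelian varieties with $\dim B = 2$ and $\dim Z = 4$, contradicting the indecomposability of $(J(C_F), \Theta)$ for the smooth irreducible curve $C_F$.

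Combining the two inclusions, $\overline{\ca P}$ restricts to a bijection between $\ca F_0$ and $\overline{\ca P}^{-1}(\CC H^2 - \mr H)$; taking successive quotients by $\GG(\CC)$ and $P\Gamma$ and using Theorem \ref{th:delignemostow} finishes the proof. The main obstacle is the Picard-Lefschetz calculation in Step 1 that realises the local monodromies as specific complex reflections in $\Gamma$ (and not merely as elements of order $10$); the required analysis of the cyclic cover singularity $y^5 = x^2$ can be traced back to the computations underlying Lemma \ref{lemma:monodromy} in \cite{DeligneMostow} and \cite{carlsontoledomonodromy}.
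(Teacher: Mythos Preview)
Your proof is correct, and your second step (the inclusion $\overline{\ca P}^{-1}(\mr H)\subseteq \ca F_s-\ca F_0$ via Proposition~\ref{prop:HcorrespondsNonSimple} and indecomposability of Jacobians) coincides exactly with the paper's argument.

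The difference lies in how you establish the reverse inclusion $\overline{\ca P}(\ca F_s-\ca F_0)\subseteq\mr H$. You argue directly: the stabiliser in $P\Gamma$ of a boundary point $(F,[f])$ contains the local monodromy group, a Picard--Lefschetz computation shows these generators are complex reflections $\phi_{r_i}$, and $P\Gamma$-equivariance of $\overline{\ca P}$ then forces the period value into $\bigcap_i H_{r_i}$. The paper instead avoids the monodromy identification entirely: having shown one inclusion, it observes that both $\overline{\ca P}^{-1}(\mr H)$ and $\ca F_s-\ca F_0$ are divisors in the irreducible manifold $\ca F_s$, and (implicitly using that both become irreducible after passing to $P\Gamma\setminus\CC H^2$, via irreducibility of $\Delta$ from Lemma~\ref{lemma:irreduciblenormal}) concludes they coincide. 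Your route is more laborious but yields the sharper statement that a $k$-nodal point lands in a $k$-fold hyperplane intersection---which is precisely Lemma~\ref{lemma:stabilizergroups}.\ref{motivicone-new}, stated later in the paper without proof. The paper's route is shorter but less informative. Both are valid; your honest flagging of the Picard--Lefschetz step as the main obstacle is appropriate, and the references you cite (Deligne--Mostow, Carlson--Toledo) do contain what is needed.
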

\begin{proof}
We have $\overline{\ca P}(\ca F_0) \subset \CC H^2 - \mr H$ by Proposition \ref{prop:HcorrespondsNonSimple}, because the Jacobian of a smooth curve cannot contain an abelian subvariety whose induced polarization is principal. Therefore, we have $\overline{\ca P}^{-1}(\mr H) \subset \ca F_s - \ca F_0$. Since $\ca F_s$ is irreducible (it is smooth by Corollary \ref{cor:framedquintics} and connected by Corollary \ref{cor:surjectivemon}), the analytic space $\overline{\ca P}^{-1}(\mr H)$ is a divisor. Since $\ca F_s - \ca F_0$ is also a divisor by Corollary \ref{cor:framedquintics}, we have $$\overline{\ca P}^{-1}(\mr H) = \ca F_s - \ca F_0$$ and we are done. 

Alternatively, 
let $H_{0,5}$ be the moduli space of degree $5$ covers of $\PP^1$ ramified along five distinct marked points \cite[\S 2.G]{Harris1998ModuliOC}. The period map $$H_{0,5}(\CC) \to P\Gamma \setminus \CC H^2,$$ that sends the moduli point of a curve $C \to \PP^1$ to the moduli point of the $\ZZ[\zeta]$-linear Jacobian $J(C)$, extends to the stable compactification $\overline{H}_{0,5}(\CC) \supset H_{0,5}(\CC)$ because the curves in the limit are of compact type. Since the divisor $\mr H \subset \CC H^2$ parametrizes abelian varieties that are products of lower dimensional ones by Proposition \ref{prop:HcorrespondsNonSimple}, the image of the boundary is exactly $P\Gamma \setminus \mr H$.
\end{proof}

\section{Moduli of real binary quintics} \label{modrealbinquin}

Having understood the period map for complex binary quintics, we turn to the period map of real binary quintics in this Section \ref{modrealbinquin}. 

\subsection{The period map for stable real binary quintics}

Define $\kappa$ as the anti-holomorphic involution
\[
\kappa \colon X_0(\CC) \to X_0(\CC), \quad  F(x,y) = \sum_{i+j=  5}a_{ij}x^iy^j \mapsto \overline{F(x,y)}= \sum_{i+j =5}\overline{a_{ij}}x^iy^j. 
\]
Let $\mr A$ be the set of anti-unitary involutions $\alpha : \Lambda \to \Lambda$, and $P\mr A = \mu_K \setminus \mr A$, see Section \ref{unitaryshimura}. For each $\alpha \in P\mr A$, there is a natural anti-holomorphic involution $\alpha \colon \ca F_0 \to \ca F_0$ such that the following diagram commutes:
\[
\xymatrix{
\ca F_0 \ar[d]\ar[r]^\alpha & \ca F_0 \ar[d] \\
X_0(\CC) \ar[r]^\kappa & X_0(\CC). 
}
\]
It is defined as follows. Consider a framed binary quintic $(F, [f]) \in \ca F_0$, where $f: \VV_F \to \Lambda$ is an $\OO_K$-linear isometry. Let $C_F \to \PP^1_\CC$ be the quintic cover defined by a smooth binary quintic $F \in X_0(\CC)$. Complex conjugation $\tn{conj} \colon \PP^2(\CC) \to \PP^2(\CC)$ induces an anti-holomorphic map 
\[
\sigma_F: C_F(\CC) \to C_{\kappa(F)}(\CC). 
\] 
Consider the pull-back $\sigma_F^\ast: \bb V_{\kappa (F)} \to \bb V_F$ of $\sigma$. The composition 
\[
  \VV_{\kappa(F)} \xrightarrow{\sigma_F^\ast} \VV_F \xrightarrow{f} \Lambda \xrightarrow{\alpha} \Lambda
\] induces a framing of $\kappa(F) \in X_0(\CC)$, and we define
\[
\alpha(F, [f]) \coloneqq \left(\kappa(F), [\alpha \circ f \circ \sigma_F^\ast] \right) \in \ca F_0.
\] Although we have chosen a representative $\alpha \in \mr A$ of the class $\alpha \in P\mr A$, the element $\alpha(F, [f]) \in \ca F_0$ does not depend on this choice.  
\\
\\
Consider the covering map $\ca F_0 \to X_0(\CC)$ introduced in (\ref{modrealbinquin}), and define
\begin{align}\label{realpointsofFzero}
\ca F_0(\RR) = \bigsqcup_{\alpha \in P\mr A} \ca F_0^\alpha \subset \ca F_0
\end{align}
as the preimage of $X_0(\RR)$ in the space $\ca F_0$. To see why the union on the left hand side of (\ref{realpointsofFzero}) is disjoint, observe that 
\[
    \ca F_0^\alpha = \left\{ (F, [f]) \in \ca F_0 : \kappa(F) = F \textnormal{ and } [f \circ \sigma^\ast_{F} \circ f^{-1}] = \alpha \right\}.
    \]
    Thus, for $\alpha, \beta \in P\mr A$ and $(F, [f]) \in \ca F_0^\alpha \cap \ca F_0^\beta$, we have $\alpha = [f \circ \sigma \circ f^{-1}] = \beta$. 

\begin{lemma}
The anti-holomorphic involution $\alpha \colon \ca F_0 \to \ca F_0$ defined by $\alpha \in P \mr A$ makes the period map $\ca P \colon \ca F_0 \to \CC H^2$ equivariant for the $\GG(\CC)$-actions on both sides.  
\end{lemma}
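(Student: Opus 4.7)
The plan is to prove two compatibilities: that $\alpha$ is compatible with the $\GG(\CC)$-action on $\ca F_0$ (which then descends to give an involution on $\ca M_0(\CC)$), and that the period map $\ca P$ intertwines the anti-holomorphic involution $\alpha$ on $\ca F_0$ with the anti-holomorphic involution $\alpha_V$ on $\CCH^2\subset\PP(V_1)$ induced by the $\OO_K$-antilinear map $\alpha\otimes_\ZZ\CC$. The first compatibility is a direct calculation: unravelling the $\GG(\CC)$-action $g\cdot(F,[f])=(g\cdot F,[f\circ g^\ast])$ and using that $\sigma_{g\cdot F}=\kappa(g)\circ\sigma_F\circ g^{-1}$ as antiholomorphic maps between cyclic covers (because $g$ acts on $\PP^1$ by a $\CC$-linear change of variable whose entries conjugate to $\kappa(g)$), one gets $\alpha(g\cdot x)=\kappa(g)\cdot\alpha(x)$, i.e.\ $\alpha$ is Galois-semilinear over the $\GG(\CC)$-action, which is the appropriate ``equivariance on both sides.'' Combined with the $\GG(\CC)$-invariance of $\ca P$ (which holds since $\ca P$ descends to the isomorphism of Theorem~\ref{th:delignemostow}), this will reduce everything to showing $\ca P\circ\alpha=\alpha_V\circ\ca P$ at the level of framed points.

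Fix $(F,[f])\in\ca F_0$. By formula (\ref{eq:periodframed}) applied to $\alpha(F,[f])=(\kappa(F),[\alpha\circ f\circ\sigma_F^\ast])$, we have
\[
\ca P(\alpha(F,[f]))=(\alpha_\CC\circ f_\CC\circ\sigma_F^\ast)\bigl(H^{1,0}(C_{\kappa(F)})_{\zeta^3}\bigr)\subset V_1,
\]
where $\alpha_\CC,f_\CC$ denote the $\CC$-linear extensions of the $\ZZ$-linear maps $\alpha$ and $f$. The plan is to identify this stepwise with $\alpha_V(\ca P(F,[f]))$. First, because $\sigma_F\colon C_F\to C_{\kappa(F)}$ is antiholomorphic and satisfies $\sigma_F\circ\zeta=\zeta^{-1}\circ\sigma_F$ (the cover automorphism on each side being multiplication of the fibre coordinate by $\zeta$, which is reversed by complex conjugation), its $\CC$-linear pullback on cohomology sends $H^{p,q}$ to $H^{q,p}$ and interchanges $\zeta^j$-eigenspaces with $\zeta^{-j}$-eigenspaces, whence $\sigma_F^\ast(H^{1,0}(C_{\kappa(F)})_{\zeta^3})=H^{0,1}(C_F)_{\zeta^2}$. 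Second, the real structure on $\rm H^1(C_F,\CC)=\rm H^1(C_F,\ZZ)\otimes\CC$ also swaps $H^{p,q}\leftrightarrow H^{q,p}$ and $\zeta^j\leftrightarrow\zeta^{-j}$, so if $c_F$ denotes the associated complex conjugation then $H^{0,1}(C_F)_{\zeta^2}=c_F(H^{1,0}(C_F)_{\zeta^3})$.

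Since $f$ is defined over $\ZZ$, the extension $f_\CC$ intertwines $c_F$ with the real conjugation $c_\Lambda$ on $\Lambda_\CC$, so $f_\CC(c_F(H^{1,0}(C_F)_{\zeta^3}))=c_\Lambda(f_\CC(H^{1,0}(C_F)_{\zeta^3}))$, which lives in the $\zeta^2$-eigenspace $V_{\tau_1\sigma}\subset\Lambda_\CC$. Applying $\alpha_\CC$ and using that $\alpha$ is $\OO_K$-antilinear, $\alpha_\CC$ maps $V_{\tau_1\sigma}$ back to $V_{\tau_1}=V_1$; one checks directly from the definitions of $\alpha_V\colon V_1\to V_1$ (as in Section~\ref{set-up}) and of $c_\Lambda$ that $\alpha_\CC\circ c_\Lambda|_{V_1}=\alpha_V$. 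Combining, $\ca P(\alpha(F,[f]))=\alpha_V(f_\CC(H^{1,0}(C_F)_{\zeta^3}))=\alpha\cdot\ca P(F,[f])$, which together with the first paragraph gives the lemma.

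The main obstacle is purely bookkeeping: three distinct antilinear operations act on the various cohomology groups---the topological antiholomorphic pullback $\sigma_F^\ast$, the real-structure conjugation $c_F$ (or $c_\Lambda$), and the antiunitary involution $\alpha$---and each shifts Hodge types and $\mu_5$-eigenvalues differently. Once their interplay is correctly tabulated, equivariance falls out formally; no nontrivial geometric input beyond Lemma~\ref{lemma:refinedhodge} is needed.
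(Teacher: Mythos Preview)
Your core argument---tracking how $\sigma_F^\ast$, the real conjugation $c_F$, and $\alpha$ each act on Hodge types and $\mu_5$-eigenspaces, and observing that their composite preserves both---is correct and is exactly the content of the paper's proof, which compresses it into the single observation that the antilinear map $\sigma_F^\ast\otimes\mathrm{conj}$ preserves the Hodge and eigenspace decompositions (citing Silhol for the Hodge part). Your first paragraph on the $\GG(\CC)$-semilinearity of $\alpha$ is correct but goes beyond what the paper actually proves here; the paper's argument only establishes $\ca P\circ\alpha=\alpha\circ\ca P$, and the $\GG(\CC)$-equivariance of $\ca P$ was already recorded before the lemma, so the somewhat awkward phrasing of the statement seems to intend only the $\alpha$-equivariance.
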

\begin{proof}
Indeed, if $\tn{conj} \colon \CC \to \CC$ is complex conjugation, then for any $F \in X_0(\CC)$, the induced map
$$\sigma^\ast_F \otimes \tn{conj} \colon \bb V_{\kappa (F)}\otimes_\ZZ \CC \to \bb V_F \otimes_\ZZ \CC$$ is anti-linear, preserves the Hodge decompositions \cite[Chapter I, Lemma (2.4)]{silholsurfaces} as well as the eigenspace decompositions. 
\end{proof}
\noindent
We obtain a \textit{real period map} 
\begin{align}\label{therealperiodmap}
\xymatrix{
\ca P_\RR \colon \ca F_0(\RR)  \ar@{=}[r] &\coprod_{\alpha \in P\mr A} \ca F_0^\alpha  \ar[r] &\coprod_{\alpha \in P\mr A} \RR H^2_\alpha  \ar@{=}[r]& 
\widetilde Y.
}
\end{align}
Define $\GG(\RR) = \GL_2(\RR)$. The map (\ref{therealperiodmap}) is constant on $\GG(\RR)$-orbits, since the same is true for the complex period map $\ca P \colon \ca F_0 \to \CC H^2$. 

By abuse of notation, we for $\alpha \in P\mr A$, we write $\RRH^2_\alpha - \mr H = \RRH^2_\alpha - \left(\mr H \cap \RRH^2_\alpha\right)$.\begin{proposition} \label{prop:realsmoothperiods} 
The real period map (\ref{therealperiodmap}) descends to a $P\Gamma$-equivariant diffeomorphism 
\begin{align} \label{firstperiod}
\ca M_0(\RR)^f \coloneqq \GG(\RR) \setminus \ca F_0(\RR) \cong \coprod_{\alpha \in P\mr A} \RR H^2_\alpha - \mr H.
 \end{align}
By $P\Gamma$-equivariance, the map (\ref{firstperiod}) induces an isomorphism of real-analytic orbifolds
\begin{equation} \label{smoothrealperiodiso}
\ca P_{\RR} \colon \ca M_0(\RR) = \GG(\RR) \setminus X_0(\RR) \cong  \coprod_{\alpha \in C \mr A}P\Gamma_\alpha \setminus \left( \RR H^2_\alpha - \mr H
    \right). 
\end{equation}
\end{proposition}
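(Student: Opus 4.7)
The plan is to deduce the proposition from the complex analogue by taking $\alpha$-fixed loci for each $\alpha \in P\mr A$ separately and then reassembling the pieces under the $P\Gamma$-action. The two structural inputs are that the restriction $\ca P \colon \ca F_0 \to \CC H^2 - \mr H$ is a principal $\GG(\CC)$-bundle (which follows from Theorem \ref{th:delignemostow} combined with Proposition \ref{prop:stableperiodshyperplane}), and that each anti-holomorphic involution $\alpha \colon \ca F_0 \to \ca F_0$ lifts the corresponding involution of $\CC H^2$ and is equivariant for the real form $\GG(\RR) = \GL_2(\RR)$, in the sense that $\alpha(g \cdot x) = \bar g \cdot \alpha(x)$ for every $g \in \GG(\CC)$ and $x \in \ca F_0$, where $\bar g$ denotes the complex conjugate of $g$ with respect to the natural $\RR$-form on the algebraic group $\GG$.

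First, I would show that for each $\alpha \in P\mr A$ separately, the induced map $\GG(\RR) \setminus \ca F_0^\alpha \to \RR H^2_\alpha - \mr H$ is a bijection. Injectivity is immediate from freeness of the $\GG(\CC)$-action (Lemma \ref{cor:freeaction}): if $x,y \in \ca F_0^\alpha$ satisfy $\ca P(x) = \ca P(y)$, the unique $g \in \GG(\CC)$ with $g \cdot x = y$ must satisfy $\bar g \cdot x = \alpha(g \cdot x) = \alpha(y) = y = g\cdot x$, forcing $\bar g = g$, i.e. $g \in \GG(\RR)$. For surjectivity, pick $p \in \RR H^2_\alpha - \mr H$ and lift it to some $x \in \ca F_0$; the identity $\ca P(\alpha(x)) = \alpha(p) = p$ produces a unique $g \in \GG(\CC)$ with $\alpha(x) = g\cdot x$, and applying $\alpha$ once more yields the cocycle relation $g \cdot \bar g = 1$. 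Producing $h \in \GG(\CC)$ with $h \cdot x \in \ca F_0^\alpha$ then amounts to writing $g$ as a coboundary, i.e.\ to showing that the class $[g] \in H^1(G, \GG(\CC))$ is trivial, where $G = \Gal(\CC/\RR)$.

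The key point is therefore the vanishing of $H^1(G, \GG(\CC))$. Since the scalar subgroup $D \subset \GL_2$ is central and isomorphic as an $\RR$-group to $\mu_5$, the short exact sequence $1 \to \mu_5 \to \GL_2(\CC) \to \GG(\CC) \to 1$ induces an exact sequence of pointed sets
\[
H^1(G, \GL_2(\CC)) \longrightarrow H^1(G, \GG(\CC)) \longrightarrow H^2(G, \mu_5).
\]
The left-hand term is trivial by classical Hilbert 90, and the right-hand term is trivial because complex conjugation acts on $\mu_5$ by $\zeta \mapsto \zeta^{-1}$, so $\mu_5^G = \{1\}$ and hence $H^2(G,\mu_5) = \mu_5^G/N(\mu_5) = 1$. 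This settles the bijection; smoothness is automatic since $\ca P$ is a holomorphic submersion, the $\GG(\RR)$-action on $\ca F_0^\alpha$ is free and proper, and both $\ca F_0^\alpha$ and $\RR H^2_\alpha - \mr H$ are real-analytic manifolds. Assembling the resulting diffeomorphisms $\GG(\RR) \setminus \ca F_0^\alpha \cong \RR H^2_\alpha - \mr H$ over $\alpha \in P\mr A$ yields (\ref{firstperiod}); passing to $P\Gamma$-orbits on both sides, using that $P\Gamma$ permutes the pieces compatibly via conjugation on $P\mr A$ with stabilizers $P\Gamma_\alpha$, and that the quotient of $\ca F_0(\RR)$ by $P\Gamma$ is $X_0(\RR)$ by (\ref{framedcovering}), gives the orbifold isomorphism (\ref{smoothrealperiodiso}). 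The main obstacle in this plan is the Galois cohomology input, but it reduces to the elementary computation above once the $\GG$-equivariance of $\alpha$ has been correctly encoded.
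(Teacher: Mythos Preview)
Your proof is correct and follows essentially the same strategy as the argument the paper cites from \cite[\textit{proof of Theorem 3.3}]{realACTsurfaces}: use that $\ca P \colon \ca F_0 \to \CC H^2 - \mr H$ is a principal $\GG(\CC)$-bundle with free $\GG(\CC)$-action (Lemma \ref{cor:freeaction}), then reduce the surjectivity on $\alpha$-fixed loci to the vanishing of $H^1(G,\GG(\CC))$, which you obtain from Hilbert~90 and the central extension $1 \to \mu_5 \to \GL_2(\CC) \to \GG(\CC) \to 1$. The paper's own proof is a bare citation to Allcock--Carlson--Toledo together with the remark that the commuting $\GG(\CC)$- and $P\Gamma$-actions on $\ca F_0$ are free; you have supplied precisely the Galois-cohomological content behind that citation, adapted to the group $\GG = \GL_2/\mu_5$ specific to the quintic case.
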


\begin{proof}
This follows from \cite[\textit{proof of Theorem 3.3}]{realACTsurfaces}. It is crucial that the actions of $G$ and $P\Gamma$ on $\ca F_0$ commute and are free, which is the case, see Corollary \ref{cor:freeaction}. 
\end{proof}

\subsection{The period map for smooth real binary quintics}

Our next goal will be to prove the real analogue of the isomorphisms (\ref{eq:isomstablefivepoints-zero}) and (\ref{eq:isomstablefivepoints-zero}) 
in Theorem \ref{th:delignemostow}. We need a lemma, a definition, and then two more lemmas. 

Consider the CM-type $\Psi = \set{\tau_1, \tau_2}$ defined in (\ref{CMTYPE}), the hermitian $\OO_K$-lattice $(\Lambda, \mf h)$ defined in (\ref{eq:hermitianformonbinaryquinticlattice}), and the sets (c.f. Definition \ref{fullset}): $$\ca H = \set{H_r \subset \CCH^2 \mid r \in \mr R}, \quad \tn{ and } \quad \mr H = \cup_{H\in \ca H}H \subset \CCH^2.$$ Here, $\mr R\subset \Lambda$ is the set of short roots (see Section \ref{set-up}). 

\begin{lemma}
The hyperplane arrangement $\mr H \subset \CCH^2$ satisfies Condition \ref{orthogonal}, that is: any two distinct $H_1, H_2\in\ca H$ either meet orthogonally, or not at all. 
\end{lemma}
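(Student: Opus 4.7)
The plan is to deduce the lemma directly from Theorem \ref{th:conditionsimplyhypothesis} by verifying its three hypotheses (Conditions \ref{crucialcondition}) for the data $(K, \Psi, \Lambda, \mf h)$ introduced in Sections \ref{jacofcyc}--\ref{sec:monodromy}.

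First I would deal with the different ideal. Since $K = \QQ(\zeta_5)$ is a cyclotomic field, Lemma \ref{lemma:discr} applies and tells us that $\mf D_K$ is principal, generated by an element $\eta \in \OO_K$ satisfying $\sigma(\eta) = -\eta$. In our concrete setup this element is $\eta = 5/(\zeta - \zeta^{-1})$, which is exactly the one used to build the hermitian form $\mf h = \eta T$ in equation (\ref{eq:hermitianformonbinaryquinticlattice}). So Condition \ref{crucialcondition}.\ref{crucialtwo} is satisfied.

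Next I would check primitivity of the CM type $(K, \Psi) = (\QQ(\zeta_5), \{\tau_1, \tau_2\})$ defined in (\ref{CMTYPE}). The only proper subfields of $K$ are $\QQ$ and the totally real subfield $F = \QQ(\zeta_5 + \zeta_5^{-1}) = \QQ(\sqrt{5})$; neither is a CM field, so $K$ admits no proper CM subfield at all and any CM type on it is automatically primitive. This handles Condition \ref{crucialcondition}.\ref{crucialone}. Finally, the signature condition \ref{crucialcondition}.\ref{crucialthree} (namely $(n,1) = (2,1)$ for $\tau_1$ and $(n+1,0) = (3,0)$ for $\tau_2$) is precisely the content of the computation (\ref{quintic-cases}), which was obtained from the refined Hodge numbers of the cyclic quintic cover via Lemma \ref{lemma:refinedhodge} together with Lemma \ref{lemma:agree} and the sign of $\Im \tau_i(\zeta - \zeta^{-1})$.

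With all three conditions verified, Theorem \ref{th:conditionsimplyhypothesis} yields directly that for any two short roots $r_1, r_2 \in \mr R$ with $H_{r_1} \neq H_{r_2}$ but $H_{r_1} \cap H_{r_2} \neq \emptyset$, one has $\mf h(r_1, r_2) = 0$, i.e. the corresponding hyperplanes are orthogonal. So no serious obstacle is expected here; the entire point of Section \ref{unitaryshimura} was to make this reduction automatic once the arithmetic setup is recognized as a cyclotomic CM datum of the correct signature. The only step that requires a tiny amount of care is matching the sign conventions so that our chosen $\eta$ and $\Psi$ are compatible (i.e.\ that $\Im \tau_i(\eta) $ has the sign needed for the comparison $\mf h^{\tau_i} = \tau_i(\eta) \cdot T^{\tau_i}$ to produce the signatures recorded in (\ref{quintic-cases})); this is a book-keeping check that was already carried out in Section \ref{jacofcyc}.
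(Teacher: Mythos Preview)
Your proof is correct and follows essentially the same route as the paper: verify the three hypotheses of Conditions \ref{crucialcondition} (primitivity of the CM type, principality of $\mf D_K$ by an anti-invariant generator via Lemma \ref{lemma:discr}, and the signature computation (\ref{quintic-cases})) and then invoke Theorem \ref{th:conditionsimplyhypothesis}. The paper's argument is the same, only more terse.
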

\begin{proof}
Condition \ref{crucialcondition}.\ref{crucialone} holds because $K$ does not contain proper CM-subfields.  
By Lemma \ref{lemma:discr}, we have that Condition \ref{crucialcondition}.\ref{crucialtwo} is satisfied. 
By equation (\ref{quintic-cases}), Condition \ref{crucialcondition}.\ref{crucialthree} holds. 
By Theorem \ref{th:conditionsimplyhypothesis}, we obtain the desired result. 
\end{proof}

\begin{definition} 
\begin{enumerate}
\item
For $k = 1, 2$, define $\Delta_k \subset \Delta = X_s(\CC) - X_0(\CC)$ to be the locus of stable binary quintics with exactly $k$ nodes. Define $\widetilde \Delta = \ca F_s - \ca F_0$, and let  $\widetilde \Delta_k \subset \widetilde \Delta$ be the inverse image of $\Delta_k$ in $\widetilde \Delta$ under the map $\widetilde \Delta \to \Delta$. 
\item For $k = 1,2$, define $\mr H_k \subset \mr H$ as the set $\mr H_k = \set{x \in \CCH^2 \mid \va{\ca H(x)} = k}$. Thus, this is the locus of points in $\mr H$ where exactly $k$ hyperplanes meet. 
\end{enumerate}
\end{definition}
\begin{lemma} 
\label{lemma:stabilizergroups}
\begin{enumerate}
\item 
The period map $\overline{\ca P}$ of (\ref{eq:stableperiodmapframed}) satisfies $\overline{\ca P}(\widetilde \Delta_k) \subset \mr H_k$. 
\item 
If $f \in \widetilde \Delta_k$, $x = \overline{\ca P(f)} \in \CCH^2$, and $\ca H(x) = \set{H_{r_1}, \dotsc, H_{r_k}}$ for $r_i \in \mr R$, 
then $\overline{\ca P}$ induces a group isomorphism $P\Gamma_f \cong G(x)$. \hfill \qed
\end{enumerate}
\end{lemma}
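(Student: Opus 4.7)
The plan is to exploit the fact from Theorem \ref{th:delignemostow} that $\overline{\ca P}\colon \ca F_s \to \CC H^2$ is a principal $\GG(\CC)$-bundle and in particular a holomorphic submersion, combined with the equality $\overline{\ca P}^{-1}(\mr H) = \widetilde\Delta$ which is immediate from Proposition \ref{prop:stableperiodshyperplane}. All the rest will be extracted from this submersion property, together with the local structure of $\ca F_s$ furnished by Corollary \ref{cor:framedquintics} and the orthogonality of the arrangement $\mr H$ (Condition \ref{orthogonal}, justified here by Theorem \ref{th:conditionsimplyhypothesis}).

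For part (1), fix $f \in \widetilde\Delta_k$. Since the binary quintic $\pi(f) \in \Delta_k$ has $k$ nodes, Corollary \ref{cor:framedquintics} supplies local holomorphic coordinates $(z_1,\dots,z_6)$ on $\ca F_s$ centered at $f$ in which $\widetilde\Delta$ is cut out by $z_1 z_2 \cdots z_k = 0$; so near $f$, the divisor $\widetilde\Delta$ is the transverse union of exactly $k$ smooth analytic branches. Because $\overline{\ca P}$ is a submersion and $\overline{\ca P}^{-1}(\mr H) = \widetilde\Delta$, these $k$ branches descend bijectively to $k$ transverse smooth branches of $\mr H$ through $x = \overline{\ca P}(f)$. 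Since $\mr H$ is locally finite and its components through $x$ form an orthogonal family by Condition \ref{orthogonal}, the local branches of $\mr H$ at $x$ are exactly the distinct hyperplanes $H_r \in \ca H$ containing $x$. Hence $|\ca H(x)| = k$, proving $x \in \mr H_k$.

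For part (2), first count. Lemma \ref{lemma:monodromy} combined with deck-transformation theory for the ramified cover $\ca F_s \to X_s(\CC)$ identifies $P\Gamma_f$ with the local monodromy group of $\ca F_0 \to X_0(\CC)$ around $\pi(f) \in \Delta_k$, yielding $P\Gamma_f \cong (\ZZ/10)^k$. On the other hand, $G(x) \cong (\ZZ/10)^k$ by Lemma \ref{Gxlemma}, using $m = 10$ for $K = \QQ(\zeta_5)$ together with the fact $|\ca H(x)| = k$ from part (1). By $P\Gamma$-equivariance of $\overline{\ca P}$, the stabilizer $P\Gamma_f$ injects into $\Stab_{P\Gamma}(x)$; so it only remains to show that this image is contained in $G(x) \subset \Stab_{P\Gamma}(x)$, since then two subgroups of the same order $10^k$ will coincide.

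The hard part will be precisely this last step, a Picard--Lefschetz identification of the local-monodromy generators as complex reflections. The approach is to reduce, by the transversality of the $k$ local components of $\Delta$, to the one-node case and invoke the explicit calculation of monodromy for a one-parameter nodal degeneration of the cyclic quintic cover: near such a node the curve $C_F$ acquires an $A_4$-singularity $y^5 + x^2 = 0$, and the associated monodromy transformation on $R^1\pi_\ast \ZZ$ is the complex reflection $\phi_{r}$ attached to the vanishing cycle $r \in \mr R$ (see \cite[\S 5]{carlsontoledomonodromy} or \cite[\S 9]{DeligneMostow}, already cited in Lemma \ref{lemma:monodromy}). Applied to each of the $k$ nodes of $\pi(f)$, this pins down the $k$ commuting generators of $P\Gamma_f$ as $\phi_{r_1},\dots,\phi_{r_k}$, where $r_1,\dots,r_k$ are the short roots cutting out the hyperplanes $H_{r_1},\dots,H_{r_k}$ from part (1). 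By definition these generate $G(x)$, so $P\Gamma_f \subset G(x)$, and hence equality holds.
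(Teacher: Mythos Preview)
The paper does not actually give a proof of this lemma; the statement ends with a bare \qed, so it is treated as immediate from the surrounding setup (Theorem \ref{th:delignemostow}, Proposition \ref{prop:stableperiodshyperplane}, Corollary \ref{cor:framedquintics}, Lemma \ref{lemma:monodromy}). Your argument is correct and fills in exactly the details the paper leaves implicit.

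One remark on your final step. You assert that the Picard--Lefschetz reflection attached to the $i$-th node is $\phi_{r_i}$ for the root $r_i$ whose hyperplane $H_{r_i}$ is the image, under $\overline{\ca P}$, of the $i$-th local branch of $\widetilde\Delta$. This is true but the identification of \emph{which} root appears deserves a word. A self-contained way to see it, avoiding the external reference, is this: the generator $\gamma_i \in P\Gamma_f$ coming from the $i$-th node fixes the $i$-th branch $\{z_i=0\}$ of $\widetilde\Delta$ pointwise (this is visible in the local model of Corollary \ref{cor:framedquintics}, where $\gamma_i$ acts by $z_i \mapsto \zeta_{10} z_i$). By $P\Gamma$-equivariance of $\overline{\ca P}$, the isometry $\gamma_i$ of $\CC H^2$ therefore fixes an open subset of $H_{r_i}$ pointwise, hence all of $H_{r_i}$. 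Since $\gamma_i^{10}=\id$, Lemma \ref{finitereflectionorders} gives $\gamma_i \in \langle \phi_{r_i}\rangle \subset G(x)$. Thus $P\Gamma_f \subset G(x)$, and equality follows from your order count.
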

 \noindent
The naturality of the Fox completion implies that for $\alpha \in P\mr A$, the anti-holomorphic involution $\alpha: \ca F_0 \to \ca F_0$ extends to an anti-holomorphic involution $\alpha: \ca F_s \to \ca F_s$. 

\begin{lemma} \label{lemma:alphaperiod}
For every $\alpha \in P\mr A$, the restriction of $\overline{\ca P}: \ca F_s \to \CC H^2$ to $\ca F_s^\alpha$ defines a diffeomorphism $\GG(\RR) \setminus \ca F_s^\alpha \cong \RR H^2_\alpha$. 
\end{lemma}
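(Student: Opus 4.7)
The plan is to exploit that $\overline{\ca P}: \ca F_s \to \CC H^2$ is a principal $\GG(\CC)$-bundle by combining Theorem \ref{th:delignemostow} (which yields $\GG(\CC) \setminus \ca F_s \cong \CC H^2$) with the freeness of the $\GG(\CC)$-action from Lemma \ref{cor:freeaction}. First I would record the necessary equivariance. By functoriality of the Fox completion (already invoked in the excerpt to extend both the $\GG(\CC)$-action and $\alpha$ from $\ca F_0$ to $\ca F_s$), the extended period map $\overline{\ca P}$ remains $\alpha$-equivariant. From the explicit formulas $\alpha(F, [f]) = (\kappa(F), [\alpha \circ f \circ \sigma_F^\ast])$ and $g\cdot (F, [f]) = (g\cdot F, [f \circ g^\ast])$ one verifies the twisted compatibility $\alpha(g\cdot p) = \kappa(g)\cdot \alpha(p)$ for $g \in \GG(\CC)$ and $p \in \ca F_s$, where $\kappa$ denotes complex conjugation on $\GG(\CC) = \GL_2(\CC)/D$ (well defined since $D = \langle \zeta I_2\rangle$ is $\kappa$-stable: $\kappa(\zeta) = \zeta^{-1}$). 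A direct check using that $\zeta \notin \RR$ yields $\GG(\CC)^\kappa = \GL_2(\RR) = \GG(\RR)$, so $\GG(\RR)$ preserves $\ca F_s^\alpha$ and $\overline{\ca P}(\ca F_s^\alpha) \subset \RR H^2_\alpha$.

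Injectivity of the induced map $\GG(\RR) \setminus \ca F_s^\alpha \to \RR H^2_\alpha$ is immediate from the free $\GG(\CC)$-action: if $f_1, f_2 \in \ca F_s^\alpha$ have the same image, there is a unique $g \in \GG(\CC)$ with $f_2 = g f_1$; applying $\alpha$ and using uniqueness forces $g = \kappa(g) \in \GG(\RR)$. The heart of the argument is surjectivity. Given $x \in \RR H^2_\alpha$, pick any $f \in \overline{\ca P}^{-1}(x)$ and write $\alpha(f) = gf$ with $g \in \GG(\CC)$; then $\alpha^2 = \id$ forces $g\kappa(g) = 1$, so $g$ defines a class in $H^1(\Gal(\CC/\RR), \GG(\CC))$. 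The central extension $1 \to D \to \GL_2(\CC) \to \GG(\CC) \to 1$, classical Hilbert 90 for $\GL_2(\CC)$, and the vanishing $H^2(\Gal(\CC/\RR), D) = 0$ (since $D \cong \ZZ/5$ with $\kappa$ acting by inversion has trivial Tate cohomology: the norm $1+\kappa$ is zero and $1-\kappa = 2$ is invertible on $\ZZ/5$) combine to give $H^1(\Gal(\CC/\RR), \GG(\CC)) = 0$. Hence $g = \kappa(h)^{-1} h$ for some $h \in \GG(\CC)$, and a short computation confirms $hf \in \ca F_s^\alpha$ lies above $x$.

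Finally, to upgrade the bijection to a diffeomorphism I would show that $\overline{\ca P}|_{\ca F_s^\alpha}: \ca F_s^\alpha \to \RR H^2_\alpha$ is a smooth submersion whose fibers are precisely the $\GG(\RR)$-orbits. At any $f \in \ca F_s^\alpha$ the differential $d\overline{\ca P}_f: T_f \ca F_s \to T_{\overline{\ca P}(f)} \CC H^2$ is a surjective $\CC$-linear map intertwining the anti-linear involutions $d\alpha_f$ and $d\alpha_{\overline{\ca P}(f)}$; by averaging $v \mapsto (v + d\alpha_f(v))/2$, surjectivity passes to the real fixed subspaces, so $d(\overline{\ca P}|_{\ca F_s^\alpha})$ is surjective. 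Dimension counting ($\dim_\RR \ca F_s^\alpha = \dim_\CC \ca F_s = 6$, $\dim_\RR \RR H^2_\alpha = 2$, $\dim_\RR \GG(\RR) = 4$) then shows the fibers are exactly $\GG(\RR)$-orbits, making $\ca F_s^\alpha \to \RR H^2_\alpha$ a principal $\GG(\RR)$-bundle and $\GG(\RR)\setminus \ca F_s^\alpha \to \RR H^2_\alpha$ the desired diffeomorphism. The main obstacle throughout is the Hilbert 90 vanishing in the middle paragraph; once that is in hand the remainder is formal.
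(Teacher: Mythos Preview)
Your proof is correct. The paper does not give a self-contained argument here: it simply cites \cite[Lemma 11.3]{realACTsurfaces} and notes that freeness of the $\GG(\CC)$-action (Corollary \ref{cor:freeaction}) is the essential input. Your write-up is precisely the Galois-descent argument behind that cited lemma, adapted to the present group $\GG(\CC) = \GL_2(\CC)/D$. The one genuinely new wrinkle compared to the cubic-surfaces case in \cite{realACTsurfaces} (where the group is $\GL_4(\CC)$ and Hilbert~90 applies directly) is the passage through the central extension $1 \to D \to \GL_2(\CC) \to \GG(\CC) \to 1$; your computation that $H^2(\Gal(\CC/\RR), D) = 0$ because $D \cong \ZZ/5$ carries the inversion action is exactly what is needed, and your identification $\GG(\CC)^\kappa = \GL_2(\RR)$ (which uses that $2$ is invertible modulo $5$) is likewise correct. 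The submersion-plus-dimension-count at the end is the standard way to promote the bijection to a diffeomorphism.
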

\begin{proof}
See \cite[Lemma 11.3]{realACTsurfaces}. It is essential that $G$ acts freely on $\ca F_s$, which holds by Corollary \ref{cor:freeaction}. 
\end{proof}
\noindent
We arrive at the main theorem of Section \ref{modrealbinquin}. Define 
\[
\ca F_s(\RR) = \bigcup_{\alpha \in P\mr A} \ca F_s^\alpha = \pi^{-1}\left(X_s(\RR)\right).
\]
This is not a manifold because of the ramification of $\pi: \ca F_s \to X_s(\CC)$, but a union of embedded submanifolds. 

\begin{theorem} \label{th:realstableperiod}
There is a smooth map 
\begin{align}\label{therealstableperiodmap}
\overline{\ca P}_{\RR}: \coprod_{\alpha \in P\mr A} \ca F_s^\alpha \to  \coprod_{\alpha \in P\mr A}\RR H^2_\alpha =  \widetilde Y
\end{align} that extends the real period map (\ref{therealperiodmap}). The map (\ref{therealstableperiodmap}) induces the following commutative diagram of topological spaces, in which $\mr P_\RR$ and $\mf P_\RR$ are homeomorphisms:
\begin{equation*}
\xymatrixcolsep{5pc}
\xymatrix{
&\coprod_{\alpha \in P\mr A} \ca F_s^\alpha \ar[r]^{\overline{\ca P}_{\RR}}\ar[d] & \widetilde Y = \coprod_{\alpha \in P\mr A} \RR H^2_\alpha \ar[d]\\
&\ca F_s(\RR) \ar[r]^{\overline{\ca P}_\RR}\ar[d] & Y\ar@{=}[d] \\
\ca M_s(\RR)^f \ar@{=}[r]\ar[d] &\GG(\RR) \setminus \ca F_s(\RR) \ar[r]^{\mr P_\RR}_\sim\ar[d] & Y\ar[d] \\
\ca M_s(\RR) \ar@{=}[r] &\GG(\RR) \setminus X_s(\RR) \ar[r]^{{\mf P}_\RR}_\sim & P\Gamma \setminus Y. 
}
\end{equation*}
%
\end{theorem}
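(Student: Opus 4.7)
The plan is to construct $\overline{\ca P}_\RR$ by restriction of the holomorphic period map $\overline{\ca P}$ of (\ref{eq:stableperiodmapframed}), verify descent through the equivalence relation defining $Y$, and exhibit a continuous inverse of $\mr P_\RR$ obtained by gluing the diffeomorphisms supplied by Lemma \ref{lemma:alphaperiod}. First I would extend the argument of Proposition \ref{prop:realsmoothperiods} to the stable locus: the anti-holomorphic involutions $\alpha\colon \ca F_s \to \ca F_s$ and $\alpha\colon \CC H^2 \to \CC H^2$ are intertwined by $\overline{\ca P}$, because on the smooth locus $\ca F_0$ this follows from the formula (\ref{eq:periodframed}) together with the fact that complex conjugation on cohomology swaps Hodge components, and the identity $\overline{\ca P}\circ \alpha = \alpha\circ \overline{\ca P}$ extends by continuity from $\ca F_0$ to $\ca F_s$. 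Hence $\overline{\ca P}(\ca F_s^\alpha) \subset \RR H^2_\alpha$, defining the map $\overline{\ca P}_\RR\colon \coprod_\alpha \ca F_s^\alpha \to \widetilde Y$ at the top of the diagram.

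Next I would verify that $\overline{\ca P}_\RR$ descends to a continuous map $\ca F_s(\RR) \to Y$. Suppose $f \in \ca F_s^\alpha \cap \ca F_s^\beta$ with $\alpha \neq \beta$ in $P\mr A$, and set $x = \overline{\ca P}(f)$. The composition $\gamma \coloneqq \beta \circ \alpha$ represents a unitary class in $P\Gamma$ whose induced action on $\ca F_s$ fixes $f$, so Lemma \ref{lemma:stabilizergroups} yields $\gamma \in P\Gamma_f = G(x)$; in particular $G(x) \neq 1$, forcing $x \in \mr H$. Writing $\beta = \gamma \circ \alpha$ with $\gamma \in G(x)$, Definition \ref{def:conditions} gives $x_\alpha \sim x_\beta$ in $Y$, so $\overline{\ca P}_\RR$ descends as desired. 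Taking the further quotient by the free $\GG(\RR)$-action inherited from the $\GG(\CC)$-invariance of $\overline{\ca P}$ then produces $\mr P_\RR\colon \GG(\RR) \setminus \ca F_s(\RR) \to Y$.

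To show $\mr P_\RR$ is a homeomorphism, I would construct a continuous inverse. Lemma \ref{lemma:alphaperiod} supplies, for each $\alpha \in P\mr A$, a diffeomorphism $\RR H^2_\alpha \cong \GG(\RR) \setminus \ca F_s^\alpha$; assembling these gives a continuous map $\phi\colon \widetilde Y \to \GG(\RR) \setminus \ca F_s(\RR)$, and the main obstacle is to show that $\phi$ factors through $Y$. Given $x_\alpha \sim x_\beta$ with $\beta = \gamma \circ \alpha$ for some $\gamma \in G(x)$, lift $x$ to $f_\alpha \in \ca F_s^\alpha$ and $f_\beta \in \ca F_s^\beta$; since $\overline{\ca P}(f_\alpha) = \overline{\ca P}(f_\beta) = x$, Theorem \ref{th:delignemostow} and Lemma \ref{cor:freeaction} produce a unique $g \in \GG(\CC)$ with $f_\beta = g \cdot f_\alpha$, and it suffices to show $g \in \GG(\RR)$. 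Starting from $\beta(f_\beta) = f_\beta$ and using the $\GG(\CC)$-anti-equivariance identity $\alpha(h \cdot f) = \bar h \cdot \alpha(f)$ for $h \in \GG(\CC)$ (which follows by tracking the definition of $\alpha$ through the framing of the Jacobian), together with the commutativity of the $\GG(\CC)$- and $P\Gamma$-actions and the fact that $\gamma \in G(x) = P\Gamma_{f_\alpha}$ fixes $f_\alpha$ (Lemma \ref{lemma:stabilizergroups}), one arrives at $(g^{-1}\bar g) \cdot f_\alpha = f_\alpha$; Lemma \ref{cor:freeaction} then forces $g = \bar g$, i.e.\ $g \in \GG(\RR)$. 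Consequently $\phi$ descends to a continuous inverse $Y \to \GG(\RR) \setminus \ca F_s(\RR)$ of $\mr P_\RR$, which is therefore a homeomorphism.

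Finally, passing to $P\Gamma$-quotients of the $P\Gamma$-equivariant homeomorphism $\mr P_\RR$, and noting that $P\Gamma \setminus \ca F_s(\RR) = X_s(\RR)$ (verifiable fiberwise over $X_s(\RR)$ for the branched $P\Gamma$-cover $\ca F_s \to X_s(\CC)$), yields the homeomorphism $\mf P_\RR\colon \GG(\RR) \setminus X_s(\RR) \to P\Gamma \setminus Y$, and the commutativity of the full diagram is immediate from the construction.
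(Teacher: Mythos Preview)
Your proposal is correct and follows essentially the same approach as the paper. The one organizational difference is that where the paper establishes that $\mr P_\RR$ is a homeomorphism by separately checking injectivity, surjectivity, and openness, you instead glue the inverses of Lemma~\ref{lemma:alphaperiod} into a continuous two-sided inverse; your verification that this inverse is well defined on $Y$ (showing the element $g\in\GG(\CC)$ with $f_\beta=g\cdot f_\alpha$ satisfies $g=\bar g$) is logically equivalent to the paper's injectivity step, which observes directly that $\gamma\in G(x)=P\Gamma_{f_\alpha}$ forces $f_\alpha\in\ca F_s^\beta$ and then invokes Lemma~\ref{lemma:alphaperiod}. Either route works; yours trades the openness argument for an explicit inverse, which is marginally cleaner.
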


\begin{proof}
The existence of $\overline{\ca P}_{\RR}$ follows from the compatibility with the involutions $\alpha \in P\mr A$. We first show that the composition 
$$ \coprod_{\alpha \in P\mr A} \ca F_s^\alpha \xrightarrow{\overline{\ca P}_{\RR}} \widetilde Y \xrightarrow{p} Y$$ factors through $\ca F_s(\RR)$. Now $f_\alpha$ and $g_\beta\in \coprod_{\alpha \in P\mr A} \ca F_s^\alpha$ have the same image in $\ca F_s(\RR)$ if and only if $f = g \in \ca F_s^\alpha \cap \ca F_s^\beta$, in which case 
$$
x\coloneqq \overline{\ca P}(f) = \overline{\ca P}(g) \in \RR H^2_\alpha \cap \RRH^2_\beta,$$ so we need to show is that $x_\alpha \sim x_\beta \in \widetilde Y$. For this, note that $\alpha\beta \in P\Gamma_f \cong (\ZZ/10)^k$, 
and $\overline{\ca P}$ induces an isomorphism 
$
P\Gamma_f \cong G(x)
$
by Lemma \ref{lemma:stabilizergroups}. Hence $\alpha \beta \in G(x)$ so that indeed, $x_\alpha \sim x_\beta$. 

Let us prove the $\GG(\RR)$-equivariance of $\overline{\ca P}_{\RR}$. Suppose that 
\[
f \in \ca F_s^\alpha, g \in \ca F_s^\beta \quad \mid \quad a \cdot f = g \in \ca F_s(\RR) \quad \tn{ for some } \quad a \in \GG(\RR).
\]
Then $x\coloneqq \overline{\ca P}(f) = \overline{\ca P}(g) \in \CC H^2$, so we need to show that $\alpha \beta \in G(x)$. The actions of $\GG(\CC)$ and $P\Gamma$ on $\CC H^2$ commute, and the same holds for the actions of $\GG(\RR)$ and $P\Gamma'$ on $\ca F_s^\RR$. 
It follows that 
\[
\alpha(g) = \alpha (a \cdot f) = a \cdot \alpha(f) = a \cdot f = g,
\]
hence $g \in \ca F_s^\alpha \cap \ca F_s^\beta$. This implies in turn that $\alpha \beta (g) = g$, hence $\alpha \beta \in P\Gamma_g \cong G(x)$, so that indeed, $x_\alpha \sim x_\beta$. 

To prove that $\mr P_\RR$ is injective, let again 
$
f_\alpha, g_\beta\in \coprod_{\alpha \in P\mr A} \ca F_s^\alpha
$
and suppose that they have the same image in $Y$. This implies that 
\[
x\coloneqq \overline{\ca P}(f) = \overline{\ca P}(g) \in \RR H^2_\alpha \cap \RRH^2_\beta,
\]
and that $\beta = \phi \circ \alpha$ for some $\phi \in G(x)$. 
We have $\phi \in G(x) \cong P\Gamma_f$ (by Lemma \ref{lemma:stabilizergroups}) hence
\begin{equation}
    \beta(f) = \phi \left(\alpha (f)\right) = \phi(f) = f. 
\end{equation}
Therefore $f,g \in \ca F_s^\beta$; since $\overline{\ca P}(f) = \overline{\ca P}(g)$, it follows from Lemma \ref{lemma:alphaperiod} that there exists $a \in \GG(\RR)$ such that $a \cdot f = g$. This proves injectivity of $\mr P_\RR$, as desired.   

The surjectivity of $\mr P_\RR: \GG(\RR) \setminus \ca F_s(\RR) \to Y$ is straightforward, using the surjectivity of $\overline{\ca P}_{\RR}$, which follows from Lemma \ref{lemma:alphaperiod}. 

Finally, we claim that $\mr P_\RR$ is open. Let $U \subset \GG(\RR) \setminus \ca F_s^\RR$, and write $U = \mr P_\RR^{-1} \mr P_\RR \left(U\right)$. Let $V$ be the preimage of $U$ in $\coprod_{\alpha \in P\mr A}\ca F_s^\alpha$. Then 
\[
V = \overline{\ca P}_{\RR}^{-1}\left( p^{-1}\left(\mr P_\RR(U)\right)\right)
\]
and hence 
$$
    \overline{\ca P}_{\RR}\left( V \right) = p^{-1}\left(\mr P_\RR(U)\right).
$$
The map $\overline{\ca P}_{\RR}$ is open, being the coproduct of the maps $\ca F_s^\alpha \to \RR H^2_\alpha$, which are open since they have surjective differential at each point. Thus $\mr P_\RR(U)$ is open in $Y$. 
\end{proof}

\begin{corollary} \label{cor:theorem2}
 There is a lattice $P\Gamma_\RR \subset \textnormal{PO}(2,1)$, an inclusion of orbifolds
 \begin{align}\label{inclusionofforbifodls}
 \coprod_{\alpha \in C \mr A}P\Gamma_\alpha \setminus \left( \RR H^2_\alpha - \mr H
    \right) \hookrightarrow P\Gamma_\RR \setminus \RR H^2, 
 \end{align}
 and a homeomorphism
\begin{equation} \label{stablehom}
\ca M_s(\RR) =  \GG(\RR) \setminus X_s(\RR) \cong P\Gamma_\RR \setminus \RR H^2 
\end{equation}
such that (\ref{stablehom}) restricts to (\ref{smoothrealperiodiso}) with respect to (\ref{inclusionofforbifodls}). 
\end{corollary}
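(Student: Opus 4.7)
The plan is to combine the two main tools already established: the period homeomorphism $\mf{P}_\RR \colon \ca M_s(\RR) \xrightarrow{\sim} P\Gamma \setminus Y$ furnished by Theorem \ref{th:realstableperiod}, and the general glueing theorem \ref{glueingtheorem1} applied to the hermitian $\OO_K$-lattice $(\Lambda, \mf h)$ of Section \ref{jacofcyc}. Before invoking Theorem \ref{glueingtheorem1} we note its standing hypothesis, Condition \ref{orthogonal}, is in force: as verified right after Theorem \ref{th:realstableperiod}, the CM datum $(K=\QQ(\zeta_5), \Psi)$ and the hermitian form $\mf h$ satisfy Conditions \ref{crucialcondition}, so Theorem \ref{th:conditionsimplyhypothesis} guarantees that the hyperplane arrangement $\mr H \subset \CC H^2$ is orthogonal.

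Theorem \ref{glueingtheorem1}(4) then yields an isomorphism of real hyperbolic orbifolds
\begin{equation*}
P\Gamma \setminus Y \;\cong\; \coprod_{C \in \pi_0(P\Gamma \setminus Y)} [P\Gamma_C \setminus \RR H^2],
\end{equation*}
in which each $P\Gamma_C$ is a lattice in $\textnormal{PO}(2,1)$. To promote this to the statement of the corollary, the next step is to show that $\ca M_s(\RR)$, equivalently $P\Gamma \setminus Y$, is connected. This is an elementary topological fact about real binary quintics: any two components $\mr M_i, \mr M_{i-1}$ of the smooth locus $\ca M_0(\RR)$ (see Section \ref{realbinaryintroduction}) meet along the closure of the discriminant locus in $\ca M_s(\RR)$, because a real binary quintic with $2i$ complex roots can be deformed inside $X_s(\RR)/\GG(\RR)$ into one with $2(i-1)$ complex roots by letting a pair of complex conjugate roots collide into a real double root and then separating them into two real roots. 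Hence $\ca M_s(\RR)$ is connected, so there is a single component $C$ and a single lattice $P\Gamma_\RR \coloneqq P\Gamma_C \subset \textnormal{PO}(2,1)$ with $P\Gamma \setminus Y \cong [P\Gamma_\RR \setminus \RR H^2]$.

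Composing this isomorphism with $\mf P_\RR$ of Theorem \ref{th:realstableperiod} produces the homeomorphism
\begin{equation*}
\ca M_s(\RR) \;\cong\; P\Gamma_\RR \setminus \RR H^2
\end{equation*}
of (\ref{stablehom}). The open inclusion (\ref{inclusionofforbifodls}) is then obtained from part (3) of Theorem \ref{glueingtheorem1}, which exhibits
\begin{equation*}
\coprod_{\alpha \in C\mr A} [P\Gamma_\alpha \setminus (\RR H^2_\alpha - \mr H)] \subset P\Gamma \setminus Y
\end{equation*}
as an open suborbifold, transported to $P\Gamma_\RR \setminus \RR H^2$ via the uniformization above.

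It remains to check that (\ref{stablehom}) restricts on the smooth locus to the orbifold isomorphism (\ref{smoothrealperiodiso}) from Proposition \ref{prop:realsmoothperiods}. This is a compatibility that should be essentially formal and is the least difficult part of the argument: the smooth real period map of Proposition \ref{prop:realsmoothperiods} is by construction obtained from $\overline{\ca P}_\RR$ of Theorem \ref{th:realstableperiod} by restricting to $\ca F_0(\RR) \subset \ca F_s(\RR)$, and under $\mr P_\RR$ the preimage of the open suborbifold $\coprod_\alpha [P\Gamma_\alpha \setminus (\RR H^2_\alpha - \mr H)] \subset P\Gamma \setminus Y$ is exactly $\GG(\RR)\setminus\ca F_0(\RR) = \ca M_0(\RR)^f$. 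Tracking the identifications, (\ref{stablehom}) restricts on $\ca M_0(\RR)$ to the map (\ref{smoothrealperiodiso}).

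The one nontrivial ingredient in this plan is the connectedness of $\ca M_s(\RR)$ --- everything else is a direct application of material already in the excerpt --- and that step is a straightforward real-root-counting deformation argument on $X_s(\RR)$ as described above.
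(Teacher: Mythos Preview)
Your proposal is correct and follows the paper's approach: the paper's own proof is the single line ``This follows directly from Theorems \ref{glueingtheorem1} and \ref{th:realstableperiod}.'' You have correctly identified that to obtain a \emph{single} lattice $P\Gamma_\RR$ one needs $P\Gamma\setminus Y$ (equivalently $\ca M_s(\RR)$) to be connected, a point the paper's terse proof glosses over here but establishes later in Section \ref{sec:triangle2}. There, however, the paper gives a simpler argument than your deformation-of-roots reasoning: $X_s(\RR)$ is obtained from the euclidean space $\RR[x,y]_{\deg 5}\cong \RR^6$ by removing a real subvariety of codimension at least two, hence is connected, and so is its quotient $\ca M_s(\RR)$. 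Your deformation argument is also valid but unnecessary.
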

\begin{proof}
This follows directly from Theorems \ref{glueingtheorem1} and \ref{th:realstableperiod}. 
\end{proof}

\begin{remark}
The proof of Theorem \ref{th:realstableperiod} also shows that $\ca M_s(\RR)$ is homeomorphic to the glued space $P\Gamma \setminus Y$ (see Definition \ref{gluedspace}) if $\ca M_s$ is the stack of cubic surfaces or of binary sextics. This strategy to uniformize the real moduli space differs from the one used in \cite{realACTsurfaces, realACTnonarithmetic, realACTbinarysextics}, since we first glue together the real ball quotients, and then prove that our real moduli space is homeomorphic to the result. 
\end{remark}


\subsection{Automorphism groups of stable real binary quintics} \label{sec:1}


Before we can finish the proof of Theorem \ref{th:theorem02}, we need to understand the orbifold structure of $\ca M_s(\RR)$, and how this structure differs from the orbifold structure of the glued space $P\Gamma \setminus Y$. In the current Section \ref{sec:1} we start by analyzing the orbifold structure of $\ca M_s(\RR)$, by listing its stabilizer groups. 
There is a canonical orbifold isomorphism
$
\ca M_s(\RR) = \GG(\RR) \setminus X_s(\RR) = (P_s/\mf S_5)(\RR).
$
Therefore, to list the automorphism groups of binary quintics is to list the elements $x = [\alpha_1, \dotsc, \alpha_5] \in (P_s/\mf S_5)(\RR)$ whose stabilizer 
$ \PGL_2(\RR)_x
$ is non-trivial, and calculate $\PGL_2(\RR)_x$ in these cases. This will be our next goal. 

\begin{proposition} \label{prop:allstabilizergroups}
All stabilizer groups $\PGL_2(\RR)_x \subset \PGL_2(\RR)$ for points $x \in (P_s/ \mf S_5)(\RR)$ are among $\ZZ/2, D_3, D_5$. For $n \in \{3,5\}$, there is a unique $\PGL_2(\RR)$-orbit in $(P_s/ \mf S_5)(\RR)$ of points $x$ with stabilizer $D_n$.  
\end{proposition}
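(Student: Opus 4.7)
The plan. Since $x \in (P_s/\mf S_5)(\RR)$ is GIT-stable for the $\PGL_2$-action, its stabilizer $G := \PGL_2(\RR)_x$ is a compact, hence finite, subgroup of $\PGL_2(\RR)$, so $G$ is cyclic $\ZZ/n$ or dihedral $D_n$ by the classification of finite subgroups of $\PGL_2(\RR)$. Let $S \subset \PP^1(\CC)$ denote the $\sigma$-stable multiset of five points (each of multiplicity $1$ or $2$) represented by $x$. The strategy is to use the combinatorics of the $G$-action on $S$ to constrain $G$, then to produce explicit configurations realizing each allowed case and to exhibit a single $\PGL_2(\RR)$-orbit for each of $D_3$ and $D_5$.

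First I would bound the cyclic order. Any $\phi \in G$ of order $n \geq 3$ lies in $\PSL_2(\RR)$ and is a rotation about some $p \in \bb H$, with fixed points $\{p,\bar p\} \subset \PP^1(\CC) \setminus \PP^1(\RR)$. Since $\phi$ has real coefficients, complex conjugation $\sigma$ commutes with $\phi$, and any $\sigma$-stable $\phi$-orbit containing a real point is entirely contained in $\PP^1(\RR)$. Combining this observation with the constraint $\sum_i m_i k_i = 5$, where $k_i$ are $\phi$-orbit sizes dividing $n$ and the multiplicities $m_i \in \{1,2\}$ are constant on each orbit, one finds $n \in \{1,2,3,5\}$: for $n = 4$ the candidate partition $4+1$ forces a $\phi$-fixed point to lie in $S$, hence in the non-real conjugate pair $\{p,\bar p\}$, but then $\sigma$-stability forces both of them into $S$ and $|S| \geq 6$, while the other partitions with parts $1,2,4$ each make $\phi^2$ fix at least three points of $\PP^1(\CC)$, so $\phi^2 = 1$, contradicting order $4$. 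For $n = 6$ a parallel argument shows that each partition with parts in $\{1,2,3\}$ forces either $\phi^2$ or $\phi^3$ to become trivial, while for $n \geq 7$ no partition of $5$ into parts dividing $n$ is faithful. The cases $n = 3$ and $n = 5$ survive, with the only faithful partitions being $3+1+1$ and $5$ respectively, and the orbit analysis forces the $n$-orbit to lie in $\PP^1(\RR)$ (and the two fixed points to be $\{p,\bar p\}$ when $n = 3$).

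Next I would promote $\ZZ/n \subset G$ to $D_n \subset G$ for $n \in \{3,5\}$ and rule out $D_2$. In the disk model centred at $p$, the regular $n$-gon lies on the boundary circle, and the $n$ horizontal $180^{\circ}$ rotations of $\PP^1(\CC)$ through the symmetry axes of the polygon are elements of $\PGL_2(\RR) \setminus \PSL_2(\RR)$ that swap $\{p,\bar p\}$ and permute the polygon's vertices (each fixing one vertex and swapping the remaining ones pairwise); together with $\ZZ/n$ they generate a copy of $D_n \subset G$. Any strict enlargement of $D_n$ in $\PGL_2(\RR)$ would contain a cyclic subgroup of an order excluded above, so $G = D_n$. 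The Klein four group $D_2 \subset \PGL_2(\RR)$ is ruled out because its three non-trivial involutions have pairwise disjoint fixed sets on $\PP^1(\CC)$, so $D_2$ has no fixed point and all its orbits have size $2$ or $4$; but $5 = 2a + 4b$ admits no solution in $\ZZ_{\geq 0}$ (even after the uniform rescaling allowed by multiplicities $\leq 2$), and so $D_2 \not\subset G$.

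For the uniqueness statement, triple transitivity of $\PGL_2(\RR)$ on $\PP^1(\RR)$ (equivalently, transitivity of $\PSL_2(\RR)$ on $\bb H$) allows any regular $n$-gon on $\PP^1(\RR)$ to be moved to any other, and for $n = 3$ the auxiliary conjugate pair $\{p,\bar p\}$ is the unique fixed pair in $\PP^1(\CC)$ of the $\ZZ/3$ rotation subgroup, hence is transported along with the triangle, yielding a single $\PGL_2(\RR)$-orbit in each case. The main obstacle I anticipate is the systematic case analysis bounding $n$: each individual subcase is elementary, but combining the three ingredients --- divisibility of orbit sizes by $n$, the fixed-point count of elements of $\PGL_2(\CC)$, and $\sigma$-stability (including the possibility that distinct $\phi$-orbits are swapped by $\sigma$ rather than being individually $\sigma$-stable) --- cleanly enough to rule out the borderline cases $n = 4$ and $n = 6$ is the step where the argument must be presented with the most care.
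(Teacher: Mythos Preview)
Your approach is correct and takes a genuinely different route from the paper's. The paper first invokes an external classification (due to Wu) of the complex stabilizers $\PGL_2(\CC)_x$ for smooth configurations, which already lie among $\ZZ/2, D_3, \ZZ/4, D_5$; it then rules out order-$4$ elements over $\RR$ by conjugating into the standard $\ZZ/4 \subset \PGL_2(\RR)$, and pins down the $D_3$ and $D_5$ configurations (including uniqueness) by producing explicit normal forms via ad hoc lemmas. Your argument is self-contained: you bound the cyclic order directly by orbit combinatorics on the five-point multiset, using that finite-order elements of order $\geq 3$ in $\PGL_2(\RR)$ are elliptic with non-real fixed points, together with $\sigma$-stability; you then promote $\ZZ/n$ to $D_n$ for $n \in \{3,5\}$ by observing that the $180^\circ$ equatorial rotations in $\PGL_2(\RR) \setminus \PSL_2(\RR)$ realize the reflections of the regular $n$-gon on $\PP^1(\RR)$ while swapping $\{p,\bar p\}$, and exclude $D_2$ by a parity count on orbit sizes. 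Your route avoids the external citation and treats the smooth and nodal loci uniformly; the paper's route is shorter once the complex result is granted, and its explicit coordinates feed directly into the subsequent orbifold analysis.

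One small slip: ``compact, hence finite'' is not literally correct, since $\PGL_2(\RR)$ contains the compact infinite subgroup $\mathrm{PSO}(2)$; what you actually need is that GIT-stability forces the complex stabilizer $\PGL_2(\CC)_x$ to be finite, whence so is $\PGL_2(\RR)_x \subset \PGL_2(\CC)_x$. Likewise, your phrase ``no partition of $5$ into parts dividing $n$ is faithful'' for $n \geq 7$ should be unpacked as: every such partition (with at most two parts equal to $1$) forces some proper power $\phi^k$ to fix at least three points of $\PP^1(\CC)$, hence to be the identity.
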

\begin{proof}
We have an injection 
$
(P_s/\mf S_5)(\RR) \hookrightarrow P_s/\mf S_5$ which is equivariant for the embedding $\PGL_2(\RR) \hookrightarrow \PGL_2(\CC)$. In particular, $\PGL_2(\RR)_x \subset \PGL_2(\CC)_x$ for every $x \in (P_s/\mf S_5)(\RR)$. The groups $\PGL_2(\CC)_x$ for equivalence classes of distinct points $x \in P_0/\mf S_5$ are calculated in \cite[Theorem 22]{wu2019moduli}, and such a group is isomorphic to $\ZZ/2, D_3, \ZZ/4$ or $D_5$. None of these have subgroups isomorphic to $D_2 = \ZZ/2 \rtimes \ZZ/2$ or $D_4 = \ZZ/2 \rtimes \ZZ/4$. 
Define an involution 
\begin{equation*} 
    \nu \coloneqq (z \mapsto 1/z) \in \PGL_2(\RR).
\end{equation*}
\noindent
The proof of Proposition \ref{prop:allstabilizergroups} will follow from the following Lemmas \ref{lemma:involution}, \ref{lem-two}, \ref{lemma:D3} and \ref{lemma:D5}. 

\begin{lemma} \label{lemma:involution}
Let $\tau \in \PGL_2(\RR)$. Consider a subset $S = \{x,y,z\} \subset \PP^1(\CC)$ stabilized by complex conjugation, such that $\tau(x) = x$, $\tau(y) = z$ and $\tau(z) = y$. There is a transformation $g \in \PGL_2(\RR)$ that maps $S$ to either $\{-1, 0, \infty\}$ or $\{-1, i, -i\}$, and that satisfies $g \tau g^{-1} = \nu = (z \mapsto 1/z) \in \PGL_2(\RR)$. In particular, $\tau^2 = \id$. 
\end{lemma}
\begin{proof}
Indeed, two transformations $g,h \in \PGL_2(\CC)$ that satisfy $g(x_i) = h(x_i)$ for three different points $x_1, x_2, x_3 \in \PP^1(\CC)$ are necessarily equal. 
\end{proof}

\begin{lemma} \label{lem-two}
There is no $x \in (P_s/\mf S_5)(\RR)$ stabilized by some $\phi \in \PGL_2(\RR)$ of order $4$. 
\end{lemma}
\begin{proof}
By \cite[Theorem 4.2]{beauvillePGL2}, all subgroups $G \subset \PGL_2(\RR)$ that are isomorphic to $\ZZ/4$ are conjugate to each other. Since the transformation $I: z \mapsto (z-1)/(z+1)$ is of order $4$, it gives a representative $G_I = \langle I \rangle$ of this conjugacy class. Hence, assuming there exists $x$ and $\phi$ as in the lemma, 
possibly after replacing $x$ by $gx$ for some $g \in \PGL_2(\RR)$, we may and do assume that $\phi = I$. On the other hand, it is easily shown that $I$ cannot fix any $x \in (P_s/\mf S_5)(\RR)$. 
\end{proof}
\noindent
Define
$$
\rho \in \PGL_2(\RR), \white \rho(z) = \frac{-1}{z+1}.
$$

\begin{lemma} \label{lemma:D3}
Let $x = (x_1, \dotsc, x_5) \in (P_s/\mf S_5)(\RR)$. Suppose $\phi(x) = x$ for an element $\phi \in \PGL_2(\RR)$ of order $3$. There is a transformation $g \in \PGL_2(\RR)$ mapping $x$ to $z = (-1, \infty, 0, \omega, \omega^2)$ with $\omega$ a primitive third root of unity, and the stabilizer of $x$ to the subgroup of $\PGL_2(\RR)$ generated by $\rho$ and $\nu$. In particular, $\PGL_2(\RR)_x$ is isomorphic to $D_3$. 
\end{lemma}

\begin{proof}
It follows from Lemma \ref{lemma:involution} that there must be three elements $x_1, x_2, x_3$ which form an orbit under $\phi$. Since complex conjugation preserves this orbit, one element in it is real; since $g$ is defined over $\RR$, they are all real. Let $g \in \PGL_2(\RR)$ such that $g(x_1) = -1$, $g(x_2) = \infty$ and $g(x_3) = 0$. Define $\kappa = g \phi g^{-1}$. Then $\kappa^3 = \id$, and $\kappa$ preserves $\{-1, \infty, 0\}$ and sends $-1$ to $\infty$ and $\infty$ to $0$. Consequently, $\kappa(0) = -1$, and it follows that $\kappa = \rho$. Hence $x$ is equivalent to an element of the form 
$z = (-1, \infty, 0, \alpha, \beta)$. Moreover, $\beta = \bar \alpha$ and $\alpha^2 + \alpha + 1 = 0$. 
\end{proof}
\noindent
Recall that $\zeta_5 = e^{2i\pi/5} \in \PP^1(\CC)$ and define
$$
\lambda = \zeta_5 + \zeta_5^{-1} \in \RR, \white\white \gamma(z) = \frac{(\lambda + 1)z - 1}{z + 1} \in \PGL_2(\RR). 
$$

\begin{lemma}\label{lemma:D5}
Let $x = (x_1, \dotsc, x_5) \in (P_s/\mf S_5)(\RR)$. Suppose $x$ is stabilized by a subgroup of $\PGL_2(\RR)$ of order $5$. There is a transformation $g \in \PGL_2(\RR)$ mapping $x$ to $z = (0, -1, \infty, \lambda+1, \lambda)$ and identifying the stabilizer of $x$ with the subgroup of $\PGL_2(\RR)$ generated by $\gamma$ and $\nu$. In particular, the stabilizer $\PGL_2(\RR)_x$ of $x$ is isomorphic to $D_5$. 
\end{lemma}

\begin{proof}
Let $\phi\in \PGL_2(\RR)_x$ be an element of order $5$. 
Using Lemma \ref{lemma:involution} one shows that $x$ must be smooth, i.e. all $x_i$ are distinct, and $x_i = \phi^{i-1}(x_1)$. Since there is one real $x_i$ and $\phi$ is defined over $\RR$, all $x_i$ are real. Now note that  $z = (0, -1, \infty, \lambda + 1, \lambda)$ is the orbit of $0$ under $\gamma: z \mapsto ((\lambda+1)z - 1)/(z+1)$. The reflection $\nu: z \mapsto 1/z$ preserves $z$ as well: if $\zeta = \zeta_5$ then $\lambda = \zeta + \zeta^{-1}$ hence $\lambda + 1 = - (\zeta^2 + \zeta^{-2}) = - \lambda^2 + 2$, so that $\lambda(\lambda + 1) = 1$. So we have $\PGL_2(\RR)_z \cong D_5$. By \cite[Theorem 22]{wu2019moduli}, the point $z$ with its stabilizer $\PGL_2(\RR)_z$ must be equivalent under $\PGL_2(\CC)$ to the point $(1, \zeta, \zeta^2, \zeta^3, \zeta^4)$ with its stabilizer $\langle x \mapsto \zeta x, x \mapsto 1/x \rangle$. 
Thus, there exists $g \in \PGL_2(\CC)$ such that $g(x_1) = 0$, $g(x_2) = -1$, $g(x_3) = \infty$, $g(x_4) = \lambda + 1$ and $g(x_5) = \lambda$, and such that $g\PGL_2(\RR)_xg^{-1} = \PGL_2(\RR)_z$. Since all $x_i$ and $z_i \in z$ are real, we see that $\bar g(x_i) = z_i$ for every $i$, hence $g$ and $\bar g$ coincide on more than $2$ points, hence $g = \bar g \in \PGL_2(\RR)$. 
\end{proof}
\noindent
Proposition \ref{prop:allstabilizergroups} follows. 
\end{proof}

\subsection{Binary quintics with automorphism group of order two} \label{sec:2}

The goal of Section \ref{sec:2} is to prove that there are no cone points in the orbifold $\PGL_2(\RR) \setminus (P_s/ \mf S_5)(\RR)$, i.e. orbifold points whose stabilizer group is $\ZZ/n$ for some $n$ acting on the orbifold chart by rotations. By Proposition \ref{prop:allstabilizergroups}, this fact will follow from the following:
\begin{proposition} \label{prop:zmod2stabilizer}
Let $x=(x_1, \dotsc, x_5) \in (P_s/\mf S_5)(\RR)$ such that $\PGL_2(\RR)_x = \langle \tau \rangle $ has order two. There is a $\PGL_2(\RR)_x$-stable open neighborhood $U \subset (P_s/\mf S_5)(\RR)$ of $x$ such that $\PGL_2(\RR)_x \setminus U \to \ca M_s(\RR)$ is injective, and a homeomorphism $\phi: (U,x) \to (B,0)$ for $0 \in B \subset \RR^2$ an open ball, such that $\phi$ identifies $\PGL_2(\RR)_x$ with $\ZZ/2$ acting on $B$ by reflections in a line through $0$.  
\end{proposition}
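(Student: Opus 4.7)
The strategy is to apply the Palais slice theorem for the proper $\PGL_2(\RR)$-action on $M \coloneqq (P_s/\mf S_5)(\RR)$ and then to pin down the resulting action of $\PGL_2(\RR)_x = \langle \tau \rangle$ on the slice via a dimension count of its fixed locus. Every GIT-stable configuration has finite stabilizer in $\PGL_2$, so the $\PGL_2(\RR)$-action on $M$ is proper; Palais' theorem produces a $\PGL_2(\RR)_x$-stable real-analytic submanifold $U \subset M$ through $x$, transverse to the orbit $\PGL_2(\RR) \cdot x$, such that $\PGL_2(\RR) \times_{\PGL_2(\RR)_x} U \to M$ is an open $\PGL_2(\RR)$-equivariant embedding; after shrinking $U$, this induces the required injection $\PGL_2(\RR)_x \setminus U \hookrightarrow \ca M_s(\RR)$. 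Since $\dim M = 5$ and $\dim (\PGL_2(\RR)\cdot x) = 3$, the slice $U$ has real dimension $2$, and any diffeomorphism $\phi \colon U \xrightarrow{\sim} B$ to an open ball $B \subset \RR^2$ with $\phi(x) = 0$ furnishes the chart of the statement. By Lemma \ref{lemma:involution}, after a further $\PGL_2(\RR)$-conjugation we may assume $\tau = \nu \colon z \mapsto 1/z$, so the remaining task is to show that the linear involution $d\nu$ on $T_xU \cong \RR^2$ has $(+1)$-eigenspace of dimension exactly $1$, i.e.\ is a reflection.

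Let $F \subset M$ be the $\nu$-fixed locus near $x$; since $\nu$ has order two and acts smoothly, $F$ is a smooth real-analytic submanifold with $T_xF = T_xM^{d\nu = +1}$, and the image of $F$ in $\ca M_s(\RR)$ coincides with $\PGL_2(\RR)_x \setminus U^\nu = U^\nu$, so $\dim U^\nu = \dim F - \dim(F \cap \PGL_2(\RR)\cdot x)$. A point $gx$ lies in $F$ iff $g^{-1}\nu g \in \Stab(x) = \langle \nu \rangle$ iff $g \in \textnormal{Cent}_{\PGL_2(\RR)}(\nu)$; the centralizer of $\nu$ in $\PGL_2(\RR)$ is one-dimensional, so $\dim(F \cap \PGL_2(\RR)\cdot x) = 1$. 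On the other hand, $\dim F = 2$: the multiset $x$ decomposes into $\nu$-orbits, and since $\PP^1(\CC)^\nu = \{\pm 1\}$ has two elements while $5$ is odd, the orbit structure must have an odd number of $\nu$-fixed points (among $\{\pm 1\}$, counted with multiplicity in $x$) and the rest in pairs $\{z, 1/z\}$. A case-by-case enumeration of the possible orbit patterns shows that each pair contributes a one-dimensional family of $\nu$-invariant deformations (via $z$ varying in $\RR \setminus \{0,\pm 1\}$ or in the arc $S^1 \setminus \{\pm 1\}$), each $\nu$-fixed point of multiplicity $2$ likewise contributes a one-dimensional family of $\nu$-invariant deformations (the splitting of the double point into a genuine $\nu$-pair, parameterized by the smooth curve $\{ab = 1\} \subset \textnormal{Sym}^2(\PP^1)$), and multiplicity-one $\nu$-fixed points are rigid; in every case the total comes out to $\dim F = 2$.

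Combining these, $\dim U^\nu = 2 - 1 = 1$, so $d\nu$ acts on $T_xU \cong \RR^2$ with one-dimensional $(+1)$-eigenspace and hence is a reflection in a line. A final linear change of coordinates in $\phi$ carries this fixed line to a line through $0$ in $B$, yielding the required identification of $\PGL_2(\RR)_x$ with $\ZZ/2$ acting by reflection. The most delicate step is the uniform verification of the equality $\dim F = 2$ across all multiplicity patterns of $x$; the key observation handling the multiplicity-two case is that the $\nu$-fixed subvariety of $\textnormal{Sym}^2(\PP^1)$ near a diagonal point $\{p,p\}$ with $p \in \{\pm 1\}$ is the smooth one-dimensional curve $\{ab = 1\}$ passing through $\{p,p\}$, so that the double point admits a one-parameter $\nu$-invariant deformation in $M$ rather than being rigid.
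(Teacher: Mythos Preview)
Your approach is correct and genuinely different from the paper's. The paper proceeds by explicit enumeration: after conjugating $\tau$ to $\nu$ via Lemma~\ref{lemma:involution}, it lists six concrete normal forms for $x$ and leaves the reflection property to a case-by-case check in coordinates. You instead use the Palais slice theorem to produce a two-dimensional slice $U$ and reduce to showing $\dim U^\nu = 1$, which you obtain from $\dim U^\nu = \dim M^\nu - \dim\textnormal{Cent}_{\PGL_2(\RR)}(\nu) = \dim M^\nu - 1$ together with a uniform verification that $\dim M^\nu = 2$ via the $\nu$-orbit decomposition of $x$. This is more conceptual and avoids writing down each configuration explicitly; the paper's route, by contrast, makes the geometry of each stratum visible.

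Two points deserve tightening. First, your enumeration of orbit patterns omits the case $x = \{-1, w, 1/w, \bar w, 1/\bar w\}$ with $w \notin \RR \cup S^1$, in which the two $\nu$-pairs are interchanged by complex conjugation rather than individually Gal-invariant; your parenthetical ``$z$ varying in $\RR \setminus \{0,\pm 1\}$ or in $S^1 \setminus \{\pm 1\}$'' does not apply here, but the conclusion $\dim M^\nu = 2$ still holds because the $\nu$- and Gal-invariant deformations are parametrized by $w$ moving in an open subset of $\CC$. Second, the passage from ``$d\nu$ is a reflection on $T_xU$'' to ``$\nu$ acts as a reflection on $U$'' requires Bochner's linearization theorem for smooth compact group actions near a fixed point; ``any diffeomorphism $\phi$'' followed by ``a final linear change of coordinates'' does not quite do this, and you should cite the linearization explicitly.
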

\begin{proof}
Using Lemma \ref{lemma:involution}, one checks that the only possibilities for the element $x=(x_1, \dotsc, x_5) \in (P_s/\mf S_5)(\RR)$ are  $(-1, 0, \infty, \beta, \beta^{-1})$,  $(-1, i, -i, \beta, \beta^{-1}) $, $ (-1, -1, \beta, 0, \infty)$,\\
$ (-1, -1, \beta, i, -i) $, $(0,0, \infty, \infty, -1)$ and $(-1, i, i, - i, -i)$.  
\end{proof}

 \subsection{Comparing the orbifold structures} \label{sec:3}

Consider the moduli space $\ca M_s(\RR)$ of real stable binary quintics. 

\begin{definition} \label{neworbifold}
Let $\overline{\mr M}_\RR$ be the hyperbolic orbifold with $\va{\ca M_s(\RR)}$ as underlying space, whose orbifold structure is induced by the homeomorphism of Corollary \ref{cor:theorem2} and the natural orbifold structure of $P\Gamma_\RR \setminus \RRH^2$. 
\end{definition}
\noindent
There are two orbifold structures on the space $\va{\ca M_s(\RR)}$: the natural orbifold structure of $\ca M_s(\RR)$, see Proposition \ref{from-stack-to-orbifold} (i.e. the orbifold structure of the quotient $ \GG(\RR) \setminus X_s(\RR)$), and the orbifold structure $\overline{\mr M}_\RR$ introduced in Definition \ref{neworbifold}. 

\begin{proposition} \label{prop:conesreflectors}
\begin{enumerate}
\item The orbifold structures of $\ca M_s(\RR)$ and $\overline{\mr M}_\RR$ differ only at the moduli point attached to the five-tuple $(\infty, i, i, -i, -i)$. The stabilizer group of $\ca M_s(\RR)$ at that moduli point is $\ZZ/2$, whereas the stabilizer group of $\overline{\mr M}_\RR$ at that point is the dihedral group $D_{10}$ of order twenty. 
\item The orbifold $\overline{\mr M}_\RR$ has no cone points and three corner reflectors, whose orders are $\pi/3, \pi/5$ and $\pi/10$. 
\end{enumerate}
\end{proposition}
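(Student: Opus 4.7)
The plan is to compare the two orbifold structures on $|\ca M_s(\RR)|$ stratum by stratum, and then read off the corner reflector data. By Theorem~\ref{th:theorem01}, the two structures coincide on the smooth locus $\ca M_0(\RR)$, so the comparison reduces to the discriminant. I would stratify the latter by the type of double roots of the underlying binary quintic: (I) a single real double root; (II) two real double roots; (III) a pair of complex conjugate double roots. Using that $\PGL_2(\RR)$ acts transitively on triples $\{x, w, \bar w\} \subset \PP^1(\CC)$ with $x \in \PP^1(\RR)$ and $w \notin \PP^1(\RR)$, stratum~(III) consists of the single moduli point of $(\infty, i, i, -i, -i)$.

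For a point $y \in Y$ lying over a stable quintic with $a$ pairs of complex conjugate nodes and $b$ real nodes, the orbifold stabilizer of $\overline{\mr M}_\RR$ is $\Gamma_f = (A_f/B_f)_{\bb B^2(\RR)}$ from Proposition~\ref{localmodel}. The group $A_f = \Stab_{P\Gamma}(y)$ fits in a short exact sequence
\[
1 \longrightarrow G(x) \longrightarrow A_f \longrightarrow \Stab_{\GG(\RR)}(F) \longrightarrow 1,
\]
in which $G(x) \cong (\ZZ/10)^{2a+b}$ is the stabilizer of a framing (Lemmas~\ref{Gxlemma} and~\ref{lemma:stabilizergroups}), and the quotient is identified with the $\GG(\RR)$-automorphism group of $F$ using the free action of $\GG(\RR)$ on $\ca F_s(\RR)$ (Lemma~\ref{cor:freeaction}), Theorem~\ref{th:realstableperiod}, and the transitivity of $P\Gamma$ on framings of a fixed quintic. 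On strata~(I) and~(II) one has $a = 0$, hence $B_f = G(x)$ and $\Gamma_f \cong \Stab_{\GG(\RR)}(F)$. By Propositions~\ref{prop:allstabilizergroups} and~\ref{prop:zmod2stabilizer} this equals the $\ca M_s(\RR)$-stabilizer, and a local inspection in the coordinates of Lemma~\ref{lemma:localcoordinates} shows that the $\ZZ/2$-summand (when it occurs) acts by reflection, matching the quotient orbifold structure.

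In stratum~(III) one has $B_f = 1$, while $\Stab_{\GG(\RR)}(F)$ for $F = x(x-iy)^2(x+iy)^2$ is the involution $z \mapsto -z$ exchanging the two nodes $\{i, -i\}$, so $|A_f| = 10^2 \cdot 2 = 200$. The action of $A_f$ on the ten copies of $\bb B^2(\RR)$ glued at $f$ is transitive, so the stabilizer of one copy has order~$20$; it is generated by the cyclic subgroup $\ZZ/10 \subset G(x)$ fixing that copy together with the involution induced by the swap $r_1 \leftrightarrow r_2$ of the two short roots. An explicit computation in the coordinates of Lemma~\ref{lemma:localcoordinates}, which I expect to be the main technical step, will show that this involution acts by reflection rather than by rotation, giving $\Gamma_f \cong D_{10}$ of order~$20$. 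Since the $\ca M_s(\RR)$-stabilizer at this point is merely $\ZZ/2$, stratum~(III) accounts for the unique locus of disagreement between the two structures, completing part~(1).

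Part~(2) is then a corollary. The complete list of stabilizers realized in $\overline{\mr M}_\RR$ is $\{1,\, \ZZ/2,\, D_3,\, D_5,\, D_{10}\}$, and none of the non-trivial ones is purely cyclic, so $\overline{\mr M}_\RR$ has no cone points. Its corner reflectors are the isolated points whose stabilizer is dihedral of order at least four, namely the orbits of $(-1, \infty, 0, \omega, \omega^2)$, $(0, -1, \infty, \lambda + 1, \lambda)$, and $(\infty, i, i, -i, -i)$, with stabilizers $D_3$, $D_5$, and $D_{10}$ coming from Lemmas~\ref{lemma:D3}, \ref{lemma:D5} and the case~(III) analysis; these correspond to corner angles $\pi/3$, $\pi/5$, and $\pi/10$ respectively.
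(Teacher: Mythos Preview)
Your approach is correct and essentially identical to the paper's: both stratify the discriminant by node type $(a,b)$, invoke Proposition~\ref{localmodel} to compute $\Gamma_f = (A_f/B_f)_{\bb B^2(\RR)}$ via the extension $1 \to G(x) \to A_f \to \Stab_{\GG(\RR)}(F) \to 1$, identify the unique complex-conjugate-node point $(\infty,i,i,-i,-i)$ as the sole locus of disagreement, and then read off the corner reflectors from Propositions~\ref{prop:allstabilizergroups} and~\ref{prop:zmod2stabilizer}. The paper is in fact terser than your sketch on the dihedral identification at that point---it only records the split exact sequence $0 \to \ZZ/10 \to \Gamma_f \to \ZZ/2 \to 0$ without spelling out the conjugation action---so the local-coordinate computation you anticipate (the swap $(t_1,t_2)\mapsto(t_2,t_1)$ conjugating $(t_1,t_2)\mapsto(\zeta t_1,\zeta^{-1}t_2)$ to its inverse) is precisely the detail that pins down $D_{10}$.
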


\begin{proof}
The statements can be deduced from Proposition \ref{localmodel}. The notation of that proposition was as follows: for $f \in Y \cong \GG(\RR) \setminus \ca F_s(\RR)$ (see Theorem \ref{th:realstableperiod}) the group $A_f \subset P\Gamma$ is the stabilizer of $f \in K$. Moreover, if $\tilde f \in \ca F_s(\RR)$ represents $f$ and if $F = [\tilde f] \in X_s(\RR)$ has $k = 2a + b$ nodes, then the image $x \in \CC H^2$ has $k = 2a + b$ nodes in the sense of Definition \ref{fullset}. 
If $F$ has no nodes ($k = 0$), then $G(x)$ is trivial by Proposition \ref{localmodel}.\ref{caseone} and $G_F = A_f = \Gamma_f$. If $F$ has only real nodes, then $B_f = G(x)$ hence $G_F = A_f/G(x) = A_f / B_f = \Gamma_f$. 
Now suppose that $a = 1$ and $b = 0$: the equation $F$ defines a pair of complex conjugate nodes. In other words, the zero set of $F$ defines a $5$-tuple $\underline{\alpha}= (\alpha_1, \dotsc, \alpha_5) \in \PP^1(\CC)$, well-defined up to the $\PGL_2(\RR) \times \mf S_5$ action on $\PP^1$, where $\alpha_1 \in \PP^1(\RR)$ and $\alpha_3 = \bar \alpha_2  = \alpha_5 = \bar \alpha_4 \in \PP^1(\CC) \setminus \PP^1(\RR)$. So we may write $\underline{\alpha} = (\rho, \alpha, \bar \alpha, \alpha, \bar \alpha)$ with $\rho \in \PP^1(\RR)$ and $\alpha \in \PP^1(\CC) \setminus \PP^1(\RR)$. Then there is a unique $T \in \PGL_2(\RR)$ such that $T(\rho) = \infty$ and $T(\alpha) = i$. But this gives $T(x) = (\infty, i , -i, i, -i)$ hence $F$ is unique up to isomorphism. As for the stabilizer $G_F = A_f / G(x)$, we have $G(x) \cong (\ZZ/10)^2$. Since there are no real nodes, $B_f$ is trivial. By Proposition \ref{localmodel}.\ref{casethree}, $K_f$ is the union of ten copies of $\bb B^2(\RR)$ meeting along a common point $\BB^0(\RR)$. In fact, in the local coordinates $(t_1, t_2)$ around $f$, the $\alpha_j: \BB^2(\CC) \to \BB^2(\CC)$ are defined by $(t_1, t_2) \mapsto (\bar t_2 \zeta^j , \bar t_1 \zeta^j)$, for $j \in \ZZ/10$, and so the fixed points sets are given by the equations $\RR H^2_j  = \{t_2 = \bar t_1 \zeta^j\}\subset \BB^2(\CC)$, $j \in \ZZ/10$. Notice that the subgroup $E \subset G(x)$ that stabilizes $\RR H^2_j$ is the cyclic group of order $10$ generated by the transformations $(t_1, t_2) \mapsto (\zeta t_1, \zeta^{-1} t_2)$. 
There is only one non-trivial transformation $T \in \PGL_2(\RR)$ that fixes $\infty$ and sends the subset $\{i, -i\} \subset \PP^1(\CC)$ to itself, and $T$ is of order $2$. Hence $G_F = \ZZ/2$ so that we have an exact sequence $
0 \to \ZZ/10 \to \Gamma_f \to \ZZ/2 \to 0$ and this splits since $G_F$ is a subgroup of $\Gamma_f$. 
We are done by Propositions \ref{prop:allstabilizergroups} and \ref{prop:zmod2stabilizer}. 
\end{proof}

\subsection{The real moduli space as a hyperbolic triangle}
\label{sec:triangle2} 
The goal of Section \ref{sec:triangle2} is to show that $\overline{\mr M}_{\RR}$ (see Definition \ref{neworbifold}) is isomorphic, as hyperbolic orbifolds, to the triangle $\Delta_{3,5,10}$ in the real hyperbolic plane $\RRH^2$ with angles $\pi/3, \pi/5$ and $\pi/10$. 
The results in the above Sections \ref{sec:1}, \ref{sec:2} and \ref{sec:3} give the orbifold singularities of $\overline{\mr M}_{\RR}$ together with their stabilizer groups. In order to complete determine the hyperbolic orbifold structure of $\overline{\mr M}_{\RR}$, however, we also need to know the underlying topological space $\va{\ca M_s(\RR)}$ of $\overline{\mr M}_{\RR}$. The first observation is that $\va{\ca M_s(\RR)}$ is compact. Indeed, it is classical that the topological space $\ca M_s(\CC) = \GG(\CC) \setminus X_s(\CC)$, parametrizing complex stable binary quintics, is compact. This follows from the fact that it is homeomorphic to $\overline{M}_{0,5}(\CC)/ \mf S_5$, and the stack of stable five-pointed curves $\overline{M}_{0,5}$ is proper \cite{Knudsen1983}, or from the fact that it is homeomorphic to a compact ball quotient \cite{shimuratranscendental}. Moreover, the map $\ca M_s(\RR) \to \ca M_s(\CC)$ is proper, which proves the compactness of $\ca M_s(\RR)$. 
 
The second observation is that $\ca M_s(\RR)$ is connected, since $X_s(\RR)$ is obtained from the euclidean space $ \{F \in \RR[x,y]: F \text{ homogeneous } \mid \deg(F) =  5\}$ by removing a subspace of codimension at least two. We can prove more:
\begin{lemma} \label{lem:simplyconnected}
The moduli space $\ca M_s(\RR)$ of real stable binary quintics is simply connected.
\end{lemma}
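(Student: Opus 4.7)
The plan is to show that the underlying topological space $\va{\ca M_s(\RR)}$ is homeomorphic to a closed $2$-disk, and hence simply connected.

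\textbf{Step 1 (local structure).} By Propositions \ref{prop:allstabilizergroups} and \ref{prop:zmod2stabilizer}, every stabilizer of the $\PGL_2(\RR)$-action on $(P_s/\mf S_5)(\RR)$ is one of the finite reflection groups $1$, $\ZZ/2$ (acting by reflection), $D_3$ or $D_5$. Hence every point of $\va{\ca M_s(\RR)}$ has a neighbourhood homeomorphic to $\RR^2/G$ for such a reflection group $G$. Combined with the compactness and connectedness of $\va{\ca M_s(\RR)}$ (established just before the statement of the lemma), this shows that $\va{\ca M_s(\RR)}$ is a compact connected topological $2$-manifold with mirror boundary and corners, and identifying it reduces to computing its Euler characteristic and the number of its boundary components.

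\textbf{Step 2 (stratification).} Stratify $\ca M_s(\RR)$ by the combinatorial type of the root configuration of a real stable binary quintic, yielding the seven types (A)--(G): the three $2$-dimensional smooth strata $\mr M_0$, $\mr M_1$, $\mr M_2$ (5, 3, or 1 real distinct simple roots), two $1$-dimensional strata D and F (four distinct real roots with one double; or two distinct real roots with one double, plus a conjugate complex pair), and two $0$-dimensional strata E and G (three distinct real roots with two doubles and one simple; or one real simple plus one complex conjugate pair of doubles). Refine this decomposition using Proposition \ref{prop:zmod2stabilizer} by separating out the codimension-one subfamilies of $\ZZ/2$-stabilizer configurations inside each $\mr M_i$, as well as the special symmetric points on the arcs D and F, and adjoin the two $0$-dimensional strata with $D_3$, respectively $D_5$, stabilizer of Lemmas \ref{lemma:D3} and \ref{lemma:D5}.

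\textbf{Step 3 (Euler characteristic and boundary).} Check by explicit parametrisation that each open $2$-stratum is an open disk, each $1$-stratum an open arc, and each $0$-stratum a single point, and use the additivity of compactly supported Euler characteristic under stratification to compute $\chi(\va{\ca M_s(\RR)}) = 1$. Then verify directly, using the adjacency relations between the strata, that the union of mirror-type boundary arcs (the closures of the $\ZZ/2$-stabilizer arcs from Proposition \ref{prop:zmod2stabilizer} together with the closures of Types D and F, meeting at the points of Types E and G and at the $D_3$ and $D_5$ points) assembles into a single circle.

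\textbf{Step 4 (conclusion).} By the classification of compact connected surfaces with boundary, a surface of Euler characteristic $1$ with one boundary component is homeomorphic to $D^2$. Hence $\va{\ca M_s(\RR)} \cong D^2$, which is simply connected.

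\textbf{Main obstacle.} The delicate point is the combinatorial verification in Step 3 that the various mirror arcs and corner points glue up into a single circle, rather than several disjoint loops. This requires carefully tracking how each of the $\ZZ/2$-stabilizer families from Proposition \ref{prop:zmod2stabilizer} sits in the closure of the appropriate $2$-dimensional smooth stratum $\mr M_i$, and how it degenerates to the strata D, F, E, G under collisions of roots; the bookkeeping is mediated by the enumeration of stabiliser types. A possible alternative, bypassing this detailed count, is to use the $\PSL_2(\RR)$-quotient $Y = \PSL_2(\RR)\setminus(P_s/\mf S_5)(\RR)$ as an intermediate step: since the generator of $\pi_1((P_s/\mf S_5)(\RR))$ coming from the $\SO(2)$-loop $\theta\mapsto R_\theta\cdot F$ lies in a single $\PSL_2(\RR)$-orbit (recalling $R_\pi = -I$ acts on the degree-five polynomial $F$ as $-F$), $Y$ is simply connected; and the $\ZZ/2 = \PGL_2(\RR)/\PSL_2(\RR)$-action on $Y$ has fixed points arising from quintics with $\ZZ/2$-stabilizer inside $\PGL_2(\RR)\smallsetminus\PSL_2(\RR)$ (e.g.\ $\nu\colon z\mapsto 1/z$ stabilising $(-1,0,\infty,\beta,\beta^{-1})$), which implies that the quotient $\ca M_s(\RR) = Y/(\ZZ/2)$ is also simply connected.
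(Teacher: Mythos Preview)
Your main approach (Steps 1--4) is sound and reaches the conclusion by a different route than the paper. The paper instead shows that each closure $\overline{\mr M}_i$ is itself a closed disc, using twisted real forms $P_s^i(\RR)$ of the ordered configuration space (defined via anti-holomorphic involutions on $(\PP^1)^5$ that swap appropriate pairs of coordinates), and then checks that the three discs glue as in Figure~\ref{fig:triangle}. Your surface-classification route avoids tracking the gluing pattern explicitly, but note that the Step~3 claim that each $\mr M_i$ is an open disc is essentially the content of the paper's key step, so the two arguments converge there; what your framework buys is that only $\chi$ and the number of boundary circles matter, not the precise incidence of the pieces.

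Your proposed alternative via $Y=\PSL_2(\RR)\setminus(P_s/\mf S_5)(\RR)$ is in fact the cleanest route and does work, once one step is made explicit: the reason the $\SO(2)$-loop is \emph{the} generator of $\pi_1((P_s/\mf S_5)(\RR))$ is that $(P_s/\mf S_5)(\RR)=X_s(\RR)/\RR^*$ with $X_s(\RR)$ simply connected (complement of codimension $\geq 2$, as noted just before the lemma), so $\pi_1\cong\pi_0(\RR^*)=\ZZ/2$. After that, connectedness of $\PSL_2(\RR)$-orbits gives the surjection onto $\pi_1(Y)$, and the fixed point of the residual $\ZZ/2$ (your $\nu$ has $\det\left(\begin{smallmatrix}0&1\\1&0\end{smallmatrix}\right)=-1$, so $\nu\in\PGL_2(\RR)\smallsetminus\PSL_2(\RR)$) kills what remains. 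This bypasses both the disc decomposition and your Euler-characteristic bookkeeping entirely.
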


\begin{proof}
The idea is to show that the following holds:
\begin{enumerate}
\item 
For each $i \in \{0,1,2\}$, the embedding $\mr M_i \hookrightarrow \overline{\mr M}_i \subset \ca M_s({\RR})$ of the connected component $\mr M_i$ of $\ca M_0(\RR)$ into its closure in $\ca M_s({\RR})$ is homeomorphic to the embedding $D \hookrightarrow \overline{D}$ of the open unit disc into the closed unit disc in $\RR^2$. 
\item We have $\ca M_s(\RR) = \overline{\mr M}_0  \cup \overline{\mr M}_1  \cup \overline{\mr M}_2$, and this glueing corresponds up to homeomorphism to the glueing of three closed discs $\overline{D}_i \subset \RR^2$ as in Figure~\ref{fig:triangle}. 
\end{enumerate}
To do this, one considers the moduli spaces of real smooth (resp. stable) genus zero curves with five real marked points \cite{Knudsen1983}, as well as twists of this space. 
Define two anti-holomorphic involutions $\sigma_i: \PP^1(\CC)^5 \to \PP^1(\CC)^5$ by
$
\sigma_1(x_1, x_2, x_3, x_4, x_5) = (\bar x_1, \bar x_2, \bar x_3, \bar x_5, \bar x_4), 
$
and 
$
\sigma(x_1, x_2, x_3, x_4, x_5) = (\bar x_1, \bar x_3, \bar x_2, \bar x_5, \bar x_4). 
$
Then define 
$$
P_0^1(\RR) = P_0(\CC)^{\sigma_1}, \white P_s^1(\RR) = P_1(\CC)^{\sigma_1}, \white
P_0^2(\RR) = P_0(\CC)^{\sigma_2}, \white P_s^2(\RR) = P_1(\CC)^{\sigma_2}.
$$
It is clear that $ \mr M_0 = \PGL_2(\RR) \setminus P_0(\RR) / \mf S_5$. Similarly, we have:
\begin{align*}
   \mr M_1 = \PGL_2(\RR) \setminus P_0^1(\RR) / \mf S_3 \times \mf S_2 \quad \tn{ and } \quad  \mr M_2 = \PGL_2(\RR) \setminus P_0^2(\RR) / \mf S_2 \times \mf S_2.
    \end{align*}
  Moreover, we have $\overline{\mr M}_0 = \PGL_2(\RR) \setminus P_s(\RR) / \mf S_5$. We define 
\begin{align*}
\overline{\mr M}_1 = \PGL_2(\RR) \setminus P_s^1(\RR) / \mf S_3 \times \mf S_2, \quad \tn{ and } \quad 
\overline{\mr M}_2 = \PGL_2(\RR) \setminus P_s^2(\RR) / \mf S_2 \times \mf S_2.  
\end{align*}
Each $\overline{\mr M}_i$ is simply connected. Moreover, the natural maps $\overline{\mr M}_i \to \ca M_s(\RR) $ are closed embeddings of topological spaces, and one can check that the images glue to form $\ca M_s(\RR)$ in the prescribed way. We leave the details to the reader. 
\end{proof}

\begin{proof}[Proof of Theorem \ref{th:theorem02}]
To any closed $2$-dimensional orbifold $O$ one can associate a set of natural numbers $S_O = \{n_1, \dotsc, n_k; m_1, \dotsc, m_l\}$ by letting $k$ be the number of cone points of $X_O$, $l$ the number of corner reflectors, $n_i$ the order of the $i$-th cone point and $2m_j$ the order of the $j$-th corner reflector. A closed $2$-dimensional orbifold $O$ is then determined, up to orbifold-structure preserving homeomorphism, by its underlying space $X_O$ and the set $S_O$ \cite{Thurston80}. By Lemma \ref{lem:simplyconnected}, $\overline{\mr M}_\RR$ is simply connected. By Proposition \ref{prop:conesreflectors}, $\overline{\mr M}_\RR$ has no cone points and three corner reflectors whose orders are $\pi/3, \pi/5$ and $\pi/10$. This implies $\overline{\mr M}_\RR$ and $\Delta_{3,5,10}$ are isomorphic as topological orbifolds. Consequently, the orbifold fundamental group of $\overline{\mr M}_\RR$ is abstractly isomorphic to the group $P\Gamma_{3,5,10}$ defined in (\ref{PGAMMAR}). 

Let $\phi: P\Gamma_{3,5,10} \hookrightarrow \text{PSL}_2(\RR)$ be \textit{any} embedding such that $X:=\phi\left(P\Gamma_{3,5,10}\right) \setminus \RR H^2$ is a hyperbolic orbifold; we claim that there is a fundamental domain $\Delta$ of $X$ isometric to $\Delta_{3,5,10}$. Consider the generator $a \in P\Gamma_{3,5,10}$. Since $\phi(a)^2 = 1$, there exists a geodesic $L_1 \subset \RR H^2$ such that $\phi(a) \in \text{PSL}_2(\RR) = \text{Isom}(\RR H^2)$ is the reflection across $L_1$. Next, consider the generator $b \in P\Gamma_{3,5,10}$. There exists a geodesic $L_2 \subset \RR H^2$ such that $\phi(b)$ is the reflection across $L_2$. One easily shows that $L_2 \cap L_1 \neq \emptyset$. 
Let $x \in L_1 \cap L_2$. Then $\phi(a)\phi(b)$ is an element of order three that fixes $x$, hence $\phi(a)\phi(b)$ is a rotation around $x$. Therefore, one of the angles between $L_1$ and $L_2$ must be $\pi/3$. Finally, we know that $\phi(c)$ is an element of order $2$ in $\PSL_2(\RR)$, hence a reflection across a line $L_3$. By the previous arguments, $L_3 \cap L_2 \neq \emptyset$ and $L_3 \cap L_1 \neq \emptyset$. It also follows that $x \in L_3 \cap L_2 \cap L_1 = \emptyset$. 
Consequently, 
the three geodesics $L_i \subset \RR H^2$ enclose a hyperbolic triangle; the orders of $\phi(a)\phi(b)$, $\phi(a)\phi(c)$ and $\phi(b)\phi(c)$ imply that the three interior angles of the triangle are $\pi/3$, $\pi/5$ and $\pi/10$. 
\end{proof}

\section{The monodromy groups} \label{sec:monodromy}

In this section, we describe the monodromy group $P\Gamma$, as well as the groups $P\Gamma_\alpha$ appearing in Proposition \ref{prop:realsmoothperiods}. As for the lattice $(\Lambda, \mf h)$ (see (\ref{eq:hermitianformonbinaryquinticlattice})), we have:

\begin{theorem}[Shimura] \label{th:calculatemonodromyshimura}
There is an isomorphism of hermitian $\OO_K$-lattices
 $$\left( \Lambda, \mf h \right) \cong \left( \OO_K^3, \textnormal{diag}\left(1,1, \frac{1 - \sqrt 5}{2}\right) \right).$$
\end{theorem}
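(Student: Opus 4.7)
The plan is to identify the isomorphism class of $(\Lambda, \mf h)$ by matching its invariants against Shimura's classification of unimodular hermitian $\OO_K$-lattices in \cite{shimuratranscendental}.

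First I would verify that $(\Lambda, \mf h)$ is a free unimodular hermitian $\OO_K$-module of rank $3$. Freeness is automatic because $\OO_K = \ZZ[\zeta_5]$ is a principal ideal domain (the class number of $\QQ(\zeta_5)$ equals one), and the rank is $\dim_\ZZ \Lambda / [K:\QQ] = 12/4 = 3$. Unimodularity is already asserted after (\ref{eq:hermitianformonbinaryquinticlattice}); the argument goes as follows. The symplectic form $E$ is unimodular because the polarization on $J(C)$ is principal. Under Lemma \ref{lemma:equivalentforms}, $E$ and $T$ correspond to one another via the trace pairing, and $E$ is unimodular precisely when $T$ is a perfect $\mathfrak D_K^{-1}$-valued pairing; multiplication by $\eta$, which generates $\mathfrak D_K$, then converts $T$ into the perfect $\OO_K$-valued pairing $\mf h$. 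The signatures of $\mf h$ at the two real places of $F = \QQ(\sqrt 5)$ are read off from (\ref{quintic-cases}) as the multiset $\{(2,1),(3,0)\}$.

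Second, I would check that the candidate form $\diag(1,1,(1-\sqrt 5)/2)$ on $\OO_K^3$ has exactly the same invariants. Its discriminant is $(1-\sqrt 5)/2$, of norm $N_{F/\QQ}((1-\sqrt 5)/2) = -1$, hence an $\OO_F$-unit, so the form is unimodular. Since $(1-\sqrt 5)/2$ is negative at the embedding of $F$ for which $\sqrt 5 > 0$ and positive at the other, its two signatures are $(2,1)$ and $(3,0)$.

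Third, I would invoke Shimura's classification result in \cite{shimuratranscendental}, which in our range asserts that unimodular rank-$3$ hermitian $\OO_K$-lattices with signature multiset $\{(2,1),(3,0)\}$ form a single isomorphism class. This statement encodes a one-class-in-genus property of the relevant indefinite unitary group $U(\mf h)$ which, via strong approximation, reduces to a local matching problem: at every finite prime of $F$ not dividing $\mathfrak D_{K/F}$ the local hermitian lattice is determined by its rank, and the only nontrivial check takes place at the prime of $K$ above $5$, where Shimura uses the explicit structure of $\OO_{K,\mathfrak p}$ and its local unitary group to conclude uniqueness. The hard part will be precisely this local analysis at the ramified prime, but since it is what Shimura establishes, we may cite it directly and conclude that $(\Lambda, \mf h)$ is isomorphic to the model.
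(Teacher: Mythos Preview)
Your approach is correct and is essentially the same as the paper's: the paper simply cites \cite[Section 6]{shimuratranscendental} together with item (5) in Shimura's table, which is exactly the classification statement you are invoking. Your outline makes explicit the invariants (rank, unimodularity, signature multiset) one must match to identify the lattice in Shimura's list, which the paper leaves implicit in the citation.
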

\begin{proof}
See \cite[Section 6]{shimuratranscendental} as well as item (5) in the table on page 1. 
\end{proof}
\noindent
Let us write $\Lambda = \OO_K^3$ and $\mf h = \textnormal{diag}(1,1, \frac{1 - \sqrt 5}{2})$ in the remaining part of Section \ref{sec:monodromy}. Write $\alpha = \zeta_5 + \zeta_5^{-1} =  \frac{\sqrt{5} - 1}{2}$. Recall that $\theta = \zeta_5 - \zeta_5^{-1}$ and observe that $|\theta|^2 = \frac{\sqrt{5} + 5}{2}$. Define three quadratic forms $q_0$, $q_1$ and $q_2$ on $\ZZ[\alpha]^3$ as follows:
\begin{equation} \label{eq:explicitquadraticforms}
\begin{split}
 q_0(x_0, x_1, x_2) & = x_0^2 + x_1^2 - \alpha x_2^2, 
\\
 q_1(x_0, x_1, x_2) & = |\theta|^2 x_0^2 + x_1^2 - \alpha x_2^2, 
\\
 q_2(x_0, x_1, x_2) & = |\theta|^2 x_0^2 + |\theta|^2 x_1^2 - \alpha x_2^2. 
 \end{split}
\end{equation}
\noindent
We consider $\ZZ[\alpha]$ as a subring of $\RR$ via the standard embedding. 

\begin{theorem} \label{th:explicitquadratic}
Consider the quadratic forms $q_j$ defined in (\ref{eq:explicitquadraticforms}). There is a union of geodesic subspaces $\mr H_j \subset \RR H^2$ ($j \in \{0,1,2\}$) and an isomorphism of hyperbolic orbifolds
\begin{equation} 
    \ca M_0(\RR) \cong \coprod_{j = 0}^2 \textnormal{PO}(q_j,\ZZ[\alpha]) \setminus \left(\RR H^2 - \mr H_j \right). 
\end{equation}
\end{theorem}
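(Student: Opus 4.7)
The plan is to reduce to Proposition~\ref{prop:realsmoothperiods}, which yields
\[
\ca M_0(\RR) \;\cong\; \coprod_{[\iota] \in C\mr A} P\Gamma_\iota \setminus \bigl(\RR H^2_\iota - \mr H\bigr),
\]
and to match each summand with the corresponding term $\textnormal{PO}(q_j,\ZZ[\alpha]) \setminus (\RR H^2 - \mr H_j)$. By Theorem~\ref{th:theorem01}, $\ca M_0(\RR)$ has exactly three components, so $|C\mr A| = 3$. Using Theorem~\ref{th:calculatemonodromyshimura}, I would fix the model $(\Lambda, \mf h) = (\OO_K^3, \textnormal{diag}(1,1,-\alpha))$ with $\alpha = \zeta + \zeta^{-1}$, set $\theta = \zeta - \zeta^{-1}$ so that $\bar\theta = -\theta$ and $|\theta|^2 = 3+\alpha$, and note that $\{x \in \OO_K : \bar x = -x\} = \theta\,\OO_F$. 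Three natural representatives of the classes in $C\mr A$ are then the diagonal anti-unitary involutions
\[
\iota_j(x_0,x_1,x_2) = \bigl(\epsilon_0^{(j)}\bar x_0,\;\epsilon_1^{(j)}\bar x_1,\;\bar x_2\bigr), \quad j = 0,1,2,
\]
with $\epsilon_i^{(j)} \in \{\pm 1\}$ and precisely $j$ minus signs among $\epsilon_0^{(j)}, \epsilon_1^{(j)}$.

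A direct computation then gives $\Lambda^{\iota_j}$ as the free $\OO_F$-lattice with basis $\bigl(\theta^{a_0^{(j)}}e_0,\theta^{a_1^{(j)}}e_1,e_2\bigr)$, where $a_i^{(j)} = 1$ iff $\epsilon_i^{(j)} = -1$, and the restriction of $\mf h$ has Gram matrix $\textnormal{diag}\bigl(|\theta|^{2a_0^{(j)}},|\theta|^{2a_1^{(j)}},-\alpha\bigr) = q_j$. This provides an $\OO_F$-isometry $(\Lambda^{\iota_j}, \mf h|_{\Lambda^{\iota_j}}) \cong (\OO_F^3, q_j)$ and hence an isometry $\RR H^2_{\iota_j} \cong \RR H^2$ under which $\mr H \cap \RR H^2_{\iota_j}$ becomes the arrangement $\mr H_j$ of the statement. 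The three $\OO_F$-lattices are pairwise non-isomorphic: their discriminant ideals are $(1),(|\theta|^2),(|\theta|^4)$, pairwise distinct since $(|\theta|^2)$ is the unique prime of $\OO_F$ above $5$. Hence the $\iota_j$ are pairwise non-conjugate in $P\Gamma$ and exhaust $C\mr A$.

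It remains to identify $P\Gamma_{\iota_j}$ with $\textnormal{PO}(q_j,\OO_F)$ inside $\textnormal{Isom}(\RR H^2_{\iota_j}) = \textnormal{PO}(2,1)$. Restriction to $\Lambda^{\iota_j}$ defines a homomorphism $P\Gamma_{\iota_j} \to \textnormal{PO}(q_j,\OO_F)$ which is injective because $\Lambda^{\iota_j}\otimes_{\OO_F}K$ spans $\Lambda\otimes_{\OO_K}K$. For surjectivity one uses that $\Gamma_{\iota_j}$ is the \emph{setwise} stabilizer of $\RR H^2_{\iota_j}$ and not merely the centralizer of $\iota_j$: it consists of those $g \in \Gamma$ with $g\iota_j g^{-1} \in \mu_K \cdot \iota_j$. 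The freedom in this $\mu_K$-twist, combined with the decomposition $\OO_K = \OO_F \oplus \OO_F\cdot\zeta$, suffices to lift every $\OO_F$-isometry of $(\OO_F^3, q_j)$ to an element of $\Gamma$ preserving $\RR H^2_{\iota_j}$. Substituting this identification into Proposition~\ref{prop:realsmoothperiods} produces the claimed orbifold isomorphism.

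The main obstacle is the surjectivity step for $j \geq 1$: the $\OO_F$-basis of $\Lambda^{\iota_j}$ involves the non-unit $\theta$ of $\OO_K$, so the $K$-linear extension of a generic $\OO_F$-isometry of $(\OO_F^3, q_j)$ need not preserve the $\OO_K$-lattice $\OO_K^3$, and producing an explicit $\mu_K$-twist that corrects this requires a careful element-by-element analysis. An attractive alternative is to invoke Theorem~\ref{th:theorem01}, which already identifies $\mr M_j$ with a quotient of $\RR H^2 - \mr H_j$ by \emph{some} arithmetic lattice in $\textnormal{PO}(2,1)$; one then matches commensurability classes with $\textnormal{PO}(q_j, \OO_F)$ and compares covolumes (readable from the hyperbolic triangle of Theorem~\ref{th:theorem02}) to force equality, bypassing the explicit lifting problem.
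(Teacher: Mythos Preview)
Your plan matches the paper's approach almost exactly: the same three diagonal anti-unitary involutions, the same fixed $\OO_F$-lattices with Gram matrices $q_j$, and the same strategy of identifying $P\Gamma_{\iota_j}$ with $\textnormal{PO}(q_j,\OO_F)$ via restriction. Your method for distinguishing the three conjugacy classes (discriminant ideals $(1),(|\theta|^2),(|\theta|^4)$ in $\OO_F$) is different from the paper's (invariants of the induced isometry on the $\FF_5$-space $\Lambda/\theta\Lambda$), but both are correct and equally short.

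The real issue is exactly the one you flag: surjectivity of $P\Gamma_{\iota_j}\to\textnormal{PO}(q_j,\OO_F)$ for $j\ge 1$. Your proposed fix via ``$\mu_K$-twists'' is not the right mechanism and would not lead anywhere clean; in fact no twist is needed. The paper's argument is a one-line lattice identity: one checks directly from the explicit description of $\Lambda_j=\Lambda^{\iota_j}$ that
\[
\Lambda \;=\; \OO_K\cdot\Lambda_j \;+\; \OO_K\cdot\theta\,\Lambda_j^{\vee} \;\subset\; K^3,
\]
where $\Lambda_j^{\vee}$ is the $\OO_F$-dual of $\Lambda_j$ with respect to $h_j$. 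Given any $g\in O(\Lambda_j,h_j)$, its $K$-linear extension preserves both $\Lambda_j$ and $\Lambda_j^{\vee}$ (the latter because $g$ is an isometry), hence preserves $\Lambda$ by the displayed identity. This extension commutes with $\iota_j$ by construction, so it already lies in the centralizer $N_\Gamma(\iota_j)$, and surjectivity follows. No element-by-element analysis is required.

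Your alternative route via Theorem~\ref{th:theorem02} and covolumes should be avoided: the identification of $\overline{\mr M}_\RR$ with the $(3,5,10)$-triangle is established later and partly relies on understanding the groups $P\Gamma_{\iota_j}$, so the argument risks circularity; even if one untangles the logic, recovering the individual covolumes of the $P\Gamma_{\iota_j}$ from the triangle requires knowing how the discriminant locus subdivides it, which is again downstream information.
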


\begin{proof}
Recall that $\theta = \zeta_5 - \zeta_5^{-1}$; we consider the $\bb F_5$-vector space $W$ equipped with the quadratic form $q = \mf h \mod \theta$. 
Define three anti-isometric involutions as follows:
\begin{equation} \label{chi}\begin{split} 
    \alpha_0: & (x_0,x_1,x_2) \mapsto (\;\;\;\bar x_0, \;\;\;\bar x_1,\bar x_2) \\
    \alpha_1: & (x_0,x_1,x_2) \mapsto (-\bar x_0, \;\;\;\bar x_1, \bar x_2) \\
    \alpha_2: & (x_0,x_1,x_2) \mapsto (-\bar x_0, -\bar x_1, \bar x_2).
    \end{split}
\end{equation}
For isometries $\alpha: W \to W$, the dimension and determinant of the fixed space $(W^\alpha, q|_{W^\alpha})$ are conjugacy-invariant. Using this, one easily shows that an anti-unitary involution of $\Lambda$ is $\Gamma$-conjugate to exactly one of the $\pm \alpha_j$, hence 
$C\mr A$ has cardinality $3$ and is represented by $\alpha_0, \alpha_1, \alpha_2$ of (\ref{chi}). By Proposition \ref{prop:realsmoothperiods}, we obtain $\ca M_0(\RR) \cong \coprod_{j = 0}^2 P\Gamma_{\alpha_j} \setminus (\RR H^2_{\alpha_j} - \mr H)$ where each hyperbolic quotient $P\Gamma_{\alpha_j} \setminus (\RR H^2_{\alpha_j} - \mr H)$ is connected. Next, consider the fixed lattices
\begin{equation}\label{eq:fixedlattices} \begin{split} 
\Lambda_0 :=    \Lambda^{\alpha_0} = \ZZ[\alpha] \oplus \ZZ[\alpha] \oplus \ZZ[\alpha] \\
  \Lambda_1:=      \Lambda^{\alpha_1} = \theta \ZZ[\alpha] \oplus \ZZ[\alpha] \oplus \ZZ[\alpha] \\
\Lambda_2:=    \Lambda^{\alpha_2} = \theta \ZZ[\alpha] \oplus \theta\ZZ[\alpha] \oplus  \ZZ[\alpha]. 
\end{split}
\end{equation}
One easily shows that $P\Gamma_{\alpha_j}=N_{P\Gamma}(\alpha_j)$ for the normalizer $N_{P\Gamma}(\alpha_j)$ of $\alpha_j$ in $P\Gamma$. Moreover, if $h_j$ denotes the restriction of $\mf h$ to $\Lambda^{\alpha_j}$, then there is a natural embedding
\begin{equation}
\iota:    N_{P\Gamma}(\alpha_j) \hookrightarrow \textnormal{PO}(\Lambda_j,h_j, \ZZ[\alpha]).
\end{equation}
We claim that $\iota$ is actually an isomorphism. Indeed, this follows from the fact that the natural homomorphism
$
\pi: N_\Gamma(\alpha_j) \to O(\Lambda_j, h_j)
$
is surjective, where $N_\Gamma(\alpha_j) = \{g \in \Gamma: g \circ \alpha_j = \alpha_j \circ g \}$ is the normalizer of $\alpha_j$ in $\Gamma$. The surjectivity of $\pi$ follows in turn from the equality
$$
    \Lambda =  \ca O_K \cdot  \Lambda_j + \ca O_K \cdot \theta \Lambda_j^\vee \subset K^3
$$
which follows from (\ref{eq:fixedlattices}). Since $\textnormal{PO}(\Lambda_j,h_j, \ZZ[\alpha]) = \textnormal{PO}(q_j,\ZZ[\alpha])$, we are done. 
\end{proof}

\cleardoublepage
\part{Real algebraic cycles}
\cleardoublepage
\chapter{Integral Fourier transforms}\label{ch:integralfourier}

\hfill This chapter is based on joint work with \textsc{Thorsten Beckmann}. 
\section{Introduction}

In the second part of this thesis, we focus on algebraic cycles on complex and real abelian varieties. A central role in this study -- which takes up Chapters \ref{ch:integralfourier}, \ref{ch:onecycles} and \ref{ch:realintegralhodge} -- is played by a certain correspondence. It has since long been known that for an abelian variety $A$ over a field $k$, with dual abelian variety $\wh A$, the Fourier transform
\begin{align} \label{intro-intro-Fourier}
\ca F_A \colon \CH(A)_{\QQ} \xrightarrow{\sim} \CH(\wh A)_{\QQ}
\end{align}
provides a powerful tool to study the $\QQ$-linear algebraic cycles of $A$. It is used to study the rational Chow ring of $A$, as well as the cycle class map to rational cohomology. Recently, Moonen and Polishchuk \cite{moonenpolishchuk} initiated the study of the integrality aspects of the Fourier transform (\ref{intro-intro-Fourier}). Indeed, it is natural to ask: 
\begin{question} \label{chow-interaction}
How does $\ca F_A$ interact with integral algebraic cycles?
\end{question}
\noindent
The goal of the current Chapter \ref{ch:integralfourier} is to work on Question \ref{chow-interaction}, building on the results of Moonen--Polishchuk. The applications of \emph{loc.cit.} primarily concern the structure of the integral Chow rings themselves. We continue with their study, but we also address the compatibility of Fourier transforms with integral cycle class maps. Since Question \ref{chow-interaction} was phrased somewhat imprecisely, let us explain in some detail the steps that we take during our search for an answer: 
\begin{enumerate}
\item[Chapter 6:] We further develop the theory of \emph{integral Fourier transforms}, on Chow rings as well as on cohomology. The main result of this chapter (Theorem \ref{th:motivic}) will provide necessary and sufficient conditions for the Fourier transform (\ref{intro-intro-Fourier}) to lift to a motivic homomorphism between integral Chow groups. 
\item[Chapter 7:] We apply the theory of Chapter \ref{ch:integralfourier} to complex abelian varieties. The main outcome of this project is Theorem \ref{maintheorem}, which says that on a principally polarized complex abelian variety $A$ whose minimal cohomology class is algebraic, all integral Hodge classes of degree $2\dim(A) - 2$ are algebraic. 
\item[Chapter 8:] We apply the theory of Chapter \ref{ch:integralfourier} (which is developed for abelian varieties over an arbitrary field $k$) to the case of abelian varieties over $k = \RR$. The principal outcome is that modulo torsion, every real abelian threefold satisfies the real integral Hodge conjecture (Theorem \ref{theorem1}). Other applications of integral Fourier transforms include a detailed analysis of the Hochschild-Serre filtration on equivariant cohomology of a real abelian variety (Theorem \ref{fourierreduction}). 
\end{enumerate}
\noindent
Having lifted a veil of what to do with integral Fourier transforms, let us now make Question \ref{chow-interaction} more precise. Let $g$ be a positive integer and let $A$ be an abelian variety of dimension $g$ over a field $k$. \emph{Fourier transforms} are correspondences between the derived categories, rational Chow groups and cohomology of $A$ and $\wh A$ attached to the Poincar\'e bundle $\ca P_A$ on $A \times \wh A$ \cite{mukaiduality,beauvillefourier,huybrechtsfouriermukai}. On the level of cohomology, the Fourier transform preserves integral $\ell$-adic \'etale cohomology when $k = k_s$ (separable closure), and integral Betti cohomology when $k = \CC$. It is thus natural to ask whether the Fourier transform on rational Chow groups preserves integral cycles modulo torsion or, more generally, lifts to a homomorphism between integral Chow groups. This question was raised by Moonen--Polishchuk \cite{moonenpolishchuk} and Totaro \cite{totaroIHCthreefolds}. More precisely, Moonen and Polishchuk gave a counterexample for abelian varieties over non-closed fields and asked about the case of algebraically closed fields. 

The goal of Chapter \ref{ch:integralfourier} is to investigate this question further.

\section{Integral Fourier transforms} \label{intfourier-intone}

The main result of Section \ref{ch:integralfourier} gives necessary and sufficient conditions for the Fourier transform on rational Chow groups or cohomology to lift to a motivic homomorphism between integral Chow groups. To get there, we need a precise definition of "integral Fourier transform", which we introduce in this Section \ref{intfourier-intone}. 

\subsection{Notation and conventions} \label{sec:notation}

We let $k$ be a field with separable closure $k_s$ and $\ell$ a prime number different from the characteristic of $k$. For a smooth projective variety $X$ over $k$, we let $\CH(X)$ be the Chow group of $X$ and define $\CH(X)_\QQ = \CH(X) \otimes \QQ$, $\CH(X)_{\QQ_\ell} = \CH(X) \otimes {\QQ_\ell}$ and $\CH(X)_{\ZZ_\ell} = \CH(X) \otimes {\ZZ_\ell}$. We let $\rm H_{\textnormal{\'et}}^i(X_{k_s}, \ZZ_\ell(a))$ be the $i$-th degree \'etale cohomology group with coeffients in $\ZZ_\ell(a)$, $a \in \ZZ$.

Often, $A$ will denote an abelian variety of dimension $g$ over $k$, with dual abelian variety $\wh A$ and (normalized) Poincar\'e bundle on $\ca P_A$. The abelian group $\CH(A)$ will in that case be equipped with two ring structures: the usual intersection product $\cdotp$ as well as the Pontryagin product $\star$. Recall that the latter is defined as follows: 
\[\star \colon \CH(A) \times \CH(A) \to \CH(A), \quad x \star  y = m_{\ast} (\pi_1^{\ast}(x) \cdot \pi_2^{\ast}(y)). 
\]
Here, as well as in the rest of the paper, $\pi_i$ denotes the projection onto the $i$-th factor, $\Delta\colon A \to A \times A$ the diagonal morphism, and $m \colon A \times A \to A$ the group law morphism of $A$. 

For any abelian group $M$ and any element $x \in M$, we denote by $x_\QQ \in M \otimes_{\ZZ} \QQ$ the image of $x$ in $M \otimes_{\ZZ} \QQ$ under the canonical homomorphism $M \to M \otimes_\ZZ \QQ$.

\subsection{Integral Fourier transforms and integral Hodge classes}
\label{sec:intfourierintHodge}

For abelian varieties $A$ over $k = k_s$, it is unknown whether the Fourier transform $\ca F_A \colon \CH(A)_{\QQ} \to \CH(\wh A)_{\QQ}$ preserves the subgroups of integral cycles modulo torsion. A sufficient condition for this to hold is that $\ca F_A$ lifts to a homomorphism $\CH(A) \to \CH(\wh A)$. In this section we outline a second consequence of such a lift $\CH(A) \to \CH(\wh A)$ when $A$ is defined over the complex numbers: the existence of an integral lift of $\ca F_A$ implies the integral Hodge conjecture for one-cycles on $\wh A$. 
\\
\\
Let $A$ be an abelian variety over $k$. The Fourier transform on the level of Chow groups is the group homomorphism
\[
\ca F_A \colon \CH(A)_\QQ \to \CH(\wh A)_\QQ
\]
induced by the correspondence $\ch(\ca P_A) \in \CH(A \times \wh A)_\QQ$, where $\ch(\ca P_A)$ is the Chern character of $\ca P_A$. Similarly, 
one defines the Fourier transform on the level of \'etale cohomology: 
\[
\mr F_{A}\colon \rm H_{\textnormal{\'{e}t}}^\bullet(A_{k_s}, \QQ_\ell(\bullet)) \to \rm H_{\textnormal{\'{e}t}}^\bullet(\wh A_{k_s}, \QQ_\ell(\bullet)). 
\]
In fact, $\mr F_A$ preserves the integral cohomology classes and induces, for each integer $j$ with $0 \leq j \leq 2g$, an isomorphism \cite[Proposition 1]{beauvillefourier}, \cite[page 18]{totaroIHCthreefolds}:
\[
    \mr F_{A}\colon \rm H_{\textnormal{\'{e}t}}^j(A_{k_s}, \ZZ_\ell(a)) \to \rm H_{\textnormal{\'{e}t}}^{2g-j}(\wh A_{k_s}, \ZZ_\ell(a+g-j)).
\]
Similarly, if $k = \CC$, then $\ch(\ca P_A)$ induces, for each integer $i$ with $0 \leq i \leq 2g$, an isomorphism of Hodge structures 
\begin{equation} \label{eq:integralfouriercohomology}
    \mr F_A\colon \rm H^i(A, \ZZ) \to \rm H^{2g-i}(\wh A, \ZZ)(g-i).
\end{equation}
\noindent
In \cite{moonenpolishchuk}, Moonen and Polishchuk consider an isomorphism $\phi\colon A \xrightarrow{\sim} \wh A$, a positive integer $d$, and define the notion of motivic integral Fourier transform of $(A, \phi)$ up to factor $d$. The definition goes as follows. Let $\ca M(k)$ be the category of effective Chow motives over $k$ with respect to ungraded correspondences, and let $h(A)$ be the motive of $A$. Then a morphism $$\ca F \colon h(A) \to h(A)$$ in $\ca M(k)$ is a \textit{motivic integral Fourier transform of $(A, \phi)$ up to factor $d$} if the following three conditions are satisfied: (i) the induced morphism $h(A)_\QQ \to h(A)_\QQ$ is the composition of the usual Fourier transform with the isomorphism $\phi^\ast\colon h(\wh A)_\QQ \xrightarrow{\sim}h(A)_\QQ$, (ii) one has $d \cdot \ca F \circ \ca F = d \cdot (-1)^g \cdot [-1]_\ast$ as morphisms from $h(A)$ to $h(A)$, and (iii) $d\cdot \ca F \circ m_\ast = d \cdot \Delta^\ast \circ \ca F \otimes \ca F\colon h(A) \otimes h(A) \to h(A)$. 

For our purposes, we consider similar homomorphisms $\CH(A) \to \CH(\wh A)$. To make their existence easier to verify (c.f.\ Theorem \ref{th:motivic}) we relax the above conditions:
 
\begin{definition} \label{def:weakintegralfourier}
Let $A_{/k}$ be an abelian variety and let $\ca F\colon \CH(A) \to \CH(\wh A)$ be a group homomorphism. We call $\ca F$ a \textit{weak integral Fourier transform} if the following diagram commutes:
\begin{equation}
\xymatrix{
\CH(A) \ar[d]\ar[r]^{\ca F} & \CH(\wh A) \ar[d] \\
\CH(A)_\QQ \ar[r]^{\ca F_A} & \CH(\wh A)_\QQ.
}
\end{equation}
We call a weak integral Fourier transform $\ca F$ \textit{motivic} if it is induced by a cycle $\Gamma$ in $\CH(A \times \wh A)$ that satisfies $\Gamma_\QQ = \ch(\ca P_A) \in \CH(A \times \wh A)_{\QQ}$. A group homomorphism $\ca F \colon \CH(A) \to \CH(\wh A)$ is an \textit{integral Fourier transform up to homology} if the following diagram commutes:
\begin{equation}\label{diagram:fouriercommutes}
\xymatrixcolsep{5pc}
    \xymatrix{
\CH(A) \ar[d]\ar[r]^{\ca F} & \CH(\wh A) \ar[d] \\
\oplus_{r \geq 0} \rm H_{\textnormal{\'{e}t}}^{2r} (A_{k_s}, \ZZ_\ell(r)) \ar[r]^{\mr F_{A}} & \oplus_{r \geq 0} \rm H_{\textnormal{\'{e}t}}^{2r} (\wh A_{k_s}, \ZZ_\ell(r)).
}
\end{equation}
Similarly, a $\ZZ_\ell$-module homomorphism $\ca F_\ell \colon \CH(A)_{\ZZ_\ell} \to \CH(\wh A)_{\ZZ_\ell}$ is called an \textit{$\ell$-adic integral Fourier transform up to homology} if $\ca F_\ell$ is compatible with $\mr F_A$ and the $\ell$-adic cycle class maps. 
Finally, an integral Fourier transform up to homology $\ca F$ (resp.\ an $\ell$-adic integral Fourier transform up to homology $\ca F_\ell$) is called \textit{motivic} if it is induced by a cycle $\Gamma \in \CH(A \times \wh A)$ (resp.\ $\Gamma_\ell \in \CH(A \times \wh A)_{\ZZ_\ell})$ such that $cl(\Gamma)$ (resp.\ $cl(\Gamma_\ell)$) equals $\ch(\ca P_A) \in \oplus_{r \geq 0}\rm H_{\textnormal{\'{e}t}}^{2r}((A \times \wh A)_{k_s}, \ZZ_\ell(r))$. 
\end{definition}

\begin{remark}
If $\ca F\colon \CH(A) \to \CH(\wh A)$ is a weak integral Fourier transform, then $\ca F$ is an integral Fourier transform up to homology, the $\ZZ_\ell$-module $\oplus_{r \geq 0}\rm H_{\textnormal{\'{e}t}}^{2r}(\wh A_{k_s}, \ZZ_\ell(r))$ being torsion-free. If $k = \CC$, then $\ca F\colon \CH(A) \to \CH(\wh A)$ is an integral Fourier transform up to homology if and only if $\ca F$ is compatible with the Fourier transform $\mr F_A\colon \rm H^\bullet(A, \ZZ) \to \rm H^\bullet(\wh A,\ZZ)$ on Betti cohomology. 
\end{remark} 
\noindent
The relation between integral Fourier transforms and Hodge classes is as follows:

\begin{lemma}  \label{lemma:trivial}
Let $A$ be a complex abelian variety and let $\ca F \colon \CH(A) \to \CH(\wh A)$ be an integral Fourier transform up to homology. 
\begin{enumerate}
\item 
For each $i \in \ZZ_{\geq 0}$, the integral Hodge conjecture for degree $2i$ classes on $A$ implies the integral Hodge conjecture for degree $2g-2i$ classes on $\wh A$. 
\item If $\ca F$ is motivic, then $\mr F_A$ induces a group isomorphism $\rm Z^{2i}(A) \xrightarrow{\sim} \rm Z^{2g-2i}(\wh A)$ and, therefore, the integral Hodge conjectures for degree $2i$ classes on $A$ and degree $2g-2i$ classes on $\wh A$ are equivalent for all $i$. 
\end{enumerate}
\end{lemma}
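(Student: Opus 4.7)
My plan rests on carefully tracking how the graded Fourier transform on cohomology of (\ref{eq:integralfouriercohomology}) interacts with the degree decomposition on Chow groups. The commutativity of diagram (\ref{diagram:fouriercommutes}) asserts that for $z \in \CH^i(A)$, the cycle class $cl(\ca F(z)) \in \bigoplus_r \rm H^{2r}(\wh A_{k_s}, \ZZ_\ell(r))$ equals $\mr F_A(cl(z))$, which, thanks to the isomorphism (\ref{eq:integralfouriercohomology}), is concentrated in degree $2g - 2i$. Writing the graded decomposition $\ca F(z) = \sum_j [\ca F(z)]_j$ with $[\ca F(z)]_j \in \CH^j(\wh A)$ and using that the cycle class map is graded then forces $cl([\ca F(z)]_j) = 0$ for $j \neq g-i$ and $cl([\ca F(z)]_{g-i}) = \mr F_A(cl(z))$.

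For part 1, I would begin with a class $\beta \in \Hdg^{2g-2i}(\wh A, \ZZ)$. Since $\mr F_A\colon \rm H^{2i}(A, \ZZ) \to \rm H^{2g-2i}(\wh A, \ZZ)(g-i)$ is an isomorphism of integral Hodge structures by (\ref{eq:integralfouriercohomology}), there is a unique $\alpha \in \Hdg^{2i}(A, \ZZ)$ with $\mr F_A(\alpha) = \beta$. Assuming the integral Hodge conjecture in degree $2i$ on $A$, we can write $\alpha = cl(z)$ for some $z \in \CH^i(A)$, and the discussion above yields an element $[\ca F(z)]_{g-i} \in \CH^{g-i}(\wh A)$ whose cycle class is $\beta$. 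Hence $\beta$ is algebraic.

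For part 2, suppose $\ca F$ is motivic, induced by a cycle $\Gamma \in \CH(A \times \wh A)$ with $cl(\Gamma) = \ch(\ca P_A)$. The transposed correspondence $\Gamma^t \in \CH(\wh A \times A)$ lifts $\ch(\ca P_{\wh A})$ under the canonical identification $\wh{\wh A} = A$, and therefore induces an integral Fourier transform up to homology $\ca F'\colon \CH(\wh A) \to \CH(A)$ compatible with the cohomological Fourier transform $\mr F_{\wh A}$. Given $\beta \in \rm Z^{2g-2i}(\wh A)$, pick a lift $z' \in \CH^{g-i}(\wh A)$ with $cl(z') = \beta$; then by the same grading argument, $[\ca F'(z')]_i \in \CH^i(A)$ realises the cohomology class $\mr F_{\wh A}(\beta)$. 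Beauville's inversion identity $\mr F_{\wh A} \circ \mr F_A = (-1)^g \cdot [-1]_A^*$, which holds on integral cohomology degree by degree, then gives $\mr F_{\wh A}(\beta) = (-1)^g [-1]_A^* \mr F_A^{-1}(\beta)$. Because $[-1]^*$ preserves the subgroup of algebraic classes, this shows $\mr F_A^{-1}(\beta) \in \rm Z^{2i}(A)$. Combined with part 1, $\mr F_A$ restricts to a bijection between $\rm Z^{2i}(A)$ and $\rm Z^{2g-2i}(\wh A)$, and since $\mr F_A$ is also a bijection between the corresponding groups of integral Hodge classes, the equivalence of the two integral Hodge conjectures is immediate.

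The only real subtlety is conceptual rather than computational: it lies in the bookkeeping between the graded Fourier transform of (\ref{eq:integralfouriercohomology}) and the total correspondence defined by $\ch(\ca P_A)$, and in verifying that the transposition $\Gamma \rightsquigarrow \Gamma^t$ indeed produces a motivic lift of $\mr F_{\wh A}$ with respect to the identification $\wh{\wh A} = A$. Once these points are in place, both statements follow by chasing the commutative diagram (\ref{diagram:fouriercommutes}) through degree decompositions.
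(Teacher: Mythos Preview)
Your argument is correct and follows the same route as the paper: the paper packages part~1 as a single commutative square
\[
\xymatrixcolsep{5pc}
\xymatrix{
\CH^i(A) \ar[d]\ar[r] & \CH_i(\wh A) \ar[d] \\
\Hdg^{2i}(A, \ZZ) \ar[r]^{\sim} & \Hdg^{2g-2i}(\wh A, \ZZ)
}
\]
obtained exactly by your grading argument, and for part~2 it simply says ``replace $A$ by $\wh A$ and $\wh A$ by $\skew{5.5}\widehat{\widehat{A}}$'', which is your transposition $\Gamma \rightsquigarrow \Gamma^t$ together with the Beauville inversion you spell out.

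One notational slip to fix: in the paper $\rm Z^{2i}(X)$ denotes the \emph{Voisin group} $\Hdg^{2i}(X,\ZZ)/\rm H^{2i}(X,\ZZ)_{\textnormal{alg}}$, i.e.\ the \emph{cokernel} of the cycle class map, not its image. So your sentence ``Given $\beta \in \rm Z^{2g-2i}(\wh A)$, pick a lift $z' \in \CH^{g-i}(\wh A)$ with $cl(z') = \beta$'' is incoherent as written. What you actually prove is that $\mr F_A$ and $\mr F_A^{-1}$ both carry algebraic classes to algebraic classes; this identifies the images of the two cycle class maps inside $\Hdg$, and \emph{then} passes to an isomorphism of cokernels $\rm Z^{2i}(A) \xrightarrow{\sim} \rm Z^{2g-2i}(\wh A)$. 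Rewrite that paragraph accordingly and the proof is fine.
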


\begin{proof}
We can extend Diagram (\ref{diagram:fouriercommutes}) to the following commutative diagram:
\begin{equation*} 
    \xymatrix{
\CH^{i}(A) \ar[d]^{cl^i} \ar[r] & \CH(A) \ar[d]\ar[r]^{\ca F} & \CH(\wh A) \ar[d] \ar[r] & \CH_i(\wh A) \ar[d]^{cl_i} \\
\rm H^{2i}(A, \ZZ) \ar[r] &\rm H^\bullet(A, \ZZ) \ar[r]^{\mr F_A} & \rm H^\bullet(\wh A, \ZZ) \ar[r] & \rm H^{2g-2i}(\wh A, \ZZ). 
}
\end{equation*}
The composition $\rm H^{2i}(A, \ZZ)  \to \rm H^{2g-2i}(\wh A, \ZZ)$ appearing on the bottom line agrees up to a suitable Tate twist with the map $\mr F_A$ of equation (\ref{eq:integralfouriercohomology}). 
Therefore, we obtain a commutative diagram:
\begin{equation} \label{diagram:cycleclassmaps}
\xymatrixcolsep{5pc}
\xymatrix{
\CH^i(A) \ar[d]^{cl^i}\ar[r] & \CH_i(\wh A) \ar[d]^{cl_i} \\
\Hdg^{2i}(A, \ZZ) \ar[r]^{\sim} & \Hdg^{2g-2i}(\wh A, \ZZ).
}
\end{equation}
Thus the surjectivity of $cl^i$ implies the surjectivity of $cl_i$. Moreover, if $\ca F$ is motivic, then replacing $A$ by $\wh A$ and $\wh A$ by $\skew{5.5}\widehat{\widehat{A}}$ in the argument above shows that the images of $cl^i$ and $cl_i$ are identified under the isomorphism $\mr F_A\colon \Hdg^{2i}(A, \ZZ) \xrightarrow{\sim} \Hdg^{2g-2i}(\wh A, \ZZ)$ in diagram (\ref{diagram:cycleclassmaps}). 
\end{proof}

\section{Rational Fourier transforms}\label{sec:propertiesfourier}

The above suggests that to prove Theorem \ref{maintheorem}, one would need to show that for a complex abelian variety of dimension $g$ whose minimal Poincar\'e class $c_1(\ca P_A)^{2g-1}/(2g-1)! \in \rm H^{4g-2}(A \times \wh A, \ZZ)$ is algebraic, all classes of the form $c_1(\ca P_A)^{i}/i! \in \rm H^{2i}(A \times \wh A, \ZZ)$ are algebraic. With this goal in mind we shall study Fourier transforms on rational Chow groups in Section \ref{sec:propertiesfourier}, and investigate how these relate to $\ch(\ca P_A) \in \CH(A \times \wh A)_{\QQ}$. In turns out that the cycles $c_1(\ca P_A)^{i}/i!$ in $ \CH(A \times \wh A)_{\QQ}$ satisfy several relations that are very similar to those proved by Beauville for the cycles $\theta^{i}/i! \in \CH(A)_{\QQ}$ in case $A$ is principally polarized, see \cite{beauvillefourier}. Since we will need these results in any characteristic in order to prove Theorem \ref{introth:integraltate}, we will work over our general field $k$, see Section \ref{sec:notation}. 
\\
\\
Let $A$ be an abelian variety over $k$. Define cycles 
 \begin{align*}
\ell &= c_1(\ca P_A) \in \CH^1(A \times \wh A)_{\QQ}, \\
 \wh \ell &= c_1(\ca P_{\wh A}) \in \CH^1(\wh A \times A)_{\QQ}, \\
 \mr R_A  &= c_1(\ca P_A)^{2g-1}/(2g-1)! \in \CH_1(A\times \wh A)_{\QQ}, \quad \tn{ and } \\
 \mr R_{\wh A} &= c_1(\ca P_{\wh A})^{2g-1}/(2g-1)! \in \CH_1(\wh A\times A)_{\QQ}.
 \end{align*} For $a \in \CH(A)_{\QQ}$, define $\mathrm{E}(a) \in \CH(A)_{\QQ}$ as the $\star$-exponential of $a$:
\[
\mathrm{E}(a)  \coloneqq \sum_{n \geq 0} \frac{a^{\star n}}{n!} \in \CH(A)_{\QQ}. 
\]
The key to Theorem \ref{maintheorem} will be the following:

\begin{lemma} \label{lemma:crucialprop} 
We have $\ch(\ca P_A) = e^\ell = (-1)^g\cdot \rm E((-1)^g\cdot \mr R_A) \in \CH( A \times \wh A)_{\QQ}$. 
\end{lemma}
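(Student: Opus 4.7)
The first equality $\ch(\ca P_A)=e^\ell$ is the definition of the Chern character of a line bundle, since $\ch(L)=\exp(c_1(L))$ for any line bundle $L$. So the content of the lemma lies entirely in the second equality, which I would approach as follows.

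The key structural observation is that the Poincar\'e bundle $\ca P_A$ makes $X\coloneqq A\times\wh A$ into a self-dually polarized abelian variety of dimension $2g$. Indeed, the polarization morphism $\phi_{\ca P_A}\colon X\to \wh X$ coincides, after the canonical identification $\wh{A\times\wh A}\cong \wh A\times A$, with the swap isomorphism; in particular it is an isomorphism and $\ca P_A$ is symmetric, i.e.\ $[-1]_X^\ast\ca P_A\cong \ca P_A$. Thus $\ell$ plays the role of a principal polarization class on $X$ and $\mr R_A=\ell^{2g-1}/(2g-1)!$ is the corresponding ``minimal'' class.

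With this in hand I would apply Beauville's Fourier formalism on $X$ in three steps:
\begin{enumerate}
\item[(1)] Since the Fourier transform $\ca F_X\colon \CH(X)_\QQ\to \CH(\wh X)_\QQ$ exchanges the intersection and Pontryagin products, one gets the formal identity $\ca F_X(e^\ell)=\rm E(\ca F_X(\ell))$ in $\CH(\wh X)_\QQ$.
\item[(2)] Using that $\phi_{\ca P_A}$ is the swap, together with Mumford's computation $\chi(\ca P_A)=(-1)^g$ (which fixes the sign/normalization via Hirzebruch--Riemann--Roch on $X$), identify $\ca F_X(\ell)$, pulled back via the swap iso $\wh X\cong X$, with $(-1)^g\mr R_A\in \CH(X)_\QQ$.
\item[(3)] Invert via the formula $\ca F_{\wh X}\circ\ca F_X=(-1)^{2g}[-1]_X^\ast=[-1]_X^\ast$ and use the symmetry of $\ca P_A$, which yields $[-1]_X^\ast e^\ell=e^\ell$ and $[-1]_X^\ast \mr R_A=\mr R_A$, to remove the involution. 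Tracking the signs produced by the exchange of $\cdot$ and $\star$ and by the swap then yields exactly $e^\ell=(-1)^g\cdot \rm E((-1)^g\mr R_A)$.
\end{enumerate}

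The main obstacle is step (2): computing $\ca F_X(\ell)$ at the level of $\CH(\wh X)_\QQ$ rather than merely in cohomology. The cohomological statement is immediate from the K\"unneth decomposition $\rm H^\bullet(A)\otimes \rm H^\bullet(\wh A)\cong \rm H^\bullet(X)$ and the fact that $\ell$ is the canonical element of $\rm H^1(A)\otimes \rm H^1(\wh A)$ under Poincar\'e duality. Upgrading to rational Chow groups requires Beauville's weight decomposition: symmetry of $\ca P_A$ forces $\ell$, and hence $\mr R_A$ and $\ca F_X(\ell)$, into the weight--$0$ pieces; combined with the vanishing $\ell^{2g+1}=0$ for dimensional reasons, Beauville's arguments reduce the Chow identity to the cohomological one in each codimension. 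The remaining work is purely sign bookkeeping, which is where the factor $(-1)^g$ on the right-hand side ultimately comes from.
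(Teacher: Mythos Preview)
Your overall architecture matches the paper's: work on $X=A\times\wh A$, use that $\ca F_X$ exchanges the intersection and Pontryagin products (with trivial sign since $\dim X=2g$), and combine this with a computation of $\ca F_X(\ell)$ via the Beauville weight decomposition. The difference, and the place where your argument has a genuine gap, is step~(2).

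Your proposed justification for step~(2) does not work. Knowing that $\ca F_X(\ell)$ and $(-1)^g\mr R_A$ both lie in the weight-$0$ piece $\CH^{2g-1,0}$ and agree in cohomology does \emph{not} force them to coincide in $\CH(X)_\QQ$: the cycle class map on $\CH^{i,0}$ is not injective in general, and the vanishing $\ell^{2g+1}=0$ adds nothing here. So the sentence ``Beauville's arguments reduce the Chow identity to the cohomological one in each codimension'' is not a valid step. The paper does the opposite of what you propose: it first establishes the Chow-level identity $\ca F_{A\times\wh A}(e^\ell)=(-1)^g e^{-\wh\ell}$ (its Claim~$(\ast)$) by lifting to the derived category---computing $\Phi_{\ca P_X}(\ca P_A)$ as a composition of Fourier--Mukai functors, applying Mukai's inversion $\Phi_{\ca P_A}\circ\Phi_{\ca P_{\wh A}}\cong[-1]^\ast[-g]$, and invoking Orlov's uniqueness of kernels---and only \emph{then} uses the weight decomposition to read off the degree-one piece $\ca F_{\wh A\times A}(-\wh\ell)=(-1)^g\mr R_A$.

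That said, the two ingredients you name---that $\phi_{\ca P_A}$ is essentially the swap isomorphism and that $\chi(\ca P_A)=(-1)^g$---\emph{can} be assembled into an alternative, more elementary proof of Claim~$(\ast)$ that avoids derived categories entirely. The point is that under the identification $\wh X\cong X$ via $\phi_{\ca P_A}$, the pullback of $\ca P_X$ is the Mumford bundle $m^\ast\ca P_A\otimes\pi_1^\ast\ca P_A^{-1}\otimes\pi_2^\ast\ca P_A^{-1}$, and then the computation $\ca F_X(e^\ell)=e^{-\ell}\cdot\pi_{2,\ast}(m^\ast e^\ell)$ proceeds verbatim as in Beauville's lemma for an ample $\Theta$, except that $\deg(\ell^{2g}/(2g)!)=\chi(\ca P_A)=(-1)^g$ replaces $\deg(\Theta^g/g!)=1$. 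This is a perfectly good route, but it requires actually carrying out that computation in Chow, not bypassing it by appealing to cohomology.
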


\begin{proof} The most important ingredient in the proof is the following:

\hypertarget{claim1}{\textit{Claim $(\ast)$}}: Consider the Fourier transform $\ca F_{A \times \wh A} \colon \CH(A \times \wh A)_{\QQ} \to \CH(\wh A \times A)_{\QQ}$. One has \[
\ca F_{A \times \wh A}(e^\ell) = (-1)^g \cdot e^{-\wh \ell} \in \CH(\wh A \times A)_{\QQ}.\]
\noindent
To prove Claim (\hyperlink{claim1}{$\ast$}), we lift the desired equality in the rational Chow group of $\wh A \times A$ to an isomorphism in the derived category $\rm D^b(\wh A \times A)$ of $\wh A \times A$. For $X= A \times \wh A$ the Poincar\'e line bundle $\mathcal{P}_X$ on $X \times \wh X \cong A \times \wh A \times \wh A \times A$ is isomorphic to $\pi_{13}^\ast \mathcal{P}_A \otimes \pi_{24}^\ast \mathcal{P}_{\wh A}$. Consider
\begin{equation}
\label{eq:composition_fourier}
\Phi_{{\ca P}_{X}}(\ca P_A) \cong \pi_{34,\ast} \left(\pi_{13}^\ast\ca P_A \otimes \pi_{24}^\ast \ca P_{\wh A} \otimes \pi_{12}^\ast\ca P_A  \right) \in \rm D^b(\wh A\times A)
\end{equation}
whose Chern character is exactly $\mathcal{F}_X(e^\ell)$. Applying the pushforward along the permutation map
\[
(123) \colon A \times \wh A \times \wh A \times A \cong \wh A \times A \times \wh A \times A
\]
the object \eqref{eq:composition_fourier} becomes $\pi_{14,\ast}\left( \pi_{12}^\ast\mathcal{P}_{\wh A} \otimes \pi_{23}^\ast \mathcal{P}_A \otimes \pi_{34}^\ast \mathcal{P}_{\wh A} \right)$ which is isomorphic to the Fourier--Mukai kernel of the composition
\[
\Phi_{\mathcal{P}_{\wh A}} \circ \Phi_{\mathcal{P}_A} \circ \Phi_{\mathcal{P}_{\wh A}}.
\]
Since $\Phi_{\mathcal{P}_{A}}\circ \Phi_{\mathcal{P}_{\wh A}}$ is isomorphic to $[-1_{\wh A}]^\ast \circ [-g]$ by \cite[Theorem 2.2]{mukaiduality}, we have
\[
\Phi_{\mathcal{P}_{\wh A}} \circ \Phi_{\mathcal{P}_A} \circ \Phi_{\mathcal{P}_{\wh A}}\cong  \Phi_{\mathcal{P}_{\wh A}} \circ [-1_{\wh A}]^\ast \circ [-g].
\]
This is the Fourier--Mukai transform with kernel $\ca E = \mathcal{P}_{\wh A}^\vee[-g] \in \rm D^b(\wh A \times A)$. By uniqueness of the Fourier--Mukai kernel of an equivalence \cite[Theorem 2.2]{orlovfouriermukaikernel} and the fact that the Chern character of $\ca E$ equals $(-1)^{g}\cdot e^{-\wh \ell} \in \CH(\wh A \times A)_{\QQ}$, this finishes the proof of Claim (\hyperlink{claim1}{$\ast$}).

Next, we claim that $(-1)^g \cdot \ca F_{\wh A \times A}(-\wh \ell)= \mr R_A$. 
To see this, recall that for each integer $i$ with $0 \leq i \leq g$, there is a canonical \textit{Beauville decomposition} $$\CH^i(A)_\QQ = \oplus_{j = i-g}^i\CH^{i,j}(A)_{\QQ}, \quad \quad \quad  \tn{see \cite{beauvilledecomposition}}.$$ Since the Poincar\'e bundle $\ca P_A$ is symmetric, we have $\ell \in \CH^{1,0}(A \times \wh A)_{\QQ}$ and hence $\ell^{i} \in \CH^{i,0}(A \times \wh A)_{\QQ}$. In particular, we have $\mr R_A \in \CH^{2g-1,0}(A \times \wh A)_\QQ$. 
The fact that $\ca P_{A}$ is symmetric also implies - via Claim (\hyperlink{claim1}{$\ast$}) - that we have
$
\ca F_{\wh A \times A}((-1)^g\cdot e^{-\wh \ell}) = e^{\ell}$. Indeed, 
\[
\ca F_{\wh A \times A} \circ \ca F_{A \times \wh A} = [-1]^\ast \cdot (-1)^{2g} = [-1]^\ast,\] see \cite[Corollary 2.22]{deningermurre}. Since $\ca F_{\wh A \times A}$ identifies the group $\CH^{i,0}(\wh A \times A)_{\QQ}$ with the group $\CH^{g-i,0}(A \times \wh A)$ (see \cite[Lemma 2.18]{deningermurre}), we must indeed have 
\begin{equation} \label{importantequation}
(-1)^g \cdot \ca F_{\wh A \times A}(-\wh \ell)= \ca F_{\wh A \times A}((-1)^{g+1}\cdot \wh \ell) =  \frac{{\ell}^{2g-1}}{(2g-1)!}  = \mr R_A. 
\end{equation}
For a $g$-dimensional abelian variety $X$ and any $x,y \in \CH(X)_{\QQ}$, one has $\ca F_{X}(x \cdot y) = (-1)^g\cdot \ca F_{X}(x) \star \ca F_{X}(y)\in \CH(\wh X)_{\QQ}$, see \cite[Proposition 3]{beauvillefourier}. This implies (see also \cite[\S 3.7]{moonenpolishchuk}) that if $a$ is a cycle on $X$ such that $\ca F_{X}(a) \in \CH_{>0}(\wh X)_{\QQ}$, then $\ca F_{X}(e^a) =
(-1)^g\cdot \rm E((-1)^g\cdot\ca F_{X}(a))$. This allows us to conclude that
\begin{align*}
e^\ell &= \ca F_{\wh A \times A}((-1)^{g} \cdot e^{-\wh \ell})=
(-1)^{g} \cdot\ca F_{\wh A \times A}( e^{-\wh \ell})
\\
&= (-1)^{g} \cdot\rm E(\ca F_{\wh A \times A}(-\wh \ell)) = (-1)^g\cdot \rm E((-1)^g\cdot \mr R_A),
\end{align*}
which finishes the proof. 
\end{proof}

\noindent
Next, assume that $A$ is equipped with a \textit{principal} polarization $\lambda \colon A \xrightarrow{\sim} \wh A$, define $\ell = c_1(\ca P_A)$, and let 
\begin{equation} \label{generaltheta}
    \Theta = \frac{1}{2}\cdot (\id, \lambda)^\ast \ell \in \CH^1(A)_\QQ
\end{equation}
be the symmetric ample class corresponding to the polarization. Here $(\id,\lambda)$ is the morphism 
$(\id,\lambda)\colon A \to A\times \wh A$. One can understand the relation between 
\[
\Gamma_\Theta \coloneqq \Theta^{g-1}/(g-1)! \in \CH_1(A)_{\QQ}
\]
and $\mr R_A = {\ell}^{2g-1}/(2g-1)! \in \CH_1(A \times \wh A)_{\QQ}$ in the following way. 
Define 
\begin{align*}
j_1 &\colon A \to A \times \wh A, \quad x \mapsto (x,0), \quad \tn{ and }  \\
j_{2} &\colon \wh A \to A \times \wh A, \quad y \mapsto (0,y).
\end{align*}
Let $\wh \Theta \in \CH^1(\wh A)_\QQ$ be the dual of $\Theta$, and define a one-cycle $\tau$ on $A \times \wh A$ as follows:
\begin{align*} 
\tau \coloneqq 
j_{1, \ast} (\Gamma_\Theta)  
+  j_{2, \ast}(\Gamma_{\wh \Theta}) - (\id, \lambda)_\ast (\Gamma_\Theta) 
\in \CH_1( A \times \wh A )_\QQ.
\end{align*}

\begin{lemma} \label{lemma:minimalclasspoincarecomparison}
One has 
$
 \tau = (-1)^{g+1}\cdot \mr R_A \in \CH_1(A \times \wh A)_{\QQ}$. 
\end{lemma}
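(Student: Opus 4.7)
The plan is to prove the identity after applying the Fourier transform $\ca F_{A\times\wh A}\colon \CH(A\times\wh A)_\QQ\to\CH(\wh A\times A)_\QQ$, using that this transform is invertible on rational Chow groups because $\ca F_{A\times\wh A}\circ \ca F_{\wh A\times A}=[-1]^\ast$. First I would compute the Fourier transform of the right-hand side. Extracting the codimension-$1$ component of Claim $(\ast)$ in the proof of Lemma~\ref{lemma:crucialprop} (applied with $A$ replaced by $A\times\wh A$), or equivalently applying equation~(\ref{importantequation}) of that proof directly, one obtains
\[
\ca F_{A\times\wh A}(\mr R_A) \;=\; (-1)^{g+1}\,\wh\ell \;\in\;\CH^1(\wh A\times A)_\QQ,
\]
so that $\ca F_{A\times\wh A}\bigl((-1)^{g+1}\mr R_A\bigr)=\wh\ell$.

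Next I would compute $\ca F_{A\times\wh A}(\tau)$ summand by summand, using the exchange formula $\ca F_B\circ f_\ast=\wh f^{\,\ast}\circ \ca F_A$ for a homomorphism $f\colon A\to B$ of abelian varieties (a consequence of base change and the projection formula applied to the Poincar\'e bundles, together with the fact that the Todd class of an abelian variety is trivial). The duals of the three morphisms appearing in $\tau$ are readily identified: $\widehat{j_1}$ and $\widehat{j_2}$ are the projections $q_1\colon \wh A\times A\to \wh A$ and $q_2\colon \wh A\times A\to A$, while the decomposition $(\id,\lambda)=(\id\times\lambda)\circ\Delta_A$ together with the computation of the dual of a diagonal map gives $\widehat{(\id,\lambda)}=\mu\colon \wh A\times A\to\wh A$, $\mu(\xi,a)=\xi+\lambda(a)$. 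Combining this with the version of the Beauville identity in the form $\ca F_A(e^\Theta)=e^{-\wh\Theta}$ (extracting codimension $1$ gives $\ca F_A(\Gamma_\Theta)=-\wh\Theta$, and symmetrically $\ca F_{\wh A}(\Gamma_{\wh\Theta})=-\Theta$) yields expressions for each piece.

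The remaining input is the Mumford/theorem-of-the-square identity $m^\ast\wh\Theta-\pi_1^\ast\wh\Theta-\pi_2^\ast\wh\Theta=(\id\times\lambda^{-1})^\ast\wh\ell$ on $\wh A\times\wh A$, which after pulling back along $\id\times\lambda\colon \wh A\times A\to\wh A\times\wh A$ gives the key formula
\[
\mu^\ast\wh\Theta \;=\; q_1^\ast\wh\Theta + q_2^\ast\Theta + \wh\ell
\quad\text{in}\quad \CH^1(\wh A\times A)_\QQ.
\]
Substituting produces the clean cancellation
\[
\ca F_{A\times\wh A}(\tau) \;=\; -q_1^\ast\wh\Theta - q_2^\ast\Theta + \mu^\ast\wh\Theta \;=\; \wh\ell,
\]
which matches $\ca F_{A\times\wh A}\bigl((-1)^{g+1}\mr R_A\bigr)$. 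Invertibility of $\ca F_{A\times\wh A}$ on rational Chow groups then yields $\tau=(-1)^{g+1}\mr R_A$, as required.

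The main difficulty I anticipate is bookkeeping of signs: the Beauville formula, the Mumford square formula, the normalization of the Poincar\'e bundle, and the sign convention implicit in $\Theta=\tfrac12(\id,\lambda)^\ast\ell$ must all be kept mutually consistent. In particular, one should verify the case $g=1$ explicitly (where $\Gamma_\Theta=[A]$ and $\mr R_A=\ell$) as a sanity check to make sure the sign $(-1)^{g+1}$ --- rather than $(-1)^g$ --- is the correct one.
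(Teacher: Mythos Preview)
Your proposal is correct and rests on the same circle of ideas as the paper's proof: the functoriality formulas~(\ref{eq:functoriality}), the Beauville identity $\ca F_A(e^\Theta)=e^{-\wh\Theta}$, and the relation~(\ref{importantequation}) between $\ell$ and $\mr R_A$ under the Fourier transform.

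The organization, however, is dual to the paper's. The paper identifies $A$ with $\wh A$ via $\lambda$, writes $\ell = m^\ast\Theta - \pi_1^\ast\Theta - \pi_2^\ast\Theta$, and applies $\ca F_{A\times A}$ to $\ell$ using the \emph{second} functoriality formula $\ca F_X\circ f^\ast = (-1)^{\dim X-\dim Y}(\wh f)_\ast\circ\ca F_Y$: this directly produces $(-1)^g(\Delta_\ast\ca F_A(\Theta)-j_{1,\ast}\ca F_A(\Theta)-j_{2,\ast}\ca F_A(\Theta))$, which is $\tau$, and one then compares with $\ca F_{A\times A}(\ell)=(-1)^{g+1}\mr R_A$. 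You instead apply $\ca F_{A\times\wh A}$ to both $\tau$ and $(-1)^{g+1}\mr R_A$, using the \emph{first} functoriality formula $(\wh f)^\ast\circ\ca F_X = \ca F_Y\circ f_\ast$, and show each equals $\wh\ell$. Your version has the mild advantage of not requiring the identification of $A$ with $\wh A$ (the Mumford-square identity for $\mu^\ast\wh\Theta$ plays the role that $\ell=m^\ast\Theta-\pi_1^\ast\Theta-\pi_2^\ast\Theta$ plays in the paper); the paper's version avoids the explicit invertibility step at the end, since it equates two computations of the same object rather than two Fourier transforms.
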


\begin{proof}


Identify $A$ and $\wh A$ via $\lambda$. This gives $\ell = m^\ast(\Theta) - \pi_1^\ast(\Theta) - \pi_2^\ast(\Theta)$, and the Fourier transform becomes an endomorphism $\ca F_A \colon \CH(A)_\QQ \to \CH(A)_\QQ$. 

We claim that \[
\tau = (-1)^g \cdot \left(  \Delta_\ast \ca F_A(\Theta) - j_{1, \ast} \ca F_A(\Theta) -  j_{2, \ast} \ca F_A(\Theta) \right).\] For this, it suffices to show that $\ca F_A(\Theta) = (-1)^{g-1}\cdot \Theta^{g-1}/(g-1)! \in \CH_1(A)_\QQ$. Now $\ca F_A(e^\Theta) = e^{-\Theta}$ by Lemma \ref{beauvillelemma} below. Moreover, since $\Theta$ is symmetric, we have $\Theta \in \CH^{1,0}(A)_{\QQ}$, hence $\Theta^{i}/i! \in \CH^{i,0}(A)_\QQ$ for each $i \geq 0$. Therefore, $\ca F_A\left(\Theta^{i}/i!\right) \in \CH^{g-i,0}(A)_\QQ$ by \cite[Lemma 2.18]{deningermurre}. This implies that, in fact, 
\[
\ca F_A\left(\Theta^i/i!\right) =(-1)^{g-i}\cdot \Theta^{g-i}/(g-i)! \in \CH^{g-i,0}(A)_\QQ\] for every $i$. In particular, the claim follows.

Next, recall that $\ca F_{A\times A}(\ell) = (-1)^{g+1}\cdot \mr R_A$, see Claim (\hyperlink{claim1}{$\ast$}). So at this point, it suffices to prove the identity $$\ca F_{A\times A}(\ell) = (-1)^g \cdot \left(  \Delta_\ast \ca F_A(\Theta) - j_{1, \ast} \ca F_A(\Theta) -  j_{2, \ast} \ca F_A(\Theta) \right).$$ To prove this, we use the following functoriality properties of the Fourier transform on the level of rational Chow groups. Let $X$ and $Y$ be abelian varieties and let $f\colon X \to Y$ be a homomorphism with dual homomorphism $\wh f\colon \wh Y \to \wh X$. We then have the following equalities \cite[(3.7.1)]{moonenpolishchuk}: 
\begin{equation}\label{eq:functoriality}
    (\wh f)^\ast \circ \ca F_X = \ca F_Y \circ f_\ast, \hspace{3mm}  \ca F_X \circ f^\ast = (-1)^{\dim X - \dim Y}\cdot (\wh f)_\ast \circ \ca F_Y. 
\end{equation}
Since $\ell = m^\ast \Theta - \pi_1^\ast \Theta - \pi_2^\ast \Theta$, it follows from Equation (\ref{eq:functoriality}) that
\begin{align*}
    \ca F_{A \times A}(\ell) &=  \ca F_{A \times A}\left( m^\ast \Theta\right) - \ca F_{A \times A}\left(\pi_1^\ast \Theta\right) - \ca F_{A \times A}\left(\pi_2^\ast \Theta\right) \\
 &= (-1)^g\cdot \left(\Delta_\ast \ca F_A(\Theta) - j_{1, \ast} \ca F_A(\Theta) -  j_{2, \ast} \ca F_A(\Theta) \right).
\end{align*}
\end{proof}

\begin{lemma}[Beauville] \label{beauvillelemma}
Let $A$ be an abelian variety over $k$, principally polarized by $\lambda \colon A \xrightarrow{\sim} \wh A$, and define $\Theta = \frac{1}{2}\cdot (\id, \lambda)^\ast c_1(\ca P_A) \in \CH^1(A)_{\QQ}$. Identify $A$ and $\wh A$ via $\lambda$. With respect to the Fourier transform \[
\ca F_A\colon \CH(A)_{\QQ} \xrightarrow{\sim} \CH(A)_{\QQ}, \quad \text{ one has }\quad \ca F_A(e^\Theta) = e^{-\Theta}.
\]
\end{lemma}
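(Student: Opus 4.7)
The plan is to reduce the identity to a direct computation using the standard description of the Poincaré class on $A \times A$ (via the identification $A \cong \wh A$ afforded by $\lambda$) in terms of $\Theta$. Throughout I will identify $A$ and $\wh A$ via $\lambda$, so that $\ca F_A$ becomes the endomorphism of $\CH(A)_\QQ$ induced by the correspondence $e^\ell \in \CH(A \times A)_\QQ$. The crucial input is the identity
\[
\ell = m^\ast \Theta - p_1^\ast \Theta - p_2^\ast \Theta \in \CH^1(A \times A)_\QQ,
\]
which I will establish first. This follows from the theorem of the square: if $\ca L$ denotes a symmetric line bundle with $c_1(\ca L) = \Theta$, then under the identification $\id \times \lambda \colon A \times A \cong A \times \wh A$ one has $\ca P_A \cong m^\ast \ca L \otimes p_1^\ast \ca L^{-1} \otimes p_2^\ast \ca L^{-1}$, and the normalization $\Theta = \frac{1}{2}(\id, \lambda)^\ast \ell$ forces the factor.

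Granted this, I would compute directly
\[
\ca F_A(e^\Theta) = p_{2,\ast}\bigl(p_1^\ast(e^\Theta) \cdot e^\ell\bigr) = p_{2,\ast}\bigl(e^{p_1^\ast \Theta + \ell}\bigr) = p_{2,\ast}\bigl(e^{m^\ast \Theta - p_2^\ast \Theta}\bigr),
\]
and then pull the factor $e^{-p_2^\ast \Theta}$ out using the projection formula, obtaining
\[
\ca F_A(e^\Theta) = e^{-\Theta} \cdot p_{2,\ast}\bigl(e^{m^\ast \Theta}\bigr).
\]
It therefore remains to show that $p_{2,\ast}(e^{m^\ast \Theta}) = 1 \in \CH^0(A)_\QQ$.

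To prove this, I will introduce the automorphism $\sigma \colon A \times A \to A \times A$, $\sigma(x,y) = (x + y, y)$, which satisfies $p_1 \circ \sigma = m$ and $p_2 \circ \sigma = p_2$. Since $\sigma$ is an isomorphism, $\sigma_\ast \sigma^\ast = \id$, and hence
\[
p_{2,\ast}\bigl(e^{m^\ast \Theta}\bigr) = p_{2,\ast}\bigl(\sigma^\ast e^{p_1^\ast \Theta}\bigr) = (p_2 \circ \sigma^{-1})_\ast \bigl(e^{p_1^\ast \Theta}\bigr) = p_{2,\ast}\bigl(e^{p_1^\ast \Theta}\bigr).
\]
The last quantity extracts the degree of the top-codimensional part of $e^\Theta$, which equals $\int_A \Theta^g/g! = g!/g! = 1$ since $\Theta$ is a principal polarization. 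Putting this into the previous display yields $\ca F_A(e^\Theta) = e^{-\Theta}$.

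The only delicate point will be to pin down the equality $\ell = m^\ast \Theta - p_1^\ast \Theta - p_2^\ast \Theta$ on the nose in $\CH^1(A \times A)_\QQ$, rather than only up to numerical equivalence; but this is standard once one uses the symmetric representative $\ca L$ together with the normalization $\Theta = \frac{1}{2}(\id, \lambda)^\ast \ell$. Everything else is a short manipulation with the projection formula and the observation that $p_{2,\ast} \circ \sigma^\ast = p_{2,\ast}$.
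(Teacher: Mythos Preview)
Your proof is correct and follows essentially the same route as the paper: write $\ell = m^\ast\Theta - p_1^\ast\Theta - p_2^\ast\Theta$, collapse the exponential to $e^{-\Theta}\cdot p_{2,\ast}(m^\ast e^\Theta)$, and show the remaining factor is $1$ via $\deg(\Theta^g/g!) = 1$. Your shear automorphism $\sigma$ is a clean way to see $p_{2,\ast}m^\ast = p_{2,\ast}p_1^\ast$; the paper instead just remarks that for codimension reasons $\pi_{2,\ast}(m^\ast e^\Theta) = \deg(\Theta^g/g!)$.

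One point to be careful about: you justify $\ell = m^\ast\Theta - p_1^\ast\Theta - p_2^\ast\Theta$ by choosing a symmetric line bundle $\ca L$ with $c_1(\ca L) = \Theta$. The paper explicitly warns that over a general base field $k$, the class $\Theta$ \emph{does not necessarily come from a symmetric ample line bundle on $A$}; this is exactly why the lemma needs its own proof rather than a direct citation of Beauville. The identity still holds, but your justification needs a small patch: either argue directly that for any (not necessarily symmetric) ample $L$ with $\varphi_L = \lambda$ one has $\ell = m^\ast c_1(L) - p_1^\ast c_1(L) - p_2^\ast c_1(L)$ in $\CH^1(A\times A)$ and that $\Theta - c_1(L) \in \Pic^0(A)_\QQ$ (where the expression $m^\ast - p_1^\ast - p_2^\ast$ vanishes), or pass to $k_s$, where a symmetric representative exists, and use injectivity of $\CH^1(A\times A)_\QQ \to \CH^1((A\times A)_{k_s})_\QQ$. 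The paper takes the latter route implicitly when computing $\deg(\Theta^g/g!)$.
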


\begin{proof}
Our proof follows the proof of \cite[Lemme 1]{beauvillefourier}, but has to be adapted, since $\Theta$ does not necessarily come from a symmetric ample line bundle on $A$. Since one still has $\ell = m^\ast \Theta - \pi_1^\ast \Theta - \pi_2^\ast \Theta$, the argument can be made to work: one has  
\begin{align*}
\ca F_A(e^\Theta) &= 
\pi_{2,\ast}\left(
e^\ell \cdot \pi_1^\ast e^{\Theta} 
\right)\\
&=
\pi_{2,\ast}\left(e^{m^\ast \Theta - \pi_2^\ast\Theta} \right) = e^{-\Theta} \pi_{2,\ast}(m^\ast e^{\Theta}) \in \CH(A)_{\QQ}.
\end{align*}
For codimension reasons, one has $$\pi_{2,\ast}(m^\ast e^\Theta) = \pi_{2,\ast}m^\ast(\Theta^g/g!)= \deg(\Theta^g/g!) \in \CH^0(A)_{\QQ} \cong \QQ.$$ Pull back $\Theta^g/g!$ along $A_{k_s} \to A$ to see that $$\deg(\Theta^g/g!) = 1 \in \CH^0(A)_{\QQ} \cong \CH^0(A_{k_s})_{\QQ},$$ since over $k_s$ the cycle $\Theta$ becomes the cycle class attached to a symmetric ample line bundle.
\end{proof}

\section{Divided powers of algebraic cycles}

It was asked by Bruno Kahn whether there exists a PD-structure on the Chow ring of an abelian variety over any field with respect to its usual (intersection) product. There are counterexamples over non-closed fields: see \cite{esnaultelementarytheorems}, where Esnault constructs an abelian surface $X$ and a line bundle $\ca L$ on $X$ such that $c_1(\ca L) \cdot c_1(\ca L)$ is not divisible by $2$ in $\CH_0(X)$. However, the case of algebraically closed fields remains open \cite[Section 3.2]{moonenpolishchuk}. What we do know, is the following:

\begin{theorem}[Moonen--Polishchuk] \label{th:PDstructure}
Let $A$ be an abelian variety over $k$. The ring $\left(\CH(A), \star \right)$ admits a canonical PD-structure $\gamma$ on the ideal $\CH_{>0}(A) \subset \CH(A)$. If $k = \bar k$, then $\gamma$ extends to a PD-structure on the ideal generated by $\CH_{>0}(A)$ and the zero cycles of degree zero. 
\end{theorem}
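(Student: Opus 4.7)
The plan is to construct, for each $x \in \CH_{>0}(A)$ and each integer $n \geq 0$, an integral class $\gamma_n(x) \in \CH(A)$ that lifts $x^{\star n}/n! \in \CH(A)_{\QQ}$, and to verify the PD axioms directly. The starting observation is that the Pontryagin product has a nilpotency property for positive-dimensional cycles: if $x \in \CH_i(A)$ with $i>0$, then $x^{\star n} \in \CH_{ni}(A)$ vanishes as soon as $ni > g$, so only finitely many $\gamma_n(x)$ are nonzero.

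The central construction passes through the symmetric product. Let $\mu_n \colon A^n \to A$ be the summation map $\mu_n(a_1,\dots,a_n) = a_1 + \cdots + a_n$, which descends through the quotient $q_n \colon A^n \to \mathrm{Sym}^n A$ to a morphism $\bar\mu_n \colon \mathrm{Sym}^n A \to A$. For $x \in \CH_{>0}(A)$ the class $x^{\boxtimes n}$ on $A^n$ is $S_n$-invariant, and the first key step is to produce a canonical integral lift $x^{[n]} \in \CH(\mathrm{Sym}^n A)$ satisfying $q_n^\ast x^{[n]} = x^{\boxtimes n}$, then define
\[
\gamma_n(x) \coloneqq \bar\mu_{n,\ast}\, x^{[n]} \in \CH(A).
\]
Rationally this is automatic and recovers $x^{\star n}/n!$ via the identity $\mu_{n,\ast}(x^{\boxtimes n}) = n!\cdot \bar\mu_{n,\ast} x^{[n]}$. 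The integrality of $x^{[n]}$ is the main obstacle: a priori descent of $S_n$-invariant cycles from $A^n$ to $\mathrm{Sym}^n A$ introduces denominators. The strategy to overcome this is to exploit that $x$ has positive dimension, so one can represent $x^{[n]}$ by an explicit integral combination of Pontryagin-type classes, using an inductive combinatorial formula: writing $x^{[n]}$ through classes on lower symmetric powers and using the compatibility of the summation with the natural maps $\mathrm{Sym}^a A \times \mathrm{Sym}^b A \to \mathrm{Sym}^{a+b} A$. Alternatively, one can transport the problem through the Fourier transform $\ca F_A$: on the dual side, $\star$ becomes $\cdot$, and the divided powers $\ca F_A(\gamma_n(x))$ can be realized integrally as higher Chern classes of appropriate vector bundles (e.g., derived pushforwards constructed from tensor powers of $\ca P_A$), which provides the required integral lifts.

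Once the classes $\gamma_n(x)$ are constructed, the PD axioms are verified as follows. The normalization axioms $\gamma_0(x) = 1$, $\gamma_1(x) = x$, and $\gamma_n(x) \in \CH_{>0}(A)$ are immediate. The additivity $\gamma_n(x+y) = \sum_{i+j=n} \gamma_i(x)\star \gamma_j(y)$ follows from the binomial expansion of $(x+y)^{\boxtimes n}$ under $\bar\mu_{n,\ast}$ together with the compatibility of the symmetric-product construction with the decomposition $\mathrm{Sym}^n A = \bigsqcup_{i+j=n} \mathrm{Sym}^i A \times_{S_n/(S_i\times S_j)} \mathrm{Sym}^j A$. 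The multiplicativity axiom $\gamma_m(x)\star \gamma_n(x) = \binom{m+n}{m}\gamma_{m+n}(x)$ and the composition axiom $\gamma_n(\gamma_m(x)) = \tfrac{(nm)!}{n!(m!)^n}\gamma_{nm}(x)$ reduce to standard identities between morphisms of symmetric powers, and these can be checked after tensoring with $\QQ$, where they follow from the formulae for the rational $\star$-exponential.

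Finally, the extension over $k = \bar k$ to the ideal generated by $\CH_{>0}(A)$ together with $\CH_0(A)^{\deg=0}$ uses that $A(\bar k)$ is a divisible abelian group. The Albanese map provides an exact sequence $0 \to T(A) \to \CH_0(A)^{\deg=0} \to A(\bar k) \to 0$ in which both the kernel and quotient are divisible, hence so is $\CH_0(A)^{\deg=0}$. For a zero-cycle $z = [a] - [0]$ one computes $z^{\star n} = \sum_{k} \binom{n}{k}(-1)^{n-k}[ka]$ and uses divisibility of $A(\bar k)$ to define $\gamma_n(z)$ by a canonical formula, extending to general classes by additivity. The main obstacle at this stage is showing well-definedness and compatibility of $\gamma_n$ across the whole enlarged ideal; this is handled by combining the construction on $\CH_{>0}(A)$ with the group-algebra PD structure on $\ZZ[A(\bar k)]/(\mathrm{degree})$, using that the Pontryagin product is precisely the convolution coming from the group law.
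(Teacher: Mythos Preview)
The paper does not provide its own proof of this result; it is stated with attribution to Moonen--Polishchuk \cite{moonenpolishchuk} and then used as a black-box input to Theorem~\ref{th:motivic}. Your proposal therefore cannot be compared against a proof in the paper, but it has genuine gaps on its own terms.

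The most serious problem is that you never actually construct the integral class $x^{[n]}$. You correctly identify this as ``the main obstacle'' and offer two strategies, but neither is carried out. The Fourier-transform route is circular in this context: the paper uses Theorem~\ref{th:PDstructure} as an ingredient in the proof of Theorem~\ref{th:motivic}, which is precisely the statement that an integral lift of $\ca F_A$ exists under suitable hypotheses; you cannot assume such a lift in order to build the PD-structure. The ``inductive combinatorial formula'' is left entirely unspecified. A second gap is your claim that the multiplicativity and composition axioms ``can be checked after tensoring with $\QQ$'': since $\CH(A)$ can have torsion, an identity in $\CH(A)_\QQ$ need not hold integrally, and the PD axioms must be established in $\CH(A)$ itself. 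Finally, for the extension over $k=\bar k$ you invoke divisibility of $A(\bar k)$ but do not give the actual formula for $\gamma_n([a]-[0])$, nor explain why the resulting definition is canonical and compatible with the construction on $\CH_{>0}(A)$; divisibility alone does not single out a preferred $n$-th divided power.
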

\noindent
In particular, for each element $x \in \CH_{>0}(A)$ and each $n \in \ZZ_{\geq 1}$, there is a canonical element $x^{[n]} \in \CH_{>0}(A)$ such that $n!x^{[n]} = x^{\star n}$, see \cite[\href{https://stacks.math.columbia.edu/tag/07GM}{Tag 07GM}]{stacks-project}. For $x \in \CH_{>0}(A)$, we may then define 
\[
\mathrm{E}(x) = \sum_{n \geq 0} x^{[n]} \in \CH(A)
\]
 as the $\star$-exponential of $x$ in terms of its divided powers. 
 
Together with the results of Section \ref{sec:propertiesfourier}, Theorem \ref{th:PDstructure} enables us to provide criteria for the existence of a motivic weak integral Fourier transform. Recall that for an abelian variety $A$ over $k$, principally polarized by $\lambda \colon A \xrightarrow{\sim} \wh A$, we defined $\Theta \in \CH^1(A)_\QQ$ as the symmetric ample class attached to $\lambda$, see equation (\ref{generaltheta}). 

\begin{theorem} \label{th:motivic}
Let $A_{/k}$ be an abelian variety of dimension $g$. The following are equivalent:
\begin{enumerate}
    \item \label{motivicone} The one-cycle $\mr R_A = c_1(\ca P_A)^{2g-1}/(2g-1)! \in \CH(A \times \wh A)_{\QQ}$ lifts to $\CH_1(A \times \wh A)$. 
    \item \label{motivictwo} The abelian variety $A$ admits a motivic weak integral Fourier transform. 
    \item \label{motivicthree} The abelian variety $A \times \wh A$ admits a motivic weak integral Fourier transform. 
\end{enumerate}
Moreover, if $A$ carries a symmetric ample line bundle that induces a principal polarization $\lambda \colon A \xrightarrow{\sim} \wh A$, then 
the above statements are equivalent to the following equivalent statements:
\begin{enumerate}
\setcounter{enumi}{3}
    \item \label{motiviczero} The two-cycle $\mr S_A = c_1(\ca P_A)^{2g-2}/(2g-2)! \in \CH(A \times \wh A)_{\QQ}$ lifts to $\CH_2(A \times \wh A)$.
    \item \label{motivicfour} The one-cycle $\Gamma_\Theta  = \Theta^{g-1}/(g-1)! \in \CH(A)_\QQ$ lifts to a one-cycle in $\CH(A)$. 
    \item \label{motivicfive} The abelian variety $A$ admits a weak integral Fourier transform. 
    \item \label{motivicsix} The Fourier transform $\ca F_A$ 
    satisfies $\ca F_A\left( \CH(A)/(\textnormal{torsion})\right) \subset \CH(\wh A)/(\textnormal{torsion})$. 
    \item \label{motiviceight} There exists a PD-structure on the ideal $\CH^{>0}(A)/(\textnormal{torsion}) \subset \CH(A)/(\textnormal{torsion})$. 
\end{enumerate}
\end{theorem}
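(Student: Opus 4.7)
I would organise the proof around four tools developed in this chapter: the identity $\ch(\ca P_A) = (-1)^g \rm E((-1)^g \mr R_A)$ in $\CH(A \times \wh A)_\QQ$ from Lemma \ref{lemma:crucialprop}, the decomposition $(-1)^{g+1}\mr R_A = j_{1,\ast}\Gamma_\Theta + j_{2,\ast}\Gamma_{\wh\Theta} - (\id,\lambda)_\ast\Gamma_\Theta$ from Lemma \ref{lemma:minimalclasspoincarecomparison}, the Pontryagin PD-structure from Theorem \ref{th:PDstructure}, and Beauville's formula $\ca F_A(e^\Theta) = e^{-\Theta}$ from Lemma \ref{beauvillelemma}.

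\textbf{The general equivalences.} For (1) $\Rightarrow$ (2): given an integral one-cycle $r \in \CH_1(A \times \wh A)$ lifting $\mr R_A$, Theorem \ref{th:PDstructure} produces divided powers $r^{[n]}$, and the finite sum $\Gamma \coloneqq (-1)^g \sum_n \bigl((-1)^g r\bigr)^{[n]} \in \CH(A \times \wh A)$ rationalises to $\ch(\ca P_A)$ by Lemma \ref{lemma:crucialprop}; hence $\Gamma$ induces a motivic weak integral Fourier transform. Conversely, the codimension-$(2g-1)$ component of any integral lift of $\ch(\ca P_A)$ lifts $\mr R_A$. For (2) $\Leftrightarrow$ (3), the factorisation $\ca P_{A\times\wh A} \cong \pi_{13}^\ast \ca P_A \otimes \pi_{24}^\ast \ca P_{\wh A}$ together with biduality shows that lifts of $\ch(\ca P_A)$ and $\ch(\ca P_{A\times\wh A})$ determine one another, via pulling back and multiplying in one direction and restricting along $(x,y) \mapsto (x,0,y,0)$ in the other.

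\textbf{The principally polarised equivalences.} Now suppose $\Theta \in \CH^1(A)$ is integral. Lemma \ref{lemma:minimalclasspoincarecomparison} makes (5) $\Rightarrow$ (1) immediate. Conversely, given a motivic lift $\Gamma$ in (2), the cycle $\lambda^\ast\bigl(\pi_{2,\ast}(\Gamma \cdot \pi_1^\ast \Theta)\bigr) \in \CH(A)$ is an integral lift of $(-1)^{g-1}\Gamma_\Theta$ by Beauville's formula, establishing (2) $\Rightarrow$ (5). Taking the codimension-$(2g-2)$ component of Lemma \ref{lemma:crucialprop} gives $\mr S_A = (-1)^g \mr R_A^{[2]}$, hence (1) $\Rightarrow$ (4); the converse (4) $\Rightarrow$ (1) would use the Beauville bigrading on $\CH(A \times \wh A)$ together with an analogue of Lemma \ref{lemma:minimalclasspoincarecomparison} decomposing $\mr S_A$ into products of one-dimensional theta cycles. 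The chain (5) $\Rightarrow$ (2) $\Rightarrow$ (6) $\Rightarrow$ (7) is formal, while (7) $\Rightarrow$ (5) follows at once: condition (7) provides an integral cycle mapping rationally to $\ca F_A(\Theta)$, whose $\lambda^\ast$-pullback lifts $(-1)^{g-1}\Gamma_\Theta$. Finally, (8) $\Rightarrow$ (5) via $\Theta^{[g-1]}$; for (5) $\Rightarrow$ (8), I would transport the Pontryagin PD-structure on $\CH_{>0}(\wh A)/(\text{torsion})$ across the Fourier transform (which is well-defined modulo torsion by (7)) to obtain the required intersection PD-structure on $\CH^{>0}(A)/(\text{torsion})$.

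\textbf{Main obstacle.} I expect (4) $\Rightarrow$ (1) to be the most delicate step, as it requires \emph{extracting} $\mr R_A$ from its second divided power $\mr R_A^{[2]}$. Combining the Beauville bigrading with Fourier duality between the Pontryagin and intersection products should suffice, but the cycle-level identity involved seems to demand more than Lemma \ref{lemma:minimalclasspoincarecomparison} alone provides. The construction of a PD-structure in (5) $\Rightarrow$ (8), which requires verifying all PD-axioms on the infinite family of divided powers, presents a secondary but similar challenge.
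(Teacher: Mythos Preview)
Your overall architecture matches the paper's: the exponential identity of Lemma~\ref{lemma:crucialprop} together with the Pontryagin PD-structure drives $(1)\Leftrightarrow(2)$, Lemma~\ref{lemma:minimalclasspoincarecomparison} handles $(5)\Rightarrow(1)$, and the chain $(2)\Rightarrow(6)\Rightarrow(7)\Rightarrow(5)$ and $(8)\Leftrightarrow(5)$ goes as you say. Your argument for $(2)\Leftrightarrow(3)$ is actually cleaner than the paper's: you restrict along $(x,y)\mapsto(x,0,y,0)$ to go $(3)\Rightarrow(2)$ directly, whereas the paper takes a detour through $(3)\Rightarrow(1)$ via the identity $(-1)^g\ca F_{\wh A\times A}(-\wh\ell)=\mr R_A$, and through an explicit expansion of $\mr R_{A\times\wh A}$ for $(1)\Rightarrow(3)$.

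Your ``main obstacle'' $(4)\Rightarrow(1)$ is, however, a self-inflicted difficulty. The paper never attempts to extract $\mr R_A$ from its divided square; instead it proves $(4)\Rightarrow(5)$ in one stroke. Given an integral lift $S_A\in\CH_2(A\times\wh A)$ of $\mr S_A$, the correspondence
\[
\CH^1(A)\xrightarrow{\ \pi_1^\ast\ }\CH^1(A\times\wh A)\xrightarrow{\ \cdot\, S_A\ }\CH^{2g-1}(A\times\wh A)\xrightarrow{\ \pi_{2,\ast}\ }\CH_1(\wh A)
\]
lifts the degree-$(1,\,g-1)$ piece of $\ca F_A$ (since $\mr S_A$ is the only component of $\ch(\ca P_A)$ contributing there). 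Applied to the integral class $\Theta=c_1(\ca L)\in\CH^{1,0}(A)$, this yields an integral lift of $\ca F_A(\Theta)=(-1)^{g-1}\Gamma_\Theta$. This is exactly the mechanism behind your $(2)\Rightarrow(5)$, but run with only the single graded piece $S_A$ rather than the full $\Gamma$; no decomposition of $\mr S_A$ in terms of theta cycles is needed.

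Your ``secondary challenge'' for $(8)$ is likewise a non-issue. Once $(7)$ holds, the inclusion $\ca F_A(\CH(A)/\text{tors})\subset\CH(\wh A)/\text{tors}$ forces $\ca F_A$ to be a bijection on $\CH(A)/\text{tors}$ (apply $\ca F_{\wh A}\circ\ca F_A=(-1)^g[-1]^\ast$). Since $\ca F_A$ is a ring isomorphism from $(\CH,\star)$ to $(\CH,\cdot)$ up to the sign $(-1)^g$, the Pontryagin PD-structure transports wholesale to an intersection PD-structure; there is no axiom-by-axiom verification to perform.
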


\begin{proof}
Suppose that \ref{motivicone} holds, and let $\Gamma \in \CH_1(A \times \wh A)$ be a cycle such that $\Gamma_\QQ = \mr R_A$. Then consider the cycle $(-1)^g\cdot \rm E((-1)^g\cdot\Gamma) \in \CH(A \times \wh A)$. By Lemma \ref{lemma:crucialprop}, we have
\begin{align*}
&(-1)^g\cdot \rm E((-1)^g\cdot \Gamma)_{\QQ} = (-1)^g\cdot \rm E((-1)^g\cdot \Gamma_{\QQ}) \\
&= (-1)^g\cdot \rm E((-1)^g\cdot \mr R_A) = \ch(\ca P_A) \in \CH(A \times \wh A)_{\QQ}.
\end{align*} Thus \ref{motivictwo} holds. We claim that \ref{motivicthree} holds as well. Indeed, consider the line bundle $\ca P_{A \times \wh A}$ on the abelian variety $X= A \times \wh A \times \wh A \times A$; one has that $\ca P_{A \times \wh A} \cong \pi_{13}^\ast\ca P_A \otimes \pi_{24}^\ast \ca P_{\wh A}$, which implies that we have the following equality in $\CH_1(X)_\QQ$:
{\footnotesize \begin{equation}\label{eq1}
\begin{split}
\mr R_{A \times \wh A} &= \frac{\left( 
\pi_{13}^\ast c_1(\ca P_A) + \pi_{24}^\ast c_1(\ca P_{\wh A})
\right)^{4g-1} }{(4g-1)!}
\\
&= 
\frac{
\pi_{13}^\ast c_1(\ca P_A)^{2g-1} \cdot \pi_{24}^\ast c_1(\ca P_{\wh A})^{2g}
+
\pi_{13}^\ast c_1(\ca P_A)^{2g} \cdot \pi_{24}^\ast c_1(\ca P_{\wh A})^{2g-1}
 }{(2g)!(2g-1)!}
\\
&= 
\frac{
\pi_{13}^\ast c_1(\ca P_A)^{2g-1} \cdot \pi_{24}^\ast ((2g)! \cdot [0]_{A \times \wh A})
+
\pi_{13}^\ast (
(2g)! \cdot [0]_{\wh A \times A}
)
\cdot \pi_{24}^\ast c_1(\ca P_{\wh A})^{2g-1}
}{(2g)!(2g-1)!}
\\
&= 
\pi_{13}^\ast ( \frac{c_1(\ca P_A)^{2g-1}}{(2g-1)!} )\cdot \pi_{24}^\ast ([0]_{A \times \wh A})
+
\pi_{13}^\ast (
[0]_{\wh A \times A}
)
\cdot \pi_{24}^\ast ( \frac{c_1(\ca P_{\wh A})^{2g-1}}{(2g-1)!} ). 
 \end{split}
\end{equation}}
\noindent
We conclude that $\mr R_{A \times \wh A}$ lifts to $\CH_1(X)$ which, by the implication $[\ref{motivicone} \implies \ref{motivictwo}]$ (that has already been proved), implies that $A \times \wh A$ admits a motivic weak integral Fourier transform. On the other hand, the implication $[\ref{motivicthree}\implies\ref{motivicone}]$ follows from the fact that $(-1)^g \cdot \ca F_{\wh A \times A}(- \wh \ell) = \mr R_A$ (see Equation (\ref{importantequation})) and the fact that an abelian variety admits a motivic weak integral Fourier transform if and only if its dual abelian variety does. Therefore, we have $[\ref{motivicone} \iff \ref{motivictwo} \iff \ref{motivicthree}]$. 

From now on, assume that $A$ is principally polarized by $\lambda \colon A \xrightarrow{\sim} A$, where $\lambda$ is the polarization attached to a symmetric ample line bundle $\ca L$ on $A$. Moreover, in what follows we shall identify $\wh A$ and $A$ via $\lambda$. 

Suppose that \ref{motiviczero} holds and let $S_{ A} \in \CH_2(A\times  A) = \CH^{2g-2}(A \times A)$ be such that $(S_{A})_\QQ = \mr S_{A} \in \CH_2(A \times A)_\QQ$. Define $\CH^{1,0}(A) \coloneqq \Pic^{\textnormal{sym}}(A)$ to be the group of isomorphism classes of symmetric line bundes on $A$. Then $S_{A}$ induces a homomorphism $\ca F \colon \CH^{1,0}(A) \to \CH_1( A)$ defined as the composition 
\begin{align*}
\CH^{1,0}(A) \xrightarrow{\pi_1^\ast} \CH^1(A \times  A) \xrightarrow{\cdot S_{A}} &\CH^{2g-1}(A \times A)  \\ & =\CH_1(A \times A) \xrightarrow{\pi_{2,\ast}} \CH_1(A).
\end{align*}
Since $\ca F_A \left(\CH^{1,0}(A)_\QQ\right) \subset \CH_1( A)_\QQ$ (see \cite[Lemma 2.18]{deningermurre}) we see that 
\begin{equation}
\label{commmuttttivitiy}
\xymatrix{
\CH^{1,0}(A)\ar[d] \ar[r]^{\ca F} & \CH_1( A)\ar[d] \\
\CH^{1,0}(A)_\QQ \ar[r]^{\ca F_A} & \CH_1( A)_{\QQ}
}
\end{equation}
commutes. Since the line bundle $\ca L$ is symmetric, we have 
\begin{align}\label{liftoftheta}
\begin{split}
\Theta &= \frac{1}{2}\cdot (\id, \lambda)^\ast c_1(\ca P_A) = \frac{1}{2}\cdot  c_1\left((\id, \lambda)^\ast\ca P_A\right)  \\ 
&= \frac{1}{2} \cdot c_1(\ca L \otimes \ca L) = c_1(\ca L) \in \CH^1(A)_{\QQ}. 
\end{split}
\end{align}
The class $c_1(\ca L) \in \CH^{1,0}(A)$ of the line bundle $\ca L$ thus lies above $\Theta \in \CH^1(A)_\QQ$. Therefore, $\ca F(c_1(\ca L)) \in \CH_1(A)$ lies above $\Gamma_\Theta = (-1)^{g-1}\ca F_{A}(\Theta)$ by the commutativity of (\ref{commmuttttivitiy}), and \ref{motivicfour} holds.

Suppose that \ref{motivicfour} holds. Then \ref{motivicone} follows readily from Lemma \ref{lemma:minimalclasspoincarecomparison}. Moreover, if \ref{motivictwo} holds, then $\ch(\ca P_A) \in \CH(A \times A)_\QQ$ lifts to $\CH(A \times A)$, hence in particular \ref{motiviczero} holds. Since we have already proved that \ref{motivicone} implies \ref{motivictwo}, we conclude that $[\ref{motiviczero} \implies  \ref{motivicfour} \implies \ref{motivicone} \implies \ref{motivictwo} \implies \ref{motiviczero}]$. 

The implications $[\ref{motivictwo}\implies\ref{motivicfive} \implies \ref{motivicsix}]$ are trivial. Assume that \ref{motivicsix} holds. By Equation (\ref{liftoftheta}), $\Theta \in \CH^1(A)_{\QQ}$ lifts to $\CH^1(A)$, hence $$\ca F_{A}(\Theta) = (-1)^{g-1}\cdot \Gamma_\Theta$$ lifts to $\CH_1(A)$, i.e.\ \ref{motivicfour} holds. 
\\
\\
Assume that \ref{motivicsix} holds. The fact that $    \ca F_{A}\left( 
    \CH(A)/(\textnormal{torsion}) 
    \right) \subset \CH(A)/(\textnormal{torsion})$ implies that 
\begin{align*}
    \CH(A)/(\textnormal{torsion}) &= \ca F_{A}\left( \ca F_{ A} \left( 
    \CH( A)/(\textnormal{torsion}) 
    \right) \right) \\
    & \subset 
    \ca F_{A}\left( 
    \CH(A)/(\textnormal{torsion}) 
    \right) \subset \CH(A)/(\textnormal{torsion}).
\end{align*}
Thus the restriction of the Fourier transform $\ca F_{A}$ to $\CH(A)/(\textnormal{torsion})$ defines an isomorphism $$\ca F_{A}\colon \CH(A)/(\textnormal{torsion}) \xrightarrow{\sim} \CH( A)/(\textnormal{torsion}).$$ If $R$ is a ring and $\gamma$ a PD-structure on an ideal $I \subset R$, then $\gamma$ extends to a PD-structure on $I/(\textnormal{torsion}) \subset R/(\textnormal{torsion})$. Thus, the ideal $\CH_{>0}(A)/(\textnormal{torsion})$ of $\CH(A)/(\textnormal{torsion})$ admits a PD-structure for the Pontryagin product $\star$ by Theorem \ref{th:PDstructure}. Since $\ca F_A$ exchanges Pontryagin and intersection products up to sign \cite[Proposition~3(ii)]{beauvillefourier}, it follows that \ref{motiviceight} holds. Since \ref{motiviceight} trivially implies \ref{motivicfour}, we are done. 
\end{proof}

\begin{question}[Moonen--Polishchuk \cite{moonenpolishchuk}, Totaro \cite{totaroIHCthreefolds}] \label{questionmoonenpolishchuk}
Let $A$ be any principally polarized abelian variety over $k = \bar k$. Are the equivalent conditions in Theorem \ref{th:motivic} satisfied for $A$? 
\end{question}

\begin{remark}
For the Jacobian $A = J(C)$ of a hyperelliptic curve $C$, the answer to Question \ref{questionmoonenpolishchuk} is "yes" \cite{moonenpolishchuk}. 
\end{remark}
\noindent
Similarly, there is a relation between integral Fourier transforms up to homology and the algebraicity of minimal cohomology classes induced by Poincar\'e line bundles and theta divisors. 

\begin{proposition} \label{prop:motivicetale-new}
Let $A$ be an abelian variety of dimension $g$ over $k$. 

The following are equivalent:
\begin{enumerate}
    \item \label{motivicone-new} The class \[\rho_A \coloneqq c_1(\ca P_A)^{2g-1}/(2g-1)! \in \rm H^{4g-2}_{\etale}((A \times \wh A)_{k_s}, \ZZ_\ell(2g-1))\] is the class of a cycle in $\CH_1(A \times \wh A)$. 
    \item \label{motivictwo-new} The abelian variety $A$ admits a motivic integral Fourier transform up to homology.  
    \item \label{motivicthree-new} The abelian variety $A \times \wh A$ admits a motivic integral Fourier transform up to homology. 
\end{enumerate}
Moreover, if $A$ carries an ample line bundle that induces a principal polarization $\lambda \colon A \xrightarrow{\sim} \wh A$, then 
the above statements are equivalent to the following equivalent statements:
\begin{enumerate}
\setcounter{enumi}{3}
    \item \label{motiviczero-new} The class \[\sigma_A \coloneqq c_1(\ca P_A)^{2g-2}/(2g-2)! \in \rm H_{\etale}^{4g-4}((A \times \wh A)_{k_s}, \ZZ_\ell(2g-2))\] is the class of a cycle in $\CH_2(A \times \wh A)$.
    \item \label{motivicfour-new} The class $\gamma_\theta  = \theta^{g-1}/(g-1)! \in \rm H^{2g-2}_{\etale}(A_{k_s}, \ZZ_\ell(g-1))$ lifts to a cycle in $\CH_1(A)$. 
    \item \label{motivicfive-new} The abelian variety $A$ admits an integral Fourier transform up to homology.  
\end{enumerate}
\end{proposition}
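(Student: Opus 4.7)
The plan is to adapt the proof of Theorem \ref{th:motivic} to the cohomological setting, replacing integral lifts in $\CH$ by lifts under the integral cycle class map. Three ingredients carry over: Lemma \ref{lemma:crucialprop}, the computation \eqref{eq1} of $\mr R_{A \times \wh A}$, and Lemma \ref{lemma:minimalclasspoincarecomparison}. All three are identities in rational Chow but remain valid after the cycle class map because the $\ell$-adic cohomology of an abelian variety is torsion-free over $\ZZ_\ell$. Moreover, the PD-structure on the augmentation ideal $\CH_{>0}(A \times \wh A)$ for the Pontryagin product (Theorem \ref{th:PDstructure}) is compatible with the cycle class map, since divided powers are unique in torsion-free rings.

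For the implication $[\ref{motivicone-new} \Rightarrow \ref{motivictwo-new}]$, I take $\Gamma \in \CH_1(A \times \wh A)$ with $cl(\Gamma) = \rho_A$. Since $\Gamma \in \CH_{>0}(A \times \wh A)$, Theorem \ref{th:PDstructure} produces the $\star$-exponential $\rm E(\Gamma) \in \CH(A \times \wh A)$ as a finite sum. The cohomological version of Lemma \ref{lemma:crucialprop} then shows that $cl((-1)^g \cdot \rm E((-1)^g \cdot \Gamma)) = \ch(\ca P_A)$, giving a motivic integral Fourier transform up to homology. The reverse implication $[\ref{motivictwo-new} \Rightarrow \ref{motivicone-new}]$ is immediate from extracting the dimension-$1$ component of any cycle lifting $\ch(\ca P_A)$. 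For $[\ref{motivicone-new} \Leftrightarrow \ref{motivicthree-new}]$, the identity \eqref{eq1} holds in cohomology and involves only products of pulled-back classes and the integral point classes $[0]_{A \times \wh A}$ and $[0]_{\wh A \times A}$, so $\rho_{A \times \wh A}$ is algebraic iff both $\rho_A$ and $\rho_{\wh A}$ are; and these two conditions coincide since $\wh{\wh A} = A$.

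For the principally polarized case, I follow the cycle $[\ref{motiviczero-new} \Rightarrow \ref{motivicfour-new} \Rightarrow \ref{motivicone-new} \Rightarrow \ref{motivictwo-new} \Rightarrow \ref{motiviczero-new}]$ of Theorem \ref{th:motivic}. Given $S \in \CH_2(A \times A)$ lifting $\sigma_A$ and the symmetric ample line bundle $\ca L$ with $c_1(\ca L) = \Theta \in \CH^1(A)$, I form the one-cycle $\pi_{2,*}(\pi_1^* c_1(\ca L) \cdot S) \in \CH_1(A)$, whose cycle class equals $\pi_{2,*}(\pi_1^* \Theta \cdot \sigma_A)$ in $\rm H_{\et}^{2g-2}(A_{k_s}, \ZZ_\ell(g-1))$. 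By a degree count, only the codimension $2g-2$ component of $\ch(\ca P_A)$ (namely $\sigma_A$) contributes to $\mr F_A(\Theta)$, so this expression equals $\mr F_A(\Theta) = (-1)^{g-1}\gamma_\theta$ by the cohomological version of Lemma \ref{beauvillelemma}; hence $\gamma_\theta$ is algebraic. For $[\ref{motivicfour-new} \Rightarrow \ref{motivicone-new}]$, I apply the cycle class map to Lemma \ref{lemma:minimalclasspoincarecomparison}: any lift $Z \in \CH_1(A)$ of $\gamma_\theta$ yields the cycle $(-1)^{g+1}(j_{1,*}Z + j_{2,*}\lambda_* Z - (\id,\lambda)_* Z) \in \CH_1(A \times \wh A)$, whose cycle class is $\rho_A$. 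The implication $[\ref{motivictwo-new} \Rightarrow \ref{motiviczero-new}]$ is immediate; $[\ref{motivictwo-new} \Rightarrow \ref{motivicfive-new}]$ is trivial; and $[\ref{motivicfive-new} \Rightarrow \ref{motivicfour-new}]$ follows by evaluating the given integral Fourier transform on $c_1(\ca L)$.

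The main technical point I expect to need care with is the compatibility of the PD-structure with the cycle class map: one needs $cl(\Gamma^{[n]}) = cl(\Gamma)^{[n]}$ in cohomology. Since Moonen--Polishchuk construct $\gamma_n$ using only multiplication-map pushforwards, pullbacks and intersection products, and since the target $\bigoplus_r \rm H_{\et}^{2r}((A \times \wh A)_{k_s}, \ZZ_\ell(r))$ is torsion-free, the uniqueness of $n$-th divided powers in torsion-free rings forces this compatibility, and the argument runs through.
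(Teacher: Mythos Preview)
Your proposal is correct and is precisely the adaptation the paper has in mind: its own proof of this proposition is the single sentence ``The proof of Theorem~\ref{th:motivic} can easily be adapted to this situation.'' Your care with the PD-structure compatibility (via torsion-freeness of $\ZZ_\ell$-cohomology) is exactly the point needed; the only spot to tighten is the direction $[\ref{motivicthree-new}\Rightarrow\ref{motivicone-new}]$, where you should either push $\rho_{A\times\wh A}$ forward along $\pi_{13}$ to recover $\rho_A$, or follow the paper's route via $(-1)^g\ca F_{\wh A\times A}(-\wh\ell)=\mr R_A$ from equation~\eqref{importantequation}.
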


\begin{proof}
The proof of Theorem \ref{th:motivic} can easily be adapted to this situation.  
\end{proof}

\begin{proposition} \label{remarketalebetti}
\begin{enumerate}
\item \label{remark3.12.1}
If $k = \CC$, then each of the statements $\ref{motivicone-new} - \ref{motivicfive-new}$ in Proposition \ref{prop:motivicetale-new} is equivalent to the same statement with \'etale cohomology replaced by Betti cohomology. 
\item \label{remark3.12.2}
Proposition \ref{prop:motivicetale-new} remains valid if one replaces integral Chow groups in statements \ref{motivicone-new}, \ref{motiviczero-new} and \ref{motivicfour-new} by their tensor product with $\ZZ_\ell$ and `integral Fourier transform up to homology' by `$\ell$-adic integral Fourier transform up to homology' in statements \ref{motivictwo-new}, \ref{motivicthree-new} and \ref{motivicfive-new}. 
\end{enumerate}
\end{proposition}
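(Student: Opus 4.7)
The plan for part (1) is to exploit the torsion-freeness of the integral Betti cohomology of a complex abelian variety together with Artin's comparison theorem. For any complex abelian variety $B$, the comparison map gives an isomorphism $\rm H^i(B(\CC), \ZZ) \otimes_{\ZZ} \ZZ_\ell \cong \rm H^i_{\et}(B, \ZZ_\ell)$ (with appropriate Tate twists), compatible with Chern characters and with cycle class maps from $\CH(B)$; since $\rm H^*(B(\CC), \ZZ)$ is torsion-free for abelian $B$, the natural map $H \to H \otimes \ZZ_\ell$ is injective. Applied to $B = A \times \wh A$ and to $B = A$, this shows that any $\Gamma \in \CH_1(A \times \wh A)$ whose \'etale cycle class equals $\rho_A$ (respectively $\sigma_A$) automatically has Betti cycle class equal to the corresponding Betti expression, and similarly for $\gamma_\theta$ on $A$; hence the Betti versions of \ref{motivicone-new}, \ref{motiviczero-new} and \ref{motivicfour-new} are equivalent to the \'etale ones. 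The same reasoning handles \ref{motivictwo-new}, \ref{motivicthree-new} and \ref{motivicfive-new}: the defining motivicity condition that $cl(\Gamma) = \ch(\ca P_A)$ holds \'etale-cohomologically if and only if it holds Betti-cohomologically, since $\ch(\ca P_A)$ corresponds to itself under the comparison and $\rm H^*(A \times \wh A, \ZZ)$ is torsion-free.

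For part (2), I would follow the proof of Proposition \ref{prop:motivicetale-new} (which itself mirrors that of Theorem \ref{th:motivic}) verbatim, replacing $\CH$ by $\CH \otimes \ZZ_\ell$ and "integral" by "$\ell$-adic integral" at every occurrence. The formal inputs all survive this replacement: Lemma \ref{lemma:crucialprop}, the functoriality relations (\ref{eq:functoriality}), and the decomposition (\ref{eq1}) of $\mr R_{A \times \wh A}$ are identities in rational Chow groups, and therefore remain valid after tensoring with $\ZZ_\ell$. The cycle class map extends $\ZZ_\ell$-linearly to $\CH(A) \otimes \ZZ_\ell \to \rm H^*_{\et}(A_{k_s}, \ZZ_\ell)$, since the target is already a $\ZZ_\ell$-module. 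Given a lift of $\rho_A$ to some $\Gamma \in \CH_1(A \times \wh A) \otimes \ZZ_\ell$, the argument then proceeds exactly as before to produce a cycle $(-1)^g \cdot \rm E((-1)^g \cdot \Gamma) \in \CH(A \times \wh A) \otimes \ZZ_\ell$ whose image in cohomology is $\ch(\ca P_A)$.

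The main obstacle I anticipate is technical rather than conceptual: verifying that the divided-power structure of Theorem \ref{th:PDstructure} on the ideal $\CH_{>0}(A) \subset (\CH(A), \star)$ extends along the flat base change $\ZZ \to \ZZ_\ell$ to a divided-power structure on $\CH_{>0}(A) \otimes \ZZ_\ell \subset \CH(A) \otimes \ZZ_\ell$. Since PD-structures are stable under flat base change, this should be routine: the divided powers $\gamma_n$ extend via $\gamma_n(x \otimes a) = \gamma_n(x) \otimes a^n$, and the PD-axioms are preserved under this extension. With this verified, the $\star$-exponential $\rm E(-)$ is well-defined on the augmentation ideal of $\CH(A) \otimes \ZZ_\ell$, and no further geometric input will be needed beyond what already appears in the proofs of Theorem \ref{th:motivic} and Proposition \ref{prop:motivicetale-new}.
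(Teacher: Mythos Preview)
Your proposal is correct and matches the paper's proof essentially line for line: for part (1) the paper uses Artin comparison together with the injectivity of $\rm H^{2i}(A(\CC),\ZZ) \to \rm H^{2i}_{\et}(A,\ZZ_\ell)$ exactly as you do, and for part (2) the paper notes that the PD-structure on $\CH_{>0}(A)$ extends to $\CH_{>0}(A)\otimes\ZZ_\ell$ by flatness of $\ZZ \to \ZZ_\ell$ (citing \cite[\href{https://stacks.math.columbia.edu/tag/07H1}{Tag 07H1}]{stacks-project}), which is precisely the point you flagged as the only potential obstacle. One small caution: your formula $\gamma_n(x\otimes a)=\gamma_n(x)\otimes a^n$ is only a heuristic on pure tensors and is not how the extension is actually constructed; the flat-base-change result you invoke is the correct justification.
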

\begin{proof}
\begin{enumerate}
\item 
In this case $\ZZ_\ell(i) = \ZZ_\ell$ and the Artin comparison isomorphism \[\rm H^{2i}_{\textnormal{\'et}}(A, \ZZ_\ell) \xrightarrow{\sim} \rm H^{2i}(A(\CC), \ZZ_\ell)\] \cite[III, Expos\'e XI]{SGA4} is compatible with the cycle class map. Since the map $\rm H^{2i}(A(\CC), \ZZ) \to \rm H_{\textnormal{\'et}}^{2i}(A, \ZZ_\ell)$ is injective, a class $\beta \in \rm H^{2i}(A(\CC), \ZZ)$ is in the image of $cl\colon \CH^i(A) \to \rm H^{2i}(A(\CC),\ZZ)$ if and only if its image $\beta_\ell \in \rm H^{2i}_{\textnormal{\'et}}(A, \ZZ_\ell)$ is in the image of $cl\colon \CH^i(A) \to \rm H^{2i}_{\textnormal{\'et}}(A, \ZZ_\ell)$. 
\item 
Indeed, for an abelian variety $A$ over $k$, the PD-structure on $\CH_{>0}(A) \subset (\CH(A), \star)$ induces a PD-structure on $\CH_{>0}(A) \otimes \ZZ_\ell \subset (\CH(A)_{\ZZ_\ell}, \star)$ by \cite[\href{https://stacks.math.columbia.edu/tag/07H1}{Tag 07H1}]{stacks-project}, because the ring map $(\CH(A),\star) \to (\CH(A)_{\ZZ_\ell},\star)$ is flat. The latter follows from the flatness of $\ZZ \to \ZZ_\ell$.
\end{enumerate}
\end{proof}

\cleardoublepage
\chapter{One-cycles on abelian varieties}\label{ch:onecycles}

\hfill This chapter is based on joint work with \textsc{Thorsten Beckmann}. 

\section{Introduction}

In this chapter we provide applications of the results developed in the previous Chapter \ref{ch:integralfourier}. These applications concern the cycle class map for curves on an abelian variety $A$. More precisely, we will consider the integral Hodge conjecture for one-cycles when $A$ is defined over $\CC$, and the integral Tate conjecture for one-cycles when $A$ is defined over the separable closure of a finitely generated field. 

To state the most important results of Chapter \ref{ch:onecycles}, let us recall how the complex cycle class map was defined (see also Section \ref{intro:sub:complexandrealalgebraiccycles}). Whenever $\iota \colon C \hookrightarrow A$ is a smooth curve, the image of the fundamental class under the pushforward map \[\iota_\ast \colon \rm H_{2}(C, \ZZ) \to \rm  H_{2}(A,\ZZ) \cong \rm  H^{2g-2}(A, \ZZ)\] defines a cohomology class $[C] \in \rm H^{2g-2}(A, \ZZ)$. This construction extends to one-cycles and factors modulo rational equivalence. The \textit{cycle class map for curves on $A$} is the canonical homomorphism defined in this way:
\[
cl\colon \CH_1(A) \to \textnormal{Hdg}^{2g-2}(A, \ZZ).
\]
It extends to a natural graded ring homomorphism $cl\colon \CH(A) \to \rm H^\bullet(A, \ZZ)$. 

The liftability of the Fourier transform that we studied in Chapter \ref{ch:integralfourier} turns out to have important consequences for the image of the cycle class map. An element $\alpha \in \rm H^{\bullet}(A, \ZZ)$ is called \textit{algebraic} if it is in the image of $cl$, and that $A$ satisfies the \textit{integral Hodge conjecture for $i$-cycles} if all Hodge classes in $\rm H^{2g-2i}(A,\ZZ)$ are algebraic. Although the integral Hodge conjecture fails in general \cite{atiyahintegralhodge, trento,totarocobordism}, it is an open question for abelian varieties. The main result of Chapter \ref{ch:onecycles} is as follows.

\begin{theorem}[with T. Beckmann] \label{maintheorem}
Let $A$ be a complex abelian variety of dimension $g$ with Poincar\'e bundle $\ca P_A$. The following three statements are equivalent:
\begin{enumerate}
    \item \label{introitem:minimalpoincare} The cohomology class $c_1(\ca P_A)^{2g-1}/(2g-1)! \in \rm H^{4g-2}(A \times \wh A, \ZZ)$ is algebraic. 
    \item \label{introitem:integralpoincare} The Chern character $\ch(\ca P_A) = \exp(c_1(\ca P_A)) \in \rm H^\bullet(A \times \wh A, \ZZ)$ is algebraic.
    \item \label{introitem:integralhodgeforproduct} The integral Hodge conjecture for one-cycles holds for $A \times \wh A$. 
\end{enumerate}
Any of these statements implies that
\begin{enumerate}
\setcounter{enumi}{3}
    \item \label{introitem:IHC} The integral Hodge conjecture for one-cycles holds for $A$ and $\wh A$. 
\end{enumerate}
Suppose that $A$ is principally polarized by $\theta \in \Hdg^2(A,\ZZ)$ 
and consider the following statements:
\begin{enumerate}
\setcounter{enumi}{4}
\item \label{introitem:minimalclass} The minimal cohomology class $\gamma_\theta \coloneqq \theta^{g-1}/(g-1)! \in \rm H^{2g-2}(A, \ZZ)$ is algebraic. 
\item \label{introitem:minimalpoincare2} The cohomology class $c_1(\ca P_A)^{2g-2}/(2g-2)! \in \rm H^{4g-4}(A \times \wh A, \ZZ)$ is algebraic. 
\end{enumerate}
Then statements $\ref{introitem:minimalpoincare} - \ref{introitem:minimalpoincare2}$ are equivalent. If they hold, then the class $\theta^{i}/i! \in \rm H^{2i}(A, \ZZ)$ is algebraic for every positive integer $i$.  
\end{theorem}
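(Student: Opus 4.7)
The plan is to dispatch the equivalences $(1) \iff (2) \iff (3)$ and the implication $(2) \Rightarrow (4)$ purely formally from Proposition~\ref{prop:motivicetale-new} and Lemma~\ref{lemma:trivial}, and then to address the principally polarized case. The only genuinely new ingredient turns out to be the last assertion about $\theta^i/i!$, which rests on the $\star$-PD-structure of Theorem~\ref{th:PDstructure}.

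For the first block, observe that condition $(2)$ --- that $\ch(\ca P_A)$ lifts to an integral cycle on $A \times \wh A$ --- is exactly the statement that $A$ admits a motivic integral Fourier transform up to homology in the sense of Definition~\ref{def:weakintegralfourier}. Combining Proposition~\ref{prop:motivicetale-new} with Proposition~\ref{remarketalebetti} to pass from $\ell$-adic to Betti cohomology then yields $(1) \iff (2)$, and also shows that both are equivalent to the existence of such a Fourier transform on $A \times \wh A$ itself. Applying the second part of Lemma~\ref{lemma:trivial} to $A \times \wh A$ --- whose dual is canonically isomorphic to itself up to a swap of factors --- with $i = 1$, we get that IHC for divisors on $A \times \wh A$ is equivalent to IHC for one-cycles there. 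Since the former holds by Lefschetz~$(1,1)$, this gives $(1) \Rightarrow (3)$; conversely $(3) \Rightarrow (1)$ is immediate, $\rho_A$ being a one-dimensional Hodge class. The implication $(2) \Rightarrow (4)$ is then one more application of the second part of Lemma~\ref{lemma:trivial}, now to $A$ and $\wh A$ themselves.

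In the principally polarized case, Proposition~\ref{prop:motivicetale-new} already contains the equivalences $(1) \iff (5) \iff (6)$, so only the final claim remains. Given $(5)$, I would fix a lift $\Gamma_\theta \in \CH_1(A)$ of $\gamma_\theta$ and form its divided Pontryagin powers $\Gamma_\theta^{[\star k]} \in \CH_k(A)$ for $0 \leq k \leq g$, using the canonical PD-structure on the ideal $\CH_{>0}(A) \subset (\CH(A),\star)$ supplied by Theorem~\ref{th:PDstructure}. Combining the cohomological Beauville identity $\mr F_A(\theta^i/i!) = (-1)^{g-i} \theta^{g-i}/(g-i)!$ (compare Lemma~\ref{beauvillelemma}), the intertwining relation $\mr F_A(x \star y) = \mr F_A(x) \cdot \mr F_A(y)$, the involutivity $\mr F_A \circ \mr F_A = (-1)^g [-1]^\ast$, and the triviality of $[-1]^\ast$ on even-degree integral cohomology, a direct computation yields $\gamma_\theta^{\star k} = k! \cdot \theta^{g-k}/(g-k)!$ in $\rm H^{2(g-k)}(A,\ZZ)$. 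Since $\rm H^\bullet(A,\ZZ)$ is torsion-free, this forces $cl(\Gamma_\theta^{[\star k]}) = \theta^{g-k}/(g-k)!$, and ranging over $0 \leq k \leq g$ (together with the vanishing $\theta^i/i! = 0$ for $i > g$) yields the desired algebraicity.

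The subtle step I anticipate is the compatibility of the PD-structure on $(\CH_{>0}(A), \star)$ with the unique PD-structure induced on $(\rm H^{>0}(A,\ZZ), \star)$ by torsion-freeness --- this is what is needed to identify $cl(\Gamma_\theta^{[\star k]})$ with $\gamma_\theta^{[\star k]}$ in cohomology. It should however follow from the uniqueness of divided powers on torsion-free targets together with the fact that $cl$ is a ring homomorphism for the Pontryagin product, so no serious obstacle is expected.
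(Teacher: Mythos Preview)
Your proposal is correct and follows essentially the same route as the paper: the equivalences $(1)\iff(2)\iff(3)$ and the implication to $(4)$ are extracted from Proposition~\ref{prop:motivicetale-new} and Lemma~\ref{lemma:trivial}, the principally polarized equivalences $(1)\iff(5)\iff(6)$ are read off from the same proposition, and the algebraicity of $\theta^i/i!$ comes from the $\star$-PD-structure together with Beauville's identity $\gamma_\theta^{\star k}/k! = \theta^{g-k}/(g-k)!$ (which the paper simply cites as \cite[Corollaire~2]{beauvillefourier} rather than rederiving). The only step you leave implicit is the trivial implication $(4)\Rightarrow(5)$ needed to close the loop of six equivalences---worth stating, since without it $(4)$ sits as a one-way consequence of the others.
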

\noindent
Remark that Condition~\ref{introitem:minimalclass} is stable under products, so a product of principally polarized abelian varieties satisfies the integral Hodge conjecture for one-cycles if and only if each of the factors does. More importantly, if $J(C)$ is the Jacobian of a smooth projective curve $C$ of genus $g$, then 
every integral Hodge class of degree $2g-2$ on $J(C)$ is a $\ZZ$-linear combination of curves classes:

\begin{theorem} \label{introth:IHCforjacobians}
Let $C_1, \dotsc ,C_n$ be smooth projective curves over $\CC$. Then the integral Hodge conjecture for one-cycles holds for the product of Jacobians $J(C_1) \times \cdots \times J(C_n)$. 
\end{theorem}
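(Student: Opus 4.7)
The plan is to apply Theorem~\ref{maintheorem} to the principally polarized abelian variety $A = J(C_1) \times \cdots \times J(C_n)$, equipped with its natural principal polarization $\theta = \sum_i \pi_i^\ast \theta_i$, where $\theta_i \in \Hdg^2(J(C_i),\ZZ)$ is the theta class of $J(C_i)$. Writing $g_i = \dim J(C_i)$ and $g = \sum g_i$, the goal is to verify statement~\ref{introitem:minimalclass} for $A$, namely that the minimal cohomology class $\gamma_\theta = \theta^{g-1}/(g-1)! \in \rm H^{2g-2}(A,\ZZ)$ is algebraic; the theorem will then deliver statement~\ref{introitem:IHC}, which is exactly what we want.

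The first step is to handle each Jacobian individually. By Poincar\'e's formula, after a choice of Abel--Jacobi embedding the class $\theta_i^{g_i-1}/(g_i-1)!$ equals the class of $C_i$ inside $J(C_i)$, so $J(C_i)$ satisfies statement~\ref{introitem:minimalclass} of Theorem~\ref{maintheorem}. The last sentence of that theorem then yields the much stronger conclusion that \emph{every} divided power $\theta_i^{k}/k! \in \rm H^{2k}(J(C_i),\ZZ)$ is algebraic, for all positive integers $k$.

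The second step is a multinomial expansion of the minimal class on the product. Since the pullbacks $\pi_i^\ast \theta_i$ commute and integral cohomology of abelian varieties is torsion-free, we have
\[
\frac{\theta^{g-1}}{(g-1)!} \;=\; \sum_{\substack{k_1+\cdots+k_n = g-1\\ 0 \le k_i \le g_i}} \; \prod_{i=1}^n \pi_i^\ast\!\left(\frac{\theta_i^{k_i}}{k_i!}\right) \;\in\; \rm H^{2g-2}(A,\ZZ).
\]
The numerical constraint $\sum k_i = g-1$ together with $k_i \le g_i$ forces exactly one ``deficient'' index $i_0$ with $k_{i_0} = g_{i_0}-1$ and $k_j = g_j$ for $j \neq i_0$. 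For such $j$, the class $\theta_j^{g_j}/g_j!$ is the class of a point on $J(C_j)$, trivially algebraic; for the index $i_0$, the class $\theta_{i_0}^{g_{i_0}-1}/(g_{i_0}-1)!$ is algebraic by the preceding paragraph. Since external products of algebraic classes are represented by products of cycles, each summand is algebraic, and hence so is $\gamma_\theta$.

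The third and final step is to invoke Theorem~\ref{maintheorem} to conclude the integral Hodge conjecture for one-cycles on $A$. The whole argument is essentially a concatenation of Poincar\'e's classical formula on each factor with the equivalence machinery of Theorem~\ref{maintheorem}, and the only mild point to verify is the combinatorial reduction above, which is immediate once one notices that the constraints admit only a single slack index. Thus I do not anticipate any serious obstacle.
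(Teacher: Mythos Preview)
Your proof is correct and essentially the same as the paper's: the paper reduces to $n=1$ via Corollary~\ref{cor:IHCforproducts} (whose proof is precisely your multinomial observation that the minimal class of a product is a sum of exterior products of minimal classes of the factors), then invokes Poincar\'e's formula and Theorem~\ref{maintheorem}. Your appeal to the last sentence of Theorem~\ref{maintheorem} to get algebraicity of all $\theta_i^k/k!$ is unnecessary, since the only divided powers appearing in your expansion are $\theta_j^{g_j}/g_j!$ (a point) and $\theta_{i_0}^{g_{i_0}-1}/(g_{i_0}-1)!$ (the curve class), but it does no harm.
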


\noindent
See Remark \ref{symmetricpowerremark}.\ref{symmetricremarkone} for another approach towards Theorem \ref{introth:IHCforjacobians} in the case $n=1$. A second consequence of Theorem \ref{maintheorem} is that the integral Hodge conjecture for one-cycles on principally polarized complex abelian varieties is stable under specialization, see Corollary \ref{complexspecialization}. An application of somewhat different nature is the following density result, proven in Section \ref{subsec:density}:

\begin{theorem} \label{introth:density}
Let $\delta = (\delta_1, \dotsc, \delta_g)$ be positive integers such that $\delta_i | \delta_{i+1}$ and let $\msf A_{g,\delta}(\CC)$ be the coarse moduli space of polarized abelian varieties over $\CC$ with polarization type $\delta$. There is a countable union $X\subset \msf A_{g,\delta}(\CC)$ of closed algebraic subvarieties of dimension at least $g$, that satisfies the following property: $X$ is dense in the analytic topology, and the integral Hodge conjecture for one-cycles holds for those polarized abelian varieties whose isomorphism class defines a point in $X$. 
\end{theorem}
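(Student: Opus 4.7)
The plan is to combine the algebraicity of the minimal class for Jacobians (Theorem \ref{introth:IHCforjacobians}) with a Hecke-orbit density statement in the Shimura variety $\msf A_{g,\delta}(\CC)$. First, I would construct a $g$-dimensional initial subvariety $Z \subset \msf A_{g,\delta}(\CC)$ whose points correspond to abelian varieties satisfying the integral Hodge conjecture for one-cycles. Pick $g$ pairwise non-isogenous elliptic curves $E_1, \dotsc, E_g$; then $\mathrm{NS}(E_1 \times \cdots \times E_g)$ equals $\bigoplus_i \ZZ \cdot \theta_{E_i}$, and the ample class $\sum_i \delta_i \theta_{E_i}$ defines a polarization of type $\delta$ on $A_0 := E_1 \times \cdots \times E_g$. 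By Theorem \ref{introth:IHCforjacobians}, $A_0$ (a product of Jacobians of elliptic curves) satisfies the integral Hodge conjecture for one-cycles. Letting the $E_i$ vary in their one-dimensional moduli yields a closed algebraic subvariety $Z \subset \msf A_{g,\delta}(\CC)$ of dimension exactly $g$, whose every point corresponds to a polarized abelian variety satisfying the conjecture.

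Next, I would exploit the action of the Hecke algebra for $\mathrm{GSp}_{2g}$ on $\msf A_{g,\delta}(\CC)$. For each Hecke correspondence $T$, the image $T \cdot Z$ is a closed algebraic subvariety of $\msf A_{g,\delta}(\CC)$ of dimension at least $g$, and by standard density of Hecke orbits through non-CM points of the (non-trivial) subvariety $Z$, the countable union $X := \bigcup_T T \cdot Z$ is dense in the analytic topology. This takes care of the density assertion and the dimension lower bound.

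The crucial technical step is to show that every point of $X$ corresponds to an abelian variety $B$ satisfying the integral Hodge conjecture for one-cycles. For $[B,\mu] \in T \cdot Z$, there exists an isogeny $f \colon A_0 \to B$ from a point $A_0$ of $Z$. Using the relation $(f \times \id_{\hat B})^{\ast}\mathcal P_B \cong (\id_{A_0} \times \hat f)^{\ast}\mathcal P_{A_0}$ on $A_0 \times \hat B$ together with Theorem \ref{maintheorem} (equivalence of (1) and (4)), one seeks to transfer algebraicity of the minimal Poincar\'e class from $A_0 \times \hat A_0$ to $B \times \hat B$. This uses the integral-Fourier-transform machinery of Chapter \ref{ch:integralfourier}: from Theorem \ref{th:motivic} one reduces to verifying that $c_1(\ca P_B)^{2g-1}/(2g-1)!$ lifts to $\CH_1(B \times \hat B)$.

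The main obstacle is precisely this last lifting step, because a direct pullback-then-pushforward along $f$ only produces that $\deg(f)$ times the minimal class on $B \times \hat B$ is algebraic, which is weaker than the desired integral statement. To bypass this, I would arrange that $A_0$ varies in $Z$ while the Hecke correspondence $T$ connecting $A_0$ to $B$ varies within a suitable Hecke algebra, so that one can produce a family of isogenies $A_0 \to B$ with varying degrees whose greatest common divisor equals $1$. Alternatively, one may exploit the large endomorphism algebra of the product $A_0 = E_1 \times \cdots \times E_g$ (which gives a rich supply of isogenies of coprime degrees into any fixed target in its isogeny class) or use a direct Künneth-type analysis of $\Hdg^{2g-2}(B, \ZZ)$ available because $B$ is isogenous to a product of elliptic curves. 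In any of these variants, the coprimality forces the minimal Poincar\'e class on $B \times \hat B$ to be integrally algebraic, and Theorem \ref{maintheorem} then yields the integral Hodge conjecture for one-cycles on $B$, completing the proof.
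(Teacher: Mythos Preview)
Your overall architecture matches the paper's: start from the $g$-dimensional locus of products of elliptic curves (carrying the appropriate type-$\delta$ polarization), spread it out by Hecke correspondences, and invoke Theorem~\ref{maintheorem}. You also correctly identify the main obstacle: passing through an isogeny $f$ only shows that a power of $\deg(f)$ times the minimal Poincar\'e class on $B\times\hat B$ is algebraic, not the class itself.

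Where your proposal has a genuine gap is in resolving this obstacle. The paper does not try to manufacture isogenies of coprime degrees into a fixed $B$, nor does it appeal to the endomorphism algebra of $A_0$ or to a K\"unneth analysis. Instead it makes one clean observation: restrict to the Hecke-$\ell$-orbit for a \emph{single} prime $\ell > (2g-1)!$. Then every isogeny in play has degree a power of $\ell$, hence is prime to $p$ for every prime $p\le (2g-1)!$. Since $(2g-1)!\cdot\rho_B = c_1(\ca P_B)^{2g-1}$ is algebraic for any $B$, and some $\ell^M\cdot\rho_B$ is algebraic via the isogeny from a product of Jacobians, a B\'ezout combination gives that $\rho_B$ itself is algebraic (this is exactly the mechanism of Proposition~\ref{proposition:IHCONELEMMA}). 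Density of this single-prime Hecke orbit still holds, by strong approximation for $\Sp_{2g}$ (Lemma~\ref{strongapproximationlemma}).

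Your three proposed workarounds are not obviously viable. For a fixed $B$ in a given Hecke image $T\cdot Z$, it is unclear how to produce isogenies from points of $Z$ to $B$ whose degrees have $\gcd$ equal to $1$: composing with multiplication maps or replacing an $E_i$ by an isogenous curve enlarges the degree rather than reducing it. The appeal to the endomorphism algebra of $A_0$ is too vague to assess, and a direct K\"unneth argument for $B$ would require $B$ itself to split as a product, which it need not. The missing idea is to pair the isogeny-produced factor with the universally available factor $(2g-1)!$, and to arrange their coprimality by the choice of $\ell$.
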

\begin{remark}
The lower bound that we obtain on the dimension of the components of $X$ actually depends on $\delta$ and is often greater than $g$. For instance, when $\delta = 1$ and $g\geq 2$, there is a set $X$ as in the theorem, whose elements are prime-power isogenous to products of Jacobians of curves. Therefore, the components of $X$ have dimension $3g-3$ in this case, c.f. Remark \ref{rem:dimensionimprovement}.
\end{remark}
\noindent
One could compare Theorem \ref{maintheorem} with the following statement, proven by Grabowski \cite{grabowski}: if $g$ is a positive integer such that the minimal class $\gamma_\theta = \theta^{g-1}/(g-1)!$ of every principally polarized abelian variety of dimension $g$ is algebraic, then every abelian variety of dimension $g$ satisfies the integral Hodge conjecture for one-cycles. In this way, he proved the integral Hodge conjecture for abelian threefolds, a result which has been extended to smooth projective threefolds $X$ with $K_X = 0$ by Voisin and Totaro \cite{voisinintegralhodge,totaroIHCthreefolds}. For abelian varieties of dimension greater than three, not many unconditional statements seem to have been known. 

In Section \ref{sec:integralhodgeuptofactorn}, we consider an abelian variety $A_{/\CC}$ of dimension $g$ and ask: if $n \in \ZZ_{\geq 1}$ is such that $n \cdot c_1(\ca P_A)^{2g-1}/(2g-1)! \in \rm H^{4g-2}(A \times \wh A, \ZZ)_{\textnormal{alg}}$, can we bound the order of $\rm Z^{2g-2}(A)$ in terms of $g$ and $n$? For a smooth complex projective $d$-dimensional variety $X$, $\rm Z^{2d-2}(X)$ is called the degree $2d-2$ \textit{Voisin group} of $X$ \cite{perry2020integral}, is a stably birational invariant \cite[Lemma 15]{voisin_someaspectsofthehodgeconjecture}, and related to the unramified cohomology groups by Colliot-Thélène--Voisin and Schreieder \cite{colliotthelenevoisin, schreieder2021refined}. 
We prove that if $n \cdot c_1(\ca P_A)^{2g-1}/(2g-1)!$ is algebraic, then $\gcd( n^2, (2g-2)!) \cdot \rm Z^{2g-2}(A) = (0)$. In particular, $(2g-2)! \cdot \rm Z^{2g-2}(A) = (0)$ for any $g$-dimensional complex abelian variety $A$.
Moreover, if $A$ is principally polarized by $\theta \in \textnormal{NS}(A)$ and if $n \cdot \gamma_\theta \in \rm H^{2g-2}(A,\ZZ)$ is algebraic, then $n \cdot c_1(\ca P_A)^{2g-1}/(2g-1)!$ is algebraic. Since it is well known that for Prym varieties, the Hodge class $2 \cdot \gamma_\theta$ is algebraic, these observations lead to the following result (see also Theorem~\ref{theorem:integralhodgeuptofactor}).
\begin{theorem} \label{introtheorem:prym}
Let $A$ be a $g$-dimensional Prym variety over $\CC$. Then $4 \cdot \rm Z^{2g-2}(A) = (0)$. 
\end{theorem}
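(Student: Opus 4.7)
The plan is to combine the classical algebraicity of $2\gamma_\theta$ on a Prym variety with two tools from Chapter~\ref{ch:integralfourier}: the canonical Pontryagin divided-power structure on $\CH_{>0}(A \times \wh A)$ (Theorem~\ref{th:PDstructure}), and the identity $\ch(\ca P_A) = (-1)^g \cdot \rm E((-1)^g \cdot \rho_A)$ from Lemma~\ref{lemma:crucialprop}.

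First, if $A = P(\tilde C / C)$ is a Prym variety of dimension $g$ with principal polarization $\theta$, then an Abel--Prym map $\tilde C \to A$ produces an effective algebraic one-cycle on $A$ whose cohomology class equals $2 \gamma_\theta = 2 \cdot \theta^{g-1}/(g-1)!$; this is a classical computation. Hence $2 \gamma_\theta \in \Hdg^{2g-2}(A,\ZZ)$ is algebraic, and via the principal polarization $\lambda : A \xrightarrow{\sim} \wh A$ the same holds for $2 \gamma_{\wh \theta}$ on $\wh A$. Lemma~\ref{lemma:minimalclasspoincarecomparison} then exhibits $(-1)^{g+1} \rho_A = j_{1,*} \gamma_\theta + j_{2,*} \gamma_{\wh \theta} - (\id, \lambda)_* \gamma_\theta$ in $\CH_1(A \times \wh A)_\QQ$; pushing forward integral algebraic representatives of $2\gamma_\theta$ and $2\gamma_{\wh\theta}$ along the closed immersions $j_1, j_2, (\id,\lambda)$ produces an integral algebraic one-cycle $\Gamma \in \CH_1(A \times \wh A)$ with $\Gamma_\QQ = \pm 2 \rho_A$.

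Next, since $\Gamma \in \CH_{>0}(A \times \wh A)$, Theorem~\ref{th:PDstructure} yields a canonical integral divided Pontryagin square $\Gamma^{[2]} \in \CH_2(A \times \wh A)$, which is in particular algebraic. Matching the codimension-$(2g-2)$ components in Lemma~\ref{lemma:crucialprop} identifies $\rho_A^{[2]}$ with $\pm \sigma_A$ where $\sigma_A = c_1(\ca P_A)^{2g-2}/(2g-2)!$; rationally, $\Gamma^{[2]}_\QQ = (2\rho_A)^{[2]} = 4 \rho_A^{[2]} = \pm 4 \sigma_A$. Hence $4 \sigma_A$ is algebraic. Since $\sigma_A$ is precisely the component of $\ch(\ca P_A)$ that realizes Beauville's Hodge isomorphism $\mr F_A : \rm H^2(\wh A, \ZZ) \xrightarrow{\sim} \rm H^{2g-2}(A, \ZZ)(g-1)$, and since every $\beta \in \Hdg^2(\wh A, \ZZ)$ equals $c_1(L)$ for some line bundle $L$ on $\wh A$ by the Lefschetz $(1,1)$ theorem, applying $\pi_{A,*}(\pi_{\wh A}^* c_1(L) \cdot -)$ to an integral algebraic representative $W$ of $4\sigma_A$ produces a cycle in $\CH_1(A)$ of cohomology class $4 \mr F_A(\beta)$. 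As $\mr F_A$ maps $\Hdg^2(\wh A, \ZZ)$ isomorphically onto $\Hdg^{2g-2}(A, \ZZ)$, this shows $4 \alpha$ is algebraic for every $\alpha \in \Hdg^{2g-2}(A, \ZZ)$, that is, $4 \cdot \rm Z^{2g-2}(A) = 0$.

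The main obstacle will be the bookkeeping in the middle step: one must carefully track signs and the codimension/dimension conventions when comparing the Pontryagin exponential $\rm E$ (which adds dimensions) with the Chern character components $\ch_i$ (which add codimensions), in order to identify $\rho_A^{[2]}$ unambiguously with a scalar multiple of $\sigma_A$. The low-dimensional cases are automatic: $\rm Z^0(A) = 0$ for $g = 1$, and for $g = 2$ the unconditional integrality of $(2g-2)! \cdot \sigma_A = c_1(\ca P_A)^{2g-2}$ combined with algebraicity of $4 \sigma_A$ yields the stronger bound $\gcd(4, 2) \cdot \rm Z^2(A) = 2 \cdot \rm Z^2(A) = 0$.
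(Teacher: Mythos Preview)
Your proof is correct and follows essentially the same route as the paper: the paper packages the argument as Theorem~\ref{theorem:integralhodgeuptofactor} together with Lemma~\ref{lemma:integralhodgeupton}, but the underlying chain is identical---$2\gamma_\theta$ algebraic $\Rightarrow$ $2\rho_A$ algebraic via Lemma~\ref{lemma:minimalclasspoincarecomparison}, then $4\sigma_A$ algebraic via the Pontryagin divided-square and Lemma~\ref{lemma:crucialprop}, then $4\cdot\rm Z^{2g-2}(A)=0$ via Lefschetz~$(1,1)$ and the correspondence $\sigma_A$. The only cosmetic difference is that the paper notes the slightly sharper bound $\gcd(4,(2g-2)!)\cdot\rm Z^{2g-2}(A)=0$, which you mention only in the $g=2$ aside.
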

\noindent
Recall that in our study of the liftability of the Fourier transform, carried out in the previous Chapter \ref{ch:integralfourier}, 
we considered abelian varieties defined over arbitrary fields. This generality allows us now to obtain the analogue of Theorem \ref{maintheorem} over the separable closure $k$ of a finitely generated field. 

A smooth projective variety $X$ of dimension $d$ over $k$ satisfies the \textit{integral Tate conjecture for one-cycles over $k$} if, for every prime number $\ell$ different from $\textnormal{char}(k)$ and for some finitely generated field of definition $k_0 \subset k$ of $X$, the cycle class map 
\begin{equation} \label{eq:integraltateconjecture}
    cl\colon \CH_1(X)_{\ZZ_\ell} = \CH_1(X)\otimes_\ZZ{\ZZ_\ell} \to \bigcup_U\rm H_{\textnormal{\'{e}t}}^{2d-2}(X, \ZZ_\ell(d-1))^U
\end{equation}
is surjective, where $U$ ranges over the open subgroups of $\Gal(k/k_0)$. 

\begin{theorem} \label{introth:integraltate} Let $A$ be an abelian variety of dimension $g$ over the separable closure $k$ of a finitely generated field. 
\begin{itemize}[wide, labelwidth=!, labelindent=0pt]
\item 
The abelian variety $A$ satisfies the integral Tate conjecture for one-cycles over $k$ if the cohomology class $$c_1(\ca P_A)^{2g-1}/(2g-1)! \in \rm H_{\textnormal{\'et}}^{4g-2}(A \times \wh A, \ZZ_\ell(2g-1))$$ is the class of a one-cycle with $\ZZ_\ell$-coefficients for every prime number $\ell < (2g-1)!$ unequal to $\textnormal{char}(k)$. 
    \item 
Suppose that $A$ is principally polarized and let $\theta_\ell \in \rm H^2_{\textnormal{\'et}}(A, \ZZ_\ell(1))$ be the class of the polarization. The map (\ref{eq:integraltateconjecture}) is surjective if 
$$
\gamma_{\theta_\ell} \coloneqq  \theta_\ell^{g-1}/(g-1)! \in \rm H_{\textnormal{\'{e}t}}^{2g-2}(A, \ZZ_\ell(g-1))
$$ is in its image. In particular, if $\ell > (g-1)!$ then this always holds. Thus $A$ satisfies the integral Tate conjecture for one-cycles if and only if $\gamma_{\theta_\ell}$ is in the image of (\ref{eq:integraltateconjecture}) for every prime number $\ell < (g-1)!$ unequal to $\textnormal{char}(k)$. 

\end{itemize}
\end{theorem}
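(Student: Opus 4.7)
The plan is to invoke the $\ell$-adic integral Fourier transforms of Chapter \ref{ch:integralfourier} to reduce the integral Tate conjecture for one-cycles on $A$ to the Tate conjecture for divisors on $\wh A$. First I would observe that the hypotheses in both assertions are automatic for large primes. Indeed, for any prime $\ell$ not dividing $(2g-1)!$ --- in particular, for every $\ell > 2g-1$ --- the integer $(2g-1)!$ is a unit in $\ZZ_\ell$, so the class $(2g-1)!^{-1} \cdot [c_1(\ca P_A)^{2g-1}] \in \CH_1(A \times \wh A)_{\ZZ_\ell}$ is a tautological $\ZZ_\ell$-algebraic lift of $\rho_A$. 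The analogous reasoning, applied to $\theta^{g-1}$, shows that $\gamma_{\theta_\ell}$ is automatically algebraic for $\ell > g-1$, hence in particular for $\ell > (g-1)!$. Combining this with the stated hypotheses, the relevant class admits a $\ZZ_\ell$-integral algebraic lift for \emph{every} prime $\ell \neq \textnormal{char}(k)$.

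Next, for each such prime $\ell$, Proposition \ref{prop:motivicetale-new} in its $\ZZ_\ell$-coefficient form (Proposition \ref{remarketalebetti}.\ref{remark3.12.2}) produces a motivic $\ell$-adic integral Fourier transform up to homology $\ca F_\ell \colon \CH(\wh A)_{\ZZ_\ell} \to \CH(A)_{\ZZ_\ell}$, compatible via the cycle class maps with the Fourier transform $\mr F_{\wh A}$ on $\ell$-adic cohomology. Since $\mr F_{\wh A}$ restricts to a $\Gal(k/k_0)$-equivariant isomorphism $\rm H^2_{\textnormal{\'et}}(\wh A, \ZZ_\ell(1)) \xrightarrow{\sim} \rm H^{2g-2}_{\textnormal{\'et}}(A, \ZZ_\ell(g-1))$, it identifies the Tate-class subgroups $\bigcup_U \rm H^2_{\textnormal{\'et}}(\wh A, \ZZ_\ell(1))^U$ and $\bigcup_U \rm H^{2g-2}_{\textnormal{\'et}}(A, \ZZ_\ell(g-1))^U$.

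Given a Galois-invariant class $\beta \in \bigcup_U \rm H^{2g-2}_{\textnormal{\'et}}(A, \ZZ_\ell(g-1))^U$, I would write $\beta = \mr F_{\wh A}(\alpha)$ for the unique Tate class $\alpha$ on $\wh A$ mapping to it. The classical Tate conjecture for divisors on abelian varieties over finitely generated fields, combined with the torsion-freeness of $\NS(\wh A)$ and of $\rm H^2_{\textnormal{\'et}}(\wh A, \ZZ_\ell(1))$, upgrades to the integral statement that the cycle class map $\NS(\wh A)_{\ZZ_\ell} \to \bigcup_U \rm H^2_{\textnormal{\'et}}(\wh A, \ZZ_\ell(1))^U$ is surjective. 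Choosing a lift $\widetilde\alpha \in \NS(\wh A)_{\ZZ_\ell}$ of $\alpha$, I then obtain $\ca F_\ell(\widetilde\alpha) \in \CH_1(A)_{\ZZ_\ell}$ with cycle class $\mr F_{\wh A}(\alpha) = \beta$, proving the first assertion.

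For the second assertion, the $\ZZ_\ell$-form of Proposition \ref{prop:motivicetale-new}.\ref{motivicfour-new} identifies $\ZZ_\ell$-algebraicity of $\gamma_{\theta_\ell}$ with that of $c_1(\ca P_A)^{2g-1}/(2g-1)!$, reducing matters to the first assertion; the converse direction of the ``iff'' is immediate, since integral Tate for one-cycles forces the Tate class $\gamma_{\theta_\ell}$ to be algebraic. The main conceptual obstacle in this plan is the integral form of the Tate conjecture for divisors on abelian varieties: a known but non-elementary input that, beyond the rational Tate conjecture, requires finiteness of the $\ell$-primary Galois-invariants in $\textnormal{Br}(\wh A)$ via the Kummer sequence. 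All remaining steps are formal consequences of the integral Fourier transform machinery developed earlier.
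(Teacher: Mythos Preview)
Your proposal is correct and follows essentially the same route as the paper: reduce to the integral Tate conjecture for divisors on the dual abelian variety via the $\ell$-adic motivic integral Fourier transform up to homology supplied by Proposition~\ref{prop:motivicetale-new} in its $\ZZ_\ell$-form (Proposition~\ref{remarketalebetti}.\ref{remark3.12.2}), and invoke Galois-equivariance of $\mr F_{\wh A}$ to match Tate classes. The only cosmetic difference is in how you justify the integral Tate conjecture for divisors: the paper simply cites Totaro's lemma that rational Tate implies integral Tate in codimension one (together with the theorems of Tate, Faltings, and Zarhin), whereas you sketch the underlying Kummer-sequence/Brauer-group mechanism; both are standard and lead to the same conclusion.
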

\noindent
Theorem \ref{introth:integraltate} implies in particular that products of Jacobians of smooth projective curves over $k$ satisfy the integral Tate conjecture for one-cycles over $k$. Moreover, for an abelian variety $A_K$ over a number field $K \subset \CC$, the integral Hodge conjecture for one-cycles on $A_\CC$ is equivalent to the integral Tate conjecture for one-cycles on $A_{\bar K}$ (Corollary \ref{hodgetatecomparison}), which in turn implies the integral Tate conjecture for one-cycles on the geometric special fiber $A_{\overline{k}(\mf p)}$ of the N\'eron model of $A_K$ over $\OO_K$ for any prime $\mf p \subset \OO_K$ at which $A_K$ has good reduction (Corollary \ref{cor:specialization}). 
\noindent
Finally, we obtain the analogue of Theorem \ref{introth:density} in positive characteristic as well. The definition for a smooth projective variety over the algebraic closure $k$ of a finitely generated field to satisfy the \textit{integral Tate conjecture for one-cycles over $k$} is analogous to the definition above (see e.g.\ \cite{charlespirutka}). 
\begin{theorem} \label{th:chinglichai}
Let $k$ be the algebraic closure of a finitely generated field of characteristic $p>0$. 
Let $\msf A_{g}$ be the coarse moduli space over $k$ of principally polarized abelian varieties of dimension $g$ over $k$. Let $X \subset \msf A_{g}(k)$ be the subset of moduli points attached to principally polarized abelian varieties over $k$ that satisfy the integral Tate conjecture for one-cycles over $k$. Then $X$ is Zariski dense in $\msf A_{g}$. 
\end{theorem}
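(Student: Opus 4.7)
The plan is to combine Theorem \ref{introth:integraltate} with the classical algebraicity of the minimal class on a Jacobian, then propagate this algebraicity across $\msf A_g$ via a Hecke-orbit density theorem of Ching-Li Chai.

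\emph{First}, I would observe that every principally polarized Jacobian $[J(C)] \in \msf A_g(k)$ lies in $X$. Under an Abel--Jacobi embedding $C \hookrightarrow J(C)$, the cycle class $[C] \in \CH_1(J(C))$ realizes the minimal class $\gamma_\theta = \theta^{g-1}/(g-1)!$ integrally in $\rm H^{2g-2}_{\textnormal{\'et}}(J(C), \ZZ_\ell(g-1))$ for every prime $\ell \neq p$; the second part of Theorem \ref{introth:integraltate} then gives the integral Tate conjecture for one-cycles on $J(C)$ over $k$. Since every genus $g$ admits an ordinary smooth projective curve over $k$ (for instance a generic hyperelliptic curve), ordinary Jacobians provide concrete points in $\msf A_g(k) \cap X$.

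\emph{Second}, let $L$ denote the finite set of primes consisting of $p$ together with all primes $\ell \leq (g-1)!$. I would show that $X$ is stable under polarized isogenies of degree coprime to every prime in $L$. Given such a polarized isogeny $f\colon A \to B$ of degree $d$, and any $\ell \in L \setminus \{p\}$, the identity $f^\ast \theta_\ell^B = \theta_\ell^A$ together with the projection formula yield $d \cdot \gamma_{\theta_\ell^B} = f_\ast \gamma_{\theta_\ell^A}$ in $\rm H^{2g-2}_{\textnormal{\'et}}(B, \ZZ_\ell(g-1))$; since $d$ is a unit in $\ZZ_\ell$, the $\ZZ_\ell$-algebraicity of $\gamma_{\theta_\ell^A}$ transfers to that of $\gamma_{\theta_\ell^B}$. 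Theorem \ref{introth:integraltate} then forces $[B] \in X$ whenever $[A] \in X$.

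\emph{Third}, I would invoke Chai's theorem on density of Hecke orbits \cite{chaiordinaryhecke, oorthecke}: for an ordinary $[A] \in \msf A_g(k)$, the polarized Hecke orbit of $[A]$ using only isogenies whose degree is coprime to every prime in $L$ is Zariski dense in $\msf A_g$. Applied to the ordinary Jacobian fixed above, the density and stability statements combine to produce a Zariski dense subset of $\msf A_g$ contained in $X$, which is the desired conclusion.

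The hard part will be the last step: invoking Chai's density theorem in the refined form which excludes not only $p$ but also every prime $\ell \leq (g-1)!$. Chai's foundational statements concern prime-to-$p$ Hecke orbits; extending them to orbits with degree coprime to an arbitrary finite set of primes should follow from his theory of ordinary leaves, the monodromy of Igusa towers and hypersymmetric points, but requires careful verification. A plausible fallback is to proceed prime by prime, producing for each $\ell \in L \setminus \{p\}$ a Zariski dense locus of $[B] \in \msf A_g$ on which $\gamma_{\theta_\ell^B}$ is $\ZZ_\ell$-algebraic, and to then argue that the simultaneous locus remains Zariski dense by using, for each $\ell$, a separate prime-to-$\ell$ Hecke correspondence starting from the ordinary Jacobian and intersecting the resulting dense Hecke orbits inductively.
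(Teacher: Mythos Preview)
Your strategy is correct in spirit and matches the paper's, but you have made the Hecke-orbit step harder than it needs to be. The paper sidesteps your ``hard part'' entirely by choosing a \emph{single} prime $q$ with $q > (g-1)!$ and $q \neq p$, and working only with the $q$-Hecke orbit of an ordinary base point. Every isogeny in this orbit has $q$-power degree, hence automatically has degree coprime to every prime $\leq (g-1)!$; and Chai's theorem \cite[Theorem~2]{chaiordinaryhecke} already gives Zariski density of the $\ell$-Hecke orbit of an ordinary point for a single prime $\ell \neq p$. No refinement excluding a finite set of primes is needed.

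Two smaller remarks. First, the paper takes as base point a product of $g$ ordinary elliptic curves rather than an ordinary Jacobian; either works, but the former is more elementary to write down. Second, your identity $f^\ast\theta^B_\ell = \theta^A_\ell$ cannot hold literally for a non-trivial isogeny between principally polarized abelian varieties (it would force $\deg f = 1$); in a Hecke orbit one has $f^\ast\lambda_B = n\cdot\lambda_A$ for some integer $n$. The paper argues instead that for $y$ in the $q$-Hecke orbit of $x$, some $[q^N]\colon A_y \to A_y$ factors through $A_x$, so $q^{(2g-2)N}\cdot\gamma_\theta$ is algebraic on $A_y$; since $\theta^{g-1} = (g-1)!\cdot\gamma_\theta$ is algebraic and $\gcd(q,(g-1)!)=1$, the class $\gamma_\theta$ itself is algebraic, and Theorem~\ref{introth:integraltate} finishes.
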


\section{The integral Hodge conjecture}

In this section we use the theory developed in Chapter \ref{ch:integralfourier} to prove Theorem \ref{maintheorem}. We also prove some applications of Theorem \ref{maintheorem}: the integral Hodge conjecture for one-cycles on products of Jacobians (Theorem \ref{introth:IHCforjacobians}), the fact that the integral Hodge conjecture for one-cycles on principally polarized complex abelian varieties is stable under specialization (Corollary \ref{complexspecialization}) and density of polarized abelian varieties satisfying the integral Hodge conjecture for one-cycles (Theorem \ref{introth:density}). 

\subsection{Proof of the main theorem} \label{sec:proofmaintheorem}

Let us prove Theorem \ref{maintheorem}. 

\begin{proof}[Proof of Theorem \ref{maintheorem}]
Suppose that \ref{introitem:minimalpoincare} holds. Then \ref{introitem:integralpoincare} holds by Propositions \ref{prop:motivicetale-new} and \ref{remarketalebetti}.\ref{remark3.12.1}. Suppose that \ref{introitem:integralpoincare} holds. Then 
\ref{introitem:IHC} follows from Lemma \ref{lemma:trivial}. So we have $[\ref{introitem:minimalpoincare} \iff \ref{introitem:integralpoincare}  \implies \ref{introitem:IHC}]$. If $\ref{introitem:minimalpoincare}$ holds, then $\rho_A = c_1(\ca P_A)^{2g-1}/(2g-1)! \in \rm H^{4g-2}(A \times \wh A, \ZZ)$ is algebraic, which implies that $\rho_{\wh A} \in \rm H^{4g-2}(\wh A \times A, \ZZ)$ is algebraic. Therefore, $\rho_{A \times \wh A} 
\in \rm H^{8g-2}(A \times \wh A \times \wh A \times A, \ZZ)$ is algebraic by Equation (\ref{eq1}). We then apply the implication $[\ref{introitem:minimalpoincare}  \implies \ref{introitem:IHC}]$ to the abelian variety $A \times \wh A$, which shows that \ref{introitem:integralhodgeforproduct} holds. Since $[\ref{introitem:integralhodgeforproduct}  \implies \ref{introitem:minimalpoincare}]$ is trivial, we have proven $[\ref{introitem:minimalpoincare} \iff \ref{introitem:integralpoincare}  \iff \ref{introitem:integralhodgeforproduct} \implies  \ref{introitem:IHC}]$. 

Next, assume that $A$ is principally polarized by $\theta \in \textnormal{NS}(A) \subset \rm H^2(A, \ZZ)$. The directions $[\ref{introitem:IHC}\implies \ref{introitem:minimalclass}]$ and $[\ref{introitem:integralpoincare} \implies \ref{introitem:minimalpoincare2}]$ are trivial and $[\ref{introitem:minimalclass} \implies \ref{introitem:minimalpoincare}]$ follows from Propositions \ref{prop:motivicetale-new} and \ref{remarketalebetti}.\ref{remark3.12.1}. We claim that \ref{introitem:minimalpoincare2} implies \ref{introitem:IHC}. Define $$\sigma_A = c_1(\ca P_A)^{2g-2}/(2g-2)! \in \rm H^{4g-4}(A \times \wh A,\ZZ)$$ and let $S \in \CH_2(A \times \wh A)$ be such that $cl(S) =  \sigma_A$. The squares in the following diagram commute:
\begin{equation} \label{diagramwithonlysurfaceclasses}
\xymatrix{
\CH^1(A) \ar[d]^{cl}\ar[r]^{\pi_1^\ast}& \CH^1(A \times \wh A) \ar[d]^{cl}\ar[r]^{\cdot S}& \CH^{2g-1}(A \times \wh A) \ar[d]^{cl} \ar[r]^{\pi_{2,\ast}} & \CH_1(\wh A) \ar[d]^{cl} \\
\rm H^2(A,\ZZ) \ar[r]^{\pi_1^\ast} & \rm H^{2}(A \times \wh A,\ZZ) \ar[r]^{\cdot \sigma_A} & \rm H^{4g-2}(A \times \wh A,\ZZ) \ar[r]^{\pi_{2,\ast}} & \rm H^{2g-2}(\wh A,\ZZ). 
}
\end{equation}
Since $\mr F_A = \pi_{2,\ast}\left( \ch(\ca P_A) \cdot \pi_1^\ast(-)\right)$ 
restricts to an isomorphism $$\mr F_A\colon \rm H^2(A,\ZZ) \xrightarrow{\sim} \rm H^{2g-2}(\wh A,\ZZ)$$ by \cite[Proposition 1]{beauvillefourier}, the composition $\pi_{2,\ast} \circ (- \cdot \sigma_A) \circ \pi_1^\ast$ on the bottom row of (\ref{diagramwithonlysurfaceclasses}) is an isomorphism. By Lefschetz $(1,1)$, the map $cl\colon \CH_1(\wh A) \to \Hdg^{2g-2}(\wh A,\ZZ)$ is therefore surjective. 

It remains to prove the algebraicity of the classes $\theta^i/i! \in \rm H^{2i}(A,\ZZ)$. This follows from Theorem \ref{th:PDstructure} and the following equality, see \cite[Corollaire 2]{beauvillefourier}):
\[
\frac{\theta^{i}}{i!}= \frac{\gamma_\theta^{\star j}}{j!}, \quad \gamma_\theta = \frac{\theta^{g-1}}{(g-1)!} \in \rm H^{2g-2}(A, \ZZ), \quad i + j = g.
\]
Therefore, the proof is finished.  
\end{proof}

\begin{corollary} \label{cor:IHCforproducts}
Let $A$ and $B$ be complex abelian varieties of respective dimensions $g_A, g_B$.
\begin{itemize}
    \item The Hodge classes $\rho_A \in \rm H^{4g_A-2}(A \times \wh A, \ZZ)$ and $\rho_B \in \rm H^{4g_B-2}(B \times \wh B, \ZZ)$ are algebraic if and only if $A \times \wh A$, $B \times \wh B$, $A\times B$ and $\wh A \times \wh B$ satisfy the integral Hodge conjecture for one-cycles. 
    \item If $A$ and $B$ are principally polarized, then the integral Hodge conjecture for one-cycles holds for $A \times B$ if and only if it holds for $A$ and $B$. 
\end{itemize}
\end{corollary}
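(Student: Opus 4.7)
The plan is to derive both bullets directly from Theorem~\ref{maintheorem} via two Künneth-type identities that are analogues of Equation~(\ref{eq1}) in the proof of Theorem~\ref{th:motivic}. For the first bullet, I would first compute $\rho_{A \times B}$ in $\rm H^{4(g_A+g_B)-2}(A \times B \times \wh A \times \wh B, \ZZ)$: since $\ca P_{A \times B} \cong \pi_{13}^\ast \ca P_A \otimes \pi_{24}^\ast \ca P_B$, expanding $c_1(\ca P_{A \times B})^{2(g_A+g_B)-1}$ by the binomial theorem and using the identity $c_1(\ca P_A)^{2g_A}/(2g_A)! = [0]_{A \times \wh A}$ (and the analogue for $B$) leaves only two terms, giving
\[
\rho_{A \times B} = \pi_{13}^\ast(\rho_A) \cdot \pi_{24}^\ast([0]_{B \times \wh B}) + \pi_{13}^\ast([0]_{A \times \wh A}) \cdot \pi_{24}^\ast(\rho_B).
\]
If $\rho_A$ and $\rho_B$ are algebraic, then so is $\rho_{A \times B}$, and Theorem~\ref{maintheorem} applied in the direction (\ref{introitem:minimalpoincare})$\Leftrightarrow$(\ref{introitem:integralhodgeforproduct}) to $A$, $B$, and $A \times B$, together with (\ref{introitem:integralhodgeforproduct})$\Rightarrow$(\ref{introitem:IHC}), yields the integral Hodge conjecture for one-cycles on all four of $A \times \wh A$, $B \times \wh B$, $A \times B$, and $\wh A \times \wh B$. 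Conversely, IHC for $A \times \wh A$ implies algebraicity of $\rho_A$ via Theorem~\ref{maintheorem}, (\ref{introitem:integralhodgeforproduct})$\Rightarrow$(\ref{introitem:minimalpoincare}), and likewise for $B$.

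For the second bullet, with $A, B$ principally polarized by $\theta_A, \theta_B$, the product $A \times B$ carries the principal polarization $\theta = \pi_1^\ast\theta_A + \pi_2^\ast\theta_B$. Expanding $\gamma_\theta = \theta^{g_A+g_B-1}/(g_A+g_B-1)!$ via the binomial theorem, only two terms in the sum have all exponents within the allowed ranges ($\theta_A^i$ vanishes for $i > g_A$, and similarly for $B$), and using $\theta_A^{g_A}/g_A! = [0]_A$ (for a principal polarization) one obtains
\[
\gamma_\theta = \pi_1^\ast(\gamma_{\theta_A}) \cdot \pi_2^\ast([0]_B) + \pi_1^\ast([0]_A) \cdot \pi_2^\ast(\gamma_{\theta_B}).
\]
The backward direction of the second bullet is then immediate: if IHC holds for $A$ and $B$, then $\gamma_{\theta_A}$ and $\gamma_{\theta_B}$ are algebraic by Theorem~\ref{maintheorem}, (\ref{introitem:IHC})$\Rightarrow$(\ref{introitem:minimalclass}), so $\gamma_\theta$ is algebraic, and the implication (\ref{introitem:minimalclass})$\Rightarrow$(\ref{introitem:IHC}) applied to $A \times B$ finishes. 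For the forward direction, I would start from IHC on $A \times B$, write $\gamma_\theta = cl(Z)$ for some $Z \in \CH_1(A \times B)$, and apply $\pi_{1,\ast}$ to both sides. By the projection formula combined with $\pi_{1,\ast}(\pi_2^\ast[0]_B) = [A]$ and $\pi_{1,\ast}(\pi_2^\ast \gamma_{\theta_B}) = 0$ (for dimension reasons: $\pi_2^\ast\gamma_{\theta_B}$ has dimension $g_A + 1 > \dim A$), one recovers $\gamma_{\theta_A} = cl(\pi_{1,\ast}Z)$, and symmetrically for $\gamma_{\theta_B}$. Theorem~\ref{maintheorem}, (\ref{introitem:minimalclass})$\Rightarrow$(\ref{introitem:IHC}), applied separately to $A$ and $B$ then concludes.

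Neither step requires any new idea beyond Theorem~\ref{maintheorem}; the only substantive task is the Künneth bookkeeping that shows exactly two bidegrees contribute to each of the two identities, which closely parallels the computation of Equation~(\ref{eq1}).
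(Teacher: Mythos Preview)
Your proposal is correct and follows essentially the same approach as the paper. The paper's proof is terse—citing Theorem~\ref{maintheorem} and Equation~(\ref{eq1}) for the first bullet and simply asserting that algebraicity of the minimal class on a product is equivalent to algebraicity on the factors for the second—whereas you spell out the Künneth expansion of $\rho_{A\times B}$ and $\gamma_\theta$ explicitly and supply the pushforward argument for the forward direction of the second bullet; these are exactly the details the paper leaves implicit.
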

\begin{proof}
The first statement follows from Theorem \ref{maintheorem} and Equation (\ref{eq1}). The second statement follows from the fact that the minimal cohomology class of the product $A \times B$ is algebraic if and only if the minimal cohomology classes of the factors $A$ and $B$ are both algebraic.
\end{proof}

\begin{proof}[Proof of Theorem \ref{introth:IHCforjacobians}]
By Corollary \ref{cor:IHCforproducts} we may assume $n = 1$, so let $C$ be a smooth projective curve. Let $p \in C$ and consider the morphism $\iota\colon C \to J(C)$ defined by sending a point $q$ to the isomorphism class of the degree zero line bundle $\OO(p-q)$. 
Then $cl(\iota(C)) = \gamma_\theta \in \rm H^{2g-2}(J(C), \ZZ)$ by Poincar\'e's formula \cite{arbarello1985geometry}, so $\gamma_\theta$ is algebraic and the result follows from Theorem \ref{maintheorem}. 
\end{proof}

\begin{remarks} \label{symmetricpowerremark}
\begin{enumerate}[wide, labelwidth=!, labelindent=0pt]
    \item \label{symmetricremarkone} Let us give another proof of Theorem \ref{introth:IHCforjacobians} in the case $n = 1$, i.e.\ let $C$ be a smooth projective curve of genus $g$ and let us prove the integral Hodge conjecture for one-cycles on $J(C)$ in a way that does not use Fourier transforms. It is classical that any Abel-Jacobi map $C^{(g)} \to J(C)$ is birational. On the other hand, the integral Hodge conjecture for one-cycles is a birational invariant, see \cite[Lemma 15]{voisin_someaspectsofthehodgeconjecture}. Therefore, to prove it for $J(C)$ it suffices to prove it for $C^{(g)}$. One then uses \cite[Corollary 5]{delbanohodgecurve} which says that for each $n \in \ZZ_{\geq 1}$, there is a natural polarization $\eta$ on the $n$-fold symmetric product $C^{(n)}$ such that for any $i \in \ZZ_{\geq 0}$, the map $\eta^{n-i}\cup (-) \colon \rm H^i(C^{(n)}, \ZZ) \to \rm H^{2n-i}(C^{(n)}, \ZZ)$ is an isomorphism. In particular, the variety $C^{(n)}$ satisfies the integral Hodge conjecture for one-cycles for any positive integer $n$. 
    \item \label{symmetricremarktwo} Along these lines, observe that the integral Hodge conjecture for one-cycles holds not only for symmetric products of smooth projective complex curves but also for any product $C_1 \times \cdots \times C_n$ of smooth projective curves $C_i$ over $\CC$. Indeed, this follows readily from the K\"unneth formula. 
    \item Let $C$ be a smooth projective complex curve of genus $g$. Our proof of Theorem \ref{maintheorem} provides an explicit description of $\Hdg^{2g-2}(J(C),\ZZ)$ depending on $\Hdg^2(J(C),\ZZ)$. More generally, let $(A,\theta)$ be a principally polarized abelian variety of dimension $g$, identify $A$ and $\wh A$ via the polarization, and let $\ell = c_1(\ca P_A) \in \rm H^2(A \times \wh A,\ZZ)$. Then $\ell = m^\ast(\theta) - \pi_1^\ast(\theta) - \pi_2^\ast(\theta)$, which implies that 

    \begin{align*}
\sigma_A= 
\frac{\ell^{2g-2}}{(2g-2)!} &= \sum_{\substack{i,j,k \geq 0\\ i + j + k = 2g-2}}^{2g-2}
(-1)^{j+k}\cdot  m^\ast\left( \frac{\theta^i}{i!}\right)\cdot \pi_1^\ast\left(\frac{\theta^j}{j!}\right)\cdot \pi_2^\ast \left(\frac{\theta^{k}}{k!}\right). 
    \end{align*}
    Any $\beta \in \Hdg^{2g-2}(A,\ZZ)$ is of the form $\pi_{2,\ast} \left( \sigma_A \cdot \pi_1^\ast [D] \right)$, where $[D] = cl(D)$ for a divisor $D$ on $A$, as follows from \eqref{diagramwithonlysurfaceclasses}. Therefore, any $\beta \in \Hdg^{2g-2}(A,\ZZ)$ may be written as
        \begin{equation} \label{beta}
\beta = \sum_{\substack{i,j,k \geq 0\\ i + j + k = 2g-2}}^{2g-2}
(-1)^{j+k}\cdot \pi_{2,\ast}\left( m^\ast\left( \frac{\theta^i}{i!}\right)\cdot \pi_1^\ast\left(\frac{\theta^j}{j!}\right)\cdot \pi_1^\ast[D]\right) \cdot \frac{\theta^{k}}{k!}. 
    \end{equation}
\noindent
Returning to the case of a Jacobian $J(C)$ of a smooth projective curve $C$ of genus $g$, the classes $\theta^i/i!$ appearing in (\ref{beta}) are effective algebraic cycle classes. Indeed, for $p \in C$ and $d \in \ZZ_{\geq 1}$, the image of the morphism $C^{d} \to J(C)$, $(x_i) \mapsto \ca O(\sum_ix_i-d\cdot p)$ defines a subvariety $W_d(C) \subset J(C)$ and by Poincar\'e's formula \cite[\S I.5]{arbarello1985geometry} one has $cl(W_d(C)) = \theta^{g-d}/(g-d)! \in \rm H^{2g-2d}(J(C), \ZZ)$. 
\end{enumerate}
\end{remarks}
\noindent
Apart from Theorem \ref{introth:IHCforjacobians}, we obtain the following corollary of Theorem \ref{maintheorem}:

\begin{corollary} \label{complexspecialization}
Let $A\to S$ be a principally polarized abelian scheme over a proper, smooth and connected variety $S$ over $\CC$. 
Let $X \subset S(\CC)$ be the set of $x \in S(\CC)$ such that the abelian variety $A_x$ satisfies the integral Hodge conjecture for one-cycles. Then $X = \cup_iZ_i(\CC)$ for some countable union of closed algebraic subvarieties $Z_i \subset S$. 
In particular, if the integral Hodge conjecture for one-cycles holds on $U(\CC)$ for a non-empty open subscheme $U$ of $S$, then it holds on all of $S(\CC)$. 
\end{corollary}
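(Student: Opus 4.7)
By Theorem \ref{maintheorem}, a principally polarized abelian variety $A_x$ satisfies the integral Hodge conjecture for one-cycles if and only if the minimal class $\gamma_{\theta_x} = \theta_x^{g-1}/(g-1)! \in \rm H^{2g-2}(A_x,\ZZ)$ is algebraic. The principal polarization of $A/S$ furnishes a global section $\theta \in \rm H^0(S, R^2\pi_\ast \ZZ)$, so the minimal class globalizes to a section $\gamma_\theta$ of $R^{2g-2}\pi_\ast \ZZ$, of Hodge type $(g-1,g-1)$ everywhere on $S$.

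The plan is to express $X$ as a countable union of $\CC$-points of closed algebraic subvarieties of $S$ by means of the relative Hilbert scheme. Fix a relatively ample line bundle on $A/S$ and, for each pair $(P_1,P_2)$ of Hilbert polynomials of dimension one, form the fiber product
$$H_{P_1,P_2} = \textnormal{Hilb}^{P_1}(A/S) \times_S \textnormal{Hilb}^{P_2}(A/S),$$
which is projective over $S$; there are only countably many choices of $(P_1,P_2)$. The cohomology class of a flat family of one-cycles is a locally constant section of the pullback of $R^{2g-2}\pi_\ast\ZZ$, so the locus
$$W_{P_1,P_2} = \{(Z_1,Z_2) \in H_{P_1,P_2} \mid [Z_1]-[Z_2] = \gamma_\theta\}$$
is cut out by the vanishing of a locally constant integral section, hence is a union of connected components of $H_{P_1,P_2}$ and in particular closed and projective over $S$. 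Its scheme-theoretic image $Z_{P_1,P_2} \subset S$ is therefore a closed algebraic subvariety, and $X = \bigcup_{P_1,P_2} Z_{P_1,P_2}(\CC)$, because a cohomology class is algebraic exactly when it is a difference of classes of effective one-cycles.

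For the final assertion, suppose the integral Hodge conjecture for one-cycles holds on $U(\CC)$ for some non-empty open $U \subset S$, so that $U(\CC) \subset \bigcup_i Z_i(\CC)$. The complex manifold $U(\CC)$ is a Baire space, so some intersection $Z_i(\CC)\cap U(\CC)$ must have non-empty interior in $U(\CC)$. Since $S$ is smooth and connected, hence irreducible, any proper closed subvariety of $S$ has strictly smaller dimension, and its $\CC$-points form a closed analytic subset of $S(\CC)$ with empty interior. Thus the corresponding $Z_i$ equals $S$, and we conclude $X = S(\CC)$.

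The main technical point to justify carefully is that $W_{P_1,P_2}$ is a union of connected components of $H_{P_1,P_2}$. This rests on the fact that the class of the universal cycle on a connected component of $H_{P_1,P_2}$ defines a single locally constant section of the pullback of $R^{2g-2}\pi_\ast\ZZ$, and that comparison with the pulled back global section $\gamma_\theta$ produces a locally constant integral section whose zero locus is open and closed. The rest of the argument is a direct combination of Theorem \ref{maintheorem} with the standard Baire category reasoning familiar from the study of Noether--Lefschetz loci.
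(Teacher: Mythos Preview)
Your proof is correct and follows essentially the same approach as the paper: both reduce via Theorem~\ref{maintheorem} to the algebraicity of the minimal class $\gamma_\theta$, then use products of relative Hilbert schemes over $S$ to exhibit $X$ as a countable union of images of projective $S$-schemes. The only noteworthy difference is in how the closedness of the relevant locus is justified: you argue directly that the cycle-class section is locally constant on the Hilbert scheme, so the locus where it equals $\gamma_\theta$ is a union of connected components; the paper instead isolates this step as a separate lemma (Lemma~\ref{lemma_algebraicityminimalclass}) proved via specialization in $\ell$-adic \'etale cohomology. Your topological argument is slightly more direct, while the paper's version has the minor advantage of being phrased so as to apply to possibly singular base curves. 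You also spell out the Baire category step for the ``in particular'' clause, which the paper leaves implicit.
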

\begin{proof}
Write $\ca A = A(\CC)$ and $B = S(\CC)$ and let $\pi\colon \ca A \to B$ be the induced family of complex abelian varieties. Let $g \in \ZZ_{\geq 0}$ be the relative dimension of $\pi$ and define, for $t \in S(\CC)$, $$\theta_t \in \NS(\ca A_t)\subset \rm H^2(\ca A_t, \ZZ)$$ to be the polarization of $\ca A_t$. There is a global section $\gamma_\theta \in \rm R^{2g-2}\pi_\ast\ZZ$ such that for each $t \in B$, $$\gamma_{\theta_t} = \theta_t^{g-1}/(g-1)! \in \rm H^{2g-2}(\ca A_t, \ZZ).$$ 
Note that $\gamma_\theta$ is Hodge everywhere on $B$. For those $t \in B$ for which $\gamma_{\theta_t}$ is algebraic, write $\gamma_{\theta_t}$ as the difference of effective algebraic cycle classes on $\ca A_t$. This gives a countable disjoint union $$\phi \colon \sqcup_{ij}H_i\times_S H_j \to S$$ of products of relative Hilbert schemes $H_i/S$. By Lemma \ref{lemma_algebraicityminimalclass} below, $\gamma_{\theta_t}$ is algebraic precisely for closed points $t$ in the image $Y \subset S$ of $\phi$. By Theorem \ref{maintheorem}, $X = Y$. 
\end{proof}

\begin{lemma} \label{lemma_algebraicityminimalclass}
Let $S$ be an integral variety over $\CC$, let $\ca A \to S$ be a principally polarized abelian scheme of relative dimension $g$ over $S$ and let $\ca C_i \subset \ca A$ for $i= 1,\dotsc, k$ be relative curves in $\ca A$ over $S$. Let $n_1,\dotsc, n_k$ be integers and let $y \in S(\CC)$ be a point that satisfies $\sum_{i = 1}^k n_i\cdot cl(C_{i,y}) = \gamma_{\theta_y} \in \rm H^{2g-2}(A_y,\ZZ)$. Then for every $x \in S(\CC)$, one has the equality $\sum_{i = 1}^k n_i\cdot cl(C_{i,x}) = \gamma_{\theta_x} \in \rm H^{2g-2}(A_x,\ZZ)$. 
\end{lemma}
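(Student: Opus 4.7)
The plan is to prove this by a deformation-invariance argument. The key observation is that both sides of the claimed equality, when $x$ varies over $S(\CC)$, define global sections of a local system on $S(\CC)$ whose stalks are the relevant cohomology groups of the fibers. Since $S(\CC)$ is connected, two global sections that agree at one point agree everywhere.

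First I would set up the local system. Since $S$ is integral over $\CC$, its analytification $S(\CC)$ is path-connected in the euclidean topology. The morphism $\pi \colon \ca A \to S$ is proper and smooth, so after analytification it is a locally trivial fiber bundle by Ehresmann's theorem, and in particular $R^{2g-2}\pi_\ast \ZZ$ is a local system on $S(\CC)$ with stalk $\rm H^{2g-2}(A_x(\CC),\ZZ)$ at each $x$. Next, the relative principal polarization gives a global section $\theta \in \rm H^0(S(\CC), R^2\pi_\ast\ZZ)$ with stalk $\theta_x$. The divided power $\theta^{g-1}/(g-1)!$ is integral in each fiber (it is a universal integer polynomial in the polarization on the first homology, as is classical for principally polarized abelian varieties), so $\gamma_\theta$ defines a global section of $R^{2g-2}\pi_\ast\ZZ$ with stalk $\gamma_{\theta_x}$ at every $x$.

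Next, I would associate to each relative curve $\ca C_i \subset \ca A$ a global section of the same local system. The inclusion $\ca C_i \hookrightarrow \ca A$ yields a cohomology class $cl(\ca C_i) \in \rm H^{2g-2}(\ca A(\CC), \ZZ)$, and the Leray spectral sequence for $\pi$ produces an edge homomorphism
\[
\rm H^{2g-2}(\ca A(\CC), \ZZ) \longrightarrow \rm H^0(S(\CC), R^{2g-2}\pi_\ast\ZZ).
\]
Let $s_i$ be the image of $cl(\ca C_i)$ under this map. Because $\pi$ is smooth and proper and $\ca C_i$ has pure relative dimension one over $S$, the stalk of $s_i$ at a point $x$ is given by the restriction of $cl(\ca C_i)$ to the fiber, which equals $cl(C_{i,x}) \in \rm H^{2g-2}(A_x(\CC), \ZZ)$ by compatibility of the cycle class map with pullback along the smooth inclusion of the fiber.

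Finally, consider the global section
\[
\alpha \coloneqq \sum_{i=1}^{k} n_i \cdot s_i - \gamma_\theta \in \rm H^0(S(\CC), R^{2g-2}\pi_\ast\ZZ).
\]
Its stalk at $y$ vanishes by hypothesis. Since $R^{2g-2}\pi_\ast\ZZ$ is a local system and $S(\CC)$ is connected, the section $\alpha$ must vanish identically, proving the lemma. The main obstacle I anticipate is the verification that the Leray edge map sends $cl(\ca C_i)$ to a section whose stalk at $x$ is precisely $cl(C_{i,x})$ — this uses proper base change together with compatibility of the cycle class map with smooth base change, and would need to be stated carefully to accommodate fibers $C_{i,x}$ that may be singular or non-reduced.
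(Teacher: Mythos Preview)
Your argument is correct and takes a genuinely different route from the paper. You work directly with the Betti local system $R^{2g-2}\pi_\ast\ZZ$ on $S(\CC)$, exhibit both $\gamma_\theta$ and $\sum n_i\,cl(C_{i,-})$ as global sections, and conclude by connectedness. The paper instead fixes $x$, reduces by base change to the case where $S$ is a smooth connected curve, and then compares the two classes in $\ell$-adic \'etale cohomology of the geometric generic fibre via the specialization isomorphism $\rm H^{2g-2}_{\et}(A_{\bar\eta},\ZZ_\ell)\xrightarrow{\sim}\rm H^{2g-2}_{\et}(A_y,\ZZ_\ell)$ and then specializes again to $x$. Your approach is more elementary and transparent in the complex-analytic setting; the paper's approach is in keeping with the \'etale methods used elsewhere (e.g.\ in the proof of the integral Tate results) and sidesteps any issue with the base being singular by first cutting down to a smooth curve.

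One caveat worth flagging: you invoke Ehresmann's theorem to conclude that $R^{2g-2}\pi_\ast\ZZ$ is a local system, but Ehresmann is usually stated for submersions between smooth manifolds, and $S$ is only assumed to be an integral variety over $\CC$, hence possibly singular. For abelian schemes this is harmless, since $R^q\pi_\ast\ZZ \cong \bigwedge^q R^1\pi_\ast\ZZ$ and $R^1\pi_\ast\ZZ$ is a local system regardless (dual to the lattice of the family); alternatively one can pass to the smooth locus of $S$, which is open, dense, and still connected, and then propagate by the same connectedness argument the paper uses implicitly when restricting to open affines. The same smoothness issue affects your construction of $cl(\ca C_i)\in\rm H^{2g-2}(\ca A(\CC),\ZZ)$, but again this is easily repaired by the same reduction, or by arguing directly that $x\mapsto cl(C_{i,x})$ is locally constant using flatness of $\ca C_i\to S$ and local topological triviality of $\pi$.
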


\begin{proof}
Since it suffices to prove the lemma for any open affine $U \subset S$ that contains $y$, we may assume that $S$ is quasi-projective. Fix $x \in S(\CC)$. After replacing $S$ by a suitable base change containing $x$ and $y$, we may assume that $S$ is a smooth connected curve. 
For $t \in S$, denote by $\theta_{\bar t} \in \rm H^{2}_{\etale}(A_{\bar t}, \ZZ_\ell)$ the class of the polarization and $\gamma_{\theta_{\bar t}} = \theta_{\bar t}^{g-1}/(g-1)!$. 
Let $\eta = \Spec(K)$ be the generic point of $S$. The elements $\sum_i n_i \cdot cl(C_{i,\bar \eta})$ and $\gamma_{\theta_{\bar \eta}}$ in $\rm H^{2g-2}_{\etale}(A_{\bar \eta}, \ZZ_\ell)$ both map to $$\sum_i n_i \cdot cl(C_{i,y}) = \gamma_{\theta_{y}} \in  \rm H^{2g-2}_{\etale}(A_{y}, \ZZ_\ell)$$ under the specialization homomorphism 
$$
s\colon \rm H^{2g-2}_{\etale}(A_{\bar \eta}, \ZZ_\ell) \to \rm H^{2g-2}_{\etale}(A_{y}, \ZZ_\ell)
$$ 
by \cite[Example 20.3.5]{fultonintersection}. Since $s$ is an isomorphism, we have $\sum_i n_i \cdot cl(C_{i,\bar \eta}) = \gamma_{\theta_{\bar \eta}}$, which implies that $\sum_{i}n_i\cdot cl(C_{x,i}) = \gamma_{\theta_x} \in \rm H_{\etale}^{2g-2}(A_x,\ZZ_\ell)$.
\end{proof}

\subsection{Density of abelian varieties satisfying the integral Hodge conjecture for one-cycles} \label{subsec:density}

The goal of this section is to prove that Conditions $\ref{introitem:minimalpoincare}-\ref{introitem:integralhodgeforproduct}$ in Theorem \ref{maintheorem} are satisfied on a dense subset of the moduli space of complex abelian varieties. To do so, will we state yet another criterion that a complex abelian variety may satisfy. In some sense this criterion provides a bridge between abelian varieties outside the Torelli locus and those lying within, thereby implying the integral Hodge conjecture for one-cycles for the abelian variety under consideration. 

\begin{definition} \label{definition:primeisogenies}
Let $A$ and $B$ be a complex abelian varieties and let $p$ a prime number. We say that $A$ is \textit{prime-to-$p$ isogenous to a $B$} if there is an isogeny $\alpha \colon A \to B$ whose degree $\deg(\alpha)$ is not divisible by $p$. We say that $A$ is \textit{$p$-power isogenous to $B$} if $A$ is isogenous to $B$ for some isogeny $\alpha$ whose degree is a power of $p$. 
\end{definition}
\noindent
The following proposition shows in particular that to prove the density part of the statement in Theorem~\ref{introth:density}, it suffices to prove that for any prime number $\ell$, those abelian varieties that are $\ell$-power isogenous to a product of elliptic curves are dense in their moduli space.

\begin{proposition} \label{proposition:IHCONELEMMA}
Let $A$ be a complex abelian variety of dimension $g$. Let $\wh A$ be the dual abelian variety and let $\ca P_A$ be the Poincar\'e bundle. Let $\kappa$ be a non-zero integer such that the cohomology class $\kappa \cdot c_1(\ca P_A)/(2g-1)! \in \rm H^{4g-2}(A \times \wh A, \ZZ)$ is algebraic. Consider the following statements:
\begin{enumerate}
    \item \label{itemlemma:one} The abelian variety $A$ satisfies the integral Hodge conjecture for one-cycles.
    \item \label{itemlemma:two} For every prime $p$, there is an abelian variety $B$ such that the abelian variety $A \times B$ is prime-to-$p$ isogenous to the Jacobian of a smooth projective curve. 
    \item \label{itemlemma:twosmall} For every prime $p$ that divides $\kappa$, there is an abelian variety $B$ such that the abelian variety $A \times B$ is prime-to-$p$ isogenous to a Jacobian of a smooth projective curve. 
    \item \label{itemlemma:three} For every prime $p$, there is an abelian variety $B$ such that the abelian variety $A \times B$ is prime-to-$p$ isogenous to a product of Jacobians of smooth projective curves.
     \item \label{itemlemma:threesmall} For every prime number $p$ dividing $\kappa$, there exists an abelian variety $B$ such that the abelian variety $A \times B$ is prime-to-$p$ isogenous to a product of Jacobians of smooth projective curves. 
\end{enumerate}
The $[\ref{itemlemma:two} \implies \ref{itemlemma:twosmall} \implies \ref{itemlemma:threesmall} \implies \ref{itemlemma:one}]$ and $[\ref{itemlemma:two} \implies \ref{itemlemma:three} \implies \ref{itemlemma:threesmall}]$. Moreover, if $A$ is principally polarized by $\theta_A \in \NS(A)$, then \ref{itemlemma:one}  is implied by 
\begin{enumerate}
\setcounter{enumi}{5}
    \item \label{itemlemma:zero} For any prime number $p | (g-1)!$ there exists a smooth projective curve $C$ and a morphism of abelian varieties $\phi\colon A \to J(C)$ such that $\phi^\ast \theta_{J(C)} = m\cdot \theta_A$ for $m \in \ZZ_{\geq 1}$ with $\gcd(m,p) = 1$. 
\end{enumerate}
Finally, if $A$ is principally polarized of Picard rank one, then the statements $\ref{itemlemma:one} - \ref{itemlemma:zero} $ are equivalent.  
\end{proposition}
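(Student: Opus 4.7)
The implications $[\ref{itemlemma:two} \Rightarrow \ref{itemlemma:twosmall}]$, $[\ref{itemlemma:two} \Rightarrow \ref{itemlemma:three}]$, $[\ref{itemlemma:three} \Rightarrow \ref{itemlemma:threesmall}]$ and $[\ref{itemlemma:twosmall} \Rightarrow \ref{itemlemma:threesmall}]$ are immediate from the definitions (a Jacobian is a one-term product of Jacobians). The plan for the central implication $[\ref{itemlemma:threesmall} \Rightarrow \ref{itemlemma:one}]$ is to use Theorem~\ref{maintheorem} to reduce to showing that $\rho_A \coloneqq c_1(\ca P_A)^{2g-1}/(2g-1)! \in \rm H^{4g-2}(A \times \wh A, \ZZ)$ is algebraic. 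For each prime $p \mid \kappa$, hypothesis~\ref{itemlemma:threesmall} furnishes an isogeny $\alpha_p \colon A \times B_p \to J_p$ of degree $d_p$ coprime to $p$ onto a product of Jacobians $J_p$; by Theorem~\ref{introth:IHCforjacobians}, $\rho_{J_p}$ is algebraic. From the universal property of the Poincar\'e bundle one obtains $(\alpha_p, \mathrm{id})^\ast c_1(\ca P_{J_p}) = (\mathrm{id}, \wh\alpha_p)^\ast c_1(\ca P_{A \times B_p})$ on $(A \times B_p) \times \wh J_p$; raising to the $(2 \dim J_p - 1)$-th divided power and pushing forward by $(\mathrm{id}, \wh\alpha_p)$ yields that $d_p \cdot \rho_{A \times B_p}$ is algebraic. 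Formula~\eqref{eq1} combined with a K\"unneth computation gives $\pi_{A\wh A, \ast}(\rho_{A \times B_p}) = \rho_A$, so $d_p \cdot \rho_A$ is algebraic. Since no prime $p \mid \kappa$ divides the corresponding $d_p$, we have $\gcd(\kappa, \{d_p\}_{p \mid \kappa}) = 1$, and a Bezout combination with the hypothesis that $\kappa \cdot \rho_A$ is algebraic yields $\rho_A$ algebraic.

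For $[\ref{itemlemma:zero} \Rightarrow \ref{itemlemma:one}]$ under the principal polarization hypothesis, Theorem~\ref{maintheorem} again reduces the problem to showing $\gamma_{\theta_A} = \theta_A^{g-1}/(g-1)!$ is algebraic. Since $\theta_A^{g-1}$ is always an algebraic integral class (an intersection of $g-1$ theta divisors), the annihilator $N$ of $\gamma_{\theta_A}$ in the Voisin group $\rm Z^{2g-2}(A)$ divides $(g-1)!$. For every prime $p \mid (g-1)!$, hypothesis~\ref{itemlemma:zero} gives $\phi_p \colon A \to J(C_p)$ with $\phi_p^\ast \theta_{J(C_p)} = m_p \theta_A$ and $\gcd(m_p, p) = 1$; in particular $\phi_p$ has finite kernel, so setting $g_p = \dim J(C_p) \geq g$, the class $\theta_{J(C_p)}^{g-1}/(g-1)! = [W_{g_p - g + 1}(C_p)]$ is algebraic by Poincar\'e's formula. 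Pulling back along $\phi_p$ and using $\phi_p^\ast \theta_{J(C_p)}^{g-1} = m_p^{g-1} \theta_A^{g-1}$ shows that $m_p^{g-1} \gamma_{\theta_A}$ is algebraic, so $N \mid m_p^{g-1}$, hence $\gcd(N, p) = 1$. Since this holds for every prime $p \mid (g-1)!$ and $N \mid (g-1)!$, we conclude $N = 1$.

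For the equivalence when $\NS(A) = \ZZ \cdot \theta_A$, the remaining implications all flow from \ref{itemlemma:one}. Writing $\gamma_{\theta_A} = \sum_i a_i [Y_i]$ for irreducible curves $Y_i \subset A$ (possible by \ref{itemlemma:one} combined with Theorem~\ref{maintheorem}), each normalization $\tilde Y_i \to A$ induces, through Albanese duality and the identification $\wh A \cong A$ from the principal polarization, a morphism $\phi_i \colon A \to J(\tilde Y_i)$. The Picard rank one hypothesis forces $\phi_i^\ast \theta_{J(\tilde Y_i)} = n_i \theta_A$ for some $n_i \in \ZZ_{\geq 0}$, and applying the Fourier transform (which sends the minimal curve class $\gamma_{\theta_A}$ to $\pm \theta_A$) to $\gamma_{\theta_A} = \sum a_i [Y_i]$ produces $\sum_i a_i n_i = \pm 1$. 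Thus the $n_i$ are jointly coprime; for any prime $p$, some $n_{i_0}$ is coprime to $p$, giving \ref{itemlemma:zero}. Poincar\'e's reducibility theorem then supplies a complementary $B_p \subset J(\tilde Y_{i_0})$ such that $A \times B_p \to J(\tilde Y_{i_0})$ is an isogeny whose degree is a power of $n_{i_0}$ (hence prime-to-$p$), establishing \ref{itemlemma:twosmall} and, via the chain already proved, \ref{itemlemma:two} and \ref{itemlemma:three}.

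The main obstacle is the Picard rank one case: controlling the degree of the Poincar\'e-reducibility isogeny $A \times B_p \to J(\tilde Y_{i_0})$ so that it is genuinely prime-to-$p$, and checking integrally (not just rationally) that the Fourier transform identity yields $\sum a_i n_i = \pm 1$. A secondary subtlety is the verification of $\pi_{A\wh A, \ast}(\rho_{A \times B_p}) = \rho_A$ with integer coefficients, which relies on a careful K\"unneth decomposition and the vanishing of the unwanted component for dimensional reasons.
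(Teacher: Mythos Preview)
Your overall strategy matches the paper's, and your treatment of the trivial implications and of $[\ref{itemlemma:zero} \Rightarrow \ref{itemlemma:one}]$ is essentially identical to the paper's. Two points deserve comment.

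For $[\ref{itemlemma:threesmall} \Rightarrow \ref{itemlemma:one}]$, your route via the Poincar\'e bundle identity $(\alpha_p \times \id)^\ast \ca P_{J_p} \cong (\id \times \wh\alpha_p)^\ast \ca P_{A \times B_p}$ followed by a single push--pull is a genuine simplification of the paper's argument. The paper instead chooses an isogeny $\beta$ with $\beta \circ \alpha_p = [d_p]$, and uses that $[d_p]^\ast$ on $\rm H^{4h-2}((A\times B_p) \times \wh{(A\times B_p)},\ZZ)$ factors through $\rm H^{4h-2}(J_p \times \wh J_p,\ZZ)$, where the integral Hodge conjecture holds; this produces the multiplier $d_p^{N_p}$ rather than your $d_p$. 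Both are coprime to $p$, so both feed the same B\'ezout argument. Your ``secondary subtlety'' about $\pi_{A\wh A,\ast}\rho_{A\times B_p}$ is not a real obstacle: the binomial expansion of $(\pi^\ast c_1(\ca P_A)+\pi^\ast c_1(\ca P_{B_p}))^{2h-1}/(2h-1)!$ has only two surviving terms for dimension reasons, and only one of them has nonzero pushforward to $A \times \wh A$; the paper records this as $f_\ast\rho_{A\times B_p} = (-1)^{g_{B_p}} \rho_A$.

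For the Picard rank one case, your argument has a genuine gap where you flagged it, and one loose step before it. First, the Fourier transform is a red herring: once $\rho(A)=1$ gives $[Y_i] = n_i \gamma_{\theta_A}$ with $n_i \in \ZZ_{>0}$, substituting into $\gamma_{\theta_A} = \sum a_i [Y_i]$ yields $\sum a_i n_i = 1$ directly. That this $n_i$ agrees with the one in $\phi_i^\ast \theta_{J(\tilde Y_i)} = n_i \theta_A$ is Welters' criterion \cite[Lemma 12.2.3]{birkenhake}, which the paper invokes explicitly. Second, and more seriously, your claim that Poincar\'e reducibility gives a complementary isogeny of degree \emph{a power of $n_{i_0}$} is unsubstantiated and not what holds. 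The paper builds the complement by hand: with $A_0 = \phi(A) \hookrightarrow J(\tilde Y_{i_0})$ via $j$, set $B = \Ker(\wh j \circ \lambda_{J})^0_{\rm red}$. Then $\alpha \colon A \times B \to J(\tilde Y_{i_0})$ factors as $A \times B \xrightarrow{\pi \times \id} A_0 \times B \to J(\tilde Y_{i_0})$, and an exact-sequence comparison with $\Ker([n_{i_0}]_A)$ shows that both $\lvert\Ker(\pi)\rvert$ and $\lvert\Ker(\lambda_{A_0})\rvert$ divide $n_{i_0}$. Hence $\deg(\alpha)$ divides $n_{i_0}^2$ --- not a power of $n_{i_0}$ in general, but still coprime to $p$, which is all that is needed for $\ref{itemlemma:two}$.
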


\begin{proof}
\textbf{Step one}: $[\ref{itemlemma:two} \implies \ref{itemlemma:twosmall} \implies \ref{itemlemma:threesmall}]$ \textit{and} $[\ref{itemlemma:two} \implies \ref{itemlemma:three} \implies \ref{itemlemma:threesmall}]$. This is trivial. 
\\
\\
\textbf{Step two}: $[\ref{itemlemma:threesmall} \implies \ref{itemlemma:one}]$. 
Let $g$ be the dimension of $A$. We want to prove that the class $c_1(\ca P_A)^{2g-1}/(2g-1)! \in \rm H^{4g-2}(A \times \wh A, \ZZ)$ is algebraic. Let $p$ be any prime number that divides $\kappa$. Then by Condition \ref{itemlemma:threesmall}, there exists an abelian variety $B$ and an isogeny $\alpha\colon A \times B \to Y$ to the product $Y = \prod_iJ(C_i)$ of Jacobians $J(C_i)$ of smooth projective curves $C_i$ such that $\gcd(\deg(\alpha), p) = 1$. Define $X = A \times B$. Let $g_B$ be the dimension of $B$, let $h = g + g_B = \dim(X) = \dim(Y)$, and let $m_p = \deg(\alpha)$. There exists an isogeny $\beta \colon Y \to X$ such that 
$\beta \circ \alpha = [m_p]_X$. If we define $n_p = \deg(\beta)$ then $$m_p \cdot n_p = \deg(\alpha) \cdot \deg(\beta) = \deg(\alpha \circ \beta) = m_p^{2h},$$ 
hence $(\beta \circ \alpha) \times (\wh \alpha \circ \wh \beta) = [m_p]_{X \times \wh X}$. 
For $N_p = 2h \cdot (4h-2)$, the homomorphism$$[m_p^{2h}]^{\ast} = (m_p^{N_p} \cdot (-) ) \colon \rm H^{4h-2}(X \times \wh X, \ZZ) \to \rm H^{4h-2}(X \times \wh X, \ZZ)$$ 
will therefore factor through $\rm H^{4h-2}(Y \times \wh{Y}, \ZZ)$. Since $Y \times \wh{Y}$ satisfies the integral Hodge conjecture by Theorem \ref{introth:IHCforjacobians}, the Hodge class $$m_p^{N_p} \cdot c_1(P_X)^{2h-1}/(2h-1)! \in \rm H^{4h-2}(X \times \wh X , \ZZ)$$ is algebraic. 
Let $f\colon A \times B \times \wh A \times \wh B \to A \times \wh A$ and $g \colon A \times B \times \wh A \times \wh B \to B \times \wh B$ be the canonical projections. Then $\ca P_X \cong f^\ast \ca P_A \otimes g^\ast \ca P_B$. Using this and denoting $\mu = c_1(\ca P_A)$ and $\nu = c_1(\ca P_B)$ we have
\[
    \frac{c_1(\ca P_{X})^{2h-1}}{(2h-1)!} =f^{\ast}\left(\frac{\mu^{2g-1}}{(2g-1)!}\right) \cdot g^\ast \left( \frac{\nu^{2g_B}}{(2g_B)!} \right) + f^\ast \left( \frac{\mu^{2g}}{(2g)!} \right) \cdot  g^\ast\left(\frac{\nu^{2g_B-1}}{(2g_B-1)!}    \right).\] 
    This implies that 
$$f_\ast \left(c_1(\ca P_X)^{2h-1}/(2h-1)! \right) = (-1)^{g_b}\mu^{2g-1}/(2g-1)!.$$
In particular, the class $m_p^{N_p} \cdot c_1(\ca P_A)^{2g-1}/(2g-1)! \in \rm H^{4g-2}(A \times \wh A, \ZZ)$ is algebraic. Let $p_1, \dotsc, p_n$ be all prime divisors of $\kappa$ and observe that $$\gcd(\kappa, m_{p_1}^{N_{p_1}},m_{p_2}^{N_{p_2}}, \dotsc, m_{p_n}^{N_{p_n}}) = 1.$$ 
Thus, there are integers $a, b_1, \dotsc, b_n$ such that $a \cdot \kappa + \sum_{i = 1}^n b_i \cdot m_{p_i}^{N_{p_i}} = 1$, and
\[
\frac{c_1(\ca P_A)^{2g-1}}{(2g-1)!} =
a \cdot \kappa \cdot \frac{c_1(\ca P_A)^{2g-1}}{(2g-1)!}  + \sum_{i = 1}^n b_i \cdot m_{p_i}^{N_{p_i}} \cdot \frac{c_1(\ca P_A)^{2g-1}}{(2g-1)!} \in \rm H^{4g-2}(A \times \wh A, \ZZ).
\]
This proves that $c_1(\ca P_A)^{2g-1}/(2g-1)!$ is a $\ZZ$-linear combination of algebraic classes, hence algebraic. Condition \ref{itemlemma:one} follows then from Theorem \ref{maintheorem}.
\\
\\
\textbf{Step three}: [$\ref{itemlemma:zero} \implies \ref{itemlemma:one}]$ \textit{for $A$ principally polarized by $\theta_A \in \NS(A)$}. Let $p_1,\dotsc, p_k$ be the prime factors of $(g-1)!$ and let $C_1,\dotsc, C_k$ be smooth proper curves for which there exist homomorphisms $\phi_i \colon A \to J(C_i)$ such that $\phi^\ast \theta_{J(C_i)} = m_i \cdot \theta_A$ for some $m_i \in \ZZ_{\geq 1}$ with $p_i \nmid m$. Since $\theta_{J(C_i)}^{g-1}/(g-1)! \in \rm H^{2g-2}(J(C_i),\ZZ)$ is algebraic for each $i$, the classes $$\phi_i^\ast(\theta_{J(C_i)}^{g-1}/(g-1)!) = m_i^{g-1} \cdot \theta_{A}^{g-1}/(g-1)! \in \rm H^{2g-2}(A,\ZZ)$$ are algebraic. Since $\gcd((g-1)!, m_1,\dotsc, m_k) = 1$, this implies that $\theta_A^{g-1}/(g-1)!$ is algebraic. Condition \ref{itemlemma:one} follows then from Theorem \ref{maintheorem}. 
\\
\\
\textbf{Step four}: [$\ref{itemlemma:zero} \impliedby \ref{itemlemma:one} \implies \ref{itemlemma:two}]$ \textit{for $(A,\theta_A)$ principally polarized with $\rho(A) =1$}. 
Write $\theta = \theta_A$. Let $Z_1, \dotsc, Z_n$ be integral curves $Z_i \subset A$ and let $e_1, \dotsc, e_n \in \ZZ$ with $e_i \neq 0$ for all $i$ be such that 
$$
{\theta^{g-1}}/{(g - 1)!} = \sum_{i = 1}^n e_i\cdot [Z_i] \in \rm H^{2g - 2}(A, \ZZ).$$ Since $\rho(A) = 1$, the group $\Hdg^{2g-2}(A, \ZZ)$ is generated by $\theta^{g-1}/(g-1)!$. Consequently, we have $[Z_i] = f_i\cdot \left(\theta^{g-1}/(g-1)!\right)$ for some non-zero $f_i \in \ZZ$. Hence we can write 
$$
{\theta^{g-1}}/{(g - 1)!} = \sum_{i = 1}^n e_i\cdot [Z_i]  = \sum_{i = 1}^n e_i\cdot f_i \cdot \theta^{g-1}/(g - 1)!
$$ which implies $\sum_{i = 1}^n e_i\cdot f_i  = 1$. Now let $p$ be any prime number. Then there exists an integer $i$ with $1 \leq i \leq n$ such that $p$ does not divide $f_i$. Let $C_i \to Z_i$ be the normalization of $Z_i$ and let $\lambda_A = \varphi_{\theta} \colon A \to \wh A$ be the polarization corresponding to $\theta$. This gives a diagram
\begin{equation}
\label{diagram:embedintojacobinian}
\xymatrixcolsep{4pc}
\xymatrix{
C_i \ar[rr]^{\varphi} \ar[dr]^{\iota}&& A \ar@/_3pc/[rrr]^{\phi} \ar[r]_{\sim}^{\lambda_A} & \wh A \ar[r]^{\varphi^{\ast}} & \Pic^0(C_i) \ar[r]^{a}_{\sim} & J(C_i), \\
& J(C_i) \ar[ur]^{\psi} & & &  &
}
\end{equation}
where $$\iota\colon C_i \to J(C_i) = H^0(C, \Omega_C)^\ast/H_1(C,\ZZ)$$ is the Abel--Jacobi map (for some $p \in C$), and $\varphi^\ast\colon \wh A = \Pic^0(A) \to \Pic^0(C_i)$ is the pullback of line bundles along $\varphi\colon C_i \to A$. The natural homomorphism $a\colon \Pic^0(C_i) \to J(C_i)$ is an isomorphism by the Abel--Jacobi theorem. Since the triangle on the left in Diagram (\ref{diagram:embedintojacobinian}) commutes and $[Z_i] \in \rm H^{2g-2}(A, \ZZ)$ is non-zero, the morphism $\psi \colon J(C_i) \to A$ is non-zero. As $\rho(A) = 1$, the map $\psi\colon J(C_i) \to A$ must be surjective, the Picard rank of a non-simple abelian variety being greater than one. Dually, $\psi$ gives rise to a non-zero homomorphism $\wh \psi\colon \wh A \to \wh{J(C_i)}$, and the simpleness of $\wh A$ implies that $\wh \psi$ is finite onto its image. 

We claim that the same is true for $\phi$. To prove this, it suffices to show that the kernel of $\varphi^\ast \colon \wh A \to \Pic^0(C_i)$ is finite. Since the homomorphism $\iota^\ast\colon \wh{J(C_i)} \to \Pic^0(C_i)$ induced by the embedding $\iota\colon C_i \to J(C_i)$ is an isomorphism, dualizing the triangle on the left in Diagram (\ref{diagram:embedintojacobinian}) proves our claim. 

By construction, we have 
$$\varphi_{\ast}[C_i] = [Z_i] = f_i \cdot \theta^{g-1}/(g-1)! \in \rm H^{2g-2}(A, \ZZ).$$
By a version of Welters' Criterion (see \cite[Lemma 12.2.3]{birkenhake}), this implies that $\phi^{\ast}\left(\theta_{J(C_i)}\right) = f_i \cdot \theta \in \rm H^2(A, \ZZ)$, where $\theta_{J(C_i)} \in \rm H^2(J(C_i), \ZZ)$ is the canonical principal polarization. In particular, \ref{itemlemma:zero} holds. 

We claim that also \ref{itemlemma:two} holds. Let $j\colon A_0 \hookrightarrow J(C_i)$ be the embedding of $A_0 = \phi(A)$ into $J(C_i)$ and let $\lambda_0 \colon A_0 \to \wh A_0$ be the polarization on $A_0$ induced by $j$. We have $\phi^{\ast}(\lambda) = \varphi_{f_i\cdot \theta} = f_i \cdot \varphi_\theta = f_i \cdot \lambda_A$. We obtain a commutative diagram
\[
\xymatrixcolsep{5pc}
\xymatrix{
&A\ar[dl]_{[f_i]_A} \ar[r]^{\pi}\ar[d]^{f_i\cdot \lambda_A} &A_0\ar[d]^{\lambda_0} \ar[r]^j & J(C_i) \ar[d]^{\lambda} \\
A & \wh A \ar[l]^{\lambda_{\wh A}} & \wh A_0 \ar[l]^{\wh \pi} & \widehat{J(C_i)}. \ar[l]\ar[l]^{\wh j} 
}
\]
Let $G$ be the kernel of $\pi$. Define $$K = \Ker([f_i]_A) = \Ker(f_i\cdot \lambda_A) \cong (\ZZ/f_i)^{2g} \subset A,$$ and $U = \Ker(\wh \pi \circ \lambda_0) \subset A_0$. Also define $H = \Ker(\lambda_0)$, and observe that $H \subset U$. The exact sequence $$0 \to G \to K \to U \to 0$$ shows that if $a, k, u$ and $h$ are the respective orders of $G$, $K$, $U$ and $H$, then one has
\begin{equation} \label{eq:dividingorders}
h | u | k | f_i \quad \textnormal{ and } \quad a | k | f_i.
\end{equation}
Then define $B = \Ker( \wh j \circ \lambda ) \subset J(C_i)$ with inclusion $i\colon B \hookrightarrow J(C_i)$. It is easy to see that $B$ is connected. 
Moreover, we have $A_0 \cap B = H$
and, therefore, an exact sequence of commutative group schemes $$0 \to H \to A_0 \times B \xrightarrow{\psi} J(C_i) \to 0.$$ 
The morphism $\alpha\colon A \times B \to J(C_i)$, defined as the composition
$$
\xymatrixcolsep{2pc}
\xymatrix{
A \times B \ar[r]^{\pi \times \id}  & A_0 \times B \ar[r]^{\psi} & J(C_i),}$$ is an isogeny. Since the degree of an isogeny is multiplicative in compositions, we have $$\deg(\alpha) = \deg\left(\psi \circ (\pi \times \id) \right) =  \deg(\psi) \cdot \deg(\pi \times \id)  = h \cdot \deg(\pi) = h \cdot a.$$ In particular, $p$ does not divide $\deg(\alpha)$ because $h$ and $a$ divide $f_i$ by Equation (\ref{eq:dividingorders}).
\end{proof}

\begin{proof}[Proof of Theorem \ref{introth:density}]
According to Theorem \ref{maintheorem}, it suffices to show that the cohomology class 
$$
\frac{c_1(\ca P_A)^{2g-1}}{(2g-1)!} \in \rm H^{4g-2}(A \times \wh A, \ZZ)
$$ is algebraic for $[(A,\lambda)]$ in a dense subset $X$ of $\msf A_{g,\delta}(\CC)$ as in the statement. Define $D = \textnormal{diag}(\delta_1, \dotsc, \delta_g)$ and define, for each subring $R$ of $\CC$, a group
\begin{align}\label{spdeltagroup}
\textnormal{Sp}_{2g}^\delta(R) = \left\{M \in \GL_{2g}(R) \mid  M 
\begin{pmatrix} 
0 & D \\
-D & 0
\end{pmatrix} M^t = 
\begin{pmatrix} 
0 & D \\
-D & 0
\end{pmatrix}
\right\}. 
\end{align}
The isomorphism
\[
\textnormal{Sp}_{2g}^\delta(\RR)  \to \textnormal{Sp}_{2g}(\RR), \quad M \mapsto 
\begin{pmatrix} 
1_g & 0 \\
0 & D
\end{pmatrix}^{-1} M \begin{pmatrix} 
1_g & 0 \\
0 & D
\end{pmatrix}
\]
induces an action of $\textnormal{Sp}_{2g}^\delta(\ZZ)$ on the genus $g$ Siegel space $\bb H_g$, and the period map defines an isomorphism of complex analytic spaces $\msf A_{g,\delta}(\CC) \cong \textnormal{Sp}_{2g}^\delta(\ZZ) \setminus \bb H_g$ \cite[Theorem 8.2.6]{birkenhake}. Pick any prime number $\ell > (2g-1)!$ and consider, for a period matrix $x \in \bb H_g$, the orbit $\textnormal{Sp}_{2g}^\delta(\ZZ[1/\ell]) \cdot x \subset \bb H_g$. Let $(A, \lambda)$ be a polarized abelian variety admitting a period matrix equal to $x$. The image of $\textnormal{Sp}_{2g}^\delta(\ZZ[1/\ell]) \cdot x$ in $\msf A_{g,\delta}(\CC)$ is the \textit{Hecke-$\ell$-orbit} of $[(A, \lambda)] \in \msf A_{g,\delta}(\CC)$, i.e.\ the set of isomorphism classes of polarized abelian varieties $[(B, \mu)] \in \msf A_{g,\delta}(\CC)$ for which there exists integers $n,m\in \ZZ_{\geq 0}$ and an isomorphism of polarized rational Hodge structures $\phi\colon \rm H_1(B, \QQ) \xrightarrow{\sim} \rm H_1(A, \QQ)$ 
such that $\ell^n \cdot \phi$ and $\ell^m \cdot \phi^{-1}$ are morphisms of integral Hodge structures (Hecke orbits were studied in positive characteristic in e.g.\ \cite{chaiordinaryhecke, oorthecke}). The degree of the isogeny $\alpha = \ell^n\phi$ must be $\ell^k$ for some nonnegative integer $k$. 
In particular, if one abelian variety in a Hecke-$\ell$-orbit happens to be isomorphic to a Jacobian, then every abelian variety in that orbit is $\ell$-power isogenous to a Jacobian, see Definition \ref{definition:primeisogenies}. 
The decomposition of a polarized abelian variety into non-decomposable polarized abelian subvarieties is unique \cite[Corollaire 2]{debarreproduits}, which implies that the morphism
\begin{align*}
\pi \colon &\prod_{i = 1}^g \msf A_{1,1} \to \msf A_{g,\delta}, \\
&\left([(E_1, \lambda_1)], \dotsc, [(E_g, \lambda_g)] \right) \mapsto ([E_1 \times \cdots \times E_g, \delta_1\cdot \lambda_1 \times \cdots \times \delta_g\cdot \lambda_g)]
\end{align*}
is finite onto its image. Thus $\msf A_{g, \delta}$ contains a $g$-dimensional subvariety on which the integral Hodge conjecture for one-cycles holds. Let $V = \pi\left( \prod_{i = 1}^g \msf A_{1,1}\right) \subset \msf A_{g,\delta}$. Then 
\[
X' \coloneqq \Sp_{2g}^\delta(\ZZ[1/\ell]) \cdot V =  \cup_iZ_i \subset \msf A_{g, \delta}(\CC)
\]
is a countable union of closed analytic subsets $Z_i \subset \msf A_{g, \delta}(\CC)$ of dimension $\dim Z_i \geq g$ such that, by Lemma \ref{strongapproximationlemma} below, $X' \subset \msf A_{g, \delta}(\CC)$ is dense in the analytic topology, and that $c_1(\ca P_A)^{2g-1}/(2g-1)! \in \rm H^{4g-2}( A \times \wh A, \ZZ)$ is algebraic for every polarized abelian variety $(A, \lambda)$ of polarization type $\delta$ whose isomorphism class lies in $X'$. To prove the theorem, we are reduced to proving that there exists a similar countable union $X \subset \msf A_{g, \delta}(\CC)$ whose components are algebraic. For this, it suffices to prove:

\textit{Claim:} The locus of $[(A, \lambda)] \in \msf A_{g, \delta}(\CC)$ such that $$c_1(\ca P_A)^{2g-1}/(2g-1)! \in \rm H^{4g-2}( A \times \wh A, \ZZ)_{\textnormal{alg}}$$ is a countable union $$W = \cup_jY_j \subset \msf A_{g, \delta}(\CC)$$ of closed algebraic subsets $Y_j \subset \msf A_{g, \delta}(\CC)$. 

Indeed, let $U \to \ca A_{g, \delta}$ be a finite \'etale cover of the moduli stack $\ca A_{g, \delta}$ and let $\ca X \to U$ be the pullback of the universal family of abelian varieties along $U \to \ca A_{g, \delta}$. This gives an abelian scheme $\ca X \times \wh{\ca X} \to U$ carrying a relative Poincar\'e line bundle $\ca P_{\ca X/U}$ and arguments similar to those used to prove Lemma \ref{lemma_algebraicityminimalclass} show that indeed, for each irreducible component $U' \subset U$, the locus in $U'(\CC)$ where $c_1(\ca P_A)^{2g-1}/(2g-1)!$ is algebraic is a countable union of closed algebraic subvarieties of $U'(\CC)$. We have $X' \subset W$ and 
since each $Z_i \subset X$ is irreducible, each $Z_i$ is contained in an irreducible component $Y_j \subset W$. We may then define $X$ as the union of those $Y_j \subset W$ that contain some $Z_i$. 

Finally, Theorem \ref{maintheorem} implies that for each $[(A, \lambda)] \in X$, the integral Hodge conjecture for one-cycles holds for the abelian variety $A$, so we are done. 
\end{proof}

\begin{lemma} \label{strongapproximationlemma}
Let $\ell$ be a prime number and let $\Sp_{2g}^\delta(\ZZ[1/\ell]) \subset \Sp_{2g}(\RR)$ be the group defined in (\ref{spdeltagroup}). Then $\Sp_{2g}^\delta(\ZZ[1/\ell])$ is analytically dense in $\Sp_{2g}(\RR)$. 
\end{lemma}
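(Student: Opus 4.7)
The plan is to deduce the lemma from strong approximation for the simply connected, semisimple $\QQ$-group $\Sp_{2g}$, transferred via the natural $\QQ$-isomorphism $\Sp_{2g}^\delta \cong \Sp_{2g}$. First, I would observe that the diagonal matrix $T = \mathrm{diag}(1_g, D) \in \GL_{2g}(\QQ)$ satisfies $T^t J T = J^\delta$, where $J = \left(\begin{smallmatrix} 0 & 1_g \\ -1_g & 0 \end{smallmatrix}\right)$ and $J^\delta = \left(\begin{smallmatrix} 0 & D \\ -D & 0 \end{smallmatrix}\right)$, so that conjugation $\phi \colon M \mapsto T^{-1} M T$ defines an isomorphism of $\QQ$-algebraic groups $\Sp_{2g}^\delta \xrightarrow{\sim} \Sp_{2g}$. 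On $\RR$-points $\phi$ agrees with the Lie group isomorphism $\Sp_{2g}^\delta(\RR) \cong \Sp_{2g}(\RR)$ displayed just after (\ref{spdeltagroup}), under which the statement of the lemma is formulated.

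Second, I would note that $\phi(\Sp_{2g}^\delta(\ZZ[1/\ell]))$ is an arithmetic subgroup of $\Sp_{2g}(\QQ)$, obtained by transporting the integral structure of $\Sp_{2g}^\delta$ across $\phi$. Since any two $\OO_S$-arithmetic subgroups of a linear algebraic $\QQ$-group (for the same set $S$ of places) are commensurable, $\phi(\Sp_{2g}^\delta(\ZZ[1/\ell]))$ is commensurable with $\Sp_{2g}(\ZZ[1/\ell])$ inside $\Sp_{2g}(\RR)$. The key remaining input is that $\Sp_{2g}(\ZZ[1/\ell])$ is dense in $\Sp_{2g}(\RR)$: this is the Kneser--Platonov strong approximation theorem applied to the simply connected, $\QQ$-simple, $\QQ$-isotropic group $\Sp_{2g}$ with $S = \{\infty, \ell\}$, which asserts that $\Sp_{2g}(\QQ)$ is dense in the $S$-adelic points, equivalently that $\Sp_{2g}(\ZZ[1/\ell])$ is dense in $\Sp_{2g}(\RR) \times \Sp_{2g}(\QQ_\ell)$; projecting onto the first factor preserves density. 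This is the only genuinely non-formal step, and I would simply cite the standard reference (e.g.\ Platonov--Rapinchuk) rather than reproving it.

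Finally, I would conclude by a commensurability-and-connectedness argument: if $H, H' \subset G$ are commensurable subgroups of a connected Lie group $G$ and $\overline{H'} = G$, then $H \cap H'$ has finite index in $H'$, so $\overline{H \cap H'}$ is a closed subgroup of $G$ of finite index in $\overline{H'} = G$, hence equals $G$ by connectedness; in particular $\overline{H} = G$. Applied with $H = \phi(\Sp_{2g}^\delta(\ZZ[1/\ell]))$, $H' = \Sp_{2g}(\ZZ[1/\ell])$, and $G = \Sp_{2g}(\RR)$, this shows that $\Sp_{2g}^\delta(\ZZ[1/\ell])$ is dense in $\Sp_{2g}(\RR)$, completing the proof. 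The hard part of the argument is really the black-boxed strong approximation step; everything else is linear algebra together with the general principle that $\OO_S$-arithmeticity and density behave well under $\QQ$-isomorphisms of ambient algebraic groups.
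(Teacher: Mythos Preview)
Your approach is essentially the same as the paper's: reduce to strong approximation for $\Sp_{2g}$ via the $\QQ$-isomorphism $\Sp_{2g}^\delta \cong \Sp_{2g}$, citing Platonov--Rapinchuk. Your explicit commensurability-and-connectedness argument is a welcome elaboration of a step the paper leaves implicit.

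There is, however, one genuine slip in your formulation of strong approximation. You claim that with $S = \{\infty, \ell\}$ strong approximation yields that $\Sp_{2g}(\ZZ[1/\ell])$ is dense in $\Sp_{2g}(\RR) \times \Sp_{2g}(\QQ_\ell)$, and then project to the first factor. This is false: $\Sp_{2g}(\ZZ[1/\ell])$ is an $S$-arithmetic lattice in $\Sp_{2g}(\RR) \times \Sp_{2g}(\QQ_\ell)$, hence discrete, not dense. So your sentence ``projecting onto the first factor preserves density'' is not a valid deduction as written. The correct invocation is the one the paper uses: take $S = \{\ell\}$ (the hypothesis of non-compactness of $\Sp_{2g}(\QQ_\ell)$ suffices), so that $\Sp_{2g}(\QQ)$ is dense in the restricted product over all $v \neq \ell$, including the archimedean place. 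Intersecting with the open subset $\Sp_{2g}(\RR) \times \prod_{p \neq \ell} \Sp_{2g}(\ZZ_p)$ and projecting to the real factor then gives exactly that $\Sp_{2g}(\ZZ[1/\ell])$ is dense in $\Sp_{2g}(\RR)$. With this correction, your argument goes through and matches the paper's.
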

\begin{proof}
Since $\Sp_{2g}^{\delta}(\QQ)$ arises as the group of rational points of an algebraic subgroup $\Sp_{2g}^\delta$ of $\GL_{2g}$ over $\mathbb Q$ \cite[Chapter 2, \S 2.3.2]{PlatonovRapinchuk}, which is isomorphic to $\Sp_{2g}$ over $\QQ$, 
the lemma follows from the well-known fact that for $S = \{\ell\} \subset \Spec(\ZZ)$, the algebraic group $\Sp_{2g}$ satisfies the strong approximation property with respect to $S$ \cite[Chapter 7, \S 7.1]{PlatonovRapinchuk}. The latter is classical and follows from the non-compactness of $\Sp_{2g}(\QQ_\ell)$, see \cite[Theorem 7.12]{PlatonovRapinchuk}. 
\end{proof}

\begin{remark} \label{rem:dimensionimprovement}
Using level structures one can show that whenever $\gcd(\prod_i\delta_i, (2g-1)!) = 1$ (or, more generally, $\gcd(\prod_i\delta_i, (2g-2)!) = 1$, see Section \ref{sec:integralhodgeuptofactorn} below), there is a countable union $X = \cup_iZ_i \subset \msf A_{g, \delta}(\CC)$ as in Theorem \ref{introth:density} such that $\dim Z_i \geq 3g-3$. Indeed, let $\msf A_{g, \delta_g}^\ast$ be the moduli space of principally polarized abelian varieties of dimension $g$ with $\delta_g$-level structure. Then there is a natural morphism $\phi \colon \msf A_{g, \delta_g}^\ast \to \msf A_{g, \delta}$ such that for any $ x= [(A, \lambda)] \in \msf A_{g, \delta_g}^\ast(\CC)$ with $[(B, \mu)] = \phi(x) \in \msf A_{g, \delta}(\CC)$, there exists an isogeny $\alpha\colon A \to B$ of degree $\prod_{i = 1}^g \delta_i$, see \cite{mumfordmoduliofabelianvarieties}.
\end{remark}

\begin{remark}
In the principally polarized case, the density in the moduli space of those abelian varieties that satisfy the integral Hodge conjecture for one-cycles admits another proof which might be interesting for comparison. Let $\msf A_g$ be the coarse moduli space of principally polarized complex abelian varieties of dimension $g$ and let $[(A,\theta)]$ be a closed point of $\msf A_g$. Then by \cite[Exercise 5.6.(10)]{birkenhake}, the following are equivalent: (i) $A$ is isogenous to the $g$-fold self-product $E^g$ for an elliptic curve $E$ with complex multiplication, (ii) $A$ has maximal Picard rank $\rho(A) = g^2$, (iii) $A$ is \textit{isomorphic} to the product $E_1 \times \cdots \times E_g$ of pairwise isogenous elliptic curves $E_i$ with complex multiplication. If any of these conditions is satisfied, then $A$ satisfies the integral Hodge conjecture for one-cycles by Theorem \ref{introth:IHCforjacobians}, and the set of isomorphism classes of principally polarized abelian varieties $(A,\theta)$ for which this holds is dense in $\msf A_g$ by \cite{lange_modulprobleme}. For an explicit example in dimension $g = 4$ of a principally polarized abelian variety $(A, \theta)$ that satisfies one of the equivalent conditions above, but is not isomorphic to a Jacobian, see \cite[\S 5]{debarre_annulation}. 
\end{remark}

\subsection{The integral Hodge conjecture for one-cycles up to factor $n$}\label{sec:integralhodgeuptofactorn}

In this section, we study a property of a smooth projective complex variety that lies somewhere in between the integral Hodge conjecture and the usual (i.e.\ rational) Hodge conjecture. The key will be the following:

\begin{definition}
Let $d,k,n \in \ZZ_{\geq 1}$ and let $X$ be a smooth projective variety over $\CC$ of dimension $d$. Recall the definition of the degree $2d-2k$ \textit{Voisin group} of $X$ \cite{voisinstablyrational, perry2020integral}:
    \begin{align*}
    \rm Z^{2d-2k}(X) &\coloneqq \textnormal{Hdg}^{2d-2k}(X, \ZZ)/ \rm H^{2d-2k}(X,\ZZ)_{\textnormal{alg}}\\
    & = \Coker \left( \CH_k(X) \to \textnormal{Hdg}^{2d-2k}(X, \ZZ) \right).
    \end{align*}
 We say that \textit{$X$ satisfies the integral Hodge conjecture for $k$-cycles up to factor $n$} if $\rm Z^{2d-2k}(X)$ is annihilated by $n$ (in other words, if $n \cdot x \in \rm H^{2d-2k}(X,\ZZ)_{\textnormal{alg}}$ for every $x \in \textnormal{Hdg}^{2d-2k}(X, \ZZ)$). 
\end{definition}



\begin{lemma} \label{lemma:integralhodgeupton}
Let $A$ be a complex abelian variety of dimension $g$. Define $\sigma_A \in \rm H^{4g-4}(A \times \wh A,\ZZ)$ to be the class $c_1(\ca P_A)^{2g-2}/(2g-2)!$. 
\begin{enumerate}
    \item  \label{firstiem}
Let $n$ be a positive integer and let $\ca F_n\colon \CH^1(\wh A) \to \CH_1(A)$ be a group homomorphism such that the following diagram commutes:
\[
\xymatrixcolsep{5pc}
\xymatrix{
\rm \CH^1(\wh A) \ar[d]\ar[r]^{\ca F_n} & \CH_1(A)\ar[d] \\
\rm H^2(\wh A,\ZZ)\ar[r]^{n \cdot \mr F_{\wh A} } & \rm H^{2g-2}( A, \ZZ).
}
\]
Then $A$ satisfies the integral Hodge conjecture for one-cycles up to factor $n$. 
\item \label{secondiem} Let $n \in \ZZ_{\geq 1}$ be such that $n \cdot \sigma_A$ is algebraic. A homomorphism $\ca F_n$ 
as in \ref{firstiem} exists. 
\end{enumerate}
\end{lemma}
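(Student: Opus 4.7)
The plan has two independent parts, one for each assertion.

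For part \ref{firstiem}, I would argue as follows. Fix a Hodge class $\alpha \in \Hdg^{2g-2}(A,\ZZ)$. Since $\mr F_{\wh A}\colon \rm H^2(\wh A,\ZZ) \to \rm H^{2g-2}(A,\ZZ)(g-1)$ is an isomorphism of integral Hodge structures (this is the integral Fourier transform on cohomology, \eqref{eq:integralfouriercohomology}), the preimage $\beta \coloneqq \mr F_{\wh A}^{-1}(\alpha) \in \rm H^2(\wh A,\ZZ)$ is itself a Hodge class. By the Lefschetz $(1,1)$-theorem, there exists $D \in \CH^1(\wh A)$ with $cl(D) = \beta$. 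The commutativity of the diagram in the statement then yields
\[
n\cdot \alpha \; = \; n\cdot \mr F_{\wh A}(cl(D)) \; = \; cl(\ca F_n(D)) \;\in\; \rm H^{2g-2}(A,\ZZ)_{\textnormal{alg}},
\]
so $n$ annihilates $\rm Z^{2g-2}(A)$.

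For part \ref{secondiem}, I would construct $\ca F_n$ as a correspondence. Pick a cycle $S \in \CH_2(A\times \wh A)$ with $cl(S) = n\cdot \sigma_A$, and define
\[
\ca F_n \colon \CH^1(\wh A) \to \CH_1(A), \quad \ca F_n(D) = \pi_{1,\ast}\!\left(\pi_2^{\ast}(D) \cdot S\right),
\]
where $\pi_1,\pi_2$ are the projections from $A\times \wh A$. The key point is to check that on cohomology this is exactly $n\cdot \mr F_{\wh A}$ when restricted to degree-$2$ classes. Using the identification $\ca P_{\wh A} \cong \tau^\ast \ca P_A$ for the swap $\tau$, the Fourier transform $\mr F_{\wh A}$ can be written as $y\mapsto \pi_{1,\ast}(\pi_2^\ast(y)\cdot \ch(\ca P_A))$. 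For $y\in \rm H^2(\wh A,\ZZ)$, the pushforward $\pi_{1,\ast}$ drops cohomological degree by $2g$, so by degree reasons only the summand of $\ch(\ca P_A)$ lying in $\rm H^{4g-4}(A\times\wh A,\ZZ)$ contributes, and this summand is precisely $\sigma_A = c_1(\ca P_A)^{2g-2}/(2g-2)!$. Therefore, for any $D\in \CH^1(\wh A)$,
\[
cl(\ca F_n(D)) = \pi_{1,\ast}(\pi_2^{\ast}cl(D)\cdot n\sigma_A) = n\cdot \pi_{1,\ast}(\pi_2^{\ast}cl(D)\cdot \sigma_A) = n\cdot \mr F_{\wh A}(cl(D)),
\]
which is the required compatibility.

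Neither part involves a genuine obstacle: part \ref{firstiem} is a formal consequence of Lefschetz $(1,1)$ combined with the fact that $\mr F_{\wh A}$ is an isomorphism of Hodge structures, while part \ref{secondiem} is a correspondence-theoretic construction whose only nontrivial ingredient is identifying, by a degree count, the piece of $\ch(\ca P_A)$ that realises $\mr F_{\wh A}$ on $\rm H^2(\wh A,\ZZ)$. The subtlest point, if any, is ensuring that the identification $\mr F_{\wh A}(y) = \pi_{1,\ast}(\pi_2^{\ast}(y)\cdot \ch(\ca P_A))$ is correct (including signs and the appropriate Tate twists), but this is a formal manipulation using symmetry of the Poincar\'e bundle and the projection formula.
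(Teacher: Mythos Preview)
Your proof is correct and follows essentially the same approach as the paper: part~\ref{firstiem} is Lefschetz $(1,1)$ combined with the fact that $\mr F_{\wh A}$ is an isomorphism of Hodge structures, and part~\ref{secondiem} is the correspondence construction using an integral lift of $n\cdot\sigma_A$. The only cosmetic difference is that the paper first passes from $\sigma_A$ to $\sigma_{\wh A}$ on $\wh A\times A$ and then applies the standard formula $\mr F_{\wh A}=\pi_{2,\ast}(\pi_1^\ast(-)\cdot\ch(\ca P_{\wh A}))$, whereas you stay on $A\times\wh A$ and invoke the swap $\tau^\ast\ca P_A\cong\ca P_{\wh A}$; these are equivalent.
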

\begin{proof}
Statement \ref{firstiem} follows immediately from the fact that $\CH^1(\wh A) \to \Hdg^2(\wh A,\ZZ)$ is surjective by Lefschetz $(1,1)$. To prove \ref{secondiem}, first observe that if $$\sigma_{\wh A} \coloneqq c_1(\ca P_{\wh A})^{2g-2}/(2g-2)! \in \rm H^{4g-4}(\wh A \times A,\ZZ),$$ then $n \cdot \sigma_{\wh A}$ is algebraic since $n \cdot \sigma_A$ is. Let $\Sigma_n \in \CH_2(\wh A \times A)$ be such that $cl(\Sigma_n) = n \cdot \sigma_{\wh A}$. This gives a commutative diagram:
\begin{equation*} 
\xymatrixcolsep{1.5pc}
\xymatrix{
\CH^1(\wh A) \ar[d]^{cl}\ar[r]^{\pi_1^\ast}& \CH^1(\wh A \times A) \ar[d]^{cl}\ar[r]^{\cdot \Sigma_n}& \CH^{2g-1}(\wh A \times A) \ar[d]^{cl} \ar[r]^{\pi_{2,\ast}} & \CH_1( A) \ar[d]^{cl} \\
\rm H^2(\wh A,\ZZ)
\ar[r]^{\pi_1^\ast} & \rm H^{2}(\wh A \times A,\ZZ) \ar[r]^{\cdot n \cdot \sigma_{\wh A}} & \rm H^{4g-2}(\wh A \times  A,\ZZ) \ar[r]^{\pi_{2,\ast}} & \rm H^{2g-2}( A,\ZZ). 
}
\end{equation*}
\noindent
Since $$ \pi_{2,\ast} \circ \left((-)\cdot n \cdot \sigma_{\wh A}\right) \circ \pi_1^\ast = n \cdot \mr F_{\wh A},$$ the homomorphism $\ca F_n \coloneqq \pi_{2,\ast} \circ \left((-)\cdot \Sigma_n\right) \cdot \pi_1^\ast$ has the required property.  \end{proof}

\begin{theorem} \label{theorem:integralhodgeuptofactor}
Consider a complex abelian variety $A$ of dimension $g$. Define the cycle $\sigma_A $ in $ \rm H^{4g-4}(A \times \wh A,\ZZ)$ as before and define $\rho_A = c_1(\ca P_A)^{2g-1}/(2g-1)! \in \rm H^{4g-2}(A \times \wh A,\ZZ)$. 
\begin{enumerate} 
    \item \label{item:nphodgealg} 
    Let $n \in \ZZ_{\geq 1}$ be such that 
    $n \cdot \rho_A$ 
    is algebraic. Then $n^2 \cdot \sigma_A$ is algebraic. In particular, $A$ satisfies the integral Hodge conjecture up to factor $\gcd(n^2, (2g-2)!)$ in this case. 
    \item \label{item:ppavnphodgealg} If $A$ is principally polarized, and $ n \in \ZZ_{\geq 1}$ is such that $n \cdot \gamma_\theta \in \textnormal{Hdg}^{2g-2}(A, \ZZ)$ is algebraic, then $n \cdot \rho_A\in \textnormal{Hdg}^{4g-2}(A \times \wh A, \ZZ)$ is algebraic. 
    \item \label{item:ppavnphodgealg-last}
    $A$ satisfies the integral Hodge conjecture for one-cycles up to factor $(2g-2)!$, and Prym varieties satisfy the integral Hodge conjecture for one-cycles up to factor $4$. 
\end{enumerate}
\end{theorem}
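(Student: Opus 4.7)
The plan is to use the exponential identity from Lemma~\ref{lemma:crucialprop} to pass between powers of $\ell = c_1(\ca P_A)$ by means of Pontryagin products, and then lift this from cohomology to Chow groups via the divided power structure established in Theorem~\ref{th:PDstructure}.

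For part~\ref{item:nphodgealg}, I would start by computing, from the equality $e^\ell = (-1)^g \rm E((-1)^g \mr R_A)$ in $\CH(A\times\wh A)_\QQ$, the degree $4g-4$ component. Since $\mr R_A \in \CH_1(A\times\wh A)_\QQ$, its $n$-th Pontryagin power lies in $\CH_n(A\times\wh A)_\QQ$, so extracting the $n=2$ term yields the identity
\[
\sigma_A \;=\; \frac{(-1)^g}{2}\,\mr R_A^{\star 2} \;=\; (-1)^g\,\mr R_A^{[2]} \;\in\; \CH_2(A\times\wh A)_\QQ,
\]
and likewise $\sigma_A = (-1)^g \rho_A^{\star 2}/2$ in $\rm H^{4g-4}(A \times \wh A,\ZZ)$. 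Now suppose $n\rho_A$ is algebraic, and lift it to a cycle $Z \in \CH_1(A\times\wh A)$. Applying the canonical PD structure on the ideal $\CH_{>0}(A\times\wh A)$ with respect to the Pontryagin product (Theorem~\ref{th:PDstructure}), we obtain a divided power $Z^{[2]} \in \CH_2(A\times\wh A)$ whose cycle class equals $(n\rho_A)^{\star 2}/2 = (-1)^g n^2 \sigma_A$. Hence $n^2 \sigma_A$ is algebraic. Combined with Lemma~\ref{lemma:integralhodgeupton} this yields that $A$ satisfies the integral Hodge conjecture for one-cycles up to factor $n^2$. Since $\ell^{2g-2}$ is algebraic as a power of $c_1(\ca P_A)$, we also have $(2g-2)!\cdot \sigma_A$ algebraic, and the factor sharpens to $\gcd(n^2,(2g-2)!)$.

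For part~\ref{item:ppavnphodgealg}, with $A$ principally polarized I would invoke Lemma~\ref{lemma:minimalclasspoincarecomparison} in its cohomological form: the Poincar\'e class $\rho_A$ equals, up to the sign $(-1)^{g+1}$, the sum $j_{1,\ast}\gamma_\theta + j_{2,\ast}\gamma_{\wh\theta} - (\id,\lambda)_\ast \gamma_\theta$ of pushforwards of the minimal class. Pushforwards preserve algebraic classes, so the algebraicity of $n\gamma_\theta$ immediately propagates to the algebraicity of $n\rho_A$.

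Part~\ref{item:ppavnphodgealg-last} is then a matter of assembling these two results. For an arbitrary complex abelian variety $A$, the class $(2g-1)!\cdot\rho_A = \ell^{2g-1}$ is algebraic, so applying part~\ref{item:nphodgealg} with $n=(2g-1)!$ gives $(2g-1)!^2\cdot\sigma_A$ algebraic, whence IHC holds up to factor $\gcd((2g-1)!^2,(2g-2)!) = (2g-2)!$. For a Prym variety $(A,\theta)$, the class of the Abel--Prym image of the associated curve satisfies $[C] = 2\gamma_\theta$ in $\rm H^{2g-2}(A,\ZZ)$, a classical formula which makes $2\gamma_\theta$ algebraic; part~\ref{item:ppavnphodgealg} then gives $2\rho_A$ algebraic, part~\ref{item:nphodgealg} gives $4\sigma_A$ algebraic, and the factor $\gcd(4,(2g-2)!) = 4$ holds as soon as $g\geq 3$. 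The main subtlety of the argument is ensuring that the identity $\sigma_A = (-1)^g\mr R_A^{[2]}$ is correctly derived from the expansion of $\rm E((-1)^g\mr R_A)$ with the right degree bookkeeping for Pontryagin products, after which the PD structure on the Chow ring does the essential work of turning a lift of $n\rho_A$ into a lift of $n^2\sigma_A$ without an extra factor of~$2$.
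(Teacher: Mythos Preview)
Your proposal is correct and follows essentially the same route as the paper: you extract the identity $\sigma_A = (-1)^g\,\rho_A^{[2]}$ from Lemma~\ref{lemma:crucialprop}, use the PD structure on $\CH_{>0}(A\times\wh A)$ from Theorem~\ref{th:PDstructure} to turn a lift of $n\rho_A$ into a lift of $n^2\sigma_A$, invoke Lemma~\ref{lemma:minimalclasspoincarecomparison} for the principally polarized step, and assemble the Prym case from the classical algebraicity of $2\gamma_\theta$. The paper's proof is organized identically; your added remark that $\gcd(4,(2g-2)!)=4$ requires $g\geq 3$ is a small refinement the paper leaves implicit.
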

\begin{proof}
\ref{item:nphodgealg}. 
By Lemma \ref{lemma:crucialprop}, one has \[
\sigma_A = c_1(\ca P_A)^{2g-2}/(2g-2)! = (-1)^g\cdot \left( \rho_A\right)^{\star 2}/2! \in \rm H^{4g-4}(A \times \wh A,\ZZ). 
\]
By Theorem \ref{th:PDstructure}, this implies that if $n \cdot \rho_A$ is algebraic, then $n^2\cdot \sigma_A$ is algebraic. Since $(2g-2)! \cdot \sigma_A$ is algebraic, it follows that $\gcd(n^2, (2g-2)!) \cdot \sigma_A$ is algebraic. Thus we are done by Lemma \ref{lemma:integralhodgeupton}.

\ref{item:ppavnphodgealg}. This follows from Lemma \ref{lemma:minimalclasspoincarecomparison}.

\ref{item:ppavnphodgealg-last}. This follows from Lemma \ref{lemma:integralhodgeupton}, parts \ref{item:nphodgealg} and \ref{item:ppavnphodgealg} and the fact that if $A$ is a $g$-dimensional Prym variety with principal polarization $\theta \in \Hdg^2(A,\ZZ)$, then $2 \cdot \gamma_\theta \in \rm H^{2g-2}(A,\ZZ)$ is algebraic.  
\end{proof}

\section{The integral Tate conjecture}

Let $X$ be a smooth projective variety over the separable closure $k$ of a finitely generated field. Let $k_0$ be a finitely generated field of definition of $X$. A class $u \in \rm H_{\textnormal{\'{e}t}}^{2i}(X, \ZZ_\ell(i))$ is an \textit{integral Tate class} if it is fixed by some open subgroup of $\Gal(k/k_0)$. Totaro has shown that for codimension-one cycles on $X$, the Tate conjecture over $k$ implies the integral Tate conjecture over $k$ \cite[Lemma 6.2]{totaroIHCthreefolds}. This means that every integral Tate class is the class of an algebraic cycle over $k$ with $\ZZ_\ell$-coefficients. 

Suppose that $A_{/k}$ is an abelian variety, defined over a finitely generated field $k_0 \subset k$ such that $k$ is the separable closure of $k_0$. Then the Tate conjecture for codimension-one cycles holds for $A$ over $k$ by results of Tate \cite{tate_endomorphisms}, Faltings \cite{Faltings1983, faltings_wustholz}, and Zarhin \cite{zarhin_one, zarhin_two}. By the above, $A$ satisfies the integral Tate conjecture for codimension-one cycles over $k$. On the other hand, the Fourier transform defines an isomorphism
\begin{equation} \label{eq:fourieretalecohomology}
\mr F_A \colon \rm H_{\textnormal{\'{e}t}}^2(A, \ZZ_\ell(1)) \xrightarrow{\sim} \rm H_{\textnormal{\'{e}t}}^{2g-2}(\wh A, \ZZ_\ell(g-1)),
\end{equation}
see \cite[Section 7]{totaroIHCthreefolds}. Since (\ref{eq:fourieretalecohomology}) is Galois-equivariant (the Poincar\'e bundle being defined over $k_0$) it sends integral Tate classes to integral Tate classes. Therefore, to prove the integral Tate conjecture for one-cycles on $A$, it suffices to lift (\ref{eq:fourieretalecohomology}) to a homomorphism $\CH^1(A)_{\ZZ_\ell} \to \CH_1(\wh A)_{\ZZ_\ell}$. 

\begin{proof}[Proof of Theorem \ref{introth:integraltate}]
This follows from the above and Proposition 
\ref{remarketalebetti}.\ref{remark3.12.2}.
\end{proof}

\begin{corollary}Let $A$ and $B$ be abelian varieties defined over the separable closure $k$ of a finitely generated field, of respective dimensions $g_A$ and $g_B$. 
\begin{enumerate}
    \item \label{item:integraltateone} The classes 
    \[
    \rho_A \in \rm H_{\textnormal{\'et}}^{4g_A-2}(A \times \wh A, \ZZ_\ell(2g_A-1)), \quad \tn{ and } \quad \rho_B \in \rm H_{\textnormal{\'et}}^{4g_B-2}(B \times \wh B, \ZZ_\ell(2g_A-1))
    \]
    are algebraic if and only if $A \times \wh A$, $B \times \wh B$, $A\times B$ and $\wh A \times \wh B$ satisfy the integral Tate conjecture for one-cycles over $k$. 
    \item  \label{item:integraltatetwo} If $A$ and $B$ are principally polarized, then the integral Tate conjecture for one-cycles over $k$ holds for $A\times B$ if and only if it holds for both $A$ and $B$. 
    \item \label{item:integraltatethree} If $\theta \in \rm H^2_{\textnormal{\'et}}(A, \ZZ_\ell(1))$ is the first Chern class of an ample line bundle that induces a principal polarization on $A$, and if $\gamma_\theta = \theta^{g-1}/(g-1)! \in  \rm H_{\textnormal{\'{e}t}}^{2g-2}(A, \ZZ_\ell(g-1))$ is algebraic, then $\theta^i/i!$ in $\rm H_{\textnormal{\'{e}t}}^{2i}(A, \ZZ_\ell(i))$ is algebraic for each $i \in \ZZ_{\geq 1}$. 
\end{enumerate}
\end{corollary}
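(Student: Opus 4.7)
The plan is to prove part \ref{item:integraltatethree} first, then to use it in parts \ref{item:integraltatetwo} and \ref{item:integraltateone}.

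For part \ref{item:integraltatethree}, I would extend the divided-power argument at the end of the proof of Theorem \ref{maintheorem} to the $\ell$-adic setting. By Proposition \ref{remarketalebetti}.\ref{remark3.12.2}, the ideal $\CH_{>0}(A) \otimes \ZZ_\ell \subset (\CH(A)_{\ZZ_\ell}, \star)$ carries a canonical PD structure. Lifting $\gamma_{\theta_\ell}$ to a class $\Gamma \in \CH_1(A)_{\ZZ_\ell}$, the divided powers $\Gamma^{[j]} \in \CH_j(A)_{\ZZ_\ell}$ satisfy $j!\,\Gamma^{[j]} = \Gamma^{\star j}$. Taking cycle classes and invoking Beauville's identity $\theta_\ell^i/i! = \gamma_{\theta_\ell}^{\star j}/j!$ in rational cohomology \cite[Corollaire 2]{beauvillefourier} (valid for $i+j=g$), one obtains $j!\cdot cl(\Gamma^{[j]}) = j!\cdot \theta_\ell^i/i!$ in $\rm H^{2i}_{\textnormal{\'et}}(A,\ZZ_\ell(i))$; integrality of $\theta_\ell^i/i!$ is a consequence of the description of the cohomology of a PPAV as the exterior algebra on a symplectic basis of $\rm H^1$. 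Torsion-freeness of $\rm H^{2i}_{\textnormal{\'et}}(A,\ZZ_\ell(i))$ then forces $cl(\Gamma^{[j]}) = \theta_\ell^i/i!$, giving algebraicity.

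For part \ref{item:integraltatetwo}, via Theorem \ref{introth:integraltate} it suffices to show that $\gamma_{\theta_{A\times B}}$ is algebraic if and only if $\gamma_{\theta_A}$ and $\gamma_{\theta_B}$ are. For the first direction I would combine part \ref{item:integraltatethree} with the expansion
\[
\gamma_{\theta_{A \times B}} = \sum_{\substack{i+j = g-1\\ 0 \leq i \leq g_A,\, 0 \leq j \leq g_B}} \pi_A^*\!\left(\frac{\theta_A^i}{i!}\right) \cdot \pi_B^*\!\left(\frac{\theta_B^j}{j!}\right)
\]
induced by $\theta_{A \times B} = \pi_A^*\theta_A + \pi_B^*\theta_B$ and $g = g_A+g_B$: algebraicity of $\gamma_{\theta_A}, \gamma_{\theta_B}$ yields algebraicity of all $\theta_A^i/i!$ and $\theta_B^j/j!$, hence of each summand. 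For the converse, given any Galois invariant $\alpha \in \rm H^{2g_A-2}_{\textnormal{\'et}}(A, \ZZ_\ell(g_A-1))$, the class $\pi_A^*\alpha \cdot \pi_B^*[0_B]$ is a Galois invariant one-cycle class on $A \times B$, algebraic by hypothesis, and the projection formula together with $\pi_{A,*}\pi_B^*[0_B] = 1 \in \rm H^0_{\textnormal{\'et}}(A,\ZZ_\ell)$ recovers $\alpha$ as the pushforward of an algebraic cycle.

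For part \ref{item:integraltateone}, the implication from integral Tate on $A \times \wh A$ and $B \times \wh B$ to algebraicity of $\rho_A$ and $\rho_B$ is immediate, as these classes are Galois invariant of the correct degree. For the converse, pullback along the swap $A \times \wh A \cong \wh A \times A$ passes algebraicity from $\rho_A$ to $\rho_{\wh A}$, and then Theorem \ref{introth:integraltate} yields integral Tate on $A$ and on $\wh A$. For each of the four products $X \times Y$ with $(X,Y) \in \{(A,\wh A),(B,\wh B),(A,B),(\wh A,\wh B)\}$, the isomorphism $\ca P_{X \times Y} \cong \pi_{13}^*\ca P_X \otimes \pi_{24}^*\ca P_Y$ on $X \times Y \times \wh X \times \wh Y$ together with a binomial expansion entirely parallel to (\ref{eq1}) gives
\[
\rho_{X \times Y} = \pi_{13}^*\rho_X \cdot \pi_{24}^*[0_{Y \times \wh Y}] + \pi_{13}^*[0_{X \times \wh X}] \cdot \pi_{24}^*\rho_Y,
\]
a $\ZZ$-linear combination of algebraic classes; hence $\rho_{X \times Y}$ is algebraic and Theorem \ref{introth:integraltate} concludes. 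The main obstacle I expect is nothing more than book-keeping: one has to verify that in each of the four binomial expansions, only the two extreme monomials survive for top-dimensional reasons on $X \times \wh X$ and $Y \times \wh Y$, and that the PD structure on $\CH_{>0}(A)$ truly extends $\ZZ_\ell$-linearly and is compatible with cycle classes on torsion-free targets — which is precisely the content of Proposition \ref{remarketalebetti}.\ref{remark3.12.2}.
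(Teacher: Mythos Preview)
Your proposal is correct and follows essentially the same approach as the paper: part~\ref{item:integraltatethree} is exactly Beauville's identity $\theta^i/i! = \gamma_\theta^{\star(g-i)}/(g-i)!$ combined with the PD-structure on $\CH_{>0}(A)_{\ZZ_\ell}$, part~\ref{item:integraltateone} is precisely the binomial expansion~(\ref{eq1}), and part~\ref{item:integraltatetwo} is the algebraicity of the minimal class of a product in terms of the factors. The only minor deviation is that for the converse in part~\ref{item:integraltatetwo} you argue directly via $\pi_{A,\ast}(\pi_A^\ast\alpha\cdot\pi_B^\ast[0_B])=\alpha$ rather than passing through the minimal class as the paper does, but this is a harmless and equally valid shortcut.
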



\begin{proof}
\ref{item:integraltateone}. See Equation (\ref{eq1}). 

\ref{item:integraltatetwo}. This is true because the minimal cohomology class of the product is algebraic if and only if the minimal cohomology classes of the factors are algebraic.

\ref{item:integraltatethree}. One has $\theta^i/i! = \gamma_\theta^{\star (g-i)}/(g-i)!$ by \cite[Corollaire 2]{beauvillefourier}.
\end{proof}

\noindent
Combining Theorems \ref{maintheorem} and \ref{introth:integraltate}, we obtain:

\begin{corollary} \label{hodgetatecomparison}
Let $A_K$ be a principally polarized abelian variety over a number field $K\subset \CC$. Its base change $A_\CC$ over $\CC$ satisfies the integral Hodge conjecture for one-cycles if and only if $A_{\bar K}$ satisfies the integral Tate conjecture for one-cycles over $\bar K = \bar \QQ$.
\end{corollary}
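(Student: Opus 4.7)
The plan is to combine Theorem \ref{maintheorem} and Theorem \ref{introth:integraltate} to reduce the statement to a comparison between algebraicity of a single minimal cohomology class in Betti cohomology over $\CC$ and its $\ell$-adic avatars over $\bar K$ for every prime $\ell$. Concretely, Theorem \ref{maintheorem} identifies IHC for one-cycles on $A_\CC$ with the property that $\gamma_\theta = \theta^{g-1}/(g-1)! \in \rm H^{2g-2}(A_\CC, \ZZ)$ is algebraic, while Theorem \ref{introth:integraltate} identifies ITC for one-cycles on $A_{\bar K}$ over $\bar K$ with the property that $\gamma_{\theta_\ell} = \theta_\ell^{g-1}/(g-1)! \in \rm H^{2g-2}_{\textnormal{\'et}}(A_{\bar K}, \ZZ_\ell(g-1))$ lifts to $\CH_1(A_{\bar K})_{\ZZ_\ell}$ for every prime $\ell$. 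Via smooth proper base change $\rm H^\bullet_{\textnormal{\'et}}(A_{\bar K}, \ZZ_\ell(\bullet)) \cong \rm H^\bullet_{\textnormal{\'et}}(A_\CC, \ZZ_\ell(\bullet))$ and Artin comparison, the class $\gamma_{\theta_\ell}$ corresponds to $\gamma_\theta \otimes 1$ under the canonical map $\rm H^{2g-2}(A_\CC, \ZZ) \to \rm H^{2g-2}(A_\CC, \ZZ) \otimes \ZZ_\ell$, and these isomorphisms intertwine the relevant cycle class maps.

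For the forward implication, assume $\gamma_\theta = cl(Z)$ for some $Z \in \CH_1(A_\CC)$. The components of $Z$ are defined over some finitely generated extension $L$ of $\bar K$ in $\CC$. Choose a smooth $\bar K$-variety $S$ with function field $L$ and spread $Z$ to a cycle $\mathcal{Z} \in \CH_1(A \times_{\bar K} S)$ whose generic fiber is $Z$. Since $\bar K$ is algebraically closed, $S(\bar K) \neq \emptyset$; for any $s \in S(\bar K)$, the specialization $\mathcal{Z}_s \in \CH_1(A_{\bar K})$ has $\ell$-adic cycle class equal to the specialization of the class $\gamma_\theta \otimes 1$, which by functoriality equals $\gamma_{\theta_\ell}$ (I will invoke the same compatibility of cycle classes with specialization already used in Lemma \ref{lemma_algebraicityminimalclass}). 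Hence $\gamma_{\theta_\ell}$ lifts to $\CH_1(A_{\bar K})$, and a fortiori to $\CH_1(A_{\bar K})_{\ZZ_\ell}$, for every $\ell$, so $A_{\bar K}$ satisfies ITC.

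For the backward implication, assume $\gamma_{\theta_\ell}$ lifts to $\CH_1(A_{\bar K})_{\ZZ_\ell}$ for every $\ell$. Base changing along $\bar K \hookrightarrow \CC$, the class $\gamma_\theta \otimes 1 \in \rm H^{2g-2}(A_\CC, \ZZ) \otimes \ZZ_\ell$ lies in the image of $\CH_1(A_\CC)_{\ZZ_\ell} \to \rm H^{2g-2}(A_\CC, \ZZ) \otimes \ZZ_\ell$ for every $\ell$. Let $V \subset \rm H^{2g-2}(A_\CC, \ZZ)$ be the subgroup of algebraic Betti classes; flatness of $\ZZ \to \ZZ_\ell$ identifies this image with $V \otimes \ZZ_\ell$. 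The quotient $Q = \rm H^{2g-2}(A_\CC, \ZZ)/V$ is finitely generated, so the natural map $Q \to \prod_\ell Q \otimes \ZZ_\ell$ is injective. Since the image of $\gamma_\theta$ in $Q$ vanishes in $Q \otimes \ZZ_\ell$ for every $\ell$, it vanishes in $Q$, so $\gamma_\theta \in V$ and $A_\CC$ satisfies IHC by Theorem \ref{maintheorem}. The main obstacle is of technical rather than conceptual nature: verifying that specialization of the lift $Z$ along a smooth $\bar K$-variety is compatible with the cycle class of $\gamma_\theta \otimes 1$, which is standard; the $\ZZ$-versus-$\ZZ_\ell$ comparison is elementary once one observes that $Q$ is finitely generated.
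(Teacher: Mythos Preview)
Your proposal is correct and follows essentially the same approach as the paper: reduce via Theorems \ref{maintheorem} and \ref{introth:integraltate} to comparing algebraicity of $\gamma_\theta$ in Betti cohomology with that of each $\gamma_{\theta_\ell}$ in $\ell$-adic cohomology, then pass between the two using Artin comparison together with a specialization/spreading-out argument in the forward direction and a $\ZZ$-versus-$\ZZ_\ell$ cokernel argument in the backward direction. The only cosmetic difference is that for the forward implication the paper specializes within connected components of the Hilbert scheme $\textnormal{Hilb}_{A_K/K}$ (and invokes Lemma \ref{lemma_algebraicityminimalclass} for the compatibility), whereas you spread out over a smooth $\bar K$-variety; these are equivalent implementations of the same specialization step.
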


\begin{proof}
We view $\bar \QQ$ as a subfield of $\CC$ in a way compatible with the inclusion $ K \hookrightarrow \CC$. For a prime number $\ell$, let $\theta_\ell = c_1(\ca L) \in \rm H_{\textnormal{\'et}}^{2}(A_{\bar \QQ}, \ZZ_\ell(1))$ be the $\ell$-adic \'etale cohomology class of $\ca L$. On the other hand, define $\theta_\CC \in \NS(A) \subset \rm H^{2}(A_\CC, \ZZ)$ to be the polarization of the complex abelian variety $A_\CC$. By Theorems \ref{maintheorem} and \ref{introth:integraltate}, it suffices to show that $\gamma_{\theta_\CC} \in \rm H^{2g-2}(A_\CC, \ZZ)$ is algebraic if and only if $\gamma_{\theta_\ell} \in \rm H_{\textnormal{\'et}}^{2g-2}(A_{\bar \QQ}, \ZZ_\ell(g-1))$ is in the image of (\ref{eq:integraltateconjecture}) for each prime number $\ell$. Define $\rm R^{2g-2}(A) = \Coker\left(\CH_1(A_\CC) \to \rm H^{2g-2}(A_\CC,\ZZ)\right)$. This implies that $$\rm R^{2g-2}(A)\otimes \ZZ_\ell = \Coker\left(\CH_1(A_\CC)_{\ZZ_\ell} \to \rm H^{2g-2}(A_\CC,\ZZ_\ell)\right).$$ 
\noindent
Suppose that $\gamma_{\theta_\ell}$ is in the image of (\ref{eq:integraltateconjecture}) for every prime number $\ell$. Then the image of $\gamma_{\theta_\CC}$ in $\rm R^{2g-2}(A)\otimes \ZZ_\ell$ is zero for each prime number $\ell$, which implies that the image of $\gamma_{\theta_\CC}$ in $\rm R^{2g-2}(A)$ is zero, i.e. $\gamma_{\theta_\CC}$ is algebraic. 
Conversely, suppose that $\gamma_{\theta_\CC} = \sum_{i= 1}^k n_i \cdot cl(C_i)$ for some smooth projective curves $C_i$ over $\CC$. 
The Hilbert scheme $\ca H = \textnormal{Hilb}_{A_K/K}$ is defined over $K$; 
for each $i = 1, \dotsc, k$ we pick a $\bar \QQ$-point in the connected component of $\ca H$ containing $[C_i \subset A]$. This gives smooth projective curves $C_i' \subset A_{\bar \QQ}$ over $\bar \QQ$ and 
we define $\Gamma = \sum_i n_i \cdot [C_i'] \in \CH_1(A_{\bar \QQ})$. On the one hand, $cl(\Gamma_\CC) = \gamma_{\theta_\CC}$ by Lemma \ref{lemma_algebraicityminimalclass}. On the other hand, the Artin comparison theorem gives an isomorphism of $\ZZ_\ell$-algebras
\[
\phi\colon \rm H^{\bullet}_{\textnormal{\'et}}(A_{\bar \QQ}, \ZZ_\ell) = \rm H^{\bullet}_{\textnormal{\'et}}(A_{\CC}, \ZZ_\ell) \cong \rm H^{\bullet}(A_\CC, \ZZ) \otimes_\ZZ \ZZ_\ell.
\]
Since $\phi$ is compatible with the cycle class maps $\CH(A_{\bar \QQ}) \to \rm H^{\bullet}_{\textnormal{\'et}}(A_{\bar \QQ}, \ZZ_\ell)$ and $\CH(A_\CC) \to \rm H^{\bullet}(A_\CC, \ZZ)$, we have 
\[
\phi(\gamma_{\theta_\ell}) = \gamma_{\theta_\CC} \quad \tn{ and } \quad \phi(cl(\Gamma)) = cl(\Gamma_\CC)= \gamma_{\theta_\CC}.
\]
 Therefore, $cl(\Gamma) = \gamma_{\theta_\ell}$. 
\end{proof}
\noindent
Another corollary of Theorem \ref{introth:integraltate} is that the integral Tate conjecture for one-cycles on abelian varieties is stable under specialization. For example, one has the following (c.f. Corollary \ref{complexspecialization}):

\begin{corollary} \label{cor:specialization}
Let $A_K$ be a principally polarized abelian variety over a number field $K$. Suppose that $A_{\bar K}$ satisfies the integral Tate conjecture for one-cycles over $\bar K$. 
Let $\mf p$ be a prime ideal of the ring of integers $\OO_K$ of $K$ at which $A_K$ has good reduction and write $\kappa = \OO_K/\mf p$. The abelian variety $A_{\bar \kappa}$ over $\bar \kappa$ satisfies the integral Tate conjecture for one-cycles over $\bar \kappa$. 
\end{corollary}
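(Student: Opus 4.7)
The plan is to combine Theorem~\ref{introth:integraltate} with a standard spread-out and specialization argument on the N\'eron model. Since $\kappa$ is a finite field, it is perfect, so $A_{\bar \kappa}$ is defined over the finitely generated field $\kappa$. By Theorem~\ref{introth:integraltate}, it suffices to show that for every prime $\ell < (g-1)!$ with $\ell \neq \textnormal{char}(\kappa)$, the class $\gamma_{\theta_\ell} \in \rm H_{\textnormal{\'et}}^{2g-2}(A_{\bar \kappa}, \ZZ_\ell(g-1))$ lies in the image of the cycle class map $\CH_1(A_{\bar \kappa})_{\ZZ_\ell} \to \bigcup_U \rm H_{\textnormal{\'et}}^{2g-2}(A_{\bar \kappa}, \ZZ_\ell(g-1))^U$, where $U$ ranges over the open subgroups of $\Gal(\bar \kappa/\kappa)$.

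Fix such a prime $\ell$. The hypothesis on $A_{\bar K}$, together with the converse direction of Theorem~\ref{introth:integraltate} applied over $\bar K$, produces a cycle $\Gamma = \sum_i a_i[Z_i] \in \CH_1(A_{\bar K})_{\ZZ_\ell}$ (with $a_i \in \ZZ_\ell$ and each $Z_i \subset A_{\bar K}$ an integral curve) such that $cl(\Gamma) = \gamma_{\theta_\ell}$. Each $Z_i$ is defined over some finite subextension of $\bar K/K$, so after passing to a common finite extension $L/K$ inside $\bar K$ I may assume every $Z_i$ descends to an integral curve in $A_L$. Pick a prime $\mf q \subset \OO_L$ above $\mf p$; good reduction at $\mf p$ persists to $\mf q$, and the N\'eron model $\ca A \to \Spec \OO_{L,\mf q}$ is then an abelian scheme with generic fiber $A_L$ and special fiber $A_{\kappa(\mf q)}$, where $\kappa(\mf q)/\kappa$ is a finite extension. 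The flat closure $\widetilde Z_i \subset \ca A$ of each $Z_i$ is a relative one-cycle; write $Z_{i,\mf q}$ for its special fiber and set $\Gamma_{\mf q} = \sum_i a_i[Z_{i,\mf q}] \in \CH_1(A_{\kappa(\mf q)})_{\ZZ_\ell}$.

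Because the polarization of $A_L$ extends uniquely to a polarization of the abelian scheme $\ca A/\OO_{L,\mf q}$, the specialization isomorphism
\[
sp \colon \rm H_{\textnormal{\'et}}^{2g-2}(A_{\bar L}, \ZZ_\ell(g-1)) \xrightarrow{\sim} \rm H_{\textnormal{\'et}}^{2g-2}(A_{\overline{\kappa(\mf q)}}, \ZZ_\ell(g-1))
\]
sends $\theta_\ell$ to $\theta_\ell$, and hence $\gamma_{\theta_\ell}$ to $\gamma_{\theta_\ell}$; by \cite[Example~20.3.5]{fultonintersection} it is also compatible with the cycle class map and sends $cl(\Gamma)$ to $cl(\Gamma_{\mf q})$. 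Combining these identities yields $cl(\Gamma_{\mf q}) = \gamma_{\theta_\ell}$ inside $\rm H_{\textnormal{\'et}}^{2g-2}(A_{\bar \kappa}, \ZZ_\ell(g-1))$; since $\Gamma_{\mf q}$ is defined over $\kappa(\mf q)$, this class is invariant under the open subgroup $\Gal(\bar \kappa/\kappa(\mf q)) \subset \Gal(\bar \kappa/\kappa)$, as required. The main obstacle is mild: apart from bookkeeping for $\ZZ_\ell$-coefficients (writing $\Gamma$ as a finite $\ZZ_\ell$-linear combination of integral curves so that flat closure in $\ca A$ may be performed term by term), the only substantive ingredient is the $\ZZ_\ell$-linear extension of the cycle-class compatibility of \cite[Example~20.3.5]{fultonintersection} for the abelian scheme $\ca A/\OO_{L,\mf q}$, together with the fact that the polarization of $A_L$ spreads uniquely to $\ca A$.
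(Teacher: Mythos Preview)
Your proof is correct and follows essentially the same approach as the paper: reduce via Theorem~\ref{introth:integraltate} to the algebraicity of $\gamma_{\theta_\ell}$ over $\bar\kappa$, extend the polarization to the N\'eron model so that $\gamma_{\theta_\ell}$ specializes to $\gamma_{\theta_\ell}$, and use \cite[Example~20.3.5]{fultonintersection} for compatibility of specialization with cycle classes. The only cosmetic difference is that the paper routes the specialization through the completion $\bar K_{\mf p}$ rather than through a finite extension $L/K$ and a prime $\mf q$ above $\mf p$, but this is immaterial.
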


\begin{proof}
Write $S = \Spec(\OO_K)$ and let $A \to S$ be the N\'eron model of $A_K$. Let $R$ (resp.\ $K_{\mf p}$) be the completion of $\OO_K$ (resp.\ $K$) at the prime $\mf p$. The natural composition $K \to K_{\mf p} \to \bar K_{\mf p}$ induces an embedding $\bar K \to \bar K_{\mf p}$, where $\bar K_{\mf p}$ is an algebraic closure of $K_{\mf p}$. This gives a commutative diagram, where the square on the right is provided in \cite[Example 20.3.5]{fultonintersection}:
\begin{equation} \label{specializationdiagram}
\begin{split}
    \xymatrix{
\xymatrixcolsep{3.2pc}
\CH(A_{\bar K})_{\ZZ_\ell} \ar[r] \ar[d] & 
\CH(A_{\bar K_{\mf p}})_{\ZZ_\ell} \ar[r] \ar[d] &
\CH(A_{\bar \kappa})_{\ZZ_\ell} \ar[d]\\ 
\oplus_{r \geq 0}\rm H^{2r}_{\textnormal{\'et}}(A_{\bar K}, \ZZ_\ell(r))\ar[r]^{\sim} &\oplus_{r \geq 0}\rm H^{2r}_{\textnormal{\'et}}(A_{\bar K_{\mf p}}, \ZZ_\ell(r))\ar[r]^{\sim}&
\oplus_{r \geq 0}\rm H^{2r}_{\textnormal{\'et}}(A_{\bar \kappa}, \ZZ_\ell(r)).
}
\end{split}
\end{equation}
Now the principal polarization $\lambda_K\colon A_K \xrightarrow{\sim} \wh A_K$ extends uniquely to a homomorphism $\lambda\colon A \to \wh A$ by the N\'eron mapping property \cite[Section 1.2, Definition 1]{BLR} and since the same is true for the inverse $\lambda_K^{-1}\colon \wh A_K \xrightarrow{\sim} A_K$ we find that $\lambda$ is an isomorphism. In particular, we see that $A_{\bar \kappa}$ is principally polarized and that the class in $\CH^1(A_{\bar K})_{\ZZ_\ell}$ of a theta divisor on $A_{\bar K}$ is sent to the class in $\CH^1(A_{\bar \kappa})_{\ZZ_\ell}$ of a theta divisor on $A_{\bar \kappa}$. Thus, the minimal class $\gamma_{\theta_{\bar K}} \in \rm H^{2g-2}_{\textnormal{\'et}}(A_{\bar K}, \ZZ_\ell(g-1))$ is sent to the minimal class $\gamma_{\theta_{\bar \kappa}} \in \rm H^{2g-2}_{\textnormal{\'et}}(A_{\bar \kappa}, \ZZ_\ell(g-1))$ by the isomorphism on the bottom of (\ref{specializationdiagram}). It follows that $\gamma_{\theta_{\bar \kappa}}$ is algebraic; by Theorem \ref{introth:integraltate}, we are done.  
\end{proof}
\noindent
Finally, let us prove Theorem \ref{th:chinglichai}. The theorem follows from Theorem \ref{introth:integraltate} together with a result of Chai on the density of an ordinary isogeny class in positive characteristic \cite{chaiordinaryhecke}.

\begin{proof}[Proof of Theorem \ref{th:chinglichai}]
For any $t \in A_g(k)$, let $(A_t, \lambda_t)$ be a principally polarized abelian variety such that $[(A_t, \lambda_t)] = t$. Let $$A = E_1 \times \cdots \times E_g$$ be the product of $g$ ordinary elliptic curves $E_i$ over $k$ and provide $A$ with its natural principal polarization. Let $x \in \msf A_{g}(k)$ be the point corresponding to the isomorphism class of $A$. Let $q > (g-1)!$ be a prime, different from $p$, and let $$\mr G_q(x) \subset \msf A_{g}( k)$$ be the set of isomorphism classes $y = [(A_y,\lambda_y)]$ that admit an isogeny $\phi \colon A_y \to A_x$ with $\phi^\ast\lambda_x = q^N \cdot \lambda_y$ for some nonnegative integer $N$. 

We claim that $A_y$ satisfies the integral Tate conjecture for one-cycles over $k$ for any $y \in \mr G_q(x)$. Indeed, for such $y$ there exists a nonnegative integer $N$ such that the isogeny $[q^N]\colon A_y \to A_y$ factors through $A_x$. Consequently, 
$q^{(2g-2)\cdot N} \cdot \gamma_\theta$ is algebraic for the first Chern class $\theta$ of the principal polarization on $A_y$, which implies that $\gamma_\theta$ is algebraic (as $q > (g-1)!$). Thus, the claim follows from Theorem \ref{introth:integraltate}. 

Now $\mr G_q(z)$ is dense in $\msf A_g$ for any ordinary principally polarized abelian variety $(A_z,\lambda_z)$ by a result of Chai \cite[Theorem 2]{chaiordinaryhecke}. Therefore, $\mr G_q(x)$ is dense in $\msf A_g$ and the proof is finished.
\end{proof}

\cleardoublepage
\chapter{Curves on real abelian threefolds}\label{ch:realintegralhodge}

\section{Introduction}




In the final Chapter \ref{ch:realintegralhodge} of our thesis, we will, as in the previous Chapter \ref{ch:onecycles}, provide applications of the results of Chapter \ref{ch:integralfourier}. In Chapter \ref{ch:onecycles}, we used integral Fourier transforms on complex abelian varieties $A$ of any dimension $g$ to establish an algebraic link between Hodge classes of degree two and Hodge classes of degree $2g-2$. We saw that this is possible if \emph{one} specific Hodge class is algebraic, namely the minimal Poincar\'e class $c_1(\ca P_A)^{2g-1}/(2g-1)!$ on $A \times \wh A$ (see Theorem \ref{maintheorem}). In Chapter \ref{ch:realintegralhodge} we likewise study one-cycles, but this time we study them over $\RR$. 

A natural question is whether there is again a preferred cohomology class, whose algebraicity would imply the real integral Hodge conjecture for one-cycles on the real abelian variety under consideration. It turns out that if one considers the real integral Hodge conjecture \emph{modulo torsion}, then this is indeed the case (see Theorem \ref{grabowski}). As in the complex case, the proof of this statement relies on integral Fourier transforms, i.e. on the theory developed in of Chapter \ref{ch:integralfourier}. 

Apart from this similarly with the complex situation, two differences stand out: 
\begin{enumerate}
\item \label{difficulty-one}Proving the algebraicity of minimal classes is more difficult over $\RR$ than over $\CC$. For example, for every complex Jacobian variety $(J(C), \theta)$, one has $\theta^{g-1}/(g-1)! = [C]$ for any Abel-Jacobi embedding of the curve $C$ into $J(C)$. If $C$ is defined over $\RR$, but $C(\RR) = \emptyset$, then it is not clear why the minimal class $\theta^{g-1}/(g-1)!$ in $ \Hdg^{2g-2}(J(C), \ZZ(g-1))^G$ should be algebraic. 
\item \label{difficulty-two}The $G$-equivariant cohomology group $\rm H^{2g-2}_G(A(\CC), \ZZ(g-1))$ of a real abelian variety $A$ 
often contains non-zero \emph{torsion} classes. For some of these torsion classes, there are no topological obstructions to be algebraic. 
\end{enumerate}
It is because of these two subtleties \ref{difficulty-one} and \ref{difficulty-two} that when working with algebraic cycles on abelian varieties over the real numbers, new methods have to be employed, methods specific to real algebraic geometry. The goal of Chapter \ref{ch:realintegralhodge} is to introduce such techniques and provide some first positive results, mostly in dimension three. 

To explain the main results of Chapter \ref{ch:realintegralhodge}, let us recall and continue the discussion of Section \ref{intro:sub:complexandrealalgebraiccycles}. 
Let $X$ be a smooth projective variety over $\RR$, and remember that $G = \Gal(\CC/\RR)$ (Definition \ref{thegaloisgroup}). Building on work of Krasnov \cite{krasnovcharact,krasnovgroth} and Van Hamel \cite{vanhamel}, Benoist and Wittenberg define a subgroup $\Hdg^{2k}_G(X(\CC), \ZZ(k))_0$ of the $G$-equivariant cohomology group $\rm H^{2k}_G(X(\CC), \ZZ(k))$ in the sense of Borel, and study the cycle class map \cite{krasnovcharact}, \cite[\S 1.6.1]{BW20}
\begin{align}\label{realcycleclassmap}
\CH_i(X) \to \rm \Hdg^{2k}_G(X(\CC), \ZZ(k))_0 \quad \quad \left(i + k = \dim(X)\right).
\end{align}
The \emph{real integral Hodge conjecture for $i$-cycles} refers to the property that  (\ref{realcycleclassmap}) is surjective. Just as in the complex situation, this property holds for every smooth projective variety $X$ over $\RR$ if $i \in \{\dim(X), \dim(X)-1, 0\}$ \cite{krasnovcharact, mangoltehamel, vanhamel, BW20},
but may fail for other values of $i$. 

Complex uniruled threefolds, as well as threefolds $X$ over $\CC$ with $K_X = 0$, satisfy the integral Hodge conjecture by work of Grabowski, Voisin and Totaro \cite{grabowski, voisinintegralhodge, totaroIHCthreefolds}. In \cite[Question 2.16]{BW20}, Benoist and Wittenberg ask whether the same is true over $\RR$. In fact, in \cite{BW21} they provide positive answers for various classes of uniruled threefolds. For real Calabi-Yau varieties, however, nothing seems to be known. In Chapter \ref{ch:realintegralhodge}, we address the following: 


\begin{question} \label{question}
Does a real abelian threefold satisfy the real integral Hodge conjecture? 
\end{question}
\noindent
Our goal is to provide evidence towards a positive answer to Question \ref{question}. 

Let us start with explaining our results. If $A$ is a real abelian variety and $k = \dim(A)-1$, 
there is an exact sequence (see Lemma \ref{lemma:important0}):
\begin{align*}
0 \to \Hdg^{2k}_G(A(\CC), \ZZ(k))_0[2] \to \Hdg^{2k}_G(A(\CC), \ZZ(k))_0 \to \Hdg^{2k}(A(\CC), \ZZ(k))^G \to 0.
\end{align*}\begin{definition}A real abelian variety $A$ of dimension $g$ satisfies the \emph{real integral Hodge conjecture for one-cycles modulo torsion} if the following map is surjective:
\begin{align} \label{eq:modtors}
\CH_1(A) \to \Hdg^{2g-2}(A(\CC), \ZZ(g-1))^G. 
\end{align}
If $g = 3$, we say that $A$ satisfies the \emph{real integral Hodge conjecture modulo torsion} if the homomorphism (\ref{eq:modtors}) is surjective. 
\end{definition}


\noindent
The first main result of Chapter \ref{ch:realintegralhodge} is as follows. 

\begin{theorem} \label{theorem1}
Every abelian threefold over $\RR$ satisfies the real integral Hodge conjecture modulo torsion. 
\end{theorem}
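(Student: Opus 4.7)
The plan is to imitate Grabowski's strategy for complex abelian threefolds, but carried out $G$-equivariantly and modulo torsion, with the missing algebraicity of the minimal class supplied by reduction to Jacobians of real curves with a real point.

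First I would reduce to the principally polarized case. Any abelian threefold $A$ over $\RR$ is isogenous to a principally polarized one (e.g.\ by Zarhin's trick applied to $A\times\wh{A}$), and the formula (\ref{eq1}) of Chapter \ref{ch:integralfourier} computing the minimal Poincar\'e class of a product in terms of those of the factors shows that the algebraicity statement of interest for $A$ is implied by the analogous statement for some principally polarized threefold of which $A$ is a factor. So I may assume $A$ is principally polarized, with polarization class $\theta\in\Hdg^2_G(A(\CC),\ZZ(1))_0$.

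Second, I would establish a real analogue of Grabowski's equivalence: for a principally polarized real abelian threefold $(A,\theta)$, surjectivity of $\CH_1(A)\to\Hdg^4(A(\CC),\ZZ(2))^G$ is equivalent to the algebraicity in $\Hdg^4(A(\CC),\ZZ(2))^G$ of the minimal class $\gamma_\theta=\theta^{g-1}/(g-1)!$. This is the theorem referred to in the introduction as Theorem \ref{grabowski}. The proof runs the Chow-level integral Fourier transform of Chapter \ref{ch:integralfourier} in the $G$-equivariant setting: a lift of $\gamma_\theta$ modulo torsion yields, via the Pontryagin divided-power exponential of Theorem \ref{th:PDstructure} (valid modulo torsion), a lift modulo torsion of $\ch(\ca P_A)$ in $\CH(A\times\wh{A})$, and hence a motivic weak integral Fourier transform modulo torsion. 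Combined with Lefschetz $(1,1)$ applied to the $G$-equivariant N\'eron-Severi group $\NS(A)^G$, which surjects onto $\Hdg^2_G(A(\CC),\ZZ(1))^G$ up to bounded torsion, this pushes every $G$-invariant Hodge class in degree $2g-2$ down to an algebraic one modulo torsion.

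Third, I would use density to reduce to Jacobians. By the density theorem \ref{density} proven in this chapter, Hecke orbits of prime-to-$\ell$ isogenous principally polarized abelian threefolds are dense in $\ca A_3(\RR)$; choosing $\ell$ appropriately for every given threefold, one arranges that each principally polarized real abelian threefold admits an isogeny, of degree coprime to $(2g-2)!$, to a Jacobian $J(C)$ of a real curve $C$ whose real locus $C(\RR)$ is non-empty (density, combined with Lemma \ref{lemma_algebraicityminimalclass}-type specialization of algebraicity of the minimal class, allows the property to descend through isogenies modulo torsion once the degree is coprime to the relevant factorials). Since such $C$ admits an Abel-Jacobi embedding $\iota\colon C\hookrightarrow J(C)$ over $\RR$ based at a real point $p\in C(\RR)$, Poincar\'e's formula gives $cl(\iota(C))=\gamma_\theta\in\Hdg^4(J(C)(\CC),\ZZ(2))^G$, so $\gamma_\theta$ is algebraic there. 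Transporting this back through the isogeny yields algebraicity of $\gamma_\theta$ modulo torsion on $A$, which combined with the Grabowski-type criterion of the second step completes the proof.

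The main obstacle will be controlling the $2$-primary torsion that is produced by the Hochschild-Serre filtration when passing between $\Hdg^{2k}_G(A(\CC),\ZZ(k))_0$ and its quotient $\Hdg^{2k}(A(\CC),\ZZ(k))^G$. Because the Fourier and divided-power constructions of Chapter \ref{ch:integralfourier} naturally produce denominators dividing $(2g-2)!$, the isogeny in the density step must be chosen of degree coprime to $(2g-2)!$, and the passage through the Grabowski criterion must be carried out modulo torsion throughout; verifying that the $2$-torsion contribution $\Hdg^4_G(A(\CC),\ZZ(2))_0[2]$ does not obstruct the argument (i.e.\ is irrelevant \emph{modulo torsion}) is precisely the point at which working modulo torsion, rather than with the full Benoist-Wittenberg group, is essential.
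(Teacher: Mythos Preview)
Your overall strategy---reduce to the principally polarized case, establish a real Grabowski criterion, then supply algebraicity of the minimal class via isogeny to the Jacobian of a real curve with a real point---is exactly the paper's. But two of your steps have genuine gaps.

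Step 1 does not work as written. Zarhin's trick produces a principal polarization on $(A\times\wh A)^4$, a variety of dimension $24$, not a principally polarized abelian \emph{threefold} isogenous to $A$; and formula (\ref{eq1}) gives the implication in the wrong direction (algebraicity for factors $\Rightarrow$ algebraicity for the product, not conversely). The paper instead proves a new result (Theorem \ref{principalisogeny}): every polarized abelian variety over $\RR$ is isogenous to a principally polarized one \emph{of the same dimension}. This fails over general fields \cite{Howe2001}, so it is a genuine ingredient; it is what makes the reduction inside Theorem \ref{grabowski} work.

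Step 3 is also looser than it needs to be. The density you cite must be established \emph{component by component} in $\ca A_3(\RR)$, for Hecke orbits of isogenies of \emph{odd} degree (Theorem \ref{density}, using two distinct odd primes $p,q$); and one must check separately (Lemma \ref{lemma:opensubsetofnonhyp}, via \cite{grossharris}) that every component of $\ca A_3(\RR)$ contains Jacobians of curves with $C(\RR)\neq\emptyset$. No specialization argument of the type Lemma \ref{lemma_algebraicityminimalclass} is used: the paper simply pulls back $\theta_C^2/2$ along the isogeny to get an odd multiple of $\theta^2/2$ algebraic, and since $\theta^2$ is already algebraic, B\'ezout gives $\theta^2/2$. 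Coprimality to $(2g-2)!$ is unnecessary; oddness suffices because the only denominator in $\gamma_\theta=\theta^{g-1}/(g-1)!$ for $g=3$ is $2$. Finally, your closing worry about $2$-primary torsion is not an obstacle: by Lemma \ref{lemma:important0}, the quotient of $\Hdg^4_G(A(\CC),\ZZ(2))_0$ by its torsion \emph{is} $\Hdg^4(A(\CC),\ZZ(2))^G$, so ``modulo torsion'' means exactly that you are targeting the latter group, and the argument runs entirely there.
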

\noindent
By using the Hochschild-Serre spectral sequence to calculate the torsion rank of the equivariant cohomology of a real abelian threefold, we obtain:

\begin{corollary} \label{IHCforconnected}
Let $A$ be an abelian threefold over $\RR$ such that $A(\RR)$ is connected. Then $A$ satisfies the real integral Hodge conjecture. 
\end{corollary}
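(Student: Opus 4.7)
The plan is as follows. By Theorem~\ref{theorem1}, the cycle class map $\CH_1(A) \to \Hdg^{4}_G(A(\CC), \ZZ(2))_0$ surjects onto the quotient $\Hdg^{4}(A(\CC), \ZZ(2))^G$ appearing in the exact sequence
\[
0 \to \Hdg^{4}_G(A(\CC), \ZZ(2))_0[2] \to \Hdg^{4}_G(A(\CC), \ZZ(2))_0 \to \Hdg^{4}(A(\CC), \ZZ(2))^G \to 0
\]
recalled just before Theorem~\ref{theorem1}. It therefore suffices to prove that, whenever $A(\RR)$ is connected, the torsion group $\Hdg^{4}_G(A(\CC), \ZZ(2))_0[2]$ vanishes; this is a purely topological/Galois-cohomological assertion that does not involve algebraic cycles at all.

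To carry this out I would use the Hochschild--Serre spectral sequence
\[
E_2^{p,q} = \rm H^p\bigl(G, \rm H^q(A(\CC), \ZZ(2))\bigr) \Rightarrow \rm H^{p+q}_G(A(\CC), \ZZ(2))
\]
in total degree four. The $E_2$ page is computable, since $\rm H^q(A(\CC), \ZZ) \cong \bigwedge^q \rm H^1(A(\CC), \ZZ)$ and the $G$-action on $\rm H^1(A(\CC), \ZZ)$ comes from the real structure on $A$. When $A(\RR)$ is connected, the real torus $A(\RR) \cong (\RR/\ZZ)^3$ forces the $G$-module $\rm H_1(A(\CC), \ZZ)$ to be, up to canonical isomorphism, $\ZZ(1)^3$, so that $\rm H^1(A(\CC), \ZZ) \cong \ZZ(-1)^3$; equivalently, the Lie algebra $G$-representation has no definite summand. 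From this I would compute each $E_2^{p,q}$ with $p+q = 4$, identify which terms survive to $E_\infty$, and read off the $2$-torsion of $\rm H^4_G(A(\CC), \ZZ(2))$.

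The remaining step is to match this computation against the two conditions (Hodge and topological) defining $\Hdg^{4}_G(A(\CC), \ZZ(2))_0$. The Hodge condition already kills the piece of the $2$-torsion that factors through the non-$(2,2)$ part of $\rm H^4(A(\CC), \CC)$, while the topological condition defining the subscript $0$ controls the image in $\rm H^2(A(\RR), \ZZ/2)$ under the restriction map. Using $\rm H^{\bullet}(A(\RR), \ZZ/2) \cong \bigwedge^{\bullet} \rm H^1(A(\RR), \ZZ/2)$ together with the connectedness assumption, one checks that the composition of these two conditions cuts down the $2$-torsion to zero.

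The main obstacle, and the step that will require real care, is the bookkeeping in this last paragraph: one must simultaneously track how the Hochschild--Serre filtration, the Hodge condition, and the topological condition interact on the $2$-torsion of $\rm H^4_G(A(\CC), \ZZ(2))$. This is where the hypothesis $\pi_0(A(\RR)) = 1$ enters decisively, since for a disconnected real abelian threefold some classes in $\rm H^{p\geq 1}(G, \rm H^{4-p}(A(\CC), \ZZ(2)))$ survive both conditions and give genuine obstructions. Once this combinatorial/cohomological vanishing is established, the corollary follows immediately by combining it with Theorem~\ref{theorem1}.
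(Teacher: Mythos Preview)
Your overall strategy coincides with the paper's: show that the $2$-torsion of $\Hdg^4_G(A(\CC),\ZZ(2))_0$ vanishes and then invoke Theorem~\ref{theorem1}. The problem is your identification of the $G$-module $\rm H^1(A(\CC),\ZZ)$, which is wrong in two ways and throws the rest of the plan off course.

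First a rank issue: $\rm H_1(A(\CC),\ZZ)$ has rank $2g=6$, not $3$, so it cannot be $\ZZ(1)^3$. More importantly, when $A(\RR)$ is connected the correct $G$-module structure is $\rm H^1(A(\CC),\ZZ)\cong\ZZ[G]^3$ (three copies of the regular representation), not any sum of sign representations. Concretely, $A(\CC)$ is $G$-equivariantly homeomorphic to a product of three copies of $S^1\times S^1$ on each of which $\sigma$ swaps the two circle factors; on $\rm H_1$ of each factor this swap gives the permutation module $\ZZ[G]$. The alternative involution $(x,y)\mapsto(x,\bar y)$ on a factor gives $\rm H_1\cong\ZZ\oplus\ZZ(1)$, but its fixed locus has two components, so it does not occur under your hypothesis.

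This correction makes the hard part of your outline disappear. Since $\ZZ[G]$ is cohomologically trivial, and since a direct check of exterior powers yields $\bigwedge^2\ZZ[G]^3\cong\ZZ(1)^3\oplus\ZZ[G]^6$ and $\bigwedge^3\ZZ[G]^3\cong\ZZ[G]^{10}$, one finds $\rm H^p(G,\rm H^q(A(\CC),\ZZ(2)))=0$ for all $p+q=4$ with $0<p<4$. The sole remaining piece $\rm H^4(G,\rm H^0)\cong\ZZ/2$ is present for any real abelian variety and is killed by the topological condition (it maps isomorphically onto $\rm H^0(A(\RR),\ZZ/2)=\ZZ/2$ in the exact sequence (\ref{important})). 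Hence $\Hdg^4_G(A(\CC),\ZZ(2))_0[2]=0$ outright, without any appeal to the Hodge condition or to a subtle interaction between filtrations. Your anticipated ``main obstacle'' simply does not arise once the $G$-module is correctly identified; this is exactly how the paper proceeds.
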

\noindent
Our proof of Theorem \ref{theorem1} is inspired by Grabowski's proof of the integral Hodge conjecture for complex abelian threefolds \cite{grabowski}. It consists of two steps:
\begin{enumerate}
\item \label{firstreduction} Reduce the real integral Hodge conjecture for one-cycles modulo torsion for abelian varieties of dimension $g$ to the algebraicity of the class $$\gamma_\theta =\theta^{g-1}/(g-1)! \in \Hdg^{2g-2}(A(\CC), \ZZ(g-1))^G$$ for every principally polarized real abelian variety $(A, \theta)$ of dimension $g$.  
\item \label{secondreduction} Reduce the algebraicity of $\gamma_\theta$ on a principally polarized real abelian threefold $A$ to the case where $A = J(C)$ is the Jacobian of a real algebraic curve $C$ with non-empty real locus, where this is clear. 
\end{enumerate}
\noindent
An essential ingredient for reduction step (\ref{firstreduction}) is the fact that any polarized abelian variety over $\RR$ is isogenous to a principally polarized one. Although the analogue of this statement over any algebraically closed field is classical \cite{MumfordAV}, it fails over general fields \cite{Howe2001}. We will prove this fact in Theorem \ref{principalisogeny}. 

As for step (\ref{secondreduction}), our strategy is to use Hecke orbits.
For integers $\alpha, \beta$, define the \emph{$(\alpha, \beta)$-Hecke orbit} of a moduli point $[(A, \lambda)] \in \ca A_g(\RR)$ as the set of $[(B, \mu)] \in \ca A_g(\RR)$ admitting an isogeny $A \to B$ preserving the polarizations up to a product of powers of $\alpha$ and $\beta$ (see Definition \ref{def:heckeorbits}). Hecke orbits are well-known to be dense in $\ca A_g(\CC)$ (see Lemma \ref{strongapproximationlemma}); we obtain the following real analogue. 
\begin{theorem} \label{density}
Let $p$ and $q$ be distinct odd prime numbers. 
The $(p,q)$-Hecke orbit of any $x \in \ca A_g(\RR)$ is analytically dense in the connected component of $\ca A_g(\RR)$ containing $x$.  
\end{theorem}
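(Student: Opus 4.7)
The plan is to lift the problem to the Siegel upper half space $\bb H_g$, identify the part of the Hecke orbit meeting a fixed real connected component with an orbit of an arithmetic subgroup of $\GL_g(\RR)$ acting through the embedding $f_\tau$, and then invoke strong approximation for $\SL_g$.

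\emph{Step 1 (reduction to $\bb H_g^{\Sigma_\tau}$).} Let $\tau \in \mr T(g)$ index the connected component $K_\tau = \GL_g^\tau(\ZZ) \setminus \bb H_g^{\Sigma_\tau}$ of $\ca A_g(\RR)$ containing $x$, and lift $x$ to a period matrix $\tilde x \in \bb H_g^{\Sigma_\tau}$. The embedding $f_\tau \colon \GL_g(\RR) \hookrightarrow \Sp_{2g}(\RR)$ recalled in Section \ref{modofrealAV} realises $\GL_g(\RR)$ as the stabiliser of $\bb H_g^{\Sigma_\tau}$ inside $\Sp_{2g}(\RR)$, and the resulting action on $\bb H_g^{\Sigma_\tau}$ is transitive. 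Thus density in $K_\tau$ of a given subset follows once one exhibits a subgroup $\Gamma \subset \GL_g(\RR)$ which is analytically dense in $\GL_g(\RR)$ and such that $f_\tau(\Gamma) \cdot \tilde x$ maps into the subset in question.

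\emph{Step 2 (identifying the orbit).} In the complex moduli space $\ca A_g(\CC) = \Sp_{2g}(\ZZ) \setminus \bb H_g$, the $(p,q)$-Hecke orbit of the moduli point corresponding to $\tilde x$ lifts to the orbit of $\tilde x$ under the subgroup $H \subset \Sp_{2g}(\ZZ[1/(pq)])$ generated by $\Sp_{2g}(\ZZ)$ together with the $p$- and $q$-Hecke correspondences. The intersection of this orbit with the connected component $K_\tau$ pulls back in $\bb H_g^{\Sigma_\tau}$ to those $M \cdot \tilde x$ with $M \in H$ satisfying $M \cdot \tilde x \in \bb H_g^{\Sigma_\tau}$, i.e.\ $M \in H \cap f_\tau(\GL_g(\RR))$. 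Using the explicit formula for $f_\tau$ together with the symplectic equations defining $\Sp_{2g}$, this intersection is identified with $f_\tau(\GL_g^\tau(\ZZ[1/(pq)]))$: the hypothesis that $p$ and $q$ are odd ensures that the mod-$2$ congruence $T^t M(\tau) T \equiv M(\tau)$ defining $\GL_g^\tau$ still makes sense over $\ZZ[1/(pq)]$, and conversely, for any $T \in \GL_g^\tau(\ZZ[1/(pq)])$, the upper-right block $\tfrac{1}{2}(M(\tau) T^{-1} - T^t M(\tau))$ of $f_\tau(T)$ has entries in $\ZZ[1/(pq)]$, since $T$ is invertible modulo $2$.

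\emph{Step 3 (strong approximation).} It remains to prove that $\GL_g^\tau(\ZZ[1/(pq)])$ is analytically dense in $\GL_g(\RR)$, for which it suffices to prove density of the subgroup $\SL_g(\ZZ[1/(pq)]) \cap \GL_g^\tau(\ZZ[1/(pq)])$ inside $\SL_g(\RR)$. Strong approximation for the simply connected semisimple $\QQ$-group $\SL_g$ at the non-compact place $p$ (cf.\ Lemma \ref{strongapproximationlemma}) yields that $\SL_g(\ZZ[1/(pq)])$ is dense in $\SL_g(\RR) \times \SL_g(\ZZ_2)$. Intersecting with $\SL_g(\RR)$ times the open subgroup of $\SL_g(\ZZ_2)$ consisting of matrices whose reduction modulo $2$ stabilises $M(\tau)$, and projecting onto the first factor, yields the desired density, concluding the proof.

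The main obstacle is Step 2, which requires unwinding the identification of Theorem \ref{th:homeomorphismmoduli} to verify that an isogeny of complex polarised abelian varieties is compatible with the real structures parametrised by $\tau$ precisely when the corresponding symplectic matrix lies in the image $f_\tau(\GL_g^\tau(\ZZ[1/(pq)]))$. Once this translation is in place, Steps 1 and 3 are essentially formal consequences of the transitive action of $\GL_g(\RR)$ on $\bb H_g^{\Sigma_\tau}$ and of strong approximation.
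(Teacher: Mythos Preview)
Your overall strategy matches the paper's, and Steps~1 and~2 are essentially correct. The gap is in Step~3: you assert that density of $\GL_g^\tau(\ZZ[1/(pq)])$ in $\GL_g(\RR)$ follows from density of its intersection with $\SL_g$ inside $\SL_g(\RR)$, but this is false --- $\SL_g(\RR)$ is a closed proper subgroup of $\GL_g(\RR)$, and the action of $\GL_g(\RR)$ on $\rm H_g$ via $N\mapsto T^tNT$ is not transitive when restricted to $\SL_g(\RR)$ (the orbits are the level sets of $\det$). Strong approximation for $\SL_g$ alone therefore cannot conclude the argument.

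The missing ingredient is control of the determinant. Writing $G=\GL_g^\tau(R)$ with $R=\ZZ[1/p,1/q]$, once you know $\SL_g(\RR)\subset\overline G$ you get $\overline G=\det^{-1}(\det(\overline G))$, so it remains to show $\det(G)$ is dense in $\RR^\ast$. Here $\det(G)=R^\ast=\{\pm p^nq^m:n,m\in\ZZ\}$, and $\{p^nq^m\}$ is dense in $\RR_{>0}$ precisely because $\log p$ and $\log q$ are incommensurable, so $\ZZ\log p+\ZZ\log q$ is dense in $\RR$. This is exactly where the hypothesis of \emph{two} distinct primes enters; with a single prime the determinant image is discrete and the argument fails. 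You should add this determinant step to complete Step~3.
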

\begin{corollary} \label{usefulcorollary}
Let $p$ and $q$ be as above. Every principally polarized abelian threefold over $\RR$ is isogenous, via an isogeny that preserves the polarizations up to a product of powers of $p$ and $q$, to the Jacobian of a non-hyperelliptic curve with non-empty real locus. 
\end{corollary}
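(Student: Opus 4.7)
The plan is to combine the density of Hecke orbits from Theorem~\ref{density} with a density/openness argument for the Torelli locus of real curves with non-empty real locus. Let $[(A,\theta)] \in \ca A_3(\RR)$ and let $K \subset \ca A_3(\RR)$ be the connected component containing it. By Theorem~\ref{density}, the $(p,q)$-Hecke orbit of $[(A,\theta)]$ is analytically dense in $K$, and every point of that orbit corresponds to a principally polarized real abelian threefold $B$ admitting an isogeny $A \to B$ preserving polarizations up to a product of powers of $p$ and $q$. Hence it suffices to exhibit a non-empty open subset $J \subset K$ consisting of moduli points of Jacobians $J(C)$ of non-hyperelliptic real algebraic curves $C$ with $C(\RR) \neq \emptyset$; density of the Hecke orbit then forces it to meet $J$, which yields the desired $B$.

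For openness of $J$ in $\ca A_3(\RR)$, I would use the fact that in dimension three the Torelli morphism $t \colon \ca M_3 \to \ca A_3$ is an open immersion onto the complement of the closed subvariety of decomposable principally polarized threefolds; the hyperelliptic locus is a proper closed subvariety of $\ca M_3$; and by Theorem~\ref{th:homeomorphismmoduli2} the condition $C(\RR) \neq \emptyset$ is constant on each connected component of $\ca M_3(\RR)$, so it singles out an explicit union of components of $\ca M_3(\RR)$, namely those indexed by Sepp\"al\"a--Silhol pairs $(s,a)$ with $s \geq 1$ (Section~\ref{modofrealAC}). The intersection with $K$ of the image under the continuous real Torelli map $t_\RR \colon \ca M_3(\RR) \to \ca A_3(\RR)$ of this union, with hyperelliptic and decomposable loci removed, is the desired open set $J \cap K$.

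What remains---and this will be the main obstacle---is to show that $J$ has non-empty intersection with every connected component $K$ of $\ca A_3(\RR)$. Since $t_\RR$ is continuous, each connected component of $\ca M_3(\RR)$ maps into a single component of $\ca A_3(\RR)$, so by the surjectivity of $t_\RR$ onto the indecomposable locus it suffices to check, using the Gross--Harris classification of the components of $\ca A_3(\RR)$ recalled in Section~\ref{modofrealAV} and the classification of the components of $\ca M_3(\RR)$ in Section~\ref{modofrealAC}, that every $K$ is hit by the image of some component of $\ca M_3(\RR)$ with $s \geq 1$. This amounts to a case-by-case verification, relying on the classical comparison (due to Comessatti and Sepp\"al\"a--Silhol) between the topological invariants of a genus three real curve $C$ and those of its Jacobian $J(C)$, in particular the formula expressing the number of connected components of $J(C)(\RR)$ in terms of $(s(C),a(C))$. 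Once this matching is established, combining the three steps produces a point of the Hecke orbit lying in $J$, completing the proof.
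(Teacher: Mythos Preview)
Your proposal is correct and follows essentially the same approach as the paper: combine the density of $(p,q)$-Hecke orbits (Theorem~\ref{density}) with the existence, in every connected component of $\ca A_3(\RR)$, of a non-empty open subset of Jacobians of non-hyperelliptic real curves with non-empty real locus. The paper isolates this second step as Lemma~\ref{lemma:opensubsetofnonhyp} and handles the crucial component-matching by citing Gross--Harris directly (\cite[p.~182 and Proposition~3.1]{grossharris}), whereas you propose to redo this matching yourself via the Comessatti/Sepp\"al\"a--Silhol comparison of invariants; these are the same verification, just with different bookkeeping.
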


\noindent
Reduction step (\ref{secondreduction}) follows because if an odd multiple of $\theta^2/2$ is algebraic for a principally polarized real abelian threefold $(A, \theta)$, then $\theta^2/2$ is algebraic. Corollary \ref{usefulcorollary} turns out to be useful for the general principally polarized case as well:

\begin{theorem} \label{reduction}
Let $\ca A_3(\RR)^+$ be a component of the moduli space of principally polarized real abelian threefolds. Suppose that the real integral Hodge conjecture holds for every Jacobian $J(C)$ such that $[J(C)] \in \ca A_3(\RR)^+$ and the real locus $C(\RR)$ of $C$ is non-empty. Then the real integral Hodge conjecture holds for every real abelian variety in $\ca A_3(\RR)^+$. 
\end{theorem}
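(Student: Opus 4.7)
The plan is to combine Theorem \ref{theorem1}, which already yields the real integral Hodge conjecture modulo torsion for every real abelian threefold, with a pushforward--pullback argument for odd-degree isogenies, in order to reduce the remaining $2$-torsion classes to the Jacobian case via Corollary \ref{usefulcorollary}.

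First I would observe that, by Theorem \ref{theorem1}, for every $[A] \in \ca A_3(\RR)^+$ the composition
\[
\CH_1(A) \to \Hdg^{4}_G(A(\CC), \ZZ(2))_0 \to \Hdg^{4}(A(\CC), \ZZ(2))^G
\]
is surjective. In view of the short exact sequence
\[
0 \to \Hdg^{4}_G(A(\CC), \ZZ(2))_0[2] \to \Hdg^{4}_G(A(\CC), \ZZ(2))_0 \to \Hdg^{4}(A(\CC), \ZZ(2))^G \to 0
\]
recalled just before Theorem \ref{theorem1}, it then suffices to show that every $2$-torsion class $\alpha \in \Hdg^{4}_G(A(\CC), \ZZ(2))_0[2]$ lies in the image of the cycle class map. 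I would fix two distinct odd primes $p$ and $q$ and invoke Corollary \ref{usefulcorollary} to produce an isogeny $\phi \colon A \to J(C)$ defined over $\RR$, where $C$ is a real algebraic curve with $C(\RR) \neq \emptyset$ and $\deg(\phi) = n$ is a product of powers of $p$ and $q$, hence odd. Because $\phi$ realises a $(p,q)$-Hecke equivalence, Theorem \ref{density} guarantees that $[J(C)]$ remains in the component $\ca A_3(\RR)^+$, so the standing hypothesis of Theorem \ref{reduction} applies and $J(C)$ satisfies the full real integral Hodge conjecture.

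The argument would then conclude by a projection-formula manipulation. The pushforward $\phi_\ast \alpha \in \Hdg^{4}_G(J(C)(\CC), \ZZ(2))_0$ is still $2$-torsion, so by hypothesis there exists $Z \in \CH_1(J(C))$ with $\phi_\ast \alpha = cl(Z)$; then $\phi^\ast Z \in \CH_1(A)$ has class $\phi^\ast \phi_\ast \alpha$. Since $\phi(\CC) \colon A(\CC) \to J(C)(\CC)$ is a Galois topological covering of degree $n$ with deck group $\ker(\phi)(\CC)$, applying the Borel construction $- \times_G EG$ produces a finite topological covering of degree $n$, and the standard identity $\phi^\ast \phi_\ast = n \cdot \id$ on $\rm H^{\bullet}_G(A(\CC), \ZZ(2))$ restricts to the subgroup $\Hdg^{4}_G(A(\CC), \ZZ(2))_0$. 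As $\alpha$ is $2$-torsion and $n$ is odd, this yields $\phi^\ast \phi_\ast \alpha = n\alpha = \alpha$, so $\alpha = cl(\phi^\ast Z)$ is algebraic.

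The main obstacle, which is already handled in preceding sections rather than in the proof of Theorem \ref{reduction} itself, is precisely Corollary \ref{usefulcorollary}: one needs that inside any prescribed component $\ca A_3(\RR)^+$ there exists a Jacobian of a real algebraic curve with non-empty real locus that is connected to $A$ by a Hecke isogeny of odd degree. This in turn rests on Theorem \ref{density} on density of real Hecke orbits combined with density of Jacobians of curves with real points in the real Torelli locus in dimension three. A minor technical point that has to be verified is the compatibility of the subgroup $\Hdg^{4}_G(-, \ZZ(2))_0$ with $\phi^\ast$ and $\phi_\ast$, which follows from the naturality of the Steenrod operations on $\rm H^{\bullet}(-, \ZZ/2)$ and of the Hodge filtration used to define it.
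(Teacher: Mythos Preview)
Your overall strategy is exactly the paper's: reduce to the $2$-torsion of $\Hdg^4_G(A(\CC),\ZZ(2))_0$ via Theorem~\ref{theorem1} and the short exact sequence, produce an odd-degree isogeny $\phi\colon A\to J(C)$ with $C(\RR)\neq\emptyset$ via Corollary~\ref{usefulcorollary}, and then transport algebraicity of $2$-torsion classes across $\phi$.

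The gap is in the step where you invoke ``the standard identity $\phi^\ast\phi_\ast=n\cdot\id$'' on equivariant cohomology. For an arbitrary degree-$n$ covering the standard identity is $\phi_\ast\phi^\ast=n\cdot\id$, not the reverse. On ordinary cohomology the reverse identity does hold for an isogeny because the cover is Galois and the deck transformations are translations, hence act trivially; but after applying the Borel construction the covering $A(\CC)\times_G EG\to J(C)(\CC)\times_G EG$ need not be Galois when $G$ acts nontrivially on $\ker(\phi)(\CC)$, so your justification breaks down. (The identity is in fact true on $H^\bullet_G$ for odd-degree isogenies, but proving it requires showing that the sum $\sum_{k\in\ker\phi}t_k^\ast$ is $G$-equivariantly chain-homotopic to $n\cdot\id$, which uses that odd-order real torsion points lie in $A(\RR)^0$; this is not a ``standard'' fact.)

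The paper sidesteps this by a cleaner route: rather than $\phi_\ast$, it uses the companion isogeny $\psi\colon J(C)\to A$ with $\psi\circ\phi=[n]_A$ and shows, via the Hochschild--Serre filtration, that $[n]^\ast$ and hence $\phi^\ast$ restrict to \emph{isomorphisms} on $F^p\rm H^{2k}_G(-,\ZZ(k))_0$ for $p>0$ (this is Corollary~\ref{algebraicity}). Since $\phi^\ast$ carries algebraic classes to algebraic classes, algebraicity of $\rm H^4_G(J(C)(\CC),\ZZ(2))_0[2]$ is then equivalent to algebraicity of $\rm H^4_G(A(\CC),\ZZ(2))_0[2]$. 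Note that only bijectivity on the $2$-torsion is needed, not the pointwise equality $\phi^\ast\phi_\ast(\alpha)=\alpha$ that you assert; your argument can be repaired along the same lines by showing $\phi^\ast\phi_\ast$ is an automorphism of the $2$-torsion (it is $\id$ on each Hochschild--Serre graded piece, hence unipotent), but at that point you are essentially reproducing Corollary~\ref{algebraicity}.
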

\noindent
In view of Theorems \ref{theorem1} and \ref{reduction}, Question \ref{question} can (in the principally polarized case) be rephrased as follows. Let $C$ be a real algebraic curve of genus three with non-empty real locus. Is the torsion subgroup of $\Hdg^{4}_G(J(C)(\CC),\ZZ(2))_0$ algebraic? 

Our final result reduces this question further. The theorem concerns torsion cohomology classes of degree four on real abelian varieties of any dimension $g$. For a smooth projective variety $X$ over $\RR$, the Hochschild-Serre spectral sequence 
\[
E^{p,q}_2 = \rm H^p(G, \rm H^q(X(\CC), \ZZ(k)) \implies \rm H^{p+q}_G(X(\CC), \ZZ(k))
\]
induces a filtration $F^\bullet$ on $\Hdg^{2k}_G(X(\CC), \ZZ(k))_0 \subset \rm H^{2k}_G(X(\CC),\ZZ(k))$ (see Section \ref{sec:realintegralhodge}). 

\begin{theorem} \label{fourierreduction}
Let $A$ be an abelian variety over $\RR$. The group $F^3\Hdg^{4}_G(A(\CC), \ZZ(2))_0$ is zero and the group $F^2\Hdg^{4}_G(A(\CC), \ZZ(2))_0$ is algebraic. 
\end{theorem}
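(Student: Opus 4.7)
The plan is to analyze the two claims via the Hochschild-Serre spectral sequence
\[
E_2^{p,q} = \rm H^p(G, \rm H^q(A(\CC), \ZZ(2))) \;\Longrightarrow\; \rm H^{p+q}_G(A(\CC), \ZZ(2)),
\]
which by construction induces $F^\bullet$, so $F^p/F^{p+1}$ is a subquotient of $E_2^{p, 4-p}$. Since $\rm H^q(A(\CC), \ZZ)$ is finite free abelian (as $A(\CC)$ is a real torus) and $|G|=2$, every $E_2^{p,4-p}$ with $p \geq 1$ is annihilated by $2$.

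First I would handle the vanishing of $F^3$. The only graded pieces contributing are $E_\infty^{3,1}$ and $E_\infty^{4,0}$, subquotients of $\rm H^3(G, \rm H^1(A(\CC), \ZZ(2)))$ and $\rm H^4(G, \ZZ(2)) \cong \ZZ/2$. The generator of the latter is $u^2$ with $u \in \rm H^2(G, \ZZ(1))$, and restricts non-trivially to the top class of $\rm H^4(A(\RR), \ZZ/2)$ whenever $A(\RR) \neq \emptyset$; the topological condition defining the subgroup $(-)_0$ (see Section \ref{sec:realintegralhodge}) is precisely engineered to kill such contributions. For $E_\infty^{3,1}$ I would use the Galois equivariance of the Fourier transform $\mr F_A$ (which preserves the Hochschild-Serre filtration and the subgroup $(-)_0$ since $\ca P_A$ is defined over $\RR$) together with the vanishing $\rm H^i(A(\RR), \ZZ/2) = 0$ for $i > g$ and the Steenrod/Bockstein compatibility in the definition of $(-)_0$, to force vanishing.

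Next I would prove the algebraicity of $F^2$. A class $\gamma \in F^2 \Hdg^4_G(A(\CC), \ZZ(2))_0$ is by Step 1 determined by its image in $F^2/F^3 \hookrightarrow E_\infty^{2,2}$, itself a subquotient of $\rm H^2(G, \rm H^2(A(\CC), \ZZ(2)))$. The Hopf algebra isomorphism $\rm H^\bullet(A(\CC), \ZZ) \cong \wedge^\bullet \rm H^1(A(\CC), \ZZ)$ gives the $G$-equivariant surjection
\[
\cup\colon \rm H^1(A, \ZZ(1)) \otimes \rm H^1(A, \ZZ(1)) \twoheadrightarrow \rm H^2(A, \ZZ(2));
\]
combined with the multiplicativity of the Hochschild-Serre spectral sequence, this writes the graded image of $\gamma$ as $\sum \alpha_i \cdot \beta_i$ with $\alpha_i, \beta_i \in E_2^{1,1}$. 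Choosing lifts $\tilde\alpha_i, \tilde\beta_i \in F^1 \rm H^2_G(A(\CC), \ZZ(1))$, their images in $\rm H^2(A(\CC), \ZZ(1))^G$ vanish, so they are automatically Hodge; modifying by elements of $F^2$ one arranges them to lie in $\Hdg^2_G(A(\CC), \ZZ(1))_0$. By the degree-$2$ real integral Hodge conjecture, valid for any smooth projective real variety (see \cite{krasnovcharact, vanhamel, mangoltehamel}), each such class is the cycle class of a real divisor on $A$. Hence $\sum \tilde\alpha_i \cup \tilde\beta_i$ is algebraic and differs from $\gamma$ by a class in $F^3 = 0$, so $\gamma$ itself is algebraic.

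The hard part will be Step 1: carefully tracking the interplay between the Hochschild-Serre differentials, the Galois cohomology of the torsion-free module $\rm H^1(A(\CC), \ZZ)$, and the Steenrod/Bockstein compatibility defining the subgroup $(-)_0$. In particular, one must verify that no nontrivial $2$-torsion class in $\rm H^3(G, \rm H^1(A(\CC), \ZZ(2)))$ simultaneously survives as an $E_\infty^{3,1}$ permanent cycle and satisfies the real-point compatibility; an analogous bookkeeping underlies the low-degree results of Benoist--Wittenberg and should transport to the present situation.
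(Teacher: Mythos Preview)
Your argument for Step~2 contains a genuine gap. The surjectivity of the $G$-equivariant map $\rm H^1(A(\CC),\ZZ(1))\otimes \rm H^1(A(\CC),\ZZ(1))\to \rm H^2(A(\CC),\ZZ(2))$ does \emph{not} imply that the induced cup product
\[
\rm H^1\bigl(G,\rm H^1(A(\CC),\ZZ(1))\bigr)\otimes \rm H^1\bigl(G,\rm H^1(A(\CC),\ZZ(1))\bigr)\longrightarrow \rm H^2\bigl(G,\rm H^2(A(\CC),\ZZ(2))\bigr)
\]
is surjective, and it generally is not. For instance, take $A$ an abelian surface with $|\pi_0(A(\RR))|=4$: then $M=\rm H^1(A(\CC),\ZZ(1))\cong \ZZ^2\oplus(\ZZ^-)^2$, so $\rm H^1(G,M)\cong(\ZZ/2)^2$ is spanned by the two $\ZZ^-$-classes $[f_1],[f_2]$, while $\wedge^2 M\cong \ZZ^2\oplus(\ZZ^-)^4$ gives $\rm H^2(G,\wedge^2 M)\cong(\ZZ/2)^2$ spanned by $[e_1\wedge e_2]$ and $[f_1\wedge f_2]$; the image of the cup product is only the line through $[f_1\wedge f_2]$. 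You offer no argument that the topological condition $(\,\cdot\,)_0$ cuts the target down to this image, and there is no reason it should: a topologically distinguished degree-$4$ class need not be an intersection of two real divisors.

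The paper proceeds quite differently. The key device for both claims is an \emph{algebraic} lift of the Fourier transform to equivariant cohomology; preserving $(\,\cdot\,)_0$ requires an honest cycle $\Gamma\in\CH(A\times\wh A)$ with $[\Gamma_\CC]=\ch(\ca P_{A_\CC})$, not merely the $G$-invariance of $\ca P_A$. For the vanishing of $F^3$, one first observes that the statement is purely topological and reduces to $A$ a product of elliptic curves; then the minimal class, hence $\ch(\ca P_A)$, is algebraic, and the resulting $\Gamma_\ast$ identifies $\rm H^3(G,\rm H^1(A(\CC),\ZZ(2)))_0$ with $\rm H^3(G,\rm H^{2g-1}(\wh A(\CC),\ZZ(g+1)))_0$, which sits inside $\rm H^{2g+2}_G(\wh A(\CC),\ZZ(g+1))_0=0$ (the degree exceeds $2\dim\wh A$). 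For the algebraicity of $F^2$, the same Fourier argument relates $F^2\rm H^4_G(J(\CC),\ZZ(2))_0$ to $F^2\rm H^{2g}_G(\wh J(\CC),\ZZ(g))_0$, a group of zero-cycle classes and hence algebraic by the real integral Hodge conjecture for zero-cycles---but this is available only when $J=J(C)$ with $C(\RR)\neq\emptyset$, so that $\ch(\ca P_J)$ is algebraic. The general $A$ is reduced to this case by a Bertini argument producing a curve $C\subset\wh A$ through enough real points so that $J(C)\to\wh A$ has connected kernel and is surjective on real points, followed by a careful analysis of the Abel--Jacobi map on torsion zero-cycles showing that the dual pullback $\wh f^\ast$ surjects onto $F^2\rm H^4_G(A(\CC),\ZZ(2))_0$.
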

\noindent
The proof of Theorem \ref{fourierreduction} relies on (a slightly more general version of) Proposition \ref{prop:motivicetale-new} in Chapter \ref{ch:integralfourier} and an analysis of the Abel-Jacobi map for zero-cycles. 

For an abelian variety $A$ over $\RR$, the Hochschild-Serre spectral sequence degenerates \cite{krasnovharnackthom}. For $p \in \ZZ_{\geq 0}$, define $\rm H^p(G, \rm H^{4-p}(A(\CC), \ZZ(2)))_0$ as the image of the canonical homomorphism \[
 F^p\Hdg^{4}_G(A(\CC), \ZZ(2))_0 \to \rm H^p(G, \rm H^{4-p}(A(\CC), \ZZ(2))).
\]
\noindent
Combining Theorems \ref{theorem1} and \ref{fourierreduction}, we obtain:

\begin{corollary} \label{questionreductioncorollary}
Let $A$ be an abelian threefold over $\RR$. Then $A$ satisfies the real integral Hodge conjecture 
if and only if the canonical homomorphism
\[
\CH_1(A)_{\textnormal{hom}} \to \rm H^1(G, \rm H^3(A(\CC), \ZZ(2)))_0
\]
is surjective, where $\CH_1(A)_{\textnormal{hom}}$ denotes the kernel of $\CH_1(A) \to \rm H^4(A(\CC), \ZZ(2))$. 
\end{corollary}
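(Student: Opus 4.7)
The plan is to combine Theorems \ref{theorem1} and \ref{fourierreduction} via the Hochschild--Serre filtration $F^\bullet$ on $\Hdg^{4}_G(A(\CC), \ZZ(2))_0$. Since the Hochschild--Serre spectral sequence degenerates at $E_2$ for real abelian varieties (by the result of Krasnov cited in the introduction of Chapter \ref{ch:realintegralhodge}), and by the very definition of $\rm H^p(G, \rm H^{4-p}(A(\CC), \ZZ(2)))_0$ given just before the corollary, the graded pieces $F^p/F^{p+1}$ are canonically isomorphic to $\rm H^p(G, \rm H^{4-p}(A(\CC), \ZZ(2)))_0$. In particular, the edge map identifies $F^0/F^1$ with $\Hdg^4(A(\CC), \ZZ(2))^G$.

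First I would invoke Theorem \ref{theorem1}, which asserts precisely that the composition $\CH_1(A) \to \Hdg^{4}_G(A(\CC), \ZZ(2))_0 \to F^0/F^1 = \Hdg^4(A(\CC), \ZZ(2))^G$ is surjective. Consequently, the real integral Hodge conjecture for $A$ is equivalent to the assertion that $\CH_1(A) \cap F^1$ surjects onto $F^1\Hdg^4_G(A(\CC), \ZZ(2))_0$. Next I would use Theorem \ref{fourierreduction}, which gives $F^3\Hdg^4_G(A(\CC), \ZZ(2))_0 = 0$ and $F^2\Hdg^4_G(A(\CC), \ZZ(2))_0 \subset \textnormal{Im}(cl)$. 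Since $F^1/F^2 = \rm H^1(G, \rm H^3(A(\CC), \ZZ(2)))_0$ by the above identification of graded pieces, this further reduces the problem to the surjectivity of the induced homomorphism $\CH_1(A) \cap F^1 \to \rm H^1(G, \rm H^3(A(\CC), \ZZ(2)))_0$.

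Finally I would identify $\CH_1(A) \cap F^1$ with the image of $\CH_1(A)_{\textnormal{hom}}$ under the cycle class map. Indeed, the edge map $\Hdg^4_G(A(\CC), \ZZ(2))_0 \to F^0/F^1 \hookrightarrow \rm H^4(A(\CC), \ZZ(2))^G$ coincides with the forgetful restriction from $G$-equivariant to ordinary cohomology, so a cycle $\alpha \in \CH_1(A)$ has cycle class in $F^1$ if and only if its image in $\rm H^4(A(\CC), \ZZ(2))$ vanishes, i.e.\ if and only if $\alpha \in \CH_1(A)_{\textnormal{hom}}$. This completes the argument.

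The proof is essentially a formal unwinding of Theorems \ref{theorem1} and \ref{fourierreduction} through the graded pieces of the Hochschild--Serre filtration, so I do not expect a serious obstacle. The only mild technical point is the identification of the edge map with the forgetful restriction $\rm H^4_G(A(\CC), \ZZ(2)) \to \rm H^4(A(\CC), \ZZ(2))^G$, which is standard and already implicitly used in the formulation of Theorem \ref{theorem1}.
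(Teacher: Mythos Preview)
Your proposal is correct and follows essentially the same approach as the paper. The paper's own proof is a one-line reference to Lemma~\ref{lemma:important0}, Theorem~\ref{theorem1}, the identifications~(\ref{canonical-HS}), the filtration~(\ref{topologically-distinguished-filtration}), and Theorem~\ref{fourierreduction}; your argument simply spells out the d\'evissage through the Hochschild--Serre filtration that these references encode.
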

\noindent
Using the K\"unneth formula and the Abel-Jacobi map for zero-cycles, we show that for a real abelian threefold $B$ and a real elliptic curve $E$ whose real locus $E(\RR)$ is connected, the homomorphism $\CH_1(B \times E)_{\hom} \to \rm H^1(G, \rm H^3(B(\CC) \times E(\CC), \ZZ(2)))$ is surjective (see Proposition \ref{prop:productsurfacecurve}). Together with Corollary \ref{questionreductioncorollary}, this implies: 

\begin{proposition} \label{abeliansurface}
Let $B$ be a real abelian surface, and $E$ a real elliptic curve whose real locus $E(\RR)$ is connected. Then $A = B \times E$ satisfies the real integral Hodge conjecture. 
\end{proposition}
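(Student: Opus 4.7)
The plan is to apply Corollary \ref{questionreductioncorollary} to the real abelian threefold $A = B \times E$. According to that corollary, the real integral Hodge conjecture for $A$ is equivalent to the surjectivity of the homomorphism
\[
\CH_1(A)_{\tn{hom}} \to \rm H^1(G, \rm H^3(A(\CC), \ZZ(2)))_0,
\]
so the whole task reduces to producing enough one-cycles on $A$ to hit every class in this $\rm H^1(G,-)$ group modulo the image of $\CH_1(A) \to \rm H^4(A(\CC),\ZZ(2))$.

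To do this, I would simply invoke Proposition \ref{prop:productsurfacecurve}, which, under the hypotheses that $B$ is a real abelian surface and $E(\RR)$ is connected, asserts that the canonical map
\[
\CH_1(B \times E)_{\tn{hom}} \to \rm H^1(G, \rm H^3(B(\CC) \times E(\CC), \ZZ(2)))
\]
is surjective onto the \emph{entire} target. Since $\rm H^1(G, \rm H^3(A(\CC), \ZZ(2)))_0$ is by construction a subgroup of $\rm H^1(G, \rm H^3(A(\CC), \ZZ(2)))$, a fortiori the map in Corollary \ref{questionreductioncorollary} is surjective, and the proposition follows.

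In this sense the real work is concentrated in Proposition \ref{prop:productsurfacecurve}, which is the main obstacle to overcome. The natural approach there would be to split $\rm H^3(A(\CC), \ZZ(2))$ via the K\"unneth formula into the summands $\rm H^3(B(\CC), \ZZ(2))$, $\rm H^2(B(\CC), \ZZ(1)) \otimes \rm H^1(E(\CC), \ZZ(1))$, and $\rm H^1(B(\CC), \ZZ) \otimes \rm H^2(E(\CC), \ZZ(2))$ (the summand with $\rm H^3(E)$ vanishes for dimension reasons), take Galois cohomology, and exhibit preimages for each $G$-cohomology piece. The last summand is controlled by the Abel--Jacobi map for zero-cycles on the surface $B$ crossed with the class of a real point of $E$, which is where the assumption $E(\RR) \neq \emptyset$ together with its connectedness enters: connectedness forces $\rm H^2(E(\CC), \ZZ(2))^G$ to be generated by the class of a real point, and this is what makes the resulting $\rm H^1(G,-)$ terms on the $B$-side accessible through cycles of the form $Z \times \{e\}$ and $C \times E$. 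The remaining summands are handled analogously using divisors on $B$ and the K\"unneth decomposition on the Chow side.
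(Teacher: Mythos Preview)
Your reduction is exactly the paper's: Proposition~\ref{abeliansurface} is deduced from Proposition~\ref{prop:productsurfacecurve} via Corollary~\ref{questionreductioncorollary}, and surjectivity onto the full $\rm H^1(G,\rm H^3)$ trivially gives surjectivity onto the subgroup with subscript~$0$.

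Your sketch of Proposition~\ref{prop:productsurfacecurve}, however, misplaces the role of connectedness and mislabels the cycles. The key use of $E(\RR)$ connected is that $\rm H^1(E(\CC),\ZZ)\cong \ZZ[G]$ as a $G$-module, which is cohomologically trivial, so the middle K\"unneth summand $\rm H^1(G,\rm H^2(B)\otimes\rm H^1(E))$ \emph{vanishes} outright; it has nothing to do with $\rm H^2(E)^G$ being generated by a real point (that holds for any real elliptic curve, since the origin is real). For the two surviving summands, the paper handles $\rm H^1(G,\rm H^3(B))$ via the Abel--Jacobi map for zero-cycles on $B$ (Lemma~\ref{lemone}), pushed to one-cycles on $A$ as $Z\times E$, and handles $\rm H^1(G,\rm H^1(B))\otimes\rm H^0(G,\rm H^2(E))$ via homologically trivial divisors on $B$ together with real Lefschetz $(1,1)$, pushed to $A$ as $C\times\{e\}$. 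Your proposed cycles $Z\times\{e\}$ and $C\times E$ have the wrong dimensions: the first is a zero-cycle on $A$ and the second a surface.
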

\noindent
Real abelian threefolds $A$ come in four different types, corresponding to the $G$-equivariant diffeomorphism type of the complex locus $A(\CC)$ (see \cite[\S1]{grossharris}). Proposition \ref{abeliansurface} shows that for abelian threefolds of three out of these four types, there are no topological obstructions to the real integral Hodge conjecture.

\section{The real integral Hodge conjecture} \label{sec:realintegralhodge}

\subsection{Generalities} Let $X$ be a smooth projective variety over $\RR$. The group \[G = \Gal(\CC/\RR) = \{ \id, \sigma \}\] acts on $X(\CC)$ via the canonical anti-holomorphic involution $\sigma \colon X(\CC) \to X(\CC)$. For $k \in \ZZ$, we denote by $\ZZ(k)$ the $G$-module that has $\ZZ$ as underlying $\ZZ$-module, on which $G$ acts by $\sigma(1) = (-1)^k$. Thus, $\ZZ(k) = \ZZ(q)$ for every $q \in \ZZ$ with $k \equiv q \bmod 2$. By abuse of notation, we also denote by $\ZZ(k)$ the constant $G$-sheaf on $X(\CC)$ attached to the $G$-module $\ZZ(k)$. For $k,q \in \ZZ_{\geq 0}$, the $G$-action on the group $\rm H^{k}(X(\CC), \ZZ(q))$ is understood to be the one induced by the involution $\rm H^k(\sigma) \circ F_\infty$, where $F_\infty = \sigma^\ast$ is the pull-back of the anti-holomorphic involution $\sigma$ on $X(\CC)$, and $\rm H^k(\sigma)$ is the involution on cohomology induced by $\sigma \colon \ZZ(q) \to \ZZ(q)$. 
\\
\\
Let $k \in \ZZ_{\geq 0}$. Attached to $X$ is also the so-called degree $2k$ \emph{equivariant cohomology group} with coefficients in $\ZZ(k)$, see \cite{tohoku}. It is denoted by $\rm H_G^{2k}(X(\CC), \ZZ(k))$, and relates to singular cohomology via canonical homomorphisms
\begin{align}\label{equivariant-to-singular}
\varphi \colon \rm H^{2k}_G(X(\CC), \ZZ(k)) \to \rm H^{2k}(X(\CC), \ZZ(k))^G.
\end{align}
A real subvariety $Z \subset X$ of codimension $k$ induces a class $[Z] \in \rm H^{2k}_G(X(\CC), \ZZ(k))$, whose image $\varphi([Z])$ in $\rm H^{2k}(X(\CC), \ZZ(k))^G$ is the Hodge class $[Z_\CC]$. It turns out that such algebraic cycle classes satisfy an additional condition, discovered by Kahn and Krasnov \cite{kahn, krasnovgroth}. It depends only the structure of $X(\CC)$ as a topological $G$-space. For $i \in \{0, \dotsc, 2k\}$, define 
\begin{align*}
\phi_i \colon \rm H^{2k}_G(X(\CC),\ZZ(k)) \to \rm H^i(X(\RR),\ZZ/2)
\end{align*} 
as the composition 
\begin{align*}
\rm    H^{2k}_G(X(\CC), \ZZ(k)) &\xrightarrow{\bmod 2}  \rm H^{2k}_G(X(\CC), \ZZ / 2) \\
&\xrightarrow{\text{restriction}} \rm H^{2k}_G(X(\RR), \ZZ / 2)  =  \rm H^{2k}(X(\RR) \times BG , \ZZ/2) \\
  & \xrightarrow[\sim]{\text{K\"unneth}} \rm H^0(X(\RR), \ZZ/2) \oplus \cdots \oplus \rm H^{2k}(X(\RR), \ZZ/2) \\
&  \xrightarrow{\text{projection}} \rm H^i(X(\RR),\ZZ/2). 
\end{align*}
For $\alpha \in \equivcohom$, define $\alpha_i = \phi_i(\alpha) \in \rm H^i(X(\RR),\ZZ/2)$. 
\begin{definition}[Benoist--Wittenberg] \label{def:BW}
The subgroup
\begin{align*}
\Hdg^{2k}_G(X(\CC), \ZZ(k))_0 \subset \rm H^{2k}_G(X(\CC), \ZZ(k))
\end{align*}
is the group of classes $\alpha \in \rm H^{2k}_G(X(\CC), \ZZ(k))$ that satisfy the following conditions:
\begin{enumerate}
    \item \label{def:top} The class $\alpha$ lies in the subgroup $\rm H^{2k}_G(X(\CC),\ZZ(k))_0$, which means that
\begin{align}    \label{topologicalcondition}
(\alpha_{0}, \alpha_{1}, \dotsc, \alpha_{k}, \dotsc, \alpha_{2k}) = 
    \left(0, \dotsc, 0, \alpha_{k}, Sq^1(\alpha_{k}), Sq^2(\alpha_{k}), \dotsc, Sq^k(\alpha_{k})\right).
 \end{align}
 Here, the $Sq^i$ are the Steenrod operations
$$
Sq^i: \rm H^p(X(\RR), \ZZ/2) \to \rm H^{p+i}(X(\RR), \ZZ/2).
$$
    \item \label{def:hodge} The image of $\alpha$ in $\rm H^{2k}(X(\CC), \ZZ(k))$ under (\ref{equivariant-to-singular}) is a Hodge class. 
\end{enumerate}
\end{definition}

By \cite[\S1.6.4]{BW20}, Definition \ref{def:BW} is functorial in $X$. 


\subsection{Hochschild-Serre} \label{subsec:hochschild}  For a smooth variety $X$ over $\RR$, the \emph{Hochschild-Serre} spectral sequence
\begin{align}\label{hochschild}
E^{p,q}_2 = \rm H^p(G, \rm H^q(X(\CC), \ZZ(k)) \implies \rm H^{p+q}_G(X(\CC), \ZZ(k))
\end{align}
is obtained by viewing $\rm H^{i}_G(X(\CC), -)$ as the right-derived functor of the composition of taking global sections and $G$-invariants on the category of $G$-sheaves on $X(\CC)$. 

Let $A$ be an abelian variety over $\RR$. Then (\ref{hochschild}) degenerates by \cite[\S5.7]{krasnovharnackthom}. Consequently, for every non-negative integer $k$, there are canonical identifications
\begin{align} \label{canonical-HS}
\begin{split}
\rm H^{2k}_G(A(\CC), \ZZ(k))_{\tors} &= \rm H^{2k}_G(A(\CC), \ZZ(k))[2] \\
&=  \Ker\left( \rm H^{2k}_G(A(\CC), \ZZ(k)) \to \rm H^{2k}(A(\CC), \ZZ(k))\right) \\
&=  F^1\rm H^{2k}_G(A(\CC), \ZZ(k)). 
\end{split}
\end{align}
Moreover, these $\ZZ/2$-modules are (non-canonically) isomorphic to the $\ZZ/2$-module $$\bigoplus_{\substack{p + q = k \\ p> 0}} \rm H^p(G, \rm H^q(A(\CC), \ZZ(k)).$$


\subsection{The topological condition} \label{subsec:topological}Let $X$ be a smooth projective variety of dimension $n$ over $\RR$. 
The following sequence is exact:
\begin{align}\label{fundamental}
\begin{split}
0 \to \rm H^{2n-2}_G(X(\CC), \ZZ(n-1))_0 &\to  \rm H^{2n-2}_G(X(\CC), \ZZ(n-1))   \to \\ 
&\to \bigoplus_{\substack{0 \leq p < n-2 \\ p \equiv n-1 \bmod 2 }} \rm H^p(X(\RR),\ZZ/2) \to 0. 
\end{split}
\end{align}
This follows from \cite[Proposition 1.8, Equation (1.33) \& Remark 1.20.(i)]{BW20}. 
\\
\\
The following result will be useful for us. Let $n$ be a positive integer and let $k = n-1$. For a smooth projective variety $X$ of dimension $n$ over $\RR$, define $$\rm H^{2\bullet}_{\star}(X(\RR), \ZZ/2) = \bigoplus_{\substack{0 \leq p < k-1 \\ p \equiv k \bmod 2 }} \rm H^p(X(\RR),\ZZ/2).$$ 
\begin{lemma} \label{lemma:important0}
Let $X$ be a smooth projective variety of dimension $n$ over $\RR$, and let $k = n-1$. Suppose that $X(\CC)$ has torsion-free degree $2k$ integral singular cohomology and that the Hochschild-Serre spectral sequence (\ref{hochschild}) degenerates. 
Then each row and each column in the following commutative diagram is exact:
\begin{align} \label{important0}
\begin{split}
\xymatrixcolsep{0.7pc}
\xymatrix{
0 \ar[r]& \rm H^{2k}_G(X(\CC), \ZZ(k))_0[2]  \ar@{^{(}->}[d] \ar[r]& \rm H^{2k}_G(X(\CC), \ZZ(k))_0\ar@{^{(}->}[d] \ar[r]^{\varphi \hspace{1mm}} & \rm H^{2k}(X(\CC), \ZZ(k))^G \ar[r] \ar@{=}[d]& 0 \\
0 \ar[r]& \rm H^{2k}_G(X(\CC), \ZZ(k))[2] \ar@{->>}[d] \ar[r]& \rm H^{2k}_G(X(\CC), \ZZ(k)) \ar[r]^{\varphi\hspace{1mm}} \ar@{->>}[d]& \rm H^{2k}(X(\CC), \ZZ(k))^G \ar[r]& 0 \\
&\rm H^{2\bullet}_{\star}(X(\RR), \ZZ/2) \ar@{=}[r]&\rm H^{2\bullet}_{\star}(X(\RR), \ZZ/2). &&
}
\end{split}
\end{align}
\end{lemma}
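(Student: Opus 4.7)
The plan is to establish the three rows and two non-trivial columns in sequence, with the middle column and middle row being relatively direct, and the top row together with the left column following in tandem from a single key surjectivity statement.

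The middle column is, by definition, the exact sequence (\ref{fundamental}) drawn from \cite{BW20}, so nothing new is needed there. For the middle row, I plan to exploit the Hochschild-Serre filtration $F^\bullet$ on $M \coloneqq \rm H^{2k}_G(X(\CC), \ZZ(k))$: degeneration of (\ref{hochschild}) yields $F^0/F^1 = E_\infty^{0,2k} = E_2^{0,2k} = \rm H^{2k}(X(\CC), \ZZ(k))^G$ and identifies $\varphi$ with the quotient $F^0 \twoheadrightarrow F^0/F^1$, giving both surjectivity and $\ker \varphi = F^1$. It remains to identify $F^1$ with $M[2]$. The inclusion $F^1 \subset M[2]$ will follow from the transfer-restriction identity $\mathrm{tr} \circ \varphi = 2 \cdot \mathrm{id}_M$ on equivariant cohomology: any $\alpha \in F^1 = \ker \varphi$ satisfies $2\alpha = \mathrm{tr}(\varphi(\alpha)) = 0$. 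The reverse inclusion uses the torsion-freeness hypothesis: $F^0/F^1 = \rm H^{2k}(X(\CC), \ZZ(k))^G$ is a subgroup of a torsion-free abelian group, hence torsion-free, so all torsion of $M$ is contained in $F^1$.

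Next, the remaining two exact sequences, the top row and the left column, will be deduced in tandem. Both reduce, by a short diagram chase in (\ref{important0}), to a single statement: the composition $M[2] \hookrightarrow M \to Q$ is surjective, where $Q \coloneqq \rm H^{2\bullet}_{\star}(X(\RR), \ZZ/2)$ denotes the common bottom of (\ref{important0}). Indeed, the elementary identity $M_0 \cap M[2] = M_0[2]$ handles the injectivities and the left-hand exactness in both the top row and the left column, while $\Coker(M_0 \xrightarrow{\varphi} \rm H^{2k}(X(\CC),\ZZ(k))^G) \cong Q/\im(M[2] \to Q)$ via the middle row and middle column, so the right-hand surjectivity in the top row and the surjectivity at the bottom of the left column amount to the same condition.

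To establish this final surjectivity $M[2] \twoheadrightarrow Q$, I propose to use the Bockstein long exact sequence attached to $0 \to \ZZ(k) \xrightarrow{\cdot 2} \ZZ(k) \to \ZZ/2 \to 0$, which realises $M[2]$ as the image of the connecting map $\delta \colon \rm H^{2k-1}_G(X(\CC), \ZZ/2) \to M$; to combine it with the surjectivity of the restriction $\rm H^{2k-1}_G(X(\CC), \ZZ/2) \twoheadrightarrow \rm H^{2k-1}_G(X(\RR), \ZZ/2) = \bigoplus_{p+q = 2k-1} \rm H^p(X(\RR), \ZZ/2)$, which holds in total degree above $\dim_\RR X(\RR)$ by the module structure over $\rm H^\bullet(BG, \ZZ/2) = \ZZ/2[t]$; and to identify $\phi \circ \delta$ via the standard formula expressing the integral Bockstein as $Sq^1$ on mod-$2$ classes. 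This compatibility, a careful bookkeeping of Steenrod operations against the conditions defining $M_0$ in (\ref{topologicalcondition}), is where I expect the main obstacle to lie, being the only step that uses genuine real topology beyond the formal degeneration and torsion-freeness hypotheses.
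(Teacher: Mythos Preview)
Your reduction is correct and matches the paper's logic: the middle row follows from Hochschild--Serre degeneration plus torsion-freeness (your transfer argument for $F^1 \subset M[2]$ is a pleasant variant of the paper's rank count), the middle column is literally the sequence~(\ref{fundamental}), and you correctly observe that both the top row and the left column reduce to the single surjectivity $M[2] \twoheadrightarrow Q$.

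Where you diverge from the paper is in how to prove this last surjectivity. The paper does it in one line: the proof of \cite[Proposition~1.8]{BW20} shows that the sequence~(\ref{fundamental}) is \emph{split}. Since $Q$ is a $\ZZ/2$-vector space, a splitting $M \cong M_0 \oplus Q$ immediately gives $M[2] \cong M_0[2] \oplus Q$, and the projection to $Q$ is visibly onto. That is the entire argument.

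Your proposed route through the Bockstein and Steenrod operations is more laborious, and more seriously, the surjectivity claim you rely on---that the restriction $\rm H^{2k-1}_G(X(\CC),\ZZ/2) \to \rm H^{2k-1}_G(X(\RR),\ZZ/2)$ is onto ``in total degree above $\dim_\RR X(\RR)$''---is not justified as stated. Equivariant localization guarantees such surjectivity only in degrees above the real dimension of the \emph{free locus} $X(\CC)\setminus X(\RR)$, which is $2n$, not $n$; and $2k-1 = 2n-3 < 2n$. In particular, for $n=3$ (the case driving the whole chapter) you have $2k-1 = 3 = n$, so even your stated threshold is not met. This step would require a separate argument, essentially reproving the splitting of~(\ref{fundamental}). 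Replace your final paragraph with the citation of the splitting and the proof is complete.
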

\begin{proof}
By the degeneration of the Hochschild-Serre spectral sequence (\ref{hochschild}), the map $$\varphi \colon \rm H^{2k}_G(X(\CC), \ZZ(k)) \to \rm H^{2k}(X(\CC), \ZZ(k))^G$$ is a surjective homomorphism between abelian groups of the same rank. The target of $\varphi$ is torsion-free, so its kernel is $ \rm H^{2k}(X(\CC), \ZZ(k))[2]$, which explains the horizontal exact sequence in the middle of diagram (\ref{important0}). By the proof of \cite[Proposition 1.8]{BW20}, the exact sequence (\ref{fundamental}) is \emph{split}. This implies that, in diagram (\ref{important0}), the vertical arrow on the bottom left and the horizontal map $\varphi$ on the top right are both surjective. 
\end{proof}

\noindent
Note that the first and second horizontal sequence in diagram (\ref{important0}) remain exact after restricting to Hodge classes. 


\subsection{Threefolds}\label{subsec:threefolds} Now let $X$ be a smooth projective threefold over $\RR$. The topological condition (\ref{topologicalcondition}) on degree four classes takes a particularly simple form: a class $\alpha \in \rm H^4_G(X(\CC),\ZZ(2))$ lies in $\rm H^4_G(X(\CC),\ZZ(2))_0$ if and only if 
\[
\alpha|_x = 0 \in \rm H^4_G(\{x\}, \ZZ(2)) = \ZZ/2 \quad \textnormal{for any} \quad x  \in X(\RR).  
\]
The conditions $\alpha|_x = 0$ for $x$ in different connected components of $X(\RR)$ are \emph{linearly independent} over $\ZZ/2$: for $n = 3$, the sequence (\ref{fundamental}) is the split exact sequence
\begin{align} \label{important}
0 \to \rm H^4_G(X(\CC),\ZZ(2))_0 \to \rm H^4_G(X(\CC),\ZZ(2)) \xrightarrow{\phi_0} \rm H^0(X(\RR),\ZZ/2) \to 0. 
\end{align}
Finally, since $X$ satisfies the real integral Hodge conjecture for $d$-cycles whenever $d \in \{0,2,3\}$ (see \cite[\S 2.3.1 and \S 2.3.2]{BW20}), the real integral Hodge conjecture for $X$ is equivalent to the surjectivity of the homomorphism $$\CH_1(X) \to \Hdg^4_G(X(\CC),\ZZ(2))_0.$$

\section{Density of Hecke orbits}


\subsection{Polarized real abelian varieties} \label{sec:polarizedabelian}
Let $A$ be a real abelian variety. 
As before, the dual abelian variety of $A$ is denoted by $\wh A$. Define $\Lambda = \rm H_1(A(\CC), \ZZ)$. Denote by $\sigma \colon A(\CC) \to A(\CC)$ the canonical anti-holomorphic involution, and by $F_{\infty} \colon \Lambda \to \Lambda$ its push-forward. 

There is a canonical bijection between: 

\begin{itemize}
\item Symmetric isogenies $\lambda \colon A \to \wh A$ such that $\lambda_\CC = \varphi_{\ca L}$ is the homomorphism $\varphi_{\ca L} \colon A_\CC \to \wh{A}_\CC$ induced by an ample line bundle $\ca L$ on $A_\CC$ as in \cite{MumfordAV}. 
\item Alternating forms $E \colon \Lambda \times \Lambda \to \ZZ$ such that $F_\infty^\ast(E) = - E$ and such that the following hermitian form is positive definite: 
\[
H \colon \Lambda_\RR \times \Lambda_\RR \to \CC, \quad H(x,y) = E(ix,y) + iE(x,y).
\]
\item Classes of ample line bundles $\theta \in \text{NS}(A_\CC)^G = \Hdg^2(A(\CC),\ZZ(1))^G$. 
\end{itemize}
\noindent
In the sequel, a \emph{polarization} on $A$ will be an element in either one of the three sets above; the context will make clear which structure is meant. 

\subsection{Moduli of real abelian varieties}  \label{sec:moduliofabelianRRR}
Let $(A, \lambda)$ be a principally polarized complex abelian variety of dimension $g$. By Galois descent (see Section \ref{stagesetting}), to give a model of $(A, \lambda)$ over $\RR$ is to give an anti-holomorphic involution 
\[\sigma \colon A(\CC) \to A(\CC) \quad \textnormal{ such that } \quad \sigma(0) = 0 \quad \textnormal{ and } \quad F_\infty^\ast(E) = -E.
\]
By \cite[Chapter IV, Theorem (4.1)]{silholsurfaces} (or \cite[Section 9]{grossharris}), such an anti-holomorphic involution $\sigma$ exists if and only if the complex principally polarized abelian variety $(A, \lambda)$ admits a period matrix of the form 
\begin{align}\label{chosenperiodmatrix}
(I_g, \frac{1}{2}M + \rm{i} N).
\end{align}
Here $N$ is a positive definite real matrix and $M$ is a symmetric $g \times g$-matrix with integral coefficients such that if $r = \textnormal{rank}(M) \leq g$, then $M$ is of the form
\begin{align}\tag{1} \label{typeone}
\begin{pmatrix} 
I_r & 0 \\ 
0 & 0 
    \end{pmatrix},
    \end{align}
    or of the form
    \begin{align} \tag{2} \label{typetwo}
    \begin{pmatrix} 
    0 & \dots & 1 & 0 \\
    \vdots&\iddots&\iddots &0\\
    1&0  & \iddots &\vdots\\
    0 &  0 & \dots &0 
    \end{pmatrix}.
\end{align}

\begin{definition}[Silhol] \label{typedefinition}
The \emph{type} $(r, \alpha) \in \ZZ^2$ of a principally polarized real abelian variety $(A, \lambda)$ is defined as follows. 
If $(I_g, \frac{1}{2}M + \rm{i} N)$ is a period matrix for $(A_\CC, \lambda_\CC)$ as above, then $r = \text{rank}(M)$. Define $\alpha \in \{0, 1,2\}$ in the following way:
\begin{itemize}
\item If $r$ is odd, then $\alpha = 1$. Thus the type of $(A, \lambda)$ is $(r, 1)$. 
\item If $r$ is zero, then $\alpha = 0$. Thus the type of $(A, \lambda)$ is $(0,1)$. 
\item If $r$ is even, but non-zero, then $\alpha = 1$ if $M$ is of the form (\ref{typeone}) and $\alpha = 2$ if $M$ is of the form (\ref{typetwo}). 
\end{itemize} 
\end{definition}
\noindent
This definition makes sense, because of the following:

\begin{proposition}[Silhol]
The type $(r, \alpha)$ of a principally polarized real abelian variety $(A, \lambda)$ does not depend on the chosen period matrix (\ref{chosenperiodmatrix}) for $(A_\CC, \lambda_\CC)$ nor on the isomorphism class of $(A, \lambda)$. If $(A, \lambda)$ is of type $(r, \alpha)$, there exists a period matrix $(I_g, \frac{1}{2}M + \rm{i} N)$ for $(A, \lambda)$ such that $M$ is of the form (\ref{typeone}) or (\ref{typetwo}), according to whether $\alpha$ equals $1$ or $2$.\end{proposition}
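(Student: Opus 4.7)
The plan is to translate the statement into a classification problem for symmetric bilinear forms over $\bb F_2$ and then invoke the (elementary but classical) classification of such forms up to $\GL_g(\bb F_2)$-congruence.

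First, I would reinterpret the data $(I_g, \tfrac{1}{2}M + \rm{i} N)$ in the language of symplectic lattices with involution, as was done in the proof of Theorem \ref{th:homeomorphismmoduli}. A choice of such a period matrix for $(A_\CC, \lambda_\CC)$ compatible with the real structure $\sigma \colon A(\CC) \to A(\CC)$ is the same as a choice of symplectic basis $\{m_1, \dotsc, m_g; n_1, \dotsc, n_g\}$ of $\Lambda = \rm H_1(A(\CC), \ZZ)$ in which $F_\infty = \sigma_\ast$ is represented by the matrix
\[
T = \begin{pmatrix} I_g & M \\ 0 & -I_g \end{pmatrix} \in \GL_{2g}(\ZZ),
\]
exactly as in (\ref{eq:T}). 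Under this dictionary, two symplectic bases of the form above, representing the same isomorphism class of principally polarized real abelian variety, differ by an element of $\Sp_{2g}(\ZZ)$ that preserves the shape of $T$. A short block-matrix calculation identifies this stabilizer with the image $f_\tau(\GL_g(\ZZ)) \subset \Sp_{2g}(\ZZ)$ of Section \ref{modofrealAV}, and shows that the action of $S \in \GL_g(\ZZ)$ on $M$ is of the form $M \mapsto S^t M S + 2K$ for an integer symmetric matrix $K$ depending on $S$ and $M$. Consequently, $M \bmod 2$ is transformed by the congruence action $\bar M \mapsto \bar S^t \bar M \bar S$ of $\GL_g(\bb F_2)$ on $\mathrm{Sym}_g(\bb F_2)$.

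Second, the well-definedness of the type now follows from the classification of symmetric matrices over $\bb F_2$ up to congruence: the $\GL_g(\bb F_2)$-orbits in $\mathrm{Sym}_g(\bb F_2)$ are parametrized by the rank $r$ together with, when $r$ is positive and even, a dichotomy according to whether the non-degenerate $r \times r$ part of the form admits a non-zero diagonal entry (giving normal form (\ref{typeone}) with $\alpha = 1$) or not (giving normal form (\ref{typetwo}) with $\alpha = 2$). For $r$ zero or odd there is a single orbit. This trichotomy is exactly the set $\mr T(g)$ and matches Definition \ref{typedefinition}.

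Third, the normal form assertion is extracted from the same classification: given any period matrix $(I_g, \tfrac{1}{2}M_0 + \rm{i} N_0)$ representing $(A, \lambda)$, choose $S \in \GL_g(\ZZ)$ transporting $M_0 \bmod 2$ to the normal form $\bar M$ of shape (\ref{typeone}) or (\ref{typetwo}) dictated by the type, and apply the corresponding change of symplectic basis $f_\tau(S) \in \Sp_{2g}(\ZZ)$. The resulting period matrix has the desired $M$ modulo $2$; the remaining even integral symmetric correction is then absorbed by translating the imaginary part using the action of $\Sp_{2g}(\ZZ)$ by integral translations of the second block column, which preserves both the shape of the period matrix and the positive definiteness of $N$. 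The main obstacle is to verify that every step of this chain of manipulations remains within the group $\Sp_{2g}(\ZZ)$ preserving both the shape of the period matrix and the positive definiteness of $N$, which amounts to checking the compatibility of the embedding $f_\tau$ with the integral translations; this bookkeeping is essentially the content of \cite[Chapter IV, Theorem 4.1]{silholsurfaces}, which I would follow.
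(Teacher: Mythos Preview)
Your approach is correct and is the standard one; the paper itself does not give a proof but simply cites \cite[Chapter IV, Corollaries (4.3) and (4.5)]{silholsurfaces}, where Silhol carries out exactly the reduction to $\GL_g(\bb F_2)$-congruence classes of symmetric forms that you outline.

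One small slip in your third step: you write ``apply the corresponding change of symplectic basis $f_\tau(S) \in \Sp_{2g}(\ZZ)$'', but by its definition $f_\tau(S)$ has integral entries only when $S \in \GL_g^\tau(\ZZ)$, i.e.\ when $S$ \emph{preserves} $M(\tau)$ modulo $2$, which is precisely what your $S$ does not do. The repair is the one you already gesture at: use instead the block-diagonal element $\left(\begin{smallmatrix} S^t & 0 \\ 0 & S^{-1} \end{smallmatrix}\right) \in \Sp_{2g}(\ZZ)$, which sends $\tfrac{1}{2}M_0 + \rm i N_0$ to $\tfrac{1}{2}S^t M_0 S + \rm i\, S^t N_0 S$, and then absorb the even symmetric correction $S^t M_0 S - M(\tau) = 2K$ via the unipotent element $\left(\begin{smallmatrix} I_g & -K \\ 0 & I_g \end{smallmatrix}\right) \in \Sp_{2g}(\ZZ)$. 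With that adjustment your argument goes through.
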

\begin{proof}
See \cite[Chapter IV, Corollaries (4.3) and (4.5)]{silholsurfaces}.  
\end{proof}


\begin{definition}
Let $\mr T(g)$ be the set of types $(r, \alpha)$ of principally polarized abelian varieties of dimension $g$ over $\RR$. For any type $\tau \in \mr T(g)$, define $M(\tau)$ to be the integral $g \times g$-matrix (\ref{typeone}) or (\ref{typetwo}) above, according to whether $\alpha$ equals $1$ or $2$. Then define $\GL_g^\tau(\ZZ)$ to be the subgroup of $\GL_g(\ZZ)$ of matrices $T \in \GL_g(\ZZ)$ that satisfy 
\[
T^t\cdot M(\tau) \cdot T \equiv M(\tau) \mod 2 \quad \quad (T^t = \textnormal{ transpose of } T). 
\]
Finally, let $\rm H_g$ be the set of symmetric positive definite real matrices of rank $g$. 
\end{definition}

\begin{theorem}[Silhol]
Let $g$ be a positive integer. For $\tau \in \mr T(g)$, define $\va{ \ca A_g(\RR)}^\tau$ to be the set of isomorphism classes of real principally polarized abelian varieties of type $\tau$. For each type $\tau \in \mr T(g)$, the period map induces a bijection
\[
\va{\ca A_g(\RR)}^\tau  = \GL_g^\tau(\ZZ) \setminus \rm H_g,
\]
where $\GL_g^\tau(\ZZ)$ acts on $\rm H_g$ by $N \mapsto {T^t}\cdot N \cdot T$. Therefore, 
\begin{align}\label{silholsbijection}
\va{\ca A_g(\RR)} = \bigsqcup_{\tau \in \mr T(g)}  \GL_g^\tau(\ZZ) \setminus \rm H_g. 
\end{align}
\end{theorem}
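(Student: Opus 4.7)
The plan is to prove this result by combining Silhol's normal form theorem (already invoked in the excerpt as \cite[Chapter IV, Theorem (4.1)]{silholsurfaces}) with an analysis of the real period map along the lines of what is done in Examples~\ref{introexample:grossharris} and Chapter~\ref{ch:realmodulispaces}. The key point is to realize each set $\GL_g^\tau(\ZZ)\setminus\rm H_g$ as a fixed-point subquotient of $\Sp_{2g}(\ZZ)\setminus\bb H_g$, and then match these to isomorphism classes of real ppav of type $\tau$.

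First I would fix $\tau \in \mr T(g)$ and consider the holomorphic involution $\Sigma_\tau \colon \bb H_g \to \bb H_g$, $Z \mapsto M(\tau)-\bar Z$ introduced in equation~(\ref{vartheta}). The fixed locus $\bb H_g^{\Sigma_\tau}$ consists exactly of period matrices of the form $\frac{1}{2}M(\tau) + iN$ with $N$ a symmetric positive definite real matrix of rank $g$, so the assignment $N \mapsto \frac{1}{2}M(\tau) + iN$ identifies $\rm H_g$ with $\bb H_g^{\Sigma_\tau}$. The embedding
\[
f_\tau \colon \GL_g(\RR) \hookrightarrow \Sp_{2g}(\RR),\quad T \mapsto \begin{pmatrix} T^t & \frac{1}{2}(M(\tau)T^{-1} - T^tM(\tau)) \\ 0 & T^{-1}\end{pmatrix},
\]
lands in the normalizer of $\Sigma_\tau$ inside $\Sp_{2g}(\RR)$, and a direct computation shows that under $N \leftrightarrow \frac{1}{2}M(\tau) + iN$ the usual action $Z \mapsto (AZ+B)(CZ+D)^{-1}$ of $f_\tau(T)$ becomes the action $N \mapsto T^t N T$ on $\rm H_g$. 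Moreover $f_\tau(T) \in \Sp_{2g}(\ZZ)$ if and only if $T^t M(\tau) T \equiv M(\tau) \bmod 2$, i.e.\ $T \in \GL_g^\tau(\ZZ)$.

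Next I would construct the bijection in two steps. For surjectivity, given any real ppav $(A,\lambda)$ of type $\tau$, Silhol's normal form theorem produces a period matrix of the shape $(I_g,\,\frac{1}{2}M(\tau)+iN)$ with $N \in \rm H_g$; sending $[(A,\lambda)]$ to the class $[N] \in \GL_g^\tau(\ZZ) \setminus \rm H_g$ gives a well-defined map, provided we check that two such period matrices give isomorphic real ppav only when related by a $\GL_g^\tau(\ZZ)$-transformation. This is the core technical step: two complex ppav are isomorphic iff their period matrices are related by some $\gamma \in \Sp_{2g}(\ZZ)$, and the isomorphism intertwines the real structures iff $\gamma$ normalizes $\Sigma_\tau$ within $\Sp_{2g}(\ZZ)$. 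One then has to show that this normalizer $\Sp_{2g}(\ZZ)(\tau)$, when restricted to $\bb H_g^{\Sigma_\tau}$, acts exactly through the image of $f_\tau(\GL_g^\tau(\ZZ))$; concretely this amounts to showing that any $\gamma \in \Sp_{2g}(\ZZ)$ preserving $\bb H_g^{\Sigma_\tau}$ has $C=0$ and $A = T^t$ with $T \in \GL_g^\tau(\ZZ)$, which follows from writing out the fixed-point equation $\gamma\cdot (M(\tau)-\bar Z) = M(\tau)-\overline{\gamma\cdot Z}$ and looking at the imaginary part as $\Im(N)\to \infty$. Injectivity of the resulting map $\va{\ca A_g(\RR)}^\tau \to \GL_g^\tau(\ZZ)\setminus \rm H_g$ follows; surjectivity is clear since any $N \in \rm H_g$ with $\frac{1}{2}M(\tau)+iN \in \bb H_g$ defines a complex ppav equipped with the anti-holomorphic involution induced by $\Sigma_\tau$, and this real ppav is of type $\tau$ by construction and Definition~\ref{typedefinition}.

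Finally, taking the disjoint union over $\tau \in \mr T(g)$ gives the decomposition~(\ref{silholsbijection}), because $\va{\ca A_g(\RR)}$ partitions according to type by definition. The main obstacle is the technical step of computing the stabilizer $\Sp_{2g}(\ZZ)(\tau)$ of $\Sigma_\tau$ and showing that it acts on $\bb H_g^{\Sigma_\tau} = \rm H_g$ precisely through $\GL_g^\tau(\ZZ)$; the rest is bookkeeping built on Silhol's normal form theorem. A similar analysis, in a more analytic guise, was already carried out in the proof of Theorem~\ref{th:homeomorphismmoduli}, and can be adapted here in a purely algebraic manner.
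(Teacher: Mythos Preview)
The paper does not actually prove this theorem: its proof reads in full ``See \cite[Chapter IV, Theorem (4.6)]{silholsurfaces}.'' The result is attributed to Silhol and simply quoted. So there is no detailed paper argument to compare against.

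Your sketch is a reasonable reconstruction of how such a proof goes, and it lines up with the ingredients the paper records (the normal form theorem, the involution $\Sigma_\tau$ of (\ref{vartheta}), the embedding $f_\tau$, the identification $\rm H_g \cong \bb H_g^{\Sigma_\tau}$). You correctly isolate the one nontrivial step: showing that the subgroup of $\Sp_{2g}(\ZZ)$ normalizing $\Sigma_\tau$ acts on $\bb H_g^{\Sigma_\tau}$ exactly through $f_\tau(\GL_g^\tau(\ZZ))$. Your proposed argument for $C=0$ via letting $N\to\infty$ is heuristic rather than a proof; the cleaner route is to write the commutation condition $\gamma\cdot\Sigma_\tau = \Sigma_\tau\cdot\gamma$ as an identity of matrices (using that $\Sigma_\tau$ is induced by the integral matrix $T$ of equation~(\ref{eq:T})) and solve directly, which is what Silhol does. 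This is also the place where Gross--Harris made the error the paper warns about in Section~\ref{modofrealAV}, so the computation deserves care. Aside from that gap in execution, your outline is sound and is essentially what the cited reference carries out.
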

\begin{proof}
See \cite[Chapter IV, Theorem (4.6)]{silholsurfaces}.
\end{proof}
\noindent
See Theorem \ref{th:homeomorphismmoduli} for the fact that bijection (\ref{silholsbijection}) is actually a \emph{homeomorphism}, with respect to the real-analytic topology on $\va{\ca A_g(\RR)}$ (see Definition \ref{deffer}). 


\subsection{Density of Hecke orbits over the real numbers} 

Before we prove Theorem \ref{density}, let us properly introduce the notion of Hecke orbits over the real numbers. 
\begin{definition} \label{def:heckeorbits}
Let $(A, \lambda)$ be a principally polarized abelian variety of dimension $g$ over $\RR$, and let $x = [(A, \lambda)] \in \va{\ca A_g(\RR)}$ the corresponding moduli point. For a tuple of integers $(\alpha, \beta )$, the \emph{$(\alpha,\beta)$-Hecke orbit of $x$} is the subset $\ca G_{\alpha,\beta}(x) \subset \va{\ca A_g(\RR)}$ of isomorphism classes $[(B, \nu)] \in \va{\ca A_g(\RR)}$ of principally polarized abelian varieties $(B, \nu)$ of dimension $g$ over $\RR$, for which there exist $n,m \in \ZZ_{\geq 0}$ and an isogeny 
\[
\phi \colon A \to B \quad \textnormal{ such that } \quad \phi^\ast(\nu) = \alpha^n\beta^m \cdot \lambda. 
\]
\end{definition}

 \begin{proof}[Proof of Theorem \ref{density}] 
 Let $p$ and $q$ be distinct odd prime numbers. 
\begin{enumerate}[leftmargin=-0.08cm, rightmargin = -0.08cm]
    \item[Step 1:] \emph{If $x = [(A, \lambda)] \in \va{\ca A_g(\RR)}$ and $\tau \in \mr T(g)$ is the type of $(A, \lambda)$, then $\ca G(x) \subset \va{\ca A_g(\RR)}^\tau$}. Indeed, for any $y = [(B, \mu)] \in \ca G(x)$, there exists an isogeny $\phi \colon A \to B$ such that $\phi^\ast(\mu) = n \cdot \lambda$ for some odd positive integer $n$. Such a map $\phi$ induces an isomorphism $\rm H_1(A(\CC),\ZZ/2) \cong \rm H_1(B(\CC),\ZZ/2)$ as symplectic spaces with involution. Since $x \in \va{\ca A_g(\RR)}^\tau$, this implies that $y \in \va{\ca A_g(\RR)}^\tau$ as well, see \cite[Section 9]{grossharris}. 
    \item[Step 2:] Define $R = \ZZ
\left[
\frac{1}{p}, \frac{1}{q}
\right]$. The ring homomorphism $R \to R / 2R$ induces a group homomorphism 
$
\GL_g(R) \to \GL_g(R/2R). 
$ For $\tau \in \mr T(g)$, we define
\[
\GL_g^\tau(R)= \{T \in \GL_g(R) \colon T^t \cdot M(\tau) \cdot T \equiv M(\tau) \bmod 2\}.
\]
 Fix one such $\tau \in \mr T(g)$. Observe that the action of $\GL_g^\tau(\ZZ)$ on $\rm H_g$ extends to a transitive action of $\GL_g(\RR)$ on $\rm H_g$. We claim:
 
     \emph{Let $x = [(A_x , \lambda_x)] \in \va{\ca A_g(\RR)}^\tau$, and lift $x$ to a point $y \in \rm H_g$. Consider the orbit $\GL_g^\tau(R) \cdot y \subset \rm H_g$ as well as its image $\GL_g^\tau(\ZZ)\setminus   \left(    \GL_g^\tau(R) \cdot y  \right) $ in $\va{\ca A_g(\RR)}^\tau = \GL_g^\tau(\ZZ) \setminus \rm H_g$. Then}
    \[
\GL_g^\tau(\ZZ) \setminus  \left(    \GL_g^\tau(R) \cdot y  \right) = \ca G_{p,q}(x). 
    \]
    Indeed, if $\bb H_g$ is the genus $g$ Siegel space of symmetric, complex $g\times g$ matrices $Z = X + \rm{i} Y$ whose imaginary part $Y$ is positive definite, then the inclusion 
    \[
    \rho_\tau \colon \rm H_g \hookrightarrow \bb H_g, \quad N \mapsto \frac{1}{2} \cdot M(\tau) + \rm{i}N\]
     is equivariant for the embedding
    \[
f_\tau\colon    \GL_g(\RR) \hookrightarrow \Sp_{2g}(\RR), \quad T \mapsto \begin{pmatrix} T^t & \frac{1}{2}\left(M(\tau) \cdot T^{-1}-T^t\cdot M(\tau)\right) \\ 0 & T^{-1} \end{pmatrix}. 
    \]
    Moreover, the action of the group $\Sp_{2g}(R)$ on $\bb H_g$ has the following geometric meaning: if we consider $\bb H_g$ as a moduli space of $g$-dimensional, principally polarized complex abelian varieties with symplectic basis, then two points $y = [A_y]$ and $z = [A_z] \in \bb H_g$ are in the same $\Sp_{2g}(R)$-orbit if and only if there exists an isogeny $\phi \colon A_y \to A_z$ that preserves the polarizations up to a product of powers of $p$ and $q$. Since the intersection
     \[
    f_\tau\left(\GL_g(\RR) \right) \cap \Sp_{2g}(R) = f_\tau\left(\GL_g^\tau(R)\right)
    \]
    equals the subgroup of $\Sp_{2g}(R)$ that preserves the locus
    \[
    \rho_\tau(\rm H_g) = \left\{
    \frac{1}{2}M(\tau) + \rm{i} N\right\} \subset \bb H_g
    \]
 of real abelian varieties of type $\tau$, this concludes Step 2. 
    \item[Step 3:] \emph{For any $\tau \in \mr T(g)$, the subgroup $\GL_g^\tau(R)\subset \GL_g(\RR)$ is dense in the analytic topology.}


Define  \[\SL_g^\tau(R)  = \SL_g(R) \cap \GL_g^\tau(R) =  \{T \in \SL_g(R) \colon T^t \cdot M(\tau) \cdot T \equiv M(\tau) \bmod 2\}.\] 
We claim that 
\begin{align}\label{inclusionsgroups}
\SL_g(\RR)  = \overline{\SL_g(R)} =\overline{\SL_g^\tau(R)} \subset \overline{\GL_g^\tau(R)} \subset \GL_g(\RR).
\end{align}
Indeed, this follows from the following two statements:
\begin{enumerate}
\item \label{proofparttwo} The closure of $\SL_g(R)$ in $\GL_g(\RR)$ is $\SL_g(\RR)$. 
\item \label{proofpartone} The subgroup $\SL_g^\tau(R) \subset \SL_g(R)$ has finite index. 
\end{enumerate}
\noindent
To prove (\hyperlink{proofparttwo}{a}), observe that the subgroup $\SL_g(\RR) \subset \GL_g(\RR)$ is closed, which implies that the closure of $\SL_g(R)$ in $\GL_g(\RR)$ equals the closure of $\SL_g(R)$ in $\SL_g(\RR)$. Thus, (\hyperlink{proofparttwo}{a}) follows from the density of $\SL_g(R)$ in $\SL_g(\RR)$, which is true by strong approximation; see the proof of Lemma \ref{strongapproximationlemma} for the precise argument. As for (\hyperlink{proofpartone}{b}), the group $\SL_g(R)$ acts on $\rm M_g(R/2R)$ via $M \mapsto A^t \cdot M \cdot A \bmod 2$, so there is an injection
\[
\SL_g^\tau(R) \setminus \SL_g(R) \hookrightarrow \rm M_g(R/2R) = \rm M_g(\ZZ/2), \quad T \mapsto T^t \cdot M(\tau) \cdot T \mod 2. 
\]
Now (\hyperlink{proofpartone}{b}) 
implies that the index of $\overline{\SL_g^\tau(R)} \subset \overline{\SL_g(R)}$ is finite. By (\hyperlink{proofparttwo}{a}), we have $\overline{\SL_g(R)} = \SL_g(\RR)$; thus $\overline{\SL_g^\tau(R)}  \subset \SL_g(\RR)$ is a closed subgroup of finite index, hence open. Therefore $\overline{\SL_g^\tau(R)}  = \SL_g(\RR)$ by connectivity of $\SL_g(\RR)$, proving Claim (\ref{inclusionsgroups}). 

Write $G = \GL_g^\tau(R)$. If $H \subset \GL_g(\RR)$ is any Lie subgroup such that $\SL_g(\RR) \subset H$, then $H = \det^{-1}(\det(H))$. 
Consequently, using (\ref{inclusionsgroups}), we obtain:
\begin{align}\label{determinantdetermins}
\overline G = {\det}^{-1}(\det(\overline G)) \subset \GL_g(\RR).
\end{align}
The equality (\ref{determinantdetermins}) implies that in order to prove Step 3, it suffices to show that ${\det}^{-1}(\det(\overline G)) = \GL_g(\RR)$; for this, it suffices to show that $\det(\overline G) = \RR^\ast$. Now the morphism $\det\colon \GL_g(\RR) \to \RR^\ast$ is open, since its differential at the identity matrix $I_g \in \GL_g(\RR)$ is the trace homomorphism $\rm M_g(\RR) \to \RR$. Writing
\[
\GL_g(\RR) = {\det}^{-1}(\RR^\ast) = {\det}^{-1}(\det(\overline G) \sqcup \det(\overline G)^c) =  \overline{G} \sqcup {\det}^{-1}\left(\det(\overline G)^c \right),
\]
it follows that 
$\det(\overline G)$ is closed in $\RR^\ast$. From this, we conclude that \[
\overline{\det(G)} \subset \overline{\det(\overline G)} = \det(\overline G). 
\]
Thus, to show that $\det(\overline G) = \RR^\ast$, it suffices to show that $\det(G)$ is dense in $\RR^\ast$. The homomorphism $\det \colon \GL_g^\tau(R) \to R^\ast$ is surjective because it admits the section 
\[
R^\ast \to \GL_g^\tau(R), \quad x \mapsto \begin{pmatrix} x & 0 \\ 0 & I_{g-1} \end{pmatrix}.
\]
Therefore, \[\det(G) = \det(\GL_g^\tau(R) ) = R^\ast = \set{\pm p^nq^m\colon n,m \in \ZZ},\] and it remains to prove that the latter is dense in $\RR^\ast$. To see this, note that $R^\ast_{>0} =  \set{p^nq^m\colon n,m \in \ZZ}$ is dense in $\RR_{>0}$ because $\log(R^\ast_{>0}) = \ZZ \log(p) + \ZZ\log(q)$ is dense in $\RR$; the latter holds since $\log(R^\ast_{>0})$ is not a cyclic subgroup of $\RR$. 

\item[Step 4:] \emph{Finish the proof.} 
Let $x = [(A, \lambda)] \in \va{\ca A_g(\RR)}$, and let $\tau \in \mr T(g)$ be the type of the principally polarized real abelian variety $(A,\lambda)$. Lift $x$ to a point $y \in \rm H_g$. By Step 3, we know that the orbit $\GL_g^\tau(R)\cdot y$ is dense in $\GL_g(\RR) \cdot y = \rm H_g$. Consequently, the image of $\GL_g^\tau(R)\cdot y$  under the projection $\rm H_g \to\va{\ca A_g(\RR)}^\tau$ is dense in $\va{\ca A_g(\RR)}^\tau$. By Step 2, this image is precisely $\ca G_{p,q}(x)$. Thus $\ca G_{p,q}(x)$ is dense in $\va{\ca A_g(\RR)}^\tau$ as desired. 
\end{enumerate}
\end{proof}


\section{Principal polarizations in real isogeny classes} \label{sec:principalpolarizations}

The goal of this section is to prove the following:

\begin{theorem} \label{principalisogeny}
Let $(A, \lambda_A)$ be a polarized abelian variety over $\RR$. Then there exists a principally polarized abelian variety $(B, \lambda_B)$ over $\RR$ and an isogeny 
\[
\phi \colon A \to B \quad \textnormal{ \emph{such that} } \quad \phi^\ast(\lambda_B) = \lambda_A. 
\]
\end{theorem}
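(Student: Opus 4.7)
The strategy is to reformulate the problem in terms of finite symplectic geometry and then solve that problem by splitting into the odd-primary and $2$-primary parts of the kernel of $\lambda_A$.

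First, I would record the following translation. Write $K = \Ker(\lambda_A)$, a finite commutative $\RR$-group scheme, and endow $K$ with the Weil pairing $e^{\lambda_A}$. Since $\textnormal{char}(\RR) = 0$, the scheme $K$ is \'etale and is determined by the finite abelian group $K(\CC)$ equipped with its $G$-action. The Weil pairing corresponds to a $G$-equivariant non-degenerate alternating pairing $e\colon K(\CC) \times K(\CC) \to \mu(\CC)$. Because $G$ acts on roots of unity by inversion, we get the compatibility
\[
e(F_\infty x, F_\infty y) = e(x,y)^{-1}, \qquad x,y \in K(\CC),
\]
where $F_\infty$ denotes the action of complex conjugation on $K(\CC)$. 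Conversely, if $L \subset K$ is any $G$-stable maximal isotropic subgroup scheme (that is, $L(\CC) \subset K(\CC)$ is a Lagrangian for $e$ stable under $F_\infty$), then $B := A/L$ is an abelian variety over $\RR$, and $\lambda_A$ descends to a principal polarization $\lambda_B \colon B \to \wh B$ satisfying $\phi^\ast \lambda_B = \lambda_A$ for the quotient isogeny $\phi \colon A \to B$. The theorem thus reduces to constructing such an $L$.

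Second, I would decompose $K(\CC) = K_{\textnormal{odd}} \oplus K_2$ into its odd- and $2$-primary parts. Both summands are $F_\infty$-stable and mutually orthogonal, and $e$ restricts to a non-degenerate alternating pairing on each. It therefore suffices to exhibit an $F_\infty$-stable Lagrangian inside each summand. For the odd part, let $K_{\textnormal{odd}}^{\pm} = \Ker(F_\infty \mp 1)$; since $2$ is invertible on $K_{\textnormal{odd}}$ one has $K_{\textnormal{odd}} = K_{\textnormal{odd}}^+ \oplus K_{\textnormal{odd}}^-$. For $x,y \in K_{\textnormal{odd}}^+$ the compatibility above yields $e(x,y) = e(x,y)^{-1}$, forcing $e(x,y) = 1$ in the odd-order group $\mu_{\textnormal{odd}}$. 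Hence $K_{\textnormal{odd}}^+$ is isotropic; non-degeneracy of the induced pairing between $K_{\textnormal{odd}}^+$ and $K_{\textnormal{odd}}^-$ gives $|K_{\textnormal{odd}}^+| = |K_{\textnormal{odd}}^-| = \sqrt{|K_{\textnormal{odd}}|}$, so $K_{\textnormal{odd}}^+$ is Lagrangian.

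Third, for the $2$-primary part I would argue by induction on the order of $K_2$. The key lemma is that there always exists a non-zero $v \in K_2[2]$ with $F_\infty(v) = v$: indeed $K_2[2]$ is a non-zero $\FF_2$-vector space with involution $F_\infty$, and over $\FF_2$ we have $(F_\infty - 1)^2 = 0$, so $\Ker(F_\infty - 1)$ contains $\textnormal{im}(F_\infty - 1)$ and is non-zero. Any such $v$ is automatically isotropic because $e$ is alternating. The cyclic subgroup $\langle v \rangle \subset K_2$ is therefore an $F_\infty$-stable isotropic subgroup, and the quotient $v^\perp / \langle v \rangle$ inherits a non-degenerate alternating pairing together with an induced involution still satisfying the compatibility $e(F_\infty x, F_\infty y) = e(x,y)^{-1}$. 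By the inductive hypothesis it contains an $F_\infty$-stable Lagrangian, whose preimage in $v^\perp \subset K_2$ is the desired $F_\infty$-stable Lagrangian of $K_2$. Combining the Lagrangians in $K_{\textnormal{odd}}$ and $K_2$ produces the required $L \subset K$.

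The main obstacle is the $2$-primary step. The odd-primary case is easy because $2$ is invertible and one has a clean eigenspace decomposition for $F_\infty$; in contrast, a finite abelian $2$-group with involution does not in general split as a direct sum of $\pm 1$-eigenspaces, so no such linear-algebra shortcut is available. The inductive construction, which leans on the fact that alternating forms over $\FF_2$ make every vector automatically isotropic and that involutions on non-zero $\FF_2$-vector spaces always have non-zero fixed vectors, is what bridges this gap and is the heart of the argument.
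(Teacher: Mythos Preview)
Your proof is correct and structurally close to the paper's, but organized differently. Both arguments reduce to the same finite symplectic problem: inside $K = \Ker(\lambda_A)(\CC)$, equipped with the Weil pairing and the involution $F_\infty$, find an $F_\infty$-stable (maximal) isotropic subgroup. The paper does not split into odd and $2$-primary parts; instead it runs a single iteration over the abelian variety itself. At each step it picks any prime $p \mid |K|$, observes that $K[p]$ is an $\FF_p$-vector space with involution and therefore contains a $\sigma$-stable line $K_1$, notes that a cyclic group is automatically isotropic for an alternating form, and replaces $A$ by $A/K_1$ (working with the inclusion of lattices $\Lambda \subset M \subset \Lambda^\vee$ to see that the polarization descends with degree divided by $p^2$). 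Your inductive step at $p=2$ is exactly this argument specialized to the $2$-primary part; what you gain by separating off the odd part is a cleaner one-shot description there ($K_{\textnormal{odd}}^+$ is already Lagrangian), whereas the paper's uniform iteration avoids the case split at the cost of not producing the Lagrangian explicitly.
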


\begin{proof}
Let $K \subset A(\CC)$ be the kernel of the analytified polarization $\lambda_A \colon A(\CC) \to \wh A(\CC)$. Then $K$ is a finite group of order $d^2$, where $d^2$ is the degree of $\lambda_A$, such that the real structure $\sigma \colon A(\CC) \to A(\CC)$ restricts to an involution
\[
\sigma \colon K \to K. 
\]
We may assume that $K \neq (0)$. Let $p$ be any prime number that divides the order of $K$. We claim that there exists a subgroup $K_1 \subset K$ of order $p$ such that $\sigma(K_1) = K_1$. To see this, let 
$H[p] \subset K$ be the $p$-torsion subgroup of $K$. Then $H[p]$ is preserved by $\sigma$, so that $H[p]$ is an $\FF_p$-vector space of finite rank equipped with a linear involution $\sigma$. Therefore, $H[p]$ contains a one-dimensional $\FF_p$-subspace $K_1$ preserved by $\sigma$, which proves our claim. 

The group $K_1 \subset A(\CC)$ descends to a finite subgroup scheme $K_1 \subset A$ over $\RR$; define $A_1$ to be the abelian variety $A/K_1$ over $\RR$. Let $\Lambda = \rm H_1(A(\CC),\ZZ)$ and $M = \rm H_1(A_1(\CC), \ZZ)$; the projection $A \to A_1$ induces an exact sequence 
\[
0 \to \Lambda \to M \to K_1 \to 0.
\]
Let $E \colon \Lambda \times \Lambda \to \ZZ$ be the alternating form attached to the polarization $\lambda_A$ of $A$. Since $M / \Lambda = K_1 \subset K = \Lambda^\vee/\Lambda$, we have inclusions
\[
\Lambda \subset M \subset \Lambda^\vee, \quad\textnormal{ where } \quad \Lambda^\vee = \set{x \in \Lambda \otimes \QQ \mid E(x, \Lambda) \subset \Lambda}. 
\]
Now the $\ZZ$-valued alternating form $E$ on the lattice $\Lambda$ gives rise to a bilinear form
\[
\overline{E} \colon \Lambda^\vee/\Lambda \times \Lambda^\vee/\Lambda \to \QQ/\ZZ,
\]which vanishes on $M/\Lambda$ because $M/\Lambda \cong \ZZ/p$ and $\overline{E}$ is alternating. This means that $E \colon \Lambda^\vee \times \Lambda^\vee \to \QQ$ restricts to an \emph{integer}-valued form $E_1$ on $M$. The latter induces a polarization $\lambda_{A_1} \colon A_1 \to \wh{A}_1$ that makes the following diagram commute:
\[
\xymatrix{
A \ar[d]^{\lambda_A} \ar[r]^{\pi}& A_1 \ar[d]^{\lambda_{A_1}} \\
\wh A & \wh A_1 \ar[l]_{\wh \pi}.
}
\]
Here $\pi$ is the quotient map $A \to A_1$ and $\wh \pi$ its dual. Since the degree of an isogeny is multiplicative in compositions, and $\deg(\pi) = p$, we have
\[
p^2 \cdot \deg(\lambda_{A_1}) = \deg(\pi)^2\cdot \deg(\lambda_{A_1}) = \deg\left( \lambda_A \right) = d^2. 
\]
If $d = p$, we are finished -- otherwise, we repeat the above procedure until the real abelian variety $A_n = A_{n-1}/K_{n-1}$ becomes principally polarized. \end{proof}

\section{Algebraic cycles on real abelian varieties}

\subsection{The Fourier transform} \label{subsec:fourier}



Let $A$ be a real abelian variety of dimension $g$, and consider the Poincar\'e bundle $\ca P_A$ on $A \times \wh A$. Let $a_1, \dotsc, a_{2g}$ be integers. The Chern character
\[
\ch(\ca P_{A_\CC}) = \exp(c_1(\ca P_{A_\CC})) \in  \rm H^{2\bullet}(A(\CC) \times \wh A(\CC), \ZZ(\bullet))
\]
defines the \emph{Fourier transform}
\begin{align}\label{fourierone}
\mr F_A \colon  \bigoplus_{i \in \ZZ_{\geq 0}} \rm H^i(A(\CC),\ZZ(a_i)) \to \bigoplus_{i \in \ZZ_{\geq 0}}\rm H^{i}(\wh A(\CC),\ZZ(a_{2g-i}- g + i))
\end{align}
It is defined as $\mr F_A(x) = \pi_{2,\ast}( \ch(\ca P_A) \cdot \pi_1^\ast(x))$, where $\pi_i$ is the projection of $A \times \wh A$ onto the $i$-th factor. By \cite{beauvillefourier}, the map (\ref{fourierone}) is an isomorphism, inducing isomorphisms
\begin{align}\label{eq:fourier}
\mr F_A \colon \rm H^i(A(\CC),\ZZ(a_i)) \xrightarrow{\sim} \rm H^{2g-i}(\wh A(\CC),\ZZ(a_i+g-i)). 
\end{align}
Since $\ch(\ca P_{A_\CC})$ is fixed by $G$, these maps are isomorphisms of $G$-modules. 

\subsection{Divisors} \label{subsec:divisors}

Let $A$ be an abelian variety over $\RR$. 
Both homomorphisms in the following composition are surjective:
\begin{align}
\label{divisorsurjectivity}
\CH^1(A) \to \Hdg_G^2(A(\CC),\ZZ(1)) \to \Hdg^2(A(\CC),\ZZ(1))^G. 
\end{align}
Indeed, the first map is surjective by the real integral Hodge conjecture for divisors (see \cite[Proposition 2.8]{BW20}), and the second by the degeneration of the Hochschild-Serre spectral sequence (see Section \ref{subsec:hochschild}). 

\subsection{The real integral Hodge conjecture for one-cycles modulo torsion}

The goal of this section is to provide an application of Theorems \ref{density} and \ref{principalisogeny} combined: the real integral Hodge conjecture for one-cycles modulo torsion follows, in some cases, from the real integral Hodge conjecture for divisors modulo torsion (see Section \ref{subsec:divisors}). The proof uses Fourier transforms for real abelian varieties (see Section \ref{subsec:fourier}) in a way similar to the way in which we used Fourier transforms for complex abelian varieties in Section \ref{sec:proofmaintheorem}. Thus, we will need the results on integral Fourier transforms obtained in Chapter \ref{ch:integralfourier}. The theory in Chapter \ref{ch:integralfourier} was developed for abelian varieties over a general field $k$ -- to apply it, we take $k = \RR$. 
 \\
 \\
We will need the following:


\begin{definition}
Let $A$ be a real abelian variety, and $k$ a non-negative integer. An element $\alpha \in \Hdg^{2k}(A(\CC),\ZZ(k))^G$ is called \emph{algebraic} if it is in the image of 
\[
\CH^k(A) \to  \Hdg^{2k}(A(\CC),\ZZ(k))^G. 
\]
\end{definition}
\noindent
Recall that by the main theorem of Chapter \ref{ch:onecycles}, the integral Hodge conjecture for one-cycles on a fixed principally polarized abelian variety $(A, \theta)$ over $\CC$ is equivalent to the algebraicity of the minimal class $\gamma_\theta = \theta^{g-1}/(g-1)!$ on $A$ (see Theorem \ref{maintheorem}). On the other hand, Grabowski reduced the integral Hodge conjecture for one-cycles for every complex abelian variety of dimension $g$ to the algebraicity of $\gamma_\theta$ for every principally polarized abelian variety of dimension $g$ \cite{grabowski}. 

We have the following real analogue of these results:


\begin{theorem} \label{grabowski}
Fix a positive integer $g$. Let $(A, \theta)$ be a principally polarized abelian variety of dimension $g$ over $\RR$. The following are equivalent: 
\begin{enumerate}
\item \label{realmodtors} The real abelian variety $A$ satisfies the real integral Hodge conjecture for one-cycles modulo torsion. 
\item \label{realminalg} The minimal class 
\[
\gamma_\theta = \frac{\theta^{g-1}}{(g-1)!} \in \Hdg^{2g-2}(A(\CC), \ZZ(g-1))^G \quad \textnormal{ \emph{ is algebraic. }}
\]
\item \label{realchernalg} The Chern character 
\[
\ch(\ca P_{A_\CC}) = \exp(c_1(\ca P_{A_{\CC}})) \in \Hdg^{2\bullet}(A(\CC ) \times \wh A(\CC), \ZZ(\bullet))^G
\quad \textnormal{ \emph{ is algebraic. }}
\]
\end{enumerate}
Moreover, if the real integral Hodge conjecture for one-cycles modulo torsion holds for every principally polarized abelian variety of dimension $g$ over $\RR$, then it holds for every abelian variety of dimension $g$ over $\RR$. 
\end{theorem}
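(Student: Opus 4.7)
The plan is to mirror the proof of Theorem~\ref{maintheorem}, the complex analogue, in the $G$-equivariant setting and to work modulo torsion throughout. The implication $(\ref{realmodtors}) \Rightarrow (\ref{realminalg})$ is immediate, since $\gamma_\theta$ is a Hodge class of degree $2g-2$ and is $G$-invariant as $\theta$ is defined over $\RR$.

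For $(\ref{realminalg}) \Rightarrow (\ref{realchernalg})$, I would identify $A$ with $\wh A$ via $\lambda$ and apply Lemma~\ref{lemma:minimalclasspoincarecomparison}, which writes $\mr R_A = c_1(\ca P_{A_\CC})^{2g-1}/(2g-1)!$ as a $G$-equivariant $\ZZ$-linear combination of pushforwards of $\gamma_\theta$ along the diagonal and the two coordinate inclusions $A \hookrightarrow A \times A$; the hypothesis then lifts $\mr R_A$ to a cycle $\Gamma \in \CH_1(A \times \wh A)$ modulo torsion. By Theorem~\ref{th:PDstructure}, the ideal $\CH_{>0}(A \times \wh A)$ carries a Pontryagin PD-structure, so the divided-power exponential $\Xi \coloneqq (-1)^g \mathrm{E}((-1)^g\Gamma) = \sum_{n \geq 0} (-1)^{g(n+1)} \Gamma^{[n]}$ is a well-defined element of $\CH(A \times \wh A)$, and by Lemma~\ref{lemma:crucialprop} its cycle class equals $\ch(\ca P_{A_\CC})$ modulo torsion, yielding $(\ref{realchernalg})$. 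For $(\ref{realchernalg}) \Rightarrow (\ref{realmodtors})$, fix a lift $\Theta$ of $\ch(\ca P_{A_\CC})$ and define the correspondence $\ca F_A(x) = \pi_{2,\ast}(\Theta \cdot \pi_1^\ast x)$, compatible modulo torsion with the Fourier transform~(\ref{eq:fourier}). Given $\alpha \in \Hdg^{2g-2}(A(\CC),\ZZ(g-1))^G$, lift the divisor class $\mr F_A(\alpha) \in \Hdg^2(\wh A(\CC), \ZZ(1))^G$ to $D \in \CH^1(\wh A)$ using~(\ref{divisorsurjectivity}); Fourier inversion $\mr F_{\wh A} \circ \mr F_A = (-1)^g \cdot [-1]^\ast$ combined with $[-1]^\ast = \mathrm{id}$ on $\rm H^{2g-2}$ then produces $(-1)^g \ca F_{\wh A}(D) \in \CH_1(A)$ as a lift of $\alpha$ modulo torsion.

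For the final statement, I would take any real abelian variety $B$ of dimension $g$ equipped with a polarization $\lambda_B$ and apply Theorem~\ref{principalisogeny} to obtain a real ppav $(A,\lambda_A)$ of dimension $g$ together with an isogeny $\phi \colon B \to A$ satisfying $\phi^\ast \lambda_A = \lambda_B$. The hypothesis combined with the already-proved $(\ref{realmodtors}) \Rightarrow (\ref{realchernalg})$ applied to $A$ yields $\ch(\ca P_{A_\CC})$ algebraic modulo torsion. Using the canonical identification $(\phi, \mathrm{id})^\ast \ca P_A \cong (\mathrm{id}, \hat\phi)^\ast \ca P_B$ on $B \times \wh A$ and the pushforward along the isogeny $(\mathrm{id}, \hat\phi) \colon B \times \wh A \to B \times \wh B$, of degree $\deg(\hat\phi)$, one obtains that $\deg(\hat\phi) \cdot \ch(\ca P_{B_\CC})$ is algebraic modulo torsion on $B \times \wh B$. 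The argument for $(\ref{realchernalg}) \Rightarrow (\ref{realmodtors})$ sketched above does not use the principal polarization of $A$ and transfers verbatim to $B$, so the real integral Hodge conjecture for one-cycles modulo torsion for $B$ reduces to the algebraicity of $\ch(\ca P_{B_\CC})$ itself modulo torsion.

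The main obstacle is the elimination of the factor $\deg(\hat\phi)$. The cokernel $N \coloneqq \Coker\bigl(\CH_1(B) \to \Hdg^{2g-2}(B(\CC), \ZZ(g-1))^G\bigr)/(\textnormal{torsion})$ is a finitely generated abelian group, so it suffices, by a Bezout-style argument, to exhibit for every prime $p$ an isogeny $\phi_p \colon B \to A_p$ to a real ppav $A_p$ of dimension $g$ with $\gcd(\deg \phi_p, p) = 1$. This is delicate, since a single polarization on $B$ only yields, via Theorem~\ref{principalisogeny}, isogeny degrees sharing the prime divisors of $\sqrt{\deg \lambda_B}$. My approach would be to invoke Theorem~\ref{density}: for distinct odd primes $p \neq q$, the $(p,q)$-Hecke orbit of $[A]$ is analytically dense in the component of $\va{\ca A_g(\RR)}$ containing $[A]$, producing real ppavs $A'$ isogenous to $A$ by isogenies of degree $p^a q^b$; composing $\phi \colon B \to A$ with these then supplies the family of isogenies $\phi_p$ with prescribed prime factorizations needed to conclude.
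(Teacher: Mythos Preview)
Your argument for the equivalence $(\ref{realmodtors}) \Leftrightarrow (\ref{realminalg}) \Leftrightarrow (\ref{realchernalg})$ is correct and essentially matches the paper: the paper packages $(\ref{realminalg}) \Rightarrow (\ref{realchernalg})$ by invoking Proposition~\ref{prop:motivicetale-new}, whose proof over any base field unwinds to precisely the combination of Lemma~\ref{lemma:minimalclasspoincarecomparison}, Theorem~\ref{th:PDstructure}, and Lemma~\ref{lemma:crucialprop} that you write out.

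For the final statement, however, your approach has a genuine gap. Composing the fixed isogeny $\phi \colon B \to A$ with a Hecke isogeny $A \to A'$ of degree $p^a q^b$ produces an isogeny $B \to A'$ of degree $\deg(\phi) \cdot p^a q^b$, which still carries every prime factor of $\deg(\phi)$. Thus for any prime $\ell \mid \deg(\phi)$ your construction never yields an isogeny from $B$ to a real ppav of degree coprime to $\ell$, and the Bezout argument cannot conclude. Varying the initial polarization on $B$ does not obviously help either: if $B$ has Picard rank one, all polarization degrees share a fixed set of prime divisors.

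The paper avoids this entirely by \emph{not} trying to make $\ch(\ca P_{B_\CC})$ algebraic for the general $B$. It argues directly on one-cycles: since $\mr F_B \colon \Hdg^2(B(\CC),\ZZ(1))^G \xrightarrow{\sim} \Hdg^{2g-2}(\wh B(\CC),\ZZ(g-1))^G$ and divisors are algebraic by~(\ref{divisorsurjectivity}), it suffices to show that $\mr F_B([L_\CC])$ is algebraic for every ample line bundle $L$ on $B$. For each such $L$ separately, Theorem~\ref{principalisogeny} gives an isogeny $\phi \colon B \to A$ to a real ppav with $\phi^\ast(\theta_A) = [L_\CC]$, and the functoriality relation $\mr F_B \circ \phi^\ast = \hat\phi_\ast \circ \mr F_A$ from~(\ref{eq:functoriality}) yields $\mr F_B([L_\CC]) = \hat\phi_\ast\bigl((-1)^{g-1}\gamma_{\hat\theta_A}\bigr)$, which is algebraic by the hypothesis applied to the ppav $\wh A$. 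The key point is that the pushforward $\hat\phi_\ast$ of an algebraic class is algebraic with no degree factor appearing.
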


\begin{proof}
The direction (\ref{realmodtors}) $\implies$(\ref{realminalg}) is trivial; let us assume that (\ref{realminalg}) holds. Let $(A, \theta)$ be a principally polarized abelian variety of dimension $g$ over $\RR$, and suppose that $\gamma_\theta \in \Hdg^{2g-2}(A(\CC), \ZZ(g-1))^G$ is algebraic. 
By the proof of Proposition \ref{prop:motivicetale-new}, the abelian variety $A$ admits a motivic integral Fourier transform up to homology, see Definition \ref{def:weakintegralfourier}. This means the following. Let $\ell$ be a prime number. 
There exists a cycle 
\[
\Gamma \in \CH(A \times \wh A) \quad \textnormal{ such that } \quad [\Gamma_\CC] = \ch(\ca P_{A_{\CC}}) \quad \in \quad \rm H^{2\bullet}_{\et}(A_{\CC} \times \wh A_{\CC}, \ZZ_\ell(\bullet)).
\]
As a consequence, $[\Gamma_\CC] = \ch(\ca P_{A_\CC}) \in \rm H^{2\bullet}(A(\CC) \times \wh A(\CC), \ZZ(\bullet))^G$, which proves (\ref{realchernalg}). 

Let us now assume that (\ref{realchernalg}) holds, and let $\Gamma \in \CH(A \times \wh A)$ be a cycle that induces $\ch(\ca P_{A_\CC})$ in Betti cohomology. The correspondence $\Gamma$ defines a group homomorphism $\Gamma_\ast \colon \CH^\bullet(A) \to \CH^\bullet(\wh A)$ such that the following diagram commutes:
{\footnotesize
\begin{align*}
\xymatrixcolsep{1pc}
\xymatrix{
\CH^1(A) \ar[d] \ar[r]  
& \CH^\bullet(A) \ar[d] \ar[r]^{\Gamma_\ast} &
 \CH^\bullet(\wh A) \ar[d] \ar[r] & 
 \CH_1(\wh A) \ar[d] \\
\Hdg^{2}(A(\CC),\ZZ(1))^G \ar[r] & \Hdg^{2\bullet}(A(\CC), \ZZ(\bullet))^G \ar[r]_\sim^{\mr F_A} & \Hdg^{2\bullet}(\wh A(\CC),\ZZ(\bullet))^G  \ar[r]& \Hdg^{2k}(\wh A(\CC),\ZZ(k))^G. 
}
\end{align*}
}
Here $k = g-1$, the composition on the bottom row is an isomorphism (see (\ref{eq:fourier}) in Section \ref{subsec:fourier}), and the left vertical arrow is surjective (see (\ref{divisorsurjectivity}) in Section \ref{subsec:divisors}). Therefore, the right vertical arrow is surjective, which implies (\ref{realmodtors}). 

Next, suppose that (\ref{realmodtors}) holds for every principally polarized real abelian variety of dimension $g$, and let $A$ be any real abelian variety of dimension $g$. We would like to show that $A$ satisfies the real integral Hodge conjecture for one-cycles modulo torsion. The isomorphism (\ref{eq:fourier}) induces an isomorphism 
\[
\mr F_A \colon \Hdg^2(A(\CC), \ZZ(1))^G \xrightarrow{\sim} \Hdg^{2g-2}(\wh A(\CC), \ZZ(g-1))^G.
\]
Since $\Hdg^2(A(\CC), \ZZ(1))^G$ is algebraic by Section \ref{subsec:divisors}, it suffices to show that $\mr F_A([L_\CC])$ is algebraic for every line bundle $L$ on $A$. By \cite[II, Exercice 7.5]{HAG}, there is an ample line bundle $M$ on $A$ such that $L \otimes M^{\otimes n}$ is ample for $n \gg 0$; we may thus assume that $L$ is ample. By Theorem \ref{principalisogeny}, there is a principally polarized abelian variety $(B, \lambda)$, and an isogeny 
\[
\phi \colon A \to B
\]
 such that, if $\theta \in \NS(B_\CC)^G = \Hdg^{2}(B(\CC), \ZZ(1))^G$ is the class corresponding to the principal polarization $\lambda \colon B \to \wh B$, then 
\[
\phi^\ast(\theta) = [L_\CC] \in \Hdg^2(A(\CC), \ZZ(1))^G. 
\]
On the other hand, the following diagram commutes by \cite[Proposition 3]{beauvillefourier}:
\[
\xymatrixcolsep{5pc}
\xymatrix{
\Hdg^2(A(\CC), \ZZ(1))^G \ar[r]^{\mr F_A}          & \Hdg^{2g-2}(\wh A(\CC), \ZZ(g-1))^G          \\
\Hdg^2(B(\CC), \ZZ(1))^G \ar[r]^{\mr F_B}\ar[u]_{\phi^\ast}  & \Hdg^{2g-2}(\wh B(\CC), \ZZ(g-1))^G \ar[u]_{\hat{\phi}_\ast}. 
}
\]
Moreover, by \cite[Proposition 5]{beauvillefourier}, we have
\[
\mr F_B(\theta) = (-1)^{g-1}\cdot \frac{\hat{\theta}^{g-1}}{(g-1)!} \in \rm H^{2g-2}(\wh B(\CC), \ZZ(g-1))^G,
\]
where $\hat{\theta} \in \Hdg^2(\wh B(\CC), \ZZ(1))^G$ denotes the dual polarization class. We conclude that $F_B(\theta)$ is algebraic, so that $\mr F_A([L_\CC]) = \wh{\phi}_\ast(\mr F_B(\theta))$ is algebraic as well. \end{proof}


\begin{corollary} \label{IHCmodulotorsionforjacobians} Let $C_1, \dotsc, C_n$ be smooth projective geometrically integral curves over $\RR$ such that $C_i(\RR)\neq \emptyset$ for each $i$. The real abelian variety $A = J(C_1) \times \cdots \times J(C_n)$ satisfies the real integral Hodge conjecture for one-cycles modulo torsion. 
\end{corollary}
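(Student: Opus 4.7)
The plan is to reduce to algebraicity of the minimal class via Theorem \ref{grabowski}, and then to exhibit the minimal class on a product of Jacobians as an explicit $\ZZ$-linear combination of classes that are visibly algebraic over $\RR$ thanks to the hypothesis $C_i(\RR) \neq \emptyset$.

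First, I note that $A = J(C_1) \times \cdots \times J(C_n)$ is principally polarized over $\RR$: each Jacobian $J(C_i)$ carries its canonical principal polarization $\theta_i$ (defined over $\RR$ since $C_i$ is), and $A$ inherits the principal polarization $\theta = \sum_{i=1}^n \pi_i^\ast \theta_i \in \Hdg^2(A(\CC),\ZZ(1))^G$, where $\pi_i \colon A \to J(C_i)$ is the projection. Writing $g_i = \dim J(C_i)$ and $g = \sum_i g_i = \dim A$, by Theorem \ref{grabowski} it suffices to show that the minimal class $\gamma_\theta = \theta^{g-1}/(g-1)! \in \Hdg^{2g-2}(A(\CC),\ZZ(g-1))^G$ is algebraic.

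Next, I would expand $\gamma_\theta$ via the multinomial theorem. Since $\pi_i^\ast \theta_i^{a}/a! = 0$ in cohomology when $a > g_i$ for dimension reasons, and since $\sum a_i = g-1 = (\sum g_i) - 1$ forces exactly one index $j$ to satisfy $a_j = g_j - 1$ (with $a_i = g_i$ for $i \neq j$), the expansion collapses to
\[
\gamma_\theta \;=\; \sum_{j=1}^n \pi_j^\ast\!\left(\frac{\theta_j^{g_j-1}}{(g_j-1)!}\right) \cdot \prod_{i\neq j} \pi_i^\ast\!\left(\frac{\theta_i^{g_i}}{g_i!}\right) \;=\; \sum_{j=1}^n \pi_j^\ast\gamma_{\theta_j} \cdot \prod_{i\neq j} \pi_i^\ast[\mathrm{pt}_i],
\]
where $[\mathrm{pt}_i] = \theta_i^{g_i}/g_i! \in \Hdg^{2g_i}(J(C_i)(\CC),\ZZ(g_i))^G$ is the class of a point by the self-intersection formula for a principal polarization.

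The class $[\mathrm{pt}_i]$ is algebraic over $\RR$: the origin $0 \in J(C_i)(\RR)$ gives a zero-cycle in $\CH^{g_i}(J(C_i))$ whose complex cycle class is the class of a point, hence equals $[\mathrm{pt}_i]$. The class $\gamma_{\theta_j}$ is algebraic over $\RR$ by Poincaré's formula: picking $p_j \in C_j(\RR)$ (possible since $C_j(\RR)\neq \emptyset$) yields a real Abel--Jacobi morphism $\iota_j \colon C_j \to J(C_j)$, $q \mapsto [q - p_j]$, defined over $\RR$, and the cycle class $[\iota_{j,*}C_j] \in \CH_1(J(C_j))$ maps to $\gamma_{\theta_j}$ under $\CH_1(J(C_j)) \to \Hdg^{2g_j-2}(J(C_j)(\CC),\ZZ(g_j-1))^G$. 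Pulling back and multiplying by the real algebraic cycles $\pi_i^{-1}(0) \subset A$ for $i\neq j$ produces real algebraic cycles on $A$ whose classes sum to $\gamma_\theta$. Thus $\gamma_\theta$ is algebraic, and Theorem \ref{grabowski} completes the proof. No step presents a serious obstacle — the main technical work is already encapsulated in Theorem \ref{grabowski}, and the hypothesis $C_i(\RR)\neq\emptyset$ is used precisely (and only) to descend the Abel--Jacobi embedding to $\RR$.
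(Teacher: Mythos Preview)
Your proof is correct and follows essentially the same approach as the paper: reduce to algebraicity of the minimal class via Theorem \ref{grabowski}, decompose the minimal class on the product into contributions from the factors, and realize each factor's minimal class via an Abel--Jacobi embedding using a real point. The paper is terser about the product step (simply asserting that the minimal class on a product is the sum of the pull-backs of the minimal classes on the factors), whereas you spell out the multinomial expansion explicitly, but the content is the same.
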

\begin{proof}
The minimal class on a product of principally polarized abelian varieties over $\RR$ is the sum of the pull-backs of the minimal classes on the factors, so by Theorem \ref{grabowski}, it suffices to treat the case $n = 1$. For a real algebraic curve $C$ whose real locus is non-empty, any Abel-Jacobi map gives an embedding of real varieties $\iota \colon C \hookrightarrow J(C)$. By Poincar\'e's formula, one has 
\[
[\iota(C)_\CC] = \frac{\theta^{g-1}}{(g-1)!} \in \Hdg^{2g-2}(J(C)(\CC), \ZZ(g-1))^G,
\]
where the class on the right hand side of the equality is the minimal cohomology class $\gamma_\theta$ of $J(C)$. Thus $\gamma_\theta$ is algebraic, so we are done by Theorem \ref{grabowski}. 
\end{proof}

\subsection{Integral Hodge classes modulo torsion on real abelian threefolds}




Recall (see Definition \ref{definition:realalgebraiccurve}) that a real algebraic curve is a smooth projective geometrically connected curve over $\RR$. Let $\ca M_3(\RR)$ be the moduli space of real algebraic curves of genus three, and consider the Torelli map 
\begin{align*}
t \colon \ca M_3(\RR) \to \ca A_3(\RR). 
\end{align*}
Let $\ca N_3(\RR) \subset \ca M_3(\RR)$ be the non-hyperelliptic locus. Let us first consider:

\emph{Claim:} The subset $t(\ca N_3(\RR))$ is open in $\ca A_3(\RR)$. 

Indeed, on the level of stacks, $\ca T \colon \ca M_3 \to \ca A_3$ is an open immersion when restricted to the non-hyperelliptic locus $\ca N_3 \subset \ca M_3$. Thus $\ca T(\ca N_3) \subset \ca A_3$ is an open substack. Recall that, for any algebraic stack $\mr X$ of finite type over $\RR$, the set $|\mr X(\RR)|$ of isomorphism classes of $\RR$-points of $\mr X$ admits a topology, called the real-analytic topology (see Definition \ref{deffer}). In this topology, the subset
\[
|\ca T(\ca N_3)(\RR)| \subset |\ca A_3(\RR)| \quad \quad \textnormal{is indeed open: this follows from Corollary \ref{stackycorollary}.\ref{stackyopen}.}
\]

\begin{lemma} \label{lemma:opensubsetofnonhyp}
Every connected component of the moduli space $\ca A_3(\RR)$ contains a non-empty open subset of non-hyperelliptic curves of genus three with non-empty real locus. 
\end{lemma}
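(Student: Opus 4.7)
The plan is to combine the openness of the Torelli map on the non-hyperelliptic locus with a case-by-case verification that every connected component of $\ca A_3(\RR)$ is represented by the Jacobian of some non-hyperelliptic real genus-three curve with non-empty real locus.

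First, I would establish openness of the relevant image at the level of real-analytic orbifolds. The Torelli map $\ca T\colon \ca N_3 \to \ca A_3$ on the non-hyperelliptic substack $\ca N_3 \subset \ca M_3$ is an open immersion of algebraic stacks over $\RR$ (the classical Torelli theorem: in genus three, $\ca N_3$ maps isomorphically onto the complement of the hyperelliptic divisor in $\ca A_3$). By Corollary \ref{stackycorollary}.\ref{stackyopen}, the induced map $|\ca N_3(\RR)| \to |\ca A_3(\RR)|$ is open. Now let $\ca N_3^+(\RR) \subset |\ca N_3(\RR)|$ denote the subset of moduli points of curves with non-empty real locus. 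Because the topological type of $C(\RR)$ is a locally constant invariant on $|\ca M_3(\RR)|$ (see Theorem \ref{th:homeomorphismmoduli2}, together with the description of components in Section \ref{modofrealAC}), the set $\ca N_3^+(\RR)$ is a union of connected components of $|\ca N_3(\RR)|$, hence open. Consequently, $t(\ca N_3^+(\RR))$ is open in $|\ca A_3(\RR)|$.

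The lemma then reduces to showing that $t(\ca N_3^+(\RR))$ meets every connected component of $|\ca A_3(\RR)|$. By the bijection (\ref{silholsbijection}), the connected components of $|\ca A_3(\RR)|$ are indexed by the Silhol types $\tau \in \mr T(3)$. For each such $\tau$, it is enough to exhibit a single non-hyperelliptic real genus-three curve $C$ with $C(\RR) \neq \emptyset$ whose Jacobian $J(C)$ is of type $\tau$.

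The hard part will be carrying out this case analysis. The plan is to use the classical classification of smooth real plane quartics (which realize, via the canonical embedding, every non-hyperelliptic genus-three real curve) by the number and nesting pattern of their real ovals, and combine it with the formulas of Gross–Harris \cite{grossharris} and Seppälä–Silhol \cite{seppalasilhol2} relating the topological type of $(C(\RR), C(\CC))$ to the Silhol type of $J(C)$. Explicit quartics of each topological type are easily produced as small real perturbations of degenerate configurations (unions of two ellipses, doubled conics, etc.), covering the cases $k = 1, 2, 3, 4$ real ovals, as well as the type I versus type II (dividing versus non-dividing) distinction for given $k$. The delicate point is to verify that the two Silhol types with the same number of real components of the Jacobian (the types $(2,1)$ and $(2,2)$) are both realized by Jacobians of plane quartics with non-empty real locus; I expect this to require comparing nesting patterns for quartics with two or three ovals and tracking the induced action of complex conjugation on $\rm H_1(J(C)(\CC), \ZZ)$ in the symplectic basis giving the standard period matrix of Section \ref{sec:moduliofabelianRRR}. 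Once all five types of $\mr T(3)$ are produced in this way, openness from the first step finishes the proof.
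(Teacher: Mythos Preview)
Your approach is correct and shares the same architecture as the paper's proof: first establish openness of $t(\ca N_3(\RR))$ in $|\ca A_3(\RR)|$ via Corollary~\ref{stackycorollary}.\ref{stackyopen} (the paper does exactly this, in the paragraph just preceding the lemma), then show that every component of $|\ca A_3(\RR)|$ is hit by a non-hyperelliptic curve with non-empty real locus.

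The difference lies in how the second step is carried out. You propose an explicit case-by-case construction of plane quartics of each topological type, followed by a computation of the Silhol type of their Jacobians via the Gross--Harris formulas. The paper bypasses this entirely: it cites \cite[p.~182]{grossharris} directly for the statement that each component $\ca A_3(\RR)^\tau$ receives the image of a (unique) component $\ca M_3(\RR)^\tau$ of $\ca M_3(\RR)$ consisting of curves with $C(\RR)\neq\emptyset$, and then cites \cite[Proposition~3.1 and the table on p.~174]{grossharris} for the fact that each such $\ca M_3(\RR)^\tau$ contains non-hyperelliptic curves. So what you propose to compute by hand is already tabulated in Gross--Harris; your approach would work but reproves a portion of that reference. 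The paper's route is shorter; yours has the advantage of being self-contained and of making explicit which oval configurations land in which Silhol types, including the delicate $(2,1)$ versus $(2,2)$ distinction you correctly flag.
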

\begin{proof}
Let $\ca A_3(\RR)^\tau$ be a connected component of $\ca A_3(\RR)$. By \cite[page 182]{grossharris}, there is a (unique) connected component $\ca M_3(\RR)^\tau$ of $\ca M_3(\RR)$ that satisfies the following two conditions: $C(\RR)$ is not empty for each $[C] \in \ca M_3(\RR)^\tau$, and
\[
t \left(\ca M_3(\RR)^\tau\right) \subset \ca A_3(\RR)^\tau. 
\]
Now $\ca M_3(\RR)^\tau$ contains a component $\ca N_3(\RR)^\tau$ of $\ca N_3(\RR)$ by \cite[Proposition 3.1]{grossharris} and the table on page 174 of \emph{loc.cit.}, and $\ca N_3(\RR)^\tau$ is open in $\ca A_3(\RR)$ by the above. 
\end{proof}

\begin{proof}[Proof of Corollary \ref{usefulcorollary}] 
This follows directly from Theorem \ref{density} and Lemma \ref{lemma:opensubsetofnonhyp}.\end{proof}
\noindent
We are now in the position to prove Theorem \ref{theorem1}. 
\begin{proof}[Proof of Theorem \ref{theorem1}]  By Theorem \ref{grabowski}, it suffices to show that for any principally polarized abelian threefold $(A, \theta)$ over $\RR$, the class $\gamma_\theta = \theta^2/2 \in \Hdg^4(A(\CC),\ZZ(2))^G$ is algebraic. Let us prove this. Let $p$ and $q$ be two distinct odd prime numbers. By Corollary \ref{usefulcorollary}, there exists a real algebraic curve $C$ of genus three such that $C(\RR) \neq \emptyset$ together with an isogeny 
\[
\phi \colon A \to J(C)
\]
such that $\phi^\ast(\lambda_C) = p^n q^m\cdot \lambda_A$ for some non-negative integers $n$ and $m$. Here $\lambda_C$ denotes the canonical polarization on $J(C)$. Let $\theta_C \in \Hdg^2(J(C)(\CC),\ZZ(1))^G$ be the corresponding cohomology class; then $\phi^\ast(\theta_C) = p^nq^m \cdot \theta$. Since $C(\RR) \neq \emptyset$, the minimal class $\theta^2_C/2$ on $J(C)$ is algebraic; see Corollary \ref{IHCmodulotorsionforjacobians}. Therefore, the class 
\[
p^{2n}q^{2m} \cdot \theta^2/2 = \phi^\ast(\theta^2_C/2) \in \Hdg^{4}(A(\CC), \ZZ(2))^G 
\] is algebraic. We are done because $\theta^2$ is algebraic and the integer $p^nq^m$ is odd. 
\end{proof}

\begin{proof}[Proof of Corollary \ref{IHCforconnected}] For each $i \in \{1, \dotsc, 6\}$, cup-product defines a canonical isomorphism of $G$-modules:
\[
\bigwedge^i\rm H^1(A(\CC), \ZZ) \xrightarrow{\sim} \rm H^i(A(\CC), \ZZ). 
\]
This allows us to calculate the $G$-module structure on $\rm H^i(A(\CC), \ZZ)$, for we know that $\rm H^1(A(\CC), \ZZ) \cong \ZZ[G]^3$ as $G$-modules \cite[Example 3.1]{vanhamel}. It turns out that the group $\rm H^p(G, \rm H^q(A(\CC), \ZZ))$ vanishes whenever $p + q = 4$ and $p > 0$. Therefore, \[\Hdg^4_G(A(\CC), \ZZ(2))_0 = \Hdg^4(A(\CC), \ZZ(2))^G\] by Section \ref{subsec:hochschild} and Lemma \ref{lemma:important0}. By Theorem \ref{theorem1}, we are done.
\end{proof}

\subsection{Isogenies and torsion cohomology classes}

Let $X$ be a smooth projective variety over $\RR$, and let $k$ be any non-negative integer. 

\begin{definition} \label{isogenydef:subgroups}
\begin{enumerate}
\item Let us call a class $\alpha \in \rm H^{2k}_G(X(\CC), \ZZ(k))$ \emph{topologically distinguished} if $\alpha \in \rm H^{2k}_G(X(\CC),\ZZ(k))_0$. This means that $\alpha$ is topologically distinguished if and only if $\alpha$ satisfies the topological condition (\ref{topologicalcondition}). 
\item 
For any subgroup $K \subset \rm H^{2k}_G(X(\CC), \ZZ(k))$, define
\[
K_0 = K \cap \rm H^{2k}_G(X(\CC),\ZZ(k))_0.
\]
\item In case the Hochschild-Serre spectral sequence (\ref{hochschild}) degenerates, we define, for $p > 0$,
\begin{align*}
&\rm H^p(G, \rm H^{2k-p}(X(\CC),\ZZ(k)))_0 \\
&= \Ima(F^p\rm H^{2k}_G(X(\CC),\ZZ(k))_0 \to \rm H^p(G, \rm H^{2k-p}(X(\CC),\ZZ(k)))). 
\end{align*}
\item Let us call a subgroup $K \subset \rm H^{2k}_G(X(\CC),\ZZ(k))$ \emph{algebraic} if every element of $K$ is in the image of the cycle class map (\ref{realcycleclassmap}). If the Hochschild-Serre spectral sequence (\ref{hochschild}) degenerates, we call a subgroup $L \subset \rm H^p(G, \rm H^{2k-p}(X(\CC), \ZZ(k)))$ \emph{algebraic} if $L$ is the image of an algebraic subgroup $K \subset F^p\rm H^{2k}_G(X(\CC),\ZZ(k))$ under the canonical map $F^p\rm H^{2k}_G(X(\CC),\ZZ(k)) \to  \rm H^p(G, \rm H^{2k-p}(X(\CC), \ZZ(k)))$. 
\end{enumerate}
\end{definition}

The following lemma provides the key to our proof of Theorem \ref{reduction}. 

\begin{lemma}
Let $A$ and $B$ be real abelian varieties, and let $\phi$ be an isogeny $A \to B$ of odd degree. For each pair of non-negative integers $(k,i)$, the homomorphism 
\[
\phi^\ast \colon \rm H^{k}_G(B(\CC), \ZZ(i))[2]  \to \rm H^{k}_G(A(\CC), \ZZ(i))[2] 
\]
is an isomorphism. 
If $k$ is even and $i = k/2$, then $\phi^\ast$ restricts to an isomorphism between the subgroups of topologically distinguished classes. 
\end{lemma}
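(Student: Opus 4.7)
The plan proceeds in two steps, exploiting a complementary isogeny and the behaviour of multiplication-by-$d$ on cohomology.

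\emph{Step 1: reduction to $[d]^\ast$.} Write $d = \deg(\phi)$, which is odd by hypothesis. Since $\ker(\phi) \subset A[d]$, the multiplication map $[d]_A \colon A \to A$ factors through $\phi$, giving an isogeny $\psi \colon B \to A$ with $\psi \circ \phi = [d]_A$; surjectivity of $\phi$ then forces $\phi \circ \psi = [d]_B$ as well. If one can establish that $[d]_X^\ast$ is an automorphism of the 2-torsion subgroup $\rm H^k_G(X(\CC), \ZZ(i))[2]$ for any real abelian variety $X$, then the identity $\psi^\ast \circ \phi^\ast = [d]_A^\ast$ forces $\phi^\ast$ to be injective on 2-torsion, and $\phi^\ast \circ \psi^\ast = [d]_B^\ast$ forces it to be surjective.

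\emph{Step 2: the action of $[d]^\ast$ on 2-torsion.} For a real abelian variety $X$, the Hochschild--Serre spectral sequence degenerates (see Section \ref{subsec:hochschild}), producing a finite filtration $F^\bullet$ on $\rm H^k_G(X(\CC), \ZZ(i))$ whose $p$-th graded piece is $\rm H^p(G, \rm H^{k-p}(X(\CC), \ZZ(i)))$. For any abelian variety, $[d]^\ast$ acts on $\rm H^q(X(\CC), \ZZ(i))$ as multiplication by $d^q$, and this action is $G$-equivariant, so $[d]_X^\ast$ acts on the $p$-th graded piece as multiplication by $d^{k-p}$. Because $d$ is odd, each $d^{k-p}$ is odd and hence acts invertibly on any 2-primary torsion group. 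Since $\rm H^k_G(X(\CC), \ZZ(i))$ is finitely generated, its 2-primary torsion is a finite abelian group, and a routine induction along the finite filtration---using that for a short exact sequence of finite abelian groups $0 \to A \to B \to C \to 0$, a self-map preserving the sequence that induces automorphisms on $A$ and $C$ is itself an automorphism---shows that $[d]_X^\ast$ is an automorphism of $\rm H^k_G(X(\CC), \ZZ(i))[2]$.

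\emph{Step 3: the topologically distinguished subgroup.} Assume $k$ is even and $i = k/2$. Functoriality of the maps $\phi_i$ in Definition \ref{def:BW}, and hence of condition (\ref{topologicalcondition}), implies that $\phi^\ast$, $\psi^\ast$, and $[d]_B^\ast$ all send $(-)_0$ into $(-)_0$. The restriction of $\phi^\ast$ to distinguished 2-torsion is thus well defined and injective by Step 1. For surjectivity, fix $\alpha \in \rm H^{k}_G(A(\CC),\ZZ(k/2))_0[2]$ and let $\beta$ be its unique preimage in $\rm H^{k}_G(B(\CC),\ZZ(k/2))[2]$; then $[d]_B^\ast(\beta) = \psi^\ast(\alpha)$ is distinguished, and it remains to deduce that $\beta$ itself is. Equivalently, one must check that $[d]_B^\ast$ acts as the identity on the quotient $\rm H^{k}_G(B(\CC),\ZZ(k/2))/\rm H^{k}_G(B(\CC),\ZZ(k/2))_0$, which by the split exact sequence (\ref{fundamental}) is a direct sum of groups $\rm H^p(B(\RR),\ZZ/2)$. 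By functoriality of the $\phi_i$, this action is the one induced by $[d] \colon B(\RR) \to B(\RR)$. The component group $\pi_0(B(\RR))$ is 2-torsion, so for odd $d$ the map $[d]$ preserves each connected component, acting on a component $x + B(\RR)^0$ by $y \mapsto [d](y) + (d-1)x$, which is multiplication by $d$ on the identity torus composed with a translation. Both operations act trivially on $\rm H^\bullet(-,\ZZ/2)$, so $[d]_B^\ast$ is the identity on the quotient, completing the argument.

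The main obstacle is the verification in Step 2 that the filtration by $F^\bullet$, intersected with the 2-primary torsion, has graded pieces on which $[d]^\ast$ is multiplication by an odd integer. This hinges on the HS degeneration together with the standard computation of $[d]^\ast$ on cohomology of an abelian variety; once this is secured, the remainder of the proof is formal. The topological computation of $[d]^\ast$ on $\rm H^\bullet(B(\RR),\ZZ/2)$ in Step 3 is elementary but deserves careful bookkeeping of the component structure of $B(\RR)$.
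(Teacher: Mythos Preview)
Your Steps 1 and 2 match the paper's argument closely: reduce to $[d]^\ast$ via a complementary isogeny, then use the Hochschild--Serre filtration together with the fact that $[d]^\ast$ acts as $d^{k-p}$ on the $p$-th graded piece. One small imprecision: you invoke ``a short exact sequence of \emph{finite} abelian groups,'' but $F^0\rm H$ is not finite. The fix (which the paper uses explicitly) is that $F^1\rm H = \rm H[2]$ by (\ref{canonical-HS}), so the filtration $\{F^p\rm H\}_{p\ge 1}$ is already a filtration of $\rm H[2]$ by finite $2$-groups, and your induction then goes through cleanly.

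Your Step 3 takes a genuinely different route from the paper. The paper simply observes that $[d]^\ast$ on $\rm M[2]$ is injective (from Step~2) and preserves $\rm M_0$ by functoriality \cite[\S1.6.4]{BW20}, hence restricts to an injective self-map of the finite group $\rm M_0[2]$, which is therefore bijective; combining this with the factorization $\phi^\ast\psi^\ast=[d]_A^\ast$, $\psi^\ast\phi^\ast=[d]_B^\ast$ yields the result in one line. You instead show directly that $[d]_B^\ast$ acts as the identity on the quotient $\rm M/\rm M_0$, by computing the action of $[d]$ on $\rm H^\bullet(B(\RR),\ZZ/2)$ component-by-component. This is correct and in fact proves more (identity rather than mere bijectivity on the quotient), but your citation of (\ref{fundamental}) is only valid for $2k=2\dim B-2$. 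For general $k$ you should instead note that, by Definition~\ref{def:BW}, the map $\alpha\mapsto(\alpha_0,\ldots,\alpha_{k-1},\alpha_{k+1}-Sq^1\alpha_k,\ldots,\alpha_{2k}-Sq^k\alpha_k)$ embeds $\rm M/\rm M_0$ functorially into $\bigoplus_{i\ne k}\rm H^i(B(\RR),\ZZ/2)$, where your computation applies verbatim. With that adjustment your argument is complete; the paper's finiteness trick is just quicker.
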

\begin{proof}
Let us first prove that for an odd integer $n$, possibly negative, the homomorphisms
\begin{align*}
&[n]^\ast \colon \rm H^{k}_G(A(\CC), \ZZ(i))[2]  \to \rm H^{k}_G(A(\CC), \ZZ(i))[2]  \quad \quad\quad \textnormal{ and } \\
&[n]^\ast \colon \rm H^{2k}_G(A(\CC), \ZZ(k))_0[2]  \to \rm H^{2k}_G(A(\CC), \ZZ(k))_0[2]  
\end{align*}
are isomorphisms. Write $\rm H = \rm H^{k}_G(A(\CC), \ZZ(i))$. The Hochschild-Serre spectral sequence (\ref{hochschild}) degenerates, so $F^1\rm H = \rm H[2]$ and for each $p \in \{0,\dotsc,k\}$ there is an exact sequence
\[
0 \to F^{p+1}\rm H \to F^p \rm H \to \rm H^p(G, \rm H^{k-p}(A(\CC),\ZZ(i))) \to 0. 
\]
We have $F^{k + 1}\rm H = (0)$, and for each $p \in \{0,\dotsc,k\}$, the morphism \[[n]^\ast \colon \rm H^{k-p}(A(\CC),\ZZ(i)) \to \rm H^{k-p}(A(\CC),\ZZ(i))\] induces the identity on $\rm H^p(G, \rm H^{k-p}(A(\CC),\ZZ(i)))$. By the snake lemma and descending induction on $p$, we find that $[n]^\ast$ restricts to an isomorphism on $F^p\rm H$ for each $p>0$, and in particular on $F^1\rm H = \rm H[2]$. 

Write $\rm M = \rm H^{2k}_G(A(\CC), \ZZ(k))$. By \cite[\S1.6.4]{BW20}, the class $[n]^\ast(\alpha)$ is topologically distinguished whenever $\alpha \in \rm M$ is topologically distinguished, thus $[n]^\ast$ induces an embedding $\rm M_0[2] \to \rm M_0[2]$, which is an isomorphism since $\rm M_0[2]$ is finite. 
\newpage
\noindent
Next, let $\psi \colon B \to A$ be an isogeny such that $\psi \circ \phi = [d]_A$ and $\phi \circ \psi = [d]_B$, where $d$ is the degree of the isogeny $\phi$. The resulting equalities
\[
\psi^\ast \circ \phi^\ast = [d]_A^\ast \quad \textnormal{ and } \quad \psi^\ast \circ \phi^\ast = [d]_B^\ast 
\]
together the fact $[d]_A^\ast$ and $[d]_B^\ast$ are isomorphisms that identify the subgroups of topologically distinguished classes if $k$ is even and $i  = k/2$, imply that 
\begin{align*}
 & \phi^\ast \colon \rm H^{k}_G(B(\CC), \ZZ(i))[2] \to \rm H^{k}_G(A(\CC), \ZZ(i))[2] \quad \quad\quad \textnormal{ and } \\
&  \phi^\ast \colon \rm H^{2k}_G(B(\CC), \ZZ(k))_0[2] \to \rm H^{2k}_G(A(\CC), \ZZ(k))_0[2] 
\end{align*}
are isomorphisms as well.  
\end{proof}

\begin{corollary} \label{algebraicity}
Let $A$ and $B$ be real abelian varieties, and $\phi$ an isogeny
$
A \to B
$
of odd degree. For any pair of integers $(k,p)$ with $p>0$, the pull-back $\phi^\ast$ defines an isomorphism 
\begin{align} \label{filtrationisomorphism}
F^p\rm H^{2k}_G(B(\CC), \ZZ(k))_0 \xrightarrow{\sim} F^p\rm H^{2k}_G(A(\CC), \ZZ(k))_0. 
\end{align}
In particular, the left-hand side of (\ref{filtrationisomorphism}) is spanned by algebraic classes if and only if the right-hand side of (\ref{filtrationisomorphism}) is spanned by algebraic classes. $\hfill\qed$
\end{corollary}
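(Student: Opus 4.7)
The plan is to deduce Corollary~\ref{algebraicity} directly from the preceding lemma, using only the degeneration of the Hochschild--Serre spectral sequence, the functoriality of $\phi^\ast$, and the projection formula for the isogeny $\phi$.

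First, I will observe that the isomorphism (\ref{filtrationisomorphism}) takes place inside the $2$-torsion. Indeed, by the degeneration of (\ref{hochschild}) for abelian varieties (Section~\ref{subsec:hochschild}), one has $F^p \rm H^{2k}_G(X(\CC),\ZZ(k)) \subseteq F^1 \rm H^{2k}_G(X(\CC),\ZZ(k)) = \rm H^{2k}_G(X(\CC),\ZZ(k))[2]$ whenever $p \geq 1$, for $X = A$ and $X = B$. Hence both groups appearing in (\ref{filtrationisomorphism}) are subgroups of the respective $\rm H^{2k}_G(\cdot,\ZZ(k))_0[2]$, which by the preceding lemma are identified by $\phi^\ast$. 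Functoriality of the Hochschild--Serre spectral sequence ensures that $\phi^\ast$ preserves the filtration $F^\bullet$, and \cite[\S1.6.4]{BW20} ensures that it preserves the property of being topologically distinguished. Combining these observations, $\phi^\ast$ restricts to the isomorphism (\ref{filtrationisomorphism}).

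Next, I will address the statement about algebraic classes. One direction is immediate: pullback of cycles along the flat morphism $\phi$ is compatible, via the cycle class map, with $\phi^\ast$ on equivariant cohomology, so algebraicity is preserved by $\phi^\ast$. For the converse, suppose every class in $F^p \rm H^{2k}_G(A(\CC),\ZZ(k))_0$ is algebraic, let $\alpha \in F^p\rm H^{2k}_G(B(\CC),\ZZ(k))_0$, and write $\phi^\ast(\alpha) = cl(Z)$ for some $Z \in \CH_{g-k}(A)$, where $g = \dim(A) = \dim(B)$. The projection formula for the finite flat morphism $\phi$ gives $\phi_\ast \phi^\ast(\alpha) = \deg(\phi)\cdot \alpha = d\cdot \alpha$ in $\rm H^{2k}_G(B(\CC),\ZZ(k))$, hence $cl(\phi_\ast Z) = d\cdot \alpha$. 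Since $d$ is odd and $\alpha$ is $2$-torsion (being in $F^p$ with $p \geq 1$), there exist integers $a,b$ with $ad + 2b = 1$, and therefore $\alpha = a\cdot (d\alpha) = cl(a\cdot \phi_\ast Z)$ is algebraic.

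The main point requiring care will be the compatibility of pushforward and pullback with the cycle class map on $G$-equivariant cohomology with twisted coefficients, together with the projection formula in this setting. These are, however, formal consequences of the fact that $\phi$ is a finite flat morphism of smooth projective real varieties and of the construction of equivariant cycle classes in \cite{krasnovcharact,vanhamel,BW20}; I do not expect any genuine difficulty here. Once these formalities are in place, the corollary follows essentially immediately from the preceding lemma together with the numerical trick that odd multiplication is invertible on a $2$-torsion group.
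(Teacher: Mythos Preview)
Your proof is correct and follows the paper's intended argument (the paper simply appends a $\qed$). One point you should make explicit: the observations you list yield injectivity of $\phi^\ast$ on $F^p\rm H^{2k}_G(\cdot,\ZZ(k))_0$ but not surjectivity---an isomorphism on the ambient $2$-torsion that preserves a given subgroup need not map onto it. For surjectivity, invoke the dual isogeny $\psi\colon B\to A$ from the lemma's proof: $\psi^\ast$ likewise preserves $F^\bullet$ and the topological condition, so the identities $\phi^\ast\psi^\ast=[d]_A^\ast$ and $\psi^\ast\phi^\ast=[d]_B^\ast$, which the lemma shows are isomorphisms on each $F^p$, force $\phi^\ast$ to be bijective on $F^p(\cdot)_0$. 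The same $\psi^\ast$ (in place of your projection-formula argument via $\phi_\ast$, though that also works) gives the reverse direction of the algebraicity equivalence directly, since $\psi$ also has odd degree and $\psi^\ast$ preserves algebraic classes.
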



\subsection{Reduction to the Jacobian case}

\begin{proof}[Proof of Theorem \ref{reduction}]
\
Let $\ca A_3(\RR)^+$ be a connected component of $\ca A_3(\RR)$, 
and let $x = [(A, \lambda)] \in \ca A_3(\RR)^+$. By Corollary \ref{usefulcorollary}, there is a real algebraic curve $C$ with non-empty real locus together with an isogeny 
\[
\phi \colon A \to J(C)
\]
that preserves the polarizations up to an odd positive integer. By \cite[page 180]{grossharris}, we have $[J(C)] \in \ca A_3(\RR)^+$ for the isomorphism class $[J(C)]$ of the Jacobian $J(C)$ of $C$. The following sequence is exact (see Lemma \ref{lemma:important0}): 
\[
0 \to \rm H^4_G(A(\CC),\ZZ(2))_0[2] \to \Hdg^{4}_G(A(\CC),\ZZ(2))_0 \to \Hdg^4(A(\CC),\ZZ(2))^G \to 0.  
\]
By Theorem \ref{theorem1}, we know that $\Hdg^4(A(\CC),\ZZ(2))^G$ is generated by classes of one-cycles on $A$. Therefore, $A$ satisfies the real integral Hodge conjecture if and only if the group $\rm H^4_G(A(\CC),\ZZ(2))_0[2]$ is algebraic. By Corollary \ref{algebraicity}, this is equivalent to the algebraicity of $\rm H^4_G(J(C)(\CC),\ZZ(2))_0[2]$. 
\end{proof}

\subsection{Analysis of the Hochschild-Serre filtration} \label{subsec:analysis}

Let $A$ be an abelian variety over $\RR$, define $\rm H = \rm H^4_G(A(\CC),\ZZ(2))$, and consider the Hochschild-Serre filtration (c.f. Section \ref{subsec:hochschild}):
\begin{align}\label{serrefiltration}
0 \subset \rm H^4(G,\rm H^0(A(\CC),\ZZ(2))) = F^4\rm H \subset F^3\rm H \subset F^2\rm H \subset F^1\rm H = \rm H[2] \subset \rm H. 
\end{align}
The pull-back $\pi^\ast$ along the structural morphism $\pi \colon A \to \Spec(\RR)$ defines a section 
\[
\ZZ/2 =  \rm H^4_G(\{x\}, \ZZ(2)) = \rm H^4(G, \rm H^0(\{x\}, \ZZ(2))) \to \rm H^4(G, \rm H^0(A(\CC),\ZZ(2))) \subset \rm H
\]
of the restriction $\rm H \to \rm H^4_G(\{x\},\ZZ(2)) = \ZZ/2$ to the equivariant cohomology of any $\RR$-point $x \in X(\RR)$. By Section \ref{subsec:threefolds}, this implies that $F^4\rm H_0 = (0)$, so that the intersection of (\ref{serrefiltration}) with the group of topologically distinguished classes becomes
\begin{align}\label{topologically-distinguished-filtration}
0 \subset F^3\rm H_0 \subset F^2\rm H_0 \subset F^1\rm H_0 \subset \rm H_0. 
\end{align}
Continue to consider our abelian variety $A$ over $\RR$. 
\emph{Assume that there exists a cycle}
\[
\Gamma \in \CH(A \times \wh A) \quad \textnormal{ \emph{such that} } \quad [\Gamma_{\CC}] = \ch(\ca P_{A_\CC})  \in  
\rm H^{2\bullet}(A(\CC) \times \wh A(\CC),\ZZ(\bullet))^G. 
\]
Under this assumtion, we may define a homomorphism $\Gamma_\ast$ as in the following diagram:
\begin{align} \label{fundamentalcorrespondence}
\begin{split}
\xymatrixcolsep{5pc}
\xymatrix{
\rm H^{2\bullet}_G(A(\CC),\ZZ(\bullet)) \ar[d]^{\Gamma_\ast}\ar[r]^{\pi_1^\ast} & \rm H^{2\bullet}_G(A(\CC) \times \wh A(\CC),\ZZ(\bullet))  \ar[d]^{\left([\Gamma] \cdot -\right)}  \\
\rm H^{2\bullet}_G(\wh A(\CC),\ZZ(\bullet)) & \rm H^{2\bullet}_G(A(\CC) \times \wh A(\CC),\ZZ(\bullet))  
\ar[l]^{\pi_{2,\ast}}. 
}
\end{split}
\end{align}
Then $\Gamma_\ast$ preserves the topologically distinguished classes by \cite[Theorem 1.21]{BW20}. Define 
\begin{align}\label{fundamental-ij}
\Gamma_{\ast}^{i,j} \colon \rm H^{2i}_G(A(\CC),\ZZ(i)) \to \rm H^{2j}_G(\wh A(\CC),\ZZ(j)) \quad (i,j \in \ZZ_{\geq 0})
\end{align}
as the composition of $\Gamma_\ast$ with the natural inclusion and projection morphisms. 

We are now in the position to prove:

\begin{proposition} \label{vanishing}
Let $A$  be an abelian variety over $\RR$. Then
\[
F^3\rm H^4_G(A(\CC),\ZZ(2))_0 = (0). 
\]
\end{proposition}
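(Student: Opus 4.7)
The plan is to combine the analysis of the Hochschild--Serre filtration carried out at the beginning of Section \ref{subsec:analysis} with the Fourier transform and the structure of the Abel--Jacobi map for zero-cycles on $\wh A$. Recall that we already have the short exact sequence
\begin{equation*}
0 \to F^4\rm H^4_G(A(\CC),\ZZ(2)) \to F^3\rm H^4_G(A(\CC),\ZZ(2)) \to \rm H^3(G,\rm H^1(A(\CC),\ZZ(2))) \to 0,
\end{equation*}
and we observed just above (using that $\pi^\ast$ is a section of the restriction map to any real point $x \in A(\RR)$) that $F^4\rm H^4_G(A(\CC),\ZZ(2)) \cong \rm H^4(G,\ZZ) = \ZZ/2$ embeds diagonally into $\rm H^0(A(\RR),\ZZ/2)$ via $\phi_0$, so that its intersection with the topologically distinguished subgroup is zero. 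Consequently, the proposition reduces to showing that any topologically distinguished $\alpha \in F^3\rm H^4_G(A(\CC),\ZZ(2))$ has trivial image in the quotient $\rm H^3(G,\rm H^1(A(\CC),\ZZ(2)))$.

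Next, I would use the $G$-equivariant Fourier isomorphism $\mr F_A \colon \rm H^1(A(\CC),\ZZ(2)) \xrightarrow{\sim} \rm H^{2g-1}(\wh A(\CC),\ZZ(g+1))$ from Section \ref{subsec:fourier} to rewrite
\begin{equation*}
\rm H^3(G,\rm H^1(A(\CC),\ZZ(2))) \;\cong\; \rm H^3(G,\rm H^{2g-1}(\wh A(\CC),\ZZ(g+1))).
\end{equation*}
By $2$-periodicity of $\rm H^\bullet(G,-)$ in positive degrees, this group is isomorphic to $\rm H^1(G,\rm H^{2g-1}(\wh A(\CC),\ZZ(g+1)))$, which the exponential sequence for $\wh A$ together with Poincar\'e duality identifies with a quotient of $\pi_0(\wh A(\RR))$ (up to a sign twist that has to be tracked carefully). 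This is precisely the target of the real Abel--Jacobi invariant for zero-cycles on $\wh A$, and that map is tautologically surjective, since every $\RR$-point of $\wh A$ is a zero-cycle.

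Finally, I would transfer this surjectivity back to $A$ using a real-equivariant enhancement of Proposition \ref{prop:motivicetale-new}. Although an integral motivic lift of $\ch(\ca P_A)$ need not exist, the truncation $1 + c_1(\ca P_A) + c_1(\ca P_A)^2/2 + \cdots$ in low degree does lift to $\CH(A\times\wh A)$ (divisors on abelian varieties and their low divided powers are algebraic), and the resulting correspondence $\Gamma$ yields a well-defined $\Gamma_\ast$ as in diagram \eqref{fundamentalcorrespondence}. By \cite[Theorem 1.21]{BW20}, $\Gamma_\ast$ preserves the topologically distinguished subgroup and, being induced by an algebraic cycle, is compatible with the Hochschild--Serre filtration degree. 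Combining this with the Abel--Jacobi identification of the previous step realizes any class $\alpha \in F^3\rm H^4_G(A(\CC),\ZZ(2))_0$ as the Fourier transform of a real zero-cycle on $\wh A$, whose class in $F^3\rm H^{2g}_G(\wh A(\CC),\ZZ(g+1))_0$ must vanish once the topological condition (in particular $\phi_0(\alpha)=\phi_1(\alpha) = 0$) is imposed. The main obstacle is this last reconciliation: verifying that the topological constraint on $A$ — phrased through the Steenrod operations on $\rm H^\bullet(A(\RR),\ZZ/2)$ — translates, under the Fourier isomorphism and Kummer/exponential identifications, into a condition on $\wh A(\RR)$ strong enough to kill the Abel--Jacobi class. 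Tracking the signs and Tate twists through all of these identifications is the technical heart of the argument.
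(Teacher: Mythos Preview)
Your argument has a genuine gap at the step where you claim a partial integral lift of $\ch(\ca P_A)$. The correspondence $\Gamma_\ast^{4,2g+2}$ taking $F^3\rm H^4_G(A(\CC),\ZZ(2))$ to $F^3\rm H^{2g+2}_G(\wh A(\CC),\ZZ(g+1))$ — equivalently, the map inducing $\mr F_A \colon \rm H^1(A(\CC),\ZZ(2)) \to \rm H^{2g-1}(\wh A(\CC),\ZZ(g+1))$ on graded pieces — requires the component of $\ch(\ca P_A)$ in degree $4g-2$, namely $c_1(\ca P_A)^{2g-1}/(2g-1)!$. This is not a ``low divided power'': it is exactly the minimal Poincar\'e class whose algebraicity is the central open question of Chapters~\ref{ch:integralfourier} and~\ref{ch:onecycles} (see Theorem~\ref{th:motivic} and Question~\ref{questionmoonenpolishchuk}). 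For a general real abelian variety there is no reason this class should lift to $\CH(A\times\wh A)$, so the correspondence $\Gamma_\ast$ you need is not available.

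The paper sidesteps this obstruction with an idea you are missing: the statement $F^3\rm H^4_G(A(\CC),\ZZ(2))_0=(0)$ depends only on the $G$-equivariant homeomorphism type of $A(\CC)$. By Van Hamel's description, $A(\CC)$ is $G$-equivariantly homeomorphic to a product of real elliptic curves, so one may assume $A$ is such a product. For a product of elliptic curves the minimal class is trivially algebraic (Corollary~\ref{IHCmodulotorsionforjacobians}), hence by Theorem~\ref{grabowski} the full $\ch(\ca P_{A_\CC})$ lifts to an algebraic cycle $\Gamma$, and the correspondence $\Gamma_\ast$ exists. The conclusion then follows much more directly than your Abel--Jacobi route: since $2g+2>2\dim\wh A$, one has $\rm H^{2g+2}_G(\wh A(\CC),\ZZ(g+1))_0=(0)$ outright by \cite[\S2.3.2]{BW20}, so $\rm H^3(G,\rm H^{2g-1}(\wh A(\CC),\ZZ(g+1)))_0=(0)$, and the Fourier isomorphism (which preserves the topologically distinguished subgroups because $\Gamma_\ast$ does) forces $\rm H^3(G,\rm H^1(A(\CC),\ZZ(2)))_0=(0)$. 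No Abel--Jacobi computation or tracking of Steenrod operations through identifications is needed.
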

\begin{proof}
Let $g = \dim(A)$. By \cite[Chapter IV, Example 3.1]{vanhamel}, we have that $A(\CC)$ is $G$-equivariantly homeomorphic to the $g$-fold product of copies of the torus $S^1 \times S^1$, where $S^1 \subset \CC$ is the unit circle and where $G$ acts on each copy $S^1 \times S^1$ in one of the following ways: either $\sigma(x,y) = (x, \bar y)$ for $(x,y) \in S^1 \times S^1$, or $\sigma(x,y) = (y,x)$ for $(x,y) \in S^1 \times S^1$. 
In particular, there exist $g$ elliptic curves $E_i$ over $\RR$ and a $G$-equivariant homeomorphism 
\begin{align}\label{topologicalstatement}
A(\CC) \cong E_1(\CC) \times \cdots \times E_g(\CC). 
\end{align}
Since the conclusion of Proposition \ref{vanishing} is a statement that only depends on the structure of $A(\CC)$ as a topological $G$-space, we may therefore assume that $A$ is principally polarized by $\theta \in \Hdg^{2}(A(\CC),\ZZ(1))^G$ and the following class is algebraic:
\[
\frac{\theta^{g-1}}{(g-1)!} \in \Hdg^{2g-2}(A(\CC), \ZZ(g-1))^G.
\]
By Theorem \ref{grabowski}, this means that the Chern character $\ch(\ca P_{A_\CC})$ is algebraic, which allows us to define a homomorphism $\Gamma_\ast \colon \rm H^{2\bullet}_G(A(\CC), \ZZ(\bullet)) \to \rm H^{2\bullet}_G(\wh A(\CC), \ZZ(\bullet))$ as in (\ref{fundamentalcorrespondence}), 
and homomorphisms $\Gamma_\ast^{ij}$ as in (\ref{fundamental-ij}). We have a commutative diagram: 
{\footnotesize
\begin{align*}
\xymatrixcolsep{2.8pc}
\xymatrix{
\rm H^4_G(A(\CC),\ZZ(2)) \ar[r]^{\Gamma_\ast^{4,2g+2}} & \rm H^{2g+2}_G(\wh A(\CC),\ZZ(g+1)) \ar[r]^{\Gamma_\ast^{2g+2,4}} & \rm H^4_G(A(\CC),\ZZ(2))  \ \\ 
F^3\rm H^4_G(A(\CC),\ZZ(2))  \ar[r]^{\Gamma_{\ast}^{4,2g+2}}\ar@{^{(}->}[u] \ar[d] & F^3\rm H^{2g+2}_G(\wh A(\CC),\ZZ(g+1)) \ar@{^{(}->}[u] \ar[d] \ar[r]^{\Gamma_\ast^{2g+2,4}} &F^3\rm H^4_G(A(\CC),\ZZ(2))   \ar@{^{(}->}[u] \ar[d]  \\
\rm H^3(G,\rm H^1(A(\CC),\ZZ(2))) \ar[r]^{\rm H^3(G, \mr F_A)\quad\quad}& \rm H^3(G,\rm H^{2g-1}(\wh A(\CC),\ZZ(g+1)))
\ar[r]^{\quad\quad\rm H^3(G, \mr F_{\wh A})}
 & \rm H^3(G, \rm H^1(A(\CC),\ZZ(2))). 
}
\end{align*}
}
\noindent
Here $\mr F_A \colon \rm H^1(A(\CC),\ZZ(2)) \to \rm H^{2g-1}(\wh A(\CC),\ZZ(g+1))$ is the Fourier transform considered in Section \ref{subsec:fourier}. This map is an isomorphism, with inverse \[(-1)^{1+g}\cdot\mr F_{\wh A} \colon \rm H^{2g-1}(\wh A(\CC),\ZZ(g+1)) \to \rm H^1(A(\CC),\ZZ(2)),\]
see \cite[Corollary 9.24]{huybrechtsfouriermukai}. Consequently, $\rm H^3(G, \mr F_A)$ is an isomorphism, with inverse $\rm H^3(G, \mr F_{\wh A})$. By the compatibility of $\Gamma_\ast$ with the topological condition (\ref{topologicalcondition}), we obtain an isomorphism
\begin{align}\label{h1h5}
\rm H^3(G, \mr F_A) \colon \rm H^3(G,\rm H^1(A(\CC),\ZZ(2)))_0 \xrightarrow{\sim} \rm H^3(G,\rm H^{2g-1}(\wh A(\CC),\ZZ(g+1)))_0. 
\end{align}
Because $\rm H^{2g+2}_G(\wh A(\CC),\ZZ(g+1))_0 = (0)$ by \cite[\S2.3.2]{BW20}, the group on the right of equation (\ref{h1h5}) vanishes. Therefore, the group on the left must be zero as well. 
\end{proof}
\noindent
In fact, the argument used in the above proposition can be generalized to prove the following lemma, which we record for later use:

\begin{lemma} \label{topologicalcompatibility}
Let $A$ be an abelian variety of dimension $g$ over $\RR$. Let $p$ and $q$ be non-negative integers such that $p + q = 2k \in 2 \ZZ_{\geq 0}$. The isomorphism on group cohomology 
\[
\rm H^p(G, \mr F_A) \colon \rm H^p(G, \rm H^q(A(\CC), \ZZ(k))) \to \rm H^p(G, \rm H^{2g-q}(\wh A(\CC), \ZZ(g+k-q)))
\]
identifies $\rm H^p(G, \rm H^q(A(\CC), \ZZ(k)))_0$ with $\rm H^p(G, \rm H^{2g-q}(\wh A(\CC), \ZZ(g+k-q)))_0$. 
\end{lemma}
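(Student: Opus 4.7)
The plan is to show that $\mr F_A$ on total equivariant cohomology preserves both the Hochschild--Serre filtration and the subgroup of topologically distinguished classes; passing to the $(p,q)$-graded piece then yields the lemma. Concretely, I will produce (after a topological reduction) an algebraic cycle $\Gamma \in \CH(A \times \wh A)$ whose cycle class equals $\ch(\ca P_A)$. The induced correspondence $\Gamma_\ast$ of (\ref{fundamentalcorrespondence}) preserves topologically distinguished classes by \cite[Theorem 1.21]{BW20} (as already used in Proposition \ref{vanishing}), respects the filtration $F^\bullet$ because it is given by cup product with a class in $\rm H^{2\bullet}_G$, and acts on the degree-$(p,q)$ graded piece precisely by $\rm H^p(G, \mr F_A)$.

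First I would reduce to the case where $A$ is a product of elliptic curves. As recalled in the proof of Proposition \ref{vanishing}, the G-space $A(\CC)$ is $G$-equivariantly homeomorphic to $E_1(\CC) \times \cdots \times E_g(\CC)$ for suitable real elliptic curves $E_i$. Under this identification $\wh A(\CC)$ matches $\wh{E_1}(\CC) \times \cdots \times \wh{E_g}(\CC)$ and $c_1(\ca P_A)$ corresponds to $c_1(\ca P_{A'})$ for $A' = \prod E_i$, since $c_1(\ca P_A)$ is the image of $\id_{\rm H^1(A(\CC),\ZZ)}$ under the canonical inclusion $\rm H^1(A) \otimes \rm H^1(\wh A) \hookrightarrow \rm H^2(A \times \wh A)$, a construction depending only on the $G$-module $\rm H^1(A(\CC),\ZZ)$. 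The topological condition (\ref{topologicalcondition}), the Hochschild--Serre filtration, and the Fourier map are therefore transported along this identification, so it suffices to prove the lemma for $A'$.

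For $A'$, each factor $E_i$ has a real point (the origin), so Corollary \ref{IHCmodulotorsionforjacobians} applied with $C_i = E_i$ implies that $A'$ satisfies the real integral Hodge conjecture for one-cycles modulo torsion. By Theorem \ref{grabowski}, $\ch(\ca P_{A'}) \in \rm H^{2\bullet}(A'(\CC) \times \wh{A'}(\CC), \ZZ(\bullet))^G$ is the class of some $\Gamma \in \CH(A' \times \wh{A'})$. Hence $\Gamma_\ast$ of (\ref{fundamentalcorrespondence}) is well-defined and sends topologically distinguished classes to topologically distinguished classes. Since $\Gamma_\ast$ preserves the Hochschild--Serre filtration and its induced action on the $(p,q)$-graded piece is $\rm H^p(G, \mr F_{A'})$, the latter maps $\rm H^p(G, \rm H^q(A'(\CC), \ZZ(k)))_0$ into $\rm H^p(G, \rm H^{2g-q}(\wh{A'}(\CC), \ZZ(g+k-q)))_0$. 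Running the same argument with $\wh{A'}$ in place of $A'$ (also a product of elliptic curves) and using the inversion formula $\mr F_{\wh{A'}} \circ \mr F_{A'} = (-1)^g \cdot [-1_{A'}]^{\ast}$ (which induces an isomorphism on each $\rm H^p(G,-)_0$ by the same argument applied to $[-1_{A'}]$) shows bijectivity.

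The main obstacle I anticipate lies in the topological invariance asserted in the second paragraph: one must verify carefully that a $G$-equivariant homeomorphism $A(\CC) \cong A'(\CC)$ induces a compatible $G$-equivariant identification of $\wh A(\CC)$ with $\wh{A'}(\CC)$ under which $\ch(\ca P_A)$ corresponds to $\ch(\ca P_{A'})$, and that under this identification the filtration $F^\bullet$ and the subgroups $(-)_0$ match. All of these are determined by the $G$-module $\rm H^1(A(\CC),\ZZ)$ together with the cup product and Steenrod operations on $A(\RR)$, which are in turn determined by the $G$-equivariant homotopy type of $A(\CC)$, so the identification is forced; nonetheless this bookkeeping is where the delicate part of the argument sits.
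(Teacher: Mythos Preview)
Your proposal is correct and follows essentially the same approach as the paper: reduce to a product of elliptic curves via the $G$-equivariant homeomorphism (\ref{topologicalstatement}), use Theorem \ref{grabowski} to obtain an algebraic $\Gamma$ with $[\Gamma_\CC]=\ch(\ca P_A)$, and then exploit that the resulting correspondence $\Gamma_\ast$ preserves both the Hochschild--Serre filtration and the topologically distinguished classes, with bijectivity coming from the inversion formula applied to $\wh A$. Your write-up is in fact more explicit than the paper's (which dispatches the argument in three lines by pointing back to Proposition \ref{vanishing}), and your identification of the ``bookkeeping'' in the topological reduction as the only delicate point is apt; the paper simply asserts ``this is a topological statement'' without further comment.
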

\begin{proof}
This is a topological statement, so we may and do assume that $A$ is a product of elliptic curves (see (\ref{topologicalstatement})). In this case, we may lift $\rm H^p(G, \mr F_A)$ to an algebraic homomorphism
\[
\Gamma_\ast^{2k, 2g+2k-2q} \colon F^p\rm H^{2k}_G(A(\CC), \ZZ(k)) \to F^p\rm H^{2g+2k-2q}_G(\wh A(\CC), \ZZ(g+k-q))
\]
as above (see Section \ref{subsec:analysis}). We can do the same thing for $\rm H^p(G, \mr F_{\wh A})$; the lemma follows from arguments similar to those used to prove Proposition \ref{vanishing}. 
\end{proof}

\subsection{Codimension-two cycles on real abelian varieties}

Let us now begin the proof of Theorem \ref{fourierreduction}. The proof consists of two steps: first we prove it for the Jacobian of a real algebraic curve with non-empty real locus, and then we reduce the general case to this particular case. 

\begin{proof}[Proof of Theorem \ref{fourierreduction} (Jacobian case)] Let us prove Theorem \ref{fourierreduction} in the case where $A$ is the Jacobian $J = J(C)$ of a real algebraic curve $C$ of genus $g \in \ZZ_{\geq 1}$ such that $C(\RR) \neq \emptyset$. By Proposition \ref{vanishing}, we have 
\[
F^2\rm H^4_G(J(\CC),\ZZ(2))_0 = \rm H^2(G, \rm H^2(J(\CC),\ZZ(2)))_0
\]
and it remains to prove that $F^2\rm H^4_G(J(\CC),\ZZ(2))_0$ is algebraic. 
By Theorem \ref{grabowski}, 
the Chern character of the Poincar\'e bundle of $J$ in integral Betti cohomology $\rm H^{2\bullet}(J(\CC) \times \wh J(\CC), \ZZ(\bullet))^G$ is algebraic. 
By Section \ref{subsec:analysis} and \cite[Corollary 9.24]{huybrechtsfouriermukai}, 
we obtain a commutative diagram
{\small 
\begin{align}\label{bottom}
\xymatrix{
F^2\rm H^4_G(J(\CC),\ZZ(2))_0\ar@{=}[d]  \ar@{^{(}->}[r]^{\Gamma_\ast^{4,6}} &  F^2\rm H^{2g}_G(\wh J(\CC),\ZZ(g))_0 \ar[d] \ar@{->>}[r]^{\Gamma_\ast^{6,4}} & F^2\rm H^4_G(J(\CC),\ZZ(2))_0\ar@{=}[d]  \\
\rm H^2(G, \rm H^2(J(\CC),\ZZ(2)))_0 \ar[r]^{\sim\hspace{1em}} &  \rm H^2(G, \rm H^{2g-2}(\wh J(\CC),\ZZ(g)))_0 \ar[r]^{\sim} & \rm H^2(G, \rm H^2(J(\CC),\ZZ(2)))_0
}
\end{align}
}
such that the composition on the bottom row of (\ref{bottom}) is the identity. Therefore, the composition on the top row of (\ref{bottom}) is the identity. Since $J$ satisfies the real integral Hodge conjecture for zero-cycles by \cite[Proposition 2.10]{BW20}, the group $F^2\rm H^{2g}_G(J(\CC),\ZZ(g))_0$ -- and hence also the group $F^2\rm H^4_G(J(\CC),\ZZ(2))_0$ -- is algebraic. 
\end{proof}

\begin{proof}[Proof of Theorem \ref{fourierreduction} (general case)]
To prove that $F^2\rm H^{4}_G(A(\CC),\ZZ(2))_0$ is algebraic for a real abelian variety $A$, we would like to reduce to the Jacobian case. The fact that this can be done rests on the following proposition in combination with Lemma \ref{gabber} below. 
\begin{proposition} \label{keyprop}
Let $0 \to B \to J \xrightarrow{f} A \to 0$ be an exact sequence of abelian varieties over $\RR$ such that $f_\RR \colon J(\RR) \to A(\RR)$ is surjective. Let $d = \dim(J)$ and $g = \dim(A)$. Then 
\begin{align*}
& f_\ast \colon F^2\rm H^{2d}_G(J(C)(\CC),\ZZ(d))_0 \to F^2\rm H^{2g}_G(A(\CC),\ZZ(g))_0 \quad\quad \text{ and } \\
 & \hat{f}^\ast \colon F^2\rm H^{4}_G(\wh J(C)(\CC),\ZZ(2))_0 \to F^2\rm H^{4}_G(\wh A(\CC),\ZZ(2))_0
 \end{align*}
are surjective, where $\wh f \colon \wh A \to \wh J$ is the dual homomorphism of $f \colon J \to A$. 
\end{proposition}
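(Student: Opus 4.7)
The plan is to deduce claims (i) and (ii) from one another via the Fourier transform of Section~\ref{subsec:fourier} together with its compatibility with dualized homomorphisms of abelian varieties (\cite[Proposition 3]{beauvillefourier}), and then to establish claim (ii) directly. I will focus on (ii), as (i) follows by the same strategy once the translation through the Fourier transform is carried out in matching cohomological degrees.

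By Proposition~\ref{vanishing}, $F^3 \rm H^4_G(X, \ZZ(2))_0 = 0$ for every real abelian variety $X$, so $F^2 \rm H^4_G(X, \ZZ(2))_0$ embeds into $\rm H^2(G, \rm H^2(X, \ZZ(2)))$ via the Hochschild-Serre edge map, with image equal to the subgroup $\rm H^2(G, \rm H^2(X, \ZZ(2)))_0$ of Definition~\ref{isogenydef:subgroups}. It thus suffices to show that $\wh f^* \colon \rm H^2(G, \rm H^2(\wh J, \ZZ(2)))_0 \to \rm H^2(G, \rm H^2(\wh A, \ZZ(2)))_0$ is surjective. For the underlying map of $G$-modules, observe that on $\rm H^1$ the pullback $\wh f^* \colon \rm H^1(\wh J, \ZZ) \to \rm H^1(\wh A, \ZZ)$ is identified, via the canonical isomorphism $\rm H^1(\wh X, \ZZ) \cong \rm H_1(X, \ZZ)$, with the pushforward $f_* \colon \rm H_1(J, \ZZ) \twoheadrightarrow \rm H_1(A, \ZZ)$. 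The latter is surjective thanks to the short exact sequence of fundamental groups $0 \to \rm H_1(B, \ZZ) \to \rm H_1(J, \ZZ) \to \rm H_1(A, \ZZ) \to 0$ associated with the fibration $B \to J \to A$. Taking second exterior powers (using $\rm H^2 = \wedge^2 \rm H^1$ for abelian varieties and the right-exactness of $\wedge^2$ on surjections of free abelian groups) yields a surjection of $G$-modules $\wh f^* \colon \rm H^2(\wh J, \ZZ(2)) \twoheadrightarrow \rm H^2(\wh A, \ZZ(2))$.

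The passage to surjectivity on $\rm H^2(G, -)$ uses Poincar\'e reducibility over $\RR$: the sequence $0 \to \wh A \to \wh J \to \wh B \to 0$ splits up to isogeny, producing an isogeny $\beta \colon \wh A \times \wh B \to \wh J$ over $\RR$ that restricts to $\wh f$ on $\wh A \times 0$. Under the rational K\"unneth isomorphism for $\wh A \times \wh B$, the map $\wh f^*$ corresponds to the projection onto the $\rm H^2(\wh A, \QQ(2))$-summand, which admits a $G$-equivariant section given by extension by zero. Since the relevant groups $\rm H^2(G, \rm H^2(-, \ZZ(2)))_0$ are $2$-torsion, one treats the prime-to-$2$ and $2$-primary parts separately: after inverting the degree of $\beta$ the splitting is integral, and the $2$-primary part is controlled by choosing $\beta$ of odd degree, which is possible by taking the Poincar\'e complement with respect to a polarization of $\wh J$ whose restriction to $\wh A$ has odd index.

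Finally, the hypothesis that $f_\RR \colon J(\RR) \twoheadrightarrow A(\RR)$ is surjective ensures compatibility with the topological condition (\ref{topologicalcondition}): it induces a surjection $\pi_0(J(\RR)) \twoheadrightarrow \pi_0(A(\RR))$, and the topological condition is verified component-by-component via restriction to real loci, so $\wh f^*$ (respectively $f_*$) maps the subgroups $(-)_0$ into one another. The main technical obstacle lies in the $2$-primary step: arranging $\beta$ to have odd degree may fail in general, and when it does one must analyze the $2$-primary part of the Hochschild-Serre filtration directly using the classification of $\rm H^1$ of a real abelian variety as a $G$-module \cite[Example 3.1]{vanhamel}, reducing the surjectivity to explicit computations of $\rm H^2(G, -)$ on sums of $\ZZ$, $\ZZ(1)$, and $\ZZ[G]$.
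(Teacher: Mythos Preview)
Your approach diverges substantially from the paper's, and it has a genuine gap.

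The paper proves the surjectivity of $f_\ast$ \emph{first}, by a zero-cycle argument: using Lemma~\ref{lemone} it reduces to showing that $F^2\CH_0(J)_{\tors} \to F^2\CH_0(A)_{\tors}$ is surjective, and this is obtained via a snake-lemma argument (diagram~(\ref{snake})) from Lemma~\ref{lemfour}, which shows $\CH_0(J)_{\tors} \to \CH_0(A)_{\tors}$ is surjective. The hypothesis that $f_\RR$ is surjective is used exactly here, to produce enough zero-cycles of degree zero on $A$. Only \emph{after} $f_\ast$ is known to be surjective does the paper invoke the Fourier transform (diagram~(\ref{hugediagram})) to deduce surjectivity of $\hat f^\ast$; this direction works because Proposition~\ref{vanishing} makes the edge map $F^2\rm H^4_G \to \rm H^2(G,\rm H^2)$ an isomorphism on the degree-$4$ side, while on the degree-$2d$ side it is only a surjection.

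Your plan runs in the opposite direction and attempts (ii) directly via Poincar\'e reducibility. The problem is the step you yourself flag: there is no reason the complement isogeny $\beta \colon \wh A \times \wh B' \to \wh J$ can be chosen of odd degree. The degree of $\beta$ is the order of $\wh A \cap \wh B'$, and no choice of polarization on $\wh J$ forces this to be odd in general. Since all the groups $\rm H^2(G,-)_0$ in play are $2$-torsion, this is exactly the case where your argument loses control. Your fallback---``explicit computations of $\rm H^2(G,-)$ on sums of $\ZZ$, $\ZZ(1)$, $\ZZ[G]$''---is not a proof: the subscript $0$ is not determined by the $G$-module structure of $\rm H^2(X(\CC),\ZZ(2))$ alone (it encodes the topological condition on $X(\RR)$), so a classification of the underlying $G$-modules does not settle surjectivity on the $(-)_0$ subgroups.

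A related symptom: you use the hypothesis $f_\RR$ surjective only to argue that $\hat f^\ast$ and $f_\ast$ preserve the $(-)_0$ subgroups, but that preservation is automatic by functoriality \cite[\S1.6.4]{BW20} and \cite[Theorem~1.21]{BW20}, independent of $f_\RR$. If your argument were complete it would prove the proposition without that hypothesis, which should make you suspicious. In the paper the hypothesis is load-bearing (Lemma~\ref{lemfour}).

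Finally, note that even granting (ii), your proposed passage to (i) ``by the same strategy'' is not symmetric: the Fourier diagram (\ref{hugediagram}) transfers surjectivity from the degree-$2d$ side to the degree-$4$ side precisely because $F^3\rm H^4_G(-)_0=0$; there is no corresponding vanishing on the degree-$2d$ side, so the reverse direction does not follow from that diagram.
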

\noindent
To prove Proposition \ref{keyprop}, we will need three lemmas (Lemmas \ref{lemone}, \ref{anotherlemma} and \ref{lemfour} below). We will first state and prove these lemmas, and then use them to prove Proposition \ref{keyprop}. After that, we will state and prove Lemma \ref{gabber}, and then use Proposition \ref{keyprop} and Lemma \ref{gabber} to prove Theorem \ref{fourierreduction}.  

Consider a projective variety $X$, smooth over $\RR$. Define $\CH_0(X)_{\textnormal{tors}}$ to be the group of torsion zero-cycles modulo rational equivalence on $X$, and define $\Lambda_X = \rm H^{2d-1}(X(\CC),\ZZ(d))/(\textnormal{torsion})$. If  
$X$ is endowed with a real point $x \in X(\RR)$, then there is an abelian variety $\Alb(X)$, the Albanese variety of $X$, equipped with a morphism $u \colon (X,x) \to \Alb(X)$ which is initial in the category of morphisms from $(X,x)$ to abelian varieties over $\RR$ \cite[Theorem A.1]{Wittenberg2008OnAT}. 
We have, as $G$-modules: $$\rm H_1(\Alb(X)(\CC), \ZZ) = \rm H_1(X(\CC), \ZZ)/(\textnormal{torsion}) = \Lambda_X.$$
See \cite[Chapter IV, \S1]{silholsurfaces} for the former and \cite[Corollaire 3.1.9]{mangolte} for the latter isomorphism. The map $u$ induces a universal regular homomorphism $\CH_0(X_\CC)_{\hom} \to \Alb(X)(\CC)$, see \cite{murreapplications}, which is $G$-equivariant thus induces a homomorphism $\text{AJ} \colon \CH_0(X)_{\hom} \to \Alb(X)(\RR)$. See also \cite{vanhamelabeljacobi}. 
\begin{lemma} \label{lemone}
Let $X$ be a smooth projective variety of dimension $d$ over $\RR$ with $x \in X(\RR)$. Suppose that the Hochschild-Serre spectral sequence (\ref{hochschild}) degenerates. Define $\CH_0(X)_{\textnormal{tors}}$ and $\Lambda_X$ as above. 
Then the following diagram commutes, and its arrows are surjective:
\begin{align}\label{AJdiagram}
\xymatrixcolsep{5pc}
\xymatrix{
\CH_0(X)_{\textnormal{tors}}\ar@{->>}[d]^{\textnormal{AJ}} \ar@{->>}[rr] && F^1\rm H^{2d}_G(X(\CC), \ZZ(d))_0 \ar@{->>}[d] \\
\Alb(X)(\RR)_{\textnormal{tors}} \ar@{->>}[r] &\pi_0\left(\Alb(X)(\RR)\right)\ar[r]^{\sim\;\;\;\;\;\;} & \rm H^1(G, \Lambda_X). 
}
\end{align}
Here, the horizontal isomorphism on the right of the bottom row is the map induced by the boundary map 
attached to the canonical exact sequence of $G$-modules 
\begin{align}\label{pizero}
0 \to \Lambda_X \to \rm H^{2d-1}(X(\CC), \RR(d)) \to \Alb(X)(\CC) \to 0.
\end{align}
\end{lemma}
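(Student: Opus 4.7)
First I would treat the isomorphism $\pi_0(\Alb(X)(\RR)) \cong \rm H^1(G, \Lambda_X)$ on the bottom right. The real vector space $V := \rm H^{2d-1}(X(\CC),\RR(d)) = \Lambda_X \otimes_\ZZ \RR$ is uniquely divisible as a $G$-module, so $\rm H^i(G,V) = 0$ for all $i \geq 1$. The $G$-cohomology long exact sequence of (\ref{pizero}) therefore reduces to
\[
V^G \to \Alb(X)(\RR) \xrightarrow{\partial} \rm H^1(G,\Lambda_X) \to 0.
\]
The continuous $\RR$-linear map $V^G \to \Alb(X)(\RR)$ has connected image containing the identity, so its image is $\Alb(X)(\RR)^0$, giving the asserted isomorphism. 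Both lower surjections then fall out immediately: $\pi_0(\Alb(X)(\RR))$ is an elementary abelian $2$-group, since for $a$ in any component $C$ the element $2a$ lies in $\Alb(X)(\RR)^0$ and divisibility of the real torus $\Alb(X)(\RR)^0$ lets one write $2a = 2b$ with $b \in \Alb(X)(\RR)^0$, whence $a - b \in \Alb(X)[2](\RR) \cap C$. In particular $\Alb(X)(\RR)_\tors$ surjects onto $\pi_0(\Alb(X)(\RR))$.

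Next I would turn to the Abel--Jacobi surjectivity on torsion. The real Abel--Jacobi map of \cite{vanhamelabeljacobi} is built from the universal $G$-equivariant regular homomorphism $\textnormal{AJ}_\CC : \CH_0(X_\CC)_{\hom} \to \Alb(X)(\CC)$ by taking $G$-invariants. Roitman's theorem identifies $\textnormal{AJ}_\CC$ with an isomorphism on torsion subgroups; a straightforward $G$-cohomology computation, using that the kernel of $\textnormal{AJ}_\CC$ is a divisible $G$-module and hence $\rm H^1(G,-)$-acyclic, then propagates the surjectivity on torsion to the real setting.

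Third, I would check commutativity of the square. Both routes compute the same connecting homomorphism: cycle-class followed by the Hochschild--Serre edge map $F^1 \rm H^{2d}_G \to \rm H^1(G, \rm H^{2d-1}(X(\CC),\ZZ(d))) \to \rm H^1(G,\Lambda_X)$ on one hand, and Abel--Jacobi followed by the $G$-coboundary $\partial$ from (\ref{pizero}) on the other. Identifying the two relies on the Poincar\'e duality isomorphism $\rm H^{2d}(X(\CC),\ZZ(d)) \cong \rm H_0(X(\CC),\ZZ) = \ZZ$ together with the universal property defining $\Alb(X)$. Granted commutativity, the remaining surjection $\CH_0(X)_\tors \twoheadrightarrow F^1 \rm H^{2d}_G(X(\CC), \ZZ(d))_0$ follows by diagram chase: the target is a finite $2$-torsion group (by Hochschild--Serre degeneration and the $|G|$-annihilation of $\rm H^{>0}(G,-)$), the real integral Hodge conjecture for zero-cycles \cite[Proposition 2.10]{BW20} lifts any class to some $z \in \CH_0(X)_{\hom}$, and Roitman's theorem then allows one to correct $z$ within its $\textnormal{AJ}$-fibre by a divisible element to land in $\CH_0(X)_\tors$.

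The hard part will be the commutativity check: although purely cohomological, it requires tracing through compatible injective resolutions and the equivariant Poincar\'e duality identification to match the Hochschild--Serre edge map for $\rm H^{2d}_G(X(\CC),\ZZ(d))$ with the $G$-cohomology boundary $\partial$ coming from the exponential description (\ref{pizero}) of $\Alb(X)(\CC)$; everything else reduces to this compatibility plus standard diagram chases.
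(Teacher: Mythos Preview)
Your overall architecture is reasonable, but two of your surjectivity steps contain genuine gaps, and both stem from trying to bypass the $\QQ/\ZZ$-coefficient bridge that the paper exploits.

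\textbf{Abel--Jacobi surjectivity.} Roitman gives an isomorphism $\CH_0(X_\CC)_{\tors}\cong\Alb(X)(\CC)_{\tors}$, and divisibility of $\Ker(\textnormal{AJ}_\CC)$ indeed yields surjectivity of $\CH_0(X_\CC)_{\hom}^G\to\Alb(X)(\RR)$. But neither statement tells you anything about the map from the \emph{real} Chow group $\CH_0(X)_{\tors}$: the natural map $\CH_0(X)\to\CH_0(X_\CC)^G$ need not be surjective, and your argument never addresses this descent issue. Notice too that you never invoke the Hochschild--Serre degeneration hypothesis here, while the paper explicitly does (via \cite[Corollary~4.5]{vanhamelabeljacobi}); that hypothesis is not decorative.

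\textbf{Top surjectivity.} After lifting $\alpha$ to $z\in\CH_0(X)_{\hom}$ via the real integral Hodge conjecture for zero-cycles, you propose to ``correct $z$ within its $\textnormal{AJ}$-fibre'' to make it torsion. But adding an element $k\in\Ker(\textnormal{AJ})$ does not preserve the equivariant cycle class: the commutativity you established only says that $cl(k)$ has trivial image in $\rm H^1(G,\Lambda_X)$, i.e.\ $cl(k)\in F^2\rm H^{2d}_G$, not that $cl(k)=0$. So the corrected element may have the wrong cycle class. The alternative---showing $\Ker(cl|_{\CH_0(X)_{\hom}})$ is $2$-divisible---is not something Roitman gives you.

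The paper avoids both problems by enlarging the diagram through $\rm H^{2d-1}_G(X(\CC),\QQ/\ZZ(d))_0$: results of Colliot-Th\'el\`ene and Van~Hamel identify $\CH_0(X)_{\tors}$ with this group, the Bockstein to $\rm H^{2d}_G(X(\CC),\ZZ(d))$ recovers the cycle class map (so the top triangle commutes and surjectivity onto $F^1\rm H^{2d}_G(\cdots)_0$ follows from a direct cohomological argument), and the comparison with $\Alb(X)(\RR)_{\tors}$ goes through $\rm H^{2d-1}(X(\CC),\QQ/\ZZ(d))^G$ via Bloch's compatibility of Roitman with the Albanese. The commutativity you flag as ``hard'' is then checked piece by piece on this larger diagram, rather than as a single compatibility.
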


\begin{proof}
We claim that we can complete diagram (\ref{AJdiagram}) in the following way: 
\begin{align}\label{AJdiagramcomplete}
\begin{split}
\xymatrix{
\CH_0(X)_{\textnormal{tors}}\ar[dr]^\sim\ar[rr]\ar[ddd]^{\textnormal{AJ}} && F^1\rm H^{2d}_G(X(\CC), \ZZ(d))_0 \ar[dd] \\
&\rm H^{2d-1}_G(X(\CC), \QQ/\ZZ(d))_0\ar[ur] \ar[d]& \\
&\rm H^{2d-1}(X(\CC), \QQ/\ZZ(d))^G \ar[r]& \rm H^1(G, \Lambda_X)  \\
\Alb(X)(\RR)_{\textnormal{tors}} \ar[ur]^\sim \ar[rr] &&\pi_0\left(\Alb(X)(\RR)\right). \ar@{=}[u]
}
\end{split}
\end{align}
Let us define the groups and arrows occurring in this diagram, prove that each arrow is surjective, and that each quadrilateral and triangle commutes. Observe that this is indeed enough to prove the lemma.  

Let us start with the diagram on the bottom of (\ref{AJdiagramcomplete}). The exact sequence of (sheaves of) $G$-modules
\begin{align}\label{divisbleexact}
0 \to \ZZ(d) \to \QQ(d) \to \QQ/\ZZ(d) \to 0
\end{align}
induces an exact sequence of $G$-modules
\[
0 \to \Lambda_X \to \rm H^{2d-1}(X(\CC), \QQ(d)) \to \rm H^{2d-1}(X(\CC), \QQ/\ZZ(d)) \to 0.
\]
This gives a canonical isomorphism $\Alb(X)(\CC)_\textnormal{tors} \cong \rm H^{2d-1}(X(\CC), \QQ/\ZZ(d))$, and the boundary map 
\[
\Alb(X)(\RR)_\textnormal{tors} \cong \rm H^{2d-1}(X(\CC), \QQ/\ZZ(d))^G \to \rm H^1(G, \Lambda_X)
\]
is surjective because $\rm H^{2d-1}(X(\CC), \QQ(d)) $ has no higher group cohomology. Thus the diagram on the bottom of (\ref{AJdiagramcomplete}) exists, commutes and its arrows are surjective.  

Let us continue with the diagram on the left of (\ref{AJdiagramcomplete}). A canonical isomorphism between $\CH_0(X_\CC)_{\textnormal{tors}}$ and $\rm H^{2d-1}(X(\CC), \QQ/\ZZ(d))$ is constructed in \cite{colliotcyclestorsion}. This map is $G$-equivariant, thus induces an isomorphism $$\CH_0(X_\CC)_{\textnormal{tors}}^G\cong \rm H^{2d-1}(X(\CC), \QQ/\ZZ(d))^G.$$ As shown by Van Hamel \cite{vanhamelabeljacobi}, the composition of the latter with the natural map $\CH_0(X)_{\textnormal{tors}} \to \CH_0(X_\CC)_{\textnormal{tors}}^G$ factors through a composition of morphisms
\begin{align}\label{comp}
\CH_0(X)_{\textnormal{tors}} \to \rm H^{2d-1}_G(X(\CC), \QQ/\ZZ(d)) \to \rm H^{2d-1}(X(\CC), \QQ/\ZZ(d))^G.
\end{align}
On the other hand, the composition 
\[
\CH_0(X)_{\textnormal{tors}} \to \rm H^{2d-1}_G(X(\CC), \QQ/\ZZ(d))  \xrightarrow{\delta} \rm H^{2d}_G(X(\CC),\ZZ(d))
\]
coincides with the cycle class map by \cite[Corollaire 1]{torsiondanslegroupe}, where the map $\delta$ is the boundary map induced by the exact sequence (\ref{divisbleexact}). Therefore, the triangle on the top of (\ref{AJdiagramcomplete}) commutes, where we define $\rm H^{2d-1}_G(X(\CC), \QQ/\ZZ(d))_0$ to be the inverse image of $\rm H^{2d}_G(X(\CC),\ZZ(d))_0$ under this boundary map $\delta$. Note that the image of $\delta$ lies indeed in $F^1\rm H^{2d}_G(X(\CC),\ZZ(d))$, because the composition of $\delta$ with the map $\rm H^{2d}_G(X(\CC),\ZZ(d)) \to \rm H^{2d}(X(\CC),\ZZ(d))$ factors through the following composition, which is zero:
\begin{align*}
 \rm H^{2d-1}_G(X(\CC), \QQ/\ZZ(d)) \to \rm H^{2d-1}(X(\CC), \QQ/\ZZ(d)) \to \rm H^{2d}(X(\CC),\ZZ(d)) = \ZZ. 
\end{align*}
We claim that the thus-obtained map
\[
\rm H^{2d-1}_G(X(\CC),\QQ/\ZZ(d))_0 \to F^1\rm H^{2d}_G(X(\CC),\ZZ(d))_0
\]
is surjective. Indeed, this follows since $\rm H^{2d-1}_G(X(\CC),\QQ/\ZZ(d)) \to F^1\rm H^{2d}_G(X(\CC),\ZZ(d))$ is surjective, which is in turn a consequence of the commutativity of the diagram
\[
\xymatrix{
&\rm H^{2d}(X(\CC) , \ZZ(d))^G \ar[r] & \rm H^{2d}(X(\CC),\QQ(d))^G \\
\rm H^{2d-1}_G(X(\CC),\QQ/\ZZ(d)) \ar[dr]^\delta \ar[r]& \rm H^{2d}_G(X(\CC),\ZZ(d)) \ar[r]\ar[u]& \rm H^{2d}_G(X(\CC), \QQ(d)),  \ar[u]_{\wr} \\
&F^1\rm H^{2d}_G(X(\CC),\ZZ(d)) \ar@{^{(}->}[u]  &
}
\]
and the fact that its middle row and column are exact. 

The image of the first arrow in (\ref{comp}) equals $ \rm H^{2d-1}_G(X(\CC), \QQ/\ZZ(d))_0$ because of the following diagram with exact rows, see \cite[Theorem 3.2]{vanhamelabeljacobi}, \cite[Proposition 1.8]{BW20}:
{\footnotesize
\begin{align*}\label{fundamentalzerosequence}
\xymatrixcolsep{1.5pc}
\xymatrix{
0 \ar[r]& 
\CH_0(X)_{\textnormal{tors}}  \ar@{->>}[d]\ar[r]& 
\rm H^{2d-1}_G(X(\CC),\QQ/\ZZ(d)) \ar[r]\ar@{->>}[d]& 
\bigoplus_{i > 0}\rm H^{d - 2i}(X(\RR), \ZZ/2) \ar[r] \ar@{=}[d]& 0 \\
0 \ar[r]& F^1\rm H^{2d}_G(X(\CC), \ZZ(d))_0 \ar[r]&  F^1\rm H^{2d}_G(X(\CC), \ZZ(d))  \ar[r]& \bigoplus_{\substack{0 \leq p < d \\ p \equiv d \bmod 2 }} \rm H^p(X(\RR),\ZZ/2) \ar[r] & 0.  
}
\end{align*}
}
The Abel-Jacobi map $\text{AJ}$ on the left of (\ref{AJdiagramcomplete}) is surjective by \cite[Corollary 4.5]{vanhamelabeljacobi} 
and our assumption that the Hochschild-Serre spectral sequence (\ref{hochschild}) degenerates. Moreover, the fact that the quadrilateral on the left of (\ref{AJdiagramcomplete}) commutes follows from the commutativity of 
\[
\xymatrix{
\CH_0(X_\CC)_{\textnormal{tors}} \ar[r]\ar[d] &  \rm H^{2d-1}(X(\CC), \QQ/\ZZ(d)) \ar[dl]_\sim \\
\Alb(X)(\CC)_{\textnormal{tors}}&
}
\]
which is \cite[Proposition 3.9]{blochtheoremroitman}. 

To finish the proof, it remains to show that the quadrilateral on the top right of (\ref{AJdiagramcomplete}) commutes. This follows from the fact that the morphism
\[
\rm H \coloneqq \rm H^{2d-1}_G(X(\CC), \QQ/\ZZ(d)) \to \rm H^{2d}_G(X(\CC), \ZZ(d)) \eqqcolon \tilde{\rm H}
\]
shifts the filtrations induced by the Hochschild-Serre spectral sequences by one degree, thereby inducing a commutative diagram
\[
\xymatrix{
0 \ar[r] & F^1\rm H \ar[r] \ar[d]  & F^0\rm H \ar[r] \ar[d]& \rm H^0(G, \rm  H^{2d}(X(\CC), \QQ/\ZZ(d))) \ar[r] \ar[d]& 0 \\
0 \ar[r] & F^2\tilde{\rm H} \ar[r] & F^1 \tilde{\rm H} \ar[r] & \rm H^1(G, \rm  H^{2d-1}(X(\CC), \ZZ(d))) \ar[r] & 0.
}
\]
\end{proof}

\begin{lemma} \label{anotherlemma} Let $f \colon J \to A$ be a surjective homomorphism of real abelian varieties. 
\begin{enumerate}
\item \label{anotherlemma:first}
The induced homomorphism $f_\RR^0 \colon J(\RR)^0 \to A(\RR)^0$ is surjective. 
\item 
Let $B = \Ker(f)$ and assume that $B$ is an abelian variety (equivalently: that $B(\CC)$ is connected). 
The induced homomorphism $J(\RR)^0_{\tors} \to A(\RR)^0_\tors$ is also surjective. 
\end{enumerate}
\end{lemma}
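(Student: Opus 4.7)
\emph{Proof proposal.} The plan for part (1) is straightforward: since $f \colon J \to A$ is a surjective homomorphism of abelian varieties, it is smooth, and its differential $df_0 \colon T_0 J \to T_0 A$ at the origin is surjective. This surjectivity is preserved after taking $G$-invariants, so the induced Lie group homomorphism $f_\RR \colon J(\RR) \to A(\RR)$ is a submersion at $0$. Hence $f_\RR(J(\RR)^0)$ is an open subgroup of the connected real Lie group $A(\RR)^0$ containing $0$, and must therefore equal $A(\RR)^0$.

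For part (2), the plan is to establish a precise structural description of the kernel $K \coloneqq B(\RR) \cap J(\RR)^0 = \Ker(f_\RR^0)$ and then run an explicit divisibility argument. First, since $B(\RR)^0$ is a connected closed Lie subgroup of $J(\RR)$ through $0$ contained in $B(\RR)$, it is contained in $K$, hence $B(\RR)^0 \subseteq K^0$; conversely, $K^0 \subseteq B(\RR)$ is connected and contains $0$, hence sits inside $B(\RR)^0$; thus $K^0 = B(\RR)^0$. Second, $\pi_0(K) = K/K^0 = K/B(\RR)^0$ embeds into $\pi_0(B(\RR))$, which is an elementary abelian $2$-group by the classical description of the connected components of the real locus of a real abelian variety (cf.\ \cite[Section 9]{grossharris} or \cite[Chapter IV]{silholsurfaces}). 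Now given $a \in A(\RR)^0_{\tors}$ of order $n$, I would lift it via part (1) to $j_0 \in J(\RR)^0$; letting $2^r$ be the exponent of the finite $2$-group $\pi_0(K)$, the element $(2^r n) \cdot j_0 = 2^r \cdot (n j_0)$ lies in $K^0 = B(\RR)^0$. Because $B(\RR)^0$ is a real torus, hence a divisible abelian group, there exists $e \in B(\RR)^0$ with $(2^r n) e = (2^r n) j_0$. Setting $j \coloneqq j_0 - e$, one immediately checks that $j \in J(\RR)^0$, that $(2^r n) j = 0$ so $j$ is torsion, and that $f(j) = f(j_0) - f(e) = a$ since $e \in B(\RR) = \Ker(f_\RR)$.

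The key input — and the main technical point — is the combination of the identification $K^0 = B(\RR)^0$ with the $2$-group structure of $\pi_0(K)$. This is exactly where the hypothesis that $B$ is an abelian variety (equivalently, $B(\CC)$ is connected) is used crucially: it guarantees that $B(\RR)^0$ has dimension $\dim B$ and is a divisible real torus, which is what allows one to absorb the ``bad'' disconnected part of $K$ into a torsion correction. If $B$ were merely a finite group scheme, $B(\RR)^0 = 0$ would fail to be divisible and the lifting argument would collapse.
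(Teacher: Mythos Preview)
Your proof is correct. Part~(2) is essentially identical to the paper's argument: lift $a$ to $j_0 \in J(\RR)^0$, multiply by enough to land in $B(\RR)^0$, use divisibility of the real torus $B(\RR)^0$ to correct, and subtract. The paper is slightly more economical, observing directly that $n j_0 \in B(\RR)$ and hence $2n j_0 \in B(\RR)^0$ (since $\pi_0(B(\RR))$ has exponent~$2$, not just $2$-power exponent), so your intermediate analysis of $K = B(\RR)\cap J(\RR)^0$ and its component group is more than strictly needed---but it is correct and clarifies the structure.

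Part~(1) takes a genuinely different route. The paper argues via the norm map $\mathrm{Nm}\colon X(\CC)\to X(\RR)^0$, $x\mapsto x+\sigma(x)$, which is surjective for any real abelian variety $X$ by \cite[Proposition 1.1]{grossharris}; then surjectivity of $f_\RR^0$ follows from surjectivity of $f_\CC$ and the commuting square with the norm maps. Your submersion argument is equally valid and arguably more self-contained: it avoids citing the norm-map surjectivity and instead uses only that a surjective homomorphism of complex tori has surjective differential, that taking $G$-invariants of a surjective $G$-equivariant $\CC$-linear map (with anti-linear $G$-action) yields a surjective $\RR$-linear map, and that an open subgroup of a connected Lie group is the whole group. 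Both approaches are short; the paper's has the virtue of reusing a fact (surjectivity of $\mathrm{Nm}$) that is independently useful in the theory of real abelian varieties.
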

\begin{proof}
\begin{enumerate}
\item 
The diagram
\[
\xymatrix{
J(\CC) \ar[d]^f \ar[r]^{\rm{Nm}} & J(\RR)^0 \ar[d]^{f_\RR^0} \\ 
A(\CC) \ar[r]^{\rm{Nm}} & A(\RR)^0
}
\]
commutes, where $\rm{Nm} \colon X(\CC) \to X(\RR)^0, x \mapsto x + \sigma(x)$ denotes the norm homomorphism of a real abelian variety $X$. By \cite[Proposition 1.1]{grossharris}, this map $\rm{Nm}$ is surjective. Thus $f_\RR^0$ is surjective.
\item Let $x \in A(\RR)^0_\tors$ and suppose that $n \cdot x = 0$ for some $n \in \ZZ_{\geq 2}$. By part \ref{anotherlemma:first}, there exists $y \in J(\RR)^0$ such that $f(y) = x$. We have that $n \cdot y \in B(\RR)$, hence $2n \cdot y \in B(\RR)^0$. Since the abelian group $B(\RR)^0$ is divisible, there exists $z \in B(\RR)^0$ such that $2n \cdot z = 2n \cdot y$. Define $\alpha = y - z \in J(\RR)^0$. Then $f(\alpha) = f(y) = x$. Moreover, $2n \cdot \alpha = 0$. We conclude that $J(\RR)^0_{\tors} \to A(\RR)^0_\tors$ is surjective. 
\end{enumerate}
\end{proof}

\begin{lemma} \label{lemfour}
Consider an exact sequence of real abelian varieties
$
 0 \to B \to J \xrightarrow{f} A \to 0. 
$
If the restricted homomorphism $f \colon J(\RR) \to A(\RR)$ is surjective, then the push-forward
\[
f_\ast \colon \CH_0(J)_{\textnormal{tors}} \to \CH_0(A)_{\textnormal{tors}} \quad\quad \text{ is also surjective.} 
\]
\end{lemma}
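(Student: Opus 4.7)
\emph{Plan.} My plan is to combine the surjectivity of pushforward on Chow groups with the Albanese description of torsion zero-cycles from Lemma~\ref{lemone}, reducing the statement to a cohomological Gysin surjectivity question.

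First I would verify that $f_\ast \colon \CH_0(J) \twoheadrightarrow \CH_0(A)$ is surjective. Every closed point of $A$ lifts to a closed point of $J$ of the same degree over $\RR$: for an $\RR$-point this is the hypothesis on $f_\RR$, while for a conjugate pair of non-real points one picks any preimage in $J(\CC)$, which must itself be non-real. Since $\deg \circ f_\ast = \deg$, this restricts to a surjection $\CH_0(J)_{\textnormal{hom}} \twoheadrightarrow \CH_0(A)_{\textnormal{hom}}$, which contains the torsion subgroups. Next, combining Lemma~\ref{anotherlemma} with the hypothesis and applying the five-lemma to the exact sequences $0 \to X(\RR)^0_{\textnormal{tors}} \to X(\RR)_{\textnormal{tors}} \to \pi_0(X(\RR)) \to 0$ for $X \in \{J,A\}$ (exact because $\pi_0(X(\RR))$ is $2$-torsion for real abelian varieties, so each component meets $X(\RR)[2]$), one obtains a surjection $f_\RR \colon J(\RR)_{\textnormal{tors}} \twoheadrightarrow A(\RR)_{\textnormal{tors}}$. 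Combined with Lemma~\ref{lemone} applied to the abelian varieties $J$ and $A$, this yields the commutative square
\[
\xymatrix{
\CH_0(J)_{\textnormal{tors}} \ar[d]^{f_\ast} \ar@{->>}[r]^{\AJ} & J(\RR)_{\textnormal{tors}} \ar@{->>}[d]^{f_\RR} \\
\CH_0(A)_{\textnormal{tors}} \ar@{->>}[r]^{\AJ} & A(\RR)_{\textnormal{tors}}
}
\]
with surjective horizontal and right-vertical arrows.

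Given $\alpha \in \CH_0(A)_{\textnormal{tors}}$, I would lift $\AJ(\alpha)$ to some $y \in J(\RR)_{\textnormal{tors}}$ and then to $\beta' \in \CH_0(J)_{\textnormal{tors}}$ with $\AJ(\beta') = y$. The difference $\alpha - f_\ast \beta'$ lies in $K_A \coloneqq \ker\bigl(\AJ|_{\CH_0(A)_{\textnormal{tors}}}\bigr)$, so by a snake-lemma argument it suffices to prove that $f_\ast$ restricts to a surjection $K_J \twoheadrightarrow K_A$. This is the main obstacle. Using the canonical isomorphism $\CH_0(X)_{\textnormal{tors}} \cong \rm H^{2d-1}_G(X(\CC), \QQ/\ZZ(d))_0$ established in the proof of Lemma~\ref{lemone}, the groups $K_X$ identify with the Hochschild--Serre pieces $F^1 \rm H^{2d-1}_G(X(\CC), \QQ/\ZZ(d))_0$, and $f_\ast$ corresponds to the Gysin pushforward. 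I would prove surjectivity here by exploiting that $J \to A$ is a principal $B$-bundle with connected fibers: the Leray spectral sequence degenerates (the cohomology of an abelian variety being an exterior algebra on $\rm H^1$), and the fundamental class of the real fiber $i(B) \subset J$ furnishes a splitting of the Gysin map. The delicate point is to check that this splitting respects the topological distinguishedness condition $(-)_0$, which should follow because $i(B)(\RR) \subset J(\RR)$ is itself a real algebraic subvariety, so its Poincar\'e dual is algebraic and automatically satisfies the Steenrod-square compatibility~(\ref{topologicalcondition}).
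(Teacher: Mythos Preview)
Your reduction in steps 1--5 is sound, and the identification $K_X \cong F^1 \rm H^{2d-1}_G(X(\CC),\QQ/\ZZ(d))_0$ in step 6 is correct. The gap is in step 7. The class $[B]$ sits in $\rm H^{2d_A}_G(J(\CC),\ZZ(d_A))$, since $B$ has codimension $d_A$ in $J$; but for the projection formula $f_\ast(f^\ast\alpha \cup \gamma)=\alpha$ to produce a section of the Gysin map on $\rm H^{2d_A-1}_G(A(\CC),\QQ/\ZZ(d_A))$ you need $\gamma$ in degree $2d_B$, i.e.\ the class of a \emph{section} $s(A)\subset J$, not of a fiber. The exact sequence $0\to B\to J\to A\to 0$ need not split over $\RR$, so no such section is available in general. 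Moreover, even granting surjectivity of the ordinary Gysin map on each Hochschild--Serre graded piece (which does follow from $\Lambda_J\twoheadrightarrow\Lambda_A$), surjectivity on $\rm H^p(G,-)$ is obstructed by $\rm H^{p+1}(G,\ker)$, and there is a further issue in passing to the topologically distinguished subgroups $(-)_0$. The Leray degeneration remark does not address either point.

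The paper sidesteps this entirely by using a different exact sequence: $0 \to \CH_0(X)_{\textnormal{tors,div}} \to \CH_0(X)_{\textnormal{tors}} \to \rm A_0(X)^{\textnormal{top}} \to 0$. The divisible piece is identified with $X(\RR)^0_{\textnormal{tors}}$ via van Hamel's Abel--Jacobi isomorphism and handled by Lemma~\ref{anotherlemma}. The finite quotient $\rm A_0(X)^{\textnormal{top}}=\rm A_0(X)/\rm A_0(X)_{\textnormal{div}}$ is handled by the surjectivity of $\rm A_0(J)\to\rm A_0(A)$, which is precisely your step 1 (lifting closed points). So your opening observation was the right ingredient; it needs to be paired with the divisible/non-divisible decomposition rather than the Abel--Jacobi kernel/image decomposition.
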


\begin{proof}
For an abelian group $G$, let $G_{\text{tors}}$ be the torsion subgroup and $G_{\text{tors},\text{div}}$ the maximal divisible torsion subgroup of $G$. Following van Hamel \cite{vanhamelabeljacobi}, we may define, for an algebraic variety $X$ over $\RR$, a group $\rm A_0(X)^{\text{top}}$ as the quotient 
\[
\rm A_0(X)^{\text{top}} = \rm A_0(X)_{\text{tors}} / \rm A_0(X)_{\textnormal{tors}, \textnormal{div}} = \CH_0(X)_{\text{tors}} / \CH_0(X)_{\textnormal{tors}, \textnormal{div}},
\]
where $\rm A_0(X)$ is the group of zero cycles of degree zero modulo rational equivalence on $X$. 

In our case, we obtain a commutative diagram with exact rows:
\[
\xymatrix{
0 \ar[r] & \CH_0(J)_{\textnormal{tors}, \textnormal{div}} \ar[d] \ar[r] & \CH_0(J)_{\text{tors}} \ar[d] \ar[r] & \rm A_0(J)^{\text{top}} \ar[r] \ar[d] & 0 \\
0 \ar[r] & \CH_0(A)_{\textnormal{tors}, \textnormal{div}} \ar[r] & \CH_0(A)_{\text{tors}} \ar[r] & \rm A_0(A)^{\text{top}} \ar[r] & 0. 
}
\]
To prove that the vertical arrow in the middle is surjective, it suffices to prove the surjectivity of the outer two vertical arrows. To prove the surjectivity of the left vertical arrow, one uses the commutativity of the diagram
\[
\xymatrixcolsep{3pc}
\xymatrix{
\CH_0(J)_{\textnormal{tors}, \textnormal{div}} \ar[d] \ar[r]_{\sim}^{\textnormal{AJ}} & J(\RR)^0_{\textnormal{tors}}\ar[d] \\
 \CH_0(A)_{\textnormal{tors}, \textnormal{div}} \ar[r]_{\sim}^{\textnormal{AJ}} & A(\RR)^0_{\textnormal{tors}}
}
\]
and Lemma \ref{anotherlemma}. The two Abel-Jacobi maps appearing in this diagram are isomorphisms by \cite[Corollary 4.2]{vanhamelabeljacobi}. 
\\
\\
It remains to prove that $A_0(J)^{\text{top}} \to A_0(A)^{\text{top}}$ is surjective. Now for any smooth projective variety $X$ over $\RR$, one has by \cite[\S3.2]{vanhamelabeljacobi} that \[
\rm A_0(X)^{\text{top}} = \rm A_0(X) / \rm A_0(X)_{\text{div}}.\]
Thus the desired surjectivity follows from the surjectivity of $\rm A_0(J) \to \rm A_0(A)$. The latter is a consequence of the surjectivity of $J(\CC) \to A(\CC)$ and $J(\RR) \to A(\RR)$, and the fact that for an abelian variety $X$ over a field $k$, the group $\rm A_0(X)$ is generated by zero-cycles of the form $[x] - \deg(k(x_i)/k)\cdot [0]$ for closed points $x$ on $X$. \end{proof}
\noindent
Let us now prove Proposition \ref{keyprop}. We will need some notation. 
\begin{definition}  \label{F2def}
For an abelian variety $A$ of dimension $g$ over $\RR$, define
\begin{align*}
F^2\CH_0(A)_{\textnormal{tors}} &= \Ker \left(  \CH_0(A)_{\textnormal{tors}} \to \rm H^1(G, \rm H^{2g-1}(A(\CC), \ZZ(g)))\right) \\
&= \left\{ \alpha \in \CH_0(A)_{\textnormal{tors}} \mid \textnormal{AJ}(\alpha) \in A(\RR)^0 \right\}.
\end{align*}
\end{definition}
\noindent
For the second equality in Definition \ref{F2def}, see Lemma \ref{lemone}. 

\begin{proof}[Proof of Proposition \ref{keyprop}]
Let $d = \dim(J)$ and $g  = \dim(A)$. Consider the following commutative diagram:
\[
\xymatrix{
F^2\CH_0(J)_{\tors} \ar[r] \ar[d] & F^2\CH_0(A)_{\tors} \ar[d] \\
F^2\rm H^{2d}_G(J(\CC), \ZZ(d))_0 \ar[r] & F^2\rm H^{2g}_G(A(\CC), \ZZ(g))_0. 
}
\]
Its vertical arrows are surjective by Lemma \ref{lemone} and Definition \ref{F2def}. To prove the surjectivity of the lower horizontal arrow, it thus suffices to prove the surjectivity of the upper horizontal arrow. Let this be our first goal.
\\
\\
For a real abelian variety $X$ of dimension $n$ over $\RR$, define $\Lambda_X = \rm H^{2n-1}(X(\CC), \ZZ(n))$. Recall the canonical identification $\pi_0(X(\RR)) = \rm H^1(G, \Lambda_X)$, see the sequence (\ref{pizero}). By Lemma \ref{lemone}, the rows in the following commutative diagram are exact:
\begin{align} \label{snake}
\xymatrix{
0 \ar[r] & F^2\CH_0(J)_{\tors} \ar[r] \ar[d] & \CH_0(J)_{\tors} \ar[r] \ar[d]^\phi& \rm H^1(G, \Lambda_J) \ar[r] \ar[d]^f & 0  \\
0 \ar[r] & F^2\CH_0(A)_{\tors} \ar[r] & \CH_0(A)_{\tors} \ar[r]& \rm H^1(G, \Lambda_A) \ar[r] & 0.  
}
\end{align}
The middle vertical arrow of diagram (\ref{snake}) is surjective by Lemma \ref{lemfour}, and its right vertical arrow is surjective by the surjectivity of $\pi_0(J(\RR)) \to \pi_0(A(\RR))$; the latter follows from the surjectivity of $J(\RR) \to A(\RR)$. 
\\
\\
\emph{Claim \hypertarget{claimone}{1}:} The kernel of $\phi \colon \CH_0(J)_{\tors} \to \CH_0(A)_{\tors}$ surjects onto the kernel of $f \colon \pi_0(J(\RR)) \to \pi_0(A(\RR))$. 

Indeed, this follows from the following commutative diagram, in which the two-headed arrows are surjective:
\begin{align}\label{anotherdiagram}
\xymatrix{
\Ker(\phi) \ar[dd] \ar[rrr] & && \Ker(f)  \ar[dd]  \\
& \CH_0(B)_{\tors}\ar[dl] \ar[ul] \ar@{->>}[r] &  \pi_0(B(\RR)) \ar[dr] \ar@{->>}[ur]&\\
\CH_0(J)_{\tors} \ar[d]^{\phi} \ar[rrr] && & \pi_0(J(\RR))  \ar[d]^f \\
\CH_0(A)_{\tors} \ar[rrr] &&& \pi_0(A(\RR)).
}
\end{align}
The fact that $\CH_0(B)_{\tors} \to \pi_0(B(\RR))$ is surjective follows from Lemma \ref{lemone}. The fact that $\pi_0(B(\RR))$ surjects onto the kernel of $f \colon \pi_0(J(\RR)) \to \pi_0(A(\RR))$ follows from the exact sequence in group cohomology
\begin{align} \label{groupcohomologysequence}
0 \to \Lambda_B^G \to \Lambda_J^G \to \Lambda_A^G \to \rm H^1(G, \Lambda_B) \to \rm H^1(G, \Lambda_J) \to \rm H^1(G, \Lambda_A) \to 0 
\end{align}
arising from the short exact sequence of $G$-modules $0 \to \Lambda_B \to \Lambda_J \to \Lambda_A \to 0$. 
\\
\\
Claim \hyperlink{claimone}{1} implies the surjectivity of the left vertical arrow in (\ref{snake}) by the snake lemma. It remains to prove the surjectivity of the map $\hat{f}^\ast$ in the proposition. \\
\\
\emph{Claim \hypertarget{claimtwo}{2}:} The maps $f_\ast$ and $\hat{f}^\ast$ fit into the following commutative diagram, where the arrows $\twoheadrightarrow$ are surjective and the arrows $\xrightarrow{\sim}$ are isomorphisms:
{\footnotesize
\begin{align} \label{hugediagram}
\xymatrixcolsep{0.3pc}
\xymatrix{
F^2\rm H^{2d}_G(J(\CC), \ZZ(d))_0 \ar@{->>}[dd]^{f_\ast}\ar@{->>}[dr]& & F^2\rm H^4_G(\wh J(\CC), \ZZ(2))_0\ar[dd]^(.35){\hat{f}^\ast}
\ar[dr]^{\sim}& \\
& \rm H^2(G, \rm H^{2d-2}(J(\CC), \ZZ(d)))_0\ar[dd]^{\rm H^2(G, f_\ast)}\ar[rr]_{\rm H^2(G, \mr F_J)\quad\quad\quad}^{\sim\quad\quad\quad\quad} &&\rm H^2(G, \rm H^2(\wh J(\CC), \ZZ(2)))_0\ar[dd]^{\rm H^2(G, \hat{f}^\ast)} \\
F^2\rm H^{2g}_G(A(\CC), \ZZ(g))_0 \ar@{->>}[dr]& & F^2\rm H^4(\wh A(\CC), \ZZ(2))_0 \ar[dr]^{\sim}& \\
& \rm H^2(G, \rm H^{2g-2}(A(\CC), \ZZ(g)))_0 \ar[rr]^\sim_{\rm H^2(G, \mr F_A)}&&\rm H^2(G, \rm H^2(\wh A(\CC), \ZZ(2)))_0. 
}
\end{align}
}
Since the surjectivity of $f_\ast$ on the left of diagram (\ref{hugediagram}) has already been proved, claim \hyperlink{claimtwo}{2} follows from the functoriality of Fourier transforms (see \cite[(3.7.1)]{moonenpolishchuk}), Proposition \ref{vanishing} and Lemma \ref{topologicalcompatibility}. 

We can now finish the proof. Consider the diagram (\ref{hugediagram}). By the commutativity on the left, the morphism 
$
\rm H^2(G, f_\ast)$ 
is surjective. By the commutativity of the square on the front, this implies that $\rm H^2(G, \hat{f}^\ast)$ 
is surjective which, by the commutativity on the right hand side of the diagram, implies that the morphism $\hat{f}^\ast \colon F^2\rm H^4_G(\wh J(\CC), \ZZ(2))_0 \to F^2\rm H^4(\wh A(\CC), \ZZ(2))_0$ is surjective. 
\end{proof}
\begin{lemma} \label{gabber}
Let $A$ be an abelian variety over $\RR$. Then $A$ contains a smooth, proper, geometrically connected curve $C$ over $\RR$ that passes through $0 \in A(\RR)$ in such a way that the following two conditions hold: the induced homomorphism $f \colon J(C) \to A$ is surjective with connected kernel, and the homomorphism $f_\RR \colon J(C)(\RR) \to A(\RR)$ is surjective. 
\end{lemma}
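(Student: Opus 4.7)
The plan is to construct $C$ as a smooth real complete intersection passing through $0$ and through a chosen real point in every connected component of $A(\RR)$, and then to verify the two surjectivity assertions via the Lefschetz hyperplane theorem combined with Lemma \ref{anotherlemma}.

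Let $n=\dim A$ and fix real points $p_0=0, p_1,\dotsc,p_k\in A(\RR)$, one in each connected component of $A(\RR)$. Since $A$ is projective over $\RR$, there is an ample line bundle $\ca L$ on $A$ defined over $\RR$ (arising, for instance, from any polarization, cf.\ Section~\ref{sec:polarizedabelian}). After replacing $\ca L$ by a sufficiently high tensor power, the linear subsystem $W\subset H^0(A,\ca L)$ of sections vanishing at $p_0,\dotsc,p_k$ is still base-point-free away from $\{p_0,\dotsc,p_k\}$ and contains a large space of real sections. The first step is to apply a real Bertini theorem in $W$: for $n-1$ suitably general real sections $s_1,\dotsc,s_{n-1}\in W$, the common zero locus $C=V(s_1,\dotsc,s_{n-1})$ is smooth (smoothness on $A\setminus\{p_0,\dotsc,p_k\}$ from real Bertini; smoothness at the prescribed points from transversality for a sufficiently ample system) and geometrically connected (by iteratively applying Hartshorne's connectedness theorem to ample divisors in the geometrically irreducible $A_\CC$, which remains valid as long as the successive intersections have dimension $\geq 1$). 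By construction $C$ is defined over $\RR$, passes through each $p_i$, and is a smooth proper geometrically connected curve.

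Next, the surjectivity of $f\colon J(C)\to A$ with connected kernel follows from Lefschetz. Iterated application of the Lefschetz hyperplane theorem to the ample complete intersection $C_\CC\subset A_\CC$ yields surjectivity of $\phi\colon H_1(C(\CC),\ZZ)\to H_1(A(\CC),\ZZ)$. Writing $J(C)=V_C/\Lambda_C$ and $A=V_A/\Lambda_A$ with $\Lambda_C=H_1(C(\CC),\ZZ)$ and $\Lambda_A=H_1(A(\CC),\ZZ)$, a direct snake-lemma computation shows that $f$ is surjective (equivalently $\phi_\RR$ is surjective) and $\pi_0(\ker f)\cong\mathrm{coker}(\phi)$; surjectivity of $\phi$ therefore gives both that $f$ is surjective and that $\ker(f)$ is connected.

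Finally, to see that $f_\RR\colon J(C)(\RR)\to A(\RR)$ is surjective, combine two facts. Since $\ker(f)$ is an abelian variety, Lemma~\ref{anotherlemma} yields surjectivity on identity components $J(C)(\RR)^0\to A(\RR)^0$. On the other hand, fixing the Abel--Jacobi embedding $\iota\colon C\hookrightarrow J(C)$ based at $0\in C(\RR)$, each $\iota(p_i)\in J(C)(\RR)$ maps to $p_i\in A(\RR)$ under $f_\RR$, so the image meets every connected component of $A(\RR)$. These two facts together give $f_\RR(J(C)(\RR))=A(\RR)$. The main technical obstacle is the opening existence step, namely a real Bertini theorem with prescribed real base points guaranteeing simultaneously smoothness and geometric connectedness; over $\CC$ this is classical, and over $\RR$ it follows by a discriminant codimension argument in $\mathbb{P}(W_\RR)$, as used in related real algebraic-geometry arguments such as those in \cite{BW20,BW21}.
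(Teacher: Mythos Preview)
Your proposal is correct and follows essentially the same approach as the paper: construct $C$ as a complete intersection through a chosen real point in each component of $A(\RR)$ via Bertini, deduce surjectivity and connectedness of the kernel from the Lefschetz hyperplane theorem on $H_1$, and combine Lemma~\ref{anotherlemma} with the Abel--Jacobi images of the chosen points to obtain surjectivity of $f_\RR$. The paper treats the Bertini step more briskly by noting that $\RR$ is infinite and citing \cite[II, Theorem~8.18]{HAG} and \cite[Footnote~12, p.~32]{debarrebook}, whereas you spell out the passage to high tensor powers and smoothness at the base points; both are fine.
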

\begin{proof}
Let $S \subset A(\RR)$ be a finite set of points containing $0 \in A(\RR)$ and at least one point of each connected component of $A(\RR)$. Since $\RR$ is infinite, Bertini's theorem can be applied: there exists a smooth and geometrically connected hyperplane section $Z \subset A$ passing through $S$ \cite[II, Theorem 8.18]{HAG}, \cite[Footnote 12, page 32]{debarrebook}. Let $g = \dim(A)$. By taking $g-1$ general such hyperplane sections, we get a smooth curve $C$ in $A$ that contains $S$. It remains to prove that $C$ satisfies the requirements stated in the proposition. 

Write $J = J(C)$ and consider the map $f \colon J \to A$ arising from the inclusion $(C,0) \hookrightarrow (A,0)$. By the proof of Theorem 10.1 in \cite{milnejacobian}, the homomorphism $f$ is surjective. We claim that the kernel of $f \colon J_\CC \to A_\CC$ is connected. This follows from the fact that $\rm H_1(C(\CC), \ZZ) \to \rm H_1(A(\CC), \ZZ)$ is surjective by the Lefschetz hyperplane theorem. Alternatively, see \cite[Proposition 2.4]{gabberspacefilling} for an algebraic argument.

We have that $f_\RR^0 \colon J(\RR)^0 \to A(\RR)^0$ is surjective by Lemma \ref{anotherlemma}. Because $S$ is contained in the image of $f_\RR$, we conclude that $f_\RR$ is surjective. 
\end{proof}
\noindent
Let us finish the proof of Theorem \ref{fourierreduction}. Let $A$ be an abelian variety over $\RR$. The group $F^3\rm H^4_G(A(\CC), \ZZ(2))_0$ is trivial by Proposition \ref{vanishing}, so it remains to prove that $F^2\rm H^4_G(A(\CC),\ZZ(2))_0$ is algebraic. Let $C \subset \wh A$ be a real algebraic curve that satisfies the conditions of Lemma \ref{gabber}. By Proposition \ref{keyprop}, the pull-back homomorphism
\[
\hat{f}^\ast \colon F^2\rm H^{4}_G(\wh J(C)(\CC),\ZZ(2))_0 \to F^2\rm H^{4}_G(A(\CC),\ZZ(2))_0
\]
is surjective, where $f \colon J(C) \to \wh A$ is the homomorphism induced by the inclusion of $(C,0)$ in $(\wh A, 0)$. By the proof of Theorem \ref{fourierreduction} in the Jacobian case, we know that $ F^2\rm H^{4}_G(\wh J(C)(\CC),\ZZ(2))_0$ is algebraic. Therefore $F^2\rm H^{4}_G(A(\CC),\ZZ(2))_0$ is algebraic. 
\end{proof}

\subsection{Reduction to the Abel-Jacobi map}
Let us prove Corollary \ref{questionreductioncorollary} and Proposition \ref{abeliansurface}. 

\begin{proof}[Proof of Corollary \ref{questionreductioncorollary}]
This follows Lemma \ref{lemma:important0}, Theorem \ref{theorem1}, equalities (\ref{canonical-HS}),  
filtration (\ref{topologically-distinguished-filtration}) and Theorem \ref{fourierreduction}. 
\end{proof}




\begin{proposition} \label{prop:productsurfacecurve}
Let $B$ be a real abelian surface, and let $E$ be a real elliptic curve whose real locus $E(\RR)$ is connected. The following homomorphism is surjective: 
\begin{align}\label{surjectivityforproducts}
\CH^2(B \times E)_{\hom} \to \rm H^1(G, \rm H^3(B(\CC) \times E(\CC), \ZZ(2))).
\end{align}
In particular, $B \times E$ satisfies the real integral Hodge conjecture, i.e. Proposition \ref{abeliansurface} holds. 
\end{proposition}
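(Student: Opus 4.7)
The plan is to apply Corollary~\ref{questionreductioncorollary} combined with Theorem~\ref{theorem1} to reduce the ``in particular'' statement to the asserted surjectivity, and then to prove the latter by splitting the target via K\"unneth into three $G$-modules, exploiting the connectedness of $E(\RR)$ to kill one of these summands, and realizing the other two by explicit algebraic $1$-cycles on $B \times E$.

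The K\"unneth formula yields an isomorphism of $G$-modules
\[
    \rm H^3(B\times E, \ZZ(2)) \;\cong\; \rm H^1(B, \ZZ(1)) \,\oplus\, \bigl(\rm H^2(B, \ZZ) \otimes \rm H^1(E, \ZZ)\bigr) \,\oplus\, \rm H^3(B, \ZZ),
\]
where $\rm H^2(E, \ZZ) = \ZZ(1)$ has been absorbed into the first summand. Since $E(\RR)$ is connected, $E$ has Silhol type $(1,1)$ (see Definition~\ref{typedefinition}), and a direct computation on a period matrix of this type identifies $\rm H^1(E, \ZZ)$ with the regular $G$-module $\ZZ[G]$. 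Hence for any $G$-module $M$, the tensor product $M \otimes_\ZZ \rm H^1(E, \ZZ) \cong M \otimes_\ZZ \ZZ[G]$ is isomorphic (via $m \otimes g \mapsto g^{-1} m \otimes g$) to the induced $G$-module $\mathrm{Ind}_{\{1\}}^G M$, and therefore cohomologically trivial by Shapiro's lemma; in particular $\rm H^1(G, \rm H^2(B, \ZZ) \otimes \rm H^1(E, \ZZ)) = 0$. Combining this vanishing with the Fourier isomorphism $\rm H^1(B, \ZZ(1)) \cong \rm H^3(\wh B, \ZZ(2))$ from~(\ref{eq:fourier}) and the identification $\rm H^1(G, \Lambda_X) = \pi_0(X(\RR))$ from~(\ref{pizero}), the target reduces to
\[
    \rm H^1(G, \rm H^3(B \times E, \ZZ(2))) \;\cong\; \pi_0(\wh B(\RR)) \,\oplus\, \pi_0(B(\RR)).
\]

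For the summand $\pi_0(B(\RR))$: given $\alpha \in \CH_0(B)_{\hom, \tors}$, the flat pullback $\pi_B^\ast \alpha \in \CH^2(B \times E)_{\hom}$ is bounded by $\gamma \times E$ for any path $\gamma$ in $B(\CC)$ from $0$ to the support of $\alpha$, and a Hodge-theoretic pairing computation using the cup-product duality between the K\"unneth pieces of $\rm H^3(B\times E)$ shows that the real Abel--Jacobi image of $\pi_B^\ast \alpha$ projects to $\AJ_B(\alpha)$ in the $\pi_0(B(\RR))$-summand; surjectivity follows from Lemma~\ref{lemone} applied to $B$. For the summand $\pi_0(\wh B(\RR))$: fix a real point $e_0 \in E(\RR)$, and for each real line bundle $L \in \wh B(\RR) = \Pic^0(B)(\RR)$ choose a real divisor $D$ on $B$ with $\mathcal O_B(D) \cong L$. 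Since $[D] = 0$ in $\rm H^2(B, \ZZ)$, the $1$-cycle $D \times \{e_0\} \in \CH^2(B \times E)_{\hom}$ is bounded by $\Sigma \times \{e_0\}$ where $\Sigma$ is a $3$-chain in $B(\CC)$ with $\partial \Sigma = D$, and integration against $\omega \in F^2 \cap \rm H^3(B) = \rm H^{2,1}(B) \subset F^2 \rm H^3(B \times E, \CC)$ recovers the de-Rham description of the Abel--Jacobi class for divisors on the surface $B$, giving $\AJ(D \times \{e_0\}) = [L] \in \Pic^0(B) = \wh B$ in the $\pi_0(\wh B(\RR))$-summand; hence the induced map $\wh B(\RR) \to \pi_0(\wh B(\RR))$ is the tautological surjection.

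The hardest part is verifying the two Hodge-theoretic computations of the real Abel--Jacobi images above --- in particular, identifying piece~(i) of the intermediate Jacobian $\rm J^2(B\times E)$ with $\wh B$ via the Poincar\'e duality $\rm H^{2,1}(B) \leftrightarrow \rm H^{0,1}(B)$ on the surface $B$, and checking $G$-equivariance throughout so as to obtain the claimed real maps. A secondary verification is the equality $\rm H^1(G, \rm H^3(B\times E, \ZZ(2)))_0 = \rm H^1(G, \rm H^3(B\times E, \ZZ(2)))$, which upgrades the conclusion of Corollary~\ref{questionreductioncorollary} to the full surjectivity stated in the proposition; this reduces to a topological computation of mod-$2$ Steenrod squares on $B(\RR) \times E(\RR)$ using the explicit form $\rm H^1(E, \ZZ) \cong \ZZ[G]$ and the connectedness of $E(\RR)$.
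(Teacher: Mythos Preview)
Your approach is correct and uses the same algebraic cycles as the paper: pullbacks $\alpha \times E$ of torsion zero-cycles on $B$, and products $D \times \{e_0\}$ of homologically trivial divisors on $B$ with a real point of $E$. The K\"unneth decomposition and the vanishing of the middle summand via $\rm H^1(E,\ZZ) \cong \ZZ[G]$ also match the paper exactly. The difference lies in how the two surjectivities are verified. The paper avoids all transcendental computation by assembling a commutative diagram (diagram~(\ref{equiv:important})) in which the exterior product $\CH^2(B)_{\tors} \oplus (\CH^1(B)_{\hom} \otimes \CH^1(E)) \to \CH^2(B\times E)_{\hom}$ is compared, via functoriality of the equivariant cycle class map, to the K\"unneth isomorphism on the target; surjectivity then reduces to two already-established facts --- Lemma~\ref{lemone} for the zero-cycle summand, and the real Lefschetz $(1,1)$ theorem (together with the triviality of the topological condition~(\ref{topologicalcondition}) for $k=1$) for the divisor summand. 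Your proposed chain-level Abel--Jacobi computations would yield the same conclusion but are, as you yourself flag, harder to carry out rigorously; the paper's functorial route sidesteps them entirely. Your detour through the Fourier transform to identify $\rm H^1(G, \rm H^1(B, \ZZ(1)))$ with $\pi_0(\wh B(\RR))$ is also correct but unnecessary: this is just $\pi_0(\Pic^0(B)(\RR))$ via the exponential sequence.

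Your final paragraph is confused. Once your explicit cycles are shown to hit all of $\pi_0(\wh B(\RR)) \oplus \pi_0(B(\RR)) \cong \rm H^1(G, \rm H^3(B\times E, \ZZ(2)))$, the proposition is proved; the equality with the $(\,)_0$-subgroup is then a \emph{consequence} of the surjectivity, not a prerequisite for it. Indeed, the paper records this equality only afterwards, in the corollary following the proposition. No Steenrod-square computation on $B(\RR) \times E(\RR)$ is needed.
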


\begin{proof}
First observe that 
\[
\rm H^1(G, \rm H^2(B(\CC), \ZZ) \otimes \rm H^1(E(\CC), \ZZ)) = (0)
\]
because $\rm H^1(E(\CC), \ZZ) \cong \ZZ[G]$ by our hypothesis that $E(\RR)$ is connected \cite[Chapter IV, Example 3.1]{vanhamel}. By the K\"unneth formula, the canonical morphism 
\begin{align} \label{equiv:kunneth}
\begin{split}
\rm H^1(G, \rm H^3(B(\CC), \ZZ(2)) \;\; &\bigoplus \;\; \rm H^1(G, \rm H^1(B(\CC), \ZZ(1)) \otimes \rm H^2(E(\CC), \ZZ(1))) \\
  & \to \;\; \rm H^1(G, \rm H^3(B(\CC) \times E(\CC), \ZZ(2)))
\end{split}
\end{align}
is therefore an isomorphism. Since $\rm H^2(E(\CC), \ZZ(1)) \cong \ZZ$ as $G$-modules, the canonical map
\begin{align} \label{equiv:gmodstructure}
\begin{split}
\rm H^1(G, \rm H^1(B(\CC), \ZZ(1))) \;\; &\bigotimes \;\;  \rm H^0(G, \rm H^2(E(\CC), \ZZ(1))) \\
  & \to \;\;  \rm H^1(G, \rm H^1(B(\CC), \ZZ(1)) \otimes \rm H^2(E(\CC), \ZZ(1)))
  \end{split}
\end{align}
is an isomorphism as well. To simplify notation, for any abelian variety $X$ over $\RR$, define $$\rm H^i_X(j) = \rm H^i(X(\CC), \ZZ(j)).$$ 
Consider the following commutative diagram: 
{\footnotesize
\begin{align} \label{equiv:important}
\xymatrix{
\CH^2(B)_{\tors} \bigoplus  \left(\CH^1(B)_{\hom} \otimes \CH^1(E) \right) \ar[r] \ar[d] & \CH^2(B \times E)_{\hom} \ar[d] \\
F^1\rm H^4_G(B(\CC), \ZZ(2)) \bigoplus  \left(F^1\rm H^2_G(B(\CC), \ZZ(1)) \otimes \rm H^2_G(E(\CC), \ZZ(1)) \right) \ar[r]  \ar[d]& F^1\rm H^4_G(B(\CC) \times E(\CC), \ZZ(2)) \ar[dd] \\
\rm H^1(G, \rm H^3_B(2)) \bigoplus \left( \rm H^1(G, \rm H^1_B(1)) \otimes \rm H^0(G, \rm H^2_E(1)) \right) \ar[d]^{\wr} &  \\
\rm H^1(G, \rm H^3_B(2)) \bigoplus \rm H^1(G, \rm H^1_B(1) \otimes \rm H^2_E(1)) \ar[r]^{\hspace{10mm} \sim} & \rm H^1(G, \rm H^3_{B \times E}(2)). 
}
\end{align}
}
The indicated isomorphisms $\xrightarrow{\sim}$ in diagram (\ref{equiv:important}) arise from the isomorphisms (\ref{equiv:kunneth}) and (\ref{equiv:gmodstructure}) above. The map $\CH^2(B)_{\tors} \to \rm H^1(G, \rm H^3_B(2))$ is surjective by Lemma \ref{lemone}, and the map $\CH^1(B)_{\hom} \to \rm H^1(G, \rm H^1_B(1))$ is surjective by the real integral Hodge conjecture for divisors (see Section \ref{subsec:divisors}) and the fact that the topological condition for degree two cohomology classes is trivial \cite[\S2.3.1]{BW20}. Therefore, the vertical composition on the left of (\ref{equiv:important}) is surjective, which implies that the vertical composition on the right -- which is  (\ref{surjectivityforproducts}) -- is surjective. By Corollary \ref{questionreductioncorollary}, this implies that $B \times E$ satisfies the real integral Hodge conjecture. 
\end{proof}
\noindent
The topological condition (\ref{topologicalcondition}) does not appear on the right hand side of (\ref{surjectivityforproducts}). 
This has the following corollary. To state it, recall the following fact. 
For a real abelian variety $A$ of dimension $g$, one has $\va{\pi_0(A(\RR))} \leq 2^g$ (see \cite[\S1]{grossharris}) and for every non-negative pair of integers $(i,g)$ with $i \leq g$, there is a real abelian variety $A$ of dimension $g$ such that $\va{\pi_0(A(\RR))} = 2^i$ (take a suitable product of elliptic curves). In particular, if $g = \dim(A) = 3$, then $\va{\pi_0(A(\RR))} \in \{1,2,4,8\}$. 

\begin{corollary}
Let $A$ be a real abelian threefold, and suppose that $\va{\pi_0(A(\RR))}  \neq 8$. Then the canonical map 
\begin{align} \label{AJsurjectivity}
F^1\rm H^4_G(A(\CC), \ZZ(2))_0 \to \rm H^1(G, \rm H^3(A(\CC), \ZZ(2)))
\end{align}
is surjective. Thus, $A$ satisfies the real integral Hodge conjecture if and only if the Abel-Jacobi map $$\CH_1(A)_{\hom} \to \rm H^1(G, \rm H^3(A(\CC), \ZZ(2)))$$ is surjective. 
\end{corollary}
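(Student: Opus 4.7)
The second assertion of the corollary follows from the first together with Corollary~\ref{questionreductioncorollary}: once we know that $F^1\rm H^4_G(A(\CC), \ZZ(2))_0 \to \rm H^1(G, \rm H^3(A(\CC), \ZZ(2)))$ is surjective, the subgroup $\rm H^1(G, \rm H^3(A(\CC), \ZZ(2)))_0$ coincides with the full target (by its definition in Section~\ref{subsec:analysis} as the image of $F^1\rm H^4_G(A(\CC), \ZZ(2))_0$), and the criterion of Corollary~\ref{questionreductioncorollary} reformulates exactly as surjectivity of the Abel--Jacobi map onto $\rm H^1(G, \rm H^3(A(\CC), \ZZ(2)))$. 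I therefore focus on proving the first assertion, that is, the equality $\rm H^1(G, \rm H^3(A(\CC), \ZZ(2)))_0 = \rm H^1(G, \rm H^3(A(\CC), \ZZ(2)))$.

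The plan is to deduce this equality from a case where Proposition~\ref{prop:productsurfacecurve} applies, exploiting that the statement is purely about the $G$-equivariant homotopy type of $A(\CC)$. Indeed, for threefolds the topological condition cutting out $\rm H^4_G(A(\CC), \ZZ(2))_0$ inside $\rm H^4_G(A(\CC), \ZZ(2))$ reduces to the vanishing of the restriction $\phi_0$ to $\rm H^0(A(\RR), \ZZ/2)$ (Section~\ref{subsec:threefolds}), while the Hodge part of Definition~\ref{def:BW} is automatic for the torsion classes in $F^1\rm H^4_G$, since the integral cohomology of an abelian variety is torsion-free. As recalled in the proof of Proposition~\ref{vanishing}, any real abelian threefold $A$ admits a $G$-equivariant homeomorphism $A(\CC) \cong E_1(\CC) \times E_2(\CC) \times E_3(\CC)$ for real elliptic curves $E_i$, each $E_i(\RR)$ being either one circle or two circles. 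One has $|\pi_0(A(\RR))| = 2^c$ with $c$ the number of factors having two real components, so the hypothesis $|\pi_0(A(\RR))| \neq 8$ forces $c \leq 2$, and after reordering I may assume $E_3(\RR)$ is connected.

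Setting $A' = (E_1 \times E_2) \times E_3$, which is a real abelian threefold of the form $B \times E$ with $B$ a real abelian surface and $E$ an elliptic curve with $E(\RR)$ connected, Proposition~\ref{prop:productsurfacecurve} gives the surjectivity of the Abel--Jacobi map $\CH_1(A')_{\hom} \to \rm H^1(G, \rm H^3(A'(\CC), \ZZ(2)))$ onto the entire target group. Since algebraic one-cycle classes are topologically distinguished and lie in $F^1\rm H^4_G(A'(\CC), \ZZ(2))_0$, this yields the equality $\rm H^1(G, \rm H^3(A'(\CC), \ZZ(2)))_0 = \rm H^1(G, \rm H^3(A'(\CC), \ZZ(2)))$, and transporting it along the $G$-equivariant homeomorphism $A(\CC) \cong A'(\CC)$ concludes the proof. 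The principal obstacle is carefully verifying this topological-invariance transport: one must check that $\rm H^1(G, \rm H^3(-, \ZZ(2)))_0$ is a functor of the $G$-equivariant homotopy type. I expect this to be essentially formal from the definitions, since all ingredients (equivariant cohomology, the Hochschild--Serre filtration, the mod-$2$ reduction and restriction to the real locus defining $\phi_0$) depend only on $A(\CC)$ as a $G$-space.
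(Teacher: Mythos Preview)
Your proof is correct and follows essentially the same approach as the paper. The paper's proof is more terse: it simply observes that the surjectivity of (\ref{AJsurjectivity}) depends only on $A(\CC)$ as a $G$-space, replaces $A$ by a product $B \times E$ with $E(\RR)$ connected via (\ref{topologicalstatement}), invokes Proposition~\ref{prop:productsurfacecurve}, and deduces the second assertion from Corollary~\ref{questionreductioncorollary} --- your argument fills in the same steps with more detail (in particular the count $|\pi_0(A(\RR))| = 2^c$ and the explicit discussion of why the statement is topological).
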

\begin{proof}
To prove the surjectivity of (\ref{AJsurjectivity}), we may replace $A(\CC)$ by any differentiable $G$-manifold which is $G$-equivariantly diffeomorphic to $A(\CC)$. In particular, we may assume that 
$A = B \times E$, where $B$ is an abelian surface and $E$ an elliptic curve whose real locus is connected (see (\ref{topologicalstatement})). The surjectivity of (\ref{AJsurjectivity}) follows then from the fact that the group $\rm H^1(G, \rm H^3(B(\CC) \times E(\CC), \ZZ(2)))$ is algebraic by Proposition \ref{prop:productsurfacecurve}. The second statement follows from the first, due to Corollary \ref{questionreductioncorollary}. 
\end{proof}
%

\cleardoublepage
\part{Summary in French}
\cleardoublepage
\chapter{R\'esum\'e en fran\c{c}ais}
\label{ch:francais}

\addtocontents{toc}{\protect\setcounter{tocdepth}{0}}
\section{Introduction} 
\addtocontents{toc}{\protect\setcounter{tocdepth}{1}}

Cette thèse porte sur l'étude des cycles alg\'ebriques et des espaces de modules en géométrie algébrique réelle. Nous esp\'erons que le pr\'esent manuscrit puisse apporter une contribution à la compréhension de ces concepts, tant pris individuellement que dans leurs interactions mutuelles. Afin de mener ce projet \`a bien, nous \'etudions au fil de ce texte les ph\'enom\`enes suivants : 

\begin{itemize}
\item La structure hyperbolique de l'espace de modules des quintiques binaires r\'eelles.
\item La distribution des variétés abéliennes non-simples dans une famille de variétés abéliennes réelles.
\item Courbes algébriques sur des variétés abéliennes complexes et réelles. 
\end{itemize}

\noindent
Comme nous allons le voir, les résultats ainsi que les méthodes de ces diff\'erents sujets sont étroitement liés. Un outil omnipr\'esente tout au long de ce projet est la \emph{théorie de Hodge équivariante} : l'utilisation de la théorie de Hodge pour étudier la cohomologie, les modules et les cycles des variétés algébriques réelles. 

\addtocontents{toc}{\protect\setcounter{tocdepth}{0}}
\section{R\'esum\'e du Chapitre \ref{ch:realmodulispaces}: Espaces de modules r\'eelles} \label{realmoduli-fr}
\addtocontents{toc}{\protect\setcounter{tocdepth}{1}}

Notre premier résultat fournit une méthode générale de construction des espaces de modules en géométrie algébrique réelle. Dans le cadre classique de la géométrie complexe, l'espace de modules d'une catégorie donn\'ee d'objets algébriques peut souvent être construit en appliquant le théorème de Keel-Mori au champ de modules correspondant. Par exemple, cela fonctionne lorsqu'un tel champ est séparé et de Deligne-Mumford sur $\CC$. Pour un champ de modules défini sur les réels, il est \emph{a priori} moins ais\'e d'obtenir une structure analytique sur l'ensemble des classes d'isomorphisme de son lieu réel, cet ensemble n'\'etant en g\'en\'eral pas en bijection avec le lieu réel de l'espace de modules grossier. Pourtant, nous parvenons \`a \'etablir le résultat suivant.

\begin{theoreme} \label{fromstacktoorbifold-fr}
Soit $\mr X$ un champ algébrique localement de type fini sur $\RR$. Il existe une topologie sur l'ensemble des classes d'isomorphisme $\va{\mr X(\RR)}$ du lieu réel de $\mr X$, fonctorielle en $\mr X$ et qui coincide avec la topologie analytique réelle lorsque $\mr X$ est un schéma. En outre, si $\mr X$ est lisse et de Deligne-Mumford sur $\RR$, l'espace $\va{\mr X(\RR)}$ porte une structure d'orbifold fonctorielle en $\mr X$. 
\end{theoreme}
\noindent
Comment cette approche se compare-t-elle aux m\'ethodes plus classiques pour construire des espaces de modules en géométrie algébrique réelle ? Consid\'erons deux cas particuliers : soit $\ca A_g$ le champ des variétés abéliennes principalement polarisées de dimension $g$, et soit $\ca M_g$ le champ des courbes propres, lisses et géométriquement connexes de genre $g$.  
Il est bien connu qu'il existe des uniformisations analytiques complexes 
\[
\ca A_g(\CC) \cong \Sp_{2g}(\ZZ) \setminus \bb H_g, \quad \textnormal{ et } \quad \ca M_g(\CC) \cong \Gamma_g \setminus \ca T_g.\] Ici, $\bb H_g$ (resp. $\ca T_g$) est une vari\'et\'e complexe param\'etrant les vari\'et\'es ab\'eliennes complexes (resp. des courbes complexes) dot\'ees d'une certaine structure suppl\'ementaire; $\Sp_{2g}(\ZZ)$ (resp. $\Gamma_g$) est un groupe discret agissant holomorphiquement et proprement discontinûment sur $\bb H_g$ (resp. $\ca T_g$) en permutant lesdites structures suppl\'ementaires; enfin, l'application 
$\bb H_g \to \ca A_g(\CC)$ (resp. $\ca T_g \to \ca M_g(\CC)$) est obtenue en oubliant ces structures. 

Pour obtenir des espaces de modules r\'eels pour ces probl\`emes de modules, on d\'efinit des ensembles d'involutions anti-holomorphes \[\{\sigma \colon \bb H_g \to \bb H_g\} \quad \textnormal{ et } \quad \{\tau \colon \ca T_g \to \ca T_g\}\] de sorte que toute vari\'et\'e ab\'elienne r\'eelle principalement polari\'ee de dimension $g$ (resp. toute courbe r\'eelle de genre $g$) se trouve dans $\bb H_g^\sigma$ (resp. $\ca T_g^\tau$) pour un certain $\sigma$ (resp. $\tau$). Pour des sous-groupes $\Sp_{2g}(\ZZ)(\sigma) \subset \Sp_{2g}(\ZZ)$ et $ \Gamma_g(\tau) \subset \Gamma_g$ d\'efinis de mani\`ere appropri\'ee, on obtient des bijections (voir \cite{grossharris, silholsurfaces, seppalasilhol2}):
\begin{align}\label{analytictopology-fr}
\ca A_g(\RR) \cong \bigsqcup_\sigma \Sp_{2g}(\ZZ)(\sigma) \setminus \bb H_g^\sigma \quad \textnormal{ and } \quad \ca M_g(\RR) \cong \bigsqcup_\tau \Gamma_g(\tau) \setminus \ca T_g^\tau.
\end{align}
Comme dans le cas complexe, il se trouve que ces applications des p\'eriodes sont des hom\'eomorphismes. 
\begin{theoreme}
Les bijections dans (\ref{analytictopology-fr}) sont des homéomorphismes, o\`u les ensembles $\va{\ca A_g(\RR)}$ et $\va{\ca M_g(\RR)}$ sont munis de la topologie donn\'ee par le Théorème \ref{fromstacktoorbifold-fr}. 
\end{theoreme}

\addtocontents{toc}{\protect\setcounter{tocdepth}{0}}
\section{R\'esum\'e du Chapitre \ref{ch:density}: Lieux de Noether-Lefschetz r\'eels}\label{intro:sub:realabeliansubvarietiesinfamily-fr}
\addtocontents{toc}{\protect\setcounter{tocdepth}{1}}

Dans le Chapitre \ref{ch:density}, nous d\'eveloppons un certain crit\`ere de densit\'e, analogue au crit\`ere de Green--Voisin pour les lieux de Noether-Lefschetz d'une variation de structure de Hodge de poids deux (voir \cite[\S17.3.4]{voisin}). Pour formuler ce r\'esultat et le comparer au crit\`ere bien connu de Green--Voisin, on consid\`ere une famille holomorphe de variétés abéliennes complexes polarisées
\begin{align} \label{NLfamily-fr}
\left( \psi \colon A \to B, \quad s \colon B \to A, \quad E \in R^2\psi_\ast\ZZ \right).
\end{align} 
\noindent
C'est-à-dire que $\psi$ est une submersion propre holomorphe, $s$ une section holomorphe de $\psi$ et on suppose que pour chaque $t \in B$, la fibre $A_t = \psi^{-1}(t)$ est une variété abélienne complexe polaris\'ee d'origine $s(t)$ et de polarisation $E_t \in \rm H^2(A_t, \ZZ)$. On suppose que $B$ est connexe. Pour chaque entier positif $k < g$, notons $S_k \subset B$ l'ensemble des $t \in B$ tels que $A_t$ contient une sous-vari\'eti\'e ab\'elienne complexe de dimension $k$. Le résultat principal de \cite{Colombo1990} est le suivant : 

\begin{theoreme-non}[Colombo--Pirola] \label{CPth-fr}
Si la Condition \ref{criterion} du Chapitre \ref{ch:density} est vérifiée, alors $S_k$ est dense pour la topologie euclidienne dans $B$. 
\end{theoreme-non}

\noindent
Supposons \`a notre tour qu'il existe des involutions anti-holomorphes $\sigma \colon B \to B$ et $\tau \colon A \to A$, qui commutent avec $\psi$ et $s$ et qui sont compatibles \`a la polarisation $E$. 
En posant
\[
R_k= \set{t \in B(\RR) \mid A_t \textnormal{ contient une sous-vari\'et\'e ab\'elienne r\'eelle de dimension $k$}},
\]
nous obtenons le r\'esultat suivant : 

\begin{theoreme}\label{intro:NLtheorem-fr}
Si la Condition \ref{criterion} du Chapitre \ref{ch:density} est v\'erif\'ee, alors $R_k$ est dense pour la topologie euclidienne dans $B(\RR)$. 
\end{theoreme}

\noindent
Il est intéressant de comparer ces th\'eor\`emes avec des résultats analogues pour les lieux de Noether-Lefschetz des hypersurfaces dans $\PP^3$. Fixons $d \geq 4$ un entier et consid\'erons l'hypersurface lisse universelle $\PP^3 \times \mr B \supset \mr S \to \mr B$ de degr\'e $d$. Classiquement, le \emph{lieu de Noether-Lefschetz} est d\'efini comme l'ensemble des $t \in \mr B(\CC)$ tel que $\mr S_t$ contient une courbe qui n'est pas une intersection complète. Dans ce cas, le résultat principal de \cite{NLlocus} assure que ce lieu est dense pour la topologie euclidenne dans $\mr B(\CC)$, bien que son intérieur soit vide \cite{MR0033557}. La situation sur $\RR$ s'avère être plus délicate que la situation sur $\CC$. Par analogie avec ce qui précède, on peut définir le \textit{lieu de Noether-Lefschetz r\'eel} comme l'ensemble des $t \in \mr B(\RR)$ tel que $\Pic(\mr S_t) \not \cong \ZZ$. En contraste avec la situation complexe, il existe pour $d=4$ une composante connexe $K$ de $\mr B(\RR)$ dont l'intersection avec le lieu de Noether-Lefschetz réel est vide. 
Il existe un critère de Green--Voisin sur les réels \cite [Proposition 1.1]{benoistttt}, mais son hypothèse est plus compliquée et il ne s'applique qu'à une composante connexe de $\mr B(\RR)$ à la fois. 

Il est remarquable que pour le probl\`eme analogue pour les vari\'et\'es ab\'eliennes, aucune de ces difficult\'es n'apparaît. 
Les applications du Théorème \ref{intro:NLtheorem-fr} sont donc pléthores : 



\begin{theoreme} \label{abeliancyclecorollari-fr} Soient $k$ et $g$ deux entiers positifs avec $k < g$. 
\begin{enumerate}
\item
Les points dans $\va{\ca A_g(\RR)}$ correspondant \`a des variétés abéliennes r\'eelles contenant une sous-variété abélienne r\'eelle de dimension $k$ sont denses dans $\va{\ca A_g(\RR)}$. 
\item Si $ k  \leq 3 \leq g$, les points dans $\va{\ca M_g(\RR)}$ correspondant \`a des courbes alg\'ebriques r\'eelles $C$ admettant un morphisme $f \colon C \to A$ vers une vari\'et\'e ab\'elienne $A$ de dimension $k$ tel que $f(C(\CC))$ engendre le groupe $A(\CC)$ sont denses dans $\va{\ca M_g(\RR)}$. 
\item Soit $V \subset \bb P\rm H^0(\bb P^2_{{\bb R}}, \OO_{\bb P^2_{{\bb R}}}(d))$ l'espace des courbes planes r\'eelles lisses de degr\'e $d$. Les points $t \in V$ tels que la courbe correspondante $C_t$ admet un morphisme non constant $C_t \to E$ vers une courbe elliptique réelle $E$ sont denses dans $V$. 
\end{enumerate}
\end{theoreme}

\addtocontents{toc}{\protect\setcounter{tocdepth}{0}}
\section{R\'esum\'e du Chapitre \ref{ch:glueing}:  Recoller des quotients de la boule} \label{recollement-fr}
\addtocontents{toc}{\protect\setcounter{tocdepth}{1}}

On verra plus tard (Section \ref{intro:sub:fivepoints-fr}) que chaque composante connexe de l'espace de modules des quintiques binaires r\'eelles lisses est isomorphe à un sous-ensemble ouvert d'un quotient du plan hyperbolique réel par un groupe discret d'isométries. En fait, il se trouve que ces isomorphismes s'étendent en un isomorphisme entre l'espace de modules réel des quintiques binaires stables et le quotient du plan hyperbolique réel par un groupe de triangles non arithmétique (voir le Théorème \ref{th:theorem02-fr} ci-dessous). Pour \'etablir un tel r\'esultat, nous développons une méthode générale de recollement des espaces quotients hyperboliques réels dans le Chapitre \ref{ch:glueing}, s'appuyant sur les travaux de Allcock, Carlson et Toledo \cite{realACTnonarithmetic, realACTsurfaces}. 
Les variétés unitaires de Shimura fournissent un cadre approprié pour ces techniques de collage ; nous expliquons ici sommairement et dans un langage plus \'el\'ementaire comment cela fonctionne. 

Soit $K$ un corps CM de degré $2g$ sur $\QQ$ d'anneau des entiers $\OO_K$. Soit $\Lambda$ un $\OO_K$-module libre de type fini, muni d'une forme hermitienne $h : \Lambda \times \Lambda \to \OO_K$. Supposons que la signature de $h$ soit $(n,1)$ par rapport à un plongement $\tau \colon K \to \CC$, et que $h$ soit définie par rapport aux autres places infinies de $K$. Soit $\CC H^n$ l'espace des droites négatives dans $\Lambda \otimes_{\OO_K, \tau} \CC$ et $P\Gamma = \Aut(\Lambda, h) / \mu_K$ où $\mu_K \subset \OO_K^\ast$ le groupe des unités finies de $K$. Notons $P\mr A$ le quotient de l'ensemble des involutions anti-unitaires $\Lambda \to \Lambda$ par $\mu_K$, et $\mr H = \cup_{h(r,r) = 1} \langle r_\CC \rangle ^\perp\subset \CC H^n$, o\`u $\langle r_\CC \rangle ^\perp$ est l'ensemble $\set{x \in \CCH^n \mid h(x,r) = 0}$. Supposons que la condition suivante soit satisfaite : \begin{condition-non}[voir \cite{orthogonalarrangements}] 
Si $r, t \in \Lambda$ sont des \'el\'ements de norme un, et si $h(r,t) \neq 0$, alors soit $\langle r_\CC \rangle ^\perp  = \langle t_\CC \rangle ^\perp \subset \CCH^n$, soit $\langle r_\CC \rangle ^\perp   \cap  \langle t_\CC \rangle ^\perp = \emptyset \subset \CCH^n$. 
\end{condition-non}
\noindent
Cette condition est toujours v\'erifi\'ee lorsque le corps $K$ v\'erifie certaines hypoth\`eses (voir les Conditions \ref{crucialcondition}), 
satisfaites lorsque $K$ est quadratique imaginaire ou \'egal \`a $\QQ(\zeta_p)$ pour un nombre premier $p>2$ (voir le Théorème \ref{th:conditionsimplyhypothesis} et le Lemme \ref{lemma:discr}). 

Nous fournissons un moyen canonique de recoller les différentes copies $\RR H^n_\alpha \coloneqq \left(\CC H^n\right)^\alpha \subset \CC H^n, \alpha \in P\mr A$ de l'espace hyperbolique réel $\RR H^n$ le long de l'arrangement d'hyperplans $\mr H$. Cela donne un espace topologique que nous désignons par $Y$, sur lequel agit $P\Gamma$. Définissons $P\Gamma_\alpha \subset P\Gamma$ comme étant le stabilisateur de $\RR H^n_\alpha$. 

Le but du Chapitre \ref{ch:glueing} est de d\'emontrer le théorème suivant.\begin{theoreme} \label{th:theorem03-fr}
L'espace topologique $P\Gamma \setminus Y$ porte une structure canonique d'orbifold hyperbolique réelle, pour laquelle 
il existe un plongement ouvert d'orbifolds $$\coprod_{\alpha \in P\Gamma \setminus P\mr A}  [P\Gamma_\alpha \setminus \left(\RR H^n_\alpha - \mr H \right)] \hookrightarrow P\Gamma \setminus Y.$$ De plus, pour chaque composante connexe $C \subset P\Gamma \setminus Y$ il existe un r\'eseau $P\Gamma_C \subset \textnormal{PO}(n, 1)$ ainsi qu'un isomorphisme d'orbifolds hyperboliques réels $C \cong [P\Gamma_C \setminus \RR H^n]$. 
\end{theoreme}

\addtocontents{toc}{\protect\setcounter{tocdepth}{0}}
\section{R\'esum\'e du Chapitre \ref{ch:binaryquintics}: Modules des quintiques binaires r\'eelles}\label{intro:sub:fivepoints-fr}
\addtocontents{toc}{\protect\setcounter{tocdepth}{1}}
Soit $X \cong \bb A^6_\RR$ l'espace affine paramétrant les polynômes homogènes $F \in \RR[x,y]$ de degré cinq. Consid\'erons les sous-variétés $X_0 \subset X_s \subset X$ paramétrant les polynômes à racines distinctes, resp. les polynômes à racines de multiplicité au plus deux (c'est-à-dire stables au sens de la théorie des invariants géométriques). 

Le but du Chapitre \ref{ch:binaryquintics} est d'étudier les espaces de modules des \textit{quintiques binaires r\'eelles} stables et lisses 
$$
\ca M_s(\RR) = \GL_2(\RR) \setminus X_s(\RR) \supset \GL_2(\RR) \setminus X_0(\RR) = \ca M_0(\RR). 
$$
Si $P_s \subset \PP^1(\CC)^5$ est l'espace des cinq-uples $(x_1, \dotsc, x_5)$ dans $\PP^1(\CC)$ tels que pour $i,j,k$ deux \`a deux distincts on n'ait pas $x_i = x_j = x_k$ (c.f. \cite{MR0437531}), et $P_0 \subset P_s$ le sous-espace des cinq-uples dont toutes les coordonnées sont distinctes, alors 
$$
\ca M_0(\RR) \cong \PGL_2(\RR) \setminus (P_0/\mf S_5)(\RR) \quad \quad \text{ et } \quad \quad \ca M_s(\RR) \cong \PGL_2(\RR) \setminus (P_s/\mf S_5)(\RR). 
$$ 
Pour $i = 0,1,2$, définissons $\mr M_{i}$ comme la composante connexe de $\ca M_0(\RR)$ paramétrant les $S \subset \PP^1(\CC)$ avec exactement $5 - 2i$ points réels. 

Il existe une application de p\'eriodes naturelle qui définit un isomorphisme entre $\ca M_s(\CC) = \GL_2(\CC) \setminus X_s(\CC)$ et un certain quotient de boule arithmétique $P\Gamma \setminus \CC H^2$. Sous cet isomorphisme, les quintiques strictement stables correspondent à des points dans un arrangement d'hyperplans $\mr H \subset \CC H^2$ (Th\'eor\`eme \ref{th:delignemostow} et Proposition \ref{prop:stableperiodshyperplane}). En étudiant l'équivariance de l'application des périodes sous des involutions anti-holomorphes $\CC H^2 \to \CC H^2$ ad\'equates, nous obtenons le r\'esultat suivant :

\begin{theoreme} \label{th:theorem01-fr}
Pour chaque $i \in \{0,1,2\}$, l'application des périodes induit un isomorphisme d'orbifolds analytiques réels $\mr M_i \cong P \Gamma_i \setminus \left(\RR H^2 - \mr H_i \right)$. Ici, $\RR H^2$ est le plan hyperbolique réel, $\mr H_i$ une union de sous-espaces géodésiques dans $\RR H^2$ et $P\Gamma_i$ un r\'eseau arithmétique dans $\textnormal{PO}(2,1)$ attaché à une forme quadratique sur $\ZZ[\zeta_5 + \zeta_5^{-1}]$. 
\end{theoreme}

\noindent
Le Théorème \ref{th:theorem01-fr} dote chaque composante $\mr M_i$ d'une métrique hyperbolique naturelle. Puisque l'on peut déformer le type topologique d'une partie \`a cinq \'el\'ements de $\PP^1(\CC)$, stable sous $\Gal(\CC/\RR)$, en permettant à deux points d'\^{e}tre recoll\'es, la compactification $\ca M_s(\RR) \supset \ca M_0(\RR)$ est connexe. On peut donc se demander si les métriques sur les $\mr M_i$ s'étendent en une métrique sur $\ca M_s(\RR)$ et, le cas \'echeant, à quoi ressemble l'espace résultant à la frontière. Le résultat principal du Chapitre \ref{ch:glueing} est le suivant, qui est une application des Théorèmes \ref{th:theorem03-fr} et \ref{th:theorem01-fr} ci-dessus.\begin{theoreme} \label{th:theorem02-fr}
Il existe une métrique hyperbolique complète sur $\ca M_s(\RR)$ dont la restriction \`a $\mr M_i$ est la m\'etrique induite par le Théorème \ref{th:theorem01-fr}. Muni de cette m\'etrique, $\ca M_s(\RR)$ est isométrique au triangle hyperbolique d'angles $\pi/3, \pi /5, \pi/10$ (voir Figure \ref{fig:triangle}). De l\`a, si $$P\Gamma_{3,5,10} = \langle \alpha_1, \alpha_2, \alpha_3 | \alpha_i^2 = (\alpha_1\alpha_2)^3 = (\alpha_1\alpha_3)^5 = (\alpha_2\alpha_3)^{10} = 1 \rangle, $$ il existe un homéomorphisme $$\ca M_s(\RR) \cong P\Gamma_{3,5,10} \setminus \RR H^2
\; \text{ et un plongement } \; \coprod_i P \Gamma_i \setminus \left(\RR H^2 - \mr H_i \right) \subset P\Gamma_{3,5,10}\setminus \RRH^2
$$ compatibles avec les isomorphismes $\mr M_i \cong P \Gamma_i \setminus \left(\RR H^2 - \mr H_i \right)$ du Théorème \ref{th:theorem01-fr}. 
\end{theoreme}
%


\addtocontents{toc}{\protect\setcounter{tocdepth}{0}}
\section{R\'esum\'e du Chapitre \ref{ch:integralfourier}: Transformations de Fourier enti\`eres} \label{fourier-fr}
\addtocontents{toc}{\protect\setcounter{tocdepth}{1}}
\begin{flushright}
(d'apr\`es un travail en commun avec Thorsten Beckmann)
\end{flushright}
\noindent
Soit $g$ un entier positif et $A$ une variété abélienne de dimension $g$ sur un corps $k$. \emph{Les transformations de Fourier} sont des correspondances attachées au faisceau de Poincar\'e $\ca P_A$ sur $A \times \wh A$ entre les catégories dérivées, les groupes de Chow rationnels et la cohomologie de $A$ et de $\wh A$ \cite{mukaiduality,beauvillefourier,huybrechtsfouriermukai}. La transformation de Fourier sur les groupes de Chow rationnels
\begin{align} \label{intro-intro-Fourier-fr}
\ca F_A \colon \CH(A)_{\QQ} \xrightarrow{\sim} \CH(\wh A)_{\QQ}
\end{align}
fournit un outil puissant d'\'etude des cycles algébriques sur $A$. Au niveau de la cohomologie, la transformation de Fourier préserve la cohomologie \'etale $\ell$-adique enti\`ere lorsque $k$ est séparablement clos, et la cohomologie de Betti enti\`ere lorsque $k = \CC$. Il est donc naturel de se demander si la transformation de Fourier (\ref{intro-intro-Fourier-fr}) se rel\`eve en un homomorphisme entre groupes de Chow entiers. 
Cette question a \'et\'e soulevée par Moonen--Polishchuk dans \cite{moonenpolishchuk} et par Totaro dans \cite{totaroIHCthreefolds}, et sera explorée dans le Chapitre \ref{ch:integralfourier}. Considérons le r\'esultat suivant :
\begin{theoreme-non}[Moonen--Polishchuk]
\begin{enumerate}
\item Soit $k$ un corps et $C$ une courbe hyperelliptique sur $k$ telle que l'un des points de Weierstrass de $C$ est défini sur $k$. Pour la jacobienne $A = J(C)$ de $C$, la transformation de Fourier (\ref{intro-intro-Fourier-fr}) se rel\`eve en un homomorphisme motivique $\ca F \colon \CH(A) \to \CH(\wh A)$. 
\item 
Soit $g \in \ZZ_{\geq 2}$ et consid\'erons le champ de modules $\ca M_{g,1}$. Soit $K$ son corps de fonctions et $(C, p_0)$ sont point générique. Pour la jacobienne $A = J(C)$ de $C$, la transformation de Fourier (\ref{intro-intro-Fourier-fr}) se ne se rel\`eve pas en un homomorphisme $\ca F \colon \CH(A) \to \CH(\wh A)$. 
\end{enumerate}
\end{theoreme-non}
\noindent
Nous \'etablissons le r\'esultat suivant : 

\begin{theoreme}
Soit $k$ un corps, $A$ une vari\'et\'e ab\'elienne sur $k$ de dimension $g \geq 1$, et $\Theta \in \CH^1(A)$ la classe d'un diviseur induisant une polarisation principale sur $A$. 
\begin{enumerate}
\item \label{rat-fr} 
Si le cycle rationnel $\Theta^{g-1}/(g-1) ! \in \CH_1(A)_\QQ$ se rel\`eve en un cycle entier $\Gamma \in \CH_1(A)$, alors la transformation de Fourier (\ref{intro-intro-Fourier-fr}) se rel\`eve en un homomorphisme motivique $\ca F \colon \CH(A) \to \CH(\wh A)$. 
\item Si $(-1)^\ast\Theta = \Theta \in \CH^1(A)$, l'inverse de l'assertion \ref{rat-fr} est également vrai. 
\end{enumerate} 
\end{theoreme}

\addtocontents{toc}{\protect\setcounter{tocdepth}{0}}
\section{R\'esum\'e du Chapitre \ref{ch:onecycles}: Cycles alg\'ebriques sur les jacobiennes}\label{intro:sub:theintegralhodgeconjectureforjacobianvarieties-fr}
\addtocontents{toc}{\protect\setcounter{tocdepth}{1}}
\begin{flushright}
(d'apr\`es un travail en commun avec Thorsten Beckmann)
\end{flushright}

\noindent
Savoir si la transformation de Fourier (\ref{intro-intro-Fourier-fr}) se rel\`eve aux groupes de Chow entiers a des conséquences importantes pour l'image de l'application classe de cycle. Rappelons que, bien que la conjecture de Hodge enti\`ere échoue en général \cite{atiyahintegralhodge, trento,totarocobordism}, elle reste une question ouverte pour les variétés abéliennes. En nous appuyant sur les résultats du Chapitre \ref{ch:integralfourier}, nous prouvons ce qui suit au Chapitre \ref{ch:onecycles} : 

\begin{theoreme} \label{cycle-maintheorem-fr}
Soit $A$ une variété abélienne complexe de dimension $g \geq 1$ avec fibr\'e de Poincar\'e $\ca P_A$. Les trois assertions suivantes sont équivalentes :
\begin{enumerate}
    \item \label{introitem:minimalpoincare-fr} La classe de cohomologie $c_1(\ca P_A)^{2g-1}/(2g-1) ! \in \rm H^{4g-2}(A \times \wh A, \ZZ)$ est algébrique. 
    \item \label{introitem:integralpoincare-fr} Le caractère de Chern $\ch(\ca P_A) = \exp(c_1(\ca P_A)) \in \rm H^\bullet(A \times \wh A, \ZZ)$ est algébrique.
    \item \label{introitem:integralhodgeforproduct-fr} La conjecture de Hodge enti\`ere pour les courbes est valable pour $A \times \wh A$. 
\end{enumerate}
L'une quelconque de ces assertions implique la conjecture enti\`ere de Hodge pour les courbes sur $A$. De plus, si $A$ est principalement polarisée par $\theta \in \Hdg^2(A, \ZZ)$, les énoncés \ref{introitem:minimalpoincare-fr} - \ref{introitem:integralhodgeforproduct-fr} sont équivalents à l'algébricité de la classe $\theta^{g-1}/(g-1) !  \in \rm H^{2g-2}(A, \ZZ)$.
\end{theoreme}
\noindent
Nous obtenons comme corollaire le r\'esultat suivant :
\begin{theoreme} \label{introth:IHCforjacobians-fr}
Soient $C_1, \dotsc ,C_n$ des courbes projectives lisses sur $\CC$. La conjecture de Hodge enti\`ere pour les courbes vaut pour le produit des jacobiennes $J(C_1) \times \cdots \times J(C_n)$. 
\end{theoreme}

\noindent
De plus, \`a l'aide du Th\'eor\`eme \ref{cycle-maintheorem-fr}, nous d\'emontrons : 

\begin{theoreme} \label{introth:density-fr}
Soit $g \in \ZZ_{\geq 2}$. Il existe une union dénombrable $X \subset \ca A_{g}(\CC)$ de sous-variétés algébriques de dimension au moins $3g-3$, satisfaisant les conditions suivantes: $X$ est dense pour la topologie euclidienne dans $\ca A_g(\CC)$ et la conjecture de Hodge enti\`ere pour les courbes vaut pour les variétés abéliennes définissant un point dans $X$. 
\end{theoreme}

\noindent
La généralité de la théorie des transformations de Fourier enti\`eres que nous avons développée (voir la Section \ref{fourier-fr}) permet d'obtenir des résultats similaires en caractéristique non nulle. \'Etant donn\'ee $k$, la clôture séparable d'un corps finiment engendré, une variété projective lisse $X$ de dimension $n$ sur $k$ satisfait la \emph{conjecture de Tate enti\`ere pour les courbes sur $k$} si pour tout nombre premier $\ell \neq \tn{char}(k)$ et tout corps de définition finiment engendré $k_0 \subset k$ de $X$, l'application classe de cycle 
\begin{equation} \label{eq:integraltateconjecture-f}
    cl \colon \CH_1(X)_{\ZZ_\ell} = \CH_1(X)\otimes_\ZZ{\ZZ_\ell} \to \bigcup_U \rm H_{\textnormal{\'et}}^{2n-2}(X, \ZZ_\ell(n-1))^U
\end{equation}
est surjective, où $U$ parcourt les sous-groupes ouverts de $\Gal(k/k_0)$. 

\begin{theoreme} \label{introth:integraltate-fr} Soit $(A, \lambda)$ une variété abélienne principalement polarisée sur $k$, de dimension $g$ et de classe de polarisation $\theta_\ell \in \rm H^2_{\textnormal{\'et}}(A, \ZZ_\ell(1))$. L'application classe de cycle (\ref{eq:integraltateconjecture-f}) est surjective si $ \theta_\ell^{g-1}/(g-1) ! \in \rm H_{\textnormal{\'et}}^{2g-2}(A, \ZZ_\ell(g-1))$ est dans son image. 
\end{theoreme}
\noindent
Observer que la condition dans le Th\'eor\`eme \ref{introth:integraltate-fr} est toujours vérifiée si $\ell > (g-1)!$. Nous obtenons comme corollaire le r\'esultat suivant :
\begin{theoreme}
Soient $C_1, \dotsc ,C_n$ des courbes projectives lisses sur $k$. La conjecture de Tate enti\`ere pour les courbes sur $k$ vaut pour le produit des jacobiennes $J(C_1) \times \cdots \times J(C_n)$. 
\end{theoreme}
\noindent
Soit $k$ la clôture algébrique d'un corps finiment engendré de caractéristique non nulle, et soit $X$ une vari\'et\'e projective lisse sur $k$. La \emph{conjecture enti\`ere de Tate pour les courbes sur $k$} est l'analogue de la propriété introduite ci-dessus. 
\begin{theoreme} \label{th:chinglichai-fr}
Soit $\msf A_{g}$ l'espace de modules grossier des variétés abéliennes principalement polarisées de dimension $g$ sur $k$. Les points de $\msf A_{g}(k)$ correspondant \`a des variétés abéliennes satisfaisant la conjecture de Tate enti\`ere pour les courbes sur $k$ sont Zariski denses dans $\msf A_{g}$. 
\end{theoreme}

\addtocontents{toc}{\protect\setcounter{tocdepth}{0}}
\section{R\'esum\'e du Chapitre \ref{ch:realintegralhodge}: Courbes sur les solides ab\'eliens r\'eels}\label{intro:sub:therealintegralhodgeconjectureforabelianthreefolds-fr}
\addtocontents{toc}{\protect\setcounter{tocdepth}{1}}

Posons $G = \Gal(\CC/\RR)$. Pour toute variété projective lisse $X$ sur $\RR$, Benoist et Wittenberg \cite{BW20} ont défini un sous-groupe $\Hdg^{2k}_G(X(\CC), \ZZ(k))_0$ du groupe de cohomologie équivariant $\rm H^{2k}_G(X(\CC), \ZZ(k))$ et ont étudié l'application classe de cycle 
\begin{align}\label{realcycleclassmap-fr}
\CH_i(X) \to \rm \Hdg^{2k}_G(X(\CC), \ZZ(k))_0 \quad \quad \left(i + k = \dim(X)\right).
\end{align}
La \emph{conjecture de Hodge enti\`ere réelle pour les $i$-cycles} questionne la surjectivit\'e de (\ref{realcycleclassmap-fr}). Comme dans la situation complexe, la surjectivit\'e de (\ref{realcycleclassmap-fr}) est valable pour toute variété projective lisse $X$ sur $\RR$ si $i \in \{\dim(X), \dim(X)-1, 0\}$ \cite{krasnovcharact, mangoltehamel, vanhamel, BW20}, mais peut échouer pour d'autres valeurs de $i \in \set{0, \dotsc, \dim(X)}$. 

En utilisant la théorie du Chapitre \ref{ch:integralfourier}, nous fournissons dans le Chapitre \ref{ch:realintegralhodge} des résultats soutenant la conjecture de Hodge enti\`ere réelle pour les solides abéliens. 

\begin{theoreme} \label{theorem1-fr}
Tout solide abélien $A$ sur $\RR$ satisfait la conjecture de Hodge entière réelle modulo la torsion, au sens o\`u l’homomorphisme suivant est surjectif :
$$
\CH_1(A) \to \Hdg^{4}_G(A(\CC), \ZZ(2))_0/(\textnormal{torsion})= \Hdg^{4}(A(\CC), \ZZ(2))^G.
$$
\end{theoreme}

\noindent
Nous obtenons comme corollaire le r\'esultat suivant :

\begin{theoreme} \label{IHCforconnected-fr}
Soit $A$ un solide abélien sur $\RR$ tel que $A(\RR)$ est connexe. Alors $A$ satisfait la conjecture de Hodge enti\`ere réelle. 
\end{theoreme}

\noindent
En ce qui concerne la conjecture de Hodge enti\`ere réelle pour les solides ab\'eliens principalement polarisés, nous d\'emontrons le théorème de réduction suivant :

\begin{theoreme} \label{reduction-fr}
Soit $\ca A_3(\RR)^+$ une composante connexe de $\ca A_3(\RR)$, l'espace de modules des solides abéliens réels principalement polarisés. Supposons que la conjecture de Hodge enti\`ere réelle vaut pour toute jacobienne $J(C)$ tel que $[J(C)] \in \ca A_3(\RR)^+$ et $C(\RR) \neq \emptyset$. Alors la conjecture de Hodge enti\`ere réelle vaut pour toute variété abélienne dans $\ca A_3(\RR)^+$. 
\end{theoreme}
\noindent
L'espace de modules $\ca A_3(\RR)$ a quatre composantes connexes, correspondant aux types topologiques des lieux r\'eels des solides ab\'eliens r\'eels principalement polaris\'es. Dans trois de ces quatre composantes, il n'y a pas d'obstruction topologique à la conjecture de Hodge enti\`ere réelle, comme suit de notre dernier r\'esultat : 
\begin{theoreme} \label{questionreductioncorollary-fr} Soit $B$ une surface abélienne réelle, et $E$ une courbe elliptique avec $E(\RR)$ connexe. Le solide abélien réel $A = B \times E$ satisfait la conjecture de Hodge enti\`ere réelle. 
\end{theoreme}

\cleardoubleevenpage
\cleardoublepage
\bibliographystyle{amsalpha}

\bibliography{references}
\cleardoubleevenpage
\thispagestyle{plain}






\includepdf[pages={4}]{couverture-anglais.pdf}


\end{document}